\renewcommand*\env@matrix[1][*\c@MaxMatrixCols c]{%
  \hskip -\arraycolsep
  \let\@ifnextchar\new@ifnextchar
  \array{#1}}
\newtheorem{coro}{{Corollary}}
\newtheorem{defi}{{ Definition}}
\newtheorem{teo}{Theorem}
\newtheorem{pro}{ Proposition }
\newtheorem{ma}{Lemma}
\newtheorem{rem}{Remark}
\renewcommand{\d}{\operatorname{d}}
\newcommand{\Exp}[1]{\operatorname{e}^{#1}}
\newcommand{\diag}{\operatorname{diag}}
\newcommand{\Ct}{\check{C}}
\newcommand{\Ht}{\check{H}}
\newcommand{\St}{\check{S}}
\newcommand{\C}{\mathbb{X}}
\newcommand{\R}{\mathbb{R}}
\newcommand{\prodint}[1]{\left\langle{#1}\right\rangle}
\begin{document}

 \title[ Transformation theory for   matrix biorthogonal  polynomials on the real line]{Transformation theory and Christoffel formulas for \\matrix biorthogonal  polynomials on the real line \\
(\MakeLowercase{with applications to the non}-A\MakeLowercase{belian} 2D T\MakeLowercase{oda lattice and noncommutative} KP \MakeLowercase{hierarchies})}

 \author[C Álvarez-Fernández]{Carlos Álvarez-Fernández$^{1,\maltese}$}
 \address{$^1$Departamento de Métodos Cuantitativos, Universidad Pontificia Comillas, C/ de Alberto Aguilera 23, 28015 Madrid, Spain }
 \email{calvarez@comillas.edu}

 \author[G Ariznabarreta]{Gerardo Ariznabarreta$^{2,\maltese,\dag}$}

 \address{$^2$Departamento de Física Teórica II (Métodos Matemáticos de la Física), Universidad Complutense de Madrid, Ciudad Universitaria, Plaza de Ciencias 1,  28040 Madrid, Spain}
 \email{gariznab@ucm.es}

\author[JC García-Ardila]{Juan C. García-Ardila$^{3,\ddag}$}
\address{$^3$Departamento de Matemáticas, Universidad Carlos III de Madrid, Avd/  Universidad 30, 28911 Leganés, Spain}
\email{jugarcia@math.uc3m.es}

\author[M Mañas]{Manuel Mañas$^{2,\maltese}$}
 \email{manuel.manas@ucm.es}

\author[F Marcellán]{Francisco Marcellán$^{3,4,\ddag}$}
\address{$^4$Instituto de Ciencias Matemáticas (ICMAT), C/ Nicolás Cabrera 13-15,
	28049 Canto Blanco, Spain}
\email{pacomarc@ing.uc3m.es}

	\thanks{$^\maltese$Thanks financial support from the Spanish ``Ministerio de Economía y Competitividad" research project [MTM2015-65888-C4-3-P],\emph{ Ortogonalidad, teoría de la aproximación y aplicaciones en física matemática}}
	\thanks{$^\dag$Thanks financial support from the Universidad Complutense de Madrid  Program ``Ayudas para Becas y Contratos Complutenses Predoctorales en España 2011"}
	\thanks{$^ \ddag$Thanks financial support from the Spanish ``Ministerio de Economía y Competitividad" research project [MTM2015-65888-C4-2-P],\emph{ Ortogonalidad, teoría de la aproximación y aplicaciones en física matemática}}

\keywords{Matrix biorthogonal polynomials, spectral theory of matrix polynomials,  quasidefinite matrix of  generalized kernels,  nondegenerate continuous sesquilinear forms,  Gauss--Borel factorization, matrix Geronimus transformations, matrix linear spectral transformations, matrix Uvarov transformations, Christoffel type formulas, quasideterminants, spectral jets, unimodular matrix polynomials, 2D non-Abelian Toda lattice hierarchy, noncommutative KP hierarchy, bilinear identities}
\subjclass{42C05,15A23}
\enlargethispage{.13cm}
\begin{abstract}
In this paper transformations for matrix orthogonal polynomials in the real line are studied. The orthogonality is understood in a broad sense, and is given in terms of a  nondegenerate  continuous sesquilinear form,  which in turn is determined by a quasidefinite matrix of bivariate generalized functions with a well defined support.
The discussion of the orthogonality for such a sesquilinear  form includes, among others, matrix Hankel cases with linear functionals,  general matrix Sobolev orthogonality and discrete orthogonal polynomials with an infinite support.
The results are mainly concerned with the derivation of Christoffel type formulas, which allow to express the perturbed matrix biorthogonal  polynomials and its norms in terms of the original ones. The basic tool is the Gauss--Borel factorization of the Gram matrix, and particular attention is paid  to the non-associative character, in general, of the product of semi-infinite matrices.
	
The first transformation considered is that of Geronimus type, in where a right multiplication by  the inverse of a   matrix polynomial and an addition of adequate masses  is performed. The resolvent matrix  and connection formulas are given. Two different methods are developed. A spectral one, based on the spectral properties of the perturbing polynomial, and constructed in terms of the second kind functions.  This approach requires the perturbing matrix polynomial to have a  nonsingular leading term. Then, using spectral techniques and spectral jets,  Christoffel-Geronimus formulas for the transformed polynomials and norms are presented. For this type of transformations, the paper also proposes an alternative method, which does not require of spectral techniques,  that is valid also for singular leading coefficients. When the leading term is nonsingular a comparative of both methods is presented. The nonspectral method is applied to unimodular Christoffel perturbations, and a simple example for a degree one massless Geronimus perturbation is given.
A discussion on  Geronimus--Uvarov transformations is presented. These  transformations are rational perturbations of the  matrix of bivariate generalized functions together with an addition of appropriate masses, determined these last ones by the spectral properties of the polynomial denominator. As for the Geronimus case, two techniques are  applied, spectral   and  mixed spectral/nonspectral. Christoffel--Geronimus--Uvarov formulas are found  with both approaches and some applications are given. For example, to transformations preserving the symmetric character of the sesquilinear form, where the adjugate of the perturbing matrix polynomial is instrumental, and also to matrix hodographic  perturbations, with both polynomials of degree one.
To complete the list of transformations, additive perturbations and in particular matrix Uvarov transformations are considered.  Christoffel--Uvarov formulas are given for the perturbed biorthogonal families and its matrix norms, in terms of spectral jets of  the Christoffel--Darboux kernels. Applications to  discrete Sobolev matrix orthogonal polynomials are provided.
	
Finally, the transformation theory is  discussed in the context of the 2D non-Abelian Toda lattice and noncommutative KP hierarchies, understood as the theory of continuous transformations of quasidefinite sesquilinear forms. The interplay between transformations and   integrable flows is discussed. Miwa shifts, $\tau$-ratio matrix functions and Sato formulas are given. Bilinear identities, involving either Geronimus--Uvarov or Uvarov transformations,  first for the Baker functions, second  then  for the biorthogonal  polynomials and its second kind functions and finally for the $\tau$-ratio matrix functions are found.
	\end{abstract}

\maketitle
\thispagestyle{empty}

\newpage
\newgeometry{total={19cm,25cm},top=1.35cm, left=1.2cm}
\setcounter{tocdepth}{2}
\tableofcontents

\section{Introduction}
Perturbations of a linear functional $u$  in the linear space of polynomials with real coefficients have been extensively studied in the theory of orthogonal polynomials on the real line (scalar OPRL). In particular, when you deal with the positive definite case, i.e. linear functionals associated with probability measures supported in an infinite subset of the real line is considered, such perturbations provide an interesting information in the framework of Gaussian quadrature rules taking into account the perturbation yields new nodes and Christoffel numbers, see \cite{Gaut1,Gaut2}. Three perturbations have attracted the interest of the researchers. Christoffel perturbations, that  appear when you consider a new functional $\hat{u}= p(x) u$, where $p(x)$ is a polynomial, 
were studied in 1858 by the  German mathematician E. B.  Christoffel  in \cite{christoffel}, in the framework of Gaussian quadrature rules. He  found  explicit formulas relating the corresponding sequences of orthogonal polynomials with respect to two measures,  the Lebesgue measure  $\d\mu$ supported in the interval $(-1,1)$ and  $d\hat{\mu}(x)= p(x) d\mu(x)$, with $p(x)=(x-q_1)\cdots(x-q_N)$ a signed polynomial in the support of $\d\mu$,  as well as the distribution of their zeros as nodes in such quadrature rules. Nowadays, these are  called Christoffel formulas, and can be considered  a classical result in the theory of orthogonal polynomials which can be found in a number of  textbooks, see for example \cite{Chi,Sze,Gaut2}. Explicit relations between the corresponding sequences of orthogonal polynomials  have been extensively studied, see \cite{Gaut1}, as well as the connection between  the corresponding monic Jacobi matrices in the framework of the so-called Darboux transformations based on the LU factorization of such matrices \cite{Bue1}.
In the theory of orthogonal polynomials,  connection formulas between two families of orthogonal polynomials allow to express any polynomial of a given degree $n$ as a linear combination of all polynomials of degree less than or equal to $n$ in the second family. A noteworthy  fact regarding  the Christoffel finding is that in  some cases the number of terms does not grow with the degree $n$ but remarkably,  and on the contrary,  remain constant,  equal to the degree of the perturbing polynomial. See \cite{Gaut1,Gaut2} for more on the Christoffel type formulas.

Geronimus transformation appears when you are dealing with perturbed functionals $v$ defined by $p(x) v=u,$ where $p(x)$ is a polynomial. Such a kind of transformations were used by the Russian mathematician J. L. Geronimus, see \cite{Geronimus}, in order to have a nice proof of a result by W. Hahn \cite{Hann} concerning the characterization of classical orthogonal polynomials (Hermite, Laguerre, Jacobi, and Bessel) as those orthogonal polynomials whose first derivatives are also orthogonal polynomials,  for an English account of Geronimus' paper \cite{Geronimus} see \cite{Golinskii}.  Again, as happened  for the Christoffel transformation, within the Geronimus transformation one can find Christoffel type formulas, now in terms of the second kind functions, relating the corresponding sequences of orthogonal polynomials,  see for example the work of P. Maroni  \cite{Maro} for a  perturbation of the type $p(x)=x-a$. 
Despite that in \cite{Geronimus} no Christoffel type formula was derived, in the present paper, in order to distinguish these Christoffel type formulas from those for Christoffel transformations,  we refer to them as Christoffel--Geronimus formulas.
The relation between the monic Jacobi matrices has been studied by using the so-called Darboux transformation with parameter \cite{Bue1}. The more general problem related to linear functionals $u$ and $v$ satisfying $p(x)u= q(x)v,$ where $p(x), q(x)$ are polynomials has been analyzed in \cite{Uva} when $u,v$ are positive definite measures supported on the real line and in \cite{dini} when linear functionals are considered, see also \cite{Zhe}. The Russian mathematician V. B. Uvarov found \cite{Uva}  Christoffel type formulas, that allow for any pair of perturbing polynomials $p(x) $ and $q(x)$, to find the new orthogonal polynomials in terms of determinantal expressions of the original unperturbed second kind functions and orthogonal polynomials. On the other hand, the addition of a finite number of Dirac masses to a linear functional appears in the framework of the spectral analysis of fourth order linear differential operators with polynomial coefficients and with orthogonal polynomials as eigenfunctions. Therein you have the so called Laguerre-type, Legendre-type and Jacobi-type orthogonal polynomials introduced by H. L. Krall, see \cite{HKrall}, and later on deeply studied in \cite{AKrall}. A more general analysis from the point of view of the algebraic properties of the sequences of orthogonal polynomials associated to the linear functionals $u$ and $w= u + \sum_{n=0}^{N} M_{n} \delta(x- a_{n})$, the so-called general Uvarov transformation in \cite{Zhe}, has been done for the positive definite case in \cite{Uva}, and in more general cases in \cite{Marc1} and \cite{Chen}.

The case of Hermitian linear functionals in the linear space of Laurent polynomials with complex coefficients is intimately related with the theory of orthogonal polynomials on the unit circle (in short OPUC). Indeed, if $u$ is a positive definite Hermitian linear functional, then there exists a probability measure supported on an infinite subset of the unit circle  that provides an integral representation for such a functional (the solution of the trigonometric moment problem). In analogy with the case of OPRL, perturbations of such linear functionals have been analyzed in the literature. For the analogue of the Christoffel transformation, see \cite{Godoy1}, where extensions of the Christoffel determinantal type formulas were found using the original Szeg\H{o} polynomials and its  Christoffel--Darboux kernels.
For the Geronimus transformation see \cite{Godoy2} and for the Uvarov transformation, see \cite{Cachafeiro}.
 Later, in \cite{ismail}, alternative formulas \emph{á la Christoffel} were presented in terms determinantal expressions of the Szeg\H{o} polynomials and its reverse polynomials, and these results were extended in \cite{R. W. Ruedemann} for biorthogonal polynomials.
 The connection between the Hessenberg matrices ---GGT (Gragg--Geronimus--Tepliaev) matrices according to the terminology in \cite{Simon2}--- associated to the multiplication operator has been explored in \cite{Castillo}. For the CMV (Cantero--Moral--Velázquez) matrices, which constitute the representation of the multiplication operator in terms of the basis of orthonormal Laurent polynomials, see \cite{Cantero} where the connection with Darboux transformations and their applications to integrable systems has been  analyzed. Regarding the CMV ordering, orthogonal Laurent polynomials and Toda systems, see \S 4.4 of \cite{am2013}  where discrete Toda flows and its connection with Darboux transformations were discussed. For   a matrix version of this discussion see \cite{ari}.
 
Was  M. G. Krein in \cite{Krein} the first to discuss matrix orthogonal polynomials, for a review on the subject see \cite{DAS}.  The great activity in this scientific field has produced a vast  bibliography, treating among other things subjects like  inner products defined on the linear space of polynomials with matrix coefficients or  aspects as the existence  of the corresponding  sequences of matrix orthogonal polynomials in the real line,  see \cite{Dur5,Dur4,mir,Rod,van2}) and their applications in Gaussian quadrature for matrix-valued functions \cite{van}, scattering theory \cite{Ni,Ger} and system theory \cite{Fuh}. 
The  seminal paper \cite{Dur3} gave the key for further studies in this subject and, subsequently, some relevant advances has been achieved in the study of families of matrix orthogonal polynomials associated to second order linear differential operators as eigenfunctions and their structural properties \cite{Dur5,grupach,grupach2,Dur6}. In  \cite{Cas1}  sequences of orthogonal polynomials satisfying a first order linear matrix differential equation were found, which is a remarkable difference with the scalar scenario, where such a situation does not appear. The spectral problem for second order linear difference operators with polynomial coefficients has been considered in \cite{Al}. Therein four families of matrix orthogonal polynomials (as matrix relatives of Charlier, Meixner, Krawtchouk scalar polynomials and another one that seems not have any scalar relative) are obtained as illustrative examples of the method described therein. 

The Japanese mathematician M. Sato and the   Kyoto school  \cite{sato0,sato, date1,date2,date3} gave a  Lie group theoretical setting  for integrable hierarchies of nonlinear partial differential/difference equations, and in \cite{mulase}  a mathematical description of factorization problems, dressing procedure, and linear systems can be found. For the  2D lattice Toda lattice hierarchy see \cite{ueno-takasaki0,ueno-takasaki1,ueno}, and \S 3 in \cite{ueno}. Multicomponent versions of the Kadomtsev--Petvhiasvili (KP) hierarchy were analyzed in \cite{sato0,BtK2,Bogdanov,Dickey,manas-1,manas0}.  Block Hankel/Toeplitz reductions, discrete flows, additional symmetries and dispersionless limits for the multi-component 2D Toda lattice hierarchy were discussed  in \cite{manas1,manas2}, and the connection with multiple orthogonal polynomials was considered in  \cite{adler 2,amu}.
The papers \cite{adler-van moerbeke, adler-vanmoerbeke 0,adler-van moerbeke 1,adler-van moerbeke 1.1,adler-van moerbeke 2} 
 showed that the Gauss--Borel factorization problem is a keystone for the connection between integrable system  and orthogonal polynomials.
These papers clearly established  --from a group-theoretical setup-- why standard orthogonality of polynomials and integrability 
of nonlinear equations of Toda type where so close.
Non-Abelian versions of Toda equations and its relation with matrix orthogonal polynomials was studied, for example,  in \cite{mir,amu} (on the real line) and in \cite{mir2,ari0} (on the unit circle).

The transformations studied in this paper are also known in the Theory of  Orthogonal Polynomials, as well as in Integrable Systems Theory, as Darboux transformations,
 denomination  that was coined in  \cite{matveev}. The motivation, for that name, was the studies of the French mathematician G. Darboux that, when studying the  Sturm--Liouville theory in  \cite{darboux2},  explicitly  found these transformations, which he obtained by a simplification of a geometrical transformation founded previously by the French mathematician T. Moutard \cite{moutard}.
See \cite{Yoon} for a review of these transformations,  in the context of orthogonal polynomials, under the light of  the factorization of the Jacobi matrices. See also \cite{gru1,gru2} for the study  of bispectrality.
In the Differential Geometry, see \cite{eisenhart},  the Christoffel, Geronimus, Uvarov and linear spectral transformations are related to geometrical transformations like  the Laplace, Lévy, adjoint Lévy and the fundamental Jonas transformations. See \cite{dsm} for a discrete version of these geometrical transformations, and its connection with integrable systems. The interested reader may consult the excellent monographs \cite{matveev-book} and \cite{rogers-schief}.

The scheme  based on the Gauss--Borel factorization problem, used here for  transformations for matrix orthogonal polynomials or non-Abelian 2D Toda lattices,  has been applied  also in the following situations 
\begin{enumerate}
	\item Matrix orthogonal polynomials, its Christoffel transformations and the relation with non-Abelian Toda hierarchies were studied in \cite{alvarez2015Christoffel}, a paper that can be understood as the seed of the present one. 
	\item  Multiple orthogonal polynomials and multicomponent Toda \cite{am}.
	\item  For matrix orthogonal Laurent polynomials on the unit circle, CMV orderings,  and non-Abelian  lattices on the circle \cite{ari}.
	\item Multivariate orthogonal polynomials in several real variables and corresponding multispectral integrable Toda hierarchy  \cite{MVOPR,ari0}.
	\item Finally, multivariate orthogonal polynomials on the multidimensional unit torus, the  multivariate extension of the CMV ordering and integrable Toda hierarchies \cite{ari1}.
\end{enumerate}

\subsection{Objectives, methodology and results}
The main objective of this paper is to study the theory of transformations of matrix orthogonal polynomials in the real line and  to derive Christoffel type formulas for the perturbed polynomials and norms in terms of the primitive ones. The framework for matrix orthogonality is that of continuous sesquilinear forms in the space of matrix polynomials. We will extend the idea of  \emph{noyau-distribution} (that we call generalized kernel) introduced by the French mathematician L. Schwartz in \cite{Schwartz1} to this matrix context. These general non Hankel scenarios have been discussed in the scalar case in different contexts,  for an orthogonal polynomial approach see \cite{Bueno} while from an integrable systems point of view see \cite{adler,adler-van moerbeke}. We will consider an arbitrary nondegenerate continuous sesquilinear form given by a matrix of generalized kernels $u_{x,y}$ with a quasidefinite Gram matrix. This scheme contains the more usual choices of Gram matrices like those  of Hankel type, or those leading to matrix discrete orthogonality or matrix Sobolev orthogonality, but not only, as these examples correspond to matrices of generalized kernels supported by the diagonal $u_{x,y}=u_{x,x}$,  \cite{Hormander}, and our scheme is applicable to the general case, with support off the diagonal. The quasidefinite condition on the Gram matrix ensures for a block Gauss--Borel factorization and, consequently,  for the existence of two biorthogonal families of matrix polynomials.
The manipulation of semi-infinite matrices must be done with care. In particular, attention must be paid to the fact that, in general,  the product of three semi-infinite  matrices, $A,B$ and $C$ is not associative and, therefore, it could happen that  $(AB)C\neq A(BC)$, see \cite{cooke}. Basic objects in the theory of matrix orthogonal polynomials on the real line, like second kind functions and mixed  Christoffel--Darboux kernels will be instrumental in the finding
of Christoffel type formulas. 

The perturbations or transformations of our sesquilinear forms are of  rational type, with an additive term $v_{x,y}$.
For example, for the Geronimus-Uvarov  transformation, also known as linear spectral transformation for the scalar standard case \cite{Zhe}, we will consider the right multiplication of the matrix of generalized kernels by the inverse of a matrix polynomial and on its left by another matrix polynomial, $u_{x,y}\mapsto W_C(x)u_{x,y}(W_G(y))^{-1}+v_{x,y}$, with a Geronimus reduction when $W_C(x)=I_p$ is the identity matrix. As we deal with the inverse of matrix polynomials we need the spectrum of the denominator not to lay in the support of the generalized kernels, i.e. we will perturb matrices of  generalized kernels with well defined support.
The final objective is the finding of Christoffel type formulas for matrix versions of the Geronimus, Geronimus--Uvarov and Uvarov transformations. In doing so, we will use for the two first mentioned transformations two different methods, which happen to be equivalent when the leading coefficient of the perturbing polynomial in the denominator is nonsingular. The first one is based on
the spectral properties of the perturbing denominator matrix polynomial, and uses the second kind functions, Christoffel--Darboux kernels and its mixed version. For quasideterminantal Christoffel type expressions,  see Theorems \ref{teo:spectral} and \ref{teo:SCGU}.
The second method, which is based on the orthogonality relations, i.e., the Gauss--Borel factorization, allows to treat the singular leading term as well, and does not require of the second kind functions. The Christoffel type formulas, expressed in terms of quasideterminants,  derived within this method are given in  Theorems \ref{theorem:nonspectral} and \ref{theorem:nonspectralLST}.
These Geronimus and Geronimus--Uvarov allow for additive terms, the masses, constructed in the $y$-variable  in terms of the spectral elements, like eigenvalues and root polynomials, of the denominator polynomial, while in the $x$ variable do depend on linearly independent arbitrary vectorial generalized functions. Matrix Uvarov transformations $u_{x,y}\mapsto u_{x,y}+v_{x,y}$ are discussed in full generality by considering the addition of a matrix of generalized kernels $v_{x,y}$ with arbitrary linearly independent matrices of generalized functions in the $x$ variable accomplished with finite support arbitrary order matrices of generalized functions in the $y$-variable. The corresponding Christoffel type formulas are given in Theorem \ref{teo:uvarov}, and some applications to discrete Sobolev matrix orthogonal polynomials are presented.

An important motivation for the research contained in this paper is  the strong relation of the Gauss--Borel factorization of general Gram matrices and integrable systems of Toda type, see \cite{adler,adler-van moerbeke,manas3}. In \cite{alvarez2015Christoffel} we studied how the matrix Christoffel transformation accomplishes with the continuous Toda flows. Similar argumentation leads, in this paper, to the extension of matrix Geronimus, Geronimus--Uvarov and Uvarov transformations to the non-Abelian 2D  Toda lattice and noncommutative KP hierarchies. Using first order Christoffel and Geronimus perturbations we construct Miwa shifts and corresponding Sato type formula for the biorthogonal matrix polynomials and its second kind functions, for that aim we introduce two $\tau$-type matrix functions, see Theorem \ref{teo:Sato}.
We also discuss, in Theorem \ref{teo:bilinear}, three different sets of bilinear identities, the first one involving the Baker functions,  the second one giving an identity constructed in terms of the biorthogonal matrix polynomials and its second kind functions, and finally, the third one, involving the mentioned $\tau$-ratio matrices.

\subsection{Layout of the paper}
We continue this introduction with two introductory  subsections. One is focused on the spectral theory of matrix polynomials, we follow \cite{lan1}. The  other is a basic background on  matrix orthogonal polynomials, see \cite{DAS}. In the second section we extend the Geronimus transformations to the matrix realm, and find connection formulas for the biorthogonal polynomials and the Christoffel--Darboux kernels. These developments allow for the finding of the Christoffel--Geronimus formula for matrix perturbations of Geronimus type. As we said we present two different schemes. In the first one, which can be applied when the perturbing polynomial has a nonsingular leading coefficient,  we express the perturbed objects in terms  of spectral jets of the primitive second kind functions and Christoffel--Darboux kernels. We present a second approach, applicable even when the leading coefficient is singular. For each method we consider two different situations, the less interesting case of  biorthogonal polynomials of  degree less than the degree of the perturbing polynomial,  and the much more interesting situation whence the degrees of the families of biorthogonal polynomials are greater than or equal to the degree of the perturbing polynomial.
To end the section,  we compare spectral versus nonspectral methods and present a number of applications. In particular, we deal with
unimodular polynomial matrix perturbations and degree one matrix Geronimus transformations.
Matrix Geronimus--Uvarov transformations are discussed in \S 3,  where  we extend the ideas developed for the Geronimus case in \S 2.
 Mixed spectral/nonspectral Christoffel–Geronimus–Uvarov formulas are given  and some
 applications to Christoffel transformations with singular leading coefficients  and spectral symmetric transformations are discussed. As a last example, we explicitly treat
 degree one Geronimus--Uvarov transformations. In \S 4,  the matrix extension of Uvarov transformations is given within the framework of additive perturbations of generalized kernels. Matrix Christoffel–Uvarov formulas for finite discrete support additive perturbations are given and reductions to
 Uvarov perturbations supported by the diagonal $y = x$ are discussed. Finally, in \S 5 applications to non-Abelian Toda  and noncommutative KP hierarchies are considered. In particular, Sato formulas are given and bilinear identities derived.
For a table containing all transformations charting the  corresponding results see \S\ref{appendix}.

\subsection{On spectral theory of matrix polynomials}
Here we give some background material regarding the spectral theory of matrix polynomials \cite{lan1,Mark2}.
\begin{defi}
Let  $A_0, A_1\cdots,A_N\in \mathbb{C}^{p\times p}$ be  square  matrices of size $p\times p$ with complex entries.  Then
\begin{align}\label{eq:mp}
W(x)=A_N x^N+A_{N-1}x^{N-1}+\cdots +A_1x+A_0
\end{align}
is said to be  a  matrix polynomial of degree $N$, $\deg (W(x))=N$. The matrix polynomial  is said to be  monic when $A_N=I_p$,  where $I_p\in\mathbb C^{p\times p}$ denotes the identity matrix. The linear space  --a bimodule for the ring of matrices $\mathbb C^{p\times p}$-- of  matrix polynomials with coefficients in  $\mathbb{C}^{p\times p}$ will be denoted by
$\mathbb{C}^{p\times p}[x]$.
\end{defi}

\begin{defi}[Eigenvalues]
The spectrum,  or the set of eigenvalues, $\sigma(W(x))$ of a matrix polynomial $W$ is the zero set of  $\det W(x)$, i.e.
	\begin{align*}
	\sigma(W(x)):=\{x\in\mathbb C: \det W(x)=0\}.
	\end{align*}
\end{defi}

\begin{pro}
	A monic matrix polynomial $W(x)$, $\deg (W(x))=N$,  has $Np$ (counting multiplicities) eigenvalues or zeros, i.e., we can write
\begin{align*}
\det W(x)=\prod_{a=1}^q(x-x_a)^{\alpha_a},
 \end{align*}
with $Np=\alpha_1+\dots+\alpha_q$.
\end{pro}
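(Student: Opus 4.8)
The plan is to show that $\det W(x)$ is a genuine scalar polynomial in $x$ of degree exactly $Np$ with leading coefficient $1$, after which the asserted factorization is an immediate consequence of the fundamental theorem of algebra. First I would note that, since each entry $W_{ij}(x)$ of the matrix $W(x)$ is a scalar polynomial, the determinant $\det W(x)$---being a finite sum of products of such entries---is itself a scalar polynomial in $x$; the entire content of the statement is therefore the determination of its degree and its leading coefficient.

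The key step is a degree count on the Leibniz expansion
\begin{align*}
\det W(x)=\sum_{\tau\in S_p}\sgn(\tau)\prod_{i=1}^p W_{i\tau(i)}(x).
\end{align*}
Because $W$ is monic, the coefficient of $x^N$ in $W_{ij}(x)$ equals the $(i,j)$ entry of $I_p$, that is $\delta_{ij}$. Hence each diagonal entry satisfies $W_{ii}(x)=x^N+(\text{lower order})$, so that $\deg W_{ii}(x)=N$, while each off-diagonal entry obeys $\deg W_{ij}(x)\le N-1$. For the identity permutation the corresponding term is $\prod_{i=1}^p W_{ii}(x)$, a monic polynomial of degree $Np$. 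For any $\tau\ne\operatorname{id}$ there are at least two indices with $\tau(i)\ne i$; writing $k$ for its number of fixed points, so that $p-k\ge 2$, the associated product has at most $k$ diagonal factors and $p-k$ off-diagonal ones, whence its degree is at most $kN+(p-k)(N-1)=Np-(p-k)\le Np-2$.

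Consequently every non-identity term has strictly smaller degree than the identity term, so no cancellation can affect the top coefficient, and $\det W(x)$ is a monic scalar polynomial of degree exactly $Np$. Applying the fundamental theorem of algebra then yields $\det W(x)=\prod_{a=1}^q(x-x_a)^{\alpha_a}$, with $x_1,\dots,x_q$ the distinct roots and multiplicities summing to $Np=\alpha_1+\dots+\alpha_q$, which is precisely the claimed count of eigenvalues. The only mildly delicate point is the bookkeeping that guarantees the top-degree contribution arises solely from the diagonal product; once the degree drop by one for off-diagonal entries (a direct consequence of monicity) is observed, the remainder of the argument is routine.
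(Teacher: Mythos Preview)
Your proof is correct. The paper itself does not supply a proof of this proposition: it is listed among the background facts on spectral theory of matrix polynomials, with the reference \cite{lan1} (Gohberg--Lancaster--Rodman) serving as the source. Your Leibniz-expansion argument, isolating the diagonal product as the unique contributor to the top-degree term $x^{Np}$ by exploiting the degree drop for off-diagonal entries forced by monicity, is exactly the standard way to justify this fact and fills the gap cleanly.
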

\begin{rem}
Given the spectrum $\sigma(W(x))=\{x_1,\dots,x_q\}$, when we need to discuss generic properties  associated to an eigenvalue, and there is no need to specify which, for the sake of simplicity  will denote such an eigenvalue by $x_0$.  Thus, $x_0$ could be any of the eigenvalues $x_1$, $x_2,\dots,x_q$.
\end{rem}

\begin{rem}
	We use  $A^{\top}$ to denote   the   transpose of $A$, which constitutes an  antiautomorphism of order two in the ring of complex matrices.
From $\det \Big(\big(W(x)\big)^\top\Big)=\det (W(x))$  it follows that
	the spectrum, eigenvalues, and multiplicities of $W(x)$ and  $\big(W(x)\big)^\top$ coincide. Consequently,
\begin{align*}
	\operatorname{dim}\operatorname{Ker}\Big((W(x))^\top\Big)&=\operatorname{dim}\operatorname{Ker}(W(x)).
\end{align*}
\end{rem}

\begin{rem}
We will denote the dual space of $\mathbb C^p$, or space of covectors,  by $\big(\mathbb C^p	\big)^*$. Given a linear subspace $\mathcal V\subset \mathbb C^p$ we define its  orthogonal complement as $\mathcal V^\perp:=\{l\in \big(\mathbb C^p	\big)^*: l(v)=0juad \forall v\in\mathcal V\}$. Given a linear map $A:\mathbb C^p\to\mathbb C^p$, which we identify to its matrix $A\in\mathbb C^{p\times p}$, we have that the left null space $\operatorname{Ker}(A^\top)\cong \big(\operatorname{Im}(A)\big)^\perp$.  Indeed, any vector $v$ in the cokernel satisfies $v^\top A=0$, thus
$v^\top\in \big(\operatorname{Im}(A)\big)^\perp$.
\end{rem}

\begin{pro}\label{pro:partial multiplicity}
	Any  nonsingular matrix polynomial $W(x)\in \mathbb C^{p\times p}[x]$, $\det W(x)\neq 0$, can be represented
	\begin{align*}
W(x)=E_{x_0}(x)\operatorname{diag} ((x-x_0)^{\kappa_1},\dots ,(x-x_0)^{\kappa_m})F_{x_0}(x)
	\end{align*}at $x=x_0\in\mathbb C$,
	where $E_{x_0}(x)$ and $F_{x_0}(x)$ are nonsingular  matrices and $\kappa_1\leq\dots\leq\kappa_m$ are nonnegative integers. Moreover, $\{\kappa_1,\dots,\kappa_m\}$ are uniquely determined by $W(x)$ and they are known as partial multiplicities of $W(x)$ at $x_0$.
\end{pro}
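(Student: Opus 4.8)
The plan is to localize at $x_{0}$ and run a Gaussian elimination over the resulting local ring, the key point being that the order of vanishing at $x_{0}$ is a valuation that governs divisibility. Let $\mathcal{O}$ denote the localization of $\mathbb{C}[x]$ at the prime $(x-x_{0})$, i.e. the ring of rational functions regular at $x_{0}$ (equivalently, holomorphic germs at $x_{0}$). This is a discrete valuation ring with uniformizer $t:=x-x_{0}$ and valuation $v$ equal to the order of vanishing at $x_{0}$; its units are exactly the functions nonvanishing at $x_{0}$, and a $p\times p$ matrix over $\mathcal{O}$ lies in $\operatorname{GL}_{p}(\mathcal{O})$ precisely when it is nonsingular at $x=x_{0}$. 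Hence the matrices $E_{x_{0}}(x)$ and $F_{x_{0}}(x)$ of the statement are nothing but elements of $\operatorname{GL}_{p}(\mathcal{O})$, and (since $W$ is square and nonsingular, so $m=p$) the claim reduces to: every $W\in\mathbb{C}^{p\times p}[x]\subset\mathcal{O}^{p\times p}$ with $\det W\neq 0$ is equivalent, under left and right multiplication by $\operatorname{GL}_{p}(\mathcal{O})$, to $\operatorname{diag}(t^{\kappa_{1}},\dots,t^{\kappa_{p}})$ with $0\le\kappa_{1}\le\dots\le\kappa_{p}$.

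For existence I would induct on $p$ using elementary operations over $\mathcal{O}$ (row/column permutations, scaling by units, and adding an $\mathcal{O}$-multiple of one line to another, each realized by a factor in $\operatorname{GL}_{p}(\mathcal{O})$). Among the nonzero entries of $W$ pick one of minimal valuation $\kappa_{1}$ and move it to the $(1,1)$ slot. Minimality of $\kappa_{1}$ forces every entry in the first row and column to have valuation $\ge\kappa_{1}$, hence to be divisible by the pivot in the DVR; so the pivot clears its row and column, leaving a block form $\operatorname{diag}(a,W')$ with $v(a)=\kappa_{1}$. A Schur-complement estimate shows each entry of $W'$ still has valuation $\ge\kappa_{1}$, which is exactly what enforces the ordering $\kappa_{1}\le\kappa_{2}\le\cdots$ as the induction proceeds on the $(p-1)\times(p-1)$ block $W'$. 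The procedure terminates in $\operatorname{diag}(t^{\kappa_{1}}u_{1},\dots,t^{\kappa_{p}}u_{p})$ with units $u_{i}$; absorbing $\operatorname{diag}(u_{1}^{-1},\dots,u_{p}^{-1})$ into one flanking factor gives the stated form, and repackaging the accumulated elementary factors as $E_{x_{0}}(x),F_{x_{0}}(x)\in\operatorname{GL}_{p}(\mathcal{O})$ finishes existence. Alternatively one may invoke the Smith normal form of $W$ over the PID $\mathbb{C}[x]$, producing unimodular $U,V$ with $UWV=\operatorname{diag}(d_{1},\dots,d_{p})$ and $d_{1}\mid\cdots\mid d_{p}$, then factor $d_{i}=t^{\kappa_{i}}e_{i}$ with $e_{i}(x_{0})\neq 0$ and push the units $e_{i}$ into $E_{x_{0}}$, the divisibility chain reproducing $\kappa_{1}\le\cdots\le\kappa_{p}$.

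The real content, and the step I expect to be the main obstacle, is the uniqueness of $\{\kappa_{1},\dots,\kappa_{p}\}$, since existence is a mechanical elimination. Here I would argue through the determinantal divisors: for $1\le k\le p$ let $I_{k}(W)$ be the ideal of $\mathcal{O}$ generated by all $k\times k$ minors of $W$. By the Cauchy--Binet formula the $k\times k$ minors of $AWB$ are $\mathcal{O}$-linear combinations of those of $W$ whenever $A,B$ have entries in $\mathcal{O}$; applying this in both directions shows $I_{k}(AWB)=I_{k}(W)$ for $A,B\in\operatorname{GL}_{p}(\mathcal{O})$, so each $I_{k}$ is an invariant of the local-equivalence class of $W$. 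Since $\mathcal{O}$ is a DVR every ideal is $(t^{n})$, and evaluating $I_{k}$ on the diagonal model $\operatorname{diag}(t^{\kappa_{1}},\dots,t^{\kappa_{p}})$ gives $I_{k}=(t^{\kappa_{1}+\cdots+\kappa_{k}})$, because the ordering $\kappa_{1}\le\cdots\le\kappa_{k}$ makes the product of the $k$ smallest powers the minor of least valuation. Therefore $\kappa_{1}+\cdots+\kappa_{k}$ equals the valuation of a generator of $I_{k}(W)$ and is determined by $W$ alone for every $k$; differencing successive sums recovers each $\kappa_{k}$, which pins down the partial multiplicities and completes the proof.
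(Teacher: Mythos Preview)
Your argument is correct and is the standard route to the local Smith form: work over the discrete valuation ring $\mathcal{O}=\mathbb{C}[x]_{(x-x_0)}$, reduce by elementary operations to a diagonal of uniformizer powers, and prove uniqueness via the invariance of the determinantal ideals $I_k$ under $\operatorname{GL}_p(\mathcal{O})$-equivalence (Cauchy--Binet), which pins down the partial sums $\kappa_1+\cdots+\kappa_k$. The alternative you sketch through the global Smith form over $\mathbb{C}[x]$ is equally valid and in fact yields polynomial flanking factors.

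There is no proof in the paper to compare against: this proposition is quoted as background from Gohberg--Lancaster--Rodman \cite{lan1} and stated without argument. Your write-up supplies exactly the justification that the paper omits. One small clarification worth making explicit is that ``nonsingular'' in the statement is to be read as ``invertible at $x_0$'' (your $\operatorname{GL}_p(\mathcal{O})$), not merely $\det\neq 0$ as a polynomial; you handle this correctly, but it is the only place a reader might stumble.
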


\begin{defi}
	 For an  eigenvalue  $x_0$ of  a monic matrix polynomial $W(x)\in\mathbb C^{p\times p}[x]$, then:
	\begin{enumerate}
			\item  A non-zero vector $r_{0}\in \mathbb C^p$ is said to be a right eigenvector, with eigenvalue $x_0\in\sigma(W(x))$, whenever
		$W(x_0)r_{0}=0$, i.e., $r_{0}\in\operatorname{Ker} W(x_0)\neq \{0\}$.
			\item A non-zero covector $l_{0}\in \big(\mathbb C^p\big)^*$ is said to be an left eigenvector, with eigenvalue  $x_0\in\sigma(W(x))$, whenever
		$l_{0}W(x_0)=0$,$\big(l_{0}\big)^\top\in\big( \operatorname{Ker}(W(x_0))\big)^\perp=\operatorname{Ker}\big( (W(x_0))^\top\big)\neq \{0\}$.
	\item 	A sequence of vectors $\{r_{0},r_{1},\ldots, r_{m-1}\}$ is said to be a right Jordan chain of length $m$ corresponding to the eigenvalue $x_0\in\sigma(W(x))$, if  $r_{0}$ is an right eigenvector of $W(x_0)$ and
			\begin{align*}
		\sum_{s=0}^{j}\frac{1}{s!}	\frac{\operatorname{d}^sW}{\operatorname{d} x^s}	\Big|_{x=x_0}r_{j-s}&=0, & j&\in\{0,\ldots,m-1\}.
		\end{align*}
\item 		A sequence of covectors $\{l_{0},l_{a,1}\ldots, l_{m-1}\}$ is said to be a left Jordan chain of length $m$, corresponding to $x_0\in\sigma(W^\top)$, if
		$\{(l_{0})^\top,(l_{1})^\top,\ldots, (l_{m-1})^\top\}$ is a right Jordan chain of  length $m$ for the matrix polynomial  $\big(W(x)\big)^\top$.	
		\item A right root polynomial  at $x_0$ is a non-zero vector polynomial  $r(x)\in\mathbb C^p[x]$ such that $W(x)r(x)$ has a zero of  certain order at $x=x_0$, the order of this zero is called the order of the root polynomial. Analogously, a  left root polynomial is a non-zero covector polynomial  $l(x)\in\mathbb (\mathbb C^p)^*[x]$ such that $l(x_0)W(x_0)=0$.
		\item The maximal lengths, either of right or left Jordan chains corresponding to  the eigenvalue $x_0$, are called the multiplicity of the eigenvector $r_{0}$ or $l_{0}$ and are denoted by $m(r_{0})$ or $m(l_{0})$, respectively.
	\end{enumerate}
\end{defi}

\begin{pro}
Given an eigenvalue $x_0\in \sigma(W(x))$  of a monic matrix polynomial $W(x)$, multiplicities of right and left eigenvectors coincide and they are equal to the corresponding   partial multiplicities $\kappa_i$.
\end{pro}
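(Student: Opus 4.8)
The plan is to route everything through the local Smith normal form of Proposition \ref{pro:partial multiplicity} together with the identification of Jordan chains with root polynomials, and to deduce the left/right symmetry from transposition.

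First I would record the elementary but crucial dictionary between Jordan chains and root polynomials. Given a right Jordan chain $\{r_{0},\dots,r_{m-1}\}$, assemble the vector polynomial $r(x)=\sum_{j=0}^{m-1}r_{j}(x-x_0)^{j}$ and observe, by the Leibniz rule, that the $j$-th Taylor coefficient of $W(x)r(x)$ at $x_0$ is precisely $\sum_{s=0}^{j}\frac{1}{s!}\frac{\d^sW}{\d x^s}\big|_{x=x_0}r_{j-s}$. Hence the chain equations hold for $j\in\{0,\dots,m-1\}$ if and only if $W(x)r(x)$ has a zero of order at least $m$ at $x_0$, i.e. $r(x)$ is a right root polynomial of order $\geq m$ with $r(x_0)=r_{0}$. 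Consequently the multiplicity $m(r_{0})$ of an eigenvector equals the maximal order of a right root polynomial whose value at $x_0$ is $r_{0}$.

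Next I would transport this order computation through the factorization $W(x)=E_{x_0}(x)\diag((x-x_0)^{\kappa_1},\dots,(x-x_0)^{\kappa_m})F_{x_0}(x)$ of Proposition \ref{pro:partial multiplicity}. Writing $D(x)$ for the diagonal middle factor and setting $s(x):=F_{x_0}(x)r(x)$, the invertibility of $E_{x_0}$ and $F_{x_0}$ at $x_0$ gives $\operatorname{ord}_{x_0}(W(x)r(x))=\operatorname{ord}_{x_0}(D(x)s(x))=\min_i\big(\kappa_i+\operatorname{ord}_{x_0}s_i\big)$, while $r(x_0)\neq 0$ is equivalent to $s(x_0)\neq 0$. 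Reading off the right-hand side, the constant choice $s(x)\equiv e_i$ with $\kappa_i>0$ realizes a root polynomial of order exactly $\kappa_i$, hence a chain of that length; conversely, since at least one component of $s$ must be nonvanishing at $x_0$, no admissible choice can push the minimum above the corresponding partial multiplicity. A filtration argument on $\operatorname{Ker}W(x_0)\cong\operatorname{Ker}D(x_0)=\operatorname{span}\{e_i:\kappa_i>0\}$ then shows that a canonical system of eigenvectors has multiplicities exactly the nonzero $\kappa_i$. This matching of the maximal chain lengths with the multiset of partial multiplicities, and in particular the upper bound, is where the real work lies: an eigenvector that mixes several diagonal directions (especially when some $\kappa_i$ coincide) must be handled through a canonical, filtered basis rather than naively component by component.

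Finally, the equality of left and right multiplicities follows by symmetry. Transposing the factorization and using $D(x)^\top=D(x)$ yields $W(x)^\top=(F_{x_0}(x))^\top D(x)(E_{x_0}(x))^\top$, a local Smith form for $W(x)^\top$ at $x_0$ with the same diagonal; by the uniqueness asserted in Proposition \ref{pro:partial multiplicity}, $W(x)^\top$ has the same partial multiplicities $\kappa_i$ at $x_0$. Since left Jordan chains of $W(x)$ are by definition right Jordan chains of $W(x)^\top$, the left eigenvector multiplicities of $W(x)$ equal the right eigenvector multiplicities of $W(x)^\top$, which by the previous steps are again the $\kappa_i$. Therefore the left and right multiplicities coincide and both equal the partial multiplicities, as claimed.
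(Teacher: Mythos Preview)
The paper does not actually prove this proposition: it is stated as background from the spectral theory of matrix polynomials (the reference is Gohberg--Lancaster--Rodman, \cite{lan1}), with no proof given. So there is no paper proof to compare against.

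That said, your approach is essentially the standard one from \cite{lan1}. The identification of Jordan chains with Taylor jets of root polynomials is exactly what the paper records in the proposition immediately following this one, and your use of the local Smith form from Proposition~\ref{pro:partial multiplicity} to reduce the order computation to $\min_i(\kappa_i+\operatorname{ord}_{x_0}s_i)$ is the right move. One point to be careful about: the paper's statement that $E_{x_0}(x)$ and $F_{x_0}(x)$ are ``nonsingular matrices'' should be read as invertible at $x=x_0$ (analytic with nonvanishing determinant there), which is what you need for $\operatorname{ord}_{x_0}(W r)=\operatorname{ord}_{x_0}(Ds)$ and for $r(x_0)\neq 0\Leftrightarrow s(x_0)\neq 0$. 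The filtration step you flag is indeed where the content lies: for a fixed eigenvector $r_0$ one has $m(r_0)=\min\{\kappa_i:(F_{x_0}(x_0)r_0)_i\neq 0\}$, and the greedy construction of a canonical set (Definition~\ref{def:canonical Jordan}) then picks out exactly the nonzero $\kappa_i$ in nonincreasing order. Your transposition argument for the left/right symmetry is clean and correct, since $D(x)^\top=D(x)$ and the uniqueness in Proposition~\ref{pro:partial multiplicity} forces $W^\top$ to have the same partial multiplicities.
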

	The above definition generalizes the  concept of Jordan chain for degree one matrix polynomials.
\begin{pro}
	The Taylor expansion of a right root polynomial $r(x)$, respectively of a  left root polynomial $l(x)$, at a given eigenvalue $x_0\in\sigma(W(x))$  of a monic matrix polynomial $W(x)$,
	\begin{align*}
	r(x)&=\sum_{j=0}^{\kappa-1}r_j (x-x_0)^j, &\text{ respectively $l(x)=\sum_{j=0}^{\kappa-1}l_j (x-x_0)^j$,}
		\end{align*}
provides  us with right    Jordan chain
\begin{align*}
&\{r_0,r_1,\dots,r_{\kappa-1}\},& \text{respectively,  left  Jordan chain $\{l_0,l_1,\dots,l_{\kappa-1}\}$.}
\end{align*}
\end{pro}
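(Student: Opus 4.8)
The plan is to compute the Taylor expansion of the product $W(x)r(x)$ about $x_0$ directly and to recognize the vanishing of its low-order coefficients as the defining relations of a Jordan chain. Since $W$ is a matrix polynomial, its Taylor series about $x_0$ is the finite sum $W(x)=\sum_{s\ge0}\frac{1}{s!}\frac{\d^sW}{\d x^s}\big|_{x=x_0}(x-x_0)^s$, and by hypothesis the first $\kappa$ Taylor coefficients of the root polynomial are $r(x)=\sum_{j=0}^{\kappa-1}r_j(x-x_0)^j+\cdots$. First I would form the Cauchy product of these two series.

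Then the coefficient of $(x-x_0)^k$ in $W(x)r(x)$ is exactly $\sum_{s=0}^{k}\frac{1}{s!}\frac{\d^sW}{\d x^s}\big|_{x=x_0}r_{k-s}$. The crucial bookkeeping observation is that for $0\le k\le\kappa-1$ this expression involves only $r_0,\dots,r_k$, so the higher Taylor coefficients $r_j$ with $j\ge\kappa$ never enter the equations of index below $\kappa$. By the definition of a right root polynomial of order $\kappa$, the product $W(x)r(x)$ has a zero of order at least $\kappa$ at $x_0$, that is, all these coefficients vanish for $k=0,\dots,\kappa-1$. Relabelling $k$ as $j$, this is literally the system $\sum_{s=0}^{j}\frac{1}{s!}\frac{\d^sW}{\d x^s}\big|_{x=x_0}r_{j-s}=0$ for $j\in\{0,\dots,\kappa-1\}$, which is precisely the defining condition of a right Jordan chain $\{r_0,\dots,r_{\kappa-1}\}$.

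To finish I would verify that $r_0$ is a genuine right eigenvector, namely $r_0\neq0$ and $W(x_0)r_0=0$: the latter is the $j=0$ relation, while $r_0=r(x_0)\neq0$ holds because a root polynomial is normalized so as not to vanish at $x_0$ (otherwise one could factor out a power of $(x-x_0)$ and spuriously reduce the order, so this normalization is built into the notion). The left case follows by transposition: taking transposes of its coefficients, a left root polynomial $l(x)$ of $W$ becomes a right root polynomial of $W(x)^\top$, and since by the earlier Remark $W(x)^\top$ shares the spectrum and multiplicities of $W(x)$, the identical computation yields a right Jordan chain for $W(x)^\top$, hence by definition a left Jordan chain $\{l_0,\dots,l_{\kappa-1}\}$ for $W(x)$. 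I expect no serious obstacle; the only points demanding care are the index bookkeeping in the Cauchy product, the remark that the coefficients $r_j$ with $j\ge\kappa$ are irrelevant to the relevant equations, and the normalization guaranteeing $r_0\neq0$.
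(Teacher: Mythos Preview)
Your argument is correct and is the standard one: form the Cauchy product of the Taylor expansions of $W$ and $r$ about $x_0$, read off that the coefficient of $(x-x_0)^j$ is $\sum_{s=0}^{j}\frac{1}{s!}W^{(s)}(x_0)\,r_{j-s}$, and use that $W(x)r(x)$ vanishes to order $\kappa$ to conclude these coefficients are zero for $j=0,\dots,\kappa-1$, which is verbatim the Jordan-chain system. Your bookkeeping remark that the coefficients $r_j$ with $j\ge\kappa$ never appear in these first $\kappa$ equations is exactly what makes the truncated Taylor polynomial suffice, and the transposition argument for the left case is fine.

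The paper itself states this proposition without proof, so there is nothing to compare against; your write-up would serve as a suitable proof. The only point worth tightening is the normalization $r(x_0)\neq 0$: the paper's Definition of a root polynomial says only that $r(x)$ is a non-zero polynomial, not that $r(x_0)\neq 0$. You are right that in the standard Gohberg--Lancaster--Rodman framework (reference \cite{lan1} in the paper) one normalizes $r(x_0)\neq 0$ so that the order is well-defined, and the paper is implicitly using this convention; but since the paper's definition does not make it explicit, you should state clearly that you are invoking this standard normalization rather than claiming it is literally contained in the paper's wording.
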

\begin{pro}
	\label{pro:canonical jordan}
	Given an eigenvalue $x_0\in\sigma(W(x))$ of a monic matrix polynomial $W(x)$, with multiplicity $s=\dim\operatorname{Ker}W(x_0)$, we can construct $s$ right root polynomials, respectively  left root polynomials, for $i\in\{1,\dots,s\}$,
	\begin{align*}
	r_i(x)=&\sum_{j=0}^{\kappa_i-1}r_{i,j}(x-x_0)^j, &\text{ respectively $l_i(x)=\sum_{j=0}^{\kappa_i-1}l_{i,j}(x-x_0)^j$,}
	\end{align*}
	where $r_i(x)$	are right root polynomials (respectively  $ l_i(x)$	are left root polynomials)  with the largest order $\kappa_i$  among all right root polynomials,  whose right eigenvector does not belong to
	$\mathbb C\{r_{0,1},\dots,r_{0,i-1}\}$  (respectively left root polynomials whose left eigenvector  does not belong to  $\mathbb C\{l_{0,1},\dots,l_{0,i-1}\}$).
\end{pro}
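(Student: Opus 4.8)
The claim of Proposition \ref{pro:canonical jordan} is essentially a matrix-polynomial analogue of the construction of a canonical system of Jordan chains (a \emph{canonical set of root polynomials}) attached to an eigenvalue $x_0$, so the plan is to mimic the standard greedy/maximality argument used in the theory of matrix pencils and matrix polynomials, but working directly with root polynomials rather than with the linearization. First I would recall, from Proposition \ref{pro:partial multiplicity}, the local Smith form $W(x)=E_{x_0}(x)\operatorname{diag}((x-x_0)^{\kappa_1},\dots,(x-x_0)^{\kappa_m})F_{x_0}(x)$ with $E_{x_0},F_{x_0}$ nonsingular at $x_0$ and $0\le\kappa_1\le\dots\le\kappa_m$; the indices $\kappa_i$ that are strictly positive are exactly the partial multiplicities, and their number equals $s=\dim\operatorname{Ker}W(x_0)$ by the earlier Proposition identifying $s$ with the count of positive partial multiplicities. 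This reduces the problem to producing $s$ root polynomials realizing precisely these orders.

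The construction proceeds inductively/greedily. At the first step I would choose $r_1(x)$ to be a right root polynomial of maximal possible order among \emph{all} right root polynomials at $x_0$; the set of attainable orders is bounded (each order is at most $\max_i\kappa_i$, as follows from the Smith form), so the maximum is attained, and its value is the largest partial multiplicity. Having chosen $r_1,\dots,r_{i-1}$ with leading eigenvectors $r_{1,0},\dots,r_{i-1,0}$ spanning an $(i-1)$-dimensional subspace of $\operatorname{Ker}W(x_0)$, I would pick $r_i(x)$ to have maximal order among those root polynomials whose eigenvector $r_{i,0}$ does \emph{not} lie in $\mathbb C\{r_{1,0},\dots,r_{i-1,0}\}$. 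The key point to verify is that, at each stage, the chosen leading eigenvectors remain linearly independent (so the process runs for exactly $s$ steps and exhausts $\operatorname{Ker}W(x_0)$), and that the resulting orders coincide with the partial multiplicities $\kappa_1,\dots,\kappa_s$ — this is the matrix-polynomial Jordan-chain analogue and follows by transporting the greedy choice through $E_{x_0}(x)^{-1}$ to the diagonal Smith form, where root polynomials are transparent: a vector polynomial $r(x)$ is a right root polynomial of order $\kappa$ at $x_0$ precisely when $F_{x_0}(x)r(x)$ has its lowest-order nonzero components controlled by the diagonal exponents. Finally, by the Proposition relating root polynomials to Jordan chains, the Taylor coefficients $\{r_{i,0},\dots,r_{i,\kappa_i-1}\}$ form genuine right Jordan chains, and the left-sided statement follows verbatim by applying the right-sided construction to $\big(W(x)\big)^\top$, using the order-two antiautomorphism and the Remark identifying $\operatorname{Ker}\big((W(x_0))^\top\big)$ with $\big(\operatorname{Im}W(x_0)\big)^\perp$.

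The main obstacle I anticipate is the independence/maximality interlock: one must show that the greedy choice of ``maximal order outside the already-spanned eigenvector subspace'' actually reproduces the Smith-form exponents $\kappa_i$ and cannot be improved by a cleverer choice of eigenvector ordering. The cleanest way around this is to pull everything back to the diagonal form via the nonsingular factors $E_{x_0}(x)$, $F_{x_0}(x)$: since these are invertible in a neighborhood of $x_0$, multiplication by them changes neither the order of vanishing of $W(x)r(x)$ nor the leading eigenvector's membership in a given subspace, so the orders attainable by root polynomials for $W$ equal those for $\operatorname{diag}((x-x_0)^{\kappa_1},\dots)$, for which the statement is immediate by inspecting each diagonal block. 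I would therefore make the reduction to diagonal form the load-bearing step and keep the combinatorial bookkeeping of the greedy selection as light as possible.
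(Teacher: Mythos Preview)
The paper does not supply a proof of Proposition~\ref{pro:canonical jordan}: this proposition, together with the surrounding statements on partial multiplicities, Jordan chains, and root polynomials, is presented as background material on the spectral theory of matrix polynomials, with the references \cite{lan1,Mark2} given at the start of the subsection. So there is no in-paper proof to compare against.

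Your proposal is correct and is precisely the standard argument from Gohberg--Lancaster--Rodman \cite{lan1}. Two small remarks. First, for Proposition~\ref{pro:canonical jordan} as stated you need less than you are proving: the proposition only asserts that the greedy construction can be carried out for exactly $s$ steps, with $\kappa_i$ \emph{defined} as the maximal order at step $i$. The identification of these $\kappa_i$ with the partial multiplicities of Proposition~\ref{pro:partial multiplicity} is the content of the subsequent Proposition~\ref{pro3}, so you are effectively sketching both at once. For the bare existence claim you only need that (i) orders are bounded above (your Smith-form remark) so the maximum is attained, and (ii) at step $i\le s$ there is at least one root polynomial with eigenvector outside $\mathbb C\{r_{1,0},\dots,r_{i-1,0}\}$, which is immediate since that span has dimension $i-1<s=\dim\operatorname{Ker}W(x_0)$ and any eigenvector $v$ outside it gives the constant root polynomial $r(x)=v$. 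Second, your reduction via the Smith factors $E_{x_0},F_{x_0}$ is the right load-bearing step for Proposition~\ref{pro3}; just be careful that it is $F_{x_0}(x_0)$, not $E_{x_0}$, that transports the eigenvector subspace, since $r\mapsto F_{x_0}r$ is the map that reduces the right root-polynomial problem for $W$ to that for the diagonal matrix.
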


\begin{defi}[Canonical Jordan chains]\label{def:canonical Jordan}
	A canonical set of right Jordan chains (respectively left  Jordan chains) of the monic matrix polynomial $W(x)$ corresponding to the eigenvalue $x_0\in\sigma(W(x))$ is,
	in terms of the right  root polynomials (respectively left root polynomials) described in Proposition \ref{pro:canonical jordan}, the  following sets of vectors
	\begin{align*}
&	\{r_{1,0}\dots,r_{1,\kappa_1-1},\dots, r_{s,0}\dots,r_{s,\kappa_r-1}\}, & \text{ respectively, covectors	$\{l_{1,0}\dots,l_{1,\kappa_1-1},\dots, l_{s,0}\dots,l_{s,\kappa_r-1}\}$.}
	\end{align*}
\end{defi}

	\begin{pro}\label{pro3}
	For a monic matrix polynomial $W(x)$ the lengths $\{\kappa_1,\dots,\kappa_r\}$ of the Jordan chains in a canonical set of Jordan chains of $W(x)$ corresponding to the eigenvalue $x_0$, see Definition \ref{def:canonical Jordan}, are the nonzero  partial multiplicities of $W(x)$ at $x=x_0$ described in Proposition \ref{pro:partial multiplicity}.
	\end{pro}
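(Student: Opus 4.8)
The plan is to reduce everything to the local Smith factorization granted by Proposition \ref{pro:partial multiplicity} and then to match the greedy construction of Proposition \ref{pro:canonical jordan} against the partial multiplicities by a counting argument. Throughout I fix the eigenvalue $x_0$ and write the factorization $W(x)=E_{x_0}(x)\operatorname{diag}((x-x_0)^{\kappa_1},\dots,(x-x_0)^{\kappa_m})F_{x_0}(x)$, with $E_{x_0}(x_0),F_{x_0}(x_0)$ invertible and $\kappa_1\le\dots\le\kappa_m$, and I abbreviate $D(x):=\operatorname{diag}((x-x_0)^{\kappa_1},\dots,(x-x_0)^{\kappa_m})$. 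The first step is an order formula: since $E_{x_0}$ is invertible at $x_0$, for any vector polynomial $r(x)$ the order of vanishing of $W(x)r(x)$ at $x_0$ equals that of $D(x)F_{x_0}(x)r(x)$, so, writing $\tilde r(x):=F_{x_0}(x)r(x)$ (analytic at $x_0$), I obtain
\[
\operatorname{ord}_{x_0}(Wr)=\min_{1\le i\le m}\bigl(\kappa_i+\operatorname{ord}_{x_0}(\tilde r_i)\bigr),
\]
which is the engine of the whole argument.

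Second, I would read off the eigenvectors and their best possible root-polynomial orders. Because $F_{x_0}(x_0)$ is an isomorphism, $v\in\operatorname{Ker}W(x_0)$ if and only if $\tilde v:=F_{x_0}(x_0)v$ is supported on the index set $S:=\{i:\kappa_i>0\}$; in particular $\dim\operatorname{Ker}W(x_0)=|S|=s$, the number of nonzero partial multiplicities, consistent with Proposition \ref{pro:canonical jordan}. For a fixed eigenvector $v$, the order formula together with the freedom to prescribe the higher Taylor coefficients of $r$ (equivalently of $\tilde r$, as $F_{x_0}$ is invertible at $x_0$) and then truncate to a genuine polynomial shows that the largest order of a right root polynomial with $r(x_0)=v$ equals
\[
\phi(v):=\min\{\kappa_i:\tilde v_i\neq0\}.
\]
Indeed the components with $\tilde v_i\neq0$ force $\operatorname{ord}_{x_0}(\tilde r_i)=0$, contributing $\kappa_i$, while those with $\tilde v_i=0$ can be made to vanish to arbitrarily high order.

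Third, I would translate the greedy selection of Proposition \ref{pro:canonical jordan} into this language: the canonical chain lengths are $\nu_1\ge\nu_2\ge\dots\ge\nu_s$ with $\nu_j=\max\{\phi(v):v\in\operatorname{Ker}W(x_0),\ v\notin\operatorname{span}(v_1,\dots,v_{j-1})\}$, where $v_1,\dots,v_{j-1}$ are the eigenvectors already chosen. The remaining task is the multiset identity $\{\nu_1,\dots,\nu_s\}=\{\kappa_i:i\in S\}$. I would prove it by a filtration/tail-count argument: for every threshold $t\ge1$ one has $\{v\in\operatorname{Ker}W(x_0):\phi(v)\ge t\}=\operatorname{span}\{F_{x_0}(x_0)^{-1}e_i:\kappa_i\ge t\}$, a subspace of dimension $\#\{i:\kappa_i\ge t\}$. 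Since the greedy always selects a vector of maximal available $\phi$ independent of the previous ones, a short induction gives $\#\{j:\nu_j\ge t\}=\#\{i\in S:\kappa_i\ge t\}$ for every $t$, and coincidence of all tail-counts forces equality of the two multisets. As the $\kappa_i$ with $i\in S$ are exactly the nonzero partial multiplicities of Proposition \ref{pro:partial multiplicity}, the claim follows; the statement for left chains is obtained verbatim by applying the same reasoning to $\bigl(W(x)\bigr)^{\top}$.

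The part I expect to demand the most care is the order formula and its exploitation in the second step: one must justify the passage between polynomial root polynomials and Taylor jets of $\tilde r$, namely that truncating the analytic vector $F_{x_0}(x)^{-1}\tilde r(x)$ to a polynomial of sufficiently high degree does not lower the order of vanishing, and that the linear-independence constraint imposed on the eigenvectors $r_{i,0}$ in the construction corresponds cleanly to independence of the transformed vectors $\tilde v$. Everything else is bookkeeping resting on the uniqueness of the $\kappa_i$ already secured by Proposition \ref{pro:partial multiplicity}.
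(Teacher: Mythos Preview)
The paper does not supply a proof of this proposition; it is stated as a known fact from the spectral theory of matrix polynomials, with the background reference \cite{lan1} (Gohberg--Lancaster--Rodman). Your argument is correct and is essentially the standard one: pass to the local Smith form, note that the invertibility of $E_{x_0},F_{x_0}$ at $x_0$ gives the order formula $\operatorname{ord}_{x_0}(Wr)=\min_i\bigl(\kappa_i+\operatorname{ord}_{x_0}(\tilde r_i)\bigr)$, deduce that the maximal root-polynomial order for a fixed eigenvector $v$ is $\phi(v)=\min\{\kappa_i:\tilde v_i\neq0\}$, and then match the greedy selection against the nonzero $\kappa_i$ via the filtration $V_t=\{v:\phi(v)\ge t\}\cup\{0\}$, which has dimension $\#\{i:\kappa_i\ge t\}$. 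The tail-count induction you outline (the greedy picks in $V_t$ until it is exhausted, so $\#\{j:\nu_j\ge t\}=\dim V_t$ for every $t$) is sound and forces the multiset equality.

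The points you flag as needing care are exactly the right ones, and they are easily settled: the truncation step is fine because the order of $W(x)r(x)$ at $x_0$ depends only on the Taylor jet of $r$ to order $\max_i\kappa_i$, so replacing the analytic $F_{x_0}(x)^{-1}\tilde r(x)$ by a polynomial with the same jet to that order preserves the vanishing order; and the independence transfer is just the invertibility of $F_{x_0}(x_0)$. The remark that the left-chain statement follows by applying the argument to $(W(x))^\top$ is also correct, since $W$ and $W^\top$ share the same Smith local form and hence the same partial multiplicities.
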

	
\begin{defi}[Canonical Jordan chains and root  polynomials]\label{def:adapted}
		For each eigenvalue $x_a\in\sigma(W(x))$  of a monic matrix polynomial $W(x)$, with multiplicity $\alpha_a$ and  $s_a=\dim \operatorname{Ker} W(x_a)$, $a\in\{1,\dots,q\}$, we  choose a canonical set of right  Jordan chains, respectively left Jordan chains,
	\begin{align*}
	&\Big\{r_{j,0}^{(a)},\dots,r_{j,\kappa_{j}^{(a)}-1}^{(a)}\Big\}_{j=1}^{s_a}, &\text{respectively $\Big\{l_{j,0}^{(a)},\dots,l_{j,\kappa_{j}^{(a)}-1}^{(a)}\Big\}_{j=1}^{s_a}$,}
	\end{align*}
	and, consequently, with partial multiplicities satisfying  $\sum_{j=1}^{s_a}\kappa_j^{(a)}=\alpha_a$.
	Thus, we can  consider the following   right  root polynomials
	\begin{align}\label{vecmil}
		r_{j}^{(a)}(x)&=\sum_{l=0}^{\kappa_j^{(a)}-1}r_{j,l}^{(a)}(x-x_a)^l, &\text{respectively  left root polynomials $l_{j}^{(a)}(x)=\sum_{l=0}^{\kappa_j^{(a)}-1}l_{j,l}^{(a)}(x-x_a)^l$.}
	\end{align}
\end{defi}
\begin{defi}[Canonical Jordan pairs]
	We also define the corresponding canonical Jordan pair $(X_a,J_a)$ with $X_a$
 the matrix
	\begin{align*}
X_a:=\begin{bmatrix}
r_{1,0}^{(a)},\dots,r_{1,\kappa_{1}^{(a)}-1}^{(a)},\dots,r_{s_a,0}^{(a)},\dots,r_{s_a,\kappa_{s_a}^{(a)}-1}^{(a)}
\end{bmatrix}\in\mathbb C^{p\times\alpha_a},
	\end{align*}
	and  $J_a$ the matrix
	\begin{align*}
	J_a:=\diag(J_{a,1},\dots,J_{a,s_a})\in\mathbb C^{\alpha_a\times \alpha_a},
	\end{align*}
	where $J_{a,j}\in\mathbb C^{\kappa^{(a)}_j\times \kappa^{(a)}_j}$ are the Jordan blocks of the eigenvalue $x_a\in\sigma(W(x))$.
	Then, we say that $(X,J)$ with
	\begin{align*}
	X&:=\begin{bmatrix}
	X_1,\dots,X_q
	\end{bmatrix}\in\mathbb C^{p\times Np}, & J&:=\diag(J_1,\dots,J_q)\in\mathbb C^{Np\times Np},	\end{align*}
	is a canonical Jordan pair for $W(x)$.
\end{defi}
	
We have the important result, see \cite{lan1},
\begin{pro}
	The Jordan pairs of a monic matrix polynomial $W(x)$ satisfy
	\begin{align*}
	A_0X_a+A_1X_aJ_a+\dots+A_{N-1} X_a(J_a)^{N-1}+X_a(J_a)^N&=0_{p\times \alpha_a},\\
		A_0X+A_1XJ+\dots+A_{N-1} XJ^{N-1}+XJ^N&=0_{p\times Np}.
	\end{align*}
\end{pro}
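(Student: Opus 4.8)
The plan is to reduce both displayed identities to a single statement about one Jordan block, and then to recognize that statement as the columnwise transcription of the defining recurrence of a Jordan chain. First I would exploit the block structure: both $J_a=\diag(J_{a,1},\dots,J_{a,s_a})$ and $J=\diag(J_1,\dots,J_q)$ are block diagonal, while $X_a$ and $X$ are the horizontal concatenations of the corresponding column blocks. For a block-diagonal matrix one has $XJ^k=[X_1J_1^k,\dots,X_qJ_q^k]$, so that $\sum_{k=0}^N A_kXJ^k=[\sum_{k=0}^N A_kX_1J_1^k,\dots,\sum_{k=0}^N A_kX_qJ_q^k]$, and likewise $\sum_{k=0}^N A_kX_aJ_a^k$ splits over the subblocks $J_{a,1},\dots,J_{a,s_a}$. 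Hence the second identity follows from the first, the first follows from the per-block identities, and it suffices to prove: for a single right Jordan chain $\{r_0,\dots,r_{m-1}\}$ at an eigenvalue $x_0$, setting $X:=[r_0,\dots,r_{m-1}]\in\mathbb C^{p\times m}$ and letting $J$ be the $m\times m$ Jordan block with eigenvalue $x_0$, one has $\sum_{k=0}^N A_kXJ^k=0_{p\times m}$.

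Next I would expand the powers of the Jordan block. Writing $J=x_0I_m+N$, with $N$ the nilpotent upper shift satisfying $N^m=0$, the binomial theorem gives $(J^k)_{i,j}=\binom{k}{j-i}x_0^{\,k-(j-i)}$ for $0\le i\le j\le m-1$ and zero otherwise. Reading off the $j$-th column of $A_kXJ^k$ and substituting $s=j-i$, this column equals $\sum_{s=0}^{j}\binom{k}{s}x_0^{\,k-s}A_kr_{j-s}$. Summing over $k$ and interchanging the two finite summations, the $j$-th column of $\sum_{k=0}^N A_kXJ^k$ becomes $\sum_{s=0}^{j}\big(\sum_{k=0}^N\binom{k}{s}x_0^{\,k-s}A_k\big)r_{j-s}$.

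The decisive step is to identify the inner coefficient matrix with a Taylor coefficient of $W$. Since $\tfrac{1}{s!}\tfrac{\d^s}{\d x^s}\big(A_kx^k\big)=\binom{k}{s}A_kx^{k-s}$ and the terms with $k<s$ vanish automatically, one obtains $\sum_{k=0}^N\binom{k}{s}x_0^{\,k-s}A_k=\tfrac{1}{s!}\tfrac{\d^sW}{\d x^s}\big|_{x=x_0}$. Consequently the $j$-th column equals $\sum_{s=0}^{j}\tfrac{1}{s!}\tfrac{\d^sW}{\d x^s}\big|_{x=x_0}r_{j-s}$, which is exactly the left-hand side of the defining relation of a right Jordan chain and therefore vanishes for each $j\in\{0,\dots,m-1\}$. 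This proves the single-block identity; reassembling the column blocks yields the identity for $(X_a,J_a)$, and concatenating over the eigenvalues $x_1,\dots,x_q$ yields the identity for $(X,J)$. The argument is in the end pure bookkeeping, and the only point requiring genuine care---the part I would check most carefully---is the index matching between the binomial coefficients produced by the powers of the Jordan block and the Taylor coefficients of $W$ at $x_0$.
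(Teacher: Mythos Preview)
Your argument is correct: the reduction to a single Jordan block via block-diagonality is valid, the binomial expansion of $(x_0I+N)^k$ is handled correctly, and the identification $\sum_{k}\binom{k}{s}x_0^{k-s}A_k=\tfrac{1}{s!}W^{(s)}(x_0)$ precisely recovers the defining recurrence of a right Jordan chain column by column.

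The paper, however, does not prove this proposition at all; it simply cites it from Gohberg--Lancaster--Rodman \cite{lan1} with the phrase ``We have the important result, see \cite{lan1}''. So your write-up supplies a self-contained proof where the paper offers only a reference. The approach you take is in fact the standard one (and essentially the one in \cite{lan1}): there is really no alternative route here, since the identity $\sum_k A_kXJ^k=0$ is nothing more than a matrix repackaging of the Jordan-chain equations, and any proof must at some point unwind that repackaging.
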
	
A key property, see Theorem 1.20 of \cite{lan1}, is
\begin{pro}\label{pro:Jordan pair0}
For any  Jordan pair $(X,J)$ of a monic matrix polynomial $W(x)=I_px^N+A_{N-1}x^{N-1}+\dots+A_0$  the matrix
	\begin{align*}
	\begin{bmatrix}
	X\\
	XJ\\
	\vdots\\
	XJ^{N-1}
	\end{bmatrix}\in\mathbb C^{Np\times Np}
	\end{align*}
	is nonsingular.
\end{pro}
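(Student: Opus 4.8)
The plan is to prove that the square matrix
\[
Q:=\begin{bmatrix} X\\ XJ\\ \vdots\\ XJ^{N-1}\end{bmatrix}\in\mathbb{C}^{Np\times Np}
\]
is \emph{injective}, since a matrix of size $Np\times Np$ is nonsingular exactly when its kernel is trivial. So I would start from a vector $c\in\mathbb{C}^{Np}$ with $Qc=0$, which is to say $XJ^ic=0$ for every $i\in\{0,\dots,N-1\}$, and aim to force $c=0$ by exploiting the Jordan structure of $J$ together with the algebraic relation recalled above.

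First I would show that $\operatorname{Ker}Q$ is $J$-invariant. The block rows of $QJc$ are the vectors $XJ^{i+1}c$ with $i\in\{0,\dots,N-1\}$. For $i\le N-2$ these vanish because $XJ^{i+1}c=0$ is already part of the hypothesis, while for the last block the identity
\[
A_0X+A_1XJ+\dots+A_{N-1}XJ^{N-1}+XJ^N=0
\]
(with $A_N=I_p$) gives $XJ^Nc=-\sum_{j=0}^{N-1}A_jXJ^jc=0$. Hence $QJc=0$, so $J(\operatorname{Ker}Q)\subseteq\operatorname{Ker}Q$. I would then argue by contradiction: if $\operatorname{Ker}Q\neq\{0\}$, then $J$ restricts to an endomorphism of this nonzero finite-dimensional complex space and therefore admits an eigenvector there, i.e.\ there exist $c\neq 0$ in $\operatorname{Ker}Q$ and an index $a$ with $Jc=x_ac$.

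Next I would translate this eigenvector condition through the explicit Jordan form $J=\diag(J_1,\dots,J_q)$, $J_a=\diag(J_{a,1},\dots,J_{a,s_a})$. The eigenspace of $J$ for the eigenvalue $x_a$ is spanned by the leading basis vectors of the $s_a$ Jordan blocks $J_{a,1},\dots,J_{a,s_a}$; writing $c=\sum_{j=1}^{s_a}\beta_j\hat e_j$ in that basis, the columns of $X$ picked out by the $\hat e_j$ are precisely the right eigenvectors $r_{j,0}^{(a)}$ (this is the content of how $X_a$ was assembled in the canonical Jordan pair). Reading off the top ($i=0$) block of $Qc=0$ then yields
\[
Xc=\sum_{j=1}^{s_a}\beta_j\, r_{j,0}^{(a)}=0 .
\]
To finish, I would invoke Proposition \ref{pro:canonical jordan} and Definition \ref{def:canonical Jordan}: the eigenvectors $r_{1,0}^{(a)},\dots,r_{s_a,0}^{(a)}$ form a basis of $\operatorname{Ker}W(x_a)$ (indeed $s_a=\dim\operatorname{Ker}W(x_a)$ and each new eigenvector is chosen outside the span of the previous ones), hence are linearly independent, so all $\beta_j=0$ and $c=0$, contradicting $c\neq 0$. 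Therefore $\operatorname{Ker}Q=\{0\}$ and $Q$ is nonsingular.

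The genuinely delicate step is the third one: correctly matching the eigenvectors of the abstract Jordan matrix $J$ with the leading columns $r_{j,0}^{(a)}$ of $X$, and then using that these leading columns are linearly independent. Everything else is a short manipulation built on the companion-type relation $\sum_{i=0}^{N}A_iXJ^i=0$; the invariance argument and the passage to an eigenvector of the restriction are standard, so the whole weight of the proof rests on the bookkeeping of the canonical Jordan chains established in the preceding propositions.
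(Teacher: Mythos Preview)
The paper does not give its own proof of this proposition; it simply cites Theorem 1.20 of \cite{lan1}. Your argument is correct and is essentially the classical proof one finds in that reference: invariance of $\operatorname{Ker}Q$ under $J$ via the companion relation $\sum_{i=0}^{N}A_iXJ^i=0$, extraction of an eigenvector of $J$ inside a putatively nontrivial kernel, identification of the eigenspace of $J$ at $x_a$ with the span of the first basis vectors of the blocks $J_{a,1},\dots,J_{a,s_a}$, and finally the linear independence of the leading chain vectors $r_{1,0}^{(a)},\dots,r_{s_a,0}^{(a)}$, which is built into the construction of Proposition~\ref{pro:canonical jordan}. The bookkeeping you flag as delicate is indeed the only nontrivial point, and you handle it correctly. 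One remark: your proof is written for the canonical Jordan pair of Definition~\ref{def:canonical Jordan}, which is the only notion of Jordan pair introduced in the paper, so the phrase ``any Jordan pair'' in the statement should be read accordingly; no extra generality is needed here.
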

\begin{defi}[Jordan triple]
Given 	\begin{align*}
	Y=\begin{bmatrix}
	Y_1\\\vdots\\Y_q
	\end{bmatrix}\in\mathbb C^{N p\times p},
	\end{align*}
	with  $Y_a\in \mathbb C^{\alpha_a\times p}$ ,
we say that  $(X,J,Y)$ is a Jordan triple whenever
	\begin{align*}
	\begin{bmatrix}
	X\\XJ\\\vdots\\
	XJ^{N-1}
	\end{bmatrix} Y=\begin{bmatrix}
	0_p\\\vdots\\0_p\\I_p
	\end{bmatrix}.
	\end{align*}
\end{defi}

Moreover, Theorem 1.23 of \cite{lan1}, gives the following characterization
\begin{pro}\label{pro:Jordan pair}
Two matrices $X\in\mathbb C^{p\times Np}$ and $J\in\mathbb C^{Np\times Np}$ constitute  a Jordan pair of a monic matrix polynomial $W(x)=I_px^N+A_{N-1}x^{N-1}+\dots+A_0$ if and only if the two following properties hold
\begin{enumerate}
	\item The matrix
	\begin{align*}
\begin{bmatrix}
	X\\
	XJ\\
	\vdots\\
	XJ^{N-1}
\end{bmatrix}
	\end{align*}
	is nonsingular.
	\item 	\begin{align*}
	A_0X+A_1XJ+\dots+A_{N-1} XJ^{N-1}+XJ^N&=0_{p\times Np}.
	\end{align*}
\end{enumerate}
\end{pro}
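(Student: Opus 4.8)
The forward implication is already at hand: property (ii) is the content of the (unlabelled) Proposition stated just before Proposition \ref{pro:Jordan pair0}, namely $A_0X+A_1XJ+\dots+A_{N-1}XJ^{N-1}+XJ^N=0_{p\times Np}$, while the nonsingularity required in (i) is exactly Proposition \ref{pro:Jordan pair0}. Thus only the converse needs an argument, and the plan is to show that \emph{every} pair $(X,J)$ satisfying (i) and (ii) is similar, as a pair, to the companion pair of $W$, and therefore to a canonical Jordan pair, which enjoys the same two properties by the forward implication.

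First I would introduce the block companion matrix and the first-block projector
\begin{align*}
C:=\begin{bmatrix}
0&I_p&&\\
&\ddots&\ddots&\\
&&0&I_p\\
-A_0&-A_1&\cdots&-A_{N-1}
\end{bmatrix}\in\mathbb C^{Np\times Np},\qquad
P_1:=\begin{bmatrix}I_p&0&\cdots&0\end{bmatrix}\in\mathbb C^{p\times Np}.
\end{align*}
Given $(X,J)$ obeying (i) and (ii), form the block column
\begin{align*}
Q:=\begin{bmatrix}X\\XJ\\\vdots\\XJ^{N-1}\end{bmatrix}\in\mathbb C^{Np\times Np},
\end{align*}
which is invertible by (i). Reading off block rows gives at once $P_1Q=X$, and a direct block computation gives the key identity $CQ=QJ$: in the first $N-1$ block rows it is the tautology $(XJ^{i})J=XJ^{i+1}$, whereas its last block row reads $-\sum_{i=0}^{N-1}A_iXJ^{i}=XJ^{N}$, which is precisely relation (ii) (with $A_N=I_p$). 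Hence $X=P_1Q$ and $J=Q^{-1}CQ$, so $(X,J)$ is similar to $(P_1,C)$ through $Q$.

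Next I would apply the very same construction to a canonical Jordan pair $(\hat X,\hat J)$ of $W$, which satisfies (i) and (ii) by the forward implication: with $\hat Q:=\begin{bmatrix}\hat X\\\hat X\hat J\\\vdots\\\hat X\hat J^{N-1}\end{bmatrix}$ one gets $\hat X=P_1\hat Q$ and $\hat J=\hat Q^{-1}C\hat Q$, so $(\hat X,\hat J)$ is also similar to $(P_1,C)$. By transitivity $(X,J)$ is similar to $(\hat X,\hat J)$ through $R:=\hat Q^{-1}Q$, that is $X=\hat XR$ and $J=R^{-1}\hat JR$; in particular $J$ and $\hat J$ share the same spectrum and the same partial multiplicities as $W$. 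Since for a Jordan pair $J$ is presented in Jordan form and is similar to the Jordan matrix $\hat J$, uniqueness of the Jordan canonical form identifies the two up to a reordering of blocks; reading relation (ii) one block column at a time then exhibits the columns of $X$, grouped according to the Jordan blocks of $J$, as Jordan chains of $W$, while (i) forces the heading eigenvectors to be independent at each eigenvalue and hence the chains to form a canonical set. Therefore $(X,J)$ is a canonical Jordan pair of $W$.

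The computational heart is the identity $CQ=QJ$, where property (ii) is exactly what closes the bottom block row; everything else in the similarity argument is bookkeeping. I expect the one genuinely delicate point to be the final identification, namely the passage from ``$(X,J)$ is similar to a canonical Jordan pair'' to ``$(X,J)$ \emph{is} a canonical Jordan pair'' once $J$ is in Jordan form. This is where the block-column reading of (ii), used to recognise honest Jordan chains of the prescribed lengths, must be combined with the canonicity description of Propositions \ref{pro:canonical jordan}--\ref{pro3} to rule out that the columns of $X$ supply chains that are too short or linearly dependent at a given eigenvalue.
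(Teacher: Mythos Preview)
The paper does not supply its own proof of this proposition: it is stated as a direct citation of Theorem 1.23 in Gohberg--Lancaster--Rodman, \emph{Matrix Polynomials} \cite{lan1}, with no argument given. Your proposal therefore fills in what the paper omits, and the route you take---via the companion linearisation $C$ and the intertwining identity $CQ=QJ$---is exactly the standard proof strategy in \cite{lan1}. The computation closing the bottom block row of $CQ=QJ$ by condition (ii) is correct and is indeed the heart of the matter.

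One comment on the final step. You are right that the passage from ``$(X,J)$ is similar to a canonical Jordan pair'' to ``$(X,J)$ \emph{is} a canonical Jordan pair'' is the only place requiring care, and you are also right that it hinges on $J$ already being a Jordan matrix (implicit in the paper's Definition of Jordan pair). Once that is granted, similarity of Jordan matrices forces $J$ and $\hat J$ to agree up to block permutation, and then reading (ii) block-column by block-column produces genuine Jordan chains of $W$ of the prescribed lengths; condition (i) then guarantees that the leading eigenvectors at each eigenvalue are linearly independent, which is precisely the canonicity requirement of Proposition~\ref{pro:canonical jordan}. So your sketch is complete in outline; in \cite{lan1} this last verification is carried out in detail, and nothing beyond what you indicate is needed.
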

	
\begin{pro}\label{pro:adapted_root}
	Given a monic matrix polynomial $W(x)$ the  adapted  root polynomials given in Definition \ref{def:adapted} satisfy
	\begin{align}\label{eq:Wr}
\big(W(x) r_j^{(a)}(x)\big)^{(m)}_{x_a}&=0, &
\big(l_j^{(a)}(x)W(x) \big)^{(m)}_{x_a}&=0,&m&\in\{0,\dots,\kappa^{(a)}_j-1\}, &
	j&\in\{1\dots,s_a\}.
	\end{align}
	Here, 	given a function $f(x)$ we use the following notation for its derivatives evaluated at an  eigenvalue $x_a\in\sigma(W(x))$
	\begin{align*}
	(f)^{(m)}_{x_a}	:=\lim_{x\to x_a}\frac{\operatorname{d}^mf}{\operatorname{d} x^m}.
	\end{align*}
\end{pro}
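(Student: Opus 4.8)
The plan is to reduce both identities directly to the defining relation of a Jordan chain by means of the Leibniz rule, so that no structural input beyond the definition of Jordan chains and Definition \ref{def:adapted} is needed. I would first fix an eigenvalue $x_a$ and an index $j$, abbreviate $\kappa=\kappa_j^{(a)}$, and recall that by construction $r_j^{(a)}(x)=\sum_{l=0}^{\kappa-1}r_{j,l}^{(a)}(x-x_a)^l$, where $\{r_{j,0}^{(a)},\dots,r_{j,\kappa-1}^{(a)}\}$ is a right Jordan chain; hence the polynomial derivatives satisfy $\big(r_j^{(a)}\big)^{(l)}_{x_a}=l!\,r_{j,l}^{(a)}$ for $0\le l\le\kappa-1$.

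For the right identity I would differentiate the product $W(x)r_j^{(a)}(x)$ exactly $m$ times and evaluate at $x_a$, obtaining by Leibniz's rule
\begin{align*}
\big(W r_j^{(a)}\big)^{(m)}_{x_a}=\sum_{s=0}^m\binom{m}{s}\big(W\big)^{(s)}_{x_a}\,\big(r_j^{(a)}\big)^{(m-s)}_{x_a}.
\end{align*}
Since $m\le\kappa-1$ forces $m-s\le\kappa-1$ for every $s$, each factor $\big(r_j^{(a)}\big)^{(m-s)}_{x_a}=(m-s)!\,r_{j,m-s}^{(a)}$ is a genuine chain vector. The arithmetic identity $\binom{m}{s}(m-s)!=m!/s!$ then recasts the sum as $m!\sum_{s=0}^m\frac{1}{s!}\big(W\big)^{(s)}_{x_a}r_{j,m-s}^{(a)}$, whose inner sum is precisely the $j=m$ instance of the right Jordan chain condition, and therefore vanishes for every $m\in\{0,\dots,\kappa-1\}$.

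For the left identity I would pass to transposes. By the definition of a left Jordan chain, $\{(l_{j,0}^{(a)})^\top,\dots,(l_{j,\kappa-1}^{(a)})^\top\}$ is a right Jordan chain for $\big(W(x)\big)^\top$, and one has $\big(l_j^{(a)}(x)W(x)\big)^\top=\big(W(x)\big)^\top\big(l_j^{(a)}(x)\big)^\top$. Applying the right-hand argument verbatim to $\big(W(x)\big)^\top$ together with the transposed chain, and transposing back, yields $\big(l_j^{(a)}W\big)^{(m)}_{x_a}=0$ in the stated range.

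I do not anticipate a genuine obstacle here; the content is essentially the bookkeeping in the Leibniz expansion. The one point deserving care is verifying that the summation limit $m\le\kappa-1$ guarantees only true chain vectors $r_{j,m-s}^{(a)}$ enter the evaluated derivative, so that the expression coincides cleanly with the defining Jordan-chain sum rather than with a truncation of it. Once this is checked, the identification with the defining relation, and hence the vanishing, is immediate.
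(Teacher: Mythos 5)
Your proof is correct. The paper states this proposition without proof, as part of the background on the spectral theory of matrix polynomials imported from the reference on matrix polynomials, so there is no in-paper argument to compare against; your Leibniz-rule expansion, the identity $\binom{m}{s}(m-s)!=m!/s!$ reducing $\big(Wr_j^{(a)}\big)^{(m)}_{x_a}$ to $m!$ times the defining Jordan-chain sum, and the passage to transposes for the left identity constitute exactly the standard argument, and the one delicate point you flag (that $m\le\kappa_j^{(a)}-1$ ensures every $r_{j,m-s}^{(a)}$ appearing is a genuine chain vector) is handled correctly.
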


In this paper we assume  that the partial multiplicities are ordered in an increasing way, i.e.,  $\kappa_1^{(a)}\leq \kappa_2^{(a)}\leq\cdots\leq \kappa_{s_a}^{(a)}$.
\begin{pro}\label{pro:lWr}
If $r_{i}^{(a)}$ and $l_j^{(a)}$ are right and left root polynomials corresponding to the eigenvalue  $x_a\in\sigma(W(x))$,  then
a polynomial
\begin{align*}
w_{i,j}^{(a)}(x)&=\sum_{m=0}^{d_{i,j}^{(a)}}w_{i,j;m}^{(a)}x^m\in\mathbb C[x], &d_{i,j}^{(a)}&:=\kappa^{(a)}_{\min(i,j)}+N-2,
\end{align*}
exists such that
\begin{align}\label{leftright}
l_i^{(a)}(x)W(x)r_j^{(a)}(x)=(x-x_a)^{\kappa^{(a)}_{\max(i,j)}}w_{i,j}^{(a)}(x).
\end{align}   	
\end{pro}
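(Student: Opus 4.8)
The plan is to prove the factorization \eqref{leftright} in two stages: first I would show that the scalar polynomial $l_i^{(a)}(x)W(x)r_j^{(a)}(x)$ is divisible by $(x-x_a)^{\kappa^{(a)}_{\max(i,j)}}$, and then I would bound the degree of the quotient. For the divisibility I would exploit that, at each fixed value of $x$, the triple product is a product of a covector, a $p\times p$ matrix and a vector of finite size, hence ordinary associative matrix multiplication, so I may bracket it either way. Grouping it as $l_i^{(a)}(x)\big(W(x)r_j^{(a)}(x)\big)$ and invoking the right-hand vanishing in Proposition \ref{pro:adapted_root}, the vector polynomial $W(x)r_j^{(a)}(x)$ has a zero of order at least $\kappa_j^{(a)}$ at $x_a$; being a polynomial, it may be divided component-wise by $(x-x_a)^{\kappa_j^{(a)}}$ to give a vector polynomial $h(x)$ with $W(x)r_j^{(a)}(x)=(x-x_a)^{\kappa_j^{(a)}}h(x)$, so that $l_i^{(a)}(x)W(x)r_j^{(a)}(x)=(x-x_a)^{\kappa_j^{(a)}}l_i^{(a)}(x)h(x)$ vanishes to order at least $\kappa_j^{(a)}$. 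Grouping instead as $\big(l_i^{(a)}(x)W(x)\big)r_j^{(a)}(x)$ and using the left-hand vanishing in the same proposition yields, identically, a zero of order at least $\kappa_i^{(a)}$. Taking the larger of the two orders and using the standing ordering convention $\kappa_1^{(a)}\leq\cdots\leq\kappa_{s_a}^{(a)}$, which identifies $\max(\kappa_i^{(a)},\kappa_j^{(a)})$ with $\kappa^{(a)}_{\max(i,j)}$, I conclude that $(x-x_a)^{\kappa^{(a)}_{\max(i,j)}}$ divides $l_i^{(a)}(x)W(x)r_j^{(a)}(x)$, whence it equals $(x-x_a)^{\kappa^{(a)}_{\max(i,j)}}w_{i,j}^{(a)}(x)$ with $w_{i,j}^{(a)}(x)$ a genuine polynomial.

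For the degree bound I would simply add degrees: since $\deg l_i^{(a)}=\kappa_i^{(a)}-1$, $\deg W=N$, and $\deg r_j^{(a)}=\kappa_j^{(a)}-1$, the product $l_i^{(a)}(x)W(x)r_j^{(a)}(x)$ has degree at most $\kappa_i^{(a)}+\kappa_j^{(a)}+N-2$. Removing the factor $(x-x_a)^{\kappa^{(a)}_{\max(i,j)}}$ lowers the degree by $\max(\kappa_i^{(a)},\kappa_j^{(a)})$, so $\deg w_{i,j}^{(a)}\leq \min(\kappa_i^{(a)},\kappa_j^{(a)})+N-2=\kappa^{(a)}_{\min(i,j)}+N-2=d_{i,j}^{(a)}$, again rewriting $\min(\kappa_i^{(a)},\kappa_j^{(a)})$ as $\kappa^{(a)}_{\min(i,j)}$ via the ordering. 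This matches the claimed form of $w_{i,j}^{(a)}(x)$.

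The argument is essentially elementary once Proposition \ref{pro:adapted_root} is available, so there is no single hard obstacle; the two points that genuinely require care are that the regrouping used above is legitimate (these are finite matrices evaluated pointwise, not the semi-infinite matrices whose products may fail to associate, as emphasized in the introduction), and that factoring a power of $(x-x_a)$ out of a vector or covector polynomial preserves polynomiality — which is guaranteed precisely by the order-of-vanishing statement of Proposition \ref{pro:adapted_root} rather than by mere continuity. Finally, it is worth stressing that the ordering convention $\kappa_1^{(a)}\leq\cdots\leq\kappa_{s_a}^{(a)}$ is exactly what converts the symmetric quantities $\max(\kappa_i^{(a)},\kappa_j^{(a)})$ and $\min(\kappa_i^{(a)},\kappa_j^{(a)})$ produced by the estimates into the index-based $\kappa^{(a)}_{\max(i,j)}$ and $\kappa^{(a)}_{\min(i,j)}$ appearing in the statement.
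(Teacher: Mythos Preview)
Your proof is correct and follows essentially the same route as the paper: both arguments use Proposition~\ref{pro:adapted_root} to factor $(x-x_a)^{\kappa_i^{(a)}}$ out of $l_i^{(a)}(x)W(x)$ and $(x-x_a)^{\kappa_j^{(a)}}$ out of $W(x)r_j^{(a)}(x)$, then combine the two bracketings to extract $(x-x_a)^{\kappa^{(a)}_{\max(i,j)}}$. Your version is actually more complete than the paper's, which omits the explicit degree count for $w_{i,j}^{(a)}$ that you spell out.
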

\begin{proof}
	From Proposition \ref{pro:adapted_root} it follows that  a covector polynomial $T_1(x)$  and a  vector polynomial  $T_2(x)$, both of degree $N$, exist such that
\begin{align}
\label{eq:Wraj}	l_i^{(a)}(x)W(x)&=(x-x_a)^{\kappa_i^{(a)}}T_1(x),& W(x)r_j^{(a)}(x)=(x-x_a)^{\kappa_j^{(a)}}T_2(x).
\end{align}
	Thus
\begin{align*}
l_i^{(a)}(x)W(x)r_j^{(a)}(x)&=(x-x_a)^{\kappa_i^{(a)}}T_1(x)r_j^{(a)}(x),& l_i^{(a)}(x)W(x)r_j^{(a)}(x)=(x-x_a)^{\kappa_j^{(a)}}l_i^{(a)}(x)T_2(x),
\end{align*}
	and the result is proved.
\end{proof}

\begin{defi}[Spectral jets]\label{def:spectral jets}
	Given a matrix  function $f(x)$ smooth in region $\Omega\subset \mathbb C$ with $x_a\in\overline{\Omega}$, a point in the closure of $\Omega$  we consider its  matrix spectral jets
	\begin{align*}
	\mathcal J^{(i)}_{f}(x_a)&:=\lim_{x\to x_a}\begin{bmatrix}
	f(x), \dots, \dfrac{f^{(\kappa^{(a)}_i-1)}(x)}{(\kappa^{(a)}_i-1)!}
	\end{bmatrix}\in\mathbb C^{p\times p\kappa^{(a)}_i},\\
	\mathcal J_{f}(x_a)&:=\begin{bmatrix}
	\mathcal J^{(1)}_{f}(x_a),\dots,	\mathcal J^{(s_a)}_{f}(x_a)
	\end{bmatrix}\in\mathbb C^{p\times p\alpha_a},\\
	\mathcal J_{f}&:=\begin{bmatrix}
	\mathcal J_{f}(x_1),\dots,	\mathcal J_{f}(x_q)
	\end{bmatrix}\in\mathbb C^{p\times Np^2},
	\end{align*}
	and given a Jordan pair  the root spectral jet vectors
	\begin{align*}
	\boldsymbol{\mathcal J}^{(i)}_{f}(x_a)&:=\lim_{x\to x_a}\begin{bmatrix}
	f(x_a)r^{(a)}_i(x_a), \dots, \dfrac{(f(x)r^{(a)}_i(x))^{(\kappa^{(a)}_i-1)}_{x_a}}{(\kappa^{(a)}_i-1)!}
	\end{bmatrix}\in\mathbb C^{p\times \kappa^{(a)}_i}\\
	\boldsymbol{\mathcal J}_{f}(x_a)&:=\begin{bmatrix}
	\boldsymbol{\mathcal J}^{(1)}_{f}(x_a),\dots,		\boldsymbol{\mathcal J}^{(s_a)}_{f}(x_a)
	\end{bmatrix}\in\mathbb C^{p\times \alpha_a},\\
	\boldsymbol{\mathcal J}_{f}&:=\begin{bmatrix}
	\boldsymbol{\mathcal J}_{f}(x_1),\dots,	\boldsymbol{\mathcal J}_{f}(x_q)
	\end{bmatrix}\in\mathbb C^{p\times Np}.
	\end{align*}
\end{defi}

\begin{defi}
	We consider the following jet matrices
	\begin{align*}
	\mathcal{ Q}_{n;i}^{(a)}&:=\boldsymbol{\mathcal J}^{(i)}_{I_px^n}(x_a)=\Bigg[(x_a)^nr^{(a)}_i(x_a),\big( x^nr^{(a)}_i(x)\big)_{x_a}^{(1)}, \dots,\frac{\big(x^nr^{(a)}_i(x)\big)_{x_a}^{(\kappa_i^{(a)}-1)}}{(\kappa_i^{(a)}-1)!}\Bigg]\in\mathbb C^{p\times \kappa^{(a)}_i},\\
	\mathcal{{ Q}}_{n}^{(a)}&:=\boldsymbol{\mathcal J}_{I_px^n}(x_a)=\Big[\mathcal Q_{n;1}^{(a)},\dots,\mathcal Q_{n;s_a}^{(a)}\Big]\in\mathbb C^{p\times \alpha_a},\\
	\mathcal{{ Q}}_{n}&:=\boldsymbol{\mathcal J}_{I_px^n}=\Big[\mathcal Q_{n}^{(1)},\dots,\mathcal Q_{n}^{(q)}\Big]\in\mathbb C^{p\times Np},\\
	\mathcal Q&:=\boldsymbol{\mathcal J}_{\chi_{[N]}}=\begin{bmatrix}
	\mathcal Q _0\\\vdots\\\mathcal Q_{N-1}
	\end{bmatrix}\in\mathbb C^{Np\times Np},
	\end{align*}
	where $(\chi_{[N]}(x))^\top :=\begin{bmatrix}
	I_p,\dots,I_px^{N-1}
	\end{bmatrix}\in\mathbb C^{p\times Np}[x]$.
\end{defi}
\begin{ma}[Root spectral jets and Jordan pairs]\label{lemma:pair}
	Given a canonical Jordan pair $(X,J)$ for the monic matrix polynomial $W(x)$ we have that
	\begin{align*}	
	\mathcal Q_n&=XJ^{n},&n&\in\{0,1,\dots\}.
	\end{align*}
	Thus, any polynomial $P_n(x)=\sum_{j=0}^nP_j x^j$ has as its spectral jet vector corresponding to $W(x)$ the following matrix
	\begin{align*}
	\boldsymbol{\mathcal J}_P=P_0X+P_1XJ+\dots+P_nXJ^{n-1}.
	\end{align*}
\end{ma}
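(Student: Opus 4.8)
The plan is to reduce the identity $\mathcal Q_n=XJ^n$ to a single Jordan block and then verify it by a direct Taylor-coefficient computation. Because $X=[X_1,\dots,X_q]$ is a column concatenation, $J=\diag(J_1,\dots,J_q)$ is block diagonal, and likewise $\mathcal Q_n=[\mathcal Q_n^{(1)},\dots,\mathcal Q_n^{(q)}]$ with $\mathcal Q_n^{(a)}=[\mathcal Q_{n;1}^{(a)},\dots,\mathcal Q_{n;s_a}^{(a)}]$ and $J_a=\diag(J_{a,1},\dots,J_{a,s_a})$, the product $XJ^n$ splits blockwise. Hence it suffices to prove, for each eigenvalue $x_a$ and each index $i$, the single-block statement $\mathcal Q_{n;i}^{(a)}=X_{a,i}(J_{a,i})^n$, where $X_{a,i}=\big[r_{i,0}^{(a)},\dots,r_{i,\kappa_i^{(a)}-1}^{(a)}\big]$ collects the root vectors of $r_i^{(a)}(x)$ and $J_{a,i}$ is the associated $\kappa_i^{(a)}\times\kappa_i^{(a)}$ Jordan block.

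For the left-hand side I would unwind Definition \ref{def:spectral jets}: the $m$-th column of $\mathcal Q_{n;i}^{(a)}=\boldsymbol{\mathcal J}^{(i)}_{I_px^n}(x_a)$ is $\tfrac{1}{m!}\big(x^n r_i^{(a)}(x)\big)^{(m)}_{x_a}$, i.e. the coefficient of $(x-x_a)^m$ in the Taylor expansion of $x^n r_i^{(a)}(x)$ about $x_a$. Writing $x^n=\big((x-x_a)+x_a\big)^n=\sum_{k}\binom{n}{k}x_a^{n-k}(x-x_a)^k$ and multiplying by $r_i^{(a)}(x)=\sum_{l}r_{i,l}^{(a)}(x-x_a)^l$, the coefficient of $(x-x_a)^m$ is $\sum_{k=0}^m\binom{n}{k}x_a^{n-k}r_{i,m-k}^{(a)}$, with the convention $r_{i,l}^{(a)}=0$ for $l<0$.

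For the right-hand side I would write $J_{a,i}=x_aI+N$, with $N$ the nilpotent upper shift, so that $(J_{a,i})^n=\sum_k\binom{n}{k}x_a^{n-k}N^k$ by the binomial theorem (the two summands commute). Since $N^k$ shifts columns, $X_{a,i}N^k$ has $m$-th column $r_{i,m-k}^{(a)}$, whence the $m$-th column of $X_{a,i}(J_{a,i})^n$ is $\sum_k\binom{n}{k}x_a^{n-k}r_{i,m-k}^{(a)}$. This matches the left-hand side column by column, proving the single-block identity; reassembling the blocks yields $\mathcal Q_n=XJ^n$. Alternatively, the same equality follows by induction on $n$, the inductive step $\mathcal Q_{n+1;i}^{(a)}=\mathcal Q_{n;i}^{(a)}J_{a,i}$ expressing that multiplying the argument by $x$ acts on Taylor coefficients exactly as right multiplication by the Jordan block.

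The displayed formula for $\boldsymbol{\mathcal J}_P$ then follows immediately by linearity: since the coefficients $P_j$ are constant matrices they factor out on the left of every derivative, so $\boldsymbol{\mathcal J}_{P}=\sum_{j=0}^n P_j\,\boldsymbol{\mathcal J}_{I_px^j}=\sum_{j=0}^n P_j\mathcal Q_j=\sum_{j=0}^n P_j XJ^{j}$, which is the asserted expression. I expect the only delicate point to be bookkeeping rather than substance: fixing the Jordan-block convention (ones on the superdiagonal) consistently with the column order of $X_{a,i}$, and checking that the nilpotent part really acts as the claimed index shift on the root vectors. Once these conventions are pinned down, the two binomial expansions coincide term by term and there is no genuine obstacle.
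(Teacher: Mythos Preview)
Your proposal is correct and follows essentially the same route as the paper. Both arguments reduce to a single Jordan block, compute the $m$-th column of $\mathcal Q_{n;i}^{(a)}$ as a binomial sum of root vectors, and match it against $X_{a,i}(J_{a,i})^n$; the paper applies the Leibniz rule to $\tfrac{1}{m!}(x^n r_i^{(a)}(x))^{(m)}_{x_a}$ and then writes out $(J_{a,i})^n$ explicitly as the upper-triangular matrix with entries $\binom{n}{j}x_a^{n-j}$, whereas you obtain the same sum by expanding $x^n=((x-x_a)+x_a)^n$ and recover $(J_{a,i})^n$ via $J_{a,i}=x_aI+N$ and the binomial theorem---a slightly cleaner packaging of the identical computation.
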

\begin{proof}
	The computation
	\begin{align*}
	\frac{1}{m!}\big( x^nr^{(a)}_i(x)\big)_{x_a}^{(m)}&=\frac{1}{m!}\sum_{k=0}^m\binom{m}{k}(x^n)_{x_a}^{(m-k)}\big(r^{(a)}_i(x)\big)^{(k)}_{x_a}\\
	&=\frac{1}{m!}\sum_{k=0}^m\frac{m!}{k!(m-k)!}\frac{n!}{(n-m+k)!}(x_a)^{n-m+k}k! r_{i,k}^{(a)}\\
	&=\sum_{k=0}^m\binom{n}{m-k}(x_a)^{n-m+k}r^{(a)}_{i,k}\\&=\begin{bmatrix}	
	r^{(a)}_{i,0},\dots,r^{(a)}_{i,m}
	\end{bmatrix}
	\begin{bmatrix}
	(x_a)^{n-m}\binom{n}{m}\\
	\vdots\\
	x_a^{n}\binom{n}{n}
	\end{bmatrix},
	\end{align*}
	leads  to
	\begin{align*}
	\mathcal Q_{n;i}^{(a)}&=\begin{bmatrix}
	r^{(a)}_{i,0},\dots,r^{(a)}_{i,\kappa_i^{(a)}-1}
	\end{bmatrix}
	\begin{bmatrix}
	x_a^{n}&x_a^{n-1}\binom{n}{1}&\cdots&x_a^{n-\kappa_i^{(a)}+1}\binom{n}{\kappa_i^{(a)}-1}\\
	0       &  x_a^{n}&\cdots&               x_a^{n-\kappa_i^{(a)}+2}\binom{n}{\kappa_i^{(a)}-2}\\
	\vdots    &     \ddots    & \ddots     &\vdots\\
	0    & \dots        &            &x_a^{n}
	\end{bmatrix}\\
	&=\begin{bmatrix}
	r^{(a)}_{i,0},\dots,r^{(a)}_{i,\kappa_i^{(a)}-1}
	\end{bmatrix}\left(J_{a,i}\right)^n.
	\end{align*}
	Consequently, in terms of the Jordan pairs associated with the right root polynomials, we have $X_a\left(J_a\right)^n=\mathcal Q_n^{(a)}$ and
	\begin{align*}
	\mathcal Q_n=XJ^n,
	\end{align*}
	which is a nonsingular matrix, see Propositions \ref{pro:Jordan pair0} and \ref{pro:Jordan pair}. 	
\end{proof}
\begin{rem}
From Proposition \ref{pro:Jordan pair0} we conclude that the root spectral jet of $W$ is a zero rectangular matrix.
\end{rem}

\begin{defi}\label{def:B}
	If $W(x)=\sum\limits_{k=0}^{N}A_{k}x^k\in\mathbb C^{p\times p}[x]$  is a  matrix polynomial of degree $N$, we introduce
	the matrix
	\begin{align*}
	\mathcal B:=\begin{bmatrix}
	A_1 & A_2 & A_3 &\dots &A_{N-1} &A_N\\
	A_2  &A_3& \vdots&\iddots&A_N&0_p\\
	A_3 &\dots&A_{N-1}&\iddots&0_p&0_p\\\vdots&\iddots&\iddots&\iddots&&\vdots\\
	A_{N-1} &A_N&0_p&&&\\
	A_N&0_p& 0_p& & \dots&0_p\end{bmatrix}\in\mathbb C^{Np\times Np}.
	\end{align*}
\end{defi}

\begin{ma}\label{lemma:triple}
	Given a  Jordan triple $(X,J,Y)$ for the monic matrix polynomial $W(x)$ we have
	\begin{align*}	
	\mathcal Q&=\begin{bmatrix}
	X\\
	XJ\\
	\vdots\\
	XJ^{N-1}
	\end{bmatrix}, &
	(\mathcal B\mathcal Q)^{-1}=\begin{bmatrix}
	Y, JY,\dots, J^{N-1}Y
	\end{bmatrix}=:\mathcal R.
	\end{align*}
\end{ma}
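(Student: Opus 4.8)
The first identity is immediate: by Lemma~\ref{lemma:pair} the monomial root jets satisfy $\mathcal{Q}_n=XJ^n$, and since $\mathcal{Q}$ is by definition the block-column stacking of $\mathcal{Q}_0,\dots,\mathcal{Q}_{N-1}$, this identifies $\mathcal{Q}$ with the block column whose successive blocks are $X,XJ,\dots,XJ^{N-1}$, which is the asserted expression. For the second identity my plan is to avoid inverting anything explicitly: since $\mathcal{B}\mathcal{Q}$ is a square $Np\times Np$ matrix, it suffices to prove that $\mathcal{R}$ is a right inverse, i.e. that $\mathcal{B}\mathcal{Q}\mathcal{R}=I_{Np}$, because a square matrix possessing a right inverse is invertible with that matrix as its inverse. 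All factors are finite matrices, so associativity holds and I may freely regroup the product as $\mathcal{B}(\mathcal{Q}\mathcal{R})$.

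The engine of the argument is to pass to the scalar-order moments $M_k:=XJ^kY\in\mathbb{C}^{p\times p}$. Indeed $\mathcal{Q}\mathcal{R}$ is the block Hankel matrix whose $(i,j)$ block, for $i,j\in\{0,\dots,N-1\}$, equals $XJ^i\cdot J^jY=M_{i+j}$. Two properties of these moments are all I need. First, the defining relation $\mathcal{Q}Y=\big[0_p,\dots,0_p,I_p\big]^\top$ of a Jordan triple says exactly that $M_0=\dots=M_{N-2}=0_p$ and $M_{N-1}=I_p$. Second, multiplying the Jordan pair relation $A_0X+A_1XJ+\dots+A_{N-1}XJ^{N-1}+XJ^N=0$ (with $A_N=I_p$ by monicity) on the right by $J^sY$ yields the recursion $\sum_{k=0}^{N}A_kM_{k+s}=0_p$, valid for every integer $s\ge 0$.

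With these facts I would then evaluate the $(a,b)$ block of $\mathcal{B}(\mathcal{Q}\mathcal{R})$ for $a,b\in\{0,\dots,N-1\}$: using that the $(i,j)$ block of $\mathcal{B}$ is $A_{i+j-1}$ (with $A_m=0_p$ for $m>N$), it equals $\sum_{m=0}^{N-1}A_{a+m+1}M_{m+b}$, and the goal is to show it is $\delta_{a,b}I_p$. The verification splits by the sign of $b-a$. When $b>a$, setting $s=b-a-1\ge 0$ and reindexing by $k=a+m+1$, I complete the partial sum to $\sum_{k=0}^{N}A_kM_{k+s}$, which vanishes by the recursion; the extra terms thereby subtracted all carry moment index at most $b-1\le N-2$ and so vanish by the first moment property, giving block $0_p$. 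When $b=a$, the constraints $a+m+1\le N$ (needed for $A_{a+m+1}\ne 0_p$) and $m+a\ge N-1$ (needed for $M_{m+a}\ne 0_p$) force $m=N-1-a$, leaving the single surviving term $A_NM_{N-1}=I_p$. When $b<a$, those same constraints require $N-1-b\le m\le N-1-a$, an empty range, so the block is $0_p$. This establishes $\mathcal{B}\mathcal{Q}\mathcal{R}=I_{Np}$ and hence $\mathcal{R}=(\mathcal{B}\mathcal{Q})^{-1}$.

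The one genuinely delicate point is the index bookkeeping in this three-case analysis; non-commutativity of the coefficients $A_k$ never interferes, since the left-right order of every factor is preserved throughout and each cancellation is driven purely by which moments vanish. A more conceptual alternative would be to use that every Jordan triple is similar, through $S=\mathcal{Q}$, to the companion triple of $W$, thereby reducing the claim to the single computation $\mathcal{B}^{-1}=\big[R,CR,\dots,C^{N-1}R\big]$ for the companion matrix $C$ and transporting it back by the similarity. I would nonetheless keep the direct moment computation as the primary argument, since it relies solely on Lemma~\ref{lemma:pair}, Proposition~\ref{pro:Jordan pair0}, and the two defining relations already at hand.
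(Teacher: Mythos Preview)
Your proof is correct. The first identity is handled identically to the paper. For the second, the paper takes a different and much shorter route: it simply invokes the biorthogonality condition~(2.6) from Gohberg--Lancaster--Rodman~\cite{lan1}, which asserts directly that $\mathcal{R}\mathcal{B}\mathcal{Q}=I_{Np}$ for a Jordan triple, and then records that $\mathcal{R}=[Y,JY,\dots,J^{N-1}Y]$ is the standard expression for the third component. Your approach, by contrast, is fully self-contained: you reduce the verification to the moment sequence $M_k=XJ^kY$, extract the initial conditions $M_0=\dots=M_{N-2}=0_p$, $M_{N-1}=I_p$ from the triple definition and the linear recursion $\sum_k A_kM_{k+s}=0_p$ from the pair relation, and then check $\mathcal{B}\mathcal{Q}\mathcal{R}=I_{Np}$ block by block. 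This buys independence from the external reference at the cost of the three-case index computation (and of course left and right inverses coincide for square matrices, so the direction of the identity is immaterial). One cosmetic point: your stated formula for the $(i,j)$ block of $\mathcal{B}$ as $A_{i+j-1}$ uses $1$-based indexing, whereas the subsequent sum $\sum_m A_{a+m+1}M_{m+b}$ is written with $0$-based indices $a,m,b\in\{0,\dots,N-1\}$; the computation itself is correct, but harmonising the conventions would remove a momentary stumble for the reader.
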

\begin{proof}
	From Lemma \ref{lemma:pair} we deduce that
	\begin{align*}
	\mathcal Q=\begin{bmatrix}
	X\\
	XJ\\
	\vdots\\
	XJ^{N-1}
	\end{bmatrix}
	\end{align*}
	which is nonsingular, see Propositions \ref{pro:Jordan pair0} and \ref{pro:Jordan pair}. 	
	The biorthogonality condition (2.6)  of \cite{lan1}  for $\mathcal R$ and $\mathcal Q$ is
	\begin{align*}
	\mathcal R \mathcal B \mathcal Q =I_{Np},
	\end{align*}
	and if  $(X,J,Y)$ is a canonical Jordan triple, then
	\begin{align}\label{eq:RJY}
	\mathcal R&=\begin{bmatrix}
	Y, J Y,\dots, J^{N-1} Y
	\end{bmatrix}.
	\end{align}
\end{proof}

\begin{pro}
	The matrix $\mathcal R_n:=\begin{bmatrix}
	Y, J Y,\dots, J^{n-1} Y
	\end{bmatrix}\in\mathbb C^{Np\times np}$ has  full rank.
\end{pro}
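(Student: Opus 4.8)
The plan is to reduce everything to Lemma \ref{lemma:triple}, which already identifies the full square matrix $\mathcal R=\mathcal R_N=\begin{bmatrix}Y,JY,\dots,J^{N-1}Y\end{bmatrix}$ with $(\mathcal B\mathcal Q)^{-1}$. First I would record that $\mathcal R_N$ is nonsingular. Indeed $\mathcal Q=XJ^{\,\bullet}$ is nonsingular by Lemma \ref{lemma:pair} together with Propositions \ref{pro:Jordan pair0} and \ref{pro:Jordan pair}, while $\mathcal B$ is block anti-triangular with the leading coefficient $A_N=I_p$ filling its whole anti-diagonal, so $\det\mathcal B=\pm 1\neq 0$. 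Hence $\mathcal B\mathcal Q$ is invertible, $(\mathcal B\mathcal Q)^{-1}=\mathcal R_N$ is a well-defined $Np\times Np$ nonsingular matrix, and its $Np$ columns form a basis of $\mathbb C^{Np}$.

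The rest is linear-algebra bookkeeping about the columns of an invertible matrix, which I would organize into two cases according to the size of $n$. If $n\leq N$, then $\mathcal R_n$ consists precisely of the first $np$ columns of $\mathcal R_N$; since any subfamily of a linearly independent family is linearly independent, these $np$ columns are independent and $\mathcal R_n$ has full column rank $np$. If instead $n> N$, then $\mathcal R_n$ contains $\mathcal R_N$ among its block-columns, so its column space already equals $\mathbb C^{Np}$, whence $\mathcal R_n$ has full row rank $Np$. In either case $\operatorname{rank}\mathcal R_n=\min(np,Np)$, which is exactly the assertion that $\mathcal R_n$ has full rank.

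The only point requiring a moment's care—and therefore the main ``obstacle''—is justifying that $\mathcal B$ is invertible so that the expression $(\mathcal B\mathcal Q)^{-1}$ of Lemma \ref{lemma:triple} is meaningful; this is immediate from the block anti-triangular shape of $\mathcal B$ in Definition \ref{def:B}, with $I_p$ on the anti-diagonal and zeros below it. Once the nonsingularity of $\mathcal R_N$ is in hand, no further computation with Jordan chains or root polynomials is needed, and the full-rank claim for every truncation or extension $\mathcal R_n$ follows purely from the inclusion of $\mathcal R_n$ into, or from, the invertible block $\mathcal R_N$.
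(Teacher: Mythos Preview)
Your argument is correct. The paper states this proposition without proof, so there is nothing to compare against; your proof fills the gap cleanly. The key observations---that $\mathcal B$ is block anti-triangular with $A_N=I_p$ on the anti-diagonal (hence invertible), that $\mathcal Q$ is invertible by Proposition~\ref{pro:Jordan pair0}, and therefore that $\mathcal R_N=(\mathcal B\mathcal Q)^{-1}$ is nonsingular---are exactly the ingredients the paper has set up in Definition~\ref{def:B} and Lemma~\ref{lemma:triple}, and your column-inclusion argument for $n\leq N$ and $n>N$ is the natural way to finish.

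One minor remark: in the paper $\mathcal R_n$ is only ever used for $n\leq N$ (see \S\ref{susection:<N} and Corollary~\ref{corollary:non_singularity2}), so the case $n>N$ is not strictly needed, but including it does no harm and your treatment of it is correct.
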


Regarding the matrix $\mathcal B$, 
\begin{defi}\label{defi:V}
	Let  us  consider the bivariate matrix polynomial
	\begin{align*}
	{\mathcal V}(x,y):=\big((\chi(y))_{[N]}\big)^\top \mathcal B(\chi(x))_{[N]}\in\mathbb C^{p\times p}[x,y],
	\end{align*}
	where $A_j$ are the matrix coefficients of $W(x)$, see \eqref{eq:mp}.
\end{defi}

We consider the complete homogeneous symmetric  polynomials in two variables
\begin{align*}
h_n(x,y)=\sum_{j=0}^{n}x^jy^{n-j}.
\end{align*}
For example, the first four polynomials are
\begin{align*}
h_0(x,y)&=1, & h_1(x,y)&=x+y, & h_2(x,y)&=x^2+xy+y^2,& h_3(x,y)&=x^3+x^2y+xy^2+y^3.
\end{align*}
\begin{pro}\label{pro:symdefV}
	In terms of   complete homogeneous symmetric polynomials in two variables we can write
	\begin{align*}
	{\mathcal V}(x,y)&=\sum_{j=1}^{N}A_{j}h_{j-1}(x,y).
	\end{align*}
\end{pro}

\subsection{On orthogonal matrix polynomials}

The polynomial ring $\mathbb C^{p\times p}[x]$ is a free bimodule over the ring of matrices $\mathbb C^{p\times p}$ with a basis given by
$\{I_p,I_p x, I_p x^2,\dots\}$.  Important free bisubmodules are the sets $\mathbb C_m^{p\times p}[x]$ of matrix polynomials of degree less than or equal to $m$.  A basis, which has cardinality $m+1$, for $\mathbb C_m^{p\times p}[x]$  is $\{I_p,I_p x, \dots, I_p x^m\}$; as $\mathbb C$ has the invariant basis number (IBN) property so does $\mathbb C^{p\times p}$, see \cite{rowen}. Therefore, being $\mathbb C^{p\times p}$ an IBN ring,  the rank of the free module $\mathbb C_m^{p\times p}[x]$ is unique and equal to $m+1$, i.e. any other basis has the same cardinality. Its algebraic dual $\big(\mathbb C_m^{p\times p}[x]\big)^*$ is the set of  homomorphisms
$\phi :\mathbb C_m^{p\times p}[x]\rightarrow \mathbb C^{p\times p}$ which are, for the right module,  of the form
\begin{align*}
\langle \phi,P(x)\rangle&=\phi_0 p_0+\dots+\phi_m p_m,  & P(x)&=p_0+\dots +p_mx^m,
\end{align*}
where $\phi_k\in\mathbb C^{p\times p}$.
Thus, we can identify the dual of the right module with the corresponding left submodule. This dual is a free module with a unique rank, equal to $m+1$, and a dual basis $\{(I_p x^k)^*\}_{k=0}^m$ given by
\begin{align*}
\langle(I_px^k)^*,I_p x^l\rangle=\delta_{k,l}I_p.
\end{align*}
We have similar statements for the left module $\mathbb C_m^{p\times p}[x]$, being its dual a right module
\begin{align*}
\langle P(x),\phi\rangle&=P_0\phi_0 +\dots+P_m\phi_m , & \langle I_p x^l,(I_px^k)^*\rangle&=\delta_{k,l}I_p.
\end{align*}

 \begin{defi}[Sesquilinear form]\label{def:sesquilinear}
	A  sesquilinear  form  $\prodint{\cdot,\cdot}$  on the bimodule $\mathbb{C}^{p\times p}[x]$ is a continuous map
\begin{align*}
\begin{array}{cccc}
\prodint{\cdot,\cdot}: &\mathbb{C}^{p\times p}[x]\times\mathbb{C}^{p\times p}[x]&\longrightarrow &\mathbb{C}^{p\times p},\\
&(P(x), Q(x))&\mapsto& \prodint{P(x),Q(y)},
\end{array}
\end{align*}
such that for any triple $P(x),Q(x),R(x)\in  \mathbb{C}^{p\times p}[x]$ the following properties are fulfilled
\begin{enumerate}
	\item  $\prodint{AP(x)+BQ(x),R(y)}=A\prodint{P(x),R(y)}+B\prodint{Q(x),R(y)}$, $\forall A,B\in\mathbb{C}^{p\times p}$,
	\item $\prodint{P(x),AQ(y)+BR(y)}=\prodint{P(x),Q(y)}A^\top+\prodint{P(x),R(y)}B^\top$, $\forall A,B\in\mathbb{C}^{p\times p}$.
\end{enumerate}
\end{defi}
The reader probably noticed that, despite we deal with complex polynomials in a real variable, we have follow \cite{Gaut2} and chosen the transpose instead of the Hermitian conjugated. 
For any couple of matrix polynomials $P(x)=\sum\limits_{k=0}^{\deg P}p_kx^k$ and $Q(x)=\sum\limits_{l=0}^{\deg Q} q_lx^l$ the sesquilinear form is defined by
\begin{align*}
\prodint{P(x),Q(y)}=\sum_{\substack{k=1,\dots,\deg P\\
		l=1,\dots,\deg Q}}p_k G_{k,l}(q_l)^\top,
\end{align*}
where the coefficients are the values of the sesquilinear form on the basis of the module
\begin{align*}
G_{k,l}=\prodint{I_px^k ,I_py^l }.
\end{align*}
The corresponding semi-infinite matrix
\begin{align*}
G=\begin{bmatrix}
G_{0,0 } &G_{0,1}& \dots\\
G_{1,0} & G_{1,1} & \dots\\
\vdots & \vdots
\end{bmatrix}
\end{align*}
is  the  named as the Gram matrix of the sesquilinear form.

\subsubsection{Hankel  sesquilinear forms}
Now, we present a family of examples of sesquilinear forms in $\mathbb C^{p\times p}[x]$ that we call Hankel sesquilinear forms.
A first example is given by matrices with complex (or real) Borel measures in $\mathbb R$ as entries
\begin{align*}
\mu=\begin{bmatrix}
\mu_{1,1}&\dots &\mu_{1,p}\\
\vdots & &\vdots\\
\mu_{p,1} &\dots&\mu_{p,p}
\end{bmatrix},
\end{align*}
i.e.,  a  $p\times p$ matrix of Borel measures supported in $\R$.
Given any pair of matrix polynomials $P(x),Q(x)\in\mathbb{C}^{p\times p}[x]$  we introduce the following     sesquilinear form
\begin{align*}
\prodint{P(x),Q(x)}_\mu=\int_\R P(x)\d\mu(x)(Q(x))^{\top}.
\end{align*}

A more general sesquilinear form can be constructed in terms of generalized functions (or continuous linear functionals).  In \cite{Maroni1985espaces,Maroni1988calcul} a linear functional setting for orthogonal polynomials is given. We consider the space of polynomials $\mathbb C[x]$, with an appropriate topology,  as the space of fundamental functions, in the sense of \cite{gelfand-distribu1,gelfand-distribu2}, and take the space of generalized functions as the corresponding continuous linear functionals. It is remarkable that the topological dual space coincides with the algebraic dual space. On the other hand,  this space of   generalized functions  is the space of formal series with complex coefficients
$(\mathbb C[x])'=\mathbb C[\![x]\!]$.

In  this article we use generalized functions with a well defined support and, consequently,  the previously described setting requires of a suitable modification.
Following \cite{Schwartz,gelfand-distribu1,gelfand-distribu2}, let us recall that the space of distributions is a space of  generalized functions when the space of fundamental functions  is constituted  by the complex valued smooth functions of compact support $\mathcal D:=C_0^\infty(\mathbb R)$, the so called space of test functions.
 In this context,  the set of zeros of a  distribution $u\in\mathcal D'$is the region $\Omega\subset \mathbb R$ if for any fundamental function $f(x)$ with support in $\Omega$ we have $\langle u, f\rangle =0$.
  Its complement, a closed set, is what is called  support,  $\operatorname{supp} u$, of the distribution $u$.   Distributions of compact support, $u\in\mathcal E'$, are  generalized functions for which the  space of fundamental functions  is the topological space of complex valued smooth  functions $\mathcal E=C^\infty(\mathbb R)$.
  As $\mathbb C[x]\subsetneq \mathcal E$ we also know that $\mathcal E'\subsetneq (\mathbb C[x])'\cap \mathcal D'$.  The set of  distributions of compact support  is a first example of an appropriate framework for the consideration of polynomials and supports simultaneously. More general settings appear within the space of tempered distributions $\mathcal S'$, $\mathcal S'\subsetneq\mathcal D'$. The space of fundamental functions  is given by the Schwartz space $\mathcal S$ of complex valued fast decreasing functions, see
   \cite{Schwartz,gelfand-distribu1,gelfand-distribu2}.  We  consider the space of fundamental functions constituted by smooth functions of slow growth $\mathcal O_M\subset \mathcal E$,  whose elements are smooth functions with
    derivatives  bounded by polynomials.  As  $\mathbb C [x],\mathcal S\subsetneq \mathcal O_M$,  for the corresponding set of generalized functions  we find that  $\mathcal O_M'\subset (\mathbb C[x])'\cap \mathcal S'$.  Therefore, these distributions give a second appropriate  framework. Finally, for a third suitable framework, including the two previous ones, we need to introduce bounded distributions. Let us consider as  space of fundamental functions, the linear space $\mathcal B$ of bounded smooth functions, i.e.,  with all its derivatives in $L^\infty(\R)$,  being the corresponding space of generalized functions $\mathcal B'$ the bounded distributions. From $\mathcal D\subsetneq \mathcal B$ we conclude that bounded distributions are distributions $\mathcal B'\subsetneq \mathcal D'$. Then, we consider the space of fast decreasing distributions $\mathcal O_c'$ given by those distributions $u\in\mathcal D'$ such that for each positive integer $k$, we have
$\big(\sqrt{1+x^2}\big)^ku\in\mathcal B'$ is a bounded distribution.
Any polynomial $P(x)\in\mathbb C[x]$, with $\deg P=k$, can be written as
\begin{align*}
P(x)&=\Big(\sqrt{1+x^2}\Big)^k F(x), & F(x)&=\frac{P(x)}{\big(\sqrt{1+x^2)}\big)^k}\in\mathcal B.
\end{align*}
Therefore, given a fast decreasing distribution $u\in\mathcal O_c'$ we may consider
\begin{align*}
\langle u,P(x)\rangle =\left\langle\Big(\sqrt{1+x^2}\Big)^ku, F(x)\right\rangle
\end{align*}
which makes sense as $\big(\sqrt{1+x^2}\big)^ku\in\mathcal B', F(x)\in\mathcal B$.  Thus,  $\mathcal O'_c\subset  (\mathbb C[x])'\cap \mathcal D'$.
Moreover  it can be proven that $\mathcal O_M'\subsetneq \mathcal O_c'$, see
\cite{Maroni1985espaces}.
 Summarizing this discussion, we have found three  generalized function spaces suitable for the discussion of polynomials and supports simultaneously:
\begin{align*}
\mathcal E'\subset \mathcal O_M'\subset \mathcal O_c' \subset \big((\mathbb C[x])'\cap \mathcal D'\big).
\end{align*}

The linear functionals could have discrete  and, as the corresponding Gram matrix is required to be quasidefinite,   infinite support. Then, we are faced with discrete orthogonal polynomials, see for example \cite{Nikiforov1991Discrete}. Two classical  examples are those of  Charlier and the Meixner.
For $\mu>0$ we have the Charlier (or Poisson--Charlier) linear functional
\begin{align*}
u=\sum_{k=0}^\infty \frac{\mu^k}{k!} \delta(x-k),
\end{align*}
and $\beta>0$ and $0<c<1$, the Meixner linear functional is 
\begin{align*}
u=\sum_{k=0}^{\infty }\frac{ \beta(\beta+1)\dots(\beta+k-1)}{k!}c^k\delta(x-k).
\end{align*}
See \cite{Al} for matrix extensions of these discrete linear functionals and corresponding matrix orthogonal polynomials.

\begin{defi}[Hankel sesquilinear forms]\label{def:sesquilinear_hankel}
Given a matrix  of generalized functions as entries
\begin{align*}
u=\begin{bmatrix}
u_{1,1}&\dots &u_{1,p}\\
\vdots & &\vdots\\
u_{p,1} &\dots&u_{p,p}
\end{bmatrix},
\end{align*}
i.e., $u_{i,j}\in(\mathbb C[x])'$, then the associated  sesquilinear form  $\prodint{P(x),Q(x)}_u$ is
given by
\begin{align*}
\big(\prodint{P(x),Q(x)}_u\big)_{i,j}:=\sum_{k,l=1}^p \prodint{u_{k,l},P_{i,k}(x) Q_{j,l}(x)}.
\end{align*}
When $u_{k,l}\in\mathcal O_c'$, we write $u\in\big(\mathcal O_c'\big)^{p\times p}$ and say that we have a matrix of fast decreasing distributions. In this case the support is defined as $\operatorname{supp} (u):=\cup_{k,l=1}^N\operatorname{supp}(u_{k,l})$.
\end{defi}
Observe that in this Hankel case, we could also have continuous and discrete orthogonality.
\begin{pro}
In terms of  the moments
\begin{align*}
 m_n:=\begin{bmatrix}
\prodint{u_{1,1},x^n} & \dots &\prodint{u_{1,p},x^n}\\
\vdots & &\vdots\\
\prodint{u_{p,1},x^n} & \dots &\prodint{u_{p,p},x^n}
\end{bmatrix}
\end{align*}
the  Gram matrix of the sesquilinear form given in Definition \ref{def:sesquilinear_hankel} is the following moment matrix
\begin{align*}
G&:=\begin{bmatrix}
m_{0}&m_{1}& m_2&\cdots\\
m_{1}&m_{2}& m_3&\cdots\\
m_{2}&m_{3}& m_4&\cdots\\
\vdots    &\vdots      &\vdots&\\
\end{bmatrix},
\end{align*}
of Hankel type.
\end{pro}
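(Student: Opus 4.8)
The plan is to evaluate the Gram matrix block-by-block by specializing Definition \ref{def:sesquilinear_hankel} to the monomial basis $\{I_px^n\}$, and to observe that applying a single-variable functional to the product of two monomials automatically produces a moment indexed by the sum of the two degrees. The only care needed is to keep the matrix-entry indices, running over $\{1,\dots,p\}$, separate from the polynomial-degree indices.

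First I would write the $(n,m)$ block of $G$ as $G_{n,m}=\prodint{I_px^n,I_py^m}_u$ and expand its $(i,j)$ scalar entry using Definition \ref{def:sesquilinear_hankel}:
\begin{align*}
\big(G_{n,m}\big)_{i,j}=\sum_{k,l=1}^p\prodint{u_{k,l},(I_px^n)_{i,k}\,(I_py^m)_{j,l}}.
\end{align*}
Substituting $(I_px^n)_{i,k}=\delta_{i,k}x^n$ and $(I_py^m)_{j,l}=\delta_{j,l}y^m$, the two Kronecker deltas kill every term of the double sum except $k=i$, $l=j$, so that
\begin{align*}
\big(G_{n,m}\big)_{i,j}=\prodint{u_{i,j},x^n\,y^m}=\prodint{u_{i,j},x^{n+m}},
\end{align*}
where in the Hankel (diagonally supported) setting both slots of the form are evaluated in one and the same variable, so the bivariate monomial $x^n\,y^m$ is read as the single-variable monomial $x^{n+m}$ before $u_{i,j}$ acts.

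Finally I would match this against the definition of the moments, which reads $(m_{n+m})_{i,j}=\prodint{u_{i,j},x^{n+m}}$. Since the identity holds for all $i,j\in\{1,\dots,p\}$ we obtain $G_{n,m}=m_{n+m}$, i.e.\ each block of $G$ depends only on the sum $n+m$ of its row and column indices; this is exactly the block-Hankel pattern displayed in the statement. I do not expect a genuine obstacle beyond the index bookkeeping: the mathematical content is merely that the univariate character of the functionals forces the a priori two-index family $\{G_{n,m}\}$ to collapse onto the one-index moment sequence $\{m_n\}$, which is the defining feature of Hankel (moment) matrices.
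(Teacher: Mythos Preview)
Your proof is correct. The paper states this proposition without proof, treating it as immediate from the definition, and your argument is exactly the intended computation. One small notational point: in Definition~\ref{def:sesquilinear_hankel} the sesquilinear form is already written with a single variable, $\big(\prodint{P(x),Q(x)}_u\big)_{i,j}=\sum_{k,l}\prodint{u_{k,l},P_{i,k}(x)Q_{j,l}(x)}$, so there is no need to pass through the bivariate monomial $x^ny^m$ and then collapse $y\to x$; you can write $\prodint{u_{i,j},x^n\cdot x^m}=\prodint{u_{i,j},x^{n+m}}$ directly.
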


\subsubsection{Matrices of generalized kernels and sesquilinear forms}
The previous examples all have in common the same Hankel block symmetry for the corresponding matrices. However,  there are sesquilinear forms which do not have this particular Hankel type symmetry. Let us stop for a moment at this point, and elaborate on bilinear and sesquilinear forms for polynomials. We first recall some facts regarding the scalar case with $p=1$, and bilinear forms instead of sesquilinear forms. Given   $u_{x,y}\in (\mathbb C[x,y])'=(\mathbb C[x,y])^*\cong\mathbb C[\![x,y]\!]$, we can consider the continuous  bilinear form $B(P(x),Q(y))=\langle u_{x,y}, P(x)\otimes Q(y)\rangle $. This gives a continuous  linear map $\mathcal L_u: \mathbb C[y]\to(\mathbb C [x])'$ such that $B(P(x),Q(y)))=\langle \mathcal L_u(Q(y)), P(x)\rangle$. The Gram matrix of this bilinear form has coefficients  $G_{k,l}=B(x^k,y^l)=\langle u_{x,y}, x^k\otimes y^{l}\rangle =\langle \mathcal L_u(y^l),x^k\rangle$.
Here we follow  Schwartz discussion on kernels and distributions \cite{Schwartz1},  see also \cite{Hormander}. A kernel  $u(x,y)$ is a complex valued locally integrable function,
that defines an integral operator $f(x)\mapsto g(x)=\int u(x,y) f(y)\d y $. Following \cite{Schwartz} we denote $(\mathcal D)_x$ and $(\mathcal D')_x$ the test functions and the corresponding distributions in the variable $x$, and similarly for the variable $y$.   We extend this construction considering   a  bivariate distribution in the variables $x,y$, $u_{x,y}\in (\mathcal D')_{x,y}$, that Schwartz called  \emph{noyau-distribution}, and as we use a wider range of generalized functions we will call generalized kernel. This  $u_{x,y}$ generates a continuous bilinear form
\begin{align*}
B_u\big(\phi(x),\psi(y)\big)
&=\langle u_{x,y}, \phi(x)\otimes \psi(y)\rangle.
\end{align*}
It also generates a continuous linear map $\mathcal L_u: (\mathcal D)_y\rightarrow (\mathcal D')_x$ with
\begin{align*}
\langle (\mathcal L_u (\psi(y)))_x,\phi(x)\rangle=\langle u_{x,y}, \phi(x)\otimes \psi(y)\rangle.
\end{align*}
The Schwartz kernel theorem states that every generalized kernel $u_{x,y}$ defines a continuous linear transformation $\mathcal L_u$ from $(\mathcal D)_y$ to $(\mathcal D')_x$, and to each of such continuous linear transformations  we can associate  one and only one  generalized kernel.
According to the prolongation scheme developed in \cite{Schwartz1}, the  generalized kernel  $u_{x,y}$ is such that $\mathcal L_u:(\mathcal E)_y\to(\mathcal E')_x$ if and only if the support of $u_{x,y}$ in $\R^2$ is compact.\footnote{Understood as a prolongation problem, see \S 5 in \cite{Schwartz1},  we have similar results if we require $\mathcal L_u:\mathcal O_M\to\mathcal O_c'$ or
	$\mathcal L_u:\mathcal O_c\to\mathcal O_c'$  or any other possibility that makes sense for polynomials and support. }

We can extended these ideas to the matrix scenario of this paper, where  instead of bilinear forms we have sesquilinear forms.
\begin{defi}
Given   a matrix of   generalized kernels
\begin{align*}
u_{x,y}:=\begin{bmatrix}
(u_{x,y})_{1,1}& \dots &(u_{x,y})_{1,p}\\
\vdots & & \vdots\\
(u_{x,y})_{p,1} & \dots & (u_{x,y})_{p,p}
\end{bmatrix}
\end{align*}
with $(u_{x,y})_{k,l}\in(\mathbb C[x,y])'$  or, if a notion of support is required, $(u_{x,y})_{k,l}\in(\mathcal E')_{x,y},(\mathcal O_M')_{x,y},(\mathcal O_c')_{x,y}$, provides a continuous sesquilinear form with  entries given by
\begin{align*}
\big(\langle P(x),Q(y\rangle_u\big)_{i,j}&=\sum_{k,l=1}^p\big\langle(u_{x,y})_{k,l}, P_{i,k}(x)\otimes Q_{j,l}(y)\big\rangle\\
&=\sum_{k,l=1}^p\big\langle \mathcal L_{ u_{k,l}}(Q_{j,l}(y)),P_{i,k}(x) \big\rangle,
\end{align*}
where $\mathcal L_{u_{k,l}}:\mathbb C[y]\to(\mathbb C[x])'$  --or  depending on the setting $\mathcal L_{u_{k,l}}:(\mathcal  E)_y\to(\mathcal E')_x$, $\mathcal L_{u_{k,l}}:(\mathcal  O_M)_y\to(\mathcal O'_c)_x$, for example--  is a continuous linear operator.  We can condensate it in a matrix form,  for  $u_{x,y}\in (\mathbb C^{p\times p}[x,y])'=(\mathbb C^{p\times p}[x,y])^*\cong \mathbb C^{p \times p}[\![x,y]\!]$,   a sesquilinear form  is given
	\begin{align*}
	\langle P(x), Q(y) \rangle_u&=\langle u_{x,y} , P(x)\otimes Q(y)\rangle\\
	&=\langle\mathcal L_u(Q(y)),P(x)\rangle,
	\end{align*}
	with $\mathcal L_u: \mathbb C^{p\times p}[y]\to(\mathbb C^{p\times p}[x])'$
 a continuous linear map. Or, in other scenarios   $\mathcal L_u:( (\mathcal  E)_y)^{p\times p}\to((\mathcal E')_x)^{p\times p}$ or
$\mathcal L_u: (( \mathcal O_M)_y)^{p\times p}\to((\mathcal O_c')_x)^{p\times p}$.
\end{defi}

\begin{rem}
All continuous sesquilinear forms are of this type. A matrix of generalized kernels $u_{x,y}$ supported by the diagonal of the form $u_{x,y}=u_{x,x}\delta(x-y)$,  gives the Hankel sesquilinear forms given in Definition \ref{def:sesquilinear_hankel}.
\end{rem}
If, instead of a matrix of bivariate distributions, we have a matrix of bivariate measures then we could write for the sesquilinear form
$\langle P(x),Q(y)\rangle=\iint P(x) \d\mu(x,y) (Q(y))^\top$,
where $\mu(x,y)$ is a matrix of bivariate measures. 

For the scalar case $p=1$, Adler and van Moerbeke discussed in \cite{adler-van moerbeke} different possibilities of non-Hankel Gram matrices.
Their Gram matrix has as coefficients
$G_{k,l}=\langle  u_l,x^k\rangle$,
for a infinite sequence of generalized functions $u_l$, that recovers the Hankel scenario for  $u_l=x^lu$. They studied in more detail the following cases
\begin{enumerate}
\item  Banded case: $u_{l+km}=x^{km} u_l$.
\item  Concatenated solitons: $u_l(x)=\delta(x-p_{l+1})-(\lambda_{l+1})^2\delta (x-q_{k+1})$.
\item Nested Calogero--Moser systems: $u_l(x)= \delta'(x-p_{l+1})+\lambda_{l+1}\delta(x-p_{l+1})$.
\item Discrete KdV soliton type: $u_l(x)=(-1)^k \delta^{(l)}(x-p)-\delta^{(l)}(x+p)$.
\end{enumerate}
We see that the three last weights are generalized functions. To compare with the Schwartz's  approach we  observe that
$\langle u_{x,y}, x^k\otimes y^l \rangle =\langle u_l, x^k \rangle$
and, consequently,  we deduce   $u_l=\mathcal L_u(y^l)$ (and for continuous kernels $u_l(x)=\int u(x,y)y^l\d y)$. The first case, has a banded structure and its Gram matrix fulfills
$\Lambda^m G=G(\Lambda^\top)^m$.
 In \cite{manas3} different examples are discussed for the matrix orthogonal polynomials, like bigraded Hankel matrices
$\Lambda^nG=G\big(\Lambda^\top\big)^m$,
where $n,m$ are positive integers,
can be realized as
$G_{k,l}=\langle u_l,  I_px^k\rangle$,
in terms of  matrices of linear functionals $u_l$ which satisfy the following periodicity condition
$u_{l+m}=u_l x^{n}$.
Therefore, given the linear functionals  $u_0,\dots,u_{m-1}$ we can recover all the others.

\subsubsection{Sesquilinear forms supported by the diagonal and Sobolev sesquilinear forms}
First we consider the scalar case
\begin{defi}
	A generalized kernel  $u_{x,y}$ is  supported by the diagonal $y=x$ if
	\begin{align*}
	\prodint{ u_{x,y}, \phi(x,y)}=\sum_{n,m}\prodint{ u^{(n,m)}_x,\frac{\partial^{n+m} \phi(x,y)}{\partial x^n\partial y^m}\Big|_{y=x}}
	\end{align*}
	for a locally finite sum and   generalized functions $u^{(n,m)}_x\in(\mathcal D')_x$.
\end{defi}
\begin{pro}[Sobolev bilinear forms]
	The  bilinear form corresponding to a generalized kernel supported by the diagonal is
\begin{align*}
B(\phi(x),\psi(x))=\sum_{n,m}\prodint{u^{(n,m)}_x,\phi^{(n)}(x)\psi^{(m)}(x)},
\end{align*}
which is of Sobolev type,
\end{pro}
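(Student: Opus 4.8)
The plan is to evaluate the defining pairing of the bilinear form on a separable (tensor product) test function and then invoke the diagonal-support hypothesis directly. Recall from the discussion preceding Definition \ref{def:sesquilinear} that the bilinear form attached to a generalized kernel $u_{x,y}$ is
\begin{align*}
B_u\big(\phi(x),\psi(y)\big)=\prodint{u_{x,y},\phi(x)\otimes\psi(y)}.
\end{align*}
First I would specialize the defining identity for a diagonally supported kernel to the particular fundamental function $\phi(x,y)=\phi(x)\psi(y)$, i.e. to the tensor product $\phi(x)\otimes\psi(y)$. This is legitimate because the characterization of diagonal support holds for arbitrary bivariate fundamental functions, and separable functions are such fundamental functions.

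The next step is the elementary but essential observation that differentiation respects the product structure of a separable function: for $\phi(x,y)=\phi(x)\psi(y)$ one has
\begin{align*}
\frac{\partial^{n+m}}{\partial x^n\partial y^m}\big(\phi(x)\psi(y)\big)=\phi^{(n)}(x)\,\psi^{(m)}(y),
\end{align*}
since the $x$-derivatives act only on the first factor and the $y$-derivatives only on the second. Restricting to the diagonal $y=x$ then replaces $\psi^{(m)}(y)$ by $\psi^{(m)}(x)$, so the mixed partial evaluated on the diagonal collapses to the pointwise product $\phi^{(n)}(x)\psi^{(m)}(x)$.

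Substituting this into the defining formula of Definition \ref{def:sesquilinear} (diagonal support) gives
\begin{align*}
B(\phi(x),\psi(x))=\prodint{u_{x,y},\phi(x)\otimes\psi(y)}=\sum_{n,m}\prodint{u^{(n,m)}_x,\phi^{(n)}(x)\psi^{(m)}(x)},
\end{align*}
which is exactly the asserted expression. The sum is well defined as a locally finite sum by the very hypothesis imposed on diagonally supported kernels, so no convergence issue arises. I would close by noting that a pairing of the form $\sum_{n,m}\prodint{u^{(n,m)}_x,\phi^{(n)}\psi^{(m)}}$, which couples derivatives of $\phi$ and $\psi$ of all orders against the functionals $u^{(n,m)}_x$, is precisely the general form of a Sobolev bilinear form, establishing the claim. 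The only point requiring care, rather than a genuine obstacle, is checking that evaluating on separable test functions is sufficient to recover the bilinear form; this is immediate here since the bilinear form is by definition built from tensor products $\phi(x)\otimes\psi(y)$.
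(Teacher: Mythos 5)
Your argument is correct and is exactly the reasoning the paper leaves implicit: the proposition follows at once by applying the defining identity of a diagonally supported kernel to the tensor product $\phi(x)\otimes\psi(y)$, using that mixed partials of a separable function factor as $\phi^{(n)}(x)\psi^{(m)}(y)$ and then restricting to $y=x$. The paper gives no proof precisely because this is immediate, so your write-up matches its approach; the only slip is cosmetic (the diagonal-support condition is its own definition, not Definition \ref{def:sesquilinear}).
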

For order zero $u^{(n,m)}_x$ generalized functions, i.e. for a set of Borel measures $\mu^{(n,m)}$, we have
\begin{align*}
B(\phi(x),\psi(x))=\sum_{n,m}\int \phi^{(n)}(x)\psi^{(m)}(x)\d\mu^{(n,m)}(x),
\end{align*}
which is of Sobolev type. Thus, in the scalar case, generalized kernels supported by the diagonal are just Sobolev bilinear forms.
The extension of these ideas to the matrix case is immediate, we only need to require to all generalized kernels to be supported by the diagonal.
\begin{pro}[Sobolev sesquilinear forms]
	A matrix of   generalized kernels supported by the diagonal
provides Sobolev sesquilinear forms
	\begin{align*}
	\big(\langle P(x),Q(x)\rangle_u\big)_{i,j}&=\sum_{k,l=1}^p\sum_{n,m}\prodint{u^{(n,m)}_{k,l}, P^{(n)}_{i,k}(x)Q^{(m)}_{j,l}(x)}.
	\end{align*}
	for a locally finite sum, in the of derivatives order $n,m$,  and  of generalized functions $u^{(n,m)}_x\in(\mathbb C[x] )'$. All Sobolev sesquilinear forms are obtained in this form.
\end{pro}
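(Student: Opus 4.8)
The plan is to reduce the matrix assertion to the two scalar statements already established — the definition of a generalized kernel supported by the diagonal and the proposition identifying its bilinear form as a Sobolev form — by arguing entry by entry. By hypothesis every scalar generalized kernel $(u_{x,y})_{k,l}$ is supported by the diagonal $y=x$, so by the scalar definition there exist generalized functions $(u^{(n,m)}_x)_{k,l}\in(\mathbb C[x])'$, indexed by a locally finite family of orders $(n,m)$, such that
\begin{align*}
\prodint{(u_{x,y})_{k,l},\phi(x,y)}=\sum_{n,m}\prodint{(u^{(n,m)}_x)_{k,l},\frac{\partial^{n+m}\phi(x,y)}{\partial x^n\partial y^m}\Big|_{y=x}}
\end{align*}
for every admissible argument $\phi$. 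The key is that the transpose convention of Definition \ref{def:sesquilinear} (the $(i,j)$ entry pairs $P_{i,k}$ with $Q_{j,l}$) introduces no conjugation of the scalar pairings, so the matrix sesquilinear form decouples into scalar \emph{bilinear} pairings to which the diagonal representation applies directly.

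First I would substitute into this formula the separated product $\phi(x,y)=P_{i,k}(x)\otimes Q_{j,l}(y)$, which as a function on $\R^2$ is simply $P_{i,k}(x)Q_{j,l}(y)$. Because the two variables separate, the mixed derivative factorizes, $\partial_x^n\partial_y^m\big(P_{i,k}(x)Q_{j,l}(y)\big)=P^{(n)}_{i,k}(x)Q^{(m)}_{j,l}(y)$, and its restriction to the diagonal is $P^{(n)}_{i,k}(x)Q^{(m)}_{j,l}(x)$. Each scalar pairing therefore reads
\begin{align*}
\prodint{(u_{x,y})_{k,l},P_{i,k}(x)\otimes Q_{j,l}(y)}=\sum_{n,m}\prodint{(u^{(n,m)}_x)_{k,l},P^{(n)}_{i,k}(x)Q^{(m)}_{j,l}(x)}.
\end{align*}
Summing over $k,l\in\{1,\dots,p\}$ and recalling the entrywise definition of the matrix sesquilinear form yields the announced formula, with $u^{(n,m)}_{k,l}:=(u^{(n,m)}_x)_{k,l}$; the order sum remains locally finite, being a finite union over the $p^2$ entries of locally finite families.

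For the converse I would run this computation in reverse. Given any Sobolev sesquilinear form, that is, a locally finite collection of generalized functions $u^{(n,m)}_{k,l}\in(\mathbb C[x])'$ together with the prescribed expression, I would define a matrix of generalized kernels by declaring its $(k,l)$ entry through the scalar diagonal formula of the definition with coefficients $u^{(n,m)}_{k,l}$. By construction every such entry is a generalized kernel supported by the diagonal, and the computation just carried out shows that the associated matrix sesquilinear form coincides with the prescribed one; hence every Sobolev sesquilinear form arises in this way, establishing the remaining claim.

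There is in truth little analytic obstruction here: the whole content lies in the entrywise separation of variables and in the index bookkeeping dictated by the transpose, both of which are routine. The only point deserving care is the admissibility of polynomial arguments as test objects for kernels supported by the diagonal, namely that the chosen space of generalized functions $(\mathbb C[x])'$ — or, when a notion of support is required, $(\mathcal E')_x$, $(\mathcal O_M')_x$ or $(\mathcal O_c')_x$ — actually pairs with the polynomial derivatives $P^{(n)}_{i,k}Q^{(m)}_{j,l}$. This is guaranteed by the prolongation scheme recalled before the definition together with the local finiteness of the order sum, so the argument goes through without further hypotheses.
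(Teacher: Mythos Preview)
Your proposal is correct and follows exactly the route the paper intends: the paper does not give an explicit proof but simply remarks that ``the extension of these ideas to the matrix case is immediate, we only need to require to all generalized kernels to be supported by the diagonal,'' and your entrywise reduction to the scalar Sobolev bilinear form is precisely that immediate extension spelled out in detail. Your careful tracking of the index conventions from Definition~\ref{def:sesquilinear} and the converse construction are both sound and constitute a complete version of what the paper leaves implicit.
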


For a recent review on scalar  Sobolev orthogonal polynomials see \cite{Marcellan2014Sobolev}. Observe that with this general framework  we could consider matrix discrete Sobolev orthogonal polynomials, that will appear whenever the linear functionals $u^{(m,n)}$ have infinite discrete support, as far as $u$ is quasidefinite.

\subsubsection{Biorthogonality, quasidefiniteness and Gauss--Borel factorization}

\begin{defi}[Biorthogonal matrix polynomials]
	Given a  sesquilinear  form $\prodint{\cdot,\cdot}$, two sequences of matrix polynomials  $\big\{P_n^{[1]}(x)\big\}_{n=0}^\infty$ and $\big\{P_n^{[2]}(x)\big\}_{n=0}^\infty$ are said to be biorthogonal with respect to $\prodint{\cdot,\cdot}$ if
	\begin{enumerate}
		\item $\deg(P_n^{[1]}(x))=\deg(P_n^{[2]}(x))=n$ for all $n\in\{0,1,\dots\}$,
		\item $\prodint{P_n^{[1]}(x),P_m^{[2]}(y)}=\delta_{n,m}H_n$ for all $n,m\in\{0,1,\dots\}$,
	\end{enumerate}
	where $H_n$ are nonsingular matrices and $\delta_{n,m}$ is the Kronecker delta.
\end{defi}

\begin{defi}[Quasidefiniteness]
	A Gram matrix of a sesquilinear form $\langle\cdot,\cdot\rangle_u$ is said  to be quasidefinite whenever $\det G_{[k]}\neq 0$, $k\in\{0,1,\dots\}$.
Here $G_{[k]}$ denotes the truncation
	\begin{align*}
	G_{[k]}:=\begin{bmatrix}
	G_{0,0}&\dots & G_{0,k-1}\\
	\vdots & & \vdots\\
	G_{k-1,0} & \dots &G_{k-1,k-1}
		\end{bmatrix}.
	\end{align*}
	We say that  the bivariate generalized function $u_{x,y}$ is quasidefinite and the corresponding sesquilinear form is nondegenerate whenever its Gram matrix is quasidefinite.
	\end{defi}	

\begin{pro}[Gauss--Borel factorization, see \cite{ari}]\label{pro:fac}
	If the Gram matrix of a sesquilinear form  $\langle\cdot,\cdot\rangle_u$ is quasidefinite, then there exists a unique Gauss--Borel factorization given by 	\begin{align}\label{eq:gauss}
	G=(S_1)^{-1} H (S_2)^{-\top},
	\end{align}
	where $S_1,S_2$ are lower unitriangular block matrices and $H$ is a diagonal block matrix
	\begin{align*}
	S_i&=\begin{bmatrix}
	I_p&0_p&0_p&\dots\\
	(S_i)_{1,0}& I_p&0_p&\cdots\\
	(S_i)_{2,0}& (S_i)_{2,1}&I_p&\ddots\\
\vdots	&\vdots&\ddots&\ddots
	\end{bmatrix}, &i&=1,2,&
	H&=\diag(
		H_0,H_1, H_2,\dots),
	\end{align*}
	with $(S_i)_{n,m}$ and $H_n\in\mathbb C^{p\times p}$, $\forall n,m\in\{0,1,\dots\}$.
\end{pro}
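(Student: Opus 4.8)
The plan is to establish the Gauss–Borel factorization \eqref{eq:gauss} by a standard block-LU argument, translating the quasidefiniteness hypothesis $\det G_{[k]}\neq 0$ into the existence and uniqueness of lower unitriangular factors $S_1,S_2$ and a block-diagonal $H$. The key observation is that the factorization $G=(S_1)^{-1}H(S_2)^{-\top}$ is equivalent, after rearranging, to
\begin{align*}
S_1 G S_2^\top = H,
\end{align*}
that is, to simultaneously reducing $G$ to block-diagonal form by a lower unitriangular transformation on the left and a corresponding one on the right. Since $S_1,S_2$ are lower unitriangular and $H$ is block-diagonal, this is precisely a block analogue of the classical Cholesky-type or LU factorization, where the nonvanishing of the leading principal block minors $\det G_{[k]}$ is the exact condition guaranteeing that every pivot block $H_n$ is invertible.

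\textbf{The main steps.} First I would argue at the level of truncations: for each $k$, the quasidefiniteness hypothesis $\det G_{[k]}\neq 0$ means the finite block matrix $G_{[k]}\in\mathbb C^{kp\times kp}$ is nonsingular, and moreover all its leading principal truncations $G_{[0]},\dots,G_{[k]}$ are nonsingular. For a finite matrix whose every leading principal (block) minor is nonzero, the block-LU factorization $G_{[k]}=(S_1)_{[k]}^{-1}H_{[k]}(S_2)_{[k]}^{-\top}$ exists and is unique, with $(S_1)_{[k]},(S_2)_{[k]}$ lower unitriangular and $H_{[k]}$ block-diagonal with invertible blocks; this is a standard result in linear algebra over a ring where the relevant pivots are invertible. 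Second, I would check the compatibility of these finite factorizations as $k$ grows: because the factorization is unique and the truncation $G_{[k]}$ is the leading principal submatrix of $G_{[k+1]}$, the entries $(S_i)_{n,m}$ and $H_n$ obtained at level $k$ agree with those obtained at level $k+1$. This nesting lets me pass to the semi-infinite limit and define the full lower unitriangular matrices $S_1,S_2$ and the diagonal $H$ coherently, entry by entry.

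\textbf{The existence/uniqueness mechanics.} Concretely, I would determine the blocks recursively. Writing out the $(n,m)$ block of the identity $G=(S_1)^{-1}H(S_2)^{-\top}$, or more conveniently of $S_1 G=H(S_2)^{-\top}$, and using that $S_1,S_2$ are lower unitriangular while $H$ is diagonal, each equation expresses a new block of $S_1$ or $S_2$, or the pivot $H_n$, in terms of previously computed blocks together with $G_{n,m}$; invertibility of $H_n$ (equivalently $\det G_{[n+1]}/\det G_{[n]}\neq 0$ in the scalar determinant sense, here guaranteed by quasidefiniteness) is exactly what allows each recursive step to proceed and to be solved uniquely. The uniqueness then follows because at no stage is there any freedom: the unitriangular normalization fixes the diagonal blocks of $S_1,S_2$ to be $I_p$, and the remaining blocks are forced.

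\textbf{The main obstacle} I anticipate is \emph{not} the algebra of the recursion but the careful handling of the semi-infinite, non-associative setting flagged earlier in the paper: products of three semi-infinite matrices need not be associative, so I must make sure that every manipulation used to derive and glue the factorization is carried out at the level of finite truncations (where associativity is automatic) before taking limits, and that the inverses $(S_1)^{-1},(S_2)^{-\top}$ are interpreted correctly. Because $S_1,S_2$ are lower unitriangular, their inverses are well defined as lower unitriangular matrices computed by a finite procedure in each entry, so this is manageable; nonetheless the cleanest route is to prove everything for $G_{[k]}$, invoke uniqueness to obtain coherence, and only then assemble the infinite factors, thereby sidestepping any associativity pitfalls entirely.
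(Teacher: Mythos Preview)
Your proposal is correct and follows the standard block-LU argument one expects here. Note, however, that the paper does not actually supply a proof of this proposition: it is stated with a citation to \cite{ari} and treated as a known result, so there is no ``paper's own proof'' to compare against. Your approach---factorize each truncation $G_{[k]}$ using the nonvanishing of all leading principal block minors, invoke uniqueness to get compatibility across levels, and assemble the semi-infinite factors entrywise---is exactly the standard route and is sound; your caution about handling associativity at the finite-truncation level is well placed and matches the spirit of the remarks following the proposition in the paper.
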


\begin{rem}
	Despite that \eqref{eq:gauss} involve the product of three semi-infinite matrices, and therefore of series --\emph{infinite sums}-- could appear, this is not the case, as, given the order of  the factors, lower unitriangular, diagonal and upper unitriangular, all the sums are finite; following \cite{Cantero} we could say that is an \emph{admissible} product.
\end{rem}

\begin{rem}
The product of semi-infinite matrices is not associative in general, see \cite{cooke}.  Hence, special care is required in the   manipulations of the Gauss--Borel factorization problem.
\end{rem}

\begin{rem}[See \cite{cooke}]
	Lower unitriangular  semi-infinite matrices have a unique inverse matrix, which is again lower unitriangular.
	The product  of lower triangular semi-infinite matrices or the product of upper triangular semi-infinite matrices is associative.
\end{rem}

Following \cite{Cantero} we have
\begin{defi}
	Block of lower (upper) Hessenberg type matrices are  semi-infinite matrices with finite number of nonzero block superdiagonals (subdiagonals).
\end{defi}
Also we have the block version of Proposition 2.3  \cite{Cantero}
\begin{pro}[See \cite{Cantero}]\label{pro:associativity}
	The associative property $(AB)C=A(BC)$ of a product of three semi-infinite block matrices $A,B$ and $C$ hold whenever
	\begin{enumerate}
\item Both $A$ and $B$ are of block lower Hessenberg type.
\item Both $B$ and $C$ are block upper Hessenberg type.
\item  If $A$ is  block lower Hessenberg type and $C$ is  block upper Hessenberg type.
	\end{enumerate}
	\end{pro}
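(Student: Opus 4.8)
The plan is to reduce the claimed associativity to a Fubini-type interchange of the order of summation for a double series whose terms, for each fixed pair of block indices, turn out to vanish for all but finitely many summation indices. Writing $A_{i,j},B_{j,l},C_{l,k}\in\mathbb C^{p\times p}$ for the blocks, the $(i,k)$ block of the two triple products reads
\begin{align*}
\big((AB)C\big)_{i,k}&=\sum_l\Big(\sum_j A_{i,j}B_{j,l}\Big)C_{l,k}, &
\big(A(BC)\big)_{i,k}&=\sum_j A_{i,j}\Big(\sum_l B_{j,l}C_{l,k}\Big),
\end{align*}
so that, once each inner sum is shown to be finite (hence an admissible product in the sense of the preceding remarks), the identity $(AB)C=A(BC)$ is equivalent block by block to
\[
\sum_l\sum_j A_{i,j}B_{j,l}C_{l,k}=\sum_j\sum_l A_{i,j}B_{j,l}C_{l,k}.
\]
First I would record the index translation of the bandedness hypotheses: by the definition of block Hessenberg matrices, a block lower Hessenberg matrix $X$ admits a constant $c$ with $X_{i,j}=0_p$ for $j>i+c$, while a block upper Hessenberg matrix $X$ admits a constant $c$ with $X_{i,j}=0_p$ for $i>j+c$.

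Next I would fix $(i,k)$ and, in each of the three cases, prove that the index set $S_{i,k}:=\{(j,l):A_{i,j}B_{j,l}C_{l,k}\neq 0_p\}$ is finite, and that both inner sums above are finite so that $AB$ and $BC$ are well defined. In case (i), with bands $a,b$ for $A,B$, lower Hessenberg-ness of $A$ forces $j\le i+a$, after which lower Hessenberg-ness of $B$ forces $l\le j+b\le i+a+b$; both indices are thus bounded above (and below by $0$), making $S_{i,k}$ finite irrespective of $C$. In case (ii), with bands $b',c'$ for $B,C$, upper Hessenberg-ness of $C$ forces $l\le k+c'$, after which upper Hessenberg-ness of $B$ forces $j\le l+b'\le k+b'+c'$, again yielding a finite $S_{i,k}$. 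In case (iii) the two outer hypotheses act directly: $A$ lower Hessenberg gives $j\le i+a$ and $C$ upper Hessenberg gives $l\le k+c'$, so $(j,l)$ ranges over an explicitly finite rectangle. In each case the very same one-sided bounds show that every block of $AB$ and of $BC$ is a finite sum, so the two inner products are admissible.

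Finally, since for the fixed $(i,k)$ only finitely many summands $A_{i,j}B_{j,l}C_{l,k}$ are nonzero, both iterated sums collapse to the single finite sum $\sum_{(j,l)\in S_{i,k}}A_{i,j}B_{j,l}C_{l,k}$; reordering a finite family of matrices is legitimate because no factor is ever commuted, only the order in which the index pairs are added, and hence the two expressions agree block by block. I expect the only delicate point to be the correct bookkeeping of the one-sided index bounds: the Hessenberg conditions must be checked to supply an upper bound on precisely the index needed both to render each inner sum finite and to cut $S_{i,k}$ down to a finite set. There is no genuine analytic subtlety once the bands are accounted for, since no true infinite sum survives. This mirrors the scalar Proposition 2.3 of \cite{Cantero}, the passage to $p\times p$ blocks causing no extra difficulty because associativity is invoked only for the scalar summation over indices, never for the noncommutative matrix multiplication of the blocks themselves.
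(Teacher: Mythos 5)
Your proof is correct. The paper itself offers no argument for this proposition — it is stated as the block version of Proposition 2.3 of the cited reference and left unproved — so there is nothing to compare against; your reduction to a finite rearrangement, with the one-sided index bounds showing in each of the three cases that both inner products are admissible and that only finitely many triple products $A_{i,j}B_{j,l}C_{l,k}$ survive for fixed $(i,k)$, is exactly the standard argument and handles the only delicate point (the bookkeeping of which Hessenberg hypothesis bounds which index) correctly.
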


For $l\geq k$ we will also use the following bordered truncated Gram matrix
\begin{align*}
G_{[k,l]}^{[1]}&:=
\left[\begin{array}{ccc}
G_{0,0}&  \cdots & G_{0,k-1} \\
\vdots                        &   & \vdots \\
G_{k-2,0}  &  \cdots & G_{ k-2,k-1}\\[1pt]
\hline
G_{l,0}& \dots & G_{l,k-1}
\end{array}\right],
\end{align*}
where we have replaced the last row of blocks of the truncated Gram matrix $G_{[k]}$ by the row of blocks $\begin{bsmallmatrix}
G_{l,0}, \dots, G_{l, k-1} \end{bsmallmatrix}$. We also need a similar matrix but replacing the last block column of $G_{[k]}$ by a column of blocks as indicated
\begin{align*}
G_{[k,l]}^{[2]}&:=\left[
\begin{array}{ccc|c}
G_{0,0} &  \cdots & G_{0,k-2}&G_{0,l} \\
\vdots  &                      &  \vdots  & \vdots \\
G_{k-1,0}  &  \cdots & G_{k-1,k-2}&G_{k-1,l}
\end{array}\right].
\end{align*}

Using  last quasideterminants, see \cite{gelfand,olver}, we find
\begin{pro}\label{qd1}
	If the last quasideterminants of the truncated moment matrices are nonsingular, i.e.,
	\begin{align*}
	\det  \Theta_*(G_{[k]})\neq& 0, & k=1,2,\dots,
	\end{align*}
	then, the Gauss--Borel factorization  can be performed and the following expressions are fulfilled
	\begin{align*}
		H_{k}&=\Theta_*\begin{bmatrix}
		G_{0,0} & G_{0,1} &\dots &G_{0,k-1}\\
		G_{1,0} & G_{1,1} &\dots &G_{1,k-1}\\
		\vdots &  \vdots &  &\vdots\\
		G_{k-1,0} & G_{k-1,1} &\dots &G_{k-1,k-1}\\
		\end{bmatrix},\end{align*}
		\begin{align*}
		(S_1)_{k,l}&=\Theta_*\begin{bmatrix}
		G_{0,0} & G_{0,1} &\dots &G_{0,k-1}& 0_p\\
		G_{1,0} & G_{1,1} &\dots &G_{1,k-1}&0_p\\
		\vdots & \vdots& & \vdots  &\vdots\\
		G_{l-1,0} & G_{l,1} &\dots &G_{l-1,k-1}& 0_p\\
		G_{l,0} & G_{l,1} &\dots &G_{l,k-1} &I_p\\
		G_{l+1,0} & G_{l+1,1} &\dots &G_{l+1,k-1} &0_p\\
		\vdots &\vdots & &\vdots  &\vdots\\
		G_{k,0} & G_{k,1} &\dots &G_{k,k-1} &0_p\\	
		\end{bmatrix},\\	\big((S_2)^\top\big)_{k,l}&=\Theta_*\begin{bmatrix}
		G_{0,0} &G_{0,1}&\dots  &G_{0,l-1} & G_{0,l} & G_{0,l+1} &\dots & G_{0,k}\\
		G_{1,0} &G_{1,1}&\dots  &G_{1,l-1} & G_{1,l} & G_{1,l+1} &\dots & G_{1,k}\\
		\vdots & \vdots&  &\vdots&\vdots&\vdots&&\vdots\\
		G_{k-1,0} &G_{k-1,1}&\dots  &G_{k-1,l-1} & G_{k-1,l} & G_{k-1,l+1} &\dots & G_{k-1,k}\\
		0_p &0_p&\dots  &0_p &  I_p&0_p & \dots& 0_p
		\end{bmatrix},
	\end{align*}
	and for the inverse elements \cite{olver} the formulas
	\begin{align*}
	(S_1^{-1})_{k,l}&=\Theta_*(G^{[1]}_{[k,l+1]})\Theta_*(G_{ [l+1]})^{-1},\\
	(S_2^{-1})_{k, l}&=\big(\Theta_*(G_{[l+1]})^{-1}\Theta_*(G^{[2]}_{[k,l+1]})\big)^\top,
	\end{align*}
	hold true.
\end{pro}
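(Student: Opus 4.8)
The plan is to reduce the whole statement to genuine finite matrices, where the product is associative and classical linear algebra applies and only block-noncommutativity (handled by quasideterminants) survives. First I would note that the hypothesis is equivalent to quasidefiniteness: by the Schur determinant identity $\det G_{[k]}=\det G_{[k-1]}\det\Theta_*(G_{[k]})$, the condition $\det\Theta_*(G_{[k]})\neq0$ for all $k$ is the same as $\det G_{[k]}\neq0$ for all $k$, so existence and uniqueness of \eqref{eq:gauss} are already guaranteed by Proposition \ref{pro:fac} and the real content here is the explicit quasideterminantal form of the factors. Writing \eqref{eq:gauss} as $S_1GS_2^\top=H$ and using that $S_1$ and $S_2^\top$ are block uni-triangular, I would check that truncation commutes with the relevant products, so that for every $k$ one gets the finite block $LDU$ factorization $(S_1)_{[k]}\,G_{[k]}\,(S_2^\top)_{[k]}=H_{[k]}$, with $(S_1)_{[k]}$ lower uni-triangular, $(S_2^\top)_{[k]}$ upper uni-triangular and $H_{[k]}=\diag(H_0,\dots,H_{k-1})$. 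All subsequent work takes place inside these finite truncations.

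Next I would extract the defining linear systems. Since $S_1G=HS_2^{-\top}$ is upper triangular with diagonal $H$, reading off its $(n,m)$ block for $m\le n$ gives $\sum_{l=0}^n (S_1)_{n,l}G_{l,m}=\delta_{n,m}H_n$; together with $(S_1)_{n,n}=I_p$ these are, for $m=0,\dots,n-1$, the block system $\big[(S_1)_{n,0},\dots,(S_1)_{n,n-1}\big]\,G_{[n]}=-\big[G_{n,0},\dots,G_{n,n-1}\big]$, while the $m=n$ equation recovers $H_n=G_{n,n}-[G_{n,0},\dots,G_{n,n-1}]\,G_{[n]}^{-1}\,[G_{0,n};\dots;G_{n-1,n}]$, i.e. the last quasideterminant of the corresponding leading truncation, which is exactly the displayed formula for $H_k$. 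Dually, from $GS_2^\top=S_1^{-1}H$ being lower triangular with diagonal $H$ one obtains the companion system $G_{[n]}\,[(S_2)_{n,0};\dots;(S_2)_{n,n-1}]=-[G_{0,n};\dots;G_{n-1,n}]$. Because $G_{[n]}$ is invertible, each system has a unique solution.

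These systems are then solved by the noncommutative Cramer's rule for quasideterminants of \cite{gelfand,olver}: solving $R\,G_{[k]}=-[G_{k,0},\dots,G_{k,k-1}]$ for its $l$-th block and expanding the last quasideterminant of the bordered matrix displayed for $(S_1)_{k,l}$ gives precisely $-[G_{k,0},\dots,G_{k,k-1}]\,G_{[k]}^{-1}e_l$, so the two agree; the mirror computation produces the stated bordered quasideterminant for $(S_2^\top)_{k,l}$. For the inverse matrices I would use the expressions of $(S_1^{-1})_{k,l}$ and $(S_2^{-1})_{k,l}$ through the sesquilinear form and the biorthogonal families, in the schematic form $(S_1^{-1})_{k,l}H_l=\prodint{I_px^k,P_l^{[2]}(y)}_u$ and its analogue, and then invoke the quasideterminant inverse identity of \cite{olver} (a form of the noncommutative Jacobi/Sylvester identity) to rewrite these as $\Theta_*(G^{[1]}_{[k,l+1]})$ and $\Theta_*(G^{[2]}_{[k,l+1]})$, yielding the two displayed formulas after multiplying by $\Theta_*(G_{[l+1]})^{-1}=H_l^{-1}$.

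The main obstacle is bookkeeping rather than depth: one must never manipulate the semi-infinite matrices as if the triple product were associative, since in general it is not (cf. the remarks following Proposition \ref{pro:fac} and Proposition \ref{pro:associativity}). Confining every computation to the finite truncations $G_{[k]}$ sidesteps this entirely, after which the only remaining care is the correct treatment of block-noncommutativity, which is exactly what the quasideterminant Cramer's rule and the quasideterminant inverse identity encode. A secondary point I would verify is that the bordered matrices written in the statement are genuinely those produced by the Cramer expansion, in particular that the unit-block borders single out the intended $l$-th block column (respectively row) of $G_{[k]}^{-1}$, which is the routine check sketched above for $(S_1)_{k,l}$.
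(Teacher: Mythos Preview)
The paper does not actually prove this proposition: it is stated as a direct consequence of the quasideterminant machinery, introduced only by the sentence ``Using last quasideterminants, see \cite{gelfand,olver}, we find'' and followed immediately by the remark on quasitau matrices. Your proposal supplies precisely what the paper omits, and the route you take is the natural unpacking of that citation.

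Your argument is correct. The reduction to finite truncations via $(S_1)_{[k]}G_{[k]}(S_2^\top)_{[k]}=H_{[k]}$ is valid because block uni-triangularity makes truncation commute with the products, and this legitimately sidesteps the semi-infinite associativity caveats. The linear system you extract from the upper-triangularity of $S_1G=HS_2^{-\top}$ is right, and your identification of $H_n$ with the Schur complement is exactly the last-quasideterminant formula displayed. The check you sketch for $(S_1)_{k,l}$, namely that the last quasideterminant of the bordered matrix equals $0_p-[G_{k,0},\dots,G_{k,k-1}]G_{[k]}^{-1}e_l$, is correct and is indeed the block Cramer expansion. For the inverse entries, the identity $(S_1^{-1})_{k,l}H_l=(GS_2^\top)_{k,l}=\langle I_px^k,P_l^{[2]}(y)\rangle_u$ that you use is the right starting point, and reducing it to $\Theta_*(G^{[1]}_{[k,l+1]})$ via the quasideterminant inverse/Jacobi identity is the intended mechanism from \cite{olver}. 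The only care needed there is bookkeeping of the bordered-matrix indices (the paper's own index conventions for $G^{[1]}_{[k,l]}$ are slightly awkward), but the substance of your argument is sound.
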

We see that the matrices $H_k$ are quasideterminants, and following \cite{MVOPR,ari} we refer to them as quasitau matrices.

\subsubsection{Biorthogonal polynomials, second kind functions and Christoffel--Darboux kernels}
\begin{defi}
	We define  $\chi(x):=[I_p,I_p x,I_px^2,\dots]^\top$, and for $x\neq 0$,  $\chi^*(x):=[I_px^{-1},I_p x^{-2},I_px^{-3},\dots]^\top$.
\end{defi}
\begin{rem}
	Observe that the Gram matrix can be expressed as
	\begin{align}\label{eq:M_chi}
	G&=\prodint{\chi(x), \chi(y)}_u\\
	\notag &=\langle u_{x,y}, \chi(x)\otimes \chi(y)\rangle
	\end{align}
	and its block  entries  are
	\begin{align*}
	G_{k,l}=\prodint{ I_px^k, I_py^l}_{u}.
		\end{align*}
	If the sesquilinear form derives from a matrix of bivariate measures $\mu(x,y)=[\mu_{i.j}(x,y)]$ we have for the Gram matrix blocks
	\begin{align*}
G_{k,l}=\iint x^k\d\mu(x,y )y^l.
	\end{align*}
	which reduces for absolutely continuous measures with respect the Lebesgue measure $\d x\d y$ to  a matrix of weights $w(x,y)=[w_{i,j}(x,y)]$, and
	When the matrix of generalized kernels is  Hankel we recover the classical Hankel structure, and the Gram matrix is
	a moment matrix. For example, for a matrix of measures we will have $G_{k,l}=\int x^{k+l}\d\mu(x )$.
	
\end{rem}
	\begin{defi}\label{defi:bio2kind}
	Given a quasidefinite matrix of generalized kernels $u_{x,y}$ and the Gauss--Borel factorization \eqref{pro:fac} of  its Gram matrix,  the corresponding first and second families of matrix  polynomials  are
	\begin{align}\label{eq:bior}
	P^{[1]}(x)=\begin{bmatrix}
P^{[1]}_0(x)\\P^{[1]}_1(x)\\\vdots
	\end{bmatrix}&:=S_1\chi(x), &  P^{[2]}(y)=\begin{bmatrix}
	P^{[2]}_0(y)\\P^{[2]}_1(y)\\\vdots
	\end{bmatrix}&:=S_2\chi(y),
	\end{align}
respectively. The corresponding  first and second families of second kind functions  á la Gram are
		\begin{align}\label{eq:second_kind}
	C^{[1]}(x)=\begin{bmatrix}
	C^{[1]}_0(x)\\C^{[1]}_1(x)\\\vdots
	\end{bmatrix}&:=H\big(S_2\big)^{-\top}\chi^{*}(x), &  	C^{[2]}(y)=\begin{bmatrix}
	C^{[2]}_0(y)\\C^{[2]}_1(y)\\\vdots
	\end{bmatrix}&:=H^\top\big(S_1\big)^{-\top}\chi^{*}(y),
	\end{align}
	respectively, whenever the series involved converge.
\end{defi}
\begin{rem}
	The matrix polynomials $P_n^{[i]}(x)$ are monic and  $\deg P_n^{[i]}(x)=n$, $i=1,2$.
\end{rem}
\begin{rem}
	We see that the second kind functions  á la Gram implies non admissible products, in the sense of  \cite{Cantero}, however we will see that the series involved do converge, in some general situations, to certain  Cauchy transformations of the biorthogonal families of matrix polynomials.
\end{rem}

\begin{pro}[Biorthogonality]
	Given a quasidefinite matrix of generalized kernels $u_{x,y}$, the first and second  families of monic matrix polynomials $\big\{P_n^{[1]}(x)\big\}_{n=0}^\infty$ and $\big\{P_n^{[2]}(x)\big\}_{n=0}^\infty$
	are biorthogonal
	\begin{align}\label{eq:biorthogonal}
	\prodint{P^{[1]}_n(x),P^{[2]}_m(y)}_u&=\delta_{n,m}H_n,& n,m&\in\{0,1,\dots\}.
	\end{align}
\end{pro}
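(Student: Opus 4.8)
The plan is to compute the sesquilinear form $\prodint{P^{[1]}_n(x),P^{[2]}_m(y)}_u$ directly from the definitions in \eqref{eq:bior} and \eqref{eq:M_chi}, and recognize the Gram matrix factorization \eqref{eq:gauss} inside it. Since $P^{[1]}(x)=S_1\chi(x)$ and $P^{[2]}(y)=S_2\chi(y)$ are semi-infinite column vectors of polynomials, I would assemble all the biorthogonality relations \eqref{eq:biorthogonal} at once into a single semi-infinite matrix identity, rather than checking them entry by entry. The key observation is that the defining bilinearity properties of the sesquilinear form (Definition \ref{def:sesquilinear}) let the constant matrix $S_1$ pass out on the left and $S_2$ pass out on the right (the latter as a transpose, because of the conjugate-linearity in the second slot), so that formally
\begin{align*}
\prodint{P^{[1]}(x),P^{[2]}(y)}_u=\prodint{S_1\chi(x),S_2\chi(y)}_u=S_1\prodint{\chi(x),\chi(y)}_u (S_2)^\top=S_1\,G\,(S_2)^\top.
\end{align*}
Substituting the Gauss--Borel factorization $G=(S_1)^{-1}H(S_2)^{-\top}$ from Proposition \ref{pro:fac} then collapses this to $S_1(S_1)^{-1}H(S_2)^{-\top}(S_2)^\top=H$, which is the diagonal block matrix whose $(n,m)$ block is exactly $\delta_{n,m}H_n$. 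Reading off the block entries yields \eqref{eq:biorthogonal}.

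First I would verify that the block degree structure is correct: because $S_1,S_2$ are lower unitriangular, the polynomial $P^{[i]}_n(x)$ is a combination of $I_p,I_px,\dots,I_px^n$ with leading coefficient $I_p$, so each is monic of degree $n$, and the indexing of the blocks matches the indexing of $\chi(x)$. Then I would carry out the factor-extraction step carefully, since this is where the main obstacle lies. The manipulations above treat the product of three semi-infinite matrices $S_1\,G\,(S_2)^\top$, and the paper repeatedly warns (see the remarks following Proposition \ref{pro:fac}) that such products are \emph{not} associative in general, so writing $S_1 G (S_2)^\top=S_1\big((S_1)^{-1}H(S_2)^{-\top}\big)(S_2)^\top$ and then regrouping is not automatically justified. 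The remedy is Proposition \ref{pro:associativity}: the relevant factors are unitriangular (hence of both lower and upper Hessenberg type in the trivial banded sense, or more precisely the groupings needed are of the admissible associative type described there), so each individual regrouping used in the cancellation is licensed. I would explicitly invoke this associativity-for-triangular-factors result at each step where three factors are regrouped, rather than silently assuming associativity.

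The other point requiring care is the passage from the abstract sesquilinear form applied to the semi-infinite vectors $\chi(x),\chi(y)$ to the genuine matrix product $S_1 G (S_2)^\top$. This is really the statement that the sesquilinear form, being defined blockwise through its values $G_{k,l}=\prodint{I_px^k,I_py^l}_u$ on the basis, extends to the linear combinations encoded by the rows of $S_1$ and $S_2$ in the expected bilinear way; the conjugate-linear convention in the second argument (property (ii) of Definition \ref{def:sesquilinear}) is precisely what produces the transpose on $S_2$ and matches \eqref{eq:M_chi}. Each block row of $S_1$ and each block row of $S_2$ has only finitely many nonzero entries (lower unitriangularity), so no convergence issue arises and every sum involved is finite, making the extension legitimate. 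Once these two technical points are secured, the identity $\prodint{P^{[1]}(x),P^{[2]}(y)}_u=H$ follows, and equating the $(n,m)$ block gives \eqref{eq:biorthogonal}. I expect the associativity bookkeeping to be the genuinely delicate part; everything else is bilinearity plus the uniqueness of the factorization already established.
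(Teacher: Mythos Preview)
Your argument is correct and is precisely the computation the paper has in mind: pull $S_1$ out on the left and $S_2$ out on the right (as a transpose), recognize $S_1G(S_2)^\top$, and collapse via the Gauss--Borel factorization. The paper in fact states this proposition without proof (it passes directly to the subsequent Remark), so there is nothing to compare against; your write-up, including the care with associativity via Proposition~\ref{pro:associativity} and the observation that all sums are finite by lower unitriangularity, is exactly in the spirit of the surrounding text.
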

\begin{rem}
		The biorthogonal relations yield the orthogonality relations
		\begin{align}
		\label{eq:orthogonality1}\prodint{P^{[1]}_n(x),y^mI_p}_u&=0_p,&\prodint{x^mI_p, P^{[2]}_n(y)}_u&= 0_p,  &m&\in\{1,\dots n-1\},\\
		\label{eq:orthogonality2}\prodint{P^{[1]}_n(x),y^nI_p}_u&= H_n, & \prodint{x^nI_p, P^{[2]}_n(y)}_u&= H_n.
		\end{align}
\end{rem}

\begin{rem}[Symmetric generalized kernels]\label{rem:symmetric}
		If $u_{x,y}=(u_{y,x})^\top$, the Gram matrix is symmetric $G=G^\top$ and we are dealing with a  Cholesky block factorization with  $S_1=S_2$ and $H=H^\top$.
		Now $P^{[1]}_n(x)=P^{[2]}_n(x)=:P_n(x)$, and $\{P_n(x)\}_{n=0}^\infty$ is a set of monic orthogonal matrix polynomials.
		In this case $C_n^{[1]}(x)=C_n^{[2]}(x)=:C_n(x)$.
\end{rem}

The shift matrix is the following semi-infinite block matrix
\begin{align*}
\Lambda:=\begin{bmatrix}
0_p&I_p&0_p&0_p&\dots\\
0_p&0_p&I_p&0_p&\ddots\\
0_p&0_p&0_p&I_p&\ddots\\
0_p&0_p&0_p&0_p&\ddots\\
\vdots&\ddots&\ddots&\ddots&\ddots
\end{bmatrix}
\end{align*}
which satisfies the  spectral property
\begin{align*}
\Lambda\chi(x)=x\chi(x).
\end{align*}

Next results are useful to ensure that the manipulations related with the Gauss--Borel factorization and semi-infinite matrices do respect certain
associativity properties.
\begin{pro}\label{pro:Gauss--Borel factorization and associativity}
For the next first  three properties we assume a quasidefinite Gram matrix $G$ with Gauss--Borel factorization $G=(S_1)^{-1}H(S_2)^{-\top}$
\begin{enumerate}
	\item For any matrix polynomial $W(x)\in\mathbb C^{p\times p}[x]$
\begin{align*}
G W(\Lambda^\top)&=(S_1)^{-1}H\Big((S_2)^{-\top}W\big(\Lambda^\top\big)\Big),\\
W(\Lambda)G&=\Big(W(\Lambda)(S_1)^{-1}\Big)H(S_2)^{-\top}.
\end{align*}
\item Given a semi-infinite vector $X=\begin{bmatrix}
X_0,X_1,\dots
\end{bmatrix}^\top$, with $X_n\in\mathbb C^{p\times p}$ and, whenever the series involved in the products $(S_2)^{-\top}X$ and $X^\top (S_1)^{-1}$ converge, we have
\begin{align*}
G X&=(S_1)^{-1}H\big((S_2)^{-\top}X\big),& X^\top G &=\big(X^\top (S_1)^{-1}\big)H(S_2)^{-\top}.
\end{align*}
Consequently,
\begin{align*}
S_1(G X)&=H\big((S_2)^{-\top}X\big),& (X^\top G )(S_2)^{\top}&=\big(X^\top (S_1)^{-1}\big)H.
\end{align*}
	\item Let us now relax the quasidefinite condition and only assume that  $(GW(\Lambda^\top))X$ or $G(W(\Lambda^\top)X)$  involves only convergent series, then $(GW(\Lambda^\top))X=G(W(\Lambda^\top)X)$.  Similarly, if $X^\top(W(\Lambda)G)$ or $(X^\top W(\Lambda))G$ involves only convergent series, then $X^\top (W(\Lambda)G)=(X^\top W(\Lambda))G$.
	\item As above, we relax the quasidefinite condition and only assume that  $GX$, respectively  $X^\top G$, involves solely convergent series, then $W(\Lambda) (GX)=(W(\Lambda )G)X$, respectively  $(X^\top G)W(\Lambda^\top) =X^\top(G	W(\Lambda^\top ))$.
\end{enumerate}
\end{pro}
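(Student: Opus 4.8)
The plan is to reduce each of the four items to the associativity criterion of Proposition \ref{pro:associativity}, after first recording the Hessenberg type of every factor. The crucial structural observation is that, since $\Lambda$ has a single nonzero block-superdiagonal, both $W(\Lambda)$ and $W(\Lambda^\top)$ are \emph{banded}: $W(\Lambda)$ is block upper triangular of bandwidth $N$ and $W(\Lambda^\top)$ is block lower triangular of bandwidth $N$, so each is simultaneously of block lower and block upper Hessenberg type. Combined with the fact that $(S_i)^{-1}$ is lower triangular (lower Hessenberg), $(S_i)^{-\top}$ is upper triangular (upper Hessenberg), and $H$ is diagonal (both), this settles the first item. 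For the first identity I would write $G=\big((S_1)^{-1}H\big)(S_2)^{-\top}$ and apply condition (iii) of Proposition \ref{pro:associativity} to the triple $A=(S_1)^{-1}H$ (lower Hessenberg), $B=(S_2)^{-\top}$, $C=W(\Lambda^\top)$ (upper Hessenberg, being banded); the second identity is the mirror image, applying the same condition (iii) to $A=W(\Lambda)$ (lower Hessenberg, banded), $B=(S_1)^{-1}$, $C=H(S_2)^{-\top}$ (upper Hessenberg).

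For the second item Proposition \ref{pro:associativity} no longer applies, since a single block-column $X$ is only of lower Hessenberg type and genuine infinite series now appear; so I would argue directly on partial sums. Expanding $G_{i,l}=\sum_{m}((S_1)^{-1})_{i,m}H_m((S_2)^{-\top})_{m,l}$, the lower triangularity of $(S_1)^{-1}$ truncates the $m$-summation to $0\le m\le i$, a finite range independent of $l$. Hence in $(GX)_i=\sum_l G_{i,l}X_l$ the finite outer $m$-sum may be interchanged with the $l$-sum: for every truncation $K\ge i$,
\[
\sum_{l=0}^{K}G_{i,l}X_l=\sum_{m=0}^{i}((S_1)^{-1})_{i,m}H_m\sum_{l=m}^{K}((S_2)^{-\top})_{m,l}X_l,
\]
and letting $K\to\infty$ the inner sums converge to $((S_2)^{-\top}X)_m$ by hypothesis, which gives $GX=(S_1)^{-1}H((S_2)^{-\top}X)$ and at the same time shows the left-hand series converges. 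The \emph{consequently} identities follow by left-multiplying by the lower-triangular $S_1$ (all sums finite, so $S_1(S_1)^{-1}=I$ associates), and the right-multiplication statements follow by the transpose-symmetric argument.

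The fourth item is clean for the same reason: since $W(\Lambda)$ is banded, $W(\Lambda)(GX)\big|_i=\sum_{r=0}^N A_r (GX)_{i+r}$ is a finite combination of the convergent entries $(GX)_{i+r}$, and the matching $(W(\Lambda)G)X$ is recovered by pulling this finite $r$-sum outside the convergent $l$-sum, which is always legitimate; the right version is symmetric. The hard part is the third item. Here $G$ carries no triangular or band structure and we are told only that one of the two groupings is built from convergent series. Expanding entrywise, both $(GW(\Lambda^\top))X$ and $G(W(\Lambda^\top)X)$ equal the doubly-indexed family $\sum_{l\ge0}\sum_{r=0}^{N}G_{i,l+r}A_rX_l$, but in each the infinite summation is the \emph{outer} operation while the finite band-sum over $r$ sits inside; comparing truncations shows the two groupings differ only by boundary terms $\sum_{r}\sum_{j=L+1}^{L+r}G_{i,j}A_rX_{j-r}$ supported within the fixed bandwidth $N$ of the truncation index $L$. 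The whole difficulty is to see that these finitely many boundary terms vanish as $L\to\infty$; unlike the fourth item, mere convergence of one grouping does not obviously force the individual mixed terms $G_{i,j}A_rX_{j-r}$ (with $j$ and $j-r$ at different heights) to zero. I expect to close this either by the finite-bandwidth bookkeeping that splits $W(\Lambda^\top)=\sum_{r}\operatorname{diag}(A_r,A_r,\dots)(\Lambda^\top)^r$ into single shifts, for which $(G(\Lambda^\top)^r)X$ and $G((\Lambda^\top)^rX)$ are literally the same reindexed series, or, in the applications where $X=\chi^*(x)$ decays geometrically, by absolute convergence legitimizing every rearrangement. This is the step to watch.
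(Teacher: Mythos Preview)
Your treatment of items (i), (ii), (iv) is correct and is essentially what the paper does. For (i) you invoke Proposition~\ref{pro:associativity} while the paper verifies the case $W(\Lambda^\top)=\Lambda^\top$ by an explicit entrywise calculation; your route is cleaner but equivalent. For (ii) and (iv) your partial-sum/finite-band arguments are the paper's arguments in compressed form.

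For item (iii) you are right to be cautious---more cautious, in fact, than the paper, whose proof only treats the single shift $W(x)=I_px$, where $(G\Lambda^\top)X$ and $G(\Lambda^\top X)$ are literally the same series after reindexing. Your boundary-term analysis is correct, and those boundary terms do \emph{not} vanish in general: take $p=1$, $G_{k,l}\equiv 1$, $X_l\equiv 1$, and $W(x)=1-x$; then $GW(\Lambda^\top)=0$ so $(GW(\Lambda^\top))X=0$, whereas $W(\Lambda^\top)X=e_0$ and $G(W(\Lambda^\top)X)=(1,1,\dots)^\top$. Thus the statement, read literally for arbitrary $W$ of degree $\geq 1$, can fail. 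Your first proposed closure (split into single shifts) works for each power $(\Lambda^\top)^r$ separately but does not combine without extra hypotheses, exactly as you suspected. Your second proposed closure is the right one: in every place the paper uses item (iii), one has $X=\chi^*(z)$ with $|z|$ outside the support, so all the series are absolutely convergent and any rearrangement is legitimate. So there is no gap in your reasoning; you have simply noticed that (iii) needs a mild regularity assumption that the paper's applications always supply.
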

\begin{proof}
	We will use  the notation $L:=(S_1)^{-1}=(l_{i,j})$ and $U:=H(S_2)^{-\top}=(u_{i,j})$. Then, we write
	\begin{align*}
	G&=\begin{bmatrix}
	I_p & 0_p &0_p&\dots\\
	l_{1,0} & I_p & 0_p&\dots\\
	l_{2,0} & l_{2,1} & I_p &\\
	\vdots &\vdots &&\ddots&
	\end{bmatrix}\begin{bmatrix}
	u_{0,0} & u_{0,1} & u_{0,2}&\dots\\
	0_p &u_{1,1} &u_{1,2} &\dots\\
	0_p & 0_p&u_{2,2}&\dots\\
	\vdots & \vdots &&\ddots
	\end{bmatrix}
	\\
	&=\begin{bmatrix}
	u_{0,0} & u_{0,1} & u_{0,2}&\dots\\
	l_{1,0}	u_{0,0} & l_{1,0}u_{0,1}+u_{1,1} & l_{1,0}u_{0,2}+u_{1,2}&\dots\\
	l_{2,0}	u_{0,0} & l_{2,0}u_{0,1}+l_{2,1}u_{1,1} & l_{2,,0}u_{0,2}+l_{2,1}u_{1,2}+u_{2,2}&\dots\\
	\vdots & \vdots & \vdots&
	\end{bmatrix}.
	\end{align*}
	\begin{enumerate}
	\item It is enough to check it for $W(\Lambda ^\top)=\Lambda^\top$.
On the one hand
	\begin{align*}
	G\Lambda^\top&=\begin{bmatrix}
	u_{0,0} & u_{0,1} & u_{0,2}&\dots\\
	l_{1,0}	u_{0,0} & l_{1,0}u_{0,1}+u_{1,1} & l_{1,0}u_{0,2}+u_{1,2}&\dots\\
	l_{2,0}	u_{0,0} & l_{2,0}u_{0,1}+l_{2,1}u_{1,1} & l_{2,,0}u_{0,2}+l_{2,1}u_{1,2}+u_{2,2}&\dots\\
	\vdots & \vdots & \vdots&
	\end{bmatrix}\begin{bmatrix}
	0_p & 0_p & 0_p&\dots\\
	I_p & 0_p & 0_p &\dots\\
	0_p & I_p  &0_p&\dots\\
	\vdots &\ddots & \ddots&\ddots&
	\end{bmatrix}\\
&=	\begin{bmatrix}
 u_{0,1} & u_{0,2}&\dots\\
l_{1,0}u_{0,1}+u_{1,1} & l_{1,0}u_{0,2}+u_{1,2}&\dots\\
 l_{2,0}u_{0,1}+l_{2,1}u_{1,1} & l_{2,,0}u_{0,2}+l_{2,1}u_{1,2}+u_{2,2}&\dots\\
\vdots & \vdots & &
\end{bmatrix};
	\end{align*}
	i.e., we shift all columns one position to the left.
	On the other hand, we compute
	\begin{align*}
	L (U\Lambda ^\top)&=\begin{bmatrix}
	I_p & 0_p &0_p&\dots\\
	l_{1,0} & I_p & 0_p&\dots\\
	l_{2,0} & l_{2,1} & I_p &\\
	\vdots &\vdots &&\ddots&
	\end{bmatrix}\begin{bmatrix}
 u_{0,1} & u_{0,2}&\dots\\
u_{1,1} &u_{1,2} &\dots\\
 0_p&u_{3,3}&\dots\\
	\vdots & \ddots &\ddots
	\end{bmatrix}\\
	&=	\begin{bmatrix}
	u_{0,1} & u_{0,2}&\dots\\
	l_{1,0}u_{0,1}+u_{1,1} & l_{1,0}u_{0,2}+u_{1,2}&\dots\\
	l_{2,0}u_{0,1}+l_{2,1}u_{1,1} & l_{2,,0}u_{0,2}+l_{2,1}u_{1,2}+u_{2,2}&\dots\\
	\vdots & \vdots & &
	\end{bmatrix},
	\end{align*}
	and we obtain the desired result.
\item We first compute
$
UX=\begin{bsmallmatrix}
u_{0,0}X_0+ u_{0,1} X_1 +\cdots\\
u_{1,1} X_1 +u_{1,2}X_2+\cdots\\
u_{2,2}X_2+u_{2,3}X_3+\cdots\\
\vdots
\end{bsmallmatrix}$,
where we assumed that all the series involved,  $U_n:=\sum\limits_{m=0}^\infty u_{n,m}X_m$, do converge. Then, we have
\begin{align*}
L(UX)&=\begin{bmatrix}
I_p & 0_p &0_p&\dots\\
l_{1,0} & I_p & 0_p&\dots\\
l_{2,0} & l_{2,1} & I_p &\\
\vdots &\vdots &&\ddots&
\end{bmatrix}\begin{bmatrix}
U_0\\
U_1\\
U_2\\
\vdots
\end{bmatrix}=\begin{bmatrix}
U_0\\
l_{1,0}U_0+U_1\\
l_{2,0}U_0+l_{2,1}U_1+U_2\\
\dots
\end{bmatrix}
\end{align*}
We compute
\begin{align*}
GX&=\begin{bmatrix}
u_{0,0} & u_{0,1} & u_{0,2}&\dots\\
l_{1,0}	u_{0,0} & l_{1,0}u_{0,1}+u_{1,1} & l_{1,0}u_{0,2}+u_{1,2}&\dots\\
l_{2,0}	u_{0,0} & l_{2,0}u_{0,1}+l_{2,1}u_{1,1} & l_{2,,0}u_{0,2}+l_{2,1}u_{1,2}+u_{3,3}&\dots\\
\vdots & \vdots & \vdots&
\end{bmatrix}
\begin{bmatrix}
X_0\\X_1\\X_2\\
\vdots
\end{bmatrix}\\
&=\begin{bmatrix}
u_{0,0} X_0+ u_{0,1} X_1 +u_{0,2}X_2+\cdots\\
l_{1,0}u_{0,0} X_0+( l_{1,0}u_{0,1}+u_{1,1} )X_1 +(l_{1,0}u_{0,2}+u_{1,2})X_2+\cdots\\
l_{2,0}	u_{0,0} X_0+( l_{2,0}u_{0,1}+l_{2,1}u_{1,1}) X_1+ (l_{2,,0}u_{0,2}+l_{2,1}u_{1,2}+u_{2,2})X_3+\cdots\\
\vdots
\end{bmatrix}
\\
&=\begin{bmatrix}
U_0\\
l_{1,0}U_0+U_1\\
l_{2,0}U_0+l_{2,1}U_1+U_2\\
\dots
\end{bmatrix}
\end{align*}
where in the last identity we have used the fact that the series involved do converge and, therefore, a finite linear combination of convergent series gives the convergent series of the corresponding finite linear combination of its coefficients.
\item To illustrate the idea of the proof we just check the case $W(x)=I_px$.  Then, as we realized before, the matrix $G\Lambda^\top$ is obtained shifting left all columns in $G$
\begin{align*}
(G\Lambda^\top)X=\begin{bmatrix}
G_{0,1}  &G_{0,2} &\dots\\
G_{1,1} & G_{1,2} &\dots\\
\vdots &\vdots
\end{bmatrix}\begin{bmatrix}
X_0\\X_1\\\vdots
\end{bmatrix}=\begin{bmatrix}
G_{0,1} X_0+G_{0,2}X_1+\cdots\\
G_{1,1} X_0+G_{1,2}X_1+\cdots\\
\vdots
\end{bmatrix},
\end{align*}
and we suppose that all series $G_{j,1} X_0+G_{j,2}X_1+\cdots$ converge for $j\in\{0,1,\dots\}$.
But,
\begin{align*}
G(\Lambda^\top X)=\begin{bmatrix}
	G_{0,0}  &G_{0,1} &\dots\\
	G_{1,0} & G_{1,1} &\dots\\
	\vdots &\vdots
\end{bmatrix}\begin{bmatrix}
0_p\\X_0\\X_1\\\vdots
\end{bmatrix}=\begin{bmatrix}
G_{0,1} X_0+G_{0,2}X_1+\cdots\\
G_{1,1} X_0+G_{1,2}X_1+\cdots\\
\vdots
\end{bmatrix},
\end{align*}
and the statement follows.
\item Again we just check one case, $\Lambda (GX)=(\Lambda G)X$:
\begin{align*}
GX&=\begin{bmatrix}
G_{0,0}  &G_{0,1} &\dots\\
G_{0,1} & G_{1,1} &\dots\\
\vdots &\vdots
\end{bmatrix}\begin{bmatrix}
X_0\\X_1\\\vdots
\end{bmatrix}
=\begin{bmatrix}
G_{0,0}X_0+G_{0,1}X_1+\cdots\\
G_{1,0}X_0+G_{1,1}X_1+\cdots\\
\vdots
\end{bmatrix},
\end{align*}
so that
$\Lambda (GX)= \begin{bsmallmatrix}
G_{1,0}X_0+G_{1,1}X_1+\cdots\\
G_{2,0}X_0+G_{2,1}X_1+\cdots\\
\vdots
\end{bsmallmatrix}$. But,
\begin{align*}
(\Lambda G)X&=\begin{bmatrix}
G_{1,0}  &G_{1,1} &\dots\\
G_{2,1} & G_{2,1} &\dots\\
\vdots &\vdots
\end{bmatrix}\begin{bmatrix}
X_0\\X_1\\\vdots
\end{bmatrix}=\begin{bmatrix}
G_{1,0}X_0+G_{1,1}X_1+\cdots\\
G_{2,0}X_0+G_{2,1}X_1+\cdots\\
\vdots
\end{bmatrix},
\end{align*}
	as claimed.
\end{enumerate}
\end{proof}

\begin{pro}\label{pro:hankel}
	The symmetry of the  block Hankel  moment matrix reads
$\Lambda G=G\Lambda^\top$.
\end{pro}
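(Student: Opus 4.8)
The plan is to reduce the identity to the defining feature of a block Hankel matrix, namely that the block $G_{k,l}$ depends only on the sum $k+l$. First I would record the block action of the shift and its transpose. Since $\Lambda$ carries identity blocks on the first superdiagonal, $(\Lambda)_{i,j}=\delta_{j,i+1}I_p$, left multiplication shifts the block rows upward, $(\Lambda G)_{k,l}=G_{k+1,l}$, while right multiplication by $\Lambda^\top$ shifts the block columns to the left, $(G\Lambda^\top)_{k,l}=G_{k,l+1}$. Both products are unambiguous: $\Lambda$ is simultaneously of block lower and block upper Hessenberg type, so by Proposition \ref{pro:associativity} each entry is a finite sum and no associativity or convergence subtlety intervenes.

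Next I would invoke the Hankel hypothesis. Writing $G_{k,l}=m_{k+l}$ as in the moment-matrix description, the two computed entries become $G_{k+1,l}=m_{k+l+1}$ and $G_{k,l+1}=m_{k+l+1}$, which coincide for every $k,l\in\{0,1,\dots\}$. This immediately yields $\Lambda G=G\Lambda^\top$.

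Alternatively, and perhaps more transparently, I would phrase the argument through the spectral property $\Lambda\chi(x)=x\chi(x)$. Starting from $G=\prodint{\chi(x),\chi(y)}_u$ together with the left and right linearity of the sesquilinear form, one obtains $\Lambda G=\prodint{x\chi(x),\chi(y)}_u$ and $G\Lambda^\top=\prodint{\chi(x),y\chi(y)}_u$; the claimed identity then amounts to the interchange $\prodint{x\,I_px^k,I_py^l}_u=\prodint{I_px^k,y\,I_py^l}_u$, which is precisely the statement that the generalized kernel is supported on the diagonal $y=x$, as holds in the Hankel case.

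There is no genuine obstacle here: the entire content is the $k+l$ dependence of the Hankel moments. The only point demanding a little care is the manipulation of semi-infinite matrices, and this is dispatched by the Hessenberg character of $\Lambda$, which guarantees that $\Lambda G$ and $G\Lambda^\top$ are honest products with finitely supported rows and columns, so that the block-by-block comparison above is fully justified.
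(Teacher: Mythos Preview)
Your proof is correct. The paper states this proposition without proof, treating it as an immediate consequence of the block Hankel structure $G_{k,l}=m_{k+l}$; your blockwise computation $(\Lambda G)_{k,l}=G_{k+1,l}=m_{k+l+1}=G_{k,l+1}=(G\Lambda^\top)_{k,l}$ is exactly the intended justification, and your alternative via $\Lambda\chi(x)=x\chi(x)$ is an equally valid rephrasing.
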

Notice that this symmetry completely characterizes  Hankel block matrices.
\begin{defi}\label{def:jacobi}
	The  matrices
$	J_1:=S_1 \Lambda (S_1)^{-1}$ and $J_2:=S_2 \Lambda (S_2)^{-1}$
are the Jacobi matrices associated with the Gram matrix $G$.
\end{defi}
The reader must notice the abuse in the notation. But for the sake of simplicity we have used the same letter for Jacobi and Jordan matrices. The type of matrix will be clear from the context.
\begin{rem}
	 Let us observe that there is no problem with the definition related with  the associativity problems of semi-infinite matrices, as one can readily check that for any couple of  lower unitriangular matrices  $L_1,L_2$ we have  $(L_1\Lambda) L_2=L_1(\Lambda L_2)$. Moreover, if $L_3$ is a third lower unitriangular matrix, we have
	 $(L_1\Lambda L_2)L_3=(L_1\Lambda)(L_2L_3)$.
\end{rem}

\begin{pro}
	The biorthogonal polynomials are eigenvectors of the Jacobi matrices
	\begin{align*}
	J_1P^{[1]}(x)&=x P^{[1]}(x), & 	J_2P^{[2]}(x)&=x P^{[2]}(x).
	\end{align*}
	and the second kind functions á la Gram satisfy
	\begin{align*}
	\big(H (J_2)^\top H^{-1}\big)C^{[1]}(x)&=xC^{[1]}(x)-H_0 \begin{bmatrix}
	I_p\\0_p\\ \vdots
	\end{bmatrix}, &
	\big(  H^\top (J_1)^\top H^{-\top}\big)C^{[2]}(x)&=xC^{[2]}(x)-H_0^{\top} \begin{bmatrix}
	I_p\\0_p\\ \vdots
	\end{bmatrix}.
	\end{align*}
\end{pro}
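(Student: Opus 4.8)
The plan is to trace the two spectral properties of the monomial vectors through the dressing matrices $S_1,S_2,H$. For the eigenvector relations I would start from $J_i=S_i\Lambda(S_i)^{-1}$ and $P^{[i]}(x)=S_i\chi(x)$. Using the associativity rule $(L_1\Lambda L_2)L_3=(L_1\Lambda)(L_2L_3)$ for lower unitriangular matrices recorded in the remark following Definition \ref{def:jacobi} (with $L_1=S_i$, $L_2=(S_i)^{-1}$, $L_3=S_i$) one obtains the matrix identity $J_iS_i=S_i\Lambda$. Applying it to $\chi(x)$ and invoking the spectral property $\Lambda\chi(x)=x\chi(x)$ then gives $J_iP^{[i]}(x)=J_i(S_i\chi(x))=(J_iS_i)\chi(x)=S_i\Lambda\chi(x)=xS_i\chi(x)=xP^{[i]}(x)$. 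The regroupings are legitimate because $J_i$ and $S_i$ are both block lower Hessenberg, so Proposition \ref{pro:associativity} applies, and because every sum acts on the polynomial vector $\chi(x)$ and is therefore finite.

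For the second-kind functions I would first record the companion spectral property $\Lambda^\top\chi^*(x)=x\chi^*(x)-[I_p,0_p,\dots]^\top$, which is immediate from the fact that $\Lambda^\top$ shifts the blocks of $\chi^*(x)=[I_px^{-1},I_px^{-2},\dots]^\top$ downward, whereas $x\chi^*(x)=[I_p,I_px^{-1},\dots]^\top$ carries the extra top block $I_p$. Writing $(J_2)^\top=(S_2)^{-\top}\Lambda^\top(S_2)^\top$ and recalling $C^{[1]}(x)=H(S_2)^{-\top}\chi^*(x)$, the middle factors in $H(J_2)^\top H^{-1}C^{[1]}(x)$ telescope through $(S_2)^\top H^{-1}H(S_2)^{-\top}=I$, leaving $H(S_2)^{-\top}\Lambda^\top\chi^*(x)$. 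Substituting the companion property produces $xC^{[1]}(x)-H(S_2)^{-\top}[I_p,0_p,\dots]^\top$, and since $(S_2)^{-\top}$ is upper unitriangular its first block column equals $[I_p,0_p,\dots]^\top$, so the boundary term collapses to $H[I_p,0_p,\dots]^\top=H_0[I_p,0_p,\dots]^\top$. This is exactly the first claimed identity; the computation for $C^{[2]}(x)$ is word for word the same after replacing $(S_2,H)$ by $(S_1,H^\top)$ and using $(J_1)^\top=(S_1)^{-\top}\Lambda^\top(S_1)^\top$.

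The delicate point, where care is genuinely needed, is the handling of the series defining $C^{[1]}$ and $C^{[2]}$: as noted in the remark after Definition \ref{defi:bio2kind}, products such as $H(S_2)^{-\top}\chi^*(x)$ are non-admissible, so the telescoping and the substitution of the $\chi^*$ spectral property are valid only under the standing convergence hypothesis of that definition. The cancellation $(S_2)^\top H^{-1}H(S_2)^{-\top}=I$ is itself a harmless finite-sum identity among triangular matrices, but commuting it past the action on $\chi^*(x)$ presupposes that the relevant series converge. I would therefore state the second pair of identities, as the proposition does, precisely under that proviso, and check convergence in the concrete situations (the Cauchy transforms) where these second-kind functions are actually used.
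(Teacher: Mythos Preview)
Your argument is correct and follows essentially the same route as the paper: for the eigenvector relations you use the definition $J_i=S_i\Lambda(S_i)^{-1}$ together with $\Lambda\chi(x)=x\chi(x)$ and the associativity rules for lower unitriangular matrices, and for the second-kind functions you telescope $(S_2)^\top H^{-1}H(S_2)^{-\top}$ to the identity and invoke $\Lambda^\top\chi^*(x)=x\chi^*(x)-[I_p,0_p,\dots]^\top$, exactly as the paper does. Your explicit flagging of the convergence proviso for the non-admissible products is appropriate and matches the paper's care with Propositions~\ref{pro:associativity} and~\ref{pro:Gauss--Borel factorization and associativity}.
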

\begin{proof}
	Let us check the first relation
	\begin{align*}
	J_1P^{[1]}(x)&=(S_1\Lambda (S_1)^{-1})(S_1 \chi(x))= \big((S_1\Lambda) (S_1)^{-1}\big)(S_1 \chi(x))\\
	&=	(S_1\Lambda) ((S_1)^{-1}S_1 \chi(x))=(S_1\Lambda)\chi(x).
	\end{align*}
	For the second kind functions we give a very detailed chain of relations, in where in every step we have carefully checked associative aspects of the operations performed (otherwise, it will take a couple of lines) a make heavy use of Propositions \ref{pro:associativity} and \ref{pro:Gauss--Borel factorization and associativity}
	\begin{align*}
	\big( H (J_2)^\top H^{-1}\big)C^{[1]}(x)&=\Big(H\big((S_2)^{-\top} \Lambda^\top (S_2)^\top \big) H^{-1}\Big)\big(	H\big(S_2\big)^{-\top}\chi^{*}(x)\big)=\big(\big(HS_2^{-\top} \Lambda^\top) \big((S_2)^{\top}H^{-1}\big)\big(	H\big(S_2\big)^{-\top}\chi^{*}(x)\big)\\
	&=(HS_2^{-\top} \Lambda^\top)  \Big(\big((S_2)^{\top}H^{-1}\big)\big(	H\big(S_2\big)^{-\top}\chi^{*}(x)\Big)=(HS_2^{-\top} \Lambda^\top)\chi^{*}(x)\\
	&=\big(HS_2^{-\top}\big)  (\Lambda^\top \chi^{*}(x)  )\\&=
	x\big(HS_2^{-\top}\big)\chi^*(x)- x\big(HS_2^{-\top}\big) \begin{bmatrix}
	I_p\\0_p\\ \vdots
	\end{bmatrix}\\&=
	xC^{[1]}(x)- x \begin{bmatrix}
	H_0\\0_p\\ \vdots
	\end{bmatrix}.   	\end{align*}
\end{proof}
\begin{rem}
	For the Hankel case we have
	\begin{align*}
	J_1C^{[1]}(x)&=xC^{[1]}(x)-H_0 \begin{bmatrix}
	I_p\\0_p\\ \vdots
	\end{bmatrix}, &
	J_2C^{[2]}(x)&=xC^{[2]}(x)-H_0^{\top} \begin{bmatrix}
	I_p\\0_p\\ \vdots
	\end{bmatrix}.
	\end{align*}
\end{rem}

\begin{pro}For Hankel type Gram matrices (i.e., associated with a matrix  of univariate generalized functionals) the two Jacobi  matrices are related by
	\begin{align*}
	H^{-1}J_1=J_2^{\top} H^{-1},
	\end{align*}
	being,  therefore,  a tridiagonal  matrix. This yields  the three term relation for biorthogonal polynomials
	and second kind functions, respectively.
\end{pro}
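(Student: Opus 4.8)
The plan is to read the stated identity directly off the Hankel symmetry $\Lambda G=G\Lambda^\top$ of Proposition \ref{pro:hankel}, combined with the Gauss--Borel factorization $G=(S_1)^{-1}H(S_2)^{-\top}$ of \eqref{eq:gauss} and the definition $J_1=S_1\Lambda(S_1)^{-1}$, $J_2=S_2\Lambda(S_2)^{-1}$ of the Jacobi matrices in Definition \ref{def:jacobi}; tridiagonality will then come for free from the Hessenberg structure of $J_1$ and $J_2$, and the three-term relations from the eigenvector equations already established for $P^{[1]}$, $P^{[2]}$ and the second kind functions.

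First I would substitute the factorization into $\Lambda G=G\Lambda^\top$. Using Proposition \ref{pro:Gauss--Borel factorization and associativity}(1) with $W(\Lambda)=\Lambda$ on the left and $W(\Lambda^\top)=\Lambda^\top$ on the right, the symmetry reads
\begin{align*}
(\Lambda(S_1)^{-1})H(S_2)^{-\top}=(S_1)^{-1}H((S_2)^{-\top}\Lambda^\top).
\end{align*}
I would then left-multiply by $S_1$ and right-multiply by $(S_2)^\top$. On the left, the factor $S_1(\Lambda(S_1)^{-1})$ regroups as $(S_1\Lambda)(S_1)^{-1}=J_1$, while on the right $S_1(S_1)^{-1}=I$ collapses the leading factor; right-multiplying, the factor $(S_2)^{-\top}\Lambda^\top(S_2)^\top$ equals $\big(S_2\Lambda(S_2)^{-1}\big)^\top=J_2^\top$, while $(S_2)^{-\top}(S_2)^\top=I$ collapses. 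The upshot is the clean matrix identity $J_1H=HJ_2^\top$, and multiplying by $H^{-1}$ on the left delivers $H^{-1}J_1=J_2^\top H^{-1}$, as claimed.

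The main obstacle is not this algebra but the legitimacy of every regrouping, since products of semi-infinite block matrices need not be associative. Each step must be licensed by Proposition \ref{pro:associativity}: the matrices $S_1,(S_1)^{-1}$ are block lower triangular (hence lower Hessenberg) and $\Lambda$ is both lower and upper Hessenberg, so the left-hand regroupings such as $S_1(\Lambda(S_1)^{-1})=(S_1\Lambda)(S_1)^{-1}$ fall under case (i); symmetrically $(S_2)^\top,(S_2)^{-\top}$ are upper triangular and $\Lambda^\top$ is Hessenberg, so the right-hand regroupings fall under case (ii); and the annihilations of $S_1(S_1)^{-1}$ against the remaining upper-Hessenberg blocks use case (iii). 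I would verify these factor by factor, invoking Proposition \ref{pro:Gauss--Borel factorization and associativity} precisely for the two steps in which the full Gram matrix $G$ is split or reassembled.

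Finally, for tridiagonality and the recurrences: because $S_1$ is block lower unitriangular and $\Lambda$ shifts by one, $J_1=S_1\Lambda(S_1)^{-1}$ is block lower Hessenberg with a single superdiagonal of identity blocks (as $P_n^{[1]}$ is monic), and likewise for $J_2$. Hence $H^{-1}J_1$ is block lower Hessenberg while $J_2^\top H^{-1}$ is block upper Hessenberg; their equality forces the common matrix to be block tridiagonal, and conjugating back by $H$ shows $J_1=HJ_2^\top H^{-1}$ and, by transposition of $J_1H=HJ_2^\top$, also $J_2$ are themselves tridiagonal. Substituting these tridiagonal operators into the eigenvector relations $J_1P^{[1]}(x)=xP^{[1]}(x)$ and $J_2P^{[2]}(x)=xP^{[2]}(x)$ then yields, row by row, the three-term recurrence relations for the two biorthogonal families, and the tridiagonal action of $H(J_2)^\top H^{-1}$ and $H^\top(J_1)^\top H^{-\top}$ on $C^{[1]}$ and $C^{[2]}$ gives the corresponding three-term relations for the second kind functions.
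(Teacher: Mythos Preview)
Your proposal is correct and follows exactly the approach the paper sketches: derive $J_1H=HJ_2^\top$ from the Hankel symmetry $\Lambda G=G\Lambda^\top$ and the Gauss--Borel factorization, with the associativity justified by Propositions \ref{pro:associativity} and \ref{pro:Gauss--Borel factorization and associativity}, then read off tridiagonality from the lower/upper Hessenberg structure and the three-term relations from the eigenvector equations. Your write-up is in fact considerably more detailed than the paper's own terse proof, which merely names these ingredients without spelling out the regroupings or the Hessenberg argument.
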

\begin{proof}
	The relation between the two Jacobi matrices follows from the Gauss--Borel factorization in the symmetry $\Lambda G = G \Lambda ^{\top}$. A consequence of
	this relation is the three-block-diagonal shape of these matrices. The
	three term  relations follow from the definitions of the Jacobi matrices in terms of the factorization matrices. There are not associative issues in this case and Propositions \ref{pro:associativity} and \ref{pro:Gauss--Borel factorization and associativity} ensure all the mentioned arguments.
\end{proof}

\begin{pro}
	We have the following last quasideterminantal expressions
	\begin{align*}
	P^{[1]}_n(x)&=\Theta_*
	\begin{bmatrix}
	G_{0,0}&G_{0,1}&\cdots&G_{0,n-1}&I_p\\
	G_{1,0}& G_{1,1}&\cdots&G_{1,n}-1&I_px\\
	\vdots&\vdots&&\vdots&\vdots\\
	G_{n-1,0}&G_{n-1,1}&\cdots&G_{n-1,n-1}&I_px^{n-1}\\
	G_{n,0}&G_{n,1}&\cdots&G_{n,n-1}&I_px^{n}
	\end{bmatrix}, \\
	(P^{[2]}_n(y))^\top&=\Theta_*
	\begin{bmatrix}
	G_{0,0}&G_{0,1}&\cdots&G_{0,n-1}&	G_{0,n}\\
	G_{1,0}& G_{1,1} &\cdots&G_{1,n-1}&G_{1,n}\\
	\vdots&\vdots&&\vdots&\vdots\\
	G_{n-1,0}&G_{n-1,1}&\cdots&G_{n-1,n-1}&G_{n-1,n}\\
	I_p&I_py&\cdots&I_py^{n-1}&I_py^{n}
	\end{bmatrix}.
	\end{align*}
\end{pro}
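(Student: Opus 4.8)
The plan is to extract the coefficients of the biorthogonal polynomials directly from the orthogonality relations and then to recognize the outcome as a Schur complement, that is, as a last quasideterminant.

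First I would record, from \eqref{eq:bior} and the lower unitriangular shape of $S_1$, that
\begin{align*}
P^{[1]}_n(x)=I_px^n+\sum_{k=0}^{n-1}(S_1)_{n,k}I_px^k,
\end{align*}
so that $P^{[1]}_n$ is monic of degree $n$ and its only unknowns are the blocks $(S_1)_{n,0},\dots,(S_1)_{n,n-1}$. Writing $\chi_{[n]}(x):=\begin{bmatrix}I_p,I_px,\dots,I_px^{n-1}\end{bmatrix}^\top$, the last quasideterminant of the first bordered matrix in the statement is, by the very definition of $\Theta_*$, the Schur complement
\begin{align*}
I_px^n-\begin{bmatrix}G_{n,0},\dots,G_{n,n-1}\end{bmatrix}(G_{[n]})^{-1}\chi_{[n]}(x),
\end{align*}
where $(G_{[n]})^{-1}$ exists because quasidefiniteness gives $\det G_{[n]}\neq 0$. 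Since $G_{[n]}$ is a finite block matrix, none of the associativity subtleties of Proposition \ref{pro:associativity} intervene here.

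Next I would turn the orthogonality relations \eqref{eq:orthogonality1}--\eqref{eq:orthogonality2} into a linear system. Expanding $\prodint{P^{[1]}_n(x),I_py^m}_u=0_p$ for $m\in\{0,\dots,n-1\}$ and isolating the monic term $(S_1)_{n,n}=I_p$ gives $\sum_{k=0}^{n-1}(S_1)_{n,k}G_{k,m}=-G_{n,m}$, that is,
\begin{align*}
\begin{bmatrix}(S_1)_{n,0},\dots,(S_1)_{n,n-1}\end{bmatrix}G_{[n]}=-\begin{bmatrix}G_{n,0},\dots,G_{n,n-1}\end{bmatrix}.
\end{align*}
Multiplying on the right by $(G_{[n]})^{-1}$ solves for the coefficient row, and substituting the result back into $P^{[1]}_n(x)=I_px^n+\begin{bmatrix}(S_1)_{n,0},\dots,(S_1)_{n,n-1}\end{bmatrix}\chi_{[n]}(x)$ reproduces exactly the Schur complement above, proving the first identity.

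For the second family the only genuine care is the transpose built into the sesquilinear form (Definition \ref{def:sesquilinear}). Writing $(P^{[2]}_n(y))^\top=I_py^n+\sum_{l=0}^{n-1}((S_2)_{n,l})^\top y^l$ and expanding $\prodint{I_px^m,P^{[2]}_n(y)}_u=0_p$ for $m\in\{0,\dots,n-1\}$ yields the column system
\begin{align*}
G_{[n]}\begin{bmatrix}((S_2)_{n,0})^\top\\\vdots\\((S_2)_{n,n-1})^\top\end{bmatrix}=-\begin{bmatrix}G_{0,n}\\\vdots\\G_{n-1,n}\end{bmatrix}.
\end{align*}
Solving for the transposed coefficients and substituting into $(P^{[2]}_n(y))^\top$ produces the last quasideterminant of the bordered matrix with right-hand column $\begin{bmatrix}G_{0,n},\dots,G_{n-1,n}\end{bmatrix}^\top$ and bottom row $\begin{bmatrix}I_p,I_py,\dots,I_py^n\end{bmatrix}$, as announced. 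The step demanding the most attention is purely clerical rather than conceptual: tracking on which side each coefficient block multiplies and where the transpose falls, so that the first formula carries a left-border column of monomials $I_px^k$ while the second carries a bottom-border row of monomials $I_py^l$ paired with a transposed coefficient vector. Once these conventions are fixed both identities are immediate consequences of biorthogonality and of the Schur-complement description of $\Theta_*$; alternatively one may simply insert the expressions for $(S_1)_{k,l}$ and $((S_2)^\top)_{k,l}$ already recorded in Proposition \ref{qd1} into \eqref{eq:bior} and sum against $\chi(x)$.
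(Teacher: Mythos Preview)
Your argument is correct. The paper states this proposition without proof, presumably regarding it as an immediate consequence of the Gauss--Borel factorization and the quasideterminantal formulas already collected in Proposition~\ref{qd1}; your derivation via the orthogonality relations \eqref{eq:orthogonality1} and the Schur-complement interpretation of $\Theta_*$ is exactly the natural way to fill in that gap, and the alternative you mention at the end (substituting the expressions from Proposition~\ref{qd1} into \eqref{eq:bior}) is the other obvious route.
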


\begin{defi}[Christoffel--Darboux kernel,\cite{simon-cd,DAS}]
	Given two sequences of matrix  biorthogonal polynomials 
	\begin{align*}
	\text{ $\big\{P_k^{[1]}(x)\big\}_{k=0}^\infty$ and $\big\{P_k^{[2]}(y)\big\}_{k=0}^\infty$, }
	\end{align*}with respect to the sesquilinear form $\prodint{\cdot,\cdot}_u$, we define the $n$-th Christoffel--Darboux kernel matrix polynomial
\begin{align}\label{eq:CD kernel}
K_{n}(x,y):=\sum_{k=0}^{n}(P_k^{[2]}(y))^\top( H_k)^{-1}P^{[1]}_k(x),
\end{align}
and the  mixed Christoffel--Darboux kernel
\begin{align*}
	K^{(pc)}_n(x,y)&:=\sum_{k=0}^n\big(P_k^{[2]}(y)\big)^\top (H_k)^{-1}C_k^{[1]}(x).	
\end{align*}
\end{defi}
\begin{pro}
	\begin{enumerate}
		\item 	For a quasidefinite matrix of generalized kernels $u_{x,y}$, the corresponding  Christoffel--Darboux kernel gives the projection operator
		\begin{align}\label{eq:reproducing}
		\prodint{ K_n(x,z),\sum_{0\leq j\ll\infty} C_j P^{[2]}_j(y)}_u&=
	\Big(	\sum_{j=0}^nC_jP_j^{[2]}(z)\Big)^\top,& 	\prodint{ \sum_{0\leq j\ll\infty}C_jP^{[1]}_j(x),(K_n(z,y))^\top}_u&=
		\sum_{j=0}^nC_jP^{[1]}_j(z).
		\end{align}
		\item
		In particular, we have
		\begin{align}\label{eq:K-u}
		\prodint{ K_n(x,z),I_py^l}_u&=I_pz^l, & l\in&\{0,1,\dots,n\}.
		\end{align}
	\end{enumerate}
\end{pro}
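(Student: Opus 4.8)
The plan is to derive both items directly from the bilinearity axioms (i) and (ii) of Definition \ref{def:sesquilinear} together with the biorthogonality \eqref{eq:biorthogonal}, obtaining item (2) as a special case of item (1). A preliminary observation that disposes of all associativity subtleties is that \emph{both} entries appearing here are finite sums: the kernel $K_n(x,z)=\sum_{k=0}^n(P^{[2]}_k(z))^\top(H_k)^{-1}P^{[1]}_k(x)$ has $n+1$ terms, and $\sum_{0\le j\ll\infty}C_jP^{[2]}_j(y)$ has finite support with $C_j\in\mathbb C^{p\times p}$. Hence every step below is a finite linear combination, and Propositions \ref{pro:associativity} and \ref{pro:Gauss--Borel factorization and associativity} are not even invoked.

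First I would prove the left identity in \eqref{eq:reproducing}. Reading $K_n(x,z)$ as a polynomial in the first slot $x$ with matrix coefficients $(P^{[2]}_k(z))^\top(H_k)^{-1}$, axiom (i) pulls these out on the left, so that $\prodint{K_n(x,z),\sum_jC_jP^{[2]}_j(y)}_u=\sum_{k=0}^n(P^{[2]}_k(z))^\top(H_k)^{-1}\prodint{P^{[1]}_k(x),\sum_jC_jP^{[2]}_j(y)}_u$. Axiom (ii) then extracts the $C_j$ on the right as transposes, and biorthogonality $\prodint{P^{[1]}_k(x),P^{[2]}_j(y)}_u=\delta_{k,j}H_k$ collapses the inner sum to $H_kC_k^\top$. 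The factors $(H_k)^{-1}H_k$ cancel, leaving $\sum_{k=0}^n(P^{[2]}_k(z))^\top C_k^\top=\sum_{k=0}^n(C_kP^{[2]}_k(z))^\top=(\sum_{j=0}^nC_jP^{[2]}_j(z))^\top$, where the truncation at $n$ is automatic since the kernel sum stops at $n$.

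The right identity in \eqref{eq:reproducing} runs along the same lines, but now I would first transpose the kernel to read it as a polynomial in $y$, namely $(K_n(z,y))^\top=\sum_{k=0}^n(P^{[1]}_k(z))^\top(H_k)^{-\top}P^{[2]}_k(y)$. Applying axiom (i) to pull out the $C_j$ on the left and axiom (ii) to the coefficients $(P^{[1]}_k(z))^\top(H_k)^{-\top}$ of $P^{[2]}_k(y)$, the single bookkeeping point is that the transpose of this coefficient is exactly $(H_k)^{-1}P^{[1]}_k(z)$; biorthogonality then yields $\prodint{P^{[1]}_j(x),(K_n(z,y))^\top}_u=\sum_k\delta_{j,k}H_j(H_k)^{-1}P^{[1]}_k(z)=P^{[1]}_j(z)$ for $j\le n$ and $0_p$ otherwise, so that the full expression reproduces $\sum_{j=0}^nC_jP^{[1]}_j(z)$. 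The only real (and entirely clerical) obstacle in both computations is keeping track of the transposes forced by the right-linearity axiom (ii); no ingredient beyond biorthogonality is needed.

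Finally, item (2) is a corollary of item (1). Since each $P^{[2]}_j(y)$ is monic of degree $j$, the family $\{P^{[2]}_0,\dots,P^{[2]}_l\}$ is a basis of the matrix polynomials of degree at most $l$, so for $l\le n$ I may write $I_py^l=\sum_{j=0}^lC_jP^{[2]}_j(y)$ uniquely, with $C_j=0_p$ for $j>l$. Substituting into the left identity of \eqref{eq:reproducing} gives $\prodint{K_n(x,z),I_py^l}_u=(\sum_{j=0}^lC_jP^{[2]}_j(z))^\top=(I_pz^l)^\top=I_pz^l$, since $I_p$ is symmetric and $z^l$ is scalar. This establishes \eqref{eq:K-u}.
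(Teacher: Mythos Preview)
Your proposal is correct and follows exactly the approach the paper indicates: the paper's proof consists of the single sentence ``It follows from the biorthogonality \eqref{eq:biorthogonal},'' and what you have written is precisely the expanded computation that this sentence summarizes. Your derivation of item (2) from item (1) via the basis expansion of $I_py^l$ is the natural reading of the paper's ``In particular.''
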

\begin{proof}
	It follows from the biorthogonality \eqref{eq:biorthogonal}.
\end{proof}

\begin{pro}[Christoffel--Darboux formula]\label{pro:CD formula}
	When the  sesquilinear form is Hankel (now $u$ is  a matrix of univariate generalized functions  with its Gram matrix of block Hankel type) the Christoffel--Darboux kernel satisfies
	\begin{align*}
	(x- y)K_n(x,y)&=(P^{[2]}_{n}(y))^\top (H_n)^{-1}P^{[1]}_{n+1}(x)-(P^{[2]}_{n+1}(y))^\top (H_{n})^{-1}P^{[1]}_{n}(x),\\
 	\end{align*}
and the mixed Christoffel-Darboux kernel fulfills
	\begin{align*}
(x-{y})	K^{(pc)}_{n}(x,y)
	&=(P^{[2]}_{n}(y))^{\top}H_n^{-1}C_{n+1}^{[1]}(x)-(P^{[2]}_{n+1}(y))^{\top}H_n^{-1}C_{n}^{[1]}(x)
	+I_p.
	\end{align*}
\end{pro}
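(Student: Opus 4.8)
The plan is to prove both identities simultaneously by realizing the kernels as semi-infinite quadratic forms and reducing $(x-y)K_n$ to a commutator supported only at the boundary block $n$. I introduce the block truncation $\pi_n:=\diag(I_p,\dots,I_p,0_p,0_p,\dots)$ onto the first $n+1$ blocks and the unit block vector $e_0:=[I_p,0_p,\dots]^\top$, so that $K_n(x,y)=(P^{[2]}(y))^\top\pi_n H^{-1}P^{[1]}(x)$ is a \emph{finite} sum. Using that the biorthogonal families are eigenvectors of the Jacobi matrices, I would write $xP^{[1]}(x)=J_1P^{[1]}(x)$ and $(yP^{[2]}(y))^\top=(P^{[2]}(y))^\top J_2^\top$, obtaining
\[
(x-y)K_n=(P^{[2]}(y))^\top\big[\pi_n H^{-1}J_1-J_2^\top\pi_n H^{-1}\big]P^{[1]}(x).
\]

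The crucial input is the Hankel symmetry $H^{-1}J_1=J_2^\top H^{-1}$. Substituting it in the first term collapses the bracket into the commutator $[\pi_n,J_2^\top]H^{-1}$. Since $J_2$ is block tridiagonal (both Jacobi matrices are lower Hessenberg, while $J_2^\top=H^{-1}J_1H$ is then lower Hessenberg as well, forcing tridiagonality), the commutator $[\pi_n,J_2^\top]$ has only the two boundary blocks $(n,n+1)$ and $(n+1,n)$, equal to $(J_2^\top)_{n,n+1}$ and $-(J_2^\top)_{n+1,n}=-I_p$ (the latter by monicity of $P^{[2]}$). Multiplying on the right by $H^{-1}$ and reading the $(n,n+1)$ block of $H^{-1}J_1=J_2^\top H^{-1}$, namely $H_n^{-1}=(J_2^\top)_{n,n+1}H_{n+1}^{-1}$, shows that $[\pi_n,J_2^\top]H^{-1}$ carries the blocks $H_n^{-1}$ at $(n,n+1)$ and $-H_n^{-1}$ at $(n+1,n)$. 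Pairing these against $(P^{[2]}(y))^\top$ on the left and $P^{[1]}(x)$ on the right yields exactly $(P^{[2]}_n(y))^\top H_n^{-1}P^{[1]}_{n+1}(x)-(P^{[2]}_{n+1}(y))^\top H_n^{-1}P^{[1]}_n(x)$, the first formula.

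For the mixed kernel I run the same computation with $C^{[1]}$ in place of $P^{[1]}$, writing $K^{(pc)}_n(x,y)=(P^{[2]}(y))^\top\pi_n H^{-1}C^{[1]}(x)$. The only new ingredient is the \emph{inhomogeneous} recurrence of the second kind functions: from $x\chi^*(x)=\Lambda^\top\chi^*(x)+e_0$ together with $C^{[1]}=H(S_2)^{-\top}\chi^*$ and $S_2^{-\top}\Lambda^\top=J_2^\top S_2^{-\top}$ one gets $xC^{[1]}(x)=J_1C^{[1]}(x)+H_0e_0$ in the Hankel case (equivalently $(H(J_2)^\top H^{-1})C^{[1]}=xC^{[1]}-H_0e_0$, already established). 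The homogeneous part reproduces verbatim the boundary commutator, hence the term $(P^{[2]}_n)^\top H_n^{-1}C^{[1]}_{n+1}-(P^{[2]}_{n+1})^\top H_n^{-1}C^{[1]}_n$. The source contributes the extra piece $(P^{[2]}(y))^\top\pi_n H^{-1}H_0e_0$; since $H^{-1}H_0e_0=e_0$, $\pi_ne_0=e_0$ and $P^{[2]}_0(y)=I_p$, this equals $(P^{[2]}_0(y))^\top=I_p$, producing the additive $+I_p$ and hence the second formula.

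The delicate points are bookkeeping rather than conceptual. First, every block transpose must be the full transpose $(M^\top)_{k,l}=(M_{l,k})^\top$, so that e.g. $(J_2^\top)_{n+1,n}=(I_p)^\top=I_p$ and the coefficient identity $(J_2^\top)_{n,n+1}H_{n+1}^{-1}=H_n^{-1}$ comes out correctly; confusing this with a mere block-position swap destroys the cancellations. Second, the rearrangements $xP^{[1]}=J_1P^{[1]}$, $(yP^{[2]})^\top=(P^{[2]})^\top J_2^\top$ and $xC^{[1]}=J_1C^{[1]}+H_0e_0$ involve semi-infinite matrices and must be justified through the associativity results (Propositions \ref{pro:associativity} and \ref{pro:Gauss--Borel factorization and associativity}); for the second kind functions this additionally presupposes convergence of the series defining $C^{[1]}$, exactly as in Definition \ref{defi:bio2kind}. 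Because $\pi_n$ truncates and the $P^{[i]}$ are polynomials, all products actually entering $(x-y)K_n$ reduce to finite sums, so localizing the commutator is the only substantive step. An equivalent elementary route avoids operators altogether: substitute the three-term recurrences into $(x-y)\sum_{k=0}^n(P^{[2]}_k)^\top H_k^{-1}P^{[1]}_k$ and telescope, the identities $H_k^{-1}a_k^{[1]}=H_{k-1}^{-1}$, $(a_k^{[2]})^\top H_k^{-1}=H_{k-1}^{-1}$ and $H_k^{-1}b_k^{[1]}=(b_k^{[2]})^\top H_k^{-1}$ (all read off from $H^{-1}J_1=J_2^\top H^{-1}$) being precisely what makes the diagonal and subdiagonal contributions cancel.
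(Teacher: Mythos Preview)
Your argument is correct and is essentially the same as the paper's: both compute the truncated pairing $(P^{[2]}(y))^\top\,[\,\cdot\,]_{[n+1]}\,H^{-1}(\cdot)$ with the middle operator $J_2^\top$ (equivalently $H^{-1}J_1$) acting once to the left and once to the right, use the Hankel identity $H^{-1}J_1=J_2^\top H^{-1}$ together with the tridiagonality it forces, and read off the two boundary blocks plus, for $K_n^{(pc)}$, the inhomogeneous $H_0e_0$ source from the recurrence for $C^{[1]}$. Your commutator packaging $[\pi_n,J_2^\top]H^{-1}$ is just a tidier way to isolate the same boundary contributions the paper extracts by displaying the block structure of $[J_2^\top H^{-1}]_{[n]}$.
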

\begin{proof}
We only prove the second formula, for the first one proceeds similarly.
It is obviously a consequence of the three term  relation. Firstly, let us  notice that
	\begin{align*}
J_{2}^{\top}H^{-1}C^{[1]}(x)&=xH^{-1}C^{[1]}(x)-\begin{bmatrix}
I_p\\0_p\\\vdots
\end{bmatrix},&
	(P^{[2]}(y) )^{\top}J_{2}^{\top}H^{-1}&={y}(P^{[2]}(y) )^{\top}H^{-1}.
	\end{align*}
	Secondly, we have
	\begin{align*}
J_{2}^{\top}H^{-1}=\left[\begin{array}{c|c}
	\left[J_{2}^{\top}H^{-1}\right]_{[n]} &  \begin{matrix} 0 & 0 &\dots\\
	\vdots&\vdots&\\
	0&0&\dots\\
H_n^{-1}&0&\dots
	\end{matrix}  \\
	\hline
		\begin{matrix}
		0&\dots &0&H_n^{-1}\\
		0 & \dots &0 &0\\
		\vdots & &\vdots& \vdots
		\end{matrix}                         &  *
	\end{array}\right].
	\end{align*}

	Using this, we calculate the
	$	\big(P_{[n]}^{[2]}(y) \big)^{\top}
\left[J_{2}^{\top}H^{-1}\right]_{[n]}
	C^{[1]}_{[n]}(x)$,
	first by computing the action of middle matrix on its left and then on its right to get
	\begin{align*}
	x K^{(pc)}_{n-1}(x,y) - (P^{[2]}_{n-1}(y))^{\top}H_n^{-1}C_n^{[1]}(x)-P_0 =
	{y} K^{(pc)}_{n-1}(x,y) - (P^{[2]}_{n}(y))^{\top}H_n^{-1}C_{n-1}^{[1]}(x),
	\end{align*}
	and since $P_0=I_p$ the Proposition is  proven.
\end{proof}

Next,  we deal with the fact that our definition of second kind functions implies non admissible products and do involve series.
\begin{defi}\label{def:supports}
	For the  support of the matrix of generalized kernels $\operatorname{supp}( u_{x,y})\subset \mathbb C^2$ we consider the action of the component projections $\pi_1,\pi_2:\mathbb C^2\rightarrow \mathbb C$ on its first and second variables,  $(x,y)\overset{\pi_1} {\mapsto}x$, $(x,y)\overset{\pi_2} {\mapsto}y$, respectively, and introduce the projected supports $\operatorname{supp}_x(u):=\pi_1\big(\operatorname{supp} (u_{x,y})\big) $ and $\operatorname{supp}_y(u):=\pi_2\big(\operatorname{supp} (u_{x,y})\big)$, both subsets of  $ \mathbb C$. 
	We will assume that $r_x:=\sup\{|z|: z\in\operatorname{supp}_xu\})<\infty$ and $r_y:=\sup\{|z|: z\in\operatorname{supp}_yu\})<\infty$
		We also consider the disks about infinity, or annulus around the origin,  $D_x:=\{z\in\mathbb C: |z|> r_x\}$ and $D_y:=\{z\in\mathbb C: |z|> r_y\}$.
\end{defi}

\begin{pro} 
\label{pro:Cauchy1}	
Whenever the matrix of generalized kernels is such that $u_{x,y}\in\big((\mathcal O_c')_{x,y}\big)^{p\times p}$,  the second kind functions á la Gram can be expressed as
\begin{align*}
C_{n}^{[1]}(z)&=\left\langle P^{[1]}_n(x),\frac{I_p}{z-y}\right\rangle_u, &|z|>r_y,\\
 \big(C_{n}^{[2]}(z)\big)^\top&=\left\langle \frac{I_p}{z-x},P^{[2]}_n(y)\right\rangle_{u},& |z|>r_x.
\end{align*}
\end{pro}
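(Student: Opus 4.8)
The plan is to start from the right-hand sides, which are \emph{bona fide} continuous pairings, and to transport them — by a geometric expansion followed by the associativity control of Proposition \ref{pro:Gauss--Borel factorization and associativity} — onto the defining series \eqref{eq:second_kind} of the second kind functions. I would treat the first identity in detail, the second being its left-handed mirror image. First I fix $z$ with $|z|>r_y$. By Definition \ref{def:supports} we have $\operatorname{supp}_y(u)\subset\{|y|\le r_y\}$, so on a neighbourhood of that bounded set the function $y\mapsto (z-y)^{-1}$ is holomorphic and, with all its $y$-derivatives, bounded; it is therefore an admissible fundamental function for the matrix of fast decreasing distributions $u\in\big((\mathcal O_c')_{x,y}\big)^{p\times p}$, and $\big\langle P_n^{[1]}(x),\tfrac{I_p}{z-y}\big\rangle_u$ is well posed. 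Moreover the Neumann series $\tfrac{1}{z-y}=\sum_{l\ge 0}y^l z^{-(l+1)}$ converges, together with all $y$-derivatives, uniformly on $\overline{\operatorname{supp}_y(u)}$, i.e.\ in the relevant test-function topology.

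Next I would interchange the continuous pairing with this sum. Linearity in the first slot on $P^{[1]}(x)=S_1\chi(x)$, the transpose rule (ii) of Definition \ref{def:sesquilinear} applied to the scalar coefficients $z^{-(l+1)}I_p$ (which equal their own transpose), and the observation that $\langle\chi(x),I_py^l\rangle_u$ is the $l$-th block column of $G$ (see \eqref{eq:M_chi}), give, in semi-infinite vector form,
\[
\Big\langle P^{[1]}(x),\tfrac{I_p}{z-y}\Big\rangle_u=S_1\Big(\sum_{l\ge0}\langle\chi(x),I_py^l\rangle_u\,z^{-(l+1)}\Big)=S_1\big(G\chi^*(z)\big),
\]
each block of $G\chi^*(z)=\langle\chi(x),\tfrac{I_p}{z-y}\rangle_u$ being a convergent series for $|z|>r_y$ by the same argument.

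The tempting cancellation in $S_1\big(G\chi^*(z)\big)=S_1\big((S_1)^{-1}H(S_2)^{-\top}\chi^*(z)\big)$ is exactly the delicate point, since regrouping a product of semi-infinite matrices against the infinite column $\chi^*(z)$ is not automatic. This is governed by item (2) of Proposition \ref{pro:Gauss--Borel factorization and associativity} with $X=\chi^*(z)$: provided $(S_2)^{-\top}\chi^*(z)$ converges — which is precisely the content of the regime $|z|>r_y$ and, equivalently, the convergence anticipated in Definition \ref{defi:bio2kind} for $C^{[1]}$ — one gets $S_1(G\chi^*(z))=H\big((S_2)^{-\top}\chi^*(z)\big)=C^{[1]}(z)$, and reading off the $n$-th block yields the first formula. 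Equivalently, and more transparently, one notes that $S_1G=H(S_2)^{-\top}$ is a genuine matrix identity, justified by case (1) of Proposition \ref{pro:associativity}, and that the lower-triangular factor $S_1$ may be moved through the convergent column $G\chi^*(z)$ block by block, which simultaneously certifies that $C^{[1]}(z)$ converges for $|z|>r_y$. The second identity follows by the mirror computation: for $|z|>r_x$ expand $\tfrac{I_p}{z-x}=\sum_{k}I_px^kz^{-(k+1)}=(\chi^*(z))^\top\chi(x)$, use $\langle\chi(x),P^{[2]}(y)\rangle_u=G(S_2)^\top$ to obtain $\langle\tfrac{I_p}{z-x},P^{[2]}(y)\rangle_u=(\chi^*(z))^\top G(S_2)^\top$, and then apply the left-handed half of Proposition \ref{pro:Gauss--Borel factorization and associativity}(2), namely $(X^\top G)(S_2)^\top=(X^\top (S_1)^{-1})H$ with $X=\chi^*(z)$, to recognise $(\chi^*(z))^\top(S_1)^{-1}H=(C^{[2]}(z))^\top$.

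The main obstacle is not algebraic but analytic-topological: justifying the term-by-term interchange of the continuous sesquilinear form with the geometric series, and, above all, handling the non-associativity of semi-infinite matrix products applied to the infinite vector $\chi^*(z)$. Restricting to $|z|>r_y$ (respectively $|z|>r_x$) is exactly what forces both the Neumann series and the triangular-factor series to converge, so that Proposition \ref{pro:Gauss--Borel factorization and associativity} applies and the formal cancellation $S_1(S_1)^{-1}=I$ against $\chi^*(z)$ becomes legitimate rather than merely formal.
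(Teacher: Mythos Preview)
Your proof is correct and follows essentially the same route as the paper: both arguments pass through the identity $S_1\big(G\chi^*(z)\big)=H\big((S_2)^{-\top}\chi^*(z)\big)$ via Proposition~\ref{pro:Gauss--Borel factorization and associativity} and then sum the geometric series $(\chi^*(z))^\top\chi(y)=I_p/(z-y)$ using uniform convergence on $\operatorname{supp}_y(u)$. The only cosmetic difference is direction---the paper starts from the Gram definition \eqref{eq:second_kind} and arrives at the Cauchy pairing, whereas you start from the pairing and arrive at \eqref{eq:second_kind}---but the chain of equalities and the analytic justifications are the same.
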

\begin{proof}
	From \eqref{eq:second_kind}, the Gauss--Borel factorization \eqref{eq:gauss} and Proposition \ref{pro:Gauss--Borel factorization and associativity} we get
		\begin{align*}
		C^{[1]}(z)&=S_1\big(G\chi^{*}(z)\big), &  \big(C^{[2]}(z)\big)^\top&=\big(\big(\chi^{*}(z)\big)^\top G\big)\big(S_2\big)^\top,
		\end{align*}
		which recalling \eqref{eq:M_chi} can be written as
		\begin{align*}
		C^{[1]}(z)&=S_1\langle \chi(x),\chi(y)\rangle_u\chi^{*}(z), &  	 \big(C^{[2]}(z)\big)^\top&=\big(\chi^{*}(z)\big)^\top \langle \chi(x),\chi(y)\rangle_u\big(S_2\big)^\top.
		\end{align*}
	Using the properties of a sesquilinear form and \eqref{eq:bior} we find
		 \begin{align*}
		 C^{[1]}(z)&=\left\langle P^{[1]}(x),\chi(y)\right\rangle_u\chi^{*}(z), &  	 \big(C^{[2]}(z)\big)^\top&=\big(\chi^{*}(z)\big)^\top \langle\chi(x),P^{[2]}(y)\rangle_u.
		 \end{align*}
		 Finally, as  $z$ belongs to a disk about $\infty$ with empty intersection with $\operatorname{supp}_y (u)$, taking into account   the uniform convergence in any compact subset in this disk about infinity  of the geometric series $	\big(\chi^{*}(z)\big)^\top\chi(y)=\dfrac{I_p}{z-y}$,  we deduce	the result.
\end{proof}
When the matrix of generalized kernels is a matrix of bivariate measures, for $z$ outside a disk containing the projected support,  $\operatorname{supp}_y\mu(x,y)$, we get
\begin{align*}
C_{n}^{[1]}(z)&=\int P^{[1]}_n(x)d\mu(x,y)\frac{1}{z-y}, & \big(C_{n}^{[2]}(z)\big)^\top&=\int \frac{1}{z-x}d\mu(x,y)\big(P^{[2]}_n(y)\big)^\top,
\end{align*}
 and when the sesquilinear form is Hankel  it reduces to the following Hankel type second kind functions
\begin{align*}
C_{n}^{[1]}(z)&=\int P^{[1]}_n(x)d\mu(x)\frac{1}{z-x}, & \big(C_{n}^{[2]}(z)\big)^\top&=\int \frac{1}{z-x}d\mu(x)\big(P^{[2]}_n(x)\big)^\top.
\end{align*}

\begin{defi}[Second kind functions á la Cauchy]
	For  a  generalized kernels is such that $u_{x,y}\in\big((\mathcal O_c')_{x,y}\big)^{p\times p}$ we define two families of second kind functions á la Cauchy given by
	\begin{align*}
	C_{n}^{[1]}(z)&=\left\langle P^{[1]}_n(x),\frac{I_p}{z-y}\right\rangle_u, &z\not\in\operatorname{supp}_y(u),\\
	\big(C_{n}^{[2]}(z)\big)^\top&=\left\langle \frac{I_p}{z-x},P^{[2]}_n(y)\right\rangle_{u}, &z\not\in\operatorname{supp}_x(u).
	\end{align*}
\end{defi}
\begin{rem}
	The second kind functions á la Cauchy can be considered as an extension of their Gram version. Hereon we will consider the second kind functions in its Cauchy version.
\end{rem}

\section{Matrix Geronimus transformations}
Geronimus transformations for scalar orthogonal polynomials were first discussed in \cite{Geronimus}, where some determinantal formulas were found, see \cite{Zhe,Maro}. Geronimus perturbations of degree two of scalar bilinear forms have been very recently treated in \cite{Derevyagin} and in the general case in \cite{DereM}.
Here we discuss its matrix extension for general sesquilinear forms.
\begin{defi}
                  Given a matrix of generalized kernels $u_{x,y}=((u_{x,y})_{i,j})\in\big((\mathcal O_c')_{x,y}\big)^{p\times p}$ with a given support $\operatorname{supp} u_{x,y}$, and a  matrix polynomial $W(y)\in\mathbb C^{p\times p}[y]$ of degree $N$,  such that $ \sigma(W(y))\cap \operatorname{supp}_y(u)=\varnothing$,	a matrix of bivariate  generalized functions $\check u_{x,y}$ is said to be a matrix Geronimus transformation of the matrix of generalized kernels $u_{x,y}$ if
                  	\begin{align}\label{eq:geronimus}
                  	\check u_{x,y}W(y)=u_{x,y}.
                  	\end{align}
                  \end{defi}

\begin{pro}\label{pro:string}
	In terms of  sesquilinear forms  a Geronimus transformation  fulfills
	\begin{align*}
	\prodint{P(x), Q(y)(W(y))^\top}_{\check u}=\prodint{P(x), Q(y)}_{u},
	\end{align*}
	while, in terms of the corresponding Gram matrices, satisfies
	\begin{align*}
	\check G W(\Lambda^\top)=G.
	\end{align*}
\end{pro}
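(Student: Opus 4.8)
The plan is to prove the sesquilinear-form identity first and then obtain the Gram-matrix identity as the special case $P(x)=I_px^k$, $Q(y)=I_py^l$, once I identify how right multiplication by $W(\Lambda^\top)$ acts.

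For the sesquilinear-form identity, I would start from the entrywise definition of the form attached to a matrix of generalized kernels, $\big(\prodint{P(x),R(y)}_{\check u}\big)_{i,j}=\sum_{k,l=1}^{p}\big\langle(\check u_{x,y})_{k,l},P_{i,k}(x)\otimes R_{j,l}(y)\big\rangle$, applied to $R(y)=Q(y)(W(y))^\top$. Expanding $R_{j,l}(y)=\sum_{m}Q_{j,m}(y)W_{l,m}(y)$ and transferring each scalar polynomial factor $W_{l,m}(y)$ from the test function onto the generalized kernel (the adjoint action of multiplication by a polynomial in the $y$ variable), the sum over $l$ reassembles the matrix product $\check u_{x,y}W(y)$ inside the pairing. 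The defining relation $\check u_{x,y}W(y)=u_{x,y}$ then replaces it by $u_{x,y}$, and what remains is exactly $\big(\prodint{P(x),Q(y)}_u\big)_{i,j}$.

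For the Gram-matrix identity, I would use $G=\prodint{\chi(x),\chi(y)}_u$ and likewise $\check G=\prodint{\chi(x),\chi(y)}_{\check u}$, so that $G_{k,l}=\prodint{I_px^k,I_py^l}_u$. Since $\big(W(\Lambda^\top)\big)_{m,l}=A_{m-l}$ for $0\le m-l\le N$ and $0_p$ otherwise, the matrix $W(\Lambda^\top)$ is banded and each block of the product is the finite sum $\big(\check G W(\Lambda^\top)\big)_{k,l}=\sum_{j=0}^{N}\check G_{k,l+j}A_j$. Using the second property of Definition \ref{def:sesquilinear} in the form $\prodint{P,A^\top Q}=\prodint{P,Q}A$, this equals $\prodint{I_px^k,(W(y))^\top I_py^l}_{\check u}$, because $\sum_{j}A_j^\top y^{j}=(W(y))^\top$. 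Taking $P(x)=I_px^k$ and $Q(y)=I_py^l$ in the already-proven sesquilinear identity then yields $\prodint{I_px^k,I_py^l(W(y))^\top}_{\check u}=\prodint{I_px^k,I_py^l}_u=G_{k,l}$, hence $\check G W(\Lambda^\top)=G$.

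The main obstacle is the careful bookkeeping in this noncommutative, transpose-laden setting: one must track how right multiplication by $(W(y))^\top$ in the second argument interacts with the transpose introduced by the second defining property of the sesquilinear form, and check that the adjoint action of polynomial multiplication legitimately turns $Q(y)(W(y))^\top$ into $\check u_{x,y}W(y)$ acting on the kernel. There is, however, no associativity difficulty in $\check G W(\Lambda^\top)$: because $W(\Lambda^\top)$ is block Hessenberg (banded), all the sums involved are finite, so the semi-infinite-matrix subtleties noted earlier do not intervene.
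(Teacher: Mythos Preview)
Your argument is correct. The paper states Proposition~\ref{pro:string} without proof, treating both identities as immediate consequences of the definition \eqref{eq:geronimus} of a Geronimus transformation and of the sesquilinear-form axioms; your entrywise unpacking of $\prodint{P(x),Q(y)(W(y))^\top}_{\check u}$ and the subsequent specialization to $P(x)=I_px^k$, $Q(y)=I_py^l$ is exactly the computation that underlies that tacit claim, and your remark that $W(\Lambda^\top)$ is block banded so that no convergence or associativity issue arises matches the paper's later comments on admissible products.
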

\begin{rem}
Given our non-Abelian scenario is, in principle, reasonable to propose the  alternative following Geronimus perturbation
	$\prodint{P(x),(W(y))^\top Q(y)}_{\check u}=\prodint{P(x), Q(y)}_{u}$,
	but we immediately realize that this perturbation will not respect the sesquilinearity and, consequently, has nonsense.
\end{rem}

We will  assume that the  perturbed moment matrix
has a Gauss--Borel factorization
$\check G=\check S_1^{-1} \check H (\check S_2)^{-\top}$,
	where $\check S_1,\check S_2$ are lower unitriangular block matrices and $\check H$ is a diagonal block matrix
	\begin{align*}
		\check S_i&=\begin{bmatrix}
	I_p&0_p&0_p&\dots\\
	(\check S_i)_{1,0}& I_p&0_p&\cdots\\
	(\check S_i)_{2,0}& (\check S_i)_{2,1}&I_p&\ddots\\
	&&\ddots&\ddots
	\end{bmatrix}, & i&=1,2,&
	\check H&=\diag
	(\check H_0, \check H_1, \check H_2,\dots).
	\end{align*}
Hence, the Geronimus transformation provides  the family of matrix biorthogonal polynomials
\begin{align*}
	\check P^{[1]}(x) & =\check S_1\chi(x), & \check P^{[2]}(y) & =\check S_2\chi(y),
\end{align*}
with respect to the perturbed sesquilinear form $\prodint{\cdot,\cdot}_{\check u}$.

Observe that the matrix generalized kernels $v_{x,y}$ such that $v_{x,y}W(y)=0_p$, can be added to a Geronimus transformed matrix of generalized kernels $\check u_{x,y}\mapsto \check u_{x,y}+v_{x,y}$, to  get a new Geronimus transformed matrix of generalized kernels. We call masses these type of terms.

\subsection{The resolvent and connection formulas}
\begin{defi}
	The resolvent matrix is
	\begin{align}\label{eq:def_Omega}
	\omega:=\check S_1 (S_1)^{-1}.
	\end{align}
\end{defi}

The key role of this resolvent matrix is determined by the following properties
\begin{pro}\label{conexw}
\begin{enumerate}
\item	The resolvent matrix can be also expressed as
\begin{align}\label{eq:resolvent_alternative}
\omega =	\check H \big(\check S_2\big)^{-\top} W(\Lambda^\top)\big( S_2\big)^{\top}	H^{-1},
\end{align}
where the products in the RHS  are associative.
\item
The resolvent matrix is a lower unitriangular block banded matrix ---with only the first $N$ block subdiagonals possibly not zero, i.e.,
\begin{align*}
\omega=\begin{bmatrix}
I_p &    0_p  &\dots        &   0_p   & 0_p&  \dots                       \\
\omega_{1,0} &I_p &\ddots&     0_p &    0_p           &    \ddots        \\
\vdots       &     \ddots         &\ddots&      \ddots         &      \ddots       \\
\omega_{N,0} &\omega_{N,1}  &   \dots   & I_p  & 0_p         & \ddots \\
0_p&\omega_{N+1,1}&\cdots& \omega_{N+1,N}&I_p& \ddots\\
\vdots&        \ddots      &\ddots&               &\ddots  &   \ddots
\end{bmatrix}.
\end{align*}	
\item The following connection formulas are satisfied
\begin{align}
	\check P^{[1]}(x) &=\omega P^{[1]}(x),\label{conex2}\\
\big(\check H^{-1}\omega H\big)^\top \check P^{[2]} (y) &=P^{[2]}(y)W^\top(y).\label{conex3}
\end{align}
\item For the last subdiagonal of the resolvent we have
\begin{align}\label{eq:omegaA}
\omega_{N+k,k}=\check H_{N+k}A_N(H_k)^{-1}.
\end{align}
\end{enumerate}
\end{pro}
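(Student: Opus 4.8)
The plan is to establish the alternative expression (1) first and then read off (2)--(4) from it, certifying every regrouping of semi-infinite matrices through Propositions \ref{pro:associativity} and \ref{pro:Gauss--Borel factorization and associativity}.

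\emph{Expression \textup{(1)}.} I would start from the Geronimus relation $\check G W(\Lambda^\top)=G$ of Proposition \ref{pro:string} and insert the two factorizations $G=(S_1)^{-1}H(S_2)^{-\top}$ and $\check G=(\check S_1)^{-1}\check H(\check S_2)^{-\top}$. By Proposition \ref{pro:Gauss--Borel factorization and associativity}(1) applied to $\check G$ one may write $\check G W(\Lambda^\top)=(\check S_1)^{-1}\check H\big((\check S_2)^{-\top}W(\Lambda^\top)\big)$. Left-multiplying by $\check S_1$ and regrouping $\check S_1\big((S_1)^{-1}H(S_2)^{-\top}\big)=\omega H(S_2)^{-\top}$ (legitimate since $\check S_1,(S_1)^{-1}$ are block lower Hessenberg) gives $\check H(\check S_2)^{-\top}W(\Lambda^\top)=\omega H(S_2)^{-\top}$. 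Right-multiplying by $(S_2)^\top H^{-1}$ and using $(S_2)^{-\top}(S_2)^\top=I$ (both factors block upper Hessenberg) together with the free associativity of the diagonal factors $\check H,H^{-1}$ yields $\omega=\check H(\check S_2)^{-\top}W(\Lambda^\top)(S_2)^\top H^{-1}$, which is (1); the same Hessenberg bookkeeping shows the five-fold product is unambiguous.

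\emph{Band structure \textup{(2)} and lowest subdiagonal \textup{(4)}.} That $\omega$ is lower unitriangular is immediate from $\omega=\check S_1(S_1)^{-1}$, a product of lower unitriangular matrices. For the band I would set $M:=(\check S_2)^{-\top}W(\Lambda^\top)(S_2)^\top$, so $\omega=\check H M H^{-1}$ and $\omega_{i,j}=\check H_i M_{i,j}(H_j)^{-1}$. Since $W(\Lambda^\top)$ is lower banded with $\big(W(\Lambda^\top)\big)_{a,b}=A_{a-b}$ nonzero only for $0\le a-b\le N$, while the upper unitriangular factors force $a\ge i$ and $b\le j$ in the entry sum, a nonzero summand needs $i-j\le a-b\le N$; hence $M_{i,j}=0$ whenever $i-j>N$, which is the claimed band. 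For $i=N+k$, $j=k$ the same inequalities force $a-b=N$ and then $a=N+k$, $b=k$, so only the diagonal unit blocks survive and $M_{N+k,k}=A_N$; therefore $\omega_{N+k,k}=\check H_{N+k}A_N(H_k)^{-1}$, which is (4).

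\emph{Connection formulas \textup{(3)}.} The first is direct: $\omega P^{[1]}(x)=\check S_1(S_1)^{-1}S_1\chi(x)=\check S_1\chi(x)=\check P^{[1]}(x)$, the regrouping being harmless among lower triangular factors. For the second I would transpose (1) to get $(\check H^{-1}\omega H)^\top=S_2\big(W(\Lambda^\top)\big)^\top(\check S_2)^{-1}$, so that $(\check H^{-1}\omega H)^\top\check P^{[2]}(y)=S_2\big(W(\Lambda^\top)\big)^\top\chi(y)$ after cancelling $(\check S_2)^{-1}\check S_2$. The one genuine computation is the shift identity $\big(W(\Lambda^\top)\big)^\top\chi(y)=\chi(y)W^\top(y)$ (block-entrywise $I_py^nW^\top(y)$), which follows from $\Lambda^k\big(A_k^\top\chi(y)\big)=y^kA_k^\top\chi(y)$ summed over $k$; pulling the right factor $W^\top(y)$ through the left action of $S_2$ then gives $S_2\chi(y)W^\top(y)=P^{[2]}(y)W^\top(y)$. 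The main obstacle throughout is not conceptual but the relentless bookkeeping of admissible products: each regrouping must be matched to the Hessenberg hypotheses of Proposition \ref{pro:associativity}, and the argument is correct precisely when those hypotheses are honoured.
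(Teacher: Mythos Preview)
Your proof is correct and follows the same route as the paper's: derive \eqref{eq:resolvent_alternative} from $\check GW(\Lambda^\top)=G$ and the two Gauss--Borel factorizations, then read off the band structure, the connection formulas, and the lowest subdiagonal. Your treatment is in fact more explicit than the paper's at two points---the entrywise argument for the band width and for $\omega_{N+k,k}=\check H_{N+k}A_N(H_k)^{-1}$, and the verification of $\big(W(\Lambda^\top)\big)^\top\chi(y)=\chi(y)W^\top(y)$---but these are exactly the computations the paper leaves implicit, so there is no genuine difference in approach.
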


\begin{proof}
\begin{enumerate}
	\item 	From  Proposition \ref{pro:string} and the Gauss--Borel factorization of $G$ and $\check G$ we get
	\begin{align*}
	\big( S_1\big)^{-1}  H \big( S_2\big)^{-\top}=\Big(\big(\check S_1\big)^{-1} \check H \big(\check S_2\big)^{-\top}\Big) W(\Lambda^\top),
	\end{align*}
 from Proposition \ref{pro:Gauss--Borel factorization and associativity} we deduce
	\begin{align*}
	\big( S_1\big)^{-1}  H \big( S_2\big)^{-\top}=\big(\check S_1\big)^{-1} \check H \Big(\big(\check S_2\big)^{-\top}W(\Lambda^\top)\Big),
	\end{align*}
and Proposition \ref{pro:associativity}, as well as the associativity and invertibility properties of lower and upper unitriangular matrices, we get
		\begin{align*}
\check S_1\big( S_1\big)^{-1}  H =	\check H \big(\check S_2\big)^{-\top} W(\Lambda^\top)\big( S_2\big)^{\top}.	
		\end{align*}
It is easily check that the RHS products are associative.
\item The resolvent matrix, being a product of lower unitriangular matrices,   is a lower unitriangular matrix. However, from \eqref{eq:resolvent_alternative} we deduce that is a matrix with all its subdiagonals with zero coefficients  but for the first $N$. Thus, it must have the described band structure.
\item From the definition we have \eqref{conex2}.  Let us notice  that \eqref{eq:resolvent_alternative} can be written as
\begin{align*}
\omega ^\top \check H ^{-\top}=	H^{-\top}S_2W^\top(\Lambda)\big(\check S_2\big)^{-1},
\end{align*}
so that
\begin{align*}
\omega ^\top \check H ^{-\top}\check P^{[2]}(y)=	H^{-\top}S_2W^\top(\Lambda)\chi(y),
\end{align*}	
and \eqref{conex3} follows.
\item It is a consequence of  \eqref{eq:resolvent_alternative}.
\end{enumerate}
\end{proof}

The connection formulas \eqref{conex2} and \eqref{conex3} can be written as
\begin{align}\label{conex1}
\check P^{[1]}_{n}(x)&=P^{[1]}_n(x)+\sum_{k=n-N}^{n-1}\omega_{n,k}P_k^{[1]}(x),\\\label{conex3'}
W(y)\big(P_n^{[2]}(y)\big)^\top(H_n)^{-1}&=\big(\check  P^{[2]}_n(y)\big)^\top (\check H_n)^{-1}+\sum_{k=n+1}^{n+N}\big(\check  P^{[2]}_k(y)\big)^\top (\check H_k)^{-1}\omega_{k,n}.
\end{align}

\begin{ma}
	We have that
		\begin{align}\label{eq:WLchi}
		W(\Lambda^\top)\chi^*(x)=\chi^*(x)W(x)-\begin{bmatrix}
		\mathcal B 	(\chi(x))_{[N]}
		\\
		0_{p}\\
		\vdots
		\end{bmatrix},
		\end{align}
		with $\mathcal B$ given in Definition \ref{def:B}.
	\end{ma}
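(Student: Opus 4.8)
The plan is to prove the identity block by block. Both sides are semi-infinite column vectors of $p\times p$ blocks indexed by $m\in\{0,1,\dots\}$, and the key structural fact is that $W(\Lambda^\top)=\sum_{k=0}^{N}A_k(\Lambda^\top)^k$ is lower-triangular and $N$-banded, so applying it to the single column $\chi^*(x)$ only involves finite sums (an admissible product, with no associativity subtlety of the kind flagged after Proposition \ref{pro:fac}). First I would record the elementary relation for $\chi^*$. Since the $m$-th block of $\chi^*(x)$ is $I_px^{-(m+1)}$ and $\Lambda^\top$ moves the $(m-1)$-th block into the $m$-th slot while putting $0_p$ in the top block, a direct inspection gives
\[
\Lambda^\top\chi^*(x)=x\,\chi^*(x)-e_0,\qquad e_0:=[I_p,0_p,0_p,\dots]^\top,
\]
where more generally $e_i$ denotes the block unit vector with $I_p$ in block $i$ and $0_p$ elsewhere. (This is the same computation used, up to a harmless typo in the factor, in the proof of the three-term relation for the second kind functions.)

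Next I would iterate. Using $(\Lambda^\top)^j e_0=e_j$, an immediate induction on $k$ yields
\[
(\Lambda^\top)^k\chi^*(x)=x^k\chi^*(x)-\sum_{i=0}^{k-1}x^{\,k-1-i}e_i,\qquad k\ge 0.
\]
Applying $W(\Lambda^\top)=\sum_{k=0}^{N}A_k(\Lambda^\top)^k$ and collecting terms gives
\[
W(\Lambda^\top)\chi^*(x)=\Big(\sum_{k=0}^{N}A_k x^k\Big)\chi^*(x)-\sum_{i=0}^{N-1}\Big(\sum_{k=i+1}^{N}A_k x^{\,k-1-i}\Big)e_i.
\]
For the first term, because $x$ is a scalar it commutes past the blocks: the $n$-th block of $\big(\sum_k A_kx^k\big)\chi^*(x)$ is $W(x)\,I_px^{-(n+1)}=I_px^{-(n+1)}W(x)$, so this term is exactly $\chi^*(x)W(x)$. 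The correction term is supported on blocks $0\le i\le N-1$, with the $i$-th block equal to $\sum_{k=i+1}^{N}A_kx^{\,k-i-1}$.

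Finally I would identify this correction with $\mathcal B(\chi(x))_{[N]}$. From Definition \ref{def:B} the $(m,n)$ block of $\mathcal B$ is $A_{m+n+1}$ when $m+n+1\le N$ and $0_p$ otherwise, so with $(\chi(x))_{[N]}=[I_p,I_px,\dots,I_px^{N-1}]^\top$ one computes the $m$-th block
\[
\big(\mathcal B(\chi(x))_{[N]}\big)_m=\sum_{n:\,m+n+1\le N}A_{m+n+1}x^n=\sum_{j=m+1}^{N}A_j x^{\,j-m-1},
\]
which is precisely the coefficient of $e_m$ above for $0\le m\le N-1$, both sides vanishing for $m\ge N$. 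Moving the correction to the right-hand side gives the claimed formula. The computation is pure bookkeeping; the only points demanding care are matching the antidiagonal index pattern $(m,n)\mapsto A_{m+n+1}$ of $\mathcal B$ with the truncation pattern $j\mapsto A_jx^{\,j-m-1}$ produced by the correction terms, and keeping straight that the coefficients $A_k$ act on the \emph{left} of the blocks in $W(\Lambda^\top)$ whereas $W(x)$ multiplies on the \emph{right} in $\chi^*(x)W(x)$ — a discrepancy that is harmless only because the spectral variable $x$ is a scalar. There is no genuine analytic or associativity obstacle.
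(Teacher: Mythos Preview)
Your proof is correct and follows essentially the same approach as the paper: both rely on the elementary iteration $(\Lambda^\top)^k\chi^*(x)=x^k\chi^*(x)-\sum_{i=0}^{k-1}x^{k-1-i}e_i$, after which the identification with $\mathcal B(\chi(x))_{[N]}$ is immediate. The paper compresses this into a one-line statement of the iterated formula, while you spell out the induction and the block-by-block matching with the antidiagonal pattern of $\mathcal B$; the substance is the same.
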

	\begin{proof}It is a direct consequence	of
	$\big(\Lambda^\top\big)^n\chi(x)=x^n \chi^*(x)-\begin{bsmallmatrix}
	x^n I_p\\\vdots\\I_p\\0_p\\ \vdots
		\end{bsmallmatrix}$.
	\end{proof}
	
\begin{pro}
	The Geronimus transformation of the second kind functions satisfies
	\begin{align}
	\label{eq:conexionC1}
	\check C^{[1]}(x)W(x)-\begin{bmatrix}
\big(\check H\big(\check S_2\big)^{-\top}\big)_{[N]}	\mathcal B 	(\chi(x))_{[N]}
	\\
	0_{p}\\
	\vdots
	\end{bmatrix}&=	\omega C^{[1]}(x),\\
	\label{eq:conexionC2}
		\big(\check C^{[2]}(x)\big)^\top\check H^{-1}\omega  &= \big(C^{[2]}(x)\big)^\top H^{-1}.
	\end{align}
\end{pro}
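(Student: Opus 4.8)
The plan is to prove the two connection formulas separately, the first one leaning on the alternative expression \eqref{eq:resolvent_alternative} for the resolvent together with the shift identity \eqref{eq:WLchi}, and the second one by a purely telescoping algebraic manipulation.

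For \eqref{eq:conexionC1} I would start from the right-hand side $\omega C^{[1]}(x)$, substitute the alternative form $\omega=\check H(\check S_2)^{-\top}W(\Lambda^\top)(S_2)^\top H^{-1}$ from \eqref{eq:resolvent_alternative}, and insert $C^{[1]}(x)=H(S_2)^{-\top}\chi^*(x)$ from \eqref{eq:second_kind}. The inner block $(S_2)^\top H^{-1}\,H(S_2)^{-\top}$ collapses to the identity, reducing the right-hand side to $\check H(\check S_2)^{-\top}W(\Lambda^\top)\chi^*(x)$. Then I would apply the Lemma \eqref{eq:WLchi}, replacing $W(\Lambda^\top)\chi^*(x)$ by $\chi^*(x)W(x)$ minus the vector supported on its first $N$ blocks. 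The first resulting piece is exactly $\check H(\check S_2)^{-\top}\chi^*(x)W(x)=\check C^{[1]}(x)W(x)$, while for the second piece I would use that $\check H(\check S_2)^{-\top}$ is block upper triangular: acting on a vector supported on the first $N$ blocks it again produces a vector supported on the first $N$ blocks, whose nonzero part involves only the top-left truncation $\big(\check H(\check S_2)^{-\top}\big)_{[N]}$. This reproduces precisely the boundary term displayed in \eqref{eq:conexionC1}, and rearranging gives the identity.

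For \eqref{eq:conexionC2} the shift identity is not needed. I would expand $\big(\check C^{[2]}(x)\big)^\top=(\chi^*(x))^\top(\check S_1)^{-1}\check H$, so that $\big(\check C^{[2]}(x)\big)^\top\check H^{-1}=(\chi^*(x))^\top(\check S_1)^{-1}$; multiplying on the right by the resolvent in its defining form $\omega=\check S_1(S_1)^{-1}$ telescopes $(\check S_1)^{-1}\check S_1=I$ and leaves $(\chi^*(x))^\top(S_1)^{-1}$, which by the definition \eqref{eq:second_kind} of the second family of second kind functions is exactly $\big(C^{[2]}(x)\big)^\top H^{-1}$.

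The main obstacle is not the algebra but the careful bookkeeping of associativity for the semi-infinite products, since $C^{[1]}(x)$ and $C^{[2]}(x)$ are built from non-admissible products involving $\chi^*(x)$ whose series converge only off the support. Every collapse, such as $(S_2)^\top H^{-1}H(S_2)^{-\top}\chi^*(x)=\chi^*(x)$ and $(\check S_1)^{-1}\check S_1=I$, must be justified by Propositions \ref{pro:associativity} and \ref{pro:Gauss--Borel factorization and associativity}, using that $\omega$ is block lower Hessenberg (indeed banded, by Proposition \ref{conexw}) so its product against the convergent column $C^{[1]}(x)$ is admissible, and that $W(\Lambda^\top)$ is block upper Hessenberg. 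I would therefore insert the parentheses explicitly at each regrouping, exactly as in the proof of Proposition \ref{pro:Cauchy1}, so that every step falls under one of the admissible cases there listed.
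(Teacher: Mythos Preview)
Your proposal is correct and coincides with the paper's ``\`a la Gram'' proof for both identities; in particular, your argument for \eqref{eq:conexionC2} is verbatim the paper's. For \eqref{eq:conexionC1} there is a small routing difference: you start from the alternative form $\omega=\check H(\check S_2)^{-\top}W(\Lambda^\top)(S_2)^\top H^{-1}$ and collapse $(S_2)^\top H^{-1}\cdot H(S_2)^{-\top}\chi^*=\chi^*$, whereas the paper starts from $\omega=\check S_1(S_1)^{-1}$, rewrites $(S_1)^{-1}H(S_2)^{-\top}\chi^*=G\chi^*$, uses $G=\check G\,W(\Lambda^\top)$ and only then factors $\check G$; both paths meet at $\check H(\check S_2)^{-\top}W(\Lambda^\top)\chi^*(x)$ before invoking \eqref{eq:WLchi}. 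The paper's route keeps the associativity bookkeeping slightly lighter, since it stays on the lower-unitriangular side and appeals directly to Proposition~\ref{pro:Gauss--Borel factorization and associativity}, while your collapse $(S_2)^\top[(S_2)^{-\top}\chi^*]=\chi^*$ needs the observation that $\omega$ is banded so the outer $j$-sum in the double sum is finite and the interchange is trivially justified. The paper also supplies a second, independent ``\`a la Cauchy'' proof, computing $\check C^{[1]}(z)W(z)-\omega C^{[1]}(z)$ as $\big\langle\check P^{[1]}(x),\tfrac{W(z)-W(y)}{z-y}\big\rangle_{\check u}$ and recognising the divided difference as $(\chi(y))^\top[\mathcal B(\chi(z))_{[N]};0;\dots]$, which avoids the semi-infinite algebra entirely at the cost of working inside the sesquilinear form.
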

\begin{proof}
	\begin{itemize}
		\item \textbf{á la Cauchy}
		To get \eqref{eq:conexionC1} we argue as follows
		\begin{align*}
		\check C^{[1]}(z)W(z)-\omega C^{[1]}(z)&=\prodint{\check P_1(x),\frac{I_p}{z-y}}_{\check u}W(z)-
		\prodint{\check P_1(x),\frac{I_p}{z-y}}_{ \check u W} & \text{use \eqref{conex2} and \eqref{eq:geronimus}}\\
		&=\prodint{\check P_1(x),\frac{W(z)-W(y)}{z-y}}_{\check u}.
		\end{align*}
		But, we have 
		\begin{align*}
		\frac{W(z)-W(y)}{z-y}&=I_p\frac{z^N-y^N}{z-y}+A_{N-1}\frac{z^{N-1}-y^{N-1}}{z-y}+\dots+A_1\\&=
		I_ph_{N-1}(z,y)+A_{N-1}h_{N-2}(z,y)+\dots+A_1\\&=
	(	\chi(y))^\top\begin{bmatrix}
\mathcal B (\chi(z))_N\\0
	\end{bmatrix}
		\end{align*}
		so that
			\begin{align*}
			\check C^{[1]}(z)W(z)-\omega C^{[1]}(z)
			&=\check S_1\prodint{\chi(x),\chi(y)}_{\check u}\begin{bmatrix}
			\mathcal B (\chi(z))_N\\0
			\end{bmatrix}\\
			&=\check S_1\check G\begin{bmatrix}
			\mathcal B (\chi(x))_N\\0
			\end{bmatrix}.
			\end{align*}
			and using the Gauss--Borel factorization the result follows. For  \eqref{eq:conexionC2} we have
	\begin{align*}
	\big(\check C^{[2]}(x)\big)^\top\check H^{-1}\omega  - \big(C^{[2]}(x)\big)^\top H^{-1}&=
	\prodint{\frac{I_p}{z-x},\check P^{[2]}(y)}_{\check u}\check H^{-1}\omega -	\prodint{\frac{I_p}{z-x}, P^{[2]}(y)}_{ u} H^{-1}\\
	&=	\prodint{\frac{I_p}{z-x},\big(\check H^{-1}\omega \big)^\top\check P^{[2]}(y)}_{\check u}-	\prodint{\frac{I_p}{z-x},H^{-\top} P^{[2]}(y)}_{ u} \\
		&=	\prodint{\frac{I_p}{z-x}, H^{-\top}P^{[2]}(y)(W(y))^\top}_{\check u}-	\prodint{\frac{I_p}{z-x},H^{-\top} P^{[2]}(y)}_{ u}\\&=0.
	\end{align*}
		\item \textbf{á la Gram} This proof is included to show how  the connection formulas can be derived directly from the Gauss--Borel factorization.
		From 	\eqref{eq:second_kind},  and taking care of the potential associative  problems with semi-infinite matrices, we find the chain of relations
	(recall that we assume that $(S_2)^{-\top}\chi_2^*$ involve only convergent series)
\begin{align*}
\omega C^{[1]}(x)&=\Big(\check S_1 (S_1)^{-1}\Big)\Big(H\big(S_2\big)^{-\top}\chi^*(x)\Big)=\check S_1 \Big((S_1)^{-1}\big(H\big(S_2\big)^{-\top}\chi^*(x)\big)\Big)
\\&=\St_1 \big(G\chi^*(x)\big)=\St_1\big(\big(\check G W(\Lambda^\top)\big)\chi^{*}(x)\big).
\end{align*}
But,  taking into account \eqref{eq:WLchi} and the assumption that $\check G\chi^*(x)$ do involve only convergent series, we conclude that
$\check G\big(W(\Lambda^\top)\chi^*(x)\big)$ involve only convergent series and therefore
$\big(\check G W(\Lambda^\top)\big)\chi^*(x)=\check G\big(W(\Lambda^\top)\chi^*(x)\big)$. Consequently, we derive
\begin{align*}
\omega C^{[1]}(x)&=
\check S_1\Big(\check G\big(W(\Lambda^\top)\chi^*(x)\big)\Big)
\\&=\check S_1 \left(\check G\left(\chi^*(x)W(x)
-\begin{bmatrix}
\mathcal B 	(\chi(x))_{[N]}
\\
0_{p}\\
\vdots
\end{bmatrix}\right)\right)
\\&=\check S_1 \left(\big((\check S_1)^{-1}\check H(\check S_2)^{-\top}\big)\left(\chi^*(x)W(x)
-\begin{bmatrix}
\mathcal B 	(\chi(x))_{[N]}
\\
0_{p}\\
\vdots
\end{bmatrix}\right)\right)
\\&=\check H\big(\check S_2\big)^{-\top}\chi^*(x)W(x)
-\check H\big(\check S_2\big)^{-\top}\begin{bmatrix}
\mathcal B 	(\chi(x))_{[N]}
\\
0_{p}\\
\vdots
\end{bmatrix}
\\&
=\check C^{[1]}(x)W(x)-\check H\big(\check S_2\big)^{-\top}\begin{bmatrix}
\mathcal B 	(\chi(x))_{[N]}
\\
0_{p}\\
\vdots
\end{bmatrix},
\end{align*}

and \eqref{eq:conexionC1} follows.
Finally, \eqref{eq:conexionC2} is a consequence of
\begin{align*}
\big(	\check C^{[2]}(y)\big)^\top\check H^{-1}\omega H&=\big(	\check C^{[2]}(y)\big)^\top\check H^{-1}\check S_1 \big( S_1\big)^{-1} H=\big(\chi^*(y)\big)^\top \big( S_1\big)^{-1} H\\&=\big(C^{[2]}(y)\big)^\top.
\end{align*}
	\end{itemize}
\end{proof}

Observe that the corresponding entries are
\begin{align}
\big(C_n^{[2]}(y)\big)^\top(H_k)^{-1}=\big(\Ct^{[2]}_n(y)\big)^\top(\check H_n)^{-1}+\sum_{k=n+1}^{n+N}\big(\Ct^{[2]}_k(y)\big)^\top(\check H_k)^{-1}\omega_{n,k}.  \label{Cauchy2}
\end{align}

\subsection{Geronimus transformations and  Christoffel--Darboux kernels}
\begin{defi}\label{def:omeganN}
The  resolvent wing is the matrix
	\begin{align*}
	\Omega{[n]}= \begin{cases}
	\begin{bmatrix}
	\omega_{n,n-N}&\dots&\dots&\omega_{n,n-1}\\
	0_p	&\ddots &&\vdots\\
	\vdots	&\ddots &\ddots&\vdots\\
	0_p	&\dots&0_p&\omega_{n+N-1,n-1}
	\end{bmatrix}\in \mathbb{C}^{Np\times Np}, &  n\geq N, \\
	\begin{bmatrix}
	\omega_{n,0}&\dots &\dots&\omega_{n,n-1}\\
	\vdots      &       &&\vdots\\
	\omega_{N,0}&&&\omega_{N,n-1}\\
	0_p&\ddots\\
	\vdots	&\ddots&\ddots&\vdots\\
	0_p&   \dots   &0_p&\omega_{n+N-1,n-1}
	\end{bmatrix}\in \mathbb{C}^{Np\times np},& n<N.
	\end{cases}
	\end{align*}
\end{defi}

\begin{teo}\label{teoconex}
For $m=\min(n,N)$, the perturbed and original Christoffel--Darboux kernels  are related by the following connection formula   	
		\begin{align}\label{K}
		\check{K}_{n-1}(x,y)&=
		W(y)K_{n-1}(x,y)-\begin{bmatrix}
		\big(	\check P_{n}^{[2]}(y)\big)^\top\Ht^{-1}_n,\dots,\big(\check P_{n+N-1}^{[2]}(y)\big)^\top\Ht^{-1}_{n+N-1}
		\end{bmatrix}
		\Omega{[n]}
		\begin{bmatrix}
		P_{n-m}^{[1]}(x)\\
		\vdots\\
		P_{n-1}^{[1]}(x)
		\end{bmatrix}.
		\end{align}
For $n\geq N$, the connection formula for the mixed Christoffel--Darboux kernels is
\begin{align}
		\label{N}
		\check K_{n-1}^{(pc)}(x,y)W(x)&=W(y)K^{(pc)}_{n-1}(x,y)-\begin{bmatrix}
		\big(	\check P_{n}^{[2]}(y)\big)^\top\Ht^{-1}_n,\dots,\big(\check P_{n+N-1}^{[2]}(y)\big)^\top\Ht^{-1}_{n+N-1}
		\end{bmatrix}
		\Omega{[n]}
		\begin{bmatrix}
		C_{n-N}^{[1]}(x)\\
		\vdots\\
		C_{n-1}^{[1]}(x)
		\end{bmatrix}+\mathcal V(x,y),
		\end{align}
		where $\mathcal V(x,y)$ was introduced in Definition \ref{defi:V}.
\end{teo}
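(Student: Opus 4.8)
The plan is to prove both connection formulas by expanding the perturbed kernels through the connection formulas of Proposition \ref{conexw}, reorganizing the resulting double sum by the index of the \emph{unperturbed} first family, and isolating the boundary terms that survive a telescoping cancellation. Throughout I write $\check c_k(y):=\big(\check P^{[2]}_k(y)\big)^\top\check H_k^{-1}$ and $c_j(y):=\big(P^{[2]}_j(y)\big)^\top H_j^{-1}$.

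For the kernel \eqref{K} I first expand
$$\check K_{n-1}(x,y)=\sum_{k=0}^{n-1}\check c_k(y)\,\check P^{[1]}_k(x)$$
and insert the connection formula \eqref{conex1}, $\check P^{[1]}_k(x)=\sum_{j=k-N}^{k}\omega_{k,j}P^{[1]}_j(x)$. Interchanging the summation order and collecting the coefficient of each $P^{[1]}_j(x)$, the relevant inner sum becomes $\sum_{k=j}^{\min(j+N,n-1)}\check c_k(y)\omega_{k,j}$. The second connection formula \eqref{conex3'}, rewritten as $\sum_{k=j}^{j+N}\check c_k(y)\omega_{k,j}=W(y)c_j(y)$ (using $\omega_{j,j}=I_p$), identifies the \emph{complete} inner sum with $W(y)c_j(y)$. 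Hence for $j\le n-1-N$ the inner sum equals $W(y)c_j(y)$ exactly, while for $n-N\le j\le n-1$ it equals $W(y)c_j(y)$ minus the tail $\sum_{k=n}^{j+N}\check c_k(y)\omega_{k,j}$. Summing over $j$, the complete contributions reassemble $W(y)\sum_{j=0}^{n-1}c_j(y)P^{[1]}_j(x)=W(y)K_{n-1}(x,y)$, and the tails collect precisely into $[\check c_n(y),\dots,\check c_{n+N-1}(y)]\,\Omega[n]\,[P^{[1]}_{n-m};\dots;P^{[1]}_{n-1}]^\top$, the matrix $\Omega[n]$ of Definition \ref{def:omeganN} being nothing but the array of coefficients $\omega_{k,j}$ with $n\le k\le n+N-1$ and $n-m\le j\le n-1$. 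The case $n<N$ follows from the same bookkeeping with lower limit $j=0$ and $m=n$.

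Next I carry out the identical reorganization for the mixed kernel \eqref{N}, starting from $\check K^{(pc)}_{n-1}(x,y)W(x)=\sum_{k=0}^{n-1}\check c_k(y)\,\check C^{[1]}_k(x)W(x)$ and using \eqref{eq:conexionC1} in the form $\check C^{[1]}_k(x)W(x)=\sum_{j}\omega_{k,j}C^{[1]}_j(x)+\beta_k(x)$, where $\beta(x):=\check H(\check S_2)^{-\top}\big[\mathcal B(\chi(x))_{[N]};0;\dots\big]$ vanishes for $k\ge N$ because $\check H(\check S_2)^{-\top}$ is upper triangular and the driving vector is supported on the first $N$ blocks. The $\omega_{k,j}C^{[1]}_j(x)$ part telescopes exactly as before (the argument only touches the $y$-dependence and $\omega$, not whether $P^{[1]}_j$ or $C^{[1]}_j$ sits on the right), producing $W(y)K^{(pc)}_{n-1}(x,y)$ together with the $\Omega[n]$ term built on $C^{[1]}_{n-N},\dots,C^{[1]}_{n-1}$. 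It remains to evaluate the leftover $\sum_{k=0}^{n-1}\check c_k(y)\beta_k(x)$, which for $n\ge N$ is the full sum $\big(\chi(y)\big)^\top\check S_2^\top\check H^{-1}\,\check H(\check S_2)^{-\top}\big[\mathcal B(\chi(x))_{[N]};0;\dots\big]$; the collapse $\check S_2^\top\check H^{-1}\check H(\check S_2)^{-\top}=I$ reduces this to $\big((\chi(y))_{[N]}\big)^\top\mathcal B(\chi(x))_{[N]}=\mathcal V(x,y)$ of Definition \ref{defi:V}.

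The main obstacle is not conceptual but a matter of careful bookkeeping: one must track the truncation effects at the upper end $k=n,\dots,n+N-1$ so as to match the precise banded shape of $\Omega[n]$, and, for the mixed formula, justify both that $\beta(x)$ is supported on exactly the first $N$ blocks and that the associativity needed to collapse $\check S_2^\top\check H^{-1}\check H(\check S_2)^{-\top}$ against a finitely supported column is legitimate — this is where Propositions \ref{pro:associativity} and \ref{pro:Gauss--Borel factorization and associativity} are invoked, the finite support of $\mathcal B(\chi(x))_{[N]}$ guaranteeing that no genuinely infinite sums intervene. The hypothesis $n\ge N$ in \eqref{N} is exactly what ensures that $\sum_{k=0}^{n-1}\check c_k(y)\beta_k(x)$ captures the whole of $\mathcal V(x,y)$, which is why that formula is stated only in this range.
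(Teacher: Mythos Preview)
Your argument is correct and is essentially the same as the paper's, only organized at the level of indices rather than matrices. The paper introduces the pairing
\[
\mathcal K_{n-1}(x,y)=\big[\check c_0(y),\dots,\check c_{n-1}(y)\big]\,\omega_{[n]}\,\begin{bmatrix}P^{[1]}_0(x)\\\vdots\\P^{[1]}_{n-1}(x)\end{bmatrix}
\]
and evaluates it two ways: letting $\omega_{[n]}$ act on the right column via \eqref{conex2} produces $\check K_{n-1}(x,y)$, while letting it act on the left row via \eqref{conex3} produces $W(y)K_{n-1}(x,y)$ minus the $\Omega[n]$ boundary block. Your double sum $\sum_{k,j}\check c_k(y)\omega_{k,j}P^{[1]}_j(x)$ is exactly this pairing written out entrywise, and your ``substitute \eqref{conex1}, then interchange and apply \eqref{conex3'}'' is precisely the two evaluations in sequence; the tail terms you isolate are the same $\Omega[n]$ correction. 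For the mixed kernel the paper runs the identical pairing with $C^{[1]}_j$ in place of $P^{[1]}_j$ and handles the extra $\beta$-block the same way you do, collapsing $\big((\chi(y))_{[n]}\big)^\top\big((\check S_2)^\top\check H^{-1}\big)_{[n]}\big(\check H(\check S_2)^{-\top}\big)_{[n,N]}\mathcal B(\chi(x))_{[N]}$ to $\mathcal V(x,y)$ once $n\ge N$. The only practical difference is that the matrix formulation avoids the explicit summation-interchange bookkeeping you carry out by hand.
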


\begin{proof}
		For the first connection formula \eqref{K} we consider the pairing
		\begin{align*}
		\mathcal K_{n-1}(x,y):=
		\begin{bmatrix}
		\big(\check P_{0}^{[2]}(y)\big)^\top(\Ht_0)^{-1},\cdots,\big(\check P_{n-1}^{[2]}(y)\big)^\top(\Ht_{n-1})^{-1}
		\end{bmatrix}
		\omega_{[n]}
		\begin{bmatrix}
		P_{0}^{[1]}(x)\\
		\vdots\\
		P_{n-1}^{[1]}(x)
		\end{bmatrix},
		\end{align*}
and compute it in two different ways. From \eqref{conex1} we get 
\begin{align*}
	\omega_{[n]}
	\begin{bmatrix}
	P_{0}^{[1]}(x)\\
	\vdots\\
	P_{n-1}^{[1]}(x)
	\end{bmatrix}=	\begin{bmatrix}
	\check P_{0}^{[1]}(x)\\
	\vdots\\
\check 	P_{n-1}^{[1]}(x)
	\end{bmatrix},
\end{align*}
and, therefore,
$\mathcal K_{n-1}(x,y)=\check K_{n-1}(x,y)$.
Relation   \eqref{conex3'} leads to
	\begin{align*}
	\mathcal K_{n-1}(x,y)= W(y)	K_{n-1}(x,y)-\begin{bmatrix}
	\big(	\check P_{n}^{[2]}(y)\big)^\top(\Ht_n)^{-1},\dots,\big(\check P_{n+N-1}^{[2]}(y)\big)^\top(\Ht_{n+N-1})^{-1}
	\end{bmatrix}
	\Omega{[n]}
	\begin{bmatrix}
	P_{n-m}^{[1]}(x)\\
	\vdots\\
	P_{n-1}^{[1]}(x)
	\end{bmatrix},
	\end{align*}
and \eqref{K} is proven.

To derive \eqref{N} we  consider the  pairing
	\begin{equation*}
	\mathcal K^{(pc)}_{n-1}(x,y):=\begin{bmatrix}
\big(	\check P_{0}^{[2]}(y)\big)^\top(\Ht_0)^{-1},\dots,\big(\check P_{n-1}^{[2]}(y)\big)^\top(\Ht_{n-1})^{-1}
	\end{bmatrix}
	\omega_{[n]}
	\begin{bmatrix}
	C_{0}^{[1]}(x)\\
	\vdots\\
	C_{n-1}^{[1]}(x)
	\end{bmatrix},
	\end{equation*}	
which, as before,	can be computed in two different forms. On the one hand, using  \eqref{eq:conexionC1} we get
	\begin{align*}
	\mathcal K^{(pc)}_{n-1}(x,y)=	&\begin{bmatrix}
	\big(	\check P_{0}^{[2]}(y)\big)^\top(\Ht_0)^{-1},\dots,\big(\check P_{n-1}^{[2]}(y)\big)^\top(\Ht_{n-1})^{-1}
	\end{bmatrix}\left(
	\begin{bmatrix}
	\Ct_{0}^{[1]}(x)W(x)\\
	\vdots\\
	\Ct_{n-1}^{[1]}(x)W(x)
	\end{bmatrix}
	-\big(\check H\big(\check S_2\big)^{-\top}\big)_{[n,N]}\mathcal B 	(\chi(x))_{[N]}
	\right)\\
	&=\check K^{(pc)}_{n-1}(x,y)W(x)-
\big((\chi(y))_{[n]}\big)^\top\
\big(\big(\check S_2\big)^{\top}\check H^{-1}\big)_{[n]}\big(\check H\big(\check S_2\big)^{-\top}\big)_{[n,N]}\mathcal B 	(\chi(x))_{[N]},
	\end{align*}
	where $\big(\check H\big(\check S_2\big)^{-\top}\big)_{[n,N]}	$ is the truncation to the $n$ first block rows and first $N$ block  columns of
	$\check H\big(\check S_2\big)^{-\top}$. This simplifies  for $n\geq N$  to
		\begin{align*}
		\mathcal K^{(pc)}_{n-1}(x,y)
		&=\check K^{(pc)}_{n-1}(x,y)W(x)-
		\big((\chi(y))_{[N]}\big)^\top
		\mathcal B 	(\chi(x))_{[N]}.
		\end{align*}
	On the other hand,  from  \eqref{conex3'} we conclude
	\begin{align*}
	\mathcal K^{(pc)}_{n-1}(x,y)= W(y)	K^{(pc)}_{n-1}(x,y)-\begin{bmatrix}
	\big(	\check P_{n}^{[2]}(y)\big)^\top(\Ht_n)^{-1},\dots,\big(\check P_{n+N-1}^{[2]}(y)\big)^\top(\Ht_{n+N-1})^{-1}
	\end{bmatrix}
	\Omega{[n]}
	\begin{bmatrix}
	C_{n-N}^{[1]}(x)\\
	\vdots\\
	C_{n-1}^{[1]}(x)
	\end{bmatrix},
	\end{align*}
	and, consequently, we obtain
	\begin{multline*}
	\check K^{(pc)}_{n-1}(x,y)W(x)
	=	W(y)K^{(pc)}_{n-1}(x,y)-\begin{bmatrix}
	\big(	\check P_{n}^{[2]}(y)\big)^\top(\Ht_n)^{-1},\dots,\big(\check P_{n+N-1}^{[2]}(y)\big)^\top(\Ht_{n+N-1})^{-1}
	\end{bmatrix}
	\Omega{[n]}
	\begin{bmatrix}
	C_{n-N}^{[1]}(x)\\
	\vdots\\
	C_{n-1}^{[1]}(x)
	\end{bmatrix}\\+
	\big((\chi(y))_{[N]}\big)^\top\mathcal B 	(\chi(x))_{[N]}.
	\end{multline*}
\end{proof}

\subsection{Spectral jets and relations for the perturbed polynomials and its second kind functions }
For the time being we will assume that the perturbing polynomial is monic, $W(x)=I_p x^N+\sum\limits_{k=0}^{N-1}A_{k}x^k\in\mathbb C^{p\times p}[x]$.

\begin{defi}
	Given a perturbing monic matrix polynomial $W(y)$ the most general mass term will have the form
	\begin{align}\label{eq:v_general}
	v_{x,y}&:=\sum_{a=1}^{q}\sum_{j=1}^{s_a}\sum_{m=0}^{\kappa_j^{(a)}-1}\frac{(-1)^{m}}{m!}\big(\xi^{[a]}_{j,m}\big)_x\otimes\delta^{(m)}(y-x_a)l_{j}^{(a)}(y),
	\end{align}
	expressed in terms of derivatives of Dirac linear functionals and adapted left root polynomials $l_{j}^{(a)}(x)$ of $W(x)$, and for vectors of generalized functions $\big(\xi^{[a]}_{j,m}\big)_x\in\big(( \mathbb C[x])'\big)^p$ .
Discrete Hankel  masses appear when these terms are supported by the diagonal with
	\begin{align}\label{eq:v_diagonal}
	v_{x,x}&:=\sum_{a=1}^{q}\sum_{j=1}^{s_a}\sum_{m=0}^{\kappa_j^{(a)}-1}(-1)^{m}\delta^{(m)}(x-x_a)\frac{\xi^{[a]}_{j,m}}{m!}l_{j}^{(a)}(x),
	\end{align}
with $\xi^{[a]}_{j,m}\in\mathbb{C}^p$.
\end{defi}

\begin{rem}
	Observe that  the  Hankel masses \eqref{eq:v_diagonal} are particular cases of  \eqref{eq:v_general} with
	\begin{align*}
	v_{x,y}&:=\sum_{a=1}^{q}\sum_{j=1}^{s_a}	\sum_{m=0}^{\kappa_j^{(a)}-1}(-1)^{m}\frac{\xi^{[a]}_{j,m}}{m!}\sum_{k=0}^m \binom{m}{k} \delta^{(m-k)}(x-x_a)\otimes
	\delta^{(k)}(y-x_a)
l_{j}^{(a)}(y),
	\end{align*}
	so that, with the particular choice in \eqref{eq:v_general}
	\begin{align*}
	\big(\xi^{[a]}_{j,k}\big)_x=
	\sum_{n=0}^{\kappa_j^{(a)}-1-k}(-1)^{n}\frac{\xi^{[a]}_{j,k+n}}{n!}\delta^{(n)}(x-x_a),
	\end{align*}
	we get the diagonal case.
\end{rem}

\begin{rem}
	For the sesquilinear forms we have\begin{align*}
	\prodint{P(x),Q(y)
	}_{\check u}=
		\prodint{P(x), Q(y)(W(y))^{-\top}}_u+\sum_{a=1}^{q}\sum_{j=1}^{s_a}\sum_{m=0}^{\kappa_j^{(a)}-1}\prodint{P(x),\big(\xi^{[a]}_{j,m}\big)_x}\frac{1}{m!} \Big(l_{j}^{(a)}(y)\big(Q(y)\big)^\top\Big)^{(m)}_{x_a}.
	\end{align*}
\end{rem}

Observe that the distribution $v_{x,y}$ is associated with the eigenvalues and  left root vectors of the perturbing polynomial $W(x)$. Needless to say that, when $W(x)$ has a singular leading coefficient, this spectral part could even disappear, for example if $ W(x)$ is unimodular; i.e., with constant determinant, not depending on $x$. Notice that, in general,  we have $Np\geq\sum_{a=1}^q\sum_{i=1}^{s_a}\kappa^{(a)}_j$ and we can  not ensure  the equality, up to  for the nonsingular leading coefficient case.

\begin{defi}
	Given a set of generalized functions  $(\xi^{[a]}_{i,m})_x$, we introduce the matrices
	 \begin{align*}
	 \prodint{
	 	\check P^{[1]}_{n}(x),(\xi^{[a]}_i)_x}&	:=
	 \begin{bmatrix}
	 \prodint{
	 	\check P^{[1]}_{n}(x),\big(\xi^{[a]}_{i,0}\big)_x
	 },\prodint{
	 \check P^{[1]}_{n}(x),\big(\xi^{[a]}_{i,1}\big)_x
	},\dots,\prodint{
	\check P^{[1]}_{n}(x),\big(\xi^{[a]}_{i,\kappa^{(a)}_i-1}\big)_x
}
\end{bmatrix}\in\mathbb C^{ p\times \kappa_i^{(a)}},\\
 \prodint{
 	\check P^{[1]}_{n}(x),(\xi^{[a]})_x}	&:=
 \begin{bmatrix}
 \prodint{
 	\check P^{[1]}_{n}(x),\big(\xi^{[a]}_{1}\big)_x
 },\prodint{
 \check P^{[1]}_{n}(x),\big(\xi^{[a]}_{2}\big)_x
},\dots,\prodint{
\check P^{[1]}_{n}(x),\big(\xi^{[a]}_{s_a}\big)_x
}
\end{bmatrix}\in\mathbb C^{ p\times \alpha_a},
\\
\prodint{
	\check P^{[1]}_{n}(x),(\xi)_x}	&:=
\begin{bmatrix}
\prodint{
	\check P^{[1]}_{n}(x),\big(\xi^{[1]}\big)_x
},\prodint{
\check P^{[1]}_{n}(x),\big(\xi^{[2]}\big)_x
},\dots,\prodint{
\check P^{[1]}_{n}(x),\big(\xi^{[q]}_{s_a}\big)_x
}
\end{bmatrix}\in\mathbb C^{ p\times Np}.
\end{align*}
\end{defi}

\begin{defi}
The exchange matrix is
	\begin{align*}
	\eta_i^{(a)}=\begin{bmatrix}
	0 &0&\dots &0&1\\
		0 &0 &\dots &1&0\\
		\vdots& &\iddots&&\vdots\\
			0 &1 &\dots &0&0\\	
			1 &0 &\dots &0&0
				\end{bmatrix}\in\mathbb C^{\kappa_i^{(a)}\times \kappa_i^{(a)}}.
	\end{align*}
\end{defi}
\begin{defi}
	The left Jordan chain matrix is given by
	\begin{align*}	
\mathcal L_i^{(a)}&:=	\begin{bmatrix}
	l^{(a)}_{i,0}&l^{(a)}_{i,1} &l^{(a)}_{i,2} &\dots&l^{(a)}_{i,\kappa_i^{(a)}-1}\\
0_{1\times p}	&l^{(a)}_{i,0}&l^{(a)}_{i,1} &\dots &l^{(a)}_{i,\kappa_i^{(a)}-2}\\
0_{1\times p}	&    0_{1\times p}  &l^{(a)}_{i,0}             &  \dots&l^{(a)}_{i,\kappa_i^{(a)}-3}\\
\vdots & \ddots& \ddots &\ddots&\vdots \\
0_{1\times p}	&0_{1\times p}&&&l^{(a)}_{i,0}
	\end{bmatrix}\in\mathbb C^{ \kappa_i^{(a)}\times p\kappa_i^{(a)}}.
	\end{align*}
	For $z\neq x_a$, we also introduce the $p\times p$ matrices
	\begin{align}\label{eq:defC}
	\check{\mathcal C}_{n;i}^{(a)}(z):=	\prodint{
		\check P^{[1]}_{n}(x),(\xi^{[a]}_i)_x}	\eta_i^{(a)}
	\mathcal L^{(a)}_i
	\begin{bmatrix}
	\frac{I_p}{(z-x_a)^{\kappa_i^{(a)}}}\\
	\vdots\\
	\frac{I_p}{z-x_a}
	\end{bmatrix},
	\end{align}
		where $i=1,\dots,s_a$.
	\end{defi}
\begin{rem}
Assume that the mass matrix  is as in \eqref{eq:v_diagonal}. Then, in terms of
	\begin{align}\label{eq:chi}
	\mathcal X_{i}^{(a)}&:=
	\begin{bmatrix}
	\xi^{[a]}_{i,\kappa_i^{(a)}-1}&\xi^{[a]}_{i,\kappa_i^{(a)}-2} &\xi^{[a]}_{i,\kappa_i^{(a)}-3} &\dots&\xi^{[a]}_{i,0}\\
	0_{p\times 1}	&\xi^{[a]}_{i,\kappa_i^{(a)}-1}&\xi^{[a]}_{i,\kappa_i^{(a)}-2}&\dots&\xi^{[a]}_{i,1}\\
	0_{p\times 1}	&   0_{p\times 1}      &\xi^{[a]}_{i,\kappa_i^{(a)}-1}          & \dots &\xi^{[a]}_{i,2}\\
	\vdots & \ddots& \ddots &\ddots&\vdots \\
	0_{p\times 1}	&   0_{p\times 1}      &&&\xi^{[a]}_{i,\kappa_i^{(a)}-1}
	\end{bmatrix}\in\mathbb C^{p\kappa_i^{(a)}\times \kappa_i^{(a)}},
	\end{align}
	we can write
\begin{align}\label{eq:PX-JPX}
	\prodint{
		\check P^{[1]}_{n}(x),(\xi^{[a]}_i)_x}	\eta_i^{(a)}=\mathcal J_{\check P^{[1]}_n}^{(i)}(x_a)	\mathcal X^{(a)}_{i}.
\end{align}
Consequently,
	\begin{align*}
		\check{\mathcal C}_{n;i}^{(a)}(z):=
	\mathcal J_{\check P^{[1]}_n}^{(i)}(x_a)
		\mathcal X^{(a)}_{i}\mathcal L^{(a)}_i
		\begin{bmatrix}
		\frac{I_p}{(z-x_a)^{\kappa_i^{(a)}}}\\
		\vdots\\
		\frac{I_p}{z-x_a}
		\end{bmatrix}.
	\end{align*}
	Observe that $\mathcal X^{(a)}_i\mathcal L_i^{(a)}\in\mathbb C^{p\kappa^{(a)}_i\times p\kappa^{(a)}_i}$ is a block upper triangular matrix, with blocks in $\mathbb C^{p\times p}$.
\end{rem}

\begin{pro}\label{pro:checkCCauchy}
Fo $ z\not\in \operatorname{supp}_y(\check u)=\operatorname{supp}_y(u)\cup \sigma(W(y))$, the following expression
\begin{align*}
\check C_n^{[1]}(z)
&=\left\langle \check P^{[1]}_n(x),\frac{I_p}{z-y}\right\rangle_{ uW^{-1}}+\sum_{a=0}^{q}\sum_{i=1}^{s_a}\check{\mathcal C}_{n;i}^{(a)}(z)
\end{align*}	
holds.
\end{pro}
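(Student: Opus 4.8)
The plan is to start from the Cauchy definition $\check C_n^{[1]}(z)=\prodint{\check P_n^{[1]}(x),\frac{I_p}{z-y}}_{\check u}$, which is legitimate precisely because $z\notin\operatorname{supp}_y(\check u)=\operatorname{supp}_y(u)\cup\sigma(W(y))$, and to feed into it the decomposition of the perturbed form recorded in the remark preceding the statement,
\[
\prodint{P(x),Q(y)}_{\check u}=\prodint{P(x),Q(y)(W(y))^{-\top}}_{u}+\sum_{a=1}^q\sum_{j=1}^{s_a}\sum_{m=0}^{\kappa_j^{(a)}-1}\prodint{P(x),(\xi^{[a]}_{j,m})_x}\frac{1}{m!}\big(l_j^{(a)}(y)(Q(y))^\top\big)^{(m)}_{x_a}.
\]
Taking $P=\check P_n^{[1]}$ and $Q(y)=\frac{I_p}{z-y}$ and using linearity splits $\check C_n^{[1]}(z)$ into a regular piece and a mass piece. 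The regular piece is $\prodint{\check P_n^{[1]}(x),\frac{I_p}{z-y}(W(y))^{-\top}}_u$, which by definition of the form attached to the kernel $u_{x,y}(W(y))^{-1}$ is exactly $\prodint{\check P_n^{[1]}(x),\frac{I_p}{z-y}}_{uW^{-1}}$; its convergence follows from the same geometric-series argument as in Proposition \ref{pro:Cauchy1}, since $\operatorname{supp}_y(uW^{-1})=\operatorname{supp}_y(u)$ (the factor $(W(y))^{-1}$ being smooth off $\sigma(W(y))$, which is disjoint from $\operatorname{supp}_y(u)$). The whole task thus reduces to identifying the mass piece with $\sum_{a=1}^q\sum_{i=1}^{s_a}\check{\mathcal C}_{n;i}^{(a)}(z)$.

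For the mass piece I would work one pair $(a,i)$ at a time, writing $\kappa:=\kappa_i^{(a)}$ and using $(Q(y))^\top=\frac{I_p}{z-y}$. The only analytic input is the Taylor expansion of $\frac{l_i^{(a)}(y)}{z-y}$ about $y=x_a$: since $z\notin\sigma(W(y))$ forces $z\neq x_a$, the geometric series $\frac{1}{z-y}=\sum_{k\ge0}(y-x_a)^k/(z-x_a)^{k+1}$ converges near $x_a$, and multiplying by $l_i^{(a)}(y)=\sum_l l_{i,l}^{(a)}(y-x_a)^l$ and collecting the coefficient of $(y-x_a)^m$ gives
\[
\frac{1}{m!}\Big(\frac{l_i^{(a)}(y)}{z-y}\Big)^{(m)}_{x_a}=\sum_{l=0}^{m}l_{i,l}^{(a)}\frac{I_p}{(z-x_a)^{m-l+1}},\qquad 0\le m\le\kappa-1.
\]
Substituting this into the mass piece produces the double sum $\sum_{m=0}^{\kappa-1}\prodint{\check P_n^{[1]}(x),(\xi^{[a]}_{i,m})_x}\sum_{l=0}^{m}l_{i,l}^{(a)}(z-x_a)^{-(m-l+1)}$.

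The final, and really the only delicate, step is purely combinatorial: to recognize this double sum as the matrix product in \eqref{eq:defC}. I would evaluate $\check{\mathcal C}_{n;i}^{(a)}(z)=\prodint{\check P_n^{[1]}(x),(\xi^{[a]}_i)_x}\,\eta_i^{(a)}\mathcal L_i^{(a)}\big[\frac{I_p}{(z-x_a)^{\kappa}},\dots,\frac{I_p}{z-x_a}\big]^\top$ starting from the right: the Toeplitz shape of $\mathcal L_i^{(a)}$ collapses its product against the descending column of inverse powers into the rows $\sum_{l=0}^{\kappa-1-s}l_{i,l}^{(a)}(z-x_a)^{-(\kappa-l-s)}$ indexed by $s=0,\dots,\kappa-1$, while the exchange matrix $\eta_i^{(a)}$ reverses the columns of $\prodint{\check P_n^{[1]}(x),(\xi^{[a]}_i)_x}$, pairing the $s$-th such row with the block $\prodint{\check P_n^{[1]}(x),(\xi^{[a]}_{i,\kappa-1-s})_x}$. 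Reindexing by $m:=\kappa-1-s$ turns this into precisely the double sum above, and summing over $i$ and $a$ yields the claimed formula. The main obstacle is keeping the reversal induced by $\eta_i^{(a)}$ consistent with the descending order of the inverse powers in the column; once the index identification $m=\kappa-1-s$ and $k=m-l+1$ is fixed, the identity is immediate.
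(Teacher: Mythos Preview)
Your proof is correct and follows essentially the same route as the paper: start from the Cauchy form $\check C_n^{[1]}(z)=\prodint{\check P_n^{[1]}(x),\frac{I_p}{z-y}}_{\check u}$, split via $\check u=uW^{-1}+v$, and compute $\frac{1}{m!}\big(l_i^{(a)}(y)/(z-y)\big)^{(m)}_{x_a}$. The paper obtains this derivative via the Leibniz rule rather than a Taylor expansion, and then simply asserts that the result follows; your explicit unwinding of the matrix product $\prodint{\check P_n^{[1]},(\xi^{[a]}_i)_x}\eta_i^{(a)}\mathcal L_i^{(a)}[\cdots]^\top$ via the reindexing $m=\kappa-1-s$ fills in precisely the combinatorial identification that the paper leaves to the reader.
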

\begin{proof}
From Proposition \ref{pro:Cauchy1}	we have
\begin{align*}
\Ct_n^{[1]}(z)&=\prodint{
	 \check P^{[1]}_n(x),\frac{I_p}{z-y}
	 }_{\check u}\\
&=
\prodint{\check P^{[1]}_n(x),\frac{I_p}{z-y}}_{ uW^{-1}}
+\sum_{a=0}^{q}\sum_{i=1}^{s_a}\sum_{m=0}^{\kappa_i^{(a)}-1}
\prodint{
	\check P^{[1]}_{n}(x),\big(\xi^{[a]}_{j,m}\big)_x
	}
\left(\frac{1}{m!}\frac{l_i^{(a)}(x)}{z-x}\right)^{(m)}_{x_a}.
\end{align*}
Now, taking into account that
\begin{align*}
\left(\frac{1}{m!}\frac{l_i^{(a)}(x)}{z-x}\right)^{(m)}_{x=x_a}=\sum_{k=0}^m\left(
\frac{l_i^{(a)}(x)}{(m-k)!}\right)^{(m-k)}_{x_a}\frac{1}{(z-x_a)^{k+1}},
\end{align*}
we deduce the result.
\end{proof}

\begin{ma}\label{lemma:trabajandoC}
		Let $r^{(a)}_j(x)$ be  right root polynomials  of the monic matrix polynomial $W(x)$ given in \eqref{vecmil}, then
		\begin{align*}
		\mathcal L^{(a)}_i
		\begin{bmatrix}
		\dfrac{I_p}{(x-x_a)^{\kappa_i^{(a)}}}\\
		\vdots\\
		\dfrac{I_p}{x-x_a}
		\end{bmatrix}W(x)r^{(a)}_j(x)&=
		\begin{bmatrix}
		\dfrac{1}{(x-x_a)^{\kappa_i^{(a)}}}\\
		\vdots\\
		\dfrac{1}{x-x_a}
		\end{bmatrix}l_i^{(a)}(x)W(x)r_j^{(a)}(x)
		+(x-x_a)^{\kappa^{(a)}_j}T(x), & T(x)&\in\mathbb{C}^{\kappa^{(a)}_j}[x].
		\end{align*}		
\end{ma}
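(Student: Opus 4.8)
The plan is to prove the identity row by row, exploiting the block upper-triangular Toeplitz shape of $\mathcal{L}_i^{(a)}$. Writing $\kappa:=\kappa_i^{(a)}$ and $l_s:=l^{(a)}_{i,s}$, so that $l_i^{(a)}(x)=\sum_{s=0}^{\kappa-1}l_s(x-x_a)^s$, I would first observe that multiplying $\mathcal{L}_i^{(a)}$ by the column of negative powers of $(x-x_a)$ merely reconstructs the principal parts (the strictly negative-power tails) of the scalar multiples $l_i^{(a)}(x)/(x-x_a)^{\kappa-r}$. The discrepancy between these principal parts and the full functions is a genuine polynomial, and the whole point is that this polynomial remainder is annihilated, up to a factor $(x-x_a)^{\kappa_j^{(a)}}$, once we multiply on the right by $W(x)r_j^{(a)}(x)$.

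Concretely, indexing rows and block-columns of $\mathcal L_i^{(a)}$ by $r,c\in\{0,\dots,\kappa-1\}$, its $(r,c)$ block is $l_{c-r}$ for $c\ge r$ and zero otherwise, while the $c$-th block of the column is $I_p/(x-x_a)^{\kappa-c}$. Setting $s=c-r$, the $r$-th entry of the product is $\sum_{s=0}^{\kappa-1-r}l_s/(x-x_a)^{\kappa-r-s}$, which I would recognize as the principal part at $x_a$ of
\begin{align*}
\frac{l_i^{(a)}(x)}{(x-x_a)^{\kappa-r}}=\sum_{s=0}^{\kappa-1}l_s(x-x_a)^{s-\kappa+r}.
\end{align*}
Hence the $r$-th entry equals $l_i^{(a)}(x)/(x-x_a)^{\kappa-r}-q_r(x)$, where $q_r(x):=\sum_{s=\kappa-r}^{\kappa-1}l_s(x-x_a)^{s-\kappa+r}$ is the nonnegative-power part, a covector polynomial (empty, hence zero, when $r=0$).

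Finally I would right-multiply each such entry by $W(x)r_j^{(a)}(x)$. The term $\big(l_i^{(a)}(x)/(x-x_a)^{\kappa-r}\big)W(x)r_j^{(a)}(x)$ equals $\frac{1}{(x-x_a)^{\kappa-r}}\,l_i^{(a)}(x)W(x)r_j^{(a)}(x)$, which is exactly the $r$-th entry of the first term on the right-hand side of the claim. The leftover is $-q_r(x)W(x)r_j^{(a)}(x)$, and here I would invoke Proposition \ref{pro:adapted_root}, in the form \eqref{eq:Wraj}, which provides a vector polynomial $T_2(x)$ with $W(x)r_j^{(a)}(x)=(x-x_a)^{\kappa_j^{(a)}}T_2(x)$. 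Thus $-q_r(x)W(x)r_j^{(a)}(x)=-(x-x_a)^{\kappa_j^{(a)}}q_r(x)T_2(x)$, with $q_r(x)T_2(x)$ a scalar polynomial; assembling the entries $r\in\{0,\dots,\kappa-1\}$ into the vector polynomial $T(x)$ whose $r$-th component is $-q_r(x)T_2(x)$ yields the stated identity. The only real content is the bookkeeping in the middle step, namely identifying the Toeplitz product as a collection of principal parts; once that is secured, the high-order vanishing of $W(x)r_j^{(a)}(x)$ at $x_a$ supplied by \eqref{eq:Wraj} closes the argument routinely, so I do not anticipate a substantive obstacle.
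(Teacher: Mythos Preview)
Your proof is correct and follows essentially the same approach as the paper's: both compute the product $\mathcal L_i^{(a)}$ times the column of negative powers row by row, recognize each entry as $l_i^{(a)}(x)/(x-x_a)^{\kappa-r}$ minus a polynomial remainder (your $q_r$), and then invoke \eqref{eq:Wraj} to absorb that remainder into $(x-x_a)^{\kappa_j^{(a)}}T(x)$. Your phrasing in terms of principal parts is a clean way to say exactly what the paper displays explicitly in its matrix equation.
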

	\begin{proof}
	 Notice that we can write
\begin{align*}
\mathcal L^{(a)}_i
	\begin{bmatrix}
	\dfrac{I_p}{(x-x_a)^{\kappa_i^{(a)}}}\\
	\vdots\\
	\dfrac{I_p}{x-x_a}
	\end{bmatrix}W(x)r^{(a)}_j(x)&=\begin{bmatrix}
l^{(a)}_{i,0}&l^{(a)}_{i,1} &l^{(a)}_{i,2} &\cdots&l^{(a)}_{i,\kappa_i^{(a)}-1}\\
0_{1\times p}	&l^{(a)}_{i,0}&l^{(a)}_{i,1} &\cdots &l^{(a)}_{i,\kappa_i^{(a)}-2}\\
0_{1\times p}	&    0_{1\times p}  &l^{(a)}_{i,0}             &  &l^{(a)}_{i,\kappa_i^{(a)}-3}\\
\vdots &\ddots &  \ddots&\ddots&\vdots \\
0_{1\times p}	&0_{1\times p}&&&l^{(a)}_{i,0}
\end{bmatrix}	\begin{bmatrix}
\dfrac{I_p}{(x-x_a)^{\kappa_i^{(a)}}}\\
\vdots\\
\dfrac{I_p}{x-x_a}
\end{bmatrix}W(x)r^{(a)}_j(x)\\&=	\begin{bmatrix}
\dfrac{l_i^{(a)}(x)}{(x-x_a)^{\kappa_i^{(a)}}}\\
\dfrac{l_i^{(a)}(x)}{(x-x_a)^{\kappa_i^{(a)}-1}}	-l^{(a)}_{i,\kappa^{(a)}_i-1}\\
\vdots\\
\dfrac{l_i^{(a)}(x)}{x-x_a}	-l^{(a)}_{i,1}- \cdots-l^{(a)}_{i,\kappa^{(a)}_i-1}(x-x_a)^{\kappa_i^{(a)}-2}
\end{bmatrix}W(x)r^{(a)}_j(x).
\end{align*}
Now,  \eqref{eq:Wraj} yields the result.
\end{proof}

\begin{ma}
The function $\check{\mathcal C}^{(a)}_{n;i}(x)W(x)r^{(b)}_j(x)\in\mathbb C^p[x]$ satisfies
{\small\begin{align}\label{eq:CWr}
\check{\mathcal C}_{n;i}^{(a)}(x)W(x)r^{(b)}_j(x)=\begin{cases}
	\prodint{
		\check P^{[1]}_{n}(x),(\xi^{[a]}_i)_x}	\eta_i^{(a)}
\begin{bmatrix}
(x-x_a)^{\kappa^{(a)}_{\max(i,j)}-\kappa^{(a)}_i}\\
\vdots\\
(x-x_a)^{\kappa^{(a)}_{\max(i,j)}-1}
\end{bmatrix}w_{i,j}^{(a)}(x)+(x-x_a)^{\kappa^{(a)}_j}T^{(a,a)}(x), & \text{ if }a=b,\\
(x-x_b)^{\kappa_j^{(b)}}T^{(a,b)}(x), &\text{ if } a\neq b,
\end{cases}
\end{align}}
where the $\mathbb C^p$-valued function $T^{(a,b)}(x)$  is analytic at $x=x_b$ and, in particular, $T^{(a,a)}(x) \in\mathbb C^p[x]$ .
\end{ma}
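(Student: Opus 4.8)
The plan is to reduce both cases to the preceding Lemma \ref{lemma:trabajandoC} and Proposition \ref{pro:lWr} by peeling off the constant left factor of $\check{\mathcal C}_{n;i}^{(a)}$. Substituting the definition \eqref{eq:defC} gives
\begin{align*}
\check{\mathcal C}_{n;i}^{(a)}(x)W(x)r^{(b)}_j(x)=\prodint{\check P^{[1]}_{n}(x),(\xi^{[a]}_i)_x}\eta_i^{(a)}\,\mathcal L^{(a)}_i\begin{bmatrix}\frac{I_p}{(x-x_a)^{\kappa_i^{(a)}}}\\\vdots\\\frac{I_p}{x-x_a}\end{bmatrix}W(x)r^{(b)}_j(x),
\end{align*}
where the leading factor $\prodint{\check P^{[1]}_{n}(x),(\xi^{[a]}_i)_x}\eta_i^{(a)}\in\mathbb C^{p\times\kappa_i^{(a)}}$ is a constant matrix. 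Hence it suffices to analyse the $\mathbb C^{\kappa_i^{(a)}}$-valued function $\mathcal L^{(a)}_i[\cdots]W(x)r^{(b)}_j(x)$ and reattach this constant factor at the end.

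For the diagonal case $a=b$, I would apply Lemma \ref{lemma:trabajandoC} verbatim, which rewrites $\mathcal L^{(a)}_i[\cdots]W(x)r^{(a)}_j(x)$ as the pole column $[(x-x_a)^{-\kappa_i^{(a)}},\dots,(x-x_a)^{-1}]^\top$ times the scalar $l_i^{(a)}(x)W(x)r_j^{(a)}(x)$, plus a polynomial remainder $(x-x_a)^{\kappa_j^{(a)}}T(x)$. Proposition \ref{pro:lWr}, i.e. \eqref{leftright}, then supplies $l_i^{(a)}(x)W(x)r_j^{(a)}(x)=(x-x_a)^{\kappa^{(a)}_{\max(i,j)}}w_{i,j}^{(a)}(x)$. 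Multiplying the pole column by $(x-x_a)^{\kappa^{(a)}_{\max(i,j)}}$ raises every exponent by $\kappa^{(a)}_{\max(i,j)}$, producing exactly the monomial column appearing in the statement; reattaching the constant prefactor and setting $T^{(a,a)}(x):=\prodint{\check P^{[1]}_{n}(x),(\xi^{[a]}_i)_x}\eta_i^{(a)}T(x)\in\mathbb C^p[x]$ closes this branch.

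For the off-diagonal case $a\neq b$ I would instead use the second factorization in \eqref{eq:Wraj}, namely $W(x)r_j^{(b)}(x)=(x-x_b)^{\kappa_j^{(b)}}T_2(x)$ with $T_2$ a vector polynomial, so that
\begin{align*}
\check{\mathcal C}_{n;i}^{(a)}(x)W(x)r^{(b)}_j(x)=(x-x_b)^{\kappa_j^{(b)}}\underbrace{\prodint{\check P^{[1]}_{n}(x),(\xi^{[a]}_i)_x}\eta_i^{(a)}\mathcal L^{(a)}_i\begin{bmatrix}\frac{I_p}{(x-x_a)^{\kappa_i^{(a)}}}\\\vdots\\\frac{I_p}{x-x_a}\end{bmatrix}T_2(x)}_{=:\,T^{(a,b)}(x)}.
\end{align*}
Since $a\neq b$ forces $x_a\neq x_b$, the bracketed pole column has its only singularities at $x_a$ and is therefore analytic at $x=x_b$, whence $T^{(a,b)}(x)$ is analytic at $x_b$, as required.

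The computations are mechanical once Lemma \ref{lemma:trabajandoC}, Proposition \ref{pro:lWr} and \eqref{eq:Wraj} are in hand; the only point that needs care is that the diagonal branch genuinely lands in $\mathbb C^p[x]$ rather than producing spurious poles at $x_a$. This is guaranteed by the standing assumption that the partial multiplicities are increasingly ordered, $\kappa^{(a)}_1\le\cdots\le\kappa^{(a)}_{s_a}$, which yields $\kappa^{(a)}_{\max(i,j)}\ge\kappa^{(a)}_i$ and hence nonnegative exponents in the shifted column; this single inequality is the crux on which the whole statement hinges.
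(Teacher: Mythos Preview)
Your proof is correct and follows essentially the same route as the paper: peel off the constant prefactor, invoke Lemma~\ref{lemma:trabajandoC} together with Proposition~\ref{pro:lWr} for the diagonal case, and use the factorization \eqref{eq:Wraj} directly for the off-diagonal case. Your closing remark about the ordering $\kappa^{(a)}_1\le\cdots\le\kappa^{(a)}_{s_a}$ forcing nonnegative exponents is a useful clarification that the paper leaves implicit.
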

\begin{proof}
	First, for the function $\check{\mathcal C}^{(a)}_{n;i}(x)W(x)r^{(b)}_j(x)\in\mathbb C^p[x]$, with  $a\neq b$, and recalling \eqref{eq:Wraj}, we have
	\begin{align*}
	\check{\mathcal C}^{(a)}_{n;i}(x)W(x)r^{(b)}_j(x)&=
	\prodint{
		\check P^{[1]}_{n}(x),(\xi^{[a]}_i)_x}	\eta_i^{(a)}
	\mathcal L^{(a)}_i
	\begin{bmatrix}
	\dfrac{I_p}{(x-x_a)^{\kappa_i^{(a)}}}\\
	\vdots\\
	\dfrac{I_p}{x-x_a}
	\end{bmatrix}W(x)r_j^{(b)}(x)\\&=(x-x_b)^{\kappa_j^{(b)}}T^{(a,b)}(x),
	\end{align*}
	where the $\mathbb C^p$-valued function $T^{(a,b)}(x)$ is analytic at $x=x_b$.
Secondly, from  \eqref{eq:defC} and Lemma \ref{lemma:trabajandoC} we deduce that
\begin{align*}
\check{\mathcal C}_{n;i}^{(a)}(x)W(x)r^{(a)}_j(x)&=
	\prodint{
		\check P^{[1]}_{n}(x),(\xi^{[a]}_i)_x}	\eta_i^{(a)}
\mathcal L^{(a)}_i
\begin{bmatrix}
\dfrac{I_p}{(x-x_a)^{\kappa_i^{(a)}}}\\
\vdots\\
\dfrac{I_p}{x-x_a}
\end{bmatrix}W(x)r^{(a)}_j(x)
\\
&=
\begin{multlined}[t]
	\prodint{
		\check P^{[1]}_{n}(x),(\xi^{[a]}_i)_x}	\eta_i^{(a)}
\begin{bmatrix}
\dfrac{I_p}{(x-x_a)^{\kappa_i^{(a)}}}\\
\vdots\\
\dfrac{I_p}{x-x_a}
\end{bmatrix}l_i^{(a)}(x)W(x)r_j^{(a)}(x)
\\	+(x-x_a)^{\kappa^{(a)}_j}
	\prodint{
		\check P^{[1]}_{n}(x),(\xi^{[a]}_i)_x}	\eta_i^{(a)}
T^{(a,a)}(x),
\end{multlined}
\end{align*}
for some $T^{(a,a)}(x)\in \mathbb C^p[x]$. Therefore, from  Proposition \ref{pro:lWr} we get
\begin{multline*}
\check{\mathcal C}_{n;i}^{(a)}(x)W(x)r^{(a)}_j(x)=
	\prodint{\check P^{[1]}_{n}(x),(\xi^{[a]}_i)_x}	\eta_i^{(a)}
\begin{bmatrix}
(x-x_a)^{\kappa^{(a)}_{\max(i,j)}-\kappa^{(a)}_i}\\
\vdots\\
(x-x_a)^{\kappa^{(a)}_{\max(i,j)}-1}
\end{bmatrix}\\\times\Big(w^{(a)}_{i,j;0}+w^{(a)}_{i,j;1}(x-x_a)+\cdots+w^{(a)}_{i,j;\kappa^{(a)}_{\min(i,j)}+N-2}
(x-x_a)^{\kappa^{(a)}_{\min(i,j)}+N-2}\Big).
\\	+(x-x_a)^{\kappa^{(a)}_j}	\prodint{
	\check P^{[1]}_{n}(x),(\xi^{[a]}_i)_x}	\eta_i^{(a)}
T^{(a,a)}(x),
\end{multline*}
and the result follows.
\end{proof}

We evaluate now the spectral jets of the second kind functions $\check C^{[1]}(z)$  á la Cauchy, thus we must take limits of derivatives precisely in points of the spectrum of $W(x)$, which do not lay in the region of definition but on the border of it. Notice that these operations are not available for the second kind functions á la Gram.

\begin{ma}
	For $m=0,\dots,\kappa^{(a)}_j-1$, the following relations hold
		\begin{align}\label{eq:C}
		\Big(	\check C_{n}^{[1]}(z)W(z)r^{(a)}_j(z)\Big)^{(m)}_{x_a}
		=\sum_{i=1}^{s_a}\Big(\check{\mathcal C}_{n;i}^{(a)}(z)W(z)r_j^{(a)}(z)\Big)^{(m)}_{x_a}.
		\end{align}
\end{ma}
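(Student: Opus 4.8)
The plan is to start from the decomposition of $\check C_n^{[1]}(z)$ furnished by Proposition \ref{pro:checkCCauchy}, multiply it on the right by $W(z)r_j^{(a)}(z)$, and show that every summand other than the diagonal spectral pieces $\check{\mathcal C}_{n;i}^{(a)}(z)W(z)r_j^{(a)}(z)$ carries a zero of order at least $\kappa_j^{(a)}$ at $z=x_a$, so that its first $\kappa_j^{(a)}$ derivatives vanish there. Distributing the factor $W(z)r_j^{(a)}(z)$ over the right-hand side of Proposition \ref{pro:checkCCauchy} gives
\[
\check C_n^{[1]}(z)W(z)r_j^{(a)}(z)=\left\langle \check P^{[1]}_n(x),\frac{I_p}{z-y}\right\rangle_{uW^{-1}}W(z)r_j^{(a)}(z)+\sum_{b=1}^{q}\sum_{i=1}^{s_b}\check{\mathcal C}_{n;i}^{(b)}(z)W(z)r_j^{(a)}(z),
\]
and I would treat the regular (Cauchy) term, the off-diagonal spectral terms $(b\neq a)$, and the diagonal spectral terms $(b=a)$ separately.

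For the regular term I would argue by analyticity: the generalized kernel $uW^{-1}$ has $y$-support equal to $\operatorname{supp}_y(u)$, and since $x_a\in\sigma(W(y))$ while the Geronimus hypothesis imposes $\sigma(W(y))\cap\operatorname{supp}_y(u)=\varnothing$, the point $x_a$ lies outside $\operatorname{supp}_y(u)$. Hence the second kind function $\left\langle \check P_n^{[1]}(x),\frac{I_p}{z-y}\right\rangle_{uW^{-1}}$, being a Cauchy transform in $z$ of a kernel supported away from $x_a$, is analytic in a neighbourhood of $z=x_a$. By Proposition \ref{pro:adapted_root} in the factored form \eqref{eq:Wraj} we have $W(z)r_j^{(a)}(z)=(z-x_a)^{\kappa_j^{(a)}}T_2(z)$, so the regular term is a product of a function analytic at $x_a$ with a factor having a zero of order $\kappa_j^{(a)}$; by the Leibniz rule all its derivatives up to order $\kappa_j^{(a)}-1$ vanish at $x_a$.

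For the off-diagonal spectral terms with $b\neq a$ I would invoke \eqref{eq:CWr}, read with the roles of the two eigenvalues exchanged (so that the superscript on $\check{\mathcal C}$ is $b$ and the superscript on $r$ is $a$): since $b\neq a$, this gives $\check{\mathcal C}_{n;i}^{(b)}(z)W(z)r_j^{(a)}(z)=(z-x_a)^{\kappa_j^{(a)}}T^{(b,a)}(z)$ with $T^{(b,a)}$ analytic at $x_a$, so again the first $\kappa_j^{(a)}$ derivatives at $x_a$ vanish. What survives are precisely the diagonal terms $b=a$, so that for $m\in\{0,\dots,\kappa_j^{(a)}-1\}$ only $\sum_{i=1}^{s_a}\big(\check{\mathcal C}_{n;i}^{(a)}(z)W(z)r_j^{(a)}(z)\big)^{(m)}_{x_a}$ remains, which is exactly the claimed identity \eqref{eq:C}.

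The only genuine subtlety — and the step I would verify most carefully — is the analyticity of the regular Cauchy term at $z=x_a$: one must be sure that passing from $u$ to $uW^{-1}$ does not enlarge the $y$-support to include the eigenvalues of $W$. This is guaranteed by the Geronimus hypothesis $\sigma(W(y))\cap\operatorname{supp}_y(u)=\varnothing$, which makes $W(y)^{-1}$ act as a smooth multiplier on $\operatorname{supp}_y(u)$ and leaves the support unchanged. Everything else is a bookkeeping application of the zero-order estimates already recorded in \eqref{eq:Wraj} and \eqref{eq:CWr} together with the Leibniz rule.
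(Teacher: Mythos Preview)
Your proof is correct and follows essentially the same approach as the paper: decompose via Proposition \ref{pro:checkCCauchy}, kill the Cauchy term using analyticity at $x_a$ together with the factorization $W(z)r_j^{(a)}(z)=(z-x_a)^{\kappa_j^{(a)}}T_2(z)$ from \eqref{eq:Wraj}, and kill the off-diagonal spectral terms via \eqref{eq:CWr}. The only cosmetic difference is that the paper writes out the Leibniz expansion of the Cauchy term explicitly inside the sesquilinear form, whereas you phrase it as ``analytic factor times a high-order zero''; both amount to the same computation.
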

\begin{proof}
	
	For $z\not\in\operatorname{supp}_y(u)\cup \sigma(W(y))$,	 a consequence of Proposition \ref{pro:checkCCauchy} is that
	\begin{align*}
	\Big(	\check C_{n}^{[1]}(z)W(z)r^{(a)}_j(z)\Big)^{(m)}_{x_a}
	=\bigg(\left\langle \check P^{[1]}_n(x),\frac{I_p}{z-y}\right\rangle_{uW^{-1}} W(z)r_j^{(a)}(z)\bigg)^{(m)}_{x_a}+\sum_{b=0}^{q}\sum_{i=1}^{s_b}\Big(\check{\mathcal C}_{n;i}^{(b)}(z)W(z)r_j^{(a)}(z)\Big)^{(m)}_{x_a}.
	\end{align*}
	But, as $\sigma(W(y))\cap \operatorname{supp}_y(u)=\varnothing$, the derivatives of the Cauchy kernel $1/(z-y)$ are analytic functions at  $z=x_a$. Therefore,
	\begin{align*}
	\bigg(\left\langle \check P^{[1]}_n(x),\frac{I_p}{z-y}\right\rangle_{uW^{-1}} W(z)r_j^{(a)}(z)\bigg)^{(m)}_{x_a}&=\left\langle \check P^{[1]}_n(x), \bigg(\frac{W(z)r_j^{(a)}(z)}{z-y} \bigg)^{(m)}_{x_a}\right\rangle_{uW^{-1}}\\&=\left\langle \check P^{[1]}_n(x), \sum_{k=0}^m\binom{m}{k}\big(W(z)r_j^{(a)}(z) \big)^{(k)}_{x_a}\frac{(-1)^{m-k}(m-k)!}{(x_a-y)^{m-k+1}}\right\rangle_{uW^{-1}}\\
	&=0_{p\times 1},
	\end{align*}
	for $m=1,\dots,\kappa^{(a)}_j-1$, where in the last equation we have used \eqref{eq:Wraj}. Equation \eqref{eq:CWr} shows that $	\check{\mathcal C}_{n;i}^{(b)}(x)W(x)r^{(a)}_j(x)$ for  $b\neq a$ has a zero at $z=x_a$ of order $\kappa^{(a)}_j$ and, consequently,
	\begin{align*}
\big(	\check{\mathcal C}_{n;i}^{(b)}(x)W(x)r^{(a)}_j(x)\big)^{(m)}_{x_a}&=0, &b\neq a,
	\end{align*}
for $m=0,\dots,\kappa^{(a)}_j-1$.
\end{proof}

\begin{defi}\label{def:Wji}
	Given the functions $w^{(a)}_{i,j;k}$ introduced in Proposition \ref{pro:lWr}, let us introduce  the matrix $\mathcal{W}_{j,i}^{(a)}\in\mathbb C^{\kappa^{(a)}_{j}\times \kappa^{(a)}_{i}}$
	\begin{align*}
	\mathcal{W}_{j,i}^{(a)}&:=\begin{cases}
	\eta_j^{(a)}\left[\begin{array}{c|c}
	0_{\kappa^{(a)}_{j}\times (\kappa^{(a)}_{i}-\kappa^{(a)}_{j})} &\begin{matrix}	w^{(a)}_{i,j;0}&w^{(a)}_{i,j;1}&\cdots
	&w^{(a)}_{i,j;\kappa^{(a)}_j-1}\\
	0_{p}&w^{(a)}_{i,j:0} &\cdots &w^{(a)}_{i,j;\kappa^{(a)}_j-2}\\
	\vdots& & \ddots&    \vdots \\
	0_p& 0_p&   & w^{(a)}_{i,j:0} \\
	\end{matrix}
	\end{array}\right], & i\geq j,\\
	\eta_j^{(a)}\begin{bmatrix}
	w^{(a)}_{i,j;\kappa_j^{(a)}-\kappa_i^{(i)}}&w^{(a)}_{i,j;\kappa_j^{(a)}-\kappa_i^{(i)}+1}&\cdots&w^{(a)}_{i,j,\kappa_j^{(a)}-1}\\
	\vdots&&&\vdots\\w^{(a)}_{i,j;0}&w^{(a)}_{i,j;1}&\cdots & w^{(a)}_{i,j;\kappa_i^{(a)}-1}\\
	0_p&w^{(a)}_{i,j;0}&&\vdots\\\vdots&&\ddots&\\
	0_p&0_p&& w^{(a)}_{i,j;0}
	\end{bmatrix}, & i\leq j.
	\end{cases}
	\end{align*}
	and the matrix
	$\mathcal W^{(a)}_{j}\in\mathbb C^{\kappa_j^{(a)}\times \alpha_a}$ given by
	\begin{align*}
	\mathcal W^{(a)}_{j}:=\Big[\mathcal W^{(a)}_{j,1},\dots, \mathcal W^{(a)}_{j,s_a}\Big].
	\end{align*}
	We also consider the matrices  $\mathcal W^{(a)}\in\mathbb C^{\alpha_a\times \alpha_a}$ and $\mathcal W\in\mathbb C^{Np\times Np}$
	\begin{align}\label{eq:T}
	\mathcal W^{(a)}&:=\begin{bmatrix}
	\mathcal W_1^{(a)}\\\vdots\\	\mathcal W_{s_a}^{(a)}
	\end{bmatrix},& \mathcal W:=\diag(\mathcal W^{(1)},\dots,\mathcal W^{(q)}).
	\end{align}
\end{defi}

\begin{pro} The following relations among the spectral jets, introduced in Definition \ref{def:spectral jets}, of the perturbed polynomials and second kind functions
	\begin{align}\label{eq:CscC}
	\boldsymbol{\mathcal J}^{(j)}_{\check{C}^{[1]}_nW}(x_a)&=\sum_{i=1}^{s_a}\boldsymbol{\mathcal J}^{(j)}_{\check {\mathcal C}_{n;i}W}(x_a),&
	\boldsymbol{\mathcal J}_{\check{C}^{[1]}_nW}(x_a)&=\sum_{i=1}^{s_a}\boldsymbol{\mathcal J}_{\check {\mathcal C}_{n;i}W}(x_a),\\
	\label{eq:CPXW}
	\boldsymbol{\mathcal J}^{(j)}_{\check {\mathcal C}_{n;i}W}(x_a)&=	\prodint{\check P^{[1]}_{n}(x),(\xi^{[a]}_i)_x}	\mathcal W_{i,j}^{(a)},&
	\boldsymbol{\mathcal J}_{\check {\mathcal C}_{n;i}W}(x_a)&=	\prodint{\check P^{[1]}_{n}(x),(\xi^{[a]}_i)_x}	\mathcal W_{i}^{(a)},\\
	\label{eq:checkCP}
	\boldsymbol{\mathcal J}_{\check C^{[1]}_n W}(x_a)
	&=\prodint{\check P^{[1]}_{n}(x),(\xi^{[a]})_x}\mathcal W^{(a)}, &
	\boldsymbol{\mathcal J}_{\check C^{[1]}_n W}
	&=\prodint{\check P^{[1]}_{n}(x),(\xi)_x}\mathcal W,
	\end{align}
	are satisfied.
\end{pro}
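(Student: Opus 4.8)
The plan is to establish the three families of identities in cascade: \eqref{eq:CscC} is an immediate repackaging of the preceding Lemma, \eqref{eq:CPXW} carries the genuine content, and \eqref{eq:checkCP} is then a purely formal block-matrix assembly. Recall from Definition \ref{def:spectral jets} that the root spectral jet $\boldsymbol{\mathcal J}^{(j)}_{f}(x_a)$ is the matrix whose $(m{+}1)$-st block column is $\frac{1}{m!}\big(f(x)r_j^{(a)}(x)\big)^{(m)}_{x_a}$ for $m=0,\dots,\kappa_j^{(a)}-1$, built from the $j$-th right root polynomial. Applying this with $f=\check C_n^{[1]}W$ and with $f=\check{\mathcal C}_{n;i}W$, the scalar relation \eqref{eq:C} asserts precisely that, column by column, the $m$-th derivative data of $\check C_n^{[1]}W$ at $x_a$ equals the sum over $i$ of those of $\check{\mathcal C}_{n;i}W$. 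Dividing \eqref{eq:C} by $m!$ and collecting the columns gives the left identity in \eqref{eq:CscC}, and the right one follows by concatenating over $j\in\{1,\dots,s_a\}$, since that concatenation commutes with the finite sum over $i$.

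For \eqref{eq:CPXW} I would start from \eqref{eq:CWr} in the diagonal case $a=b$. The prefactor $\prodint{\check P^{[1]}_{n}(x),(\xi^{[a]}_i)_x}\eta_i^{(a)}$ is a constant matrix, hence factors out of every derivative taken at $x_a$; and the remainder $(x-x_a)^{\kappa_j^{(a)}}T^{(a,a)}(x)$ has vanishing Taylor coefficients up to order $\kappa_j^{(a)}-1$, so it contributes nothing to $\boldsymbol{\mathcal J}^{(j)}$. Therefore $\boldsymbol{\mathcal J}^{(j)}_{\check{\mathcal C}_{n;i}W}(x_a)$ equals $\prodint{\check P^{[1]}_{n}(x),(\xi^{[a]}_i)_x}\,\eta_i^{(a)}$ times the jet array of the explicit vector-valued function $\mathbf v(x)=[(x-x_a)^{\kappa^{(a)}_{\max(i,j)}-\kappa^{(a)}_i},\dots,(x-x_a)^{\kappa^{(a)}_{\max(i,j)}-1}]^\top\, w_{i,j}^{(a)}(x)$. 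Expanding $w_{i,j}^{(a)}$ in powers of $(x-x_a)$ and reading off Taylor coefficients shows that this jet array is a shifted triangular Toeplitz matrix in the coefficients $w^{(a)}_{i,j;s}$, and the claim reduces to verifying that the product of $\eta_i^{(a)}$ with this array is exactly the block $\mathcal W^{(a)}_{i,j}$ of Definition \ref{def:Wji}. Concatenating the resulting left identity over $j$, and using $\mathcal W^{(a)}_i=[\mathcal W^{(a)}_{i,1},\dots,\mathcal W^{(a)}_{i,s_a}]$, yields the right identity of \eqref{eq:CPXW}.

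Finally, \eqref{eq:checkCP} is bookkeeping. Summing the right identity of \eqref{eq:CPXW} over $i$ and invoking the right identity of \eqref{eq:CscC} gives $\boldsymbol{\mathcal J}_{\check C_n^{[1]}W}(x_a)=\sum_{i=1}^{s_a}\prodint{\check P^{[1]}_{n}(x),(\xi^{[a]}_i)_x}\mathcal W^{(a)}_i$; recognizing this sum as the product of the row-concatenation $\prodint{\check P^{[1]}_{n}(x),(\xi^{[a]})_x}$ with the column-stack of the blocks $\mathcal W^{(a)}_1,\dots,\mathcal W^{(a)}_{s_a}$ from \eqref{eq:T} produces the first equality. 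The global identity then follows by concatenating over $a\in\{1,\dots,q\}$ and noting that $\mathcal W=\diag(\mathcal W^{(1)},\dots,\mathcal W^{(q)})$ is block diagonal, so that $\prodint{\check P^{[1]}_{n}(x),(\xi)_x}\mathcal W$ decouples block-wise into the per-eigenvalue identities just established.

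The main obstacle is the entry-by-entry matching in the middle step: reconciling the triangular Toeplitz array produced by the jets of $\mathbf v$ with the two-case definition of $\mathcal W^{(a)}_{i,j}$, while keeping precise track of the exchange matrix $\eta_i^{(a)}$, of the leading power $\kappa^{(a)}_{\max(i,j)}-\kappa_i^{(a)}$ (which is $0$ when $i\ge j$ and positive when $i\le j$, matching the flush versus vertically shifted shapes in Definition \ref{def:Wji}), and of the truncation of the jet at order $\kappa_j^{(a)}-1$ against the degree bound $d^{(a)}_{i,j}=\kappa^{(a)}_{\min(i,j)}+N-2$ on $w_{i,j}^{(a)}$ from Proposition \ref{pro:lWr}. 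Everything outside this matching is formal linear algebra over the block structures.
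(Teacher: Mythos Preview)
Your proposal is correct and follows essentially the same route as the paper: \eqref{eq:CscC} comes straight from \eqref{eq:C}, \eqref{eq:CPXW} from differentiating \eqref{eq:CWr} and matching the resulting Taylor array against Definition~\ref{def:Wji}, and \eqref{eq:checkCP} by block-assembling via \eqref{eq:T}. The paper likewise leaves the entry-by-entry matching you flag as the main obstacle implicit, stating only that ``collecting all these equations in a matrix form we get \eqref{eq:CPXW}''; your more explicit account of how $\eta_i^{(a)}$ and the leading power $\kappa^{(a)}_{\max(i,j)}-\kappa_i^{(a)}$ reproduce the two shapes in Definition~\ref{def:Wji} is a genuine clarification of that step.
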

\begin{proof}
	Equation \eqref{eq:CscC} is a direct consequence of \eqref{eq:C}. According to \eqref{eq:CWr}  for $m=0,\dots, \kappa_j^{(a)}-1$, we have
	\begin{align*}
	\big(\check{\mathcal C}_{n;i}^{(a)}(x)W(x)r^{(a)}_j(x)\big)_{x=x_a}^{(m)}=
	\prodint{\check P^{[1]}_{n}(x),(\xi^{[a]}_i)_x}\eta_i^{(a)}
	\begin{bmatrix}
	\Big((x-x_a)^{\kappa^{(a)}_{\max(i,j)}-\kappa^{(a)}_i}w_{i,j}^{(a)}(x)\Big)^{(m)}_{x_a}\\
	\vdots\\
	\Big((x-x_a)^{\kappa^{(a)}_{\max(i,j)}-1}w_{i,j}^{(a)}(x)\Big)^{(m)}_{x_a}
	\end{bmatrix},
	\end{align*}
	and collecting all these equations in a matrix form we get \eqref{eq:CPXW}. Finally, we notice that from \eqref{eq:CscC}  and \eqref{eq:CPXW}  we deduce
	\begin{align*}
	\boldsymbol{\mathcal J}^{(j)}_{\check C^{[1]}_nW}(x_a)&=\sum_{i=1}^{s_a}	\prodint{\check P^{[1]}_{n}(x),(\xi^{[a]}_i)_x}\mathcal W_{i,j}^{(a)},&
	\boldsymbol{\mathcal J}_{\check C^{[1]}_nW}(x_a)&=\sum_{i=1}^{s_a}	\prodint{\check P^{[1]}_{n}(x),(\xi^{[a]}_i)_x}\mathcal W_{i}^{(a)}.
	\end{align*}
	Now,  using \eqref{eq:T} we can  write the second equation as 
	\begin{align*}
	\boldsymbol{\mathcal J}_{\check C^{[1]}_nW}(x_a)&=\sum_{i=1}^{s_a}	\prodint{\check P^{[1]}_{n}(x),(\xi^{[a]}_i)_x}\mathcal W^{(a)}_i \\
	&=	\prodint{\check P^{[1]}_{n}(x),(\xi^{[a]})_x}\mathcal W^{(a)}.
	\end{align*}
	A similar argument leads to the second relation in \eqref{eq:checkCP}.
\end{proof}

\begin{defi}
	For the Hankel masses, we also consider the matrices $\mathcal T_i^{(a)}\in\mathbb C^{p\kappa^{(a)}_i\times \alpha_a}$, $\mathcal T^{(a)}\in\mathbb C^{p\alpha_a\times \alpha_a}$ and $\mathcal T\in\mathbb C^{Np^2\times Np}$ given by
			\begin{align*}
			\mathcal T_i^{(a)}&:=\mathcal X_i^{(a)}\eta_i^{(a)}\mathcal W_i^{(a)}, &
			\mathcal T^{(a)}&:=\begin{bmatrix}
				\mathcal T_1^{(a)}\\\vdots\\	\mathcal T_{s_a}^{(a)}
			\end{bmatrix},& \mathcal T:=\diag(\mathcal T^{(1)},\dots,\mathcal T^{(q)}).
			\end{align*}
\end{defi}

\begin{rem} For  masses as in \eqref{eq:v_diagonal},  relations \eqref{eq:CPXW} and \eqref{eq:checkCP} reduce to
	\begin{align*}
\boldsymbol{\mathcal J}^{(j)}_{\check {\mathcal C}_{n;i}W}(x_a)&=\mathcal J^{(i)}_{\check P^{[1]}_n}(x_a)\mathcal X^{(a)}_i\eta_i^{(a)} \mathcal W_{i,j}^{(a)},&
\boldsymbol{\mathcal J}_{\check {\mathcal C}_{n;i}W}(x_a)&=\mathcal J^{(i)}_{\check P^{[1]}_n}(x_a)\mathcal T^{(a)}_i ,\\
\boldsymbol{\mathcal J}_{\check C^{[1]}_n W}(x_a)
&=\mathcal J_{\check P^{[1]}_n}(x_a)\mathcal T^{(a)}, &
\boldsymbol{\mathcal J}_{\check C^{[1]}_n W}
&=\mathcal J_{\check P^{[1]}_n}\mathcal T.
	\end{align*}
\end{rem}

\subsection{Spectral Christoffel--Geronimus formulas}
We assume a mass term as in \eqref{eq:v_general}.
\subsubsection{Discussion for $n\geq N$}
\begin{rem}\label{rem:nonsingular}
	First, we perform a preliminary comment.  Later on, see Corollary \ref{corollary:non_singularity},  with the aid of a nonspectral approach, we will see that
\begin{align*}
\begin{vmatrix}
\boldsymbol{\mathcal J}_{C^{[1]}_{n-N}}-\prodint{ P^{[1]}_{n-N}(x),(\xi)_x}\mathcal W\\ 	\vdots\\ \boldsymbol{\mathcal J}_{C^{[1]}_{n-1}}-\prodint{ P^{[1]}_{n-1}(x),(\xi)_x}\mathcal W
\end{vmatrix}&\neq 0, & n&\geq N.
\end{align*}
\end{rem}

\begin{pro}\label{pro:resolventCP}
If    $n\geq N$,
  the matrix coefficients of  the  connection matrix satisfy
\begin{align*}
\big[\omega_{n,n-N},\dots,\omega_{n,n-1}\big]	
&=
-\big(\boldsymbol{\mathcal J}_{C^{[1]}_{n}}-\prodint{ P^{[1]}_{n}(x),(\xi)_x}\mathcal W\big)
\begin{bmatrix}
\boldsymbol{\mathcal J}_{C^{[1]}_{n-N}}-\prodint{ P^{[1]}_{n-N}(x),(\xi)_x}\mathcal W\\ 	\vdots\\ \boldsymbol{\mathcal J}_{C^{[1]}_{n-1}}-\prodint{ P^{[1]}_{n-1}(x),(\xi)_x}\mathcal W
\end{bmatrix}^{-1}.
\end{align*}
\end{pro}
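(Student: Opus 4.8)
The plan is to combine the two connection formulas already at hand---the one for the biorthogonal polynomials \eqref{conex1} and the one for the second kind functions \eqref{eq:conexionC1}---and then to read off the resolvent coefficients by passing to spectral jets. First I would examine the $n$-th block row of \eqref{eq:conexionC1}. Since the inhomogeneous $\mathcal B$-term lives only in the first $N$ block rows, for $n\geq N$ it drops out; using that $\omega$ is lower unitriangular and banded with $\omega_{n,n}=I_p$ (Proposition \ref{conexw}), one is left with the clean block identity
\begin{align*}
\check C^{[1]}_n(x)W(x)=C^{[1]}_n(x)+\sum_{k=n-N}^{n-1}\omega_{n,k}C^{[1]}_k(x).
\end{align*}

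Next I would apply the spectral jet $\boldsymbol{\mathcal J}$ of Definition \ref{def:spectral jets} to both sides. Because the jet is built from derivatives evaluated against the right root polynomials, left multiplication by the constant matrices $\omega_{n,k}$ commutes with it, so the right-hand side becomes $\boldsymbol{\mathcal J}_{C^{[1]}_n}+\sum_{k=n-N}^{n-1}\omega_{n,k}\boldsymbol{\mathcal J}_{C^{[1]}_k}$. For the left-hand side I would invoke the already established relation \eqref{eq:checkCP}, which identifies $\boldsymbol{\mathcal J}_{\check C^{[1]}_nW}$ with $\prodint{\check P^{[1]}_n(x),(\xi)_x}\mathcal W$. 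Then I would expand this pairing by inserting the polynomial connection formula \eqref{conex1} and using the left $\mathbb C^{p\times p}$-linearity of the sesquilinear form (Definition \ref{def:sesquilinear}), obtaining $\prodint{P^{[1]}_n(x),(\xi)_x}\mathcal W+\sum_{k=n-N}^{n-1}\omega_{n,k}\prodint{P^{[1]}_k(x),(\xi)_x}\mathcal W$.

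Equating the two expressions and grouping the terms carrying an $\omega_{n,k}$ factor, the diagonal contributions collect into $-\big(\boldsymbol{\mathcal J}_{C^{[1]}_n}-\prodint{P^{[1]}_n(x),(\xi)_x}\mathcal W\big)$, while the remaining sum is $\sum_{k=n-N}^{n-1}\omega_{n,k}\big(\boldsymbol{\mathcal J}_{C^{[1]}_k}-\prodint{P^{[1]}_k(x),(\xi)_x}\mathcal W\big)$. Setting $M_k:=\boldsymbol{\mathcal J}_{C^{[1]}_k}-\prodint{P^{[1]}_k(x),(\xi)_x}\mathcal W$ and stacking the $N$ row blocks $M_{n-N},\dots,M_{n-1}$ into a single matrix, this is exactly a linear system for the block row vector $\big[\omega_{n,n-N},\dots,\omega_{n,n-1}\big]$, namely $-M_n=\big[\omega_{n,n-N},\dots,\omega_{n,n-1}\big]\,[M_{n-N}^\top,\dots,M_{n-1}^\top]^{\,\text{stacked}}$.

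The only remaining point, and the one I expect to be the genuine obstacle, is the invertibility of that coefficient matrix; solving and multiplying on the right by its inverse then yields exactly the claimed formula. This nonsingularity is not established by spectral means at this stage but is precisely the content announced in Remark \ref{rem:nonsingular}, to be proven later via the nonspectral approach of Corollary \ref{corollary:non_singularity}, so here I would simply cite it. A secondary bookkeeping check is that each $M_k$ is a $p\times Np$ matrix, so that stacking $N$ of them produces a square $Np\times Np$ block matrix and the inverse makes dimensional sense, with $\big[\omega_{n,n-N},\dots,\omega_{n,n-1}\big]$ correctly of size $p\times Np$.
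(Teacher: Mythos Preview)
Your proposal is correct and follows essentially the same approach as the paper: extract the $n$-th block row of \eqref{eq:conexionC1} for $n\geq N$ (where the $\mathcal B$-term vanishes), take spectral jets, use \eqref{eq:checkCP} to identify $\boldsymbol{\mathcal J}_{\check C^{[1]}_nW}$ with $\prodint{\check P^{[1]}_n(x),(\xi)_x}\mathcal W$, expand the latter via \eqref{conex1}, subtract, and invoke Remark \ref{rem:nonsingular} for invertibility. The paper's proof is organized slightly differently---it writes the jet equation and the $\xi$-pairing equation separately before equating them---but the logical content is identical.
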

\begin{proof}
	From the connection formula  \eqref{eq:conexionC1}, for  $n\geq N$
\begin{align*}
\check C_n^{[1]}(x)W(x)=\sum_{k=n-N}^{n-1}\omega_{n,k}C_k^{[1]}(x)+C_n^{[1]}(x),
\end{align*}
and we conclude that
\begin{align*}
\boldsymbol{\mathcal J}_{ \check C^{[1]}_nW}=\big[\omega_{n,n-N},\dots,\omega_{n,n-1}\big]	
\begin{bmatrix}
\boldsymbol{\mathcal J}_{C^{[1]}_{n-N}}\\ 	\vdots\\ \boldsymbol{\mathcal J}_{C^{[1]}_{n-1}}
	\end{bmatrix}
	+\boldsymbol{\mathcal J}_{C^{[1]}_{n}}.
\end{align*}
Similarly, using the equation \eqref{conex1}, we get
\begin{align}\label{eq:PomegaP}
\prodint{ \check P^{[1]}_{n}(x),(\xi)_x}\mathcal W=\big[\omega_{n,n-N},\dots,\omega_{n,n-1}\big]	
\begin{bmatrix}
\prodint{ P^{[1]}_{n-N}(x),(\xi)_x}\mathcal W\\ 	\vdots\\ \prodint{ P^{[1]}_{n-1}(x),(\xi)_x}\mathcal W
\end{bmatrix}
+\prodint{ P^{[1]}_{n}(x),(\xi)_x}\mathcal W.
\end{align}
Now, from \eqref{eq:checkCP} we deduce
\begin{align*}
\big[\omega_{n,n-N},\dots,\omega_{n,n-1}\big]	
\begin{bmatrix}
\boldsymbol{\mathcal J}_{C^{[1]}_{n-N}}\\ 	\vdots\\ \boldsymbol{\mathcal J}_{C^{[1]}_{n-1}}
\end{bmatrix}
+\boldsymbol{\mathcal J}_{C^{[1]}_{n}}
&=\big[\omega_{n,n-N},\dots,\omega_{n,n-1}\big]	
\begin{bmatrix}
\prodint{ P^{[1]}_{n-N}(x),(\xi)_x}\mathcal W\\ 	\vdots\\ \prodint{ P^{[1]}_{n-1}(x),(\xi)_x}\mathcal W
\end{bmatrix}
+\prodint{ P^{[1]}_{n}(x),(\xi)_x}\mathcal W,
\end{align*}
that is to say
\begin{align*}
\big[\omega_{n,n-N},\dots,\omega_{n,n-1}\big]	
\begin{bmatrix}
\boldsymbol{\mathcal J}_{C^{[1]}_{n-N}}-\prodint{ P^{[1]}_{n-N}(x),(\xi)_x}\mathcal W\\ 	\vdots\\ \boldsymbol{\mathcal J}_{C^{[1]}_{n-1}}
-\prodint{ P^{[1]}_{n-1}(x),(\xi)_x}\mathcal W
\end{bmatrix}
&=
-\Big(\boldsymbol{\mathcal J}_{C^{[1]}_{n}}-\prodint{ P^{[1]}_{n}(x),(\xi)_x}\mathcal W\Big).
\end{align*}
Hence, using Remark \ref{rem:nonsingular} we get the result.
\end{proof}
\begin{rem}
		In the next results, the jets of the Christoffel--Darboux kernels are considered with respect to the first variable $x$, and we treat the $y$-variable as a parameter.
\end{rem}
\begin{teo}[Spectral Christoffel--Geronimus formulas]\label{teo:spectral}
When $n\geq N$, for monic Geronimus perturbations, with  masses as described in \eqref{eq:v_general},
	we have the following last quasideterminantal  expressions for the perturbed biorthogonal matrix polynomials and its matrix norms
	\begin{align*}
	\check P^{[1]}_{n}(x)&=
		\Theta_*\begin{bmatrix}\boldsymbol{\mathcal J}_{C^{[1]}_{n-N}}-\prodint{ P^{[1]}_{n-N}(x),(\xi)_x}\mathcal W& P_{n-N}^{[1]}(x)\\ 	\vdots&\vdots\\
\boldsymbol{\mathcal J}_{C^{[1]}_{n}}-\prodint{ P^{[1]}_{n}(x),(\xi)_x}\mathcal W& P^{[1]}_n(x)
		\end{bmatrix},\\
			\check H_{n}&=
			\Theta_*\begin{bmatrix}	\boldsymbol{\mathcal J}_{C^{[1]}_{n-N}}-\prodint{ P^{[1]}_{n-N}(x),(\xi)_x}\mathcal W& H_{n-N}\\
	\boldsymbol{\mathcal J}_{C^{[1]}_{n-N+1}}-\prodint{ P^{[1]}_{n-N+1}(x),(\xi)_x}\mathcal W& 0_p\\	\vdots&\vdots\\
	\boldsymbol{\mathcal J}_{C^{[1]}_{n}}-\prodint{ P^{[1]}_{n}(x),(\xi)_x}\mathcal W& 0_p
			\end{bmatrix},\\
				\big(	\check P _{n}^{[2]}(y)\big)^\top&=	-\Theta_*
				\begin{bmatrix}
				\boldsymbol{\mathcal J}_{C^{[1]}_{n-N}}-	\prodint{ P^{[1]}_{n-N}(x),(\xi)_x}\mathcal W&	H_{n-N}\\
							\boldsymbol{\mathcal J}_{C^{[1]}_{n-N+1}}-	\prodint{ P^{[1]}_{n-N+1}(x),(\xi)_x}\mathcal W& 0_p\\
				\vdots &\vdots\\	
				\boldsymbol{\mathcal J}_{C^{[1]}_{n-1}}-	\prodint{ P^{[1]}_{n-1}(x),(\xi)_x}\mathcal W&0_p\\
				W(y)\big(	\boldsymbol{\mathcal J}_{ K^{(pc)}_{n-1}}(y)-
				\prodint{ K_{n-1}(x,y),(\xi)_x}\mathcal W\big)+\boldsymbol{\mathcal J}_{ \mathcal V}(y)&0_p
				\end{bmatrix}.
	\end{align*}
\end{teo}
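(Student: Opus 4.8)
The plan is to read each of the three expressions through the expansion of the last quasideterminant: for a block matrix $\bigl[\begin{smallmatrix}\mathcal A&\mathbf b\\ \mathbf c&d\end{smallmatrix}\bigr]$ one has $\Theta_*=d-\mathbf c\,\mathcal A^{-1}\mathbf b$. Throughout I abbreviate $A_k:=\boldsymbol{\mathcal J}_{C^{[1]}_k}-\prodint{P^{[1]}_k(x),(\xi)_x}\mathcal W$ and let $\mathcal A$ be the $Np\times Np$ matrix whose block rows are $A_{n-N},\dots,A_{n-1}$; this is the common top-left block of all three quasideterminants, and it is nonsingular by Remark \ref{rem:nonsingular}, so every quasideterminant below is well defined. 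The entire argument then amounts to recognizing, after the expansion, the resolvent coefficients furnished by Proposition \ref{pro:resolventCP} together with the connection formulas.

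First I would treat $\check P^{[1]}_n(x)$. Its quasideterminant has top-left block $\mathcal A$, last row $[\,A_n\mid P^{[1]}_n(x)\,]$ and last column with blocks $P^{[1]}_{n-N}(x),\dots,P^{[1]}_{n-1}(x)$, so the expansion yields $\Theta_*=P^{[1]}_n(x)-A_n\mathcal A^{-1}[P^{[1]}_{n-N}(x),\dots,P^{[1]}_{n-1}(x)]^\top$. Proposition \ref{pro:resolventCP} identifies $-A_n\mathcal A^{-1}$ with the resolvent row $[\omega_{n,n-N},\dots,\omega_{n,n-1}]$, so $\Theta_*=P^{[1]}_n(x)+\sum_{k=n-N}^{n-1}\omega_{n,k}P^{[1]}_k(x)=\check P^{[1]}_n(x)$ by the connection formula \eqref{conex1}. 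For $\check H_n$ the top-left block is again $\mathcal A$, the last column has blocks $H_{n-N},0_p,\dots,0_p$ and the last row is $[\,A_n\mid 0_p\,]$, so the expansion collapses to $\Theta_*=-A_n\mathcal A^{-1}[H_{n-N},0_p,\dots,0_p]^\top=\omega_{n,n-N}H_{n-N}$; since $W$ is monic ($A_N=I_p$), \eqref{eq:omegaA} gives $\omega_{n,n-N}=\check H_n(H_{n-N})^{-1}$ and hence $\Theta_*=\check H_n$.

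The third identity is the substantial one. Here $-\Theta_*=R(y)\,\mathcal A^{-1}[H_{n-N},0_p,\dots,0_p]^\top$, with $R(y)$ the displayed last row, namely $W(y)\bigl(\boldsymbol{\mathcal J}_{K^{(pc)}_{n-1}}(y)-\prodint{K_{n-1}(x,y),(\xi)_x}\mathcal W\bigr)+\boldsymbol{\mathcal J}_{\mathcal V}(y)$. The core of the proof is the factorization $R(y)=\bigl[(\check P^{[2]}_n(y))^\top\check H_n^{-1},\dots,(\check P^{[2]}_{n+N-1}(y))^\top\check H_{n+N-1}^{-1}\bigr]\,\Omega{[n]}\,\mathcal A$, which I would obtain by taking the root spectral jet in $x$ of the mixed Christoffel--Darboux connection formula \eqref{N}. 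Expanding $\check K^{(pc)}_{n-1}$ in the biorthogonal basis and applying \eqref{eq:checkCP} turns the jet of its left-hand side into $\prodint{\check K_{n-1}(x,y),(\xi)_x}\mathcal W$; the ordinary-kernel connection formula \eqref{K} then rewrites this as $W(y)\prodint{K_{n-1}(x,y),(\xi)_x}\mathcal W$ minus precisely the $\Omega{[n]}$-contraction appearing on the right of the jetted \eqref{N}. After cancellation the surviving terms regroup into $R(y)=[\,\cdots]\,\Omega{[n]}\,\mathcal A$. Multiplying on the right by $\mathcal A^{-1}[H_{n-N},0_p,\dots,0_p]^\top$ and using that $\Omega{[n]}$ is block upper triangular with $(1,1)$-block $\omega_{n,n-N}$, the product collapses to $(\check P^{[2]}_n(y))^\top\check H_n^{-1}\omega_{n,n-N}H_{n-N}$, which is $(\check P^{[2]}_n(y))^\top$ once more through $\omega_{n,n-N}H_{n-N}=\check H_n$. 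The hard part will be exactly this matching of the two connection formulas \eqref{K} and \eqref{N} under the jet map and the subsequent collapse of the upper-triangular wing $\Omega{[n]}$; the quasideterminant expansions in the first two identities are routine by comparison.
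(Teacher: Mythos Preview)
Your proposal is correct and follows essentially the same route as the paper: expand each quasideterminant, identify $-A_n\mathcal A^{-1}$ with the resolvent row via Proposition~\ref{pro:resolventCP}, invoke \eqref{conex1} and \eqref{eq:omegaA} (with $A_N=I_p$) for $\check P^{[1]}_n$ and $\check H_n$, and for $\big(\check P^{[2]}_n\big)^\top$ take the root spectral jet of \eqref{N}, convert the left side through \eqref{eq:checkCP} into $\prodint{\check K_{n-1}(x,y),(\xi)_x}\mathcal W$, rewrite that via \eqref{K}, and subtract to obtain $R(y)=[\,\cdots\,]\Omega[n]\,\mathcal A$, after which the upper-triangular shape of $\Omega[n]$ and $\omega_{n,n-N}H_{n-N}=\check H_n$ give the collapse. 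The paper organizes the bookkeeping slightly differently---it writes out the intermediate identity \eqref{eq:PomeganN} explicitly before specializing to the first block column---but the substance is identical.
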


\begin{proof}
	First, we consider  the expressions for  $	\check P^{[1]}_{n}(x)$ and      $\check H_{n}$.
	Using relation \eqref{conex1} we have
	\begin{align*}
	\check P^{[1]}_{n}(x)&=P^{[1]}_n(x)+\big[\omega_{n,n-N},\dots,\omega_{n,n-1}\big]\begin{bmatrix}P_{n-N}^{[1]}(x)\\\vdots\\
P_{n-1}^{[1]}(x)	
	\end{bmatrix},
	\end{align*}
	 from Proposition \ref{pro:resolventCP} we obtain
\begin{align*}
	\check P^{[1]}_{n}(x)&=P^{[1]}_{n}(x)-\big(\boldsymbol{\mathcal J}_{C^{[1]}_{n}}-\prodint{ P^{[1]}_{n}(x),(\xi)_x}\mathcal W\big)
	\begin{bmatrix}\boldsymbol{\mathcal J}_{C^{[1]}_{n-N}}-\prodint{ P^{[1]}_{n-N}(x),(\xi)_x}\mathcal W\\ 	\vdots\\ \boldsymbol{\mathcal J}_{C^{[1]}_{n-1}}-\prodint{ P^{[1]}_{n-1}(x),(\xi)_x}\mathcal W
	\end{bmatrix}^{-1}\begin{bmatrix}P_{n-N}^{[1]}(x)\\\vdots\\
P_{n-1}^{[1]}(x)	
\end{bmatrix},
\end{align*}
and the result follows.
To get the transformation for the $H$'s we proceed as follows. From \eqref{eq:omegaA} we deduce
\begin{align} \label{eq:HG}
\check H_{n}=\omega_{n,n-N}H_{n-N}.
\end{align}
But, according to Proposition \ref{pro:resolventCP}, we have
\begin{align*}
\omega_{n,n-N}
&= -\big(\boldsymbol{\mathcal J}_{C^{[1]}_{n}}-\prodint{ P^{[1]}_{n}(x),(\xi)_x}\mathcal W\big)
\begin{bmatrix}\boldsymbol{\mathcal J}_{C^{[1]}_{n-N}}-\prodint{ P^{[1]}_{n-N}(x),(\xi)_x}\mathcal W\\ 	\vdots\\ \boldsymbol{\mathcal J}_{C^{[1]}_{n-1}}-\prodint{ P^{[1]}_{n-1}(x),(\xi)_x}\mathcal W
\end{bmatrix}^{-1}\begin{bmatrix}
I_p\\0_p\\\vdots\\0_p
\end{bmatrix}.
\end{align*}
Hence,
\begin{align*}
\check H_{n}=-\big(\boldsymbol{\mathcal J}_{C^{[1]}_{n}}-\prodint{ P^{[1]}_{n}(x),(\xi)_x}\mathcal W\big)
\begin{bmatrix}\boldsymbol{\mathcal J}_{C^{[1]}_{n-N}}-\prodint{ P^{[1]}_{n-N}(x),(\xi)_x}\mathcal W\\ 	\vdots\\ \boldsymbol{\mathcal J}_{C^{[1]}_{n-1}}-\prodint{ P^{[1]}_{n-1}(x),(\xi)_x}\mathcal W
\end{bmatrix}^{-1}\begin{bmatrix}
H_{n-N}\\0_p\\\vdots\\0_p
\end{bmatrix}.
\end{align*}

We now prove the result for $\Big(\check P^{[2]}_{n}(y)\Big)^\top$.	On  one hand, 	according to Definition \ref{eq:CD kernel}	we rewrite   \eqref{N} as
	\begin{align*}
	\sum_{k=0}^{n-1}\big(\check P_k^{[2]}(y)\big)^\top \check H_k^{-1}\check C_k^{[1]}(x)W(x)&=\begin{multlined}[t][0.7\textwidth]
	W(y)K^{(pc)}_{n-1}(x,y)\\-\begin{bmatrix}
	\big(	\check P_{n}^{[2]}(y)\big)^\top\Ht^{-1}_n,\dots,\big(\check P_{n+N-1}^{[2]}(y)\big)^\top\Ht^{-1}_{n+N-1}
	\end{bmatrix}
	\Omega{[n]}
	\begin{bmatrix}
	C_{n-N}^{[1]}(x)\\
	\vdots\\
	C_{n-1}^{[1]}(x)
	\end{bmatrix}+\mathcal V(x,y).
	\end{multlined}
	\end{align*}
	Therefore, the corresponding spectral jets do satisfy
	\begin{align*}
	\sum_{k=0}^{n-1}\big(\check P_k^{[2]}(y)\big)^\top \check H_k^{-1}\boldsymbol{\mathcal J}_{\check C^{[1]}_kW}&=\begin{multlined}[t][0.7\textwidth]	W(y)
	\boldsymbol{\mathcal J}_{ K^{(pc)}_{n-1}}(y)\\-\begin{bmatrix}
	\big(	\check P_{n}^{[2]}(y)\big)^\top\Ht^{-1}_n,\dots,\big(\check P_{n+N-1}^{[2]}(y)\big)^\top\Ht^{-1}_{n+N-1}
	\end{bmatrix}
	\Omega{[n]}\begin{bmatrix}
	\boldsymbol{\mathcal J}_{ C^{[1]}_{n-N}}\\ 	\vdots\\	\boldsymbol{\mathcal J}_{ C^{[1]}_{n-1}}
	\end{bmatrix}+\boldsymbol{\mathcal J}_{ \mathcal V}(y),
	\end{multlined}
	\end{align*}
	and, recalling \eqref{eq:checkCP}, we conclude that
	\begin{multline}\label{eq:WKpc}
	\sum_{k=0}^{n-1}\big(\check P_k^{[2]}(y)\big)^\top \check H_k^{-1}	\prodint{  \check P^{[1]}_{k} (x),(\xi)_x}\mathcal W=	W(y)
	\boldsymbol{\mathcal J}_{ K^{(pc)}_{n-1}}(y)\\-\begin{bmatrix}
	\big(	\check P_{n}^{[2]}(y)\big)^\top\Ht^{-1}_n,\dots,\big(\check P_{n+N-1}^{[2]}(y)\big)^\top\Ht^{-1}_{n+N-1}
	\end{bmatrix}
	\Omega{[n]}\begin{bmatrix}
	\boldsymbol{\mathcal J}_{ C^{[1]}_{n-N}}\\ 	\vdots\\	\boldsymbol{\mathcal J}_{ C^{[1]}_{n-1}}
	\end{bmatrix}+\boldsymbol{\mathcal J}_{ \mathcal V}(y).
	\end{multline}
	On the other hand, from \eqref{K} we realize that
	\begin{multline*}
	\sum_{k=0}^{n-1}\big(\check P_k^{[2]}(y)\big)^\top \check H_k^{-1}\prodint{ P^{[1]}_{k}(x),(\xi)_x}\mathcal W\\=
	W(y)\prodint{ K_{n-1}(x,y),(\xi)_x}\mathcal W-\begin{bmatrix}
	\big(	\check P_{n}^{[2]}(y)\big)^\top\Ht^{-1}_n,\dots,\big(\check P_{n+N-1}^{[2]}(y)\big)^\top\Ht^{-1}_{n+N-1}
	\end{bmatrix}
	\Omega{[n]}
	\begin{bmatrix}
	\prodint{ P^{[1]}_{n-N}(x),(\xi)_x}\mathcal W\\ 	\vdots\\\prodint{ P^{[1]}_{n-1}(x),(\xi)_x}\mathcal W
	\end{bmatrix},
	\end{multline*}
	which can be subtracted to 	\eqref{eq:WKpc} to get
	\begin{multline*}
	W(y)\big(	\boldsymbol{\mathcal J}_{ K^{(pc)}_{n-1}}(y)-\prodint{ K_{n-1}(x,y),(\xi)_x}\mathcal W\big)+\boldsymbol{\mathcal J}_{ \mathcal V}(y)\\=\begin{bmatrix}
	\big(	\check P_{n}^{[2]}(y)\big)^\top\Ht^{-1}_n,\dots,\big(\check P_{n+N-1}^{[2]}(y)\big)^\top\Ht^{-1}_{n+N-1}
	\end{bmatrix}
	\Omega{[n]}\begin{bmatrix}\boldsymbol{\mathcal J}_{C^{[1]}_{n-N}}-\prodint{ P^{[1]}_{n-N}(x),(\xi)_x}\mathcal W\\ 	\vdots\\ \boldsymbol{\mathcal J}_{C^{[1]}_{n-1}}-\prodint{ P^{[1]}_{n-1}(x),(\xi)_x}\mathcal W
	\end{bmatrix}.
	\end{multline*}	
	Hence, we obtain the  formula
	\begin{multline}\label{eq:PomeganN}
	\begin{bmatrix}
	\big(	\check P _{n}^{[2]}(y)\big)^\top\Ht^{-1}_n,\dots,\big(\check P_{n+N-1}^{[2]}(y)\big)^\top\Ht^{-1}_{n+N-1}
	\end{bmatrix}
	\Omega{[n]}\\=	\Big(	W(y)\big(	\boldsymbol{\mathcal J}_{ K^{(pc)}_{n-1}}(y)-\prodint{ K_{n-1}(x,y),(\xi)_x}\mathcal W\big)+\boldsymbol{\mathcal J}_{ \mathcal V}(y)\Big)
\begin{bmatrix}\boldsymbol{\mathcal J}_{C^{[1]}_{n-N}}-\prodint{ P^{[1]}_{n-N}(x),(\xi)_x}\mathcal W\\ 	\vdots\\ \boldsymbol{\mathcal J}_{C^{[1]}_{n-1}}-\prodint{ P^{[1]}_{n-1}(x),(\xi)_x}\mathcal W
\end{bmatrix}^{-1}.
	\end{multline}	
	Now, for $n\geq N$, from Definition \ref{def:omeganN}  and the fact that   $\omega_{n,n-N}=\check H_n\big(H_{n-N}\big)^{-1}$, we get
	\begin{align*}
	\big(	\check P _{n}^{[2]}(y)\big)^\top=	\Big(	W(y)\big(	\boldsymbol{\mathcal J}_{ K^{(pc)}_{n-1}}(y)-\prodint{ K_{n-1}(x,y),(\xi)_x}\mathcal W\big)+\boldsymbol{\mathcal J}_{ \mathcal V}(y)\Big)
	\begin{bmatrix}\boldsymbol{\mathcal J}_{C^{[1]}_{n-N}}-\prodint{ P^{[1]}_{n-N}(x),(\xi)_x}\mathcal W\\ 	\vdots\\ \boldsymbol{\mathcal J}_{C^{[1]}_{n-1}}-\prodint{ P^{[1]}_{n-1}(x),(\xi)_x}\mathcal W
	\end{bmatrix}^{-1}\begin{bmatrix}
	H_{n-N}\\0_p\\\vdots\\0_p
	\end{bmatrix},
	\end{align*}
	and the result follows.	
\end{proof}

\begin{rem}[Spectral Christoffel--Geronimus formulas with discrete masses supported by the diagonal]
	When $n\geq N$, for monic Geronimus perturbations, with  masses as described in \eqref{eq:v_diagonal},
	we have the following last quasideterminantal  expressions for the perturbed biorthogonal matrix polynomials and its matrix norms
	\begin{align*}
	\check P^{[1]}_{n}(x)&=
	\Theta_*\begin{bmatrix}\boldsymbol{\mathcal J}_{C^{[1]}_{n-N}}-{\mathcal J}_{P^{[1]}_{n-N}}\mathcal T& P_{n-N}^{[1]}(x)\\ 	\vdots&\vdots\\
	\boldsymbol{\mathcal J}_{C^{[1]}_{n}}-{\mathcal J}_{P^{[1]}_{n}}\mathcal T& P^{[1]}_n(x)
	\end{bmatrix},\\
	\check H_{n}&=
	\Theta_*\begin{bmatrix}	\boldsymbol{\mathcal J}_{C^{[1]}_{n-N}}-{\mathcal J}_{P^{[1]}_{n-N}}\mathcal T& H_{n-N}\\
	\boldsymbol{\mathcal J}_{C^{[1]}_{n-N+1}}-{\mathcal J}_{P^{[1]}_{n-N+1}}\mathcal T & 0_p\\	\vdots&\vdots\\
	\boldsymbol{\mathcal J}_{C^{[1]}_{n}}-{\mathcal J}_{P^{[1]}_{n}}\mathcal T & 0_p
	\end{bmatrix},\\
	\big(	\check P _{n}^{[2]}(y)\big)^\top&=	-\Theta_*
	\begin{bmatrix}
	\boldsymbol{\mathcal J}_{C^{[1]}_{n-N}}-	\mathcal J_{P^{[1]}_{n-N}}\mathcal T&	H_{n-N}\\
	\boldsymbol{\mathcal J}_{C^{[1]}_{n-N+1}}-	\mathcal J_{P^{[1]}_{n-N+1}}\mathcal T& 0_p\\
	\vdots &\vdots\\	
	\boldsymbol{\mathcal J}_{C^{[1]}_{n-1}}-	\mathcal J_{P^{[1]}_{n-1}}\mathcal T&0_p\\
	W(y)\big(	\boldsymbol{\mathcal J}_{ K^{(pc)}_{n-1}}(y)-\mathcal J_{K_{n-1}}(y)\mathcal T\big)+\boldsymbol{\mathcal J}_{ \mathcal V}(y)&0_p
	\end{bmatrix}.
	\end{align*}
\end{rem}

\subsubsection{Discussion for $n< N$}
Later on, in the context of nonspectral methods, we will derive Corollary \ref{corollary:non_singularity2}, which is applicable in our monic polynomial perturbation  scenario. Thus, we know that  $	\begin{bsmallmatrix}
\boldsymbol{\mathcal J}_{C^{[1]}_0}-\prodint{ P^{[1]}_{0}(x),(\xi)_x}\mathcal W\\
\vdots\\
\boldsymbol{\mathcal J}_{C^{[1]}_n}-\prodint{ P^{[1]}_{n}(x),(\xi)_x}\mathcal W
\end{bsmallmatrix}$ is full rank.

\begin{pro}\label{pro:LUN}
The truncations $\omega_{[N]}$, $\check H_{[N]}$, and $(\check S_2)_{[N]}$ yield the Gauss--Borel factorization
	\begin{align*}
\big(\omega_{[N]}\big)^{-1}	\check H_{[N]}\big((\check S_2)_{[N]}\big)^{-\top}=-		\begin{bmatrix}
	\boldsymbol{\mathcal J}_{C^{[1]}_0}-\prodint{ P^{[1]}_{0}(x),(\xi)_x}\mathcal W\\
	\vdots\\			\boldsymbol{\mathcal J}_{C^{[1]}_{N-1}}-\prodint{ P^{[1]}_{N-1}(x),(\xi)_x}\mathcal W
	\end{bmatrix}\mathcal R,
	\end{align*}
	where $\mathcal R$ is given in \eqref{eq:RJY}.
\end{pro}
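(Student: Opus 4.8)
The plan is to apply the root spectral jet operator $\boldsymbol{\mathcal J}$ (relative to $W$) to the first $N$ block rows of the second-kind connection formula \eqref{eq:conexionC1} and then read off the truncated factorization. First I would observe that the inhomogeneous term in \eqref{eq:conexionC1} carries all of its $x$-dependence in $(\chi(x))_{[N]}$, whose root spectral jet is, by the very definition of $\mathcal Q$, exactly $\mathcal Q=\boldsymbol{\mathcal J}_{\chi_{[N]}}$. Since left multiplication by a constant semi-infinite matrix commutes with taking jets, the jet of the $k$-th block row of $\big(\check H(\check S_2)^{-\top}\big)_{[N]}\mathcal B(\chi(x))_{[N]}$ is the $k$-th block row of $\big(\check H(\check S_2)^{-\top}\big)_{[N]}\mathcal B\mathcal Q$. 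Because $\check S_2$ is lower unitriangular and $\check H$ is block diagonal, truncation commutes with inversion and transposition on these triangular factors, so $\big(\check H(\check S_2)^{-\top}\big)_{[N]}=\check H_{[N]}\big((\check S_2)_{[N]}\big)^{-\top}$; no associativity issues arise, as we only manipulate finite truncations.

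Next I would compute the jets of the remaining two terms. The right-hand side $\omega C^{[1]}(x)$ has $k$-th block row $\sum_{j}\omega_{k,j}C^{[1]}_j(x)$, whose jet is $\sum_{j=0}^{k}\omega_{k,j}\boldsymbol{\mathcal J}_{C^{[1]}_j}$ (for $k<N$ the band structure of $\omega$ forces $j\le k$). For the left-hand side I use \eqref{eq:checkCP}, namely $\boldsymbol{\mathcal J}_{\check C^{[1]}_k W}=\prodint{\check P^{[1]}_k(x),(\xi)_x}\mathcal W$, and then expand $\check P^{[1]}_k$ via the polynomial connection formula \eqref{conex1}, which for $k<N$ reads $\check P^{[1]}_k=\sum_{j=0}^{k}\omega_{k,j}P^{[1]}_j$, to get $\prodint{\check P^{[1]}_k(x),(\xi)_x}\mathcal W=\sum_{j=0}^{k}\omega_{k,j}\prodint{P^{[1]}_j(x),(\xi)_x}\mathcal W$. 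Collecting the three jets in the $k$-th block row and cancelling the common sum $\sum_{j=0}^k\omega_{k,j}\prodint{P^{[1]}_j,(\xi)_x}\mathcal W$ yields, block row by block row,
\[
-\big[\check H_{[N]}\big((\check S_2)_{[N]}\big)^{-\top}\mathcal B\mathcal Q\big]_k=\sum_{j=0}^{k}\omega_{k,j}\Big(\boldsymbol{\mathcal J}_{C^{[1]}_j}-\prodint{P^{[1]}_j(x),(\xi)_x}\mathcal W\Big).
\]

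Stacking these identities for $k=0,\dots,N-1$ and writing $\mathcal M$ for the block column appearing on the right gives the compact relation $-\check H_{[N]}\big((\check S_2)_{[N]}\big)^{-\top}\mathcal B\mathcal Q=\omega_{[N]}\mathcal M$. The remaining step is purely algebraic: $\omega_{[N]}$ is lower unitriangular, hence invertible, and $\mathcal B\mathcal Q$ is invertible with $(\mathcal B\mathcal Q)^{-1}=\mathcal R$ by Lemma \ref{lemma:triple} (so I would first record, via Lemma \ref{lemma:pair}, that $\mathcal Q=\boldsymbol{\mathcal J}_{\chi_{[N]}}$ is precisely the Jordan-pair stack). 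Left-multiplying by $(\omega_{[N]})^{-1}$ and right-multiplying by $\mathcal R$ produces $(\omega_{[N]})^{-1}\check H_{[N]}\big((\check S_2)_{[N]}\big)^{-\top}=-\mathcal M\mathcal R$, the asserted identity. To justify the phrase \emph{Gauss--Borel factorization}, I note that $(\omega_{[N]})^{-1}$ is lower unitriangular, $\check H_{[N]}$ is block diagonal, and $\big((\check S_2)_{[N]}\big)^{-\top}$ is upper unitriangular, so the left-hand side is exactly an $LDU$ decomposition, unique by Proposition \ref{pro:fac} once $-\mathcal M\mathcal R$ is known to be quasidefinite, which is ensured by the nonsingularity recorded in Remark \ref{rem:nonsingular}.

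The step I expect to be the main obstacle is the bookkeeping of the inhomogeneous term: making precise that $\boldsymbol{\mathcal J}_{\mathcal B\chi_{[N]}}=\mathcal B\mathcal Q$ and that truncation genuinely commutes with the inverse-transpose on the triangular block factors, so that the jet of \eqref{eq:conexionC1} can be read off cleanly row by row without colliding with the non-associativity of semi-infinite products. Everything else reduces to a finite linear-algebra rearrangement.
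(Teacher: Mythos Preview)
Your argument is correct and follows essentially the same route as the paper: take root spectral jets of the first $N$ block rows of \eqref{eq:conexionC1}, replace $\boldsymbol{\mathcal J}_{\check C^{[1]}_kW}$ via \eqref{eq:checkCP}, unfold $\check P^{[1]}_k$ through \eqref{conex1}, and then invert $\omega_{[N]}$ and $\mathcal B\mathcal Q$ using Lemma~\ref{lemma:triple}. The only minor slip is your appeal to Remark~\ref{rem:nonsingular} for quasidefiniteness, which covers $n\geq N$; the relevant nonsingularity for $n<N$ is Corollary~\ref{corollary:non_singularity2}, but the paper's own proof does not invoke this at all and simply records the matrix identity.
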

\begin{proof}
		From \eqref{eq:conexionC1} we deduce
		\begin{align*}
		\big(\check C^{[1]}(z)\big)_{[N]}W(z)-
		\big(\check H\big(\check S_2\big)^{-\top}\big)_{[N]}\mathcal B 	(\chi(x))_{[N]}
		=	\omega_{[N]} \big(C^{[1]}(z)\big)_{[N]},
		\end{align*}
		so that
		\begin{align*}
		\begin{bmatrix}
		\boldsymbol{\mathcal J}_{\check C^{[1]}_0W}\\
		\vdots\\
		\boldsymbol{\mathcal J}_{\check C^{[1]}_{N-1}W}
		\end{bmatrix}-
		\big(\check H\big(\check S_2\big)^{-\top}\big)_{[N]}\mathcal B \mathcal Q=\omega_{[N]}		\begin{bmatrix}
		\boldsymbol{\mathcal J}_{ C^{[1]}_0}\\
		\vdots\\
		\boldsymbol{\mathcal J}_{ C^{[1]}_{N-1}}
		\end{bmatrix}.
		\end{align*}
			Recalling  \eqref{eq:checkCP} we deduce
			\begin{align*}
			\begin{bmatrix}
		\prodint{ \check P^{[1]}_{0}(x),(\xi)_x}\mathcal W\\
			\vdots\\
		\prodint{ \check P^{[1]}_{N-1}(x),(\xi)_x}\mathcal W
			\end{bmatrix}-
			\big(\check H\big(\check S_2\big)^{-\top}\big)_{[N]}\mathcal B \mathcal Q=\omega_{[N]}		\begin{bmatrix}
			\boldsymbol{\mathcal J}_{ C^{[1]}_0}\\
			\vdots\\
			\boldsymbol{\mathcal J}_{ C^{[1]}_{N-1}}
			\end{bmatrix}.
			\end{align*}
			Therefore, using  \eqref{conex1}, we conclude
			\begin{align}\label{eq:WomegaCP}
			-			\big(\check H\big(\check S_2\big)^{-\top}\big)_{[N]}=\omega_{[N]}		\begin{bmatrix}
			\boldsymbol{\mathcal J}_{C^{[1]}_0}-\prodint{ P^{[1]}_{0}(x),(\xi)_x}\mathcal W\\
			\vdots\\			\boldsymbol{\mathcal J}_{C^{[1]}_{N-1}}-\prodint{ P^{[1]}_{N-1}(x),(\xi)_x}\mathcal W
			\end{bmatrix}\big(\mathcal B \mathcal Q\big)^{-1},
			\end{align}
			and the result is proven.
\end{proof}

From Proposition \ref{pro:LUN} and  the corresponding explicit quasideterminantal  expressions for its solution we get
\begin{ma}\label{lemma:<N}
	For  $n\in\{0,1,\dots,N-1\}$ and $0\leq k<n$,
		\begin{align*}
	\check H_{n}&=-\Theta_*\begin{bmatrix}
	\big(\boldsymbol{\mathcal J}_{C^{[1]}_0}-\prodint{ P^{[1]}_{0}(x),(\xi)_x}\mathcal W\big)\mathcal R_n\\
	\vdots\\			\big(\boldsymbol{\mathcal J}_{C^{[1]}_{n-1}}-\prodint{ P^{[1]}_{n-1}(x),(\xi)_x}\mathcal W\big)\mathcal R_n
	\end{bmatrix}, \\
	\omega_{n,k}&=\Theta_*\left[\begin{array}{c|c}
	\begin{matrix}
		\big(\boldsymbol{\mathcal J}_{C^{[1]}_0}-\prodint{ P^{[1]}_{0}(x),(\xi)_x}\mathcal W\big)\mathcal R_n\\
\vdots\\
\big(\boldsymbol{\mathcal J}_{C^{[1]}_{n}}-\prodint{ P^{[1]}_{n}(x),(\xi)_x}\mathcal W\big)\mathcal R_n\\	
	\end{matrix}& e_k
	\end{array}\right],\\	\big((\check S_2)^\top\big)_{n,k}&=\Theta_*\begin{bmatrix}
		\big(\boldsymbol{\mathcal J}_{C^{[1]}_0}-\prodint{ P^{[1]}_{0}(x),(\xi)_x}\mathcal W\big)\mathcal R_{n+1}\\
\vdots\\
		\big(\boldsymbol{\mathcal J}_{C^{[1]}_{n-1}}-\prodint{ P^{[1]}_{n-1}(x),(\xi)_x}\mathcal W\big)\mathcal R_{n+1}\\[5pt]
(e_k)^\top
	\end{bmatrix}.
	\end{align*}
Here we have used  the matrices $e_k=\begin{bmatrix} 0_p, \dots, 0_p,I_p,0_p,\dots,0_p)
\end{bmatrix}^\top\in\mathbb C^{(n+1)p\times p}$ with all its $p\times p$ blocks  being the zero matrix $0_p$, but for the $k$-th block which is the identity matrix  $I_p$.
\end{ma}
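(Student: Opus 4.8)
The plan is to read the three quasideterminantal expressions straight off the Gauss--Borel factorization recorded in Proposition \ref{pro:LUN}, by treating its right-hand side as an auxiliary ``Gram matrix'' and then invoking verbatim the explicit solution formulas of Proposition \ref{qd1}. Concretely, I would set
\begin{align*}
\mathcal M:=-\begin{bmatrix}
\boldsymbol{\mathcal J}_{C^{[1]}_0}-\prodint{ P^{[1]}_{0}(x),(\xi)_x}\mathcal W\\
\vdots\\
\boldsymbol{\mathcal J}_{C^{[1]}_{N-1}}-\prodint{ P^{[1]}_{N-1}(x),(\xi)_x}\mathcal W
\end{bmatrix}\mathcal R,
\end{align*}
so that Proposition \ref{pro:LUN} reads $\mathcal M=\big(\omega_{[N]}\big)^{-1}\check H_{[N]}\big((\check S_2)_{[N]}\big)^{-\top}$. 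Because $\omega$ and $\check S_2$ are lower unitriangular, so are their truncations; hence $(\omega_{[N]})^{-1}$ is lower unitriangular and $((\check S_2)_{[N]})^{-\top}$ is upper unitriangular, and the displayed identity is exactly a factorization of the shape \eqref{eq:gauss} with the roles of $S_1,H,S_2$ played respectively by $\omega_{[N]}$, $\check H_{[N]}$ and $(\check S_2)_{[N]}$.

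Next I would make the index dictionary explicit. Writing $\mathcal R=\begin{bmatrix}Y,JY,\dots,J^{N-1}Y\end{bmatrix}$, the $(k,l)$ block of $\mathcal M$ is $-\big(\boldsymbol{\mathcal J}_{C^{[1]}_k}-\prodint{ P^{[1]}_{k}(x),(\xi)_x}\mathcal W\big)J^lY$, so truncating $\mathcal M$ to its first $n$ block columns is the same as replacing $\mathcal R$ by $\mathcal R_n$, and to its first $n+1$ columns by $\mathcal R_{n+1}$; this is the origin of the $\mathcal R_n$ and $\mathcal R_{n+1}$ in the statement. With this identification the $H_n$-formula of Proposition \ref{qd1} produces $\check H_n$ (its Gram rows being the rows $k=0,\dots,n-1$ of $\mathcal M$ restricted to $\mathcal R_n$), the $(S_1)_{n,k}$-formula produces $\omega_{n,k}$, and the $\big((S_2)^\top\big)_{n,k}$-formula produces $\big((\check S_2)^\top\big)_{n,k}$, with the appended unit block $e_k$ taking over the role of the $I_p$/$0_p$ border in Proposition \ref{qd1}. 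The restriction to $n\in\{0,\dots,N-1\}$ is precisely what makes the $[N]$-truncation in Proposition \ref{pro:LUN} capture everything relevant, since $\omega$ is banded of width $N$.

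The sign bookkeeping I would dispatch using the homogeneity $\Theta_*(-A)=-\Theta_*(A)$ of the last quasideterminant: in the $\check H_n$-formula every block of the truncation carries the factor $-1$, which therefore factors out and accounts for the leading minus sign; in the $\omega_{n,k}$- and $\big((\check S_2)^\top\big)_{n,k}$-formulas the bordered structure $\Theta_*\!\left[\begin{smallmatrix}B&c\\ d&0\end{smallmatrix}\right]=-dB^{-1}c$ contains exactly two factors of $\mathcal M$-blocks ($d$ and $B^{-1}$) while the unit border $c$ is unscaled, so the two sign flips cancel and no leading minus survives, as stated.

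The one genuinely substantive point --- and the main obstacle --- is to certify that Proposition \ref{qd1} applies at all, that is, that every truncated quasideterminant of $\mathcal M$ used above is nonsingular, so that the factorization is unique (Proposition \ref{pro:fac}) and the quasideterminantal formulas are valid. This is exactly what Corollary \ref{corollary:non_singularity2}, cited just before the statement, supplies: it guarantees that the stacked jet-difference matrix with rows $\boldsymbol{\mathcal J}_{C^{[1]}_j}-\prodint{ P^{[1]}_{j}(x),(\xi)_x}\mathcal W$, $j=0,\dots,n$, has full rank. Combined with the full rank of $\mathcal R_n$ recorded after Lemma \ref{lemma:triple}, this yields the invertibility of all the truncations entering the Schur complements, and the three formulas follow.
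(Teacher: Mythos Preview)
Your proposal is correct and is exactly the approach the paper takes: the paper's entire proof reads ``From Proposition \ref{pro:LUN} and the corresponding explicit quasideterminantal expressions for its solution we get'', and you have simply unpacked that sentence, making explicit the dictionary with Proposition \ref{qd1}, the role of $\mathcal R_n$ versus $\mathcal R_{n+1}$ in producing the column truncations, and the sign bookkeeping. Your appeal to Corollary \ref{corollary:non_singularity2} for the nonsingularity of the truncations is also the paper's own (forward-referenced) justification, though one could equally note that the quasidefiniteness of $\check u$ already forces each $\check H_n$ to be nonsingular, whence the truncations of $\mathcal M$ are automatically invertible via the inherited Gauss--Borel factorization.
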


	\begin{teo}[Spectral Christoffel--Geronimus formulas]\label{teo:spectralN}
		For $n< N$ and monic Geronimus perturbations,  with masses as described in \eqref{eq:v_general},
		we have the following last quasideterminant expressions for the perturbed biorthogonal matrix polynomials
			\begin{align}\label{eq:checkP1}
			\check P^{[1]}_{n}(x)&=
			\Theta_*\begin{bmatrix}	\big(	\boldsymbol{\mathcal J}_{C^{[1]}_{0}}-\prodint{ P^{[1]}_{0}(x),(\xi)_x}\mathcal W\big)\mathcal R_n& P_{0}^{[1]}(x)\\ 	\vdots&\vdots\\
			\big(	\boldsymbol{\mathcal J}_{C^{[1]}_{n}}-\prodint{ P^{[1]}_{n}(x),(\xi)_x}\mathcal W\big)\mathcal R_n& P^{[1]}_n(x)
			\end{bmatrix},
\\
		\big(\check P^{[2]}_n(y)\big)^\top&=\Theta_*
		\begin{bmatrix}
		\big(\boldsymbol{\mathcal J}_{C^{[1]}_{0}}-\prodint{ P^{[1]}_{0}(x),(\xi)_x}\mathcal W\big)\mathcal R_{n+1}\\
		\vdots\\			\big(\boldsymbol{\mathcal J}_{C^{[1]}_{n-1}}-\prodint{ P^{[1]}_{n-1}(x),(\xi)_x}\mathcal W\big)\mathcal R_{n+1}\\
	(\chi(y))^\top_{[n+1]}
		\end{bmatrix}.
		 \label{eq:checkP2N}
			\end{align}
	\end{teo}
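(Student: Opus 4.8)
The plan is to reduce both identities to the elementary Schur-complement reading of the last quasideterminant, $\Theta_*\left[\begin{smallmatrix}A&b\\c&d\end{smallmatrix}\right]=d-cA^{-1}b$, once the coefficients that enter the connection formula \eqref{conex1} and the definition $\check P^{[2]}(y)=\check S_2\chi(y)$ have been identified as the unique solution of a linear system encoded in the triangular Gauss--Borel data of Proposition \ref{pro:LUN}.

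First I would unpack that Proposition. Writing $W_k:=\boldsymbol{\mathcal J}_{C^{[1]}_k}-\prodint{P^{[1]}_k(x),(\xi)_x}\mathcal W$ and $M:=-\begin{bsmallmatrix}W_0\\\vdots\\W_{N-1}\end{bsmallmatrix}\mathcal R$, Proposition \ref{pro:LUN} reads $M=(\omega_{[N]})^{-1}\check H_{[N]}((\check S_2)_{[N]})^{-\top}$, a block $LDU$ factorization. Since $\mathcal R=[Y,JY,\dots,J^{N-1}Y]$ by \eqref{eq:RJY}, the $(k,l)$ block is $M_{k,l}=-W_kJ^lY$, so the first $n$ block columns of its $k$-th block row equal $-W_k\mathcal R_n$. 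Left multiplication by $\omega_{[N]}$ renders $\omega_{[N]}M=\check H_{[N]}((\check S_2)_{[N]})^{-\top}$ block upper triangular, and right multiplication by $(\check S_2)_{[N]}^\top$ renders $M(\check S_2)_{[N]}^\top=(\omega_{[N]})^{-1}\check H_{[N]}$ block lower triangular; reading off the vanishing blocks in the $n$-th block row of the former and the $n$-th block column of the latter gives
\[
[\omega_{n,0},\dots,\omega_{n,n-1}]\,A=-W_n\mathcal R_n,\qquad
A\begin{bmatrix}c_0\\\vdots\\c_{n-1}\end{bmatrix}=-\begin{bmatrix}W_0J^nY\\\vdots\\W_{n-1}J^nY\end{bmatrix},
\]
where $A:=\begin{bsmallmatrix}W_0\mathcal R_n\\\vdots\\W_{n-1}\mathcal R_n\end{bsmallmatrix}\in\mathbb C^{np\times np}$ and $c_k:=((\check S_2)_{n,k})^\top$ are the transposed coefficients of $\check P^{[2]}_n$, so that $(\check P^{[2]}_n(y))^\top=\sum_{k=0}^n c_ky^k$ with $c_n=I_p$. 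These two relations are exactly the content repackaged in Lemma \ref{lemma:<N}, and $A$ is nonsingular by the quasidefiniteness of $\check G$ (equivalently, by Corollary \ref{corollary:non_singularity2}).

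Next I would feed these into the two defining relations. For the first family the connection formula \eqref{conex1} collapses, for $n<N$, to $\check P^{[1]}_n(x)=P^{[1]}_n(x)+[\omega_{n,0},\dots,\omega_{n,n-1}]\begin{bsmallmatrix}P^{[1]}_0(x)\\\vdots\\P^{[1]}_{n-1}(x)\end{bsmallmatrix}$; inserting $[\omega_{n,0},\dots,\omega_{n,n-1}]=-(W_n\mathcal R_n)A^{-1}$ presents the right-hand side as $P^{[1]}_n(x)-(W_n\mathcal R_n)A^{-1}\begin{bsmallmatrix}P^{[1]}_0(x)\\\vdots\\P^{[1]}_{n-1}(x)\end{bsmallmatrix}$, which is the Schur complement of $A$ in the bordered array of \eqref{eq:checkP1}. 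For the second family I would expand $(\check P^{[2]}_n(y))^\top=y^nI_p+[I_p,\dots,I_py^{n-1}]\begin{bsmallmatrix}c_0\\\vdots\\c_{n-1}\end{bsmallmatrix}$, substitute $\begin{bsmallmatrix}c_0\\\vdots\\c_{n-1}\end{bsmallmatrix}=-A^{-1}\begin{bsmallmatrix}W_0J^nY\\\vdots\\W_{n-1}J^nY\end{bsmallmatrix}$, and recognize the outcome as a last quasideterminant: using $\mathcal R_{n+1}=[\mathcal R_n,J^nY]$ to merge the block $A$ and the column $(W_kJ^nY)_k$ into the single rows $W_k\mathcal R_{n+1}$, and reading $(\chi(y))^\top_{[n+1]}$ as the bordering row, one lands on \eqref{eq:checkP2N}.

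The Leibniz and binomial bookkeeping required to pass from the jets of the second kind functions to the $\mathcal W$-weighted pairings is already absorbed into \eqref{eq:checkP} and the lemmas preceding it, so only two points remain delicate. One is the block-dimension bookkeeping of the bordering step: the asymmetry between $\mathcal R_n$ in the $\check P^{[1]}_n$ formula and $\mathcal R_{n+1}$ in the $\check P^{[2]}_n$ formula reflects that the resolvent row ranges over $k<n$ whereas the coefficient column $(c_k)$ ranges over $k\le n$. The other, and the genuine obstacle, is the nonsingularity of $A$: it is not guaranteed by the spectral construction itself, and this is exactly where the forward reference to the nonspectral Corollary \ref{corollary:non_singularity2} (equivalently, $\det\check G_{[n]}\neq0$) is indispensable.
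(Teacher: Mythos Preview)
Your proposal is correct and follows essentially the same route as the paper: the paper simply writes out the connection formula \eqref{conex1} and the definition of $\check P^{[2]}_n$ via $\check S_2$, then cites Lemma \ref{lemma:<N} (whose content you re-derive inline from Proposition \ref{pro:LUN}) to identify the resolvent row and the $\check S_2$ column with the Schur complements appearing in the two quasideterminants. Your explicit unpacking of the $LDU$ factorization into the two linear systems, and the use of $\mathcal R_{n+1}=[\mathcal R_n, J^nY]$ to absorb the extra column, is exactly what the paper's Lemma \ref{lemma:<N} packages; the forward dependence on Corollary \ref{corollary:non_singularity2} for the invertibility of $A$ is also the same.
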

\begin{proof}
	For $n<N$ , \eqref{conex1} and \eqref{eq:bior} imply
	\begin{align*}
	\check P^{[1]}_{n}(x)&=P^{[1]}_n(x)+\big[\omega_{n,0},\dots,\omega_{n,n-1}\big]\begin{bmatrix}P_{0}^{[1]}(x)\\\vdots\\
	P_{n-1}^{[1]}(x)	
	\end{bmatrix},\\
	\big(\check P^{[2]}_n(y)\big)^\top&=I_py ^n+\begin{bmatrix}
	I_p,\dots,I_py^{n-1}
	\end{bmatrix}\begin{bmatrix}
	\big( (\check S_2)^\top\big)_{0,n}\\\vdots\\
	\big( (\check S_2)^\top\big)_{n-1,n}
	\end{bmatrix}.
	\end{align*}
	Then,  Lemma \ref{lemma:<N} gives the stated result.
\end{proof}

	\begin{rem}[Spectral Christoffel--Geronimus formulas]
	For    masses as described in \eqref{eq:v_diagonal},
		we have
		\begin{align*}
		\check P^{[1]}_{n}(x)&=
		\Theta_*\begin{bmatrix}	\big(	\boldsymbol{\mathcal J}_{C^{[1]}_{0}}-\mathcal J_{P^{[1]}_{0}}\mathcal T\big)\mathcal R_n& P_{0}^{[1]}(x)\\ 	\vdots&\vdots\\
		\big(	\boldsymbol{\mathcal J}_{C^{[1]}_{n}}-\mathcal J_{P^{[1]}_{n}}\mathcal T\big)\mathcal R_n& P^{[1]}_n(x)
		\end{bmatrix},
		\\
		\big(\check P^{[2]}_n(y)\big)^\top&=\Theta_*
		\begin{bmatrix}
		\big(\boldsymbol{\mathcal J}_{C^{[1]}_{0}}-\mathcal J_{P^{[1]}_{0}}\mathcal T\big)\mathcal R_{n+1}\\
		\vdots\\			\big(\boldsymbol{\mathcal J}_{C^{[1]}_{n-1}}-\mathcal J_{P^{[1]}_{n-1}}\mathcal T\big)\mathcal R_{n+1}\\
		(\chi(y))^\top_{[n+1]}
		\end{bmatrix}.
		\end{align*}
	\end{rem}

\subsection{Nonspectral Christoffel--Geronimus formulas}
We now present  an alternative \emph{orthogonality relations} approach for the derivation of Christoffel type formulas,   that avoids the use of  the second kind functions and of the spectral structure of the perturbing polynomial.
A key feature of these results  is that they hold even for perturbing matrix polynomials with singular leading coefficient.

\begin{defi}\label{def:R}
For a given perturbed matrix of  generalized kernels  $\check u_{x,y}= u_{x,y}\big(W(y)\big)^{-1}+v_{x,y}$, with   $v_{x,y}W(y)=0_{p}$, we define a semi-infinite block matrix
	\begin{align*}
	R&:=	\prodint{P^{[1]}(x),\chi(y)}_{\check u}
	\\
	&=	\prodint{P^{[1]}(x),\chi(y)}_{u W^{-1}}+	\prodint{P^{[1]}(x),\chi(y)}_{ v}.
	\end{align*}
\end{defi}
\begin{rem}
		Its blocks are
	$R_{n,l}=\prodint{P^{[1]}_n(x),I_py^l}_{\check u}\in\mathbb C^{p\times p}$.
		Observe that for a Geronimus  perturbation of a Borel measure $\d\mu(x,y)$, with general masses as in \eqref{eq:v_general} we have
			\begin{align*}
			R_{n,l}=\int P^{[1]}_n(x)\d\mu(x,y)(W(y))^{-1}y^l+\sum_{a=1}^{q}\sum_{i=1}^{s_a}\sum_{m=0}^{\kappa_j^{(a)}-1}\frac{1}{m!}
			\langle P^{[1]}_n(x),
			\big(\xi^{[a]}_{i,m}\big)_x\rangle
			\big(l_{j}^{(a)}(y)y^l\big)^{(m)}_{x_a},
			\end{align*}	
		that, when the masses are discrete and supported by the diagonal $y=x$, reduces to
			\begin{align*}
			R_{n,l}=\int P^{[1]}_n(x)\d\mu(x,y)(W(y))^{-1}y^l+\sum_{a=1}^{q}\sum_{i=1}^{s_a}\sum_{m=0}^{\kappa_j^{(a)}-1}\frac{1}{m!}\Big(P^{[1]}_n(x)x^l\xi^{[a]}_{i,m}l_{j}^{(a)}(x)\Big)^{(m)}_{x_a}.
			\end{align*}
\end{rem}

\begin{pro}
The following relations hold true
	\begin{align}\label{eq:RSM}
	R&=S_1\check G,\\
	\omega R&=\check H \big(\check S_2\big)^{-\top},\label{OmegaR0}\\
	RW(\Lambda^\top) &=	H\big(S_2\big)^{-\top}.\label{RW}
	\end{align}
\end{pro}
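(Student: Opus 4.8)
The plan is to establish the first identity \eqref{eq:RSM} directly from the definition of $R$, and then to read off \eqref{OmegaR0} and \eqref{RW} from it by inserting the two Gauss--Borel factorizations together with the Geronimus identity $\check G W(\Lambda^\top)=G$ of Proposition \ref{pro:string}. The only point requiring care throughout is the non-associativity of semi-infinite products, which I would control at every reassociation by invoking Proposition \ref{pro:associativity}.

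First I would prove \eqref{eq:RSM}. Since $P^{[1]}(x)=S_1\chi(x)$, left-linearity of the sesquilinear form (Definition \ref{def:sesquilinear}) together with the representation $\check G=\langle\chi(x),\chi(y)\rangle_{\check u}$ (cf.\ \eqref{eq:M_chi}) gives $R=\langle S_1\chi(x),\chi(y)\rangle_{\check u}=S_1\check G$. Concretely the $(n,l)$ block equals $\sum_{k=0}^{n}(S_1)_{n,k}\check G_{k,l}$, a finite sum because $S_1$ is lower unitriangular, so no convergence issue arises and $R=S_1\check G$.

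Next, for \eqref{OmegaR0} I would substitute $\omega=\check S_1(S_1)^{-1}$ and $R=S_1\check G$ and collapse the lower-triangular factors: $\omega S_1=\check S_1(S_1)^{-1}S_1=\check S_1$, and since $\omega$ and $S_1$ are both block lower Hessenberg, Proposition \ref{pro:associativity} yields $\omega R=\omega(S_1\check G)=(\omega S_1)\check G=\check S_1\check G$. Inserting $\check G=(\check S_1)^{-1}\check H(\check S_2)^{-\top}$ and reassociating once more (again by Proposition \ref{pro:associativity}, since $\check S_1$ and $(\check S_1)^{-1}$ are lower Hessenberg) gives $\check S_1\check G=\check H(\check S_2)^{-\top}$, which is \eqref{OmegaR0}. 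For \eqref{RW} I would use $R=S_1\check G$ and the key structural observation that $W(\Lambda^\top)$ is block banded, hence simultaneously lower and upper Hessenberg; Proposition \ref{pro:associativity} then permits $(S_1\check G)W(\Lambda^\top)=S_1(\check G W(\Lambda^\top))$. The Geronimus identity $\check G W(\Lambda^\top)=G$ of Proposition \ref{pro:string} turns the right-hand side into $S_1 G$, and inserting $G=(S_1)^{-1}H(S_2)^{-\top}$ while collapsing the lower-triangular pair $S_1(S_1)^{-1}=I_p$ leaves $H(S_2)^{-\top}$, which is \eqref{RW}.

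The main obstacle is purely bookkeeping: at no stage may three semi-infinite matrices be freely reassociated, so each rearrangement above must be matched to one of the cases of Proposition \ref{pro:associativity} (or to the admissibility of the Gauss--Borel products controlled by Proposition \ref{pro:Gauss--Borel factorization and associativity}). Fortunately every triple product occurring here pairs a triangular factor either with another triangular factor or with the banded matrix $W(\Lambda^\top)$, so the required hypotheses are met and all block entries reduce to finite sums.
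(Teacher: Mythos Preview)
Your proof is correct. For \eqref{eq:RSM} and \eqref{OmegaR0} you follow exactly the paper's argument. For \eqref{RW}, however, your route differs from the paper's: the paper combines the alternative resolvent expression \eqref{eq:resolvent_alternative} with \eqref{OmegaR0} to obtain $\omega = \omega R W(\Lambda^\top)(S_2)^\top H^{-1}$ and then cancels the lower unitriangular (hence invertible) factor $\omega$ on both sides, whereas you work directly from $R = S_1\check G$ and the Geronimus identity $\check G W(\Lambda^\top) = G$ of Proposition~\ref{pro:string}. Your path is more elementary---it avoids invoking \eqref{eq:resolvent_alternative} and the cancellation argument---and the reassociation $(S_1\check G)W(\Lambda^\top) = S_1(\check G W(\Lambda^\top))$ is cleanly justified by case~(iii) of Proposition~\ref{pro:associativity}, since $S_1$ is lower Hessenberg and $W(\Lambda^\top)$, being lower banded, is upper Hessenberg. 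The paper's route, on the other hand, exhibits \eqref{RW} as a companion to \eqref{OmegaR0} through the symmetry encoded in the two expressions for $\omega$, which is structurally suggestive even if less direct.
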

\begin{proof}
	 \eqref{eq:RSM} follows  from Definition \ref{def:R}. Indeed,
	\begin{align*}
	R&=	\prodint{P^{[1]}(x),\chi(y)}_{\check u}\\
	&=S_1	\prodint{\chi(x),\chi(y)}_{\check u}.
	\end{align*}
	To deduce \eqref{OmegaR0} we recall  \eqref{eq:def_Omega}, \eqref{eq:RSM}, and the Gauss factorization of the perturbed matrix of moments
	\begin{align*}
	\omega R&= \big(\check S_1 (S_1)^{-1} \big) (S_1\check G)\\&=\check S_1\check G\\&=\check S_1\Big( (\check S_1)^{-1}\check H (\check S_2)^{-\top}\Big)
	\end{align*}
	Finally, to get \eqref{RW}, we use \eqref{eq:resolvent_alternative} together with \eqref{OmegaR0}, which implies
$\omega=\omega R W(\Lambda^\top) \big(S_2)^\top H^{-1}$,
	and as the resolvent is unitriangular with a unique inverse matrix \cite{cooke},  we obtain the result.		
\end{proof}

From \eqref{OmegaR0} it immediately follows that
\begin{pro}\label{OmegaR}
	The matrix $R$ fulfills
	\begin{align*}
	(\omega R)_{n,l}=\begin{cases}
	0_p, &l\in\{0,\dots,n-1\},\\
	\check H_n, & n=l.
	\end{cases}
	\end{align*}
\end{pro}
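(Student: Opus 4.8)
The plan is to read the statement straight off the factorized form \eqref{OmegaR0} of the product $\omega R$, so that no further computation with the sesquilinear form $\prodint{\cdot,\cdot}_{\check u}$ is required. First I would invoke \eqref{OmegaR0}, namely $\omega R=\check H(\check S_2)^{-\top}$, which already packages all the orthogonality content into a product of two semi-infinite block matrices with transparent triangular structure. Everything then reduces to reading off the $(n,l)$ blocks of this product.

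Next I would record the two structural facts that make the block entries immediate. Since $\check S_2$ is lower unitriangular, its inverse $(\check S_2)^{-1}$ is again lower unitriangular with identity diagonal blocks (by the remark recalled from \cite{cooke} that lower unitriangular semi-infinite matrices admit a unique, again lower unitriangular, inverse), and hence $(\check S_2)^{-\top}$ is upper unitriangular: its $(n,l)$ block vanishes for $l<n$ and equals $I_p$ for $l=n$. The factor $\check H=\diag(\check H_0,\check H_1,\dots)$ is block diagonal, so left multiplication by $\check H$ acts row-wise, scaling the $n$-th block row; concretely one has $\big(\check H(\check S_2)^{-\top}\big)_{n,l}=\check H_n\big((\check S_2)^{-\top}\big)_{n,l}$, with no associativity subtlety since a diagonal times an upper triangular matrix is an admissible product in the sense of Proposition \ref{pro:associativity}.

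Combining these, for $l\in\{0,\dots,n-1\}$ the upper-triangular block $\big((\check S_2)^{-\top}\big)_{n,l}$ is $0_p$, so $(\omega R)_{n,l}=\check H_n\,0_p=0_p$, while for $l=n$ that block is $I_p$, giving $(\omega R)_{n,n}=\check H_n$. This is exactly the asserted form, and the proof is essentially a transcription of \eqref{OmegaR0}. I do not anticipate any genuine obstacle: the only point requiring a moment's care is confirming that the inverse-transpose of a lower unitriangular semi-infinite block matrix is upper unitriangular with identity diagonal blocks, which is precisely the inversion property cited above; once that is in place, the diagonal/triangular block arithmetic is routine.
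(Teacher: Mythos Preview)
Your proof is correct and takes essentially the same approach as the paper: the paper's proof is the single line ``From \eqref{OmegaR0} it immediately follows that\ldots'', and you have simply spelled out why \eqref{OmegaR0} (i.e., $\omega R=\check H(\check S_2)^{-\top}$) gives the result, via the block-diagonal/upper-unitriangular structure of the two factors.
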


\begin{pro}
	The matrix
	$\begin{bsmallmatrix}
	R_{0,0} & \dots & R_{0,n-1}\\
	\vdots & & \vdots\\
	R_{n-1,0} & \dots & R_{n-1,n-1}
	\end{bsmallmatrix}$
	is nonsingular.
\end{pro}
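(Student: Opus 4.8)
The plan is to read off the nonsingularity from the factorization \eqref{OmegaR0}, namely $\omega R=\check H(\check S_2)^{-\top}$, whose right-hand side is block upper triangular with the nonsingular diagonal blocks $\check H_n$, together with the lower unitriangular banded shape of the resolvent $\omega$ recorded in Proposition \ref{conexw}. Write $R_{[n]}$ for the $n\times n$ block truncation displayed in the statement, and $\omega_{[n]}$ for the analogous truncation of $\omega$. The decisive structural observation I would make first is that the top-left $n\times n$ block truncation of the (admissible) product $\omega R$ equals the product $\omega_{[n]}R_{[n]}$ of the truncations; once this is in hand the conclusion is immediate.

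To justify that identity, recall that $\omega$ is lower triangular, so $\omega_{i,k}=0_p$ whenever $k>i$. Hence for indices $i,j\in\{0,\dots,n-1\}$ the block
\[
(\omega R)_{i,j}=\sum_{k}\omega_{i,k}R_{k,j}=\sum_{k=0}^{i}\omega_{i,k}R_{k,j}
\]
involves only blocks $R_{k,j}$ with $k\le i\le n-1$ and $j\le n-1$, i.e.\ blocks already contained in $R_{[n]}$; consequently $(\omega R)_{i,j}=(\omega_{[n]}R_{[n]})_{i,j}$, whence $(\omega R)_{[n]}=\omega_{[n]}R_{[n]}$. Each sum here is finite (indeed $\omega$ has only its first $N$ subdiagonals possibly nonzero), so no convergence or associativity subtlety arises. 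This is precisely the point I expect to require the most care, in view of the paper's warnings about products of semi-infinite matrices; confining the argument to finite truncations sidesteps it cleanly.

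Finally I would combine the two ingredients. By Proposition \ref{OmegaR} the matrix $(\omega R)_{[n]}$ is block upper triangular with diagonal blocks $\check H_0,\dots,\check H_{n-1}$, each nonsingular as a diagonal block of the Gauss--Borel factorization of $\check G$; therefore $\det (\omega R)_{[n]}=\prod_{k=0}^{n-1}\det\check H_k\neq 0$. On the other hand $\omega_{[n]}$ is lower unitriangular, so $\det\omega_{[n]}=1$. From $(\omega R)_{[n]}=\omega_{[n]}R_{[n]}$ we obtain $\det R_{[n]}=\det(\omega R)_{[n]}\neq 0$, so that $R_{[n]}$ is nonsingular, as claimed. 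Equivalently, $R_{[n]}=(\omega_{[n]})^{-1}(\omega R)_{[n]}$ exhibits $R_{[n]}$ as a product of two invertible matrices.
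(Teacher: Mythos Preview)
Your argument is correct: the lower unitriangularity of $\omega$ gives $(\omega R)_{[n]}=\omega_{[n]}R_{[n]}$, and Proposition~\ref{OmegaR} makes the left-hand side block upper triangular with nonsingular diagonal $\check H_0,\dots,\check H_{n-1}$, so $R_{[n]}$ is invertible.

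The paper, however, takes a shorter route. Instead of working with \eqref{OmegaR0} and the resolvent, it uses the more primitive relation \eqref{eq:RSM}, $R=S_1\check G$. Since $S_1$ is lower unitriangular, the same truncation argument you spelled out for $\omega$ yields $R_{[n]}=(S_1)_{[n]}\check G_{[n]}$, and nonsingularity follows at once from the quasidefiniteness hypothesis on $\check G$. Your approach goes through the Gauss--Borel factorization of $\check G$ and reads off the invertibility from the diagonal blocks $\check H_k$; the paper instead appeals directly to $\det\check G_{[n]}\neq 0$. Both are valid, but the paper's version avoids invoking $\omega$ and Proposition~\ref{OmegaR} altogether, making it a one-line consequence of the definition of $R$.
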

\begin{proof}
	From \eqref{eq:RSM} we conclude for the corresponding truncations that $R_{[n]}=(S_1)_{[n]}\check G_{[n]}$ is nonsingular, as we are assuming, to ensure the orthogonality, that $\check G_{[n]}$ is nonsingular for all $n\in\{1,2,\dots\}$.
\end{proof}

\begin{defi}
	Let us introduce the polynomials $	r^K_{n,l}(z)\in\mathbb C^{p\times p}[z]$, $l\in\{0,\dots,n-1\}$, given by
	\begin{align*}
	r^K_{n,l}(z):&=\prodint{W(z)K_{n-1}(x,z),I_py^l}_{\check u}-I_pz^l\\ &=
\prodint{W(z)K_{n-1}(x,z),I_py^l}_{ uW^{-1}}+\prodint{W(z)K_{n-1}(x,z),I_py^l}_{v}-I_pz^l.
	\end{align*}
\end{defi}

\begin{pro}
	For $l\in\{0,1,\dots,n-1\}$	 and  $m=\min(n,N)$ we have
	\begin{align*}
	r^K_{n,l}(z)=\Big[(\check P_{n}^{[2]}(z))^\top(\check H_{n})^{-1},\dots,(\check P_{n-1+N}^{[2]}(z))^\top(\check H_{n-1+N})^{-1}\Big]
\Omega{[n]}
	\begin{bmatrix}
	R_{n-m,l}\\
	\vdots\\
	R_{n-1,l}
	\end{bmatrix}.
	\end{align*}
\end{pro}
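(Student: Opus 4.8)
The plan is to start from the definition $r^K_{n,l}(z)=\prodint{W(z)K_{n-1}(x,z),I_py^l}_{\check u}-I_pz^l$, expand the Christoffel--Darboux kernel $K_{n-1}(x,z)=\sum_{k=0}^{n-1}(P^{[2]}_k(z))^\top(H_k)^{-1}P^{[1]}_k(x)$, and pull the matrix coefficients $W(z)(P^{[2]}_k(z))^\top(H_k)^{-1}$ out of the sesquilinear form. For fixed $z$ these coefficients are constant in the integration variable $x$, so the left-linearity in the first argument of Definition~\ref{def:sesquilinear} applies, and recalling that $R_{k,l}=\prodint{P^{[1]}_k(x),I_py^l}_{\check u}$ one obtains
\begin{align*}
r^K_{n,l}(z)=\sum_{k=0}^{n-1}W(z)\big(P^{[2]}_k(z)\big)^\top(H_k)^{-1}R_{k,l}-I_pz^l.
\end{align*}

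Next I would substitute the Geronimus connection formula \eqref{conex3'} for each factor $W(z)(P^{[2]}_k(z))^\top(H_k)^{-1}$, which produces a diagonal contribution $(\check P^{[2]}_k(z))^\top(\check H_k)^{-1}$ together with a band of resolvent terms $\sum_{j=k+1}^{k+N}(\check P^{[2]}_j(z))^\top(\check H_j)^{-1}\omega_{j,k}$. Interchanging the order of summation and splitting the resulting double sum according to whether the upper index $j$ satisfies $j\geq n$ or $j\leq n-1$ is the organizational core of the argument. The part with $j\geq n$ runs over $j\in\{n,\dots,n+N-1\}$, and the band structure of $\omega$ (nonzero only for $j-N\leq k\leq j$) confines the lower index to $k\in\{n-m,\dots,n-1\}$ with $m=\min(n,N)$; this part is precisely the product $[(\check P^{[2]}_n(z))^\top(\check H_n)^{-1},\dots,(\check P^{[2]}_{n+N-1}(z))^\top(\check H_{n+N-1})^{-1}]\,\Omega{[n]}\,[R_{n-m,l},\dots,R_{n-1,l}]^\top$ of the claimed right-hand side, the two shapes of $\Omega{[n]}$ in Definition~\ref{def:omeganN} (for $n\geq N$ and for $n<N$) being accounted for uniformly by this band.

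It then remains to show that the leftover terms vanish, namely that the $j\leq n-1$ off-diagonal contributions, the diagonal terms, and $-I_pz^l$ sum to zero. Combining the first two groups yields $\sum_{j=0}^{n-1}(\check P^{[2]}_j(z))^\top(\check H_j)^{-1}(\omega R)_{j,l}$, and invoking $\omega R=\check H(\check S_2)^{-\top}$ from \eqref{OmegaR0} collapses each summand to $(\check P^{[2]}_j(z))^\top\big((\check S_2)^{-\top}\big)_{j,l}$. Because $(\check S_2)^{-\top}$ is upper unitriangular and $l\leq n-1<j$ for every $j\geq n$, the omitted tail vanishes identically, so this truncated sum equals the full series $(\check P^{[2]}(z))^\top(\check S_2)^{-\top}=(\chi(z))^\top$ read off at block $l$, which is $I_pz^l$. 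This cancels the $-I_pz^l$ term and proves the identity.

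The main obstacle I anticipate is purely the index bookkeeping of the double-sum splitting: tracking the ranges forced by the band of $\omega$, matching them to the entries of the resolvent wing $\Omega{[n]}$ in both regimes $n\geq N$ and $n<N$, and justifying that the truncation at $j=n-1$ is harmless via the upper-triangularity of $(\check S_2)^{-\top}$. Everything else reduces to the already-established relations \eqref{conex3'} and \eqref{OmegaR0}.
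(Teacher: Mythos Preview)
Your argument is correct, but it takes a longer route than the paper's. The paper's one-line proof simply applies the sesquilinear form $\prodint{\,\cdot\,,I_py^l}_{\check u}$ to the already-established connection formula \eqref{K} for the Christoffel--Darboux kernels: rewriting \eqref{K} as $W(z)K_{n-1}(x,z)=\check K_{n-1}(x,z)+[\text{boundary row}]\,\Omega[n]\,[P^{[1]}_{n-m}(x),\dots,P^{[1]}_{n-1}(x)]^\top$, one obtains $\prodint{W(z)K_{n-1}(x,z),I_py^l}_{\check u}=\prodint{\check K_{n-1}(x,z),I_py^l}_{\check u}+[\text{boundary row}]\,\Omega[n]\,[R_{n-m,l},\dots,R_{n-1,l}]^\top$, and the reproducing property \eqref{eq:K-u} (applied to $\check u$ and $\check K_{n-1}$) gives $\prodint{\check K_{n-1}(x,z),I_py^l}_{\check u}=I_pz^l$ for $l\leq n-1$, which cancels the $-I_pz^l$ in the definition of $r^K_{n,l}$.

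What you do instead is expand $K_{n-1}$, substitute \eqref{conex3'} termwise, and reorganize the double sum; your cancellation step then uses $\omega R=\check H(\check S_2)^{-\top}$ from \eqref{OmegaR0} together with the upper unitriangularity of $(\check S_2)^{-\top}$ to recover $I_pz^l$. This is effectively re-deriving the content of Theorem~\ref{teoconex} in situ rather than citing it: \eqref{K} was itself proved by the same pairing trick with \eqref{conex3'}, and your final cancellation is a disguised form of the reproducing property. Both approaches are valid; the paper's is shorter because it modularizes through \eqref{K}, while yours is more self-contained and makes the role of $\omega R$ explicit.
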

\begin{proof}
	It follows from  \eqref{K}, Definition \ref{def:R}, and  \eqref{eq:K-u}.
\end{proof}

\begin{defi}
	For $n\geq N$,  given the matrix
	\begin{align*}
	\begin{bmatrix}
	R_{n-N,0} & \dots & R_{n-N,n-1}\\
	\vdots & & \vdots\\
	R_{n-1,0} & \dots & R_{n-1,n-1}
	\end{bmatrix}\in\mathbb C^{Np\times n p},
	\end{align*}
	we  construct a submatrix of it by selecting $Np$ columns among all the $np$ columns. For that aim, we use indexes $(i,a)$  labeling the columns, where $i$ runs through $\{0,\dots,n-1\}$ and indicates the block, and $a\in\{1,\dots, p\}$ denotes the corresponding column in  that block; i.e., $(i,a)$ is an index selecting the $a$-th column of the $i$-block. Given a set of $N$ different couples $I=\{(i_r,a_r)\}_{r=1}^{N}$, with a lexicographic ordering, we define the corresponding \emph{square}
	submatrix $R_n^{\square}:=\big[\mathfrak c_{(i_1,a_1)},\dots, \mathfrak c_{(i_{Np},a_{Np})}\big]$. Here $\mathfrak c_{(i_r,a_r)}$ denotes the $a_r$-th column of the matrix
	\begin{align*}
	\begin{bmatrix}
	R_{n-N,i_r} \\
	\vdots \\
	R_{n-1,i_r}
	\end{bmatrix}.
	\end{align*}
	The set of indexes $I$ is said poised if $R_n^{\square}$ is nonsingular.
	We also use the notation
	where $r_n^{\square}:=\big[\tilde {\mathfrak c}_{(i_1,a_1)},\dots, \tilde{\mathfrak c}_{(i_{Np},a_{Np})}\big]$. Here $\tilde {\mathfrak c}_{(i_r,a_r)}$ denotes the $a_r$-th column of the matrix $	R_{n,i_r}$.
	Given  a poised set of indexes we define  $(r^K_{n}(y))^\square$ as the matrix built up by taking  from the matrices $r^K_{n,i_r}(y)$  the columns $a_r$.
\end{defi}	

\begin{ma}
	For $n\geq N$, there exists at least a poised set.
\end{ma}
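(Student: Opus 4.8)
The plan is to reduce the existence of a poised set to a single full-row-rank statement about the rectangular matrix
\begin{align*}
M_n:=\begin{bmatrix}
R_{n-N,0} & \dots & R_{n-N,n-1}\\
\vdots & & \vdots\\
R_{n-1,0} & \dots & R_{n-1,n-1}
\end{bmatrix}\in\mathbb C^{Np\times np},
\end{align*}
out of whose columns the square matrix $R_n^{\square}$ is carved. Indeed, by the very definition a poised set $I=\{(i_r,a_r)\}_{r=1}^{Np}$ is a choice of $Np$ columns of $M_n$ whose spans exhausts $\mathbb C^{Np}$, i.e. a choice for which $R_n^{\square}\in\mathbb C^{Np\times Np}$ is nonsingular. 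Such a choice exists precisely when $M_n$ has full row rank $Np$. So the entire content of the lemma is the rank equality $\operatorname{rank} M_n=Np$, after which elementary linear algebra supplies the columns.

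The key observation I would make is that $M_n$ is nothing but the bottom $N$ block-rows of the truncated matrix $R_{[n]}$, whose nonsingularity has just been established in the preceding Proposition. There one uses \eqref{eq:RSM} in the truncated form $R_{[n]}=(S_1)_{[n]}\check G_{[n]}$, together with the unitriangularity of $(S_1)_{[n]}$ and the quasidefiniteness $\det\check G_{[n]}\neq 0$ assumed throughout. Since $R_{[n]}\in\mathbb C^{np\times np}$ is nonsingular, its $np$ rows are linearly independent; in particular the $Np$ rows constituting $M_n$ are linearly independent, so $M_n$ has full row rank $Np$.

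Finally, any matrix of full row rank $Np$ with $np\geq Np$ columns admits $Np$ linearly independent columns, and selecting their labels $(i_r,a_r)$ produces a nonsingular $R_n^{\square}$, whence $I$ is poised. The only place where the hypothesis $n\geq N$ is genuinely used is to guarantee $np\geq Np$, so that there are at least as many columns as rows; for $n<N$ the matrix $M_n$ has too few columns and no square selection exists, which is exactly why the statement is restricted to $n\geq N$. I therefore expect no serious obstacle: once $M_n$ is identified as a row-subblock of the already-nonsingular $R_{[n]}$, the rank claim is immediate, and the only mild point to record is that the column selection defining $R_n^{\square}$ coincides with the combinatorial notion of a poised index set.
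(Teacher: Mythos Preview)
Your proposal is correct and follows essentially the same route as the paper's proof: both identify $M_n$ as the last $N$ block-rows of the nonsingular truncation $R_{[n]}$, deduce that its $Np$ rows are linearly independent, and conclude that $Np$ linearly independent columns can be extracted to form a poised $R_n^{\square}$. Your write-up is slightly more explicit about the equivalence between the existence of a poised set and the full-row-rank condition, and about where the hypothesis $n\geq N$ enters, but the argument is the same.
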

\begin{proof}
	For $n\geq N$, we consider the rectangular block matrix
	\begin{align*}
	\begin{bmatrix}
	R_{n-N,0} & \dots & R_{n-N,n-1}\\
	\vdots & & \vdots\\
	R_{n-1,0} & \dots & R_{n-1,n-1}
	\end{bmatrix}\in\mathbb C^{Np\times n p}.
	\end{align*}
	As the truncation $R_{[n]}$ is nonsingular, this matrix is  full rank, i.e., all its $Np$ rows are linearly independent. Thus, there must be $Np$ independent columns and the desired result follows.
\end{proof}

\begin{ma}\label{lemma:Euclides}
	Whenever the leading coefficient $A_N$ of the perturbing polynomial $W(y)$ is nonsingular, we can  decompose any monomial $I_p y^l$ as
	\begin{align*}
	I_p y^l=\alpha_l(y)(W(y))^\top+\beta_l(y),
	\end{align*}
	where $\alpha_l(y),\beta_l(y)=\beta_{l,0}+\cdots+\beta_{l,N-1}y^{N-1}\in\mathbb C^{p\times p}[y] $, with $\deg  \alpha_l(y)  \leq l-N$.
\end{ma}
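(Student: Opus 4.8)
The plan is to recognize the claimed identity as nothing but the right Euclidean division of the monomial $I_py^l$ by the matrix polynomial $(W(y))^\top$, a division that is legitimate precisely because the leading coefficient $A_N^\top$ of $(W(y))^\top$ is invertible (equivalently, $A_N$ is nonsingular). First I would prove, by induction on the degree, the slightly more general statement that every $P(y)\in\mathbb C^{p\times p}[y]$ can be written as $P(y)=\alpha(y)(W(y))^\top+\beta(y)$ with $\deg\beta\leq N-1$ and $\deg\alpha\leq\deg P-N$, and then specialize to $P(y)=I_py^l$ with leading coefficient $I_p$.

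For the base case $\deg P\leq N-1$ one takes $\alpha=0$ and $\beta=P$, so that for the monomial $I_py^l$ with $l<N$ we simply set $\alpha_l=0$ and $\beta_l(y)=I_py^l$, consistent with $\deg\alpha_l\leq l-N<0$. For the inductive step, suppose $\deg P=l\geq N$ and write $P(y)=Cy^l+(\text{lower order terms})$ with $C\in\mathbb C^{p\times p}$ its leading coefficient. Since $A_N$ is nonsingular, $(A_N^{-1})^\top$ exists and $(A_N^{-1})^\top A_N^\top=(A_NA_N^{-1})^\top=I_p$; hence $\gamma(y):=C(A_N^{-1})^\top y^{l-N}$ is such that $\gamma(y)(W(y))^\top$ has leading term $C(A_N^{-1})^\top A_N^\top y^l=Cy^l$, and therefore $P(y)-\gamma(y)(W(y))^\top$ has degree at most $l-1$. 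By the inductive hypothesis applied to this lower-degree polynomial there exist $\tilde\alpha(y),\beta(y)$ with $\deg\beta\leq N-1$ and $\deg\tilde\alpha\leq l-1-N$ and $P(y)-\gamma(y)(W(y))^\top=\tilde\alpha(y)(W(y))^\top+\beta(y)$; setting $\alpha(y):=\gamma(y)+\tilde\alpha(y)$, of degree at most $l-N$, closes the induction. Specializing to $C=I_p$ gives the desired $\alpha_l$ and $\beta_l=\beta_{l,0}+\cdots+\beta_{l,N-1}y^{N-1}$.

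The argument is elementary and the only point demanding care is the noncommutativity of $\mathbb C^{p\times p}[y]$: the quotient coefficients must sit on the \emph{left} of $(W(y))^\top$, and the cancellation of the top-degree term $Cy^l$ amounts to solving $\gamma_{l-N}A_N^\top=C$ for the leading coefficient $\gamma_{l-N}$ of $\gamma$, which is possible exactly because $A_N^\top$ is invertible. This is where the hypothesis on the leading coefficient enters, and it is also the reason the statement can fail when $A_N$ is singular. I would finally remark, although it is not needed in what follows, that the decomposition is unique: if $\alpha_1(y)(W(y))^\top+\beta_1(y)=\alpha_2(y)(W(y))^\top+\beta_2(y)$ then $(\alpha_1(y)-\alpha_2(y))(W(y))^\top=\beta_2(y)-\beta_1(y)$ has degree at most $N-1$, whereas its left-hand side, whose leading coefficient involves the invertible $A_N^\top$, has degree at least $N$ unless $\alpha_1=\alpha_2$, which then forces $\beta_1=\beta_2$ as well.
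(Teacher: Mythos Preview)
Your proof is correct. The paper states this lemma without proof, treating it as the standard right Euclidean division by $(W(y))^\top$ in $\mathbb C^{p\times p}[y]$, which is available precisely because the leading coefficient $A_N^\top$ is invertible; your induction on the degree supplies exactly the details the paper omits, and your remark on uniqueness, while not needed downstream, is also correct.
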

\begin{pro}\label{pro:first_poised}
	Let us assume that the matrix polynomial $W(y)=A_Ny^N+\dots +A_0$ has a nonsingular leading coefficient and $n\geq N$. Then, the set $\{0,1,\dots,N-1\}$ is poised.
\end{pro}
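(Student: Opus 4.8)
The plan is to show directly that the $Np\times Np$ block matrix
\[
R_n^{\square}=\begin{bmatrix} R_{n-N,0}&\cdots&R_{n-N,N-1}\\ \vdots&&\vdots\\ R_{n-1,0}&\cdots&R_{n-1,N-1}\end{bmatrix}
\]
attached to the index set $\{0,1,\dots,N-1\}$ is nonsingular. I would start from the fact, established just above, that the truncation $R_{[n]}=[R_{m,l}]_{m,l=0}^{n-1}=(S_1)_{[n]}\check G_{[n]}$ is nonsingular. Consequently its last $N$ block rows, that is the $Np\times np$ matrix $\widetilde R:=[R_{m,l}]_{\,m=n-N,\dots,n-1;\ l=0,\dots,n-1}$, are linearly independent, so $\operatorname{rank}\widetilde R=Np$.

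The heart of the argument is to prove that, on exactly these rows, every block column of $\widetilde R$ is a right $\mathbb C^{p\times p}$-combination of the first $N$ block columns, with coefficients independent of the row index. Fix $m\in\{n-N,\dots,n-1\}$ and $l\in\{0,\dots,n-1\}$, and use the Euclidean division of Lemma \ref{lemma:Euclides} (available precisely because $A_N$ is nonsingular), writing $I_py^l=\alpha_l(y)(W(y))^\top+\beta_l(y)$ with $\beta_l(y)=\sum_{r=0}^{N-1}\beta_{l,r}y^r$ and $\deg\alpha_l\le l-N$. Substituting into $R_{m,l}=\prodint{P^{[1]}_m(x),I_py^l}_{\check u}$ and applying the Geronimus identity of Proposition \ref{pro:string} to the summand carrying $(W(y))^\top$, I obtain
\[
R_{m,l}=\prodint{P^{[1]}_m(x),\alpha_l(y)}_{u}+\sum_{r=0}^{N-1}R_{m,r}\,\beta_{l,r}^{\top}.
\]

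Now write $\alpha_l(y)=\sum_{s}\alpha_{l,s}y^s$, so the first term equals $\sum_{s}\prodint{P^{[1]}_m(x),I_py^s}_u\,\alpha_{l,s}^{\top}$. Because $m\ge n-N$ and $l\le n-1$, one has $\deg\alpha_l\le l-N\le n-1-N\le m-1$, so every exponent $s$ that appears satisfies $s<m$; by the orthogonality of $P^{[1]}_m$ with respect to the unperturbed form $u$, see \eqref{eq:orthogonality1}, each pairing $\prodint{P^{[1]}_m(x),I_py^s}_u$ vanishes. Hence $R_{m,l}=\sum_{r=0}^{N-1}R_{m,r}\beta_{l,r}^{\top}$ for all admissible $m,l$. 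Collecting these identities over $l=0,\dots,n-1$ produces the factorization $\widetilde R=R_n^{\square}B$, where $B\in\mathbb C^{Np\times np}$ is the matrix whose $(r,l)$ block is $\beta_{l,r}^{\top}$ and which does not depend on $m$. Since $Np=\operatorname{rank}\widetilde R=\operatorname{rank}(R_n^{\square}B)\le\operatorname{rank}R_n^{\square}\le Np$, the matrix $R_n^{\square}$ must have rank $Np$ and is therefore nonsingular, which is exactly the assertion that $\{0,1,\dots,N-1\}$ is poised.

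The step I expect to require the most care is the degree bookkeeping in the third paragraph: one must check simultaneously that the remainder $\beta_l$ has degree at most $N-1$ (so that the surviving part of $R_{m,l}$ lands in the first $N$ block columns) and that the quotient $\alpha_l$ has degree at most $m-1$ uniformly over the entire block-row range $m\ge n-N$ (so that the $u$-pairing is killed by orthogonality). This is the only place where both hypotheses are genuinely used: nonsingularity of $A_N$ is what makes the division of Lemma \ref{lemma:Euclides} possible, while $n\ge N$ both yields the uniform bound $l-N\le m-1$ and keeps the relevant block rows inside the nonsingular truncation $R_{[n]}$.
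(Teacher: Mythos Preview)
Your proof is correct. The key identity $R_{m,l}=\sum_{r=0}^{N-1}R_{m,r}\beta_{l,r}^{\top}$ for $n-N\le m\le n-1$ and $0\le l\le n-1$ is exactly the computation the paper also hinges on, and your degree bookkeeping is right: $\deg\alpha_l\le l-N\le n-1-N\le m-1$, so orthogonality of $P^{[1]}_m$ with respect to $u$ kills the quotient term.

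The route, however, differs from the paper's. The paper argues by uniqueness of solutions: it shows that the resolvent row $[\omega_{n,n-N},\dots,\omega_{n,n-1}]$ solves the linear system with coefficient matrix $R_n^{\square}$, then assumes a second solution $[\tilde\omega_{n,n-N},\dots,\tilde\omega_{n,n-1}]$ exists, builds from it a monic polynomial $\tilde P_n$, and uses the same Euclidean division to verify that $\tilde P_n$ satisfies all the $\check u$-orthogonality relations characterizing $\check P^{[1]}_n$; uniqueness of the biorthogonal family then forces the two solutions to coincide, whence $R_n^{\square}$ is nonsingular. Your argument is more direct and purely linear-algebraic: you factor $\widetilde R=R_n^{\square}B$ and read off nonsingularity from the rank of $\widetilde R$, never invoking the existence or uniqueness of the perturbed biorthogonal family. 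This buys you a cleaner proof that stays entirely on the unperturbed side (only orthogonality with respect to $u$ and nonsingularity of $R_{[n]}$ are used), whereas the paper's approach has the advantage of making transparent why the system encodes precisely the determination of $\check P^{[1]}_n$.
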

\begin{proof}
	From Proposition \ref{OmegaR} we deduce
\begin{align*}
	\big[\omega_{n,n-N},\dots,\omega_{n,n-1}\big]\begin{bmatrix}
	R_{n-N,l}\\\vdots\\
	R_{n-1,l}
	\end{bmatrix}=-R_{n,l},
\end{align*}
for $l\in\{0,1,\dots,n-1\}$.
In particular, the resolvent vector $\big[\omega_{n,n-N},\dots,\omega_{n,n-1}\big]$ is a solution of the linear system
\begin{align}\label{eq:system}
\big[\omega_{n,n-N},\dots,\omega_{n,n-1}\big]\begin{bmatrix}
R_{n-N,0}&\dots & R_{n-N,N-1}\\\vdots& &\vdots\\
R_{n-1,0}&\dots &R_{n-1,N-1}
\end{bmatrix}=-\begin{bmatrix}
R_{n,0},\dots,R_{n,N-1}
\end{bmatrix}.
\end{align}
We will show now that this is the unique solution to this linear system. Let us proceed by contradiction and assume that there is another solution, say $\big[\tilde\omega_{n,n-N},\dots,\tilde\omega_{n,n-1}\big]$. Consider then the monic matrix polynomial
\begin{align*}
\tilde P_n(x)=P^{[1]}_n(x)+\tilde\omega_{n,n-1}P^{[1]}_{n-1}(x)+\dots+\tilde\omega_{n,n-N}P^{[1]}_{n-N}(x).
\end{align*}
Because $\big[\tilde\omega_{n,n-N},\dots,\tilde\omega_{n,n-1}\big]$ solves \eqref{eq:system} we know that
\begin{align*}
\langle \tilde P_n(x), I_py^l\rangle_{\check u}&=0_p, & l\in\{0,\dots, N-1\}.
\end{align*}
 Lemma  \ref{lemma:Euclides} implies  the following relations for $\deg \alpha_l(y)<m$,
\begin{align*}
\langle P^{[1]}_m(x), I_py^l\rangle_{\check u}&=\langle P^{[1]}_m(x), \alpha_l(y)\rangle_{\check u W}+\langle P^{[1]}_m(x), \beta_l(y)\rangle_{\check u}\\
&=\langle P^{[1]}_m(x), \alpha_l(y)\rangle_{ u }+\langle P^{[1]}_m(x), \beta_l(y)\rangle_{\check u}\\
&=\langle P^{[1]}_m(x), \beta_l(y)\rangle_{\check u}.
\end{align*}
But $\deg\alpha_l(y)\leq l-N$, so that the previous equation will hold at least for $l-N<m$; i.e., $l<m+N$.
Consequently,  for $l\in\{0,\dots,n-1\}$, we find
\begin{align*}
\langle\tilde P_n(x), I_p y^l\rangle_{\check u}&= \langle P^{[1]}_n(x), I_p y^l\rangle_{\check u}+\tilde\omega_{n,n-1}\langle  P^{[1]}_{n-1}(x), I_p y^l\rangle_{\check u}+\dots+\tilde\omega_{n,n-N}\langle P^{[1]}_{n-N}(y)
, I_p y^l\rangle_{\check u}\\
&= \langle P^{[1]}_n(x), \beta_l(y)\rangle_{\check u}+\tilde\omega_{n,n-1}\langle  P^{[1]}_{n-1}(x), \beta_l(y)\rangle_{\check u}+\dots+\tilde\omega_{n,n-N}\langle P^{[1]}_{n-N}(x)
, \beta_l(y)\rangle_{\check u}\\
&=\sum_{k=0}^{N-1}\big(R_{n,k}+\tilde{\omega}_{n,n-1}R_{n-1,k}+\dots+ \tilde{\omega}_{n,n-N}R_{n-N,k} \big)    (\beta_{l,k})^\top\\
&=0_p.
\end{align*}
Therefore,  from the uniqueness of the biorthogonal families,  we deduce
$\tilde P_n(x)=\check P^{[1]}_n(x)$,
and, recalling \eqref{conex1},  there is a unique solution of \eqref{eq:system}. Thus,
\begin{align*}
\begin{bmatrix}
R_{n-N,0}& \dots & R_{n-N,n-1}\\\vdots& &\vdots\\
R_{n-1,0}&\dots &R_{n-1,n-1}
\end{bmatrix}
\end{align*}
is nonsingular, and $I=\{0,\dots,N-1\}$ is a poised set.
\end{proof}	

\begin{pro}\label{pro:como_es_Omega}
	For $n< N$, we can write
	\begin{align*}
	\big[\omega_{n,0},\dots,\omega_{n,n-1}\big]=-\big[R_{n,0},\dots,R_{n,n-1}\big]
	\begin{bmatrix}
	R_{0,0} & \dots & R_{0,n-1}\\
	\vdots & & \vdots\\
	R_{n-1,0} & \dots & R_{n-1,n-1}
	\end{bmatrix}^{-1}.
	\end{align*}
For $n\geq N$,  given poised set, which always exists, we have
	\begin{align*}
	\big[\omega_{n,n-N},\dots,\omega_{n,n-1}\big]=-r_n^{\square}
	(R_n^{\square})^{-1}.
	\end{align*}
\end{pro}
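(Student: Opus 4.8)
The plan is to read off both formulas from the orthogonality encoded in Proposition \ref{OmegaR}, combined with the band structure of the resolvent recorded in Proposition \ref{conexw}. Recall that $\omega$ is lower unitriangular with at most its first $N$ block subdiagonals nonzero and $\omega_{n,n}=I_p$; hence each row of $\omega$ has only finitely many nonzero blocks, the product $\omega R$ is admissible, and its $(n,l)$ block is the finite sum $\sum_{k}\omega_{n,k}R_{k,l}$. Since Proposition \ref{OmegaR} asserts $(\omega R)_{n,l}=0_p$ for every $l\in\{0,\dots,n-1\}$, separating off the diagonal contribution $\omega_{n,n}R_{n,l}=R_{n,l}$ converts these vanishing conditions into an inhomogeneous linear system for the nontrivial resolvent blocks in row $n$. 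This is the single mechanism behind both displayed identities.

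First I would treat the case $n<N$. Here the band structure forces the nonzero off-diagonal blocks of row $n$ to be exactly $\omega_{n,0},\dots,\omega_{n,n-1}$, so the relations $(\omega R)_{n,l}=0_p$ for $l\in\{0,\dots,n-1\}$ become
\[
\sum_{k=0}^{n-1}\omega_{n,k}R_{k,l}=-R_{n,l},\qquad l\in\{0,\dots,n-1\}.
\]
Assembling these over $l$ gives the square block system $[\omega_{n,0},\dots,\omega_{n,n-1}]\,[R_{k,l}]_{k,l=0}^{n-1}=-[R_{n,0},\dots,R_{n,n-1}]$. Because the truncated matrix $[R_{k,l}]_{k,l=0}^{n-1}$ is nonsingular, as shown above via the factorization $R_{[n]}=(S_1)_{[n]}\check G_{[n]}$, I may multiply on the right by its inverse, obtaining the first displayed formula directly.

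For $n\geq N$ the same computation applies, but now the band structure singles out the blocks $\omega_{n,n-N},\dots,\omega_{n,n-1}$, and the vanishing of $(\omega R)_{n,l}$ for all $l\in\{0,\dots,n-1\}$ yields the wide, overdetermined system
\[
[\omega_{n,n-N},\dots,\omega_{n,n-1}]\begin{bmatrix}R_{n-N,0}&\cdots&R_{n-N,n-1}\\ \vdots& &\vdots\\ R_{n-1,0}&\cdots&R_{n-1,n-1}\end{bmatrix}=-[R_{n,0},\dots,R_{n,n-1}],
\]
whose coefficient matrix has only $Np$ rows but $np\geq Np$ columns. The remedy is to restrict to a poised set of indices, which exists by the lemma guaranteeing the existence of a poised set for $n\geq N$: selecting the corresponding $Np$ columns replaces the system by the square relation $[\omega_{n,n-N},\dots,\omega_{n,n-1}]\,R_n^{\square}=-r_n^{\square}$, and the nonsingularity of $R_n^{\square}$ (precisely the definition of poised) lets me invert to reach the second formula.

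The argument is essentially linear algebra once the structural inputs are in hand, so I do not anticipate a genuine obstacle. The only point demanding care is the passage from the overdetermined system to the square one when $n\geq N$: one must observe that the resolvent row, being a solution of the full wide system, automatically solves any column-restricted subsystem, so that restriction to a poised selection is harmless, and then invoke that $R_n^{\square}$ is nonsingular to conclude the restricted solution is unique and equals $-r_n^{\square}(R_n^{\square})^{-1}$. The existence lemma ensures the formula is non-vacuous, while Proposition \ref{pro:first_poised} identifies the canonical poised set $\{0,\dots,N-1\}$ in the nonsingular-leading-coefficient case.
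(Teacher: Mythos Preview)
Your proposal is correct and follows essentially the same approach as the paper, which proves the proposition with the single line ``It follows from Proposition \ref{OmegaR}.'' You have simply unpacked that sentence: the vanishing $(\omega R)_{n,l}=0_p$ for $l<n$, combined with the band structure and $\omega_{n,n}=I_p$, yields the linear systems you wrote down, and the nonsingularity of $R_{[n]}$ (for $n<N$) or of a poised submatrix $R_n^{\square}$ (for $n\geq N$) then gives the stated inversions.
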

\begin{proof}
	It follows from Proposition \ref{OmegaR}.
\end{proof}

\begin{teo}[Non-spectral Christoffel--Geronimus formulas]\label{theorem:nonspectral}
	Given a matrix Geronimus transformation the corresponding  perturbed polynomials, $\{\check P^{[1]}_{n}(x)\}_{n=0}^\infty$ and $\{\check P^{[2]}_{n}(y)\}_{n=0}^\infty$, and matrix norms $\{\check H_{n}\}_{n=0}^\infty$  can be expressed as follows.
		For $n\geq  N$,
		\begin{align*}
		\check P^{[1]}_n(x)&=\Theta_*\begin{bmatrix}[c|c]
		R_n^{\square}&\begin{matrix}P^{[1]}_{n-N}(x)\\\vdots \\P^{[1]}_{n-1}(x)\end{matrix}\\\hline
		r_n^{\square} & P^{[1]}_{n}(x)
		\end{bmatrix},& \big(	\check P_n^{[2]}(y)\big)^\top A_N=-\Theta_*\left[\begin{array}{c|c}R_n^{\square} & \begin{matrix}
		H_{n-N}\\ 0_p\\\vdots \\0_p
		\end{matrix}\\\hline
		(r^K_{n}(y))^\square& 0_p
		\end{array}	
		\right],\end{align*}
		and two alternative expressions 
	\begin{align*}
			\check H_n&=\Theta_*\left[\begin{array}{c|c}
				R_n^{\square}&\begin{matrix}R_{n-N,n}\\\vdots \\R_{n-1,n}\end{matrix}\\\hline
				r_n^{\square} & R_{n,n}
				\end{array}\right]=\Theta_*\left[\begin{array}{c|c}R_n^{\square} & \begin{matrix}
				H_{n-N}\\ 0_p\\\vdots \\0_p
				\end{matrix}\\\hline
				r_{n}^\square& 0_p
				\end{array}	
				\right]
		\end{align*}
	 For $n< N$, we have $ \check H_n=\Theta_*\big(R_{[n+1]}\big)$ and
	 \begin{align*}
	 \check P^{[1]}_n(x)&=\Theta_*\begin{bmatrix}
	 R_{0,0} & \dots & R_{0,n-1} & P^{[1]}_0(x)\\
	 \vdots & & \vdots &\vdots\\
	 R_{n,0} & \dots & R_{n,n-1} & P^{[1]}_{n}(x)\
	 \end{bmatrix},&
	\big(\check P^{[2]}_n(y)\big)^\top&=\Theta_*\begin{bmatrix}
	 R_{0,0} & \dots & R_{0,n-1} &R_{0,n}\\
	 \vdots & & \vdots &\vdots\\
	 R_{n-1,0} & \dots & R_{n-1,n-1} & R_{n-1,n}\\
	 I_p & \dots & I_p y^{n-1} & I_p y^n
	 \end{bmatrix}.
	 \end{align*}			
\end{teo}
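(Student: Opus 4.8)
The plan is to obtain all four families of formulas from the single connection identity \eqref{conex1}, namely $\check P^{[1]}_{n}(x)=P^{[1]}_n(x)+\sum_{k=n-N}^{n-1}\omega_{n,k}P_k^{[1]}(x)$, by inserting the closed forms for the resolvent block $\big[\omega_{n,n-N},\dots,\omega_{n,n-1}\big]$ provided by Proposition \ref{pro:como_es_Omega}. The only packaging tool needed at the end is the Schur-complement form of a last quasideterminant, $\Theta_*\big[\begin{smallmatrix}A&b\\c&d\end{smallmatrix}\big]=d-cA^{-1}b$: each displayed expression is an instance of this once the resolvent block $-r_n^{\square}(R_n^{\square})^{-1}$ (for $n\geq N$), respectively $-\big[R_{n,0},\dots,R_{n,n-1}\big]\big(R_{[n]}\big)^{-1}$ (for $n<N$), has been substituted. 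Throughout, invertibility of the relevant denominator—$R_n^{\square}$ in the poised case, $R_{[n]}$ in the low-degree case—is exactly what the preparatory results guarantee: $R_{[n]}=(S_1)_{[n]}\check G_{[n]}$ is nonsingular by \eqref{eq:RSM} and quasidefiniteness of $\check G$, while a poised set for $n\geq N$ exists by the Lemma asserting this fact. It is precisely the poised-set mechanism that lets the argument proceed with a \emph{singular} leading coefficient $A_N$, since no inverse of $A_N$ is ever formed in choosing the columns of $R_n^{\square}$.

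For the range $n\geq N$ I would first handle $\check P^{[1]}_n$: substituting $\big[\omega_{n,n-N},\dots,\omega_{n,n-1}\big]=-r_n^{\square}(R_n^{\square})^{-1}$ into \eqref{conex1} turns the right-hand side into $P^{[1]}_n(x)-r_n^{\square}(R_n^{\square})^{-1}\big[P^{[1]}_{n-N}(x);\dots;P^{[1]}_{n-1}(x)\big]$, which is the announced $\Theta_*$ with $A=R_n^{\square}$, $c=r_n^{\square}$, $d=P^{[1]}_n(x)$. For the first expression of $\check H_n$ I would use Proposition \ref{OmegaR}: since $(\omega R)_{n,n}=\check H_n$ and $\omega$ is banded, $\check H_n=R_{n,n}+\sum_{k=n-N}^{n-1}\omega_{n,k}R_{k,n}$, and the same substitution produces the $\Theta_*$ with last block column $\big[R_{n-N,n};\dots;R_{n,n}\big]$. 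The second, equivalent, expression for $\check H_n$ is then read off from the last-subdiagonal relation \eqref{eq:omegaA}, $\omega_{n,n-N}=\check H_nA_N(H_{n-N})^{-1}$, which replaces that column by $\big[H_{n-N};0;\dots;0\big]^\top$; this relation is also the origin of the factor $A_N$ multiplying $(\check P^{[2]}_n(y))^\top$ on the left.

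The genuinely delicate step is $\check P^{[2]}_n$ for $n\geq N$, and I expect it to be the main obstacle. I would begin from the identity that writes $r^K_{n,l}(y)$ as $\big[(\check P^{[2]}_n(y))^\top\check H_n^{-1},\dots,(\check P^{[2]}_{n+N-1}(y))^\top\check H_{n+N-1}^{-1}\big]\,\Omega{[n]}\,\big[R_{n-N,l};\dots;R_{n-1,l}\big]$, restrict both sides to the poised columns to get $(r^K_n(y))^{\square}=\big[(\check P^{[2]}_n(y))^\top\check H_n^{-1},\dots\big]\,\Omega{[n]}\,R_n^{\square}$, and then right-multiply by $(R_n^{\square})^{-1}$ and by the block column $\big[H_{n-N};0;\dots;0\big]^\top$. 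Because $\Omega{[n]}$ is block upper triangular with first column $\big[\omega_{n,n-N};0;\dots;0\big]^\top$, only the leading second-kind term survives the second multiplication, and the identity $\check H_n^{-1}\omega_{n,n-N}=A_N(H_{n-N})^{-1}$ collapses the left-hand side to exactly $(\check P^{[2]}_n(y))^\top A_N$. Isolating this single polynomial out of the triangular wing $\Omega{[n]}$—and checking that the special column $\big[H_{n-N};0;\dots;0\big]^\top$ performs the bookkeeping correctly—is where the precise upper-triangular shape of $\Omega{[n]}$ and relation \eqref{eq:omegaA} are indispensable.

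It remains to dispatch $n<N$, where neither second-kind functions nor spectral data enter. For $\check P^{[1]}_n$ I would use the low-degree branch $\big[\omega_{n,0},\dots,\omega_{n,n-1}\big]=-\big[R_{n,0},\dots,R_{n,n-1}\big]\big(R_{[n]}\big)^{-1}$ of Proposition \ref{pro:como_es_Omega} in \eqref{conex1}, again recognizing a $\Theta_*$. For $\check P^{[2]}_n$ I would instead invoke biorthogonality directly: writing $\check P^{[2]}_n(y)=\sum_{l=0}^n\check p_{n,l}y^l$ with $\check p_{n,n}=I_p$, the conditions $\prodint{P^{[1]}_k(x),\check P^{[2]}_n(y)}_{\check u}=0$ for $k<n$ read $\sum_{l=0}^n R_{k,l}(\check p_{n,l})^\top=0$, a linear system whose unique solution is the displayed quasideterminant in the blocks $R_{k,l}$. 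Finally $\check H_n=\Theta_*\big(R_{[n+1]}\big)$ follows because $R=S_1\check G$ with $S_1$ lower unitriangular, so left multiplication by $(S_1)_{[n+1]}$ leaves the last quasideterminant invariant and $\Theta_*(R_{[n+1]})=\Theta_*(\check G_{[n+1]})=\check H_n$ by Proposition \ref{qd1}. In this range the entire content is the quasideterminant rewriting, with invertibility of $R_{[n]}$ the only point to certify.
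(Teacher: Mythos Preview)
Your proposal is correct and follows essentially the same route as the paper: for $n\geq N$ you substitute $-r_n^{\square}(R_n^{\square})^{-1}$ from Proposition~\ref{pro:como_es_Omega} into the connection formula and into $(\omega R)_{n,n}=\check H_n$, and for $\check P^{[2]}_n$ you exploit the upper-triangular shape of $\Omega[n]$ together with \eqref{eq:omegaA} exactly as the paper does. The only cosmetic difference is in the $n<N$ case: the paper extracts the coefficients of $\check S_2$ from the relation $R(\check S_2)^\top=\omega^{-1}\check H$ (equivalently, the Gauss--Borel factorization $R_{[n+1]}=(\omega_{[n+1]})^{-1}\check H_{[n+1]}((\check S_2)_{[n+1]})^{-\top}$), whereas you argue directly from biorthogonality $\langle P^{[1]}_k,\check P^{[2]}_n\rangle_{\check u}=0$ and from the invariance $\Theta_*\big((S_1)_{[n+1]}\check G_{[n+1]}\big)=\Theta_*(\check G_{[n+1]})$---but these are two phrasings of the same linear system, so no substantive deviation.
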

\begin{proof}
For $m=\min(n,N)$, from the connection formula \eqref{conex2} we have
		\begin{align*}
		\check P^{[1]}_{n}(x)=	[\omega_{n,n-m},\dots,\omega_{n,n-1}]\begin{bmatrix}
		P^{[1]}_{n-m}(x)\\\vdots\\P^{[1]}_{n-1}(x)
		\end{bmatrix}+P^{[1]}_{n}(x),
		\end{align*}
		and from Proposition \ref{OmegaR} we deduce
		\begin{align*}
		\check H_n=	\
		[\omega_{n,n-m},\dots,\omega_{n,n-1}]\begin{bmatrix}
		R_{n-m,n}\\\vdots\\R_{n-1,n}
		\end{bmatrix}+R_{n,n},
		\end{align*}
		and use \eqref{eq:HG}.
		Then, recalling Proposition \ref{pro:como_es_Omega} we obtain the desired formulas for $\check P^{[1]}_n(x)$ and $\check H_n$.
		\item
		For $n\geq N$, we have
		\begin{align*}
		r^K_{n,l}(y)=\begin{bmatrix}(\check P_{n}^{[2]}(y))^\top(\check H_{n})^{-1},\dots,(\check P_{n-1+N}^{[2]}(y))^\top(\check H_{n-1+N})^{-1}\end{bmatrix}\Omega{[n]}
		\begin{bmatrix}
		R_{n-N,l}\\
		\vdots\\
		R_{n-1,l}
		\end{bmatrix},
		\end{align*}
	so that
		\begin{align*}
		(r^K_n(y))^\square(R_n^{\square})^{-1}=
	\begin{bmatrix}(\check P_{n}^{[2]}(y))^\top(\check H_{n})^{-1},\dots,(\check P_{n-1+N}^{[2]}(y))^\top(\check H_{n-1+N})^{-1}\end{bmatrix}
		\Omega{[n]}.
		\end{align*}
		In particular, recalling \eqref{eq:omegaA}, we deduce that
		\begin{align*}
		(\check P_{n}^{[2]}(y))^\top A_N=(r^K_n(y))^\square(R_n^{\square})^{-1}\begin{bmatrix}
		H_{n-N}\\ 0_p\\\vdots \\0_p
		\end{bmatrix}.
		\end{align*}

			For $n<N$, 	we can write	\eqref{OmegaR0} as $R\big(\check S_2\big)^\top=\omega^{-1} \check H$, which implies
				\begin{align*}
				\begin{bmatrix}
				\big((S_2)^\top\big)_{0,n}\\\vdots\\
				\big((S_2)^\top\big)_{n-1,n}
				\end{bmatrix}=-\big(R_{[n]}\big)^{-1}\begin{bmatrix}
				R_{0,n}\\\vdots\\R_{n-1,n}
				\end{bmatrix},
				\end{align*}
		giving in turn
				\begin{align*}
			\big(P^{[2]}_n(y)\big)^\top&=y^n+\begin{bmatrix}
			I_p,\dots,I_py^{n-1}
			\end{bmatrix}\begin{bmatrix}
			\big((S_2)^\top\big)_{0,n}\\\vdots\\
			\big((S_2)^\top\big)_{n-1,n}
			\end{bmatrix}\\&=
			y^n-\begin{bmatrix}
			I_p,\dots,I_py^{n-1}
			\end{bmatrix}\big(R_{[n]}\big)^{-1}\begin{bmatrix}
			R_{0,n}\\\vdots\\R_{n-1,n}
			\end{bmatrix}.
				\end{align*}	
\end{proof}
Observe that  the Gauss--Borel factorization
$R_{[n]}=\big(\omega_{[n]}\big)^{-1} \check H_{[n]}\big((\check S_2)_{[n]}\big)^{-\top}$,
 gives the formulas in the previous theorem,  for $n<N$.
\subsection{Spectral versus nonspectral}
\begin{defi}
	We introduce the  truncation given by taking only the first $N$ columns of a given semi-infinite matrix
	\begin{align*}
	R^{(N)}:=\begin{bmatrix}
	R_{0,0} & R_{0,1} &\dots &R_{0,N-1}\\
	R_{1,0} & R_{0,1} &\dots &R_{1,N-1}\\
	\vdots &\vdots && \vdots
	\end{bmatrix}.
	\end{align*}
\end{defi}
Then, we can connect the spectral methods and the nonspectral techniques as follows
\begin{pro}\label{pro:specvsnon}
The following relation takes place
		\begin{align*}
		\boldsymbol{\mathcal J}_{C^{[1]}}-\prodint{ P^{[1]}(x),(\xi)_x}\mathcal W=-R^{(N)}
		\mathcal B 	\mathcal Q.
		\end{align*}
\end{pro}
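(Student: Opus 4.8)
The plan is to prove the equivalent statement that, writing $D$ for the semi-infinite block matrix whose $n$-th block row is $D_n:=\boldsymbol{\mathcal J}_{C^{[1]}_n}-\prodint{ P^{[1]}_n(x),(\xi)_x}\mathcal W\in\mathbb C^{p\times Np}$, one has $D=-R^{(N)}\mathcal B\mathcal Q$. Since $\omega$ is lower unitriangular and hence invertible, and since $\mathcal B\mathcal Q\in\mathbb C^{Np\times Np}$ is a genuine (finite) matrix, it suffices to check $\omega D=-\omega\big(R^{(N)}\mathcal B\mathcal Q\big)$ and then cancel $\omega$ on the left; right multiplication by the finite matrix $\mathcal B\mathcal Q$ commutes with the banded product by $\omega$, so no associativity issue arises (cf. Proposition \ref{pro:associativity}).

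First I would compute $\omega D$ by taking root spectral jets in the connection formula \eqref{eq:conexionC1}. The subtracted $\mathcal B$-term there is supported on the first $N$ block rows only, and by Lemma \ref{lemma:pair} the root spectral jet of $(\chi(x))_{[N]}$ is exactly $\mathcal Q$; hence the jet of that term equals $\big(\check H\big(\check S_2\big)^{-\top}\big)_{[N]}\mathcal B\mathcal Q$ in the first $N$ block rows and $0$ beneath. Replacing $\boldsymbol{\mathcal J}_{\check C^{[1]}_nW}$ by $\prodint{\check P^{[1]}_n(x),(\xi)_x}\mathcal W$ via \eqref{eq:checkCP}, and then using the connection formula \eqref{conex2} in the form $\check P^{[1]}=\omega P^{[1]}$ to factor $\omega$ out of the left-hand side, the identity on the top $N$ block rows becomes $\omega_{[N]}D_{[N]}=-\big(\check H\big(\check S_2\big)^{-\top}\big)_{[N]}\mathcal B\mathcal Q$, which is precisely the content of \eqref{eq:WomegaCP} established in the proof of Proposition \ref{pro:LUN}. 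For $n\geq N$ the correction term is absent, so the same substitutions give $(\omega D)_n=0$ directly. Thus $\omega D$ has its first $N$ block rows equal to $-\big(\check H\big(\check S_2\big)^{-\top}\big)_{[N]}\mathcal B\mathcal Q$ and vanishes below block row $N$.

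Next I would identify $\omega R^{(N)}$. By \eqref{OmegaR0} we have $\omega R=\check H\big(\check S_2\big)^{-\top}$, a block upper triangular matrix; restricting to the first $N$ block columns gives $\omega R^{(N)}$, whose $(n,l)$ block equals $\big(\check H\big(\check S_2\big)^{-\top}\big)_{n,l}$ for $l<N$. Block upper triangularity forces these blocks to vanish whenever $l<n$, so $\omega R^{(N)}$ reduces to $\big(\check H\big(\check S_2\big)^{-\top}\big)_{[N]}$ in its first $N$ block rows and is $0$ for $n\geq N$. Multiplying on the right by the finite matrix $\mathcal B\mathcal Q$ and comparing with the previous paragraph yields $\omega D=-(\omega R^{(N)})\mathcal B\mathcal Q=-\omega\big(R^{(N)}\mathcal B\mathcal Q\big)$, and cancelling the invertible $\omega$ proves the proposition.

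The individual computations are routine once this structure is in place; the only points requiring care are the bookkeeping of the truncations and of the semi-infinite products. The clean split at block row $N$—coming from the fact that the $\mathcal B$-correction in \eqref{eq:conexionC1} lives only in the first $N$ rows while $\omega R$ is block upper triangular—is what lets a single identity hold simultaneously for $n<N$ and $n\geq N$; this, together with verifying that right multiplication by the finite matrix $\mathcal B\mathcal Q$ is associative against the banded factor $\omega$, is the main thing to get right.
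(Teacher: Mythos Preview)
Your proof is correct and follows essentially the same route as the paper: take root spectral jets of the connection formula \eqref{eq:conexionC1}, replace $\boldsymbol{\mathcal J}_{\check C^{[1]}W}$ via \eqref{eq:checkCP}, factor out $\omega$ using \eqref{conex2}, identify the right-hand side through $\omega R=\check H(\check S_2)^{-\top}$ from \eqref{OmegaR0}, and cancel the unitriangular $\omega$. The paper carries this out in one stroke at the semi-infinite level rather than splitting into $n<N$ and $n\geq N$, but the argument is the same.
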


\begin{proof}
From \eqref{eq:conexionC1} we deduce that
	\begin{align*}
	\check C^{[1]}(x)W(x)-\check H\big(\check S_2\big)^{-\top}\left[\begin{array}{c}
	\mathcal B 	(\chi(x))_{[N]}
	\\
	0_{p}\\
	\vdots
	\end{array}\right]&=	\omega C^{[1]}(x).\\
	\end{align*}
Taking the corresponding root spectral jets, we obtain
	\begin{align*}
\boldsymbol{\mathcal J}_{\check C^{[1]}W}-\check H\big(\check S_2\big)^{-\top}\left[\begin{array}{c}
		\mathcal B 	\mathcal Q
		\\
		0_{p}\\
		\vdots
	\end{array}\right]&=	\omega \boldsymbol{\mathcal J}_{C^{[1]}},\\
\end{align*}
that, together with \eqref{eq:checkCP}, gives
		\begin{align*}
		\omega \Big(\boldsymbol{\mathcal J}_{C^{[1]}}-\prodint{ P^{[1]}(x),(\xi)_x}\mathcal W\Big)=-\check H\big(\check S_2\big)^{-\top}\left[\begin{array}{c}
		\mathcal B 	\mathcal Q
		\\
		0_{p}\\
		\vdots
		\end{array}\right].
		\end{align*}
Now, relation \eqref{OmegaR0} implies
	\begin{align*}
	\omega \Big(\boldsymbol{\mathcal J}_{C^{[1]}}-\prodint{ P^{[1]}(x),(\xi)_x}\mathcal W+R^{(N)}\mathcal B\mathcal Q\Big)=0.
	\end{align*}
But, given that $\omega$ is a lower unitriangular matrix, and therefore with an inverse, see \cite{cooke}, the unique solution to $\omega X=0$, where $X$ is a semi-infinite matrix, is $X=0$.
\end{proof}
We now discuss  an important fact, which  ensures that the spectral Christoffel--Geronimus formulas presented in previous sections make sense
\begin{coro}\label{corollary:non_singularity}
If the leading coefficient $A_N$  is nonsingular and  $n\geq N$,	then
\begin{align*}
\begin{bmatrix}
\boldsymbol{\mathcal J}_{ C^{[1]}_{n-N}}-\prodint{ P^{[1]}_{n-N}(x),(\xi)_x}\mathcal W\\
\vdots\\
\boldsymbol{\mathcal J}_{ C^{[1]}_{n-1}}-\prodint{ P^{[1]}_{n-1}(x),(\xi)_x}\mathcal W\
\end{bmatrix}
\end{align*}
is nonsingular.
\end{coro}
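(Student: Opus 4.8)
The plan is to read the matrix in question directly off the spectral-versus-nonspectral identity of Proposition \ref{pro:specvsnon} and then to recognize its three factors as nonsingular matrices. Concretely, I would start from
\begin{align*}
\boldsymbol{\mathcal J}_{C^{[1]}}-\prodint{ P^{[1]}(x),(\xi)_x}\mathcal W=-R^{(N)}\mathcal B \mathcal Q
\end{align*}
and restrict it to the block rows labelled $n-N,\dots,n-1$. Since $\mathcal B\mathcal Q\in\mathbb C^{Np\times Np}$ is a genuine finite matrix, multiplying a finite block of rows of $R^{(N)}$ by it raises no associativity issue for semi-infinite products, and the selected rows give
\begin{align*}
\begin{bmatrix}
\boldsymbol{\mathcal J}_{ C^{[1]}_{n-N}}-\prodint{ P^{[1]}_{n-N}(x),(\xi)_x}\mathcal W\\
\vdots\\
\boldsymbol{\mathcal J}_{ C^{[1]}_{n-1}}-\prodint{ P^{[1]}_{n-1}(x),(\xi)_x}\mathcal W
\end{bmatrix}
=-
\begin{bmatrix}
R_{n-N,0}&\dots&R_{n-N,N-1}\\
\vdots&&\vdots\\
R_{n-1,0}&\dots&R_{n-1,N-1}
\end{bmatrix}\mathcal B\mathcal Q.
\end{align*}
The $Np\times Np$ block on the right is precisely the square matrix $R_n^{\square}$ associated with the index set $\{0,1,\dots,N-1\}$, obtained by taking all $p$ columns of each of the first $N$ block columns.

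It then remains to check that each of the three factors $R_n^{\square}$, $\mathcal B$ and $\mathcal Q$ is invertible. The factor $\mathcal Q$ is nonsingular by Lemma \ref{lemma:triple} (equivalently by Propositions \ref{pro:Jordan pair0} and \ref{pro:Jordan pair}). The factor $\mathcal B$, as given in Definition \ref{def:B}, is block anti-triangular: its main anti-diagonal is entirely filled by the leading coefficient $A_N$ and all blocks below it vanish, so $\det\mathcal B=\pm(\det A_N)^N$, which is nonzero exactly because $A_N$ is assumed nonsingular. Finally, the invertibility of $R_n^{\square}$ is the content of Proposition \ref{pro:first_poised}: under the same hypotheses, $A_N$ nonsingular and $n\geq N$, the set $\{0,1,\dots,N-1\}$ is poised, which by definition means that $R_n^{\square}$ is nonsingular.

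A product of three nonsingular square matrices is nonsingular, so the displayed row block is invertible and the corollary follows. The only place where the hypothesis on $A_N$ is genuinely used is in these two determinantal facts, namely $\det\mathcal B=\pm(\det A_N)^N$ and the poisedness of $\{0,\dots,N-1\}$ supplied by Proposition \ref{pro:first_poised}; everything else is a mechanical consequence of Proposition \ref{pro:specvsnon}. The main subtlety to state carefully is therefore the identification of the relevant block of rows of $R^{(N)}$ with the square submatrix $R_n^{\square}$ for the poised set $\{0,\dots,N-1\}$, after which no further computation is required.
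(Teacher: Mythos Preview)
Your proposal is correct and follows essentially the same route as the paper: restrict Proposition~\ref{pro:specvsnon} to the block rows $n-N,\dots,n-1$ to obtain the factorization, then invoke Proposition~\ref{pro:first_poised} for the $R$-block and Lemma~\ref{lemma:triple} for $\mathcal B\mathcal Q$. The only cosmetic difference is that you separately argue $\mathcal B$ and $\mathcal Q$ are invertible, whereas the paper cites Lemma~\ref{lemma:triple} directly for the invertibility of the product $\mathcal B\mathcal Q$ (since it exhibits $\mathcal R$ as its inverse).
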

\begin{proof}
From Proposition \ref {pro:specvsnon} one deduces the following formula
\begin{align}\label{eq:spectral non spectral}
\begin{bmatrix}
\boldsymbol{\mathcal J}_{ C^{[1]}_{n-N}}-\prodint{ P^{[1]}_{n-N}(x),(\xi)_x}\mathcal W\\
\vdots\\
\boldsymbol{\mathcal J}_{ C^{[1]}_{n-1}}-\prodint{ P^{[1]}_{n-1}(x),(\xi)_x}\mathcal W
\end{bmatrix}
=-\begin{bmatrix}
R_{n-N,0}& \dots& R_{n-N,N-1}\\\vdots& &\vdots\\
R_{n-1,0}&\dots &R_{n-1,N-1}
\end{bmatrix}
\mathcal B\mathcal Q.
\end{align}
Now,  Proposition \ref{pro:first_poised} and Lemma \ref{lemma:triple} lead to the result.
\end{proof}
 We stress at this point that \eqref{eq:spectral non spectral} connects the spectral and the nonspectral  methods. Moreover,  when we border with a further block row we  obtain
 \begin{align*}
\begin{bmatrix}
\boldsymbol{\mathcal J}_{ C^{[1]}_{n-N}}-\prodint{ P^{[1]}_{n-N}(x),(\xi)_x}\mathcal W\\
\vdots\\
\boldsymbol{\mathcal J}_{ C^{[1]}_{n}}-\prodint{ P^{[1]}_{n}(x),(\xi)_x}\mathcal W
\end{bmatrix}
 =-\begin{bmatrix}
 R_{n-N,0}&\dots & R_{n-N,N-1}\\\vdots& &\vdots\\
 R_{n,0}&\dots &R_{n,N-1}
 \end{bmatrix}
 \mathcal B\mathcal Q.
 \end{align*}
 \begin{coro}\label{corollary:non_singularity2}
If the leading coefficient $A_N$  is nonsingular and  $n< N$, then
\begin{align*}
\begin{bmatrix}
\boldsymbol{\mathcal J}_{ C^{[1]}_{0}}-\prodint{ P^{[1]}_{0}(x),(\xi)_x}\mathcal W\\
\vdots\\
\boldsymbol{\mathcal J}_{ C^{[1]}_{n-1}}-\prodint{ P^{[1]}_{n-1}(x),(\xi)_x}\mathcal W
\end{bmatrix}\mathcal R_n
\end{align*}
is nonsingular, and
\begin{align*}
\begin{bmatrix}
\boldsymbol{\mathcal J}_{ C^{[1]}_{0}}-\prodint{ P^{[1]}_{0}(x),(\xi)_x}\mathcal W\\
\vdots\\
\boldsymbol{\mathcal J}_{ C^{[1]}_{n-1}}-\prodint{ P^{[1]}_{n-1}(x),(\xi)_x}\mathcal W
\end{bmatrix}
\end{align*}
is full rank.
 \end{coro}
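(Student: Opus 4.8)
The plan is to adapt, essentially verbatim, the argument used for Corollary \ref{corollary:non_singularity}, the only new ingredient being that for $n<N$ the jet matrix is rectangular of size $np\times Np$, so I cannot speak of its nonsingularity directly; instead I would first produce a square block by right multiplication by $\mathcal R_n$. Concretely, I would restrict the semi-infinite identity of Proposition \ref{pro:specvsnon} to its first $n$ block rows. Since $n<N$ and $\mathcal B\mathcal Q$ is a fixed $Np\times Np$ matrix, truncating the left factor $R^{(N)}$ to its first $n$ block rows gives
\begin{align*}
\begin{bmatrix}
\boldsymbol{\mathcal J}_{ C^{[1]}_{0}}-\prodint{ P^{[1]}_{0}(x),(\xi)_x}\mathcal W\\
\vdots\\
\boldsymbol{\mathcal J}_{ C^{[1]}_{n-1}}-\prodint{ P^{[1]}_{n-1}(x),(\xi)_x}\mathcal W
\end{bmatrix}
=-\begin{bmatrix}
R_{0,0}& \dots& R_{0,N-1}\\\vdots& &\vdots\\
R_{n-1,0}&\dots &R_{n-1,N-1}
\end{bmatrix}\mathcal B\mathcal Q.
\end{align*}

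The next step is to multiply both sides on the right by $\mathcal R_n=\begin{bmatrix} Y,JY,\dots,J^{n-1}Y\end{bmatrix}$. By Lemma \ref{lemma:triple} one has $\mathcal B\mathcal Q=\mathcal R^{-1}$ with $\mathcal R=\begin{bmatrix} Y,JY,\dots,J^{N-1}Y\end{bmatrix}$, and $\mathcal R_n$ is precisely the array of the first $n$ block columns of $\mathcal R$. Hence $\mathcal B\mathcal Q\,\mathcal R_n=\mathcal R^{-1}\mathcal R_n$ equals the first $n$ block columns of $I_{Np}$, i.e. the selector $\begin{bmatrix} I_{np}\\ 0_{(N-n)p\times np}\end{bmatrix}$, which extracts the leading $n$ block columns of the $R$-block above. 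The product therefore collapses to
\begin{align*}
\begin{bmatrix}
\boldsymbol{\mathcal J}_{ C^{[1]}_{0}}-\prodint{ P^{[1]}_{0}(x),(\xi)_x}\mathcal W\\
\vdots\\
\boldsymbol{\mathcal J}_{ C^{[1]}_{n-1}}-\prodint{ P^{[1]}_{n-1}(x),(\xi)_x}\mathcal W
\end{bmatrix}\mathcal R_n
=-\begin{bmatrix}
R_{0,0}& \dots& R_{0,n-1}\\\vdots& &\vdots\\
R_{n-1,0}&\dots &R_{n-1,n-1}
\end{bmatrix}=-R_{[n]}.
\end{align*}
By \eqref{eq:RSM} we have $R_{[n]}=(S_1)_{[n]}\check G_{[n]}$, which is nonsingular because $(S_1)_{[n]}$ is unitriangular and $\check G_{[n]}$ is nonsingular by quasidefiniteness; this settles the first assertion.

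For the second assertion I would invoke a rank count: the jet matrix has size $np\times Np$, and right multiplication by the $Np\times np$ matrix $\mathcal R_n$ yields the nonsingular $np\times np$ matrix $-R_{[n]}$. Since the rank of a product cannot exceed the rank of either factor, the jet matrix must have rank at least $np$; having exactly $np$ rows, it is of full row rank. I expect the only delicate point to be the bookkeeping of the second step, namely checking that $\mathcal R^{-1}\mathcal R_n$ is indeed the block-identity selector and that this selector truncates the $R$-block to its leading principal part; this is genuinely routine here, since $\mathcal B\mathcal Q$ and $\mathcal R$ are ordinary $Np\times Np$ matrices and none of the associativity subtleties of semi-infinite products are in play.
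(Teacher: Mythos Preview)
Your proof is correct and follows essentially the same approach as the paper: both truncate the semi-infinite identity of Proposition~\ref{pro:specvsnon} to the first $n$ block rows, right-multiply by $\mathcal R_n$ using $\mathcal B\mathcal Q=\mathcal R^{-1}$ from Lemma~\ref{lemma:triple}, and identify the result with $-R_{[n]}$, whose nonsingularity follows from $R_{[n]}=(S_1)_{[n]}\check G_{[n]}$. Your bookkeeping of the selector $\mathcal R^{-1}\mathcal R_n=\begin{bsmallmatrix}I_{np}\\0\end{bsmallmatrix}$ is in fact more explicit than the paper's, and your rank argument for the second assertion is the standard one.
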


\begin{proof}
	 From Proposition \ref{pro:specvsnon} we know that when
 the leading coefficient $A_N$ is nonsingular	the following relation holds
 \begin{align*}
\big( \boldsymbol{\mathcal J}_{C^{[1]}}-\prodint{ P^{[1]}_{n-1}(x),(\xi)_x}\mathcal W
\big)\mathcal R=-R^{(N)},
 \end{align*}
where $\mathcal R=(Y,JY,\dots, J^{N-1}Y)$ is  a  Jordan triple  $(X,Y,J)$ of the perturbing polynomial $W(x)$. Thus, for $n<N$ we have
in terms of the truncation $\mathcal R_n=(Y,JY,\dots, J^{n-1}Y)$ , which as we have proved is a full rank matrix,
\begin{align*}
\begin{bmatrix}
\boldsymbol{\mathcal J}_{ C^{[1]}_{0}}-\prodint{ P^{[1]}_{0}(x),(\xi)_x}\\
\vdots\\
\boldsymbol{\mathcal J}_{ C^{[1]}_{n-1}}-\prodint{ P^{[1]}_{n-1}(x),(\xi)_x}
\end{bmatrix}\mathcal R_n
 =-\begin{bmatrix}
 R_{0,0}&\dots & R_{0,n-1}\\\vdots& &\vdots\\
 R_{n,0}&\dots &R_{n,n-1}
 \end{bmatrix}.
 \end{align*}
 Thus, as $\begin{vmatrix}
 R_{0,0}&\dots & R_{0,n-1}\\\vdots& &\vdots\\
 R_{n,0}&\dots &R_{n,n-1}
 \end{vmatrix}\neq 0$, we  deduce  the result.
\end{proof}

\subsection{Applications}
\subsubsection{Unimodular Christoffel perturbations and nonspectral techniques}\label{s:unimodular}
The spectral methods apply to those Geronimus transformations with a perturbing polynomial $W(y)$ having a nonsingular leading coefficient $A_N$.
This was also the case for the techniques developed in \cite{alvarez2015Christoffel} for matrix Christoffel transformations, where the perturbing polynomial had a nonsingular leading coefficient. However, we have shown that despite we can extend the use of the spectral techniques to the study of matrix Geronimus transformations, we also
have a  nonspectral approach applicable even for singular leading coefficients. For example, some cases that have appeared several times in the literature --see \cite{Dur6}-- are unimodular perturbations and,  consequently,  with $ W(y)$ having a singular leading coefficient. In this case, we have that $(W(y))^{-1}$ is a matrix polynomial, and we can consider the Geronimus transformation associated with the matrix polynomial $(W(y))^{-1}$ --as the spectrum is empty $\sigma(W(y))=\varnothing$,   no masses appear-- as a Christoffel transformation with perturbing matrix polynomial $W(y)$ of the original matrix of
 generalized kernels
\begin{align}\label{eq:unimodular}
\check u_{x,y}=u_{x,y} \big((W(y))^{-1}\big)^{-1}=u_{x,y} W(y).
\end{align}
We can apply Theorem \ref{theorem:nonspectral} with
\begin{align*}
R&=	\prodint{P^{[1]}(x),\chi(y)}_{u W}, &
	R_{n,l}&=\prodint{P^{[1]}_n(x),I_py^l}_{uW}\in\mathbb C^{p\times p}.
	\end{align*}
For example, when the matrix of generalized kernels is  a matrix of measures $\mu$, we can write
\begin{align*}
R_{n,l}&=\int P^{[1]}_n(x)\d\mu(x,y)W(y)y^l .
\end{align*}
Here $W(x)$ is a Christoffel perturbation and $\deg((W(x))^{-1})$ gives you the number of original orthogonal polynomials required for the Christoffel type formula.
	Theorem \ref{theorem:nonspectral}  can be nicely applied to get $\check P^{[1]}_n(x)$ and $\check H_n$.
However, it only gives Christoffel--Geronimus formulas for $\big(\check P^{[2]}_n(y)\big)^\top A_N$ and given that $A_N$ is singular, we only partially recover $\check P^{[2]}_n(y)$. This problem disappears whenever we
 have symmetric generalized kernels $u_{x,y}=(u_{y,x})^\top$, see Remark \ref{rem:symmetric}, as then  $ P_n^{[1]}(x)= P_n^{[2]}(x)=:P_n(x)$ and
biorthogonality collapses to orthogonality of $\{P_n(x)\}_{n=0}^\infty$. From \eqref{eq:unimodular}, we need to require
\begin{align*}
u_{x,y}W(y)= (W(x))^\top (u_{y,x})^\top,
\end{align*}
that when the initial matrix of kernels is itself symmetric $u_{x,y}= (u_{y,x})^\top$ reads
$u_{x,y}W(y)= (W(x))^\top u_{x,y}$.
Now, if we are dealing with Hankel matrices of generalized kernels $u_{x,y}=u_{x,x}$ we find
$u_{x,x,}W(x)= (W(x))^\top u_{x,x}$,
that for the scalar case reads $u_{x,x}=u_0I_p$ with $u_0$ a generalized function we need $W(x)$ to be a symmetric matrix polynomial.
For this scenario, if $\{p_n(x)\}_{n=0}^\infty$ denotes the set of monic orthogonal polynomials associated with $u_{0}$, we have
$R_{n,l}=\big\langle u_{0} , p_n(x)W(x)x^l\big\rangle$.

For example, if we take $p=2$, with the unimodular perturbation given by
\begin{align*}
W(x)=\begin{bmatrix}
(A_2)_{1,1}x^2+(A_1)_{1,1}x+(A_0)_{1,1} &(A_2)_{1,2}x^2+(A_1)_{1,2}x+(A_0)_{1,2}\\(A_2)_{1,2}x^2+(A_1)_{1,2}x+(A_0)_{1,2}&(A_2)_{2,2}x^2+(A_1)_{2,2}x+(A_0)_{2,2}
\end{bmatrix}
\end{align*}
we have, that the inverse  is the following matrix polynomial
\begin{align*}
(W(x))^{-1}=\frac{1}{\det W(x)}\begin{bmatrix}
(A_2)_{2,2}x^2+(A_1)_{2,2}x+(A_0)_{2,2} &-(A_2)_{1,2}x^2-(A_1)_{1,2}x-(A_0)_{1,2}\\-(A_2)_{1,2}x^2-(A_1)_{1,2}x-(A_0)_{1,2}&(A_2)_{1,1}x^2+(A_1)_{1,1}x+(A_0)_{1,1}
\end{bmatrix},
\end{align*}
where $\det W(x)$ is a constant,
and the inverse has also degree 2. Therefore, for $n\in\{2,3,\dots\}$,
we have the following expressions for the perturbed matrix orthogonal polynomials
\begin{align*}
\check P_n(x)&=\Theta_*\begin{bmatrix}
\big\langle u_{0} , p_{n-2}(x)x^k (A_2x^2+A_1x+A_0)\big\rangle & \big\langle u_{0} , p_{n-2}(x)x^l (A_2x^2+A_1x+A_0)\big\rangle &p_{n-2}(x) I_p\\
\big\langle u_{0} , p_{n-1}(x)x^k(A_2x^2+A_1x)\big\rangle & \big\langle u_{0} , p_{n-1}(x)x^l (A_2x^2+A_1x+A_0)\big\rangle & p_{n-1}(x) I_p\\
\big\langle u_{0} , p_n(x)x^k A_2x^2\big\rangle & \big\langle u_{0} , p_n(x)x^l (A_2x^2+A_1x)\big\rangle &p_{n}(x)I_p
\end{bmatrix},
\end{align*}
and the corresponding matrix norms or quasitau matrices are
{\small
	\begin{align*}
\check H_n&=\Theta_*\begin{bmatrix}
\big\langle u_{0} , p_{n-2}(x)x^k (A_2x^2+A_1x+A_0)\big\rangle & \big\langle u_{0} , p_{n-2}(x)x^l (A_2x^2+A_1x+A_0)\big\rangle &\big\langle u_{0} , p_{n-2}(x)x^n (A_2x^2+A_1x+A_0)\big\rangle\\
\big\langle u_{0} , p_{n-1}(x)x^k(A_2x^2+A_1x)\big\rangle & \big\langle u_{0} , p_{n-1}(x)x^l (A_2x^2+A_1x+A_0)\big\rangle & \big\langle u_{0} , p_{n-1}(x)x^n (A_2x^2+A_1x+A_0)\big\rangle\\
\big\langle u_{0} , p_n(x)x^k A_2x^2\big\rangle & \big\langle u_{0} , p_n(x)x^l (A_2x^2+A_1x)\big\rangle &\big\langle u_{0} , p_{n}(x)x^n (A_2x^2+A_1x+A_0)\big\rangle
\end{bmatrix}.
\end{align*}
}
Here the natural numbers $k$ and $l$ satisfy $0\leq k<l\leq n-1$ and are among those  (we know that they do exist) that fulfil
\begin{align*}
\det\begin{bmatrix}
\big\langle u_{0} , p_{n-2}(x)x^k (A_2x^2+A_1x+A_0)\big\rangle & \big\langle u_{0} , p_{n-2}(x)x^l (A_2x^2+A_1x+A_0)\big\rangle \\
\big\langle u_{0} , p_{n-1}(x)x^k(A_2x^2+A_1x)\big\rangle & \big\langle u_{0} , p_{n-1}(x)x^l (A_2x^2+A_1x+A_0)\big\rangle
\end{bmatrix}\neq 0.
\end{align*}

Observe  that the case of size $p=2$ unimodular matrix polynomials is particularly simple, because the degree of the perturbation and its inverse coincide. However, for bigger sizes this is not the case.  For a better understanding,
 let us recall that unimodular matrices always factorize in terms of elementary matrix polynomials and elementary matrices, which  are of the following form
\begin{enumerate}
	\item Elementary matrix polynomials: $e_{i,j}(x)=I_p+E_{i,j}p(x)$ with $i\neq j$ and $E_{i,j}$ the matrix with a 1 at the $(i,j)$ entry and zero elsewhere,
	and $p(x)\in\mathbb C[x]$.
\item Elementary matrices: \begin{enumerate}
 	\item  $I_p+(c-1)E_{i,i}$ with $c\in\mathbb C$.
	\item $\eta^{(i,j)}=I_p-E_{i,i}-E_{j,j}+E_{i,j}+E_{j,i}$: the identity matrix with the $i$-th and $j$-th rows interchanged.
\end{enumerate}
\end{enumerate}
The inverses of these matrices are elementary again
\begin{align*}
(e_{i,j}(x))^{-1}&=I_p-p(x)E_{i,j},\\
(I_p+(c-1)E_{i,i})^{-1}&=I_p+(c^{-1}-1)E_{i,i},\\
(\eta^{(i,j)})^{-1}&=\eta^{(i,j)},
\end{align*}
and the inverse of a general unimodular matrix polynomial can be computed immediately once its factorization in terms of elementary matrices is  given.
However, the degree of the matrix polynomial and its inverse requires a separate analysis.

If our perturbation $W(x)=I_p+p(x)E_{i,j}$ is an elementary matrix polynomial, with $\deg p(x)=N$, then we have that $(W(x))^{-1}=I_p-p(x)E_{i,j}$ and $\deg W(x)=\deg ((W(x))^{-1})=N$.  If we assume a departing matrix of generalized kernels $u_{x,y}$,  for $n\geq N$, the first family of perturbed polynomials will be
\begin{align*}
\check P^{[1]}_n(x)&=\Theta_*\begin{bmatrix}
\big\langle P^{[1]}_{n-N}(x),y^{k_1} (I_p+p(y)E_{i,j})\big\rangle_u & \dots& \big\langle P^{[1]}_{n-N}(x),y^{k_N} (I_p+p(y)E_{i,j})\big\rangle_u &P^{[1]}_{n-N}(x) \\
\vdots & & \vdots&\vdots\\
\big\langle P^{[1]}_n(x),y^{k_1} (I_p+p(y)E_{i,j})\big\rangle_u &\dots & \big\langle P^{[1]}_n(x),y^{k_N}(I_p+p(y)E_{i,j})\big\rangle_u &P^{[1]}_{n}(x)
\end{bmatrix}.
\end{align*}
Here, the  sequence of different integers $\{k_1,\dots,k_N\}\subset\{1,\dots,n-1\}$ is such that
\begin{align*}
\det\begin{bmatrix}
\big\langle P^{[1]}_{n-N}(x),y^{k_1} (I_p+p(y)E_{i,j})\big\rangle_u & \dots& \big\langle P^{[1]}_{n-N}(x),y^{k_N} (I_p+p(y)E_{i,j})\big\rangle_u \\
\vdots & & \vdots\\
\big\langle  P^{[1]}_{n-1}(x),y^{k_1} (I_p+p(y)E_{i,j})\big\rangle_u &\dots & \big\langle P^{[1]}_{n-1}(x),y^{k_N}(I_p+p(y)E_{i,j})\big\rangle_u
\end{bmatrix}\neq 0.
\end{align*}

A bit more complex situation appears when we have the product of different elementary matrix polynomials, for example
\begin{align*}
W(x)=\big(I_p+p^{(1)}_{i_1,j_1}(x)E_{i_1,j_1}\big)\big(I_p+p^{(2)}_{i_2,j_2}(x)E_{i_2,j_2}\big),
\end{align*}
which has two possible forms depending on whether  $j_1\neq i_2$ or $j_1= i_2$
\begin{align*}
W(x)=\begin{cases}
I_p+p^{(1)}_{i_1,j_1}(x)E_{i_1,j_1}+p^{(2)}_{i_2,j_2}(x)E_{i_2,j_2}, &j_1\neq i_2, \\
I_p+p^{(1)}_{i_1,j_1}(x)E_{i_1,j_1}+p^{(2)}_{j_2,j_2}(x)E_{j_2,j_2}+p^{(1)}_{i_1,j_1}(x)p^{(2)}_{j_1,j_2}(x)E_{i_1,j_2},&j_1= i_2,
\end{cases}
\end{align*}
so that
\begin{align*}
\deg (W(x))=\begin{cases}
(1-\delta_{i_1,i_2}\delta_{j_1,j_2})\max\big(\deg (p^{(1)}_{i_1,j_1}(x)),\deg(p^{(2)}_{i_2,j_2}(x))\big)
+\delta_{i_1,i_2}\delta_{j_1,j_2}\deg (p^{(1)}_{i_1,j_1}(x)+p^{(2)}_{i_2,j_2}) , &j_1\neq i_2, \\
\deg (p^{(1)}_{i_1,j_1}(x))+\deg(p^{(1)}_{j_1,j_2}(x)), &j_1= i_2.
\end{cases}
\end{align*}
For the inverse, we find
\begin{align*}
(W(x))^{-1}=\begin{cases}
I_p-p^{(1)}_{i_1,j_1}(x)E_{i_1,j_1}-p^{(2)}_{i_2,j_2}(x)E_{i_2,j_2}, &j_2\neq i_1, \\
I_p-p^{(1)}_{i_1,j_1}(x)E_{i_1,j_1}-p^{(2)}_{i_2,i_1}(x)E_{i_2,i_1}+p^{(1)}_{i_1,j_1}(x)p^{(2)}_{i_2,i_1}(x)E_{i_2,j_1}, &j_2= i_1,
\end{cases}
\end{align*}
and
\begin{align*}
\deg \big((W(x))^{-1}\big)=\begin{cases}
(1-\delta_{i_1,i_2}\delta_{j_1,j_2})\max(\deg (p^{(1)}_{i_1,j_1}(x)),\deg(p^{(2)}_{i_2,j_2}(x))+\delta_{i_1,i_2}\delta_{j_1,j_2}\deg (p^{(1)}_{i_1,j_1}(x)+p^{(2)}_{i_2,j_2}), &j_2\neq i_1, \\
\deg (p^{(1)}_{i_1,j_1}(x))+\deg(p^{(2)}_{i_2,i_1}(x)), &j_2= i_1.
\end{cases}
\end{align*}
Thus, if either $j_1\neq i_2$ and $ j_2\neq i_1$, or when $j_1= i_2$ and $ j_2= i_1$,
 the degrees $W(x)$ and $(W(x))^{-1}$ coincide, for $j_1= i_2$ and $ j_2\neq i_1$ we find
$\deg W(x)>\deg((W(x))^{-1})$ and when $j_1\neq  i_2$ and $ j_2= i_1$ we have $\deg W(x)<\deg((W(x))^{-1})$.
Consequently, the degrees of  unimodular matrix polynomials can be bigger than, equal to  or smaller than the degrees of  its   inverses.

We will be interested in unimodular perturbations $W(x)$ that factorize in terms of
$K$ elementary polynomial factors $\{e_{i_m,j_m}(x)\}_{m=1}^K$ and $L$ exchange factors $\{\eta^{(l_n,q_n)}\}_{n=1}^L$.
We will use the following notation for elementary polynomials and elementary matrices
\begin{align*}
(i,j)_{p_{i,j}(x)}&:=E_{i,j}p_{i,j}(x)   &    [l,q]&:=\eta_{l,q},
\end{align*}
suited to take products among them, according to the product table
\begin{align*}
(i_1,j_1)_{p_{i_1,j_1}}(i_2,j_2)_{p_{i_2,j_2}}&=\delta_{j_1,i_2}(i_1,j_2)_{p_{i_1,j_1}p_{i_2,j_2}}, \\
[l,q](i,j)_{p_{i,j}}&=(1-\delta_{l,i})(1-\delta_{q,i})(i,j)_{p_{i,j}}+\delta_{l,i}(q,j)_{p_{i,j}}+\delta_{q,i}(l,j)_{p_{i,j}},\\
(i,j)_{p_{i,j}}[l,q]&=(1-\delta_{l,j})(1-\delta_{q,j})(i,j)_{p_{i,j}}+\delta_{l,j}j(i,)_{p_{i,j}}+\delta_{q,j}(i,l)_{p_{i,j}}.
\end{align*}
Bearing this in mind,  we denote all the possible permutations of a vector with $K$ entries, having $i$ out of these
equal to 1 and the rest equal to zero, by $\sigma_{i}^K=\big\{{\sigma}_{i,j}^{K}\big\}_{j=1}^{|\sigma_{i}^K|}$ with
${\sigma}_{i,j}^{K}=\begin{pmatrix} ({\sigma}_{i,j}^{K})_1,
\dots, ({\sigma}_{i,j}^{K})_K \end{pmatrix}\in (\mathbb Z_2)^K$ where $({\sigma}_{i,j}^{K})_r \in \mathbb Z_2:=\{1,0\}$ and
$|\sigma_{i}^K|=\begin{pmatrix} K \\ i \end{pmatrix}$ we can rewrite a given unimodular perturbation as a sum. Actually,
any unimodular polynomial that factorizes in terms of $K$ elementary polynomials $e_{i,j}(x)$
and $L$ elementary matrices $\eta^{(l,q)}$, in a given order, can be expanded  into a sum of $2^K$ terms
\begin{align*}
W(x)&=e_{i_1,j_1}(x)\cdots e_{i_r,j_r}(x)\eta^{(l_1,q_1)} \cdots \eta^{(l_t,q_t)}
e_{i_{r+1},j_{r+1}}(x)\cdots \eta^{(l_L,q_L)} \cdots e_{i_K,j_K}(x)\\
&=\sum_{i=0}^{K}\sum_{j=1}^{|\sigma_{i}^K|}
(i_1,j_1)^{(\sigma_{i,j}^K)_{1}}_{p_{i_1,j_1}} \cdots (i_r,j_r)^{(\sigma_{i,j}^K)_{r}}_{p_{i_r,j_r}}
[l_1,q_1]\cdots[l_t,q_t](i_{r+1},j_{r+1})^{(\sigma_{i,j}^K)_{r+1}}_{p_{i_{r+1},j_{r+1}}}\dots [l_L,q_L]\dots
(i_K,j_K)^{(\sigma_{i,j}^K)_{K}}_{p_{i_K,j_K}},
\end{align*}
where  $(i,j)_{p_{i,j}}^0=\mathbb{I}_p$.
Notice that although in the factorization of $W$ we have assumed that it starts and ends with elementary polynomials, the result would still
be valid if it started and/or ended with an interchange elementary matrix $\eta$.
We notationally simplify these type of expressions by considering   the sequences of couples  of natural numbers
$\{i_1,j_1\}\,\{(i_2,j_2\}),\dots,\{i_k,j_k\}\big\}$,
where $\{n,m\}$ stands either for $(n,m)_{p_{m,n}}$ or $[m,n]$, and identifying  paths.
We say that two couples of naturals $\{k,l\}$ and $\{n,m\}$ are linked if $l=n$.
When we deal with a couple $[n,m]$ the order is not of the natural numbers is not relevant, for example $(k,l)$ and $[l,m]$ are linked as well
as $(k,l)$ and $[m,l]$ are linked.
A path of length $l$ is a subset of $I$ of the form
\begin{align*}
\big\{\{a_1,a_2\},\{a_2,a_3\}, \{a_3,a_4\},\dots,\{a_{l-1},a_{l}\},\{a_l,a_{l+1}\}\big\}_l.
\end{align*}
The order of the sequence is respected for the construction of each path. Thus,  the element  $(a_i,a_{i+1})$, as an element  of the sequence  $I$, is previous to the element $(a_{i+1}, a_{i+2})$ in the sequence. A path is proper if it does not belong to a longer path.
Out of the $2^K$ terms that appear only paths remain. In order to know
the degree of the unimodular polynomial one must check the factors of   the proper paths, and look for the maximum  degree involved in those factors .
For a better  understanding  let us work out  a couple of significant examples.  These examples deal with non symmetric matrices and, therefore, we have complete Christoffel type expressions for $\check P^{[1]}_n(x)$ and $\check H_n$, but also  the mentioned penalty for $P^{[2]}_n(x)$.
Firstly, let us consider a polynomial with $K=5$, $L=0$ and $p=6$,
\begin{align*}
W(x)&=e_{1,2}(x)e_{2,3}(x)e_{3,6}(x)e_{4,3}(x)e_{3,5}(x)
\end{align*}
in terms of sequences of couples the  paths for this unimodular polynomial has the following structure
\begin{gather*}
\{\varnothing\}_{i=5}, \\
\{\varnothing\}_{i=4},\\
\underline{\{(1,2),(2,3),(3,6)\}_{i=3}},\underline{\{(1,2),(2,3),(3,5)\}_{i=3}},\\
\underline{\{(4,3),(3,5)\}_{i=2}},\{(2,3),(3,5)\}_{i=2},\{(2,3),(3,6)\}_{i=2},\{(1,2)(2,3)\}_{i=2},\\ \{(1,2)\}_{i=1},\{(2,3)\}_{i=1},\{(3,6)\}_{i=1},\{(4,3)\}_{i=1},\{(3,5)\}_{i=1},\\
\{I_6\}_{i=0},
\end{gather*}
where $\{I_6\}_{i=0}$ indicates that the product not involving couples produces the identity matrix (in general will be a product of interchanging matrices) and we have underlined the proper paths.
Thus
\begin{align*}
W(x)&=e_{1,2}(x)e_{2,3}(x)e_{3,6}(x)e_{4,3}(x)e_{3,5}(x)\\
&=(1,6)_{p_{1,2}p_{2,3}p_{3,6}}+(1,5)_{p_{1,2}p_{2,3}p_{3,5}} +
(4,5)_{p_{4,3}p_{3,5}}+(2,5)_{p_{2,3}p_{3,5}}+(2,6)_{p_{2,3}p_{3,6}}+(1,3)_{p_{1,2}p_{2,3}} \\
&+(1,2)_{p_{1,2}}+(2,3)_{p_{2,3}}+(3,6)_{p_{3,6}}+(4,3)_{p_{4,3}}+(3,5)_{p_{3,5}} +I_5\\
&=\begin{bmatrix}
1 & p_{1,2}(x) & p_{1,2}(x)p_{2,3}(x)  & 0 &p_{1,2}(x)p_{2,3}(x)p_{3,5}(x) &p_{1,2}(x)p_{2,3}(x)p_{3,6}(x)\\
0  & 1& p_{2,3}(x) & 0 &p_{2,3}(x)p_{3,5}(x) &p_{2,3}(x)p_{3,6}(x)\\
0 & 0& 1& 0&p_{3,5}(x) & p_{3,6}(x)\\
0 &0 &p_{4,3}(x) & 1 &p_{4,3}(x) p_{3,5}(x)&0\\
0 & 0 & 0 & 0 &1&0\\
0 & 0 & 0 & 0 &0&1
\end{bmatrix}.
\end{align*}
Its inverse is
\begin{align*}
(W(x))^{-1}&=(e_{3,5}(x))^{-1}(e_{4,3}(x))^{-1}(e_{3,6}(x))^{-1}(x)(e_{2,3}(x))^{-1}(e_{1,2}(x))^{-1},
\end{align*}
and the paths are
\begin{gather*}
\{\varnothing \}_{i=5},\\
\{\varnothing \}_{i=4},\\
\{\varnothing\}_{i=3},\\
\underline{\{(4,3),(3,6)\}_{i=2}},\\
\underline{\{(3,5)\}_{i=1}}, \{(4,3)\}_{i=1},\{(3,6)\}_{i=1},\underline{\{(2,3)\}_{i=1}},\underline{\{(1,2)\}_{i=1}},\\
\{I_6\}_{i=0}.
\end{gather*}
Thus,
\begin{align*}
(W(x))^{-1}&=(4,6)_{p_{4,3}p_{3,6}}+
(3,5)_{-p_{3,5}}+(4,3)_{-p_{4,3}}+(3,6)_{-p_{3,6}}+(2,3)_{-p_{2,3}}+(1,2)_{-p_{1,2}} +
I_6\\&=\begin{bmatrix}
1 &- p_{1,2}(x) & 0 & 0 &0 &0\\
0  & 1& -p_{2,3}(x) & 0 &0 &0\\
0 & 0& 1& 0&-p_{3,5}(x) & -p_{3,6}(x)\\
0 &0 &-p_{4,3}(x) & 1 &0&p_{4,3}(x) p_{3,6}(x)\\
0 & 0 & 0 & 0 &1&0\\
0 & 0 & 0 & 0 &0&1
\end{bmatrix}.
\end{align*}
Then, looking at the proper paths, we find
{\small \begin{align*}
	\deg  W(x)&=\max\big(\deg p_{1,2}(x) +\deg p_{2,3}(x)+\deg p_{3,6}(x), \deg p_{1,2}(x) +\deg p_{2,3}(x)+\deg p_{3,5}(x),\deg p_{4,3}(x) +\deg p_{3,5}(x)\big),\\
\deg((W(x))^{-1})&=\max \big(
\deg  p_{1,2}(x),\deg  p_{2,3}(x),\deg  p_{3,6}(x)+\deg  p_{4,3}(x), \deg p_{3,5}(x)
\big).
\end{align*}}
For example, if we assume that
\begin{align*}
\deg p_{1,2}(x)&=2,  &\deg p_{2,3}(x)&=1, & \deg p_{3,6}(x)&=2, & \deg p_{4,3}(x)&=1, & \deg p_{3,5}(x)&=3,
\end{align*}
we get for the corresponding unimodular matrix polynomial and its inverse
\begin{align*}
\deg (W(x))&=6, & \deg\big( (W(x))^{-1}\big)&=3,
\end{align*}
so that, for example, the first family of perturbed biorthogonal polynomials, for $n\geq 3$ is
\begin{align}\label{eq:example}
\check P^{[1]}_n(x)&=\Theta_*\begin{bmatrix}
\big\langle P^{[1]}_{n-3}(x),y^{k_1} W(y)\big\rangle_u & \big\langle P^{[1]}_{n-3}(x),x^{k_2} W(y)\big\rangle_u & \big\langle P^{[1]}_{n-3}(x),y^{k_3} W(y)\big\rangle_u &P^{[1]}_{n-3}(x) \\
\big\langle P^{[1]}_{n-2}(x),y^{k_1}W(y)\big\rangle_u & \big\langle P^{[1]}_{n-2}(x),y^{k_2}W(y)\big\rangle_u& \big\langle P^{[1]}_{n-2}(x),y^{k_3}W(y)\big\rangle_u &P^{[1]}_{n-2}(x)\\
\big\langle P^{[1]}_{n-1}(x),y^{k_1}W(y)\big\rangle_u & \big\langle P^{[1]}_{n-1}(x),y^{k_2}W(y)\big\rangle_u& \big\langle P^{[1]}_{n-1}(x),y^{k_3}W(y)\big\rangle_u &P^{[1]}_{n-1}(x)\\
\big\langle P^{[1]}_n(x),y^{k_1} W(y)\big\rangle_u &\big\langle P^{[1]}_{n}(x),y^{k_2}W(y)\big\rangle_u & \big\langle P^{[1]}_n(x),y^{k_3}W(y)\big\rangle_u &P^{[1]}_{n}(x)
\end{bmatrix}.
\end{align}
Here, the  sequence of different integers $\{k_1,k_2,k_3\}\subset\{1,\dots,n-1\}$ is such that
\begin{align*}
\det\begin{bmatrix}
\big\langle P^{[1]}_{n-3}(x),y^{k_1} W(y)\big\rangle_u & \big\langle P^{[1]}_{n-3}(x),y^{k_2} W(y)\big\rangle_u & \big\langle P^{[1]}_{n-3}(x),y^{k_3} W(y)\big\rangle_u \\
\big\langle P^{[1]}_{n-2}(x),y^{k_1}W(y)\big\rangle_u & \big\langle P^{[1]}_{n-2}(x),y^{k_2}W(y)\big\rangle_u& \big\langle P^{[1]}_{n-2}(x),y^{k_3}W(y)\big\rangle_u \\
\big\langle P^{[1]}_{n-1}(x),y^{k_1}W(y)\big\rangle_u & \big\langle P^{[1]}_{n-1}(x),y^{k_2}W(y)\big\rangle_u& \big\langle P^{[1]}_{n-1}(x),y^{k_3}W(y)\big\rangle_u
\end{bmatrix}\neq 0.
\end{align*}

Let us now work out a polynomial with $K=L=4$ and $p=5$.
The unimodular matrix polynomial we consider is
\begin{align*}
W(x)&=e_{2,1}(x)\eta^{(1,4)}\eta^{(5,4)}e_{5,1}(x)\eta^{(3,2)}e_{2,3}(x)\eta^{(3,1)}e_{1,5}(x).
\end{align*}
The paths are
{\small\begin{gather*}
\{\varnothing\}_{i=4},\\
\{\varnothing \}_{i=3},\\
\underline{\{(2,1),[1,4],[5,4],(5,1),[3,2],[3,1]\}_{i=2}},\underline{\{[1,4],[5,4],[3,2],(2,3),[3,1],(1,5)\}_{i=2}}, \\
\{(2,1),[1,4],[5,4],[3,2],[3,1]\}_{i=1},\{[1,4],[5,4],(5,1),[3,2],[3,1]\}_{i=1},\{[1,4],[5,4],[3,2],(2,3),[3,1]\}_{i=1},\{[1,4],[5,4],[3,2],[3,1],(1,5)\}_{i=1},\\
\{[1,4],[5,4],[3,2],[3,1]\}_{i=0},
\end{gather*}}
so that
\begin{align}\label{eq:example2}
W(x)
&= (2,3)_{p_{2,1}p_{5,1}}+(3,5)_{p_{2,3}p_{1,5}}+
(2,5)_{p_{2,1}}+(1,3)_{p_{5,1}}+(3,1)_{p_{2,3}}+(2,5)_{p_{1,5}}+[1,4][5,4][3,2][3,1]\\
&=\begin{bmatrix}
0& 0 & p_{5,1}(x)  &0 &1\\
1  & 0& p_{2,1}(x)p_{5,1}(x) & 0 &p_{2,1}(x)+p_{1,5}(x)\\
p_{2,3}(x) & 1& 0& 0&p_{2,3}(x)  p_{1,5}(x)\\
0 & 0 & 1 & 0 &0\\
0 & 0 &  0  &1&0
\end{bmatrix}.
\end{align}
The inverse matrix is
\begin{align*}
(W(x))^{-1}&=(e_{1,5}(x))^{-1}\eta^{(3,1)}(e_{2,3}(x))^{-1}\eta^{(3,2)}(e_{5,1}(x))^{-1}\eta^{(5,4)}\eta^{(1,4)}(e_{2,1}(x))^{-1},
\end{align*}
with paths given by
{\small \begin{gather*}
\{\varnothing\}_{i=4},\\
\{\varnothing\}_{i=3},\\
\underline{\{(1,5),[3,1],[3,2],(5,1),[5,4],[1,4]\}_{i=2}},\underline{\{[3,1],(2,3),[3,2],[5,4],[1,4],(2,1)\}_{i=2}},\\
\{[3,1],[3,2],[5,4],[1,4],(2,1)\}_{i=1},\{[3,1],[3,2],(5,1),[5,4],[1,4])\}_{i=1},\{[3,1],(2,3),[3,2],[5,4],[1,4])\}_{i=1},
\{(1,5),[3,1],[3,2],[5,4],[1,4]\}_{i=1},\\
\{[3,1],[3,2],[5,4],[1,4]\}_{i=0},
\end{gather*}}
and, consequently,
\begin{align*}
(W(x))^{-1}&=(1,4)_{p_{1,5}p_{5,1}}+(2,1)_{p_{2,3}p_{2,1}} +
(1,1)_{-p_{2,1}}+(5,4)_{-p_{5,1}}+(2,2)_{-p_{2,3}}+(1,1)_{-p_{1,5}}
+[3,1][3,2][5,4][1,4]\\
&=\begin{bmatrix}
-p_{2,1}(x)-p_{1,5}(x)&1&0&p_{1,5}(x)p_{5,1}(x)&0\\
p_{2,3}(x)p_{2,1}(x)&-p_{2,3}(x)&1&0&0\\
0&0&0&1&0\\
0&0&0&0&1\\
1&0&0&-p_{5,1}(x)&0
\end{bmatrix}.
\end{align*}
Proper paths, which we have  underlined, give the degrees of the polynomials
 \begin{align*}
	\deg  W(x)&=\max\big(\deg p_{2,1}(x) +\deg p_{5,1}(x), \deg p_{1,5}(x) +\deg p_{2,3}(x)\big),\\
	\deg((W(x))^{-1})&=\max \big(
	\deg  p_{1,5}(x)+\deg  p_{5,1}(x),\deg  p_{2,3}(x)+\deg  p_{2,1}(x)
	\big).
	\end{align*}
	For example, if we assume that
	\begin{align*}
	\deg p_{2,1}(x)&=2,  &\deg p_{5,1}(x)&=1, & \deg p_{1,5}(x)&=2, & \deg p_{2,3}(x)&=1,
	\end{align*}
we find $\deg  W(x)=\deg((W(x))^{-1})=3$ and formula \eqref{eq:example} is applicable for $W(x)$ as given in \eqref{eq:example2}.

If we seek for  symmetric unimodular polynomials of the form
\begin{align*}
W(x)=V(x)\big(V(x)\big)^\top,
\end{align*}
where $V(x)$ is a unimodular matrix polynomial. For example, we put $p=4$, and consider
\begin{align*}
V(x)=\begin{bmatrix}
1 & p_{1,2}(x)p_{3,2}(x) & p_{1,2}(x)& 0\\
0 & p_{3,2} (x) & 1 & 0\\
0 & 1& 0 &0\\
0 & 0 & p_{4,3}(x)& 1
\end{bmatrix},
\end{align*}
in such a way  the perturbing symmetric unimodular  matrix  polynomial is
\begin{align*}
W(x)=\begin{bmatrix}
1+(p_{1,2}(x))^2(p_{3,2}(x))^2 &  p_{1,2}(x)(p_{3,2}(x))^2+p_{1,2}(x) & p_{1,2}(x)p_{3,2}(x) & p_{1,2}(x)p_{4,3}(x)\\
p_{1,2}(x)(p_{3,2}(x))^2+p_{1,2}(x) & 1+(p_{3,2}(x))^2 & p_{3,2} (x) & p_{4,3} (x)\\
p_{1,2}(x)p_{3,2}(x) & p_{3,2} (x) & 1 &0\\
p_{1,2}(x)p_{4,3}(x)&  p_{4,3} (x) &0& 1+(p_{4,3}(x))^2
\end{bmatrix}.
\end{align*}
Let us assume that
\begin{align*}
\deg p_{1,2}(x)&=3, &\deg p_{3,2}(x)&=1, & \deg p_{4,3}(x)&=1,
\end{align*}
then
\begin{align*}
\deg W(x) &= 8, & \deg\big(
(W(x))^{-1}\big)&=4.
\end{align*}
Now, we take a scalar matrix of linear functionals $u=u_{0} I_p$, with $u_{0}\in\big(\R[x]\big)'$  positive definite, and assume that the polynomials $p_{1,2}(x), p_{2,3}(x),p_{3,4}(x)\in\mathbb R[x]$. Then, we obtain matrix orthogonal polynomials $\{P_n(x)\}_{n=0}^\infty$ for the matrix of linear functionals $W(x)u_0$, which in terms of the sequence of scalar orthogonal polynomials $\{p_n(x)\}_{n=0}^\infty$ of the linear functional $u_0$ are, for $n\geq 4$
{\small
	\begin{align*}
P_n(x)=\Theta_*\begin{bmatrix}
\big\langle u_{0},p_{n-4}(x)x^{k_1} W(x)\big\rangle & \big\langle u_{0},p_{n-4}(x)x^{k_2} W(x)\big\rangle & \big\langle u_{0},p_{n-4}(x)x^{k_3} W(x)\big\rangle  &\big\langle u_{0},p_{n-4}(x)x^{k_4} W(x)\big\rangle
&p_{n-4}(x) I_p\\
\big\langle u_{0},p_{n-3}(x)x^{k_1} W(x)\big\rangle & \big\langle u_{0},p_{n-3}(x)x^{k_2} W(x)\big\rangle & \big\langle u_{0},p_{n-3}(x)x^{k_3} W(x)\big\rangle  &\big\langle u_{0},p_{n-3}(x)x^{k_4} W(x)\big\rangle
&p_{n-3}(x) I_p\\
\big\langle u_{0},p_{n-2}(x)x^{k_1} W(x)\big\rangle & \big\langle u_{0},p_{n-2}(x)x^{k_2} W(x)\big\rangle & \big\langle u_{0},p_{n-2}(x)x^{k_3} W(x)\big\rangle  &\big\langle u_{0},p_{n-2}(x)x^{k_4} W(x)\big\rangle
&p_{n-2}(x) I_p\\
\big\langle u_{0},p_{n-1}(x)x^{k_1} W(x)\big\rangle & \big\langle u_{0},p_{n-1}(x)x^{k_2} W(x)\big\rangle & \big\langle u_{0},p_{n-1}(x)x^{k_3} W(x)\big\rangle  &\big\langle u_{0},p_{n-1}(x)x^{k_4} W(x)\big\rangle
&p_{n-1}(x) I_p\\
\big\langle u_{0},p_{n}(x)x^{k_1} W(x)\big\rangle & \big\langle u_{0},p_{n}(x)x^{k_2} W(x)\big\rangle & \big\langle u_{0},p_{n}(x)x^{k_3} W(x)\big\rangle  &\big\langle u_{0},p_{n}(x)x^{k_4} W(x)\big\rangle
&p_{n}(x) I_p
\end{bmatrix}.
\end{align*}}
The set $\{k_1,k_2,k_3,k_4\}\subset\{1,\dots,n-1\}$ is such that
\begin{align*}
\det\begin{bmatrix}
\big\langle u_{0},p_{n-4}(x)x^{k_1} W(x)\big\rangle & \big\langle u_{0},p_{n-4}(x)x^{k_2} W(x)\big\rangle & \big\langle u_{0},p_{n-4}(x)x^{k_3} W(x)\big\rangle  &\big\langle u_{0},p_{n-4}(x)x^{k_4} W(x)\big\rangle
\\
\big\langle u_{0},p_{n-3}(x)x^{k_1} W(x)\big\rangle & \big\langle u_{0},p_{n-3}(x)x^{k_2} W(x)\big\rangle & \big\langle u_{0},p_{n-3}(x)x^{k_3} W(x)\big\rangle  &\big\langle u_{0},p_{n-3}(x)x^{k_4} W(x)\big\rangle
\\
\big\langle u_{0},p_{n-2}(x)x^{k_1} W(x)\big\rangle & \big\langle u_{0},p_{n-2}(x)x^{k_2} W(x)\big\rangle & \big\langle u_{0},p_{n-2}(x)x^{k_3} W(x)\big\rangle  &\big\langle u_{0},p_{n-2}(x)x^{k_4} W(x)\big\rangle
\\
\big\langle u_{0},p_{n-1}(x)x^{k_1} W(x)\big\rangle & \big\langle u_{0},p_{n-1}(x)x^{k_2} W(x)\big\rangle & \big\langle u_{0},p_{n-1}(x)x^{k_3} W(x)\big\rangle  &\big\langle u_{0},p_{n-1}(x)x^{k_4} W(x)\big\rangle
\end{bmatrix}\neq 0.
\end{align*}

\subsubsection{Degree one matrix Geronimus transformations}\label{S:degreeone}

We consider a degree one perturbing polynomial of the form
\begin{align*}
W(x)=xI_p-A,
\end{align*}
and assume, for the sake of simplicity, that all $\xi$ are taken zero, i.e., there  are no masses.
Observe that in this case a Jordan pair $(X,J)$ is such that $A=XJX^{-1}$, and 
 Lemma \ref{lemma:pair}  implies that the  root spectral jet of a polynomial $P(x)=\sum_kP_kx^k\in\mathbb C^{p\times p}[x]$
is $\boldsymbol{\mathcal J}_P=P(A)X$, where we understand a right evaluation, i.e., $P(A):=\sum_{k}P_k A^k$.  An similar argument, for $\sigma(A)\cap \operatorname{supp}_y(u)=\varnothing$,  yields
\begin{align*}
\boldsymbol{\mathcal J}_{C^{[1]}_n}=\left\langle P^{[1]}(x), (A-I_py)^{-1}X\right\rangle_u,
\end{align*}
expressed in terms of the resolvent $(A-I_py)^{-1}$ of $A$.  Formally, it can be  written
\begin{align*}
\boldsymbol{\mathcal J}_{C^{[1]}_n}=C^{[1]}_n(A)X,
\end{align*}
where  we again understand a right evaluation in the Taylor series of the Cauchy transform.
Moreover, we  also need the root spectral jet of the mixed Christoffel--Darboux kernel
\begin{align*}
\boldsymbol{\mathcal J}_{K_{n-1}^{(pc)}}(y)&=\sum_{k=0}^{n-1}
\big(P^{[2]}_k(y)\big)^\top
\big(H_k\big)^{-1}C^{[1]}_k(A)X
\\&=:K_{n-1}^{(pc)}(A,y)X,
\end{align*}
that for a Hankel generalized  kernel $u_{x,y}$,
using  the Christoffel--Darboux formula for mixed kernels, reads
\begin{align*}
\boldsymbol{\mathcal J}_{K_{n-1}^{(pc)}}(y)
&=\Big(\big(P^{[2]}_{n-1}(y)\big)^\top
\big(H_{n-1}\big)^{-1}C^{[1]}_{n}(A)-\big(P^{[2]}_{n}(y)\big)^\top
\big(H_{n-1}\big)^{-1}C^{[1]}_{n-1}(A)+I_p\Big)(A-I_py)^{-1}X.
\end{align*}
We also have $\mathcal V(x,y)=I_p$ so that $\boldsymbol{\mathcal J}_{\mathcal V}=X$.

Thus, for $n\geq 1$ we have
\begin{align*}
\check P^{[1]}_n(x)&=\Theta_*\begin{bmatrix}
C^{[1]}_{n-1}(A)X & P^{[1]}_{n-1} (x)\\
C^{[1]}_{n}(A)X & P^{[1]}_{n} (x)
\end{bmatrix}\\&=P^{[1]}_n(x)-C^{[1]}_n(A)\big(C^{[1]}_{n-1}(A)\big)^{-1} P^{[1]}_{n-1}(x),\\
\check H_n&=\Theta_*\begin{bmatrix}
C^{[1]}_{n-1}(A)X & H_{n-1}\\
C^{[1]}_{n}(A)X & 0_p
\end{bmatrix}\\&=-C^{[1]}_n(A)\big(C^{[1]}_{n-1}(A)\big)^{-1} H_{n-1},\\
\big(\check P^{[2]}_n(y)\big)^\top&=\Theta_*\begin{bmatrix}
C^{[1]}_{n-1}(A)X & H_{n-1}\\
(I_py-A)\big(K_{n-1}^{(pc)}(A,y)+I_p\big)X & 0_p
\end{bmatrix}\\&=
\big((I_py-A)K_{n-1}^{(pc)}(A,y)+I_p\big)\big(C^{[1]}_{n-1}(A)\big)^{-1}H_{n-1}.
 \end{align*}
 For a Hankel  matrix of bivariate generalized functionals, i.e., with  a Hankel  Gram matrix  so that the Christoffel--Darboux formula holds, we have
\begin{align*}
\big(\check P^{[2]}_n(y)\big)^\top&=
-(I_py-A)\Big(\big(P^{[2]}_{n-1}(y)\big)^\top
\big(H_{n-1}\big)^{-1}C^{[1]}_{n}(A)-\big(P^{[2]}_{n}(y)\big)^\top
\big(H_{n-1}\big)^{-1}C^{[1]}_{n-1}(A)\Big)(I_py-A)^{-1} H_{n-1}.
\end{align*}

\section{Matrix Geronimus--Uvarov transformations}\label{sGU}
 In 1969, Uvarov in \S 1 of \cite{Uva}   considered for the first time a massless Geronimus--Uvarov transformation for scalar orthogonal polynomials --called linear spectral transformations in \cite{Zhe}-- finding (as Uvarov called them) general  Christoffel formulas for these transformations. The results in \S 1 of \cite{Uva} are a detailed version of the results presented in \cite{Uva0} in 1959 in Russian. See \cite{Zhe,Gaut2} for more details. We now consider a transformation generated by two  matrix polynomials $W_C(x),W_G(x)\in\mathbb C^{p\times p}[x]$, that we call  Christoffel and Geronimus polynomials, respectively. This can be understood as a composition of a Geronimus transformation as treated  in the previous section and a Christoffel transformation as discussed in \cite{alvarez2015Christoffel}.
\begin{defi}\label{def:Geronimus-Uvarov}
	Given two   matrix polynomials $W_C(x),W_G(y)$ of degrees $N_C,N_G$,  and a matrix of generalized kernels $u_{x,y}\in (\mathcal O_c')^{p\times p}$ such that $\sigma(W_G(y)))\cap \operatorname{supp}_y(u)=\varnothing$, a matrix Geronimus--Uvarov transformation $\hat u_{x,y}$ of  $ u_{x,y}$ is a matrix of generalized kernels such that
	\begin{align}
	\hat u_{x,y} W_G(y)=W_C(x)u_{x,y}.
	\end{align}
\end{defi}

\begin{pro}\label{pro:stringLST}
	The perturbed Gram  matrix $\hat G:=\langle \chi(x),\chi(y)\rangle_{\hat u}$ and the original one $G$ satisfy
	\begin{align*}
	\hat  G W_G(\Lambda^\top)=W_C(\Lambda)G.
	\end{align*}
The sequilinear forms are related by
\begin{align*}
\prodint{P(x),  Q(y)\big(W_G(y)\big)^\top}_{\hat u}=\prodint{ P(x)W_C(x), Q(y)}_u.
\end{align*}
\end{pro}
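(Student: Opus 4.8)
The plan is to establish the sesquilinear form identity first and then to read the Gram matrix relation off from it. Both statements are the Geronimus--Uvarov analogues of Proposition \ref{pro:string}; the only new ingredient is that a Christoffel polynomial $W_C(x)$ now acts on the left in addition to the Geronimus polynomial $W_G(y)$ on the right, so I would follow the same transfer-of-multiplication mechanism used there.

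First I would prove $\prodint{P(x),Q(y)(W_G(y))^\top}_{\hat u}=\prodint{P(x)W_C(x),Q(y)}_u$ by unfolding the entrywise definition of the sesquilinear form. Writing the $(i,j)$ entry as a sum over the scalar generalized kernels $(\hat u_{x,y})_{k,l}$ paired against $P_{i,k}(x)\otimes\big(Q(y)(W_G(y))^\top\big)_{j,l}$, the crucial step is that multiplication of a scalar generalized kernel by a polynomial in one variable transfers to the corresponding test-function slot, i.e. $\prodint{v_{x,y}\,g(y),\phi(x)\otimes\psi(y)}=\prodint{v_{x,y},\phi(x)\otimes g(y)\psi(y)}$ and likewise in $x$. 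Applying this in the $y$-slot collapses the sum into a pairing against $(\hat u_{x,y}W_G(y))_{k,n}$; invoking the defining relation of Definition \ref{def:Geronimus-Uvarov}, $\hat u_{x,y}W_G(y)=W_C(x)u_{x,y}$, replaces this by $(W_C(x)u_{x,y})_{k,n}$; and transferring the polynomial $(W_C(x))_{k,m}$ back to the $x$-slot, using that scalar polynomials commute, reconstitutes $\sum_k P_{i,k}(x)(W_C(x))_{k,m}=(P(x)W_C(x))_{i,m}$. This is exactly the $(i,j)$ entry of $\prodint{P(x)W_C(x),Q(y)}_u$.

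Next I would derive $\hat G W_G(\Lambda^\top)=W_C(\Lambda)G$ by specializing the form identity to the monomials $P(x)=I_px^a$, $Q(y)=I_py^b$. Writing $W_G(y)=\sum_m B_m y^m$ and $W_C(x)=\sum_m C_m x^m$, I would use the second linearity property of Definition \ref{def:sesquilinear}, $\prodint{P,AQ}=\prodint{P,Q}A^\top$, to rewrite $\sum_m \hat G_{a,b+m}B_m$ as $\prodint{I_px^a,I_py^b(W_G(y))^\top}_{\hat u}$: here the transpose forced by property (ii) is precisely what turns $(W_G(y))^\top$ into the untransposed coefficients $B_m$ sitting to the right of $\hat G$. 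The form identity converts this into $\prodint{W_C(x)x^a,I_py^b}_u$, and the first linearity property then gives $\sum_m C_m G_{a+m,b}$. Using the shift relation $(\Lambda)_{a,c}=\delta_{c,a+1}I_p$, the blocks of the two banded factors are $(W_C(\Lambda))_{a,c}=C_{c-a}$ and $(W_G(\Lambda^\top))_{c,b}=B_{c-b}$, so the two computed sums are exactly the $(a,b)$ blocks of $W_C(\Lambda)G$ and $\hat G W_G(\Lambda^\top)$. Since $W_C(\Lambda)$ and $W_G(\Lambda^\top)$ are block banded, both products are admissible — each entry being a finite sum — so no associativity or convergence difficulty enters.

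I expect the main obstacle to be the bookkeeping of transposes and of the left/right placement of the matrix coefficients: one must verify that the transpose imposed by property (ii) cancels the transpose in $(W_G(y))^\top$, so that the Geronimus coefficients land to the right of $\hat G$ as $W_G(\Lambda^\top)$ demands, whereas the Christoffel coefficients, carried by property (i) \emph{without} transpose, land to the left of $G$ as in $W_C(\Lambda)$. Keeping these placements consistent is the only genuinely delicate point; everything else is the routine transfer-of-multiplication already exploited in Proposition \ref{pro:string}.
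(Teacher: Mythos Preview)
Your proof is correct. The paper states Proposition \ref{pro:stringLST} without proof, treating it as an immediate consequence of Definition \ref{def:Geronimus-Uvarov} in parallel with Proposition \ref{pro:string}; your entrywise unfolding of the sesquilinear pairing, followed by the monomial specialization to recover the Gram-matrix identity, is exactly the natural argument the paper leaves implicit.
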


As we did for the Geronimus transformation, we   assume that the  perturbed moment matrix
admits a Gauss--Borel   factorization
$\hat  G=\hat  S_1^{-1} \hat  H (\hat  S_2)^{-\top}$,
where $\hat  S_1,\hat  S_2$ are lower unitriangular block matrices and $\hat  H$ is a diagonal block matrix
\begin{align*}
\hat S_i&=\begin{bmatrix}
I_p&0_p&0_p&\dots\\
(\hat  S_i)_{1,0}& I_p&0_p&\cdots\\
(\hat  S_i)_{2,0}& (\hat  S_i)_{2,1}&I_p&\ddots\\
&&\ddots&\ddots
\end{bmatrix}, & i&=1,2,&
\hat  H&=\diag (\hat  H_0,\hat  H_1,\hat  H_2,\dots).
\end{align*}
Consequently, the Geronimus--Uvarov transformation provides a new family of matrix biorthogonal polynomials
\begin{align*}
\hat  P^{[1]}(x)&=\hat  S_1\chi(x), & \hat  P^{[2]}(y)&=\hat  S_2\chi(y),
\end{align*}
with respect to the perturbed sesquilinear form $\prodint{\cdot,\cdot}_{\hat u}$.

\subsection{The resolvent and connection formulas for the matrix Geronimus--Uvarov transformation}
\begin{defi}
	The resolvent matrix is given  by
	\begin{align}\label{eq:def_OmegaLST}
	\omega:=\hat S_1 W_C(\Lambda)(S_1)^{-1}.
	\end{align}
\end{defi}

\begin{pro}\label{conexwLST}
	\begin{enumerate}
		\item	The resolvent matrix can be also expressed as
		\begin{align}\label{eq:resolvent_alternativeLST}
		\omega =	\hat  H \big(\hat  S_2\big)^{-\top} W_G(\Lambda^\top)\big( S_2\big)^{\top}	H^{-1}.
		\end{align}
		\item
		The resolvent matrix is a  block banded matrix  ---with only the first $N_G$ block subdiagonals the main diagonal and the $N_C$ block superdiagonals  possibly not zero, i.e.,
		\begin{align*}
		\omega=\begin{bmatrix}
	\omega_{0,0} &    \omega_{0,1}  &\dots     &\dots   &\omega_{0,N_C-1}&   I_p   & 0_p&  \dots             &&         \\
		\omega_{1,0} &\omega_{1,1}&\dots&  \dots&   \omega_{1,N_C-1}&    \omega_{1,N_C}         &   I_p &\ddots        \\
		\vdots       &  \vdots   &         &&&      \ddots       &  &      \ddots       \\
		\omega_{N_G,0} &\omega_{N_G,1}  & &&&&  \omega_{N_G,N_C+N_G-1}   & I_p  & 0_p         &  \\
		0_p&\omega_{N_G+1,1}&& &&& \omega_{N_G+1,N_G+N_C-1}&\omega_{N_G+1,N_G+N_C}&I_p& \ddots\\
		\vdots&       & \ddots      &&&&\ddots&      \ddots         &\ddots  &   \ddots
		\end{bmatrix}.
		\end{align*}	
		\item The following connection formulas are satisfied
		\begin{align}
		\hat P^{[1]}(x)W_C(x) &=\omega P^{[1]}(x),\label{conex2LST}\\
		\big(\hat  P^{[2]} (y) \big)^\top \hat H^{-1}\omega &=W_G(y)\big(P^{[2]}(y)\big)^\top H^{-1}.\label{conex3LST}
		\end{align}
		\item For the last resolvent subdiagonal we have
		\begin{align}\label{eq:omegaALST}
		\omega_{N_G+k,k}=\hat H_{N_G+k}A_{G,N_G}(H_k)^{-1},
		\end{align}
		where $A_{G,N_G}$ is the leading coefficient of the perturbing matrix polynomial $W_G(x)$.
	\end{enumerate}
\end{pro}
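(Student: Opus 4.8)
The plan is to mirror, almost line for line, the proof strategy already used for the Geronimus case in Proposition \ref{conexw}, which is the special case $W_C(x)=I_p$ of the present statement. The four claims are tightly interlinked: claim (1) is the computational heart, and claims (2), (3), (4) follow as corollaries of (1) together with the definition \eqref{eq:def_OmegaLST} and the band structure of the matrices involved.

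First I would establish (1). Starting from Proposition \ref{pro:stringLST}, namely $\hat G\, W_G(\Lambda^\top)=W_C(\Lambda)G$, I insert the Gauss--Borel factorizations $G=(S_1)^{-1}H(S_2)^{-\top}$ and $\hat G=(\hat S_1)^{-1}\hat H(\hat S_2)^{-\top}$ to obtain
\begin{align*}
(\hat S_1)^{-1}\hat H(\hat S_2)^{-\top}W_G(\Lambda^\top)=W_C(\Lambda)(S_1)^{-1}H(S_2)^{-\top}.
\end{align*}
The key is to rearrange this into the definition of $\omega$. Using Proposition \ref{pro:Gauss--Borel factorization and associativity}(1) to regroup $(\hat S_2)^{-\top}W_G(\Lambda^\top)$ on the left and $W_C(\Lambda)(S_1)^{-1}$ on the right without violating associativity, and then left-multiplying by $\hat S_1$ and right-multiplying by $(S_2)^{\top}H^{-1}$ while invoking Proposition \ref{pro:associativity} for the admissibility of the triple products of Hessenberg-type factors, I arrive at
\begin{align*}
\hat S_1 W_C(\Lambda)(S_1)^{-1}H=\hat H(\hat S_2)^{-\top}W_G(\Lambda^\top)(S_2)^{\top},
\end{align*}
whose left-hand side is $\omega H$ by \eqref{eq:def_OmegaLST}, giving \eqref{eq:resolvent_alternativeLST} after multiplying by $H^{-1}$ on the right. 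The main obstacle throughout is exactly this associativity bookkeeping: the semi-infinite products are not associative in general, so every regrouping must be justified by the Hessenberg-type hypotheses of Proposition \ref{pro:associativity}; this is why $W_C(\Lambda)$ (lower Hessenberg) and $W_G(\Lambda^\top)$ (upper Hessenberg) appear on the correct sides.

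For (2), the band structure is read off from the two expressions for $\omega$. From the definition \eqref{eq:def_OmegaLST}, $\omega$ is a product of a lower unitriangular matrix, the lower Hessenberg $W_C(\Lambda)$ with $N_C$ nonzero subdiagonals (so $N_C$ nonzero superdiagonals after the unitriangular conjugation), and a lower unitriangular matrix, forcing all subdiagonals beyond band to vanish except giving at most $N_C$ nonzero superdiagonals; from \eqref{eq:resolvent_alternativeLST}, $\omega$ is a product involving the upper Hessenberg $W_G(\Lambda^\top)$ with $N_G$ nonzero superdiagonals, forcing at most $N_G$ nonzero subdiagonals. Intersecting the two constraints yields the stated band shape with diagonal entry $\omega_{k,k}$ not necessarily $I_p$ (the leading coefficient of $W_C$ enters here). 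For (3), the connection formula \eqref{conex2LST} is immediate from \eqref{eq:def_OmegaLST} applied to $\chi(x)$, since $\omega P^{[1]}(x)=\hat S_1 W_C(\Lambda)(S_1)^{-1}S_1\chi(x)=\hat S_1 W_C(\Lambda)\chi(x)=\hat S_1 W_C(x)\chi(x)=\hat P^{[1]}(x)W_C(x)$, using $W_C(\Lambda)\chi(x)=W_C(x)\chi(x)$; and \eqref{conex3LST} follows by transposing \eqref{eq:resolvent_alternativeLST} into $\omega^\top\hat H^{-\top}=H^{-\top}S_2 W_G^\top(\Lambda)(\hat S_2)^{-1}$ and applying it to $\chi(y)$, exactly as in the Geronimus proof of \eqref{conex3}. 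Finally (4) is obtained by extracting the last nonzero subdiagonal block from \eqref{eq:resolvent_alternativeLST}: only the top-degree term $A_{G,N_G}(\Lambda^\top)^{N_G}$ of $W_G(\Lambda^\top)$ contributes to the $(N_G+k,k)$ entry, and combining it with the diagonal blocks $\hat H_{N_G+k}$ and $H_k^{-1}$ of the surrounding diagonal factors yields $\omega_{N_G+k,k}=\hat H_{N_G+k}A_{G,N_G}(H_k)^{-1}$, in complete parallel with \eqref{eq:omegaA}.
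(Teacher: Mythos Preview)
Your proposal is correct and follows essentially the same route as the paper's proof: derive \eqref{eq:resolvent_alternativeLST} from the Gram relation $\hat G W_G(\Lambda^\top)=W_C(\Lambda)G$ and the two Gauss--Borel factorizations (with the associativity bookkeeping via Propositions \ref{pro:associativity} and \ref{pro:Gauss--Borel factorization and associativity}), then read off the band structure from the two expressions for $\omega$, obtain \eqref{conex2LST} from the definition and \eqref{conex3LST} from the alternative expression, and extract \eqref{eq:omegaALST} from the leading term of $W_G(\Lambda^\top)$. One small terminological slip: $W_C(\Lambda)$ is upper banded (with $N_C$ nonzero superdiagonals), not lower Hessenberg, so conjugation by lower unitriangulars produces a \emph{lower} Hessenberg matrix with $N_C$ superdiagonals---your conclusion is right but the labeling is reversed.
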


\begin{proof}
	\begin{enumerate}
		\item 	  Proposition \ref{pro:stringLST} and the Gauss--Borel  factorizations of the Gram matrices  $G$ and $\hat G$ lead to
		\begin{align*}
	W_C(\Lambda)	\big( S_1\big)^{-1}  H \big( S_2\big)^{-\top}=\big(\hat  S_1\big)^{-1} \hat  H \big(\hat  S_2\big)^{-\top} W_G(\Lambda^\top),
		\end{align*}
		so that
		\begin{align*}
		\hat  S_1 W_C(\Lambda)\big( S_1\big)^{-1}  H =	\hat  H \big(\hat  S_2\big)^{-\top} W_G(\Lambda^\top)\big( S_2\big)^{\top}	,
		\end{align*}
		and the result follows. Propositions \ref{pro:associativity} and \ref{pro:Gauss--Borel factorization and associativity} ensure all the mentioned steps.\footnote{	A	detailed version is
			\begin{align*}
		&&	W_C(\Lambda)\Big(	\big( S_1\big)^{-1}  H \big( S_2\big)^{-\top}\Big)&=\Big(\big(\hat  S_1\big)^{-1} \hat  H \big(\hat  S_2\big)^{-\top} \Big)W_G(\Lambda^\top) & &\overset{\text P\ref{pro:Gauss--Borel factorization and associativity}}{\Rightarrow }&
			\Big(	W_C(\Lambda)	\big( S_1\big)^{-1}\Big)  H \big( S_2\big)^{-\top}&=\big(\hat  S_1\big)^{-1} \Big(\hat  H \big(\hat  S_2\big)^{-\top} W_G(\Lambda^\top)\Big)\\ \
			&\Rightarrow &
			\hat	S_1\Big(	\Big(	W_C(\Lambda)	\big( S_1\big)^{-1}\Big)  H \big( S_2\big)^{-\top}\Big)&=\hat S_1\Big(\big(\hat  S_1\big)^{-1} \Big(\hat  H \big(\hat  S_2\big)^{-\top} W_G(\Lambda^\top)\Big)\Big)& &\overset{\text{P\ref{pro:associativity}}}{\Rightarrow} &
			\Big(	\hat	S_1	\Big(	W_C(\Lambda)	\big( S_1\big)^{-1}\Big)  H \big( S_2\big)^{-\top}&=\Big(\hat  H \big(\hat  S_2\big)^{-\top} W_G(\Lambda^\top)\Big)\\&\Rightarrow &
			\Big(	\Big(	\hat	S_1	\Big(	W_C(\Lambda)	\big( S_1\big)^{-1}\Big)  H \big( S_2\big)^{-\top}\Big)(S_2)^\top&=\Big(\hat  H \big(\hat  S_2\big)^{-\top} W_G(\Lambda^\top)\Big)(S_2)^\top & &\overset{\text{P\ref{pro:associativity}}}{\Rightarrow}  &
			\Big(	\hat	S_1	\Big(	W_C(\Lambda)	\big( S_1\big)^{-1}\Big)  H&=\Big(\hat  H \big(\hat  S_2\big)^{-\top} W_G(\Lambda^\top)\Big)(S_2)^\top.
			\end{align*}
			By P\ref{pro:associativity} or P\ref{pro:Gauss--Borel factorization and associativity} we indicated that is due to the corresponding Propositions.}
		\item From its definition, the resolvent matrix is a lower generalized Hessenberg block  matrix, with $N_C$ nonzero superdiagonals. However, from \eqref{eq:resolvent_alternativeLST} we deduce that it is an upper generalized block Hessenberg matrix with $N_G$ nonzero subdiagonals. As a conclusion, we get the band structure. Propositions \ref{pro:associativity} and \ref{pro:Gauss--Borel factorization and associativity} ensure all the mentioned steps.
		\item From the resolvent's definition we deduce \eqref{conex2LST}, and  \eqref{eq:resolvent_alternativeLST} gives
		 \eqref{conex3LST}.
		\item Direct observation from \eqref{eq:resolvent_alternativeLST}.
	\end{enumerate}
\end{proof}


\begin{pro}
	The matrix Geronimus--Uvarov transformation of the second kind functions satisfy
	\begin{gather}
	\label{eq:conexionC1LST}
	\hat C^{[1]}(x)W_G(x)-\begin{bmatrix}
	\big(\hat H\big(\hat S_2\big)^{-\top}\big)_{[N_G]}	\mathcal B_G 	(\chi(x))_{[N_G]}
	\\
	0_{p}\\
	\vdots
	\end{bmatrix}=	\omega C^{[1]}(x),\\
	\label{eq:conexionC2LST}
\big(	\hat C^{[2]}(x)\big)^\top\hat  H^{-1}\omega  = W_C(x)\big(C^{[2]}(x)\big)^\top H^{-1}-\begin{bmatrix}
(\chi(x))^\top_{[N_C]}(\mathcal B_C) ^\top\big( S_1\big)^{-1}_{[N_C]}, 0_p,\dots	
\end{bmatrix}.
	\end{gather}
\end{pro}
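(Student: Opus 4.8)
The plan is to establish the two connection formulas \eqref{eq:conexionC1LST} and \eqref{eq:conexionC2LST} by mimicking the argument already carried out for the pure Geronimus case in the proof of \eqref{eq:conexionC1} and \eqref{eq:conexionC2}, but now carefully tracking the extra Christoffel polynomial $W_C$ that multiplies $u_{x,y}$ on the left. I would present both an \emph{á la Cauchy} derivation and an \emph{á la Gram} derivation, since the first is conceptually transparent while the second shows directly how the formulas emerge from the Gauss--Borel factorization and the resolvent definition \eqref{eq:def_OmegaLST}.

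For \eqref{eq:conexionC1LST}, I would work \emph{á la Cauchy}. Starting from $\hat C^{[1]}(z)=\langle\hat P^{[1]}(x),\tfrac{I_p}{z-y}\rangle_{\hat u}$ and the connection formula \eqref{conex2LST}, together with the defining relation $\hat u_{x,y}W_G(y)=W_C(x)u_{x,y}$ from Definition \ref{def:Geronimus-Uvarov}, I would compute
\begin{align*}
\hat C^{[1]}(z)W_G(z)-\omega C^{[1]}(z)=\left\langle\hat P^{[1]}(x),\frac{W_G(z)-W_G(y)}{z-y}\right\rangle_{\hat u}.
\end{align*}
The divided-difference identity $\frac{W_G(z)-W_G(y)}{z-y}=(\chi(y))^\top\begin{bsmallmatrix}\mathcal B_G(\chi(z))_{[N_G]}\\0\end{bsmallmatrix}$, exactly as in the Geronimus proof but with $\mathcal B_G$ the $\mathcal B$-matrix built from the coefficients of $W_G$ via Definition \ref{def:B}, then reduces the right-hand side to $\hat S_1\hat G\begin{bsmallmatrix}\mathcal B_G(\chi(z))_{[N_G]}\\0\end{bsmallmatrix}$, and invoking the factorization $\hat G=\hat S_1^{-1}\hat H(\hat S_2)^{-\top}$ yields the claimed boundary term $(\hat H(\hat S_2)^{-\top})_{[N_G]}\mathcal B_G(\chi(z))_{[N_G]}$. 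The associativity steps must be justified by Propositions \ref{pro:associativity} and \ref{pro:Gauss--Borel factorization and associativity}, just as before.

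The formula \eqref{eq:conexionC2LST} is the genuinely new one, since the Christoffel factor $W_C(x)$ now acts in the $x$-variable and produces an \emph{extra subtracted boundary term} that has no analogue in \eqref{eq:conexionC2}. Here I would again argue \emph{á la Cauchy}: writing $(\hat C^{[2]}(z))^\top\hat H^{-1}\omega$ and using \eqref{conex3LST} to replace $\hat H^{-1}\omega$, the defining relation $\hat u_{x,y}W_G(y)=W_C(x)u_{x,y}$ transfers $W_C$ onto the $x$-slot, giving $\langle\tfrac{W_C(z)-W_C(x)}{z-x}+\tfrac{W_C(x)}{z-x},P^{[2]}(y)\rangle$-type terms. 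The piece $\tfrac{W_C(x)}{z-x}$ reassembles into $W_C(z)(C^{[2]}(z))^\top H^{-1}$, while the divided difference $\tfrac{W_C(z)-W_C(x)}{z-x}$, handled via the analogous $\mathcal B_C$ identity, produces precisely the finite boundary correction $(\chi(z))^\top_{[N_C]}(\mathcal B_C)^\top(S_1)^{-1}_{[N_C]}$. I expect the main obstacle to be bookkeeping the \emph{two} different $\mathcal B$-matrices $\mathcal B_C,\mathcal B_G$ simultaneously and correctly placing every truncation index $[N_C]$ versus $[N_G]$, while ensuring that all semi-infinite products remain admissible in the sense of Proposition \ref{pro:associativity}; the transpose/conjugation conventions of the sesquilinear form make it easy to drop a $\top$ or misplace a factor of $W_C$ versus $(W_C)^\top$, so each manipulation will need to be cross-checked against the sesquilinearity axioms of Definition \ref{def:sesquilinear}.
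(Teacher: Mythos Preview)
Your proposal is correct and follows essentially the same \emph{á la Cauchy} route as the paper: for \eqref{eq:conexionC1LST} the paper computes $\hat C^{[1]}(z)W_G(z)-\omega C^{[1]}(z)=\langle\hat P^{[1]}(x),\tfrac{W_G(z)-W_G(y)}{z-y}\rangle_{\hat u}$ and then invokes the Geronimus argument verbatim, and for \eqref{eq:conexionC2LST} it uses \eqref{conex3LST} and Proposition \ref{pro:stringLST} to obtain $-\langle\tfrac{W_C(x)-W_C(z)}{x-z},P^{[2]}(y)\rangle_u H^{-1}$, exactly the divided-difference term you identify. One minor slip in your exposition: after transferring via Proposition \ref{pro:stringLST} the full expression $(\hat C^{[2]}(z))^\top\hat H^{-1}\omega$ already equals $\langle\tfrac{W_C(x)}{z-x},P^{[2]}(y)\rangle_u H^{-1}$, so it is the piece $\tfrac{W_C(z)}{z-x}$ (not $\tfrac{W_C(x)}{z-x}$) that reassembles into $W_C(z)(C^{[2]}(z))^\top H^{-1}$; this does not affect the argument. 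Note also that the paper omits the \emph{á la Gram} derivation here, so your plan to include it goes slightly beyond what is presented.
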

\begin{proof}
	We proceed with the proof of \eqref{eq:conexionC1LST}
	\begin{align*}
	\hat C^{[1]}(z)W_G(z)-\omega C^{[1]}(z)&= \prodint{P^{[1]}(x),\frac{I_p}{z-y}}_{\hat u}W_G(z)-\prodint{\hat P^{[1]}(x)W_C(x),\frac{I_p}{z-y}}_{ u}\\
	&= \prodint{P^{[1]}(x),\frac{W_G(z)-W_G(y)}{z-y}}_{\hat u}
	\end{align*}
	and the results follows as in he Geronimus case.

For \eqref{eq:conexionC2LST} we have
	\begin{align*}
	\big(	\hat C^{[2]}(z)\big)^\top\hat  H^{-1}\omega  - W_C(z)\big(C^{[2]}(z)\big)^\top H^{-1}&=
	\prodint{\frac{I_p}{z-x},\hat P^{[2]}(y)}_{\hat u}\hat H^{-1}\omega-W_C(z)\prodint{\frac{I_p}{z-x},P^{[2]}(y)}_u\\
	&=
	\prodint{\frac{I_p}{z-x},\big(\hat H^{-1}\omega\big)^\top\hat P^{[2]}(y)}_{\hat u}-\prodint{\frac{W_C(z)}{z-x},P^{[2]}(y)}_u\\
	&=
	\prodint{\frac{I_p}{z-x},H^{-\top}P^{[2]}(y)(W_G(y))^\top}_{\hat u}-\prodint{\frac{W_C(z)}{z-x},P^{[2]}(y)}_u
	\\
	&=-\prodint{\frac{W_C(x)-W_C(z)}{x-z},P^{[2]}(y)}_u.
	\end{align*}
	Again we using the ideas of the proof for the Geronimus case the result follows.
\end{proof}

\begin{rem}\label{rem}
	We can understand the matrix Geronimus--Uvarov transformation as a composition of a  Geronimus transformation and  a Christoffel transformation, in this order. Indeed, we can write it in terms of the corresponding matrices  of generalized kernels
$u_{x,y} \mapsto \check u_{x,y} \mapsto \hat u_{x,y} $
where
\begin{align*}
\check u_{x,y}W_G(y)&=u_{x,y}, & \hat u_{x,y}&=W_C (x) \check u_{x,y}.
\end{align*}
At the level of the Gram matrices we will have
\begin{align*}
\check G W_G(\Lambda^\top)&= G, & \hat G&=W_C(\Lambda)  \check G.
\end{align*}
 In this paper,  the Geronimus--Uvarov transformation can be performed in two steps, i.e.,  $\check G$ has a Gauss--Borel factorization.\footnote{Recall that $G$ and $\hat G$ do have such factorization by assumption.} This leads to the following relations
\begin{align*}
\big(\check S_1\big)^{-1} \check H\big(\check{ S_2}\big)^{-\top}W_G(\Lambda^\top)&=\big( S_1\big)^{-1}  H\big({ S_2}\big)^{-\top},&
\big(\hat  S_1\big)^{-1} \hat  H\big(\hat{ S_2}\big)^{-\top}&=W_C(\Lambda)\big( \check S_1\big)^{-1}  \check H\big({ \check S_2}\big)^{-\top},
\end{align*}
and to the associated Geronimus and Christoffel resolvent matrices
\begin{align*}
\omega_G&:=\check S_1 \big(S_1)^{-1}=\check H \big(\check S_2\big)^{-\top}W_G(\Lambda^\top)S_2H^{-1},\\
\omega_C&:= \hat S_1 W_C(\Lambda)\big(\check S_1\big)^{-1} = \hat H \big(\hat S_2\big)^{-\top} \big(\check S_2\big)^{\top} \check H^{-1}.
\end{align*}
The Geronimus resolvent matrix  $\omega_G$ is a lower unitriangular  block semi-infinite matrix with only the first $N_G$ subdiagonals non-zero, and  $\omega_C$ is an upper triangular  block semi-infinite matrix with only the $N_C$  first superdiagonals non-zero. The resolvent $\omega$ results from  the composition of both transformations, so that it factors as
$\omega=\omega_C\omega_G$.
\end{rem}

\subsection{Matrix Geronimus--Uvarov transformation and Christoffel--Darboux kernels}

\begin{defi}\label{def:omeganNLST}
	We introduce the  resolvent \emph{wings}   matrices
	\begin{align*}
	\Omega^G{[n]}:= \begin{cases}
	\begin{bmatrix}
	\omega_{n,n-N_G}&\dots&\dots&\omega_{n,n-1}\\
	0_p	&\ddots &&\vdots\\
	\vdots	&\ddots &\ddots&\vdots\\
	0_p	&\dots&0_p&\omega_{n+N_G-1,n-1}
	\end{bmatrix}\in \mathbb{C}^{N_Gp\times N_Gp}, &  n\geq N_G, \\
	\begin{bmatrix}
	\omega_{n,0}&\dots &\dots&\omega_{n,n-1}\\
	\vdots      &       &&\vdots\\
	\omega_{N_G,0}&&&\omega_{N_G,n-1}\\
	0_p&\ddots\\
	\vdots	&\ddots&\ddots&\vdots\\
	0_p&   \dots   &0_p&\omega_{n+N_G-1,n-1}
	\end{bmatrix}\in \mathbb{C}^{N_Gp\times np},& n<N_G,
	\end{cases}
	\end{align*}
		\begin{align*}
		\Omega^C{[n]}:=
		\begin{cases}
			\begin{bmatrix}
			I_p&0_p&\dots &0_p&0_p\\
			\omega_{n-N_C+1,n}& I_p &\ddots&0_p&0_p\\
			\vdots&\ddots&\ddots&&\vdots\\
			\vdots&&\ddots&I_p&0_p\\
			\omega_{n-1,n}&\dots&&\omega_{n-1,n+N_C-2} &I_p
			\end{bmatrix}\in \mathbb{C}^{N_Cp\times N_Cp}, & n\geq N_C,\\[40pt]
		\begin{bmatrix}
		\omega_{0,n}& \dots& \omega_{0,N_C-1}&I_p &\dots&0\\
		\vdots&&&&\ddots&\vdots\\
		\omega^{[1]}_{n-1,n}&\dots&&\dots&\omega_{n-1,n+N_C-2} &I_p
		\end{bmatrix}\in\mathbb C^{np\times N_Cp}, & n< N_C.
		\end{cases}
		\end{align*}
\end{defi}

\begin{teo}[Matrix Geronimus--Uvarov transformation and connection formulas for the Christoffel--Darboux kernels]\label{teoconexLST}
	For $m_G=\min(n,N_G)$ and $m_C=\min(n,N_C)$, the perturbed and original Christoffel--Darboux kernels  are connected through	
	\begin{multline}\label{eq:KLST}
	\hat K_{n-1}(x,y)W_C(x)		=	W_G(y)	K_{n-1}(x,y)\\-	\begin{bmatrix}
	\big(\hat P_{n-m_C}^{[2]}(y)\big)^\top(\hat H_{n-m_C})^{-1},\cdots,\big(\hat P_{n+N_G-1}^{[2]}(y)\big)^\top(\hat H_{n+N_G-1})^{-1}
	\end{bmatrix}\begin{bmatrix}0_{m_Cp\times  m_G p} &
	-\Omega^C{[n]} \\
	\Omega^G{[n]}	& 0_{N_Gp\times N_Cp}
	\end{bmatrix}	\begin{bmatrix}	P_{n-m_G}^{[1]}(x)\\
	\vdots\\
	P_{n+N_C-1}^{[1]}(x)
	\end{bmatrix}.
	\end{multline}
	For $n\geq N_G$, the mixed Christoffel--Darboux kernels satisfy
		\begin{multline}\label{eq:NLST}
		\hat K_{n-1}^{(pc)}(x,y)W_G(x)		=	W_G(y)	K_{n-1}^{(pc)}(x,y)+\mathcal V_G(x,y)\\-	\begin{bmatrix}
		\big(\hat P_{n-m_C}^{[2]}(y)\big)^\top(\hat H_{n-m_C})^{-1},\cdots,\big(\hat P_{n+N_G-1}^{[2]}(y)\big)^\top(\hat H_{n+N_G-1})^{-1}
		\end{bmatrix}\begin{bmatrix}0_{m_Cp\times  N_G p} &
		-\Omega^C{[n]} \\
		\Omega^G{[n]}	& 0_{N_Gp\times N_Cp}
		\end{bmatrix}	\begin{bmatrix}	C_{n-N_G}^{[1]}(x)\\
		\vdots\\
		C_{n+N_C-1}^{[1]}(x)
		\end{bmatrix}.
		\end{multline}
\end{teo}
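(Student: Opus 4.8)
The plan is to replicate, in the two-polynomial setting, the two-way computation of a truncated pairing that established the Geronimus connection formula in Theorem \ref{teoconex}, now taking into account that the Geronimus--Uvarov resolvent $\omega = \hat S_1 W_C(\Lambda)(S_1)^{-1}$ is a genuinely banded matrix carrying $N_G$ nonzero subdiagonals together with $N_C$ nonzero superdiagonals (the topmost one equal to $I_p$), rather than being merely lower triangular as in the Geronimus case. For the first identity \eqref{eq:KLST} I would introduce the auxiliary expression
\[
\mathcal K_{n-1}(x,y) := \begin{bmatrix}\big(\hat P_0^{[2]}(y)\big)^\top(\hat H_0)^{-1}, \dots, \big(\hat P_{n-1}^{[2]}(y)\big)^\top(\hat H_{n-1})^{-1}\end{bmatrix}\omega_{[n]}\begin{bmatrix}P_0^{[1]}(x)\\ \vdots\\ P_{n-1}^{[1]}(x)\end{bmatrix},
\]
where $\omega_{[n]}$ is the $n$-block principal truncation of the resolvent, and evaluate it in two different ways.

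First I would let $\omega_{[n]}$ act on the right column. By the connection formula \eqref{conex2LST} one has $\omega P^{[1]}(x) = \hat P^{[1]}(x) W_C(x)$; but since $\omega$ carries $N_C$ superdiagonals, the truncated product differs from $[\hat P_0^{[1]} W_C, \dots, \hat P_{n-1}^{[1]} W_C]^\top$ by the spillover terms reaching the blocks $P_n^{[1]}, \dots, P_{n+N_C-1}^{[1]}$, which are exactly those collected by the wing $\Omega^C{[n]}$ of Definition \ref{def:omeganNLST}. This first evaluation thus yields $\hat K_{n-1}(x,y) W_C(x)$ minus a superdiagonal correction. Symmetrically, letting $\omega_{[n]}$ act on the left row via \eqref{conex3LST}, $(\hat P^{[2]})^\top \hat H^{-1}\omega = W_G(y)(P^{[2]})^\top H^{-1}$, the $N_G$ subdiagonals spill into the blocks $\hat P_n^{[2]}, \dots, \hat P_{n+N_G-1}^{[2]}$, organized by the wing $\Omega^G{[n]}$, so the second evaluation gives $W_G(y)K_{n-1}(x,y)$ minus a subdiagonal correction. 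Equating the two evaluations and collecting both corrections on one side packages them into the single off-diagonal block matrix $\begin{bsmallmatrix}0 & -\Omega^C{[n]}\\ \Omega^G{[n]} & 0\end{bsmallmatrix}$ sandwiched between the enlarged row vector $[\hat P_{n-m_C}^{[2]}, \dots, \hat P_{n+N_G-1}^{[2]}]$ and the enlarged column vector $[P_{n-m_G}^{[1]}, \dots, P_{n+N_C-1}^{[1]}]$, which is precisely \eqref{eq:KLST}; the sizes $m_G=\min(n,N_G)$ and $m_C=\min(n,N_C)$ absorb the small-$n$ boundary cases.

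For the mixed-kernel identity \eqref{eq:NLST} I would run the same scheme with the second-kind functions $C^{[1]}$ in place of $P^{[1]}$, using a pairing $\mathcal K^{(pc)}_{n-1}(x,y)$ built from $[C_0^{[1]}, \dots, C_{n-1}^{[1]}]^\top$. The structural novelty is that the connection formula \eqref{eq:conexionC1LST} for $\hat C^{[1]}$ carries, besides the resolvent action, a boundary term involving $\mathcal B_G$; exactly as in the Geronimus computation leading to \eqref{N}, this boundary contribution collapses---via the analogue of the identity \eqref{eq:WLchi} and Definition \ref{defi:V}---to $\mathcal V_G(x,y) = \big((\chi(y))_{[N_G]}\big)^\top \mathcal B_G (\chi(x))_{[N_G]}$, which survives as the additive term on the right-hand side, while the banded spillover again assembles into the same off-diagonal block matrix. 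Here I would restrict to $n \ge N_G$, so that $m_G=N_G$; this is what makes the $\mathcal B_G$ boundary term truncate cleanly and produces the stated column $[C_{n-N_G}^{[1]}, \dots, C_{n+N_C-1}^{[1]}]$.

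Throughout, the main obstacle is not the algebra of the two evaluations but the bookkeeping forced by the semi-infinite, doubly-banded setting: one must check at each step that the products defining $\omega$ and its truncations behave associatively (Propositions \ref{pro:associativity} and \ref{pro:Gauss--Borel factorization and associativity}), and one must correctly delimit the two spillover windows, above and below the truncation boundary. Because $\omega$ now spills in both directions, the corrections no longer assemble into a single triangular wing as in the Geronimus case but into the two-by-two off-diagonal block structure, and the index ranges of the enlarged boundary vectors must be matched against the block dimensions $m_C p$, $N_G p$, $m_G p$ and $N_C p$ to confirm the placement of the zero blocks.
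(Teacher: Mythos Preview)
Your proposal is correct and follows essentially the same approach as the paper: both introduce the truncated pairing $\mathcal K_{n-1}(x,y)$ built from $\omega_{[n]}$, evaluate it via \eqref{conex2LST} on the right column and via \eqref{conex3LST} on the left row, and collect the two-sided spillover into the off-diagonal block $\begin{bsmallmatrix}0 & -\Omega^C{[n]}\\ \Omega^G{[n]} & 0\end{bsmallmatrix}$; the mixed-kernel case is handled identically with $C^{[1]}$ in place of $P^{[1]}$, the $\mathcal B_G$ boundary term from \eqref{eq:conexionC1LST} collapsing to $\mathcal V_G(x,y)$ under the restriction $n\ge N_G$. Your account of why the doubly-banded resolvent forces a two-block correction (rather than the single triangular wing of the Geronimus case) is accurate and matches the paper's computation step for step.
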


\begin{proof}
	For the first connection formulas \eqref{eq:KLST} we consider the pairing
	\begin{align*}
	\mathcal K_{n-1}(x,y):=
	\begin{bmatrix}
	\big(\hat P_{0}^{[2]}(y)\big)^\top(\hat H_0)^{-1},\cdots,\big(\hat P_{n-1}^{[2]}(y)\big)^\top(\hat H_{n-1})^{-1}
	\end{bmatrix}
	\omega_{[n]}
	\begin{bmatrix}
	P_{0}^{[1]}(x)\\
	\vdots\\
	P_{n-1}^{[1]}(x)
	\end{bmatrix},
	\end{align*}
	and compute it in two different ways. From \eqref{conex2LST} we get that
	\begin{align*}
	\omega_{[n]}
	\begin{bmatrix}
	P_{0}^{[1]}(x)\\
	\vdots\\
	P_{n-1}^{[1]}(x)
	\end{bmatrix}=	\begin{bmatrix}
	\hat P_{0}^{[1]}(x)\\
	\vdots\\
	\hat 	P_{n-1}^{[1]}(x)
	\end{bmatrix}W_C(x)- \tilde \Omega^C{[n]}	\begin{bmatrix}
	P_{n}^{[1]}(x)\\
	\vdots\\
	P_{n+N_C-1}^{[1]}(x)
	\end{bmatrix},
	\end{align*}
	where we have used the notation
	\begin{align*}
\tilde\Omega ^C[n]:=	\begin{cases}
\begin{bmatrix}
	0_{(n-N_c)p\times N_cp}\\
	\Omega^C{[n]}
	\end{bmatrix}, & n\geq N_C,\\
	\Omega^C{[n]},&n<N_C.
\end{cases}
	\end{align*}
Therefore, for $m_C=\min(N_C,n)$, we can write
	\begin{align*}
	\mathcal K_{n-1}(x,y)=\hat K_{n-1}(x,y)W_C(x)-	\begin{bmatrix}
	\big(\hat P_{n-m_C}^{[2]}(y)\big)^\top(\hat H_{n-m_C})^{-1},\dots,\big(\hat P_{n-1}^{[2]}(y)\big)^\top(\hat H_{n-1})^{-1}
	\end{bmatrix}\Omega^C{[n]}	\begin{bmatrix}
	P_{n}^{[1]}(x)\\
	\vdots\\
	P_{n+N_C-1}^{[1]}(x)
	\end{bmatrix}.
	\end{align*}
For $m_G=\min(n,N_G)$,	relation   \eqref{conex3LST} yields
	\begin{align*}
	\mathcal K_{n-1}(x,y)= W_G(y)	K_{n-1}(x,y)-\begin{bmatrix}
	\big(	\hat P_{n}^{[2]}(y)\big)^\top(\hat H_n)^{-1},\dots,\big(\hat  P_{n+N_G-1}^{[2]}(y)\big)^\top(\hat H_{n+N_G-1})^{-1}
	\end{bmatrix}
	\Omega^G{[n]}
	\begin{bmatrix}
	P_{n-m_G}^{[1]}(x)\\
	\vdots\\
	P_{n-1}^{[1]}(x)
	\end{bmatrix}.
	\end{align*}
	Hence, we obtain		
	\begin{multline*}
W_G(y)	K_{n-1}(x,y)-\begin{bmatrix}
\big(	\hat P_{n}^{[2]}(y)\big)^\top(\hat H_n)^{-1},\dots,\big(\hat  P_{n+N_G-1}^{[2]}(y)\big)^\top(\hat H_{n+N_G-1})^{-1}
\end{bmatrix}
\Omega^G{[n]}
\begin{bmatrix}
P_{n-m_G}^{[1]}(x)\\
\vdots\\
P_{n-1}^{[1]}(x)
\end{bmatrix}\\	=\hat K_{n-1}(x,y)W_C(x)-	\begin{bmatrix}
		\big(\hat P_{n-m_C}^{[2]}(y)\big)^\top(\hat H_{n-m_C})^{-1},\dots,\big(\hat P_{n-1}^{[2]}(y)\big)^\top(\hat H_{n-1})^{-1}
		\end{bmatrix}\Omega^C{[n]}	\begin{bmatrix}
		P_{n}^{[1]}(x)\\
		\vdots\\
		P_{n+N_C-1}^{[1]}(x)
		\end{bmatrix},
		\end{multline*}
		and \eqref{eq:KLST} follows.

Next, we consider the  pairing
	\begin{align*}
	\mathcal K^{(pc)}_{n-1}(x,y):=\begin{bmatrix}
	\big(	\hat P_{0}^{[2]}(y)\big)^\top(\hat H_0)^{-1},\dots,\big(\hat  P_{n-1}^{[2]}(y)\big)^\top(\hat H_{n-1})^{-1}
	\end{bmatrix}
	\omega_{[n]}
	\begin{bmatrix}
	C_{0}^{[1]}(x)\\
	\vdots\\
	C_{n-1}^{[1]}(x)
	\end{bmatrix},
	\end{align*}	
	which, we calculate   in two different ways. First, using  \eqref{eq:conexionC1LST} we get

	\begin{align*}
	\mathcal K^{(pc)}_{n-1}(x,y)=	&\begin{multlined}[t][0.8\textwidth]
	\begin{bmatrix}
	\big(	\hat P_{0}^{[2]}(y)\big)^\top(\hat H_0)^{-1},\dots,\big(\hat  P_{n-1}^{[2]}(y)\big)^\top(\hat H_{n-1})^{-1}
	\end{bmatrix}\left(
	\begin{bmatrix}
	\hat C_{0}^{[1]}(x)W_G(x)\\
	\vdots\\
	\hat C_{n-1}^{[1]}(x)W_G(x)
	\end{bmatrix}
	-\big(\hat H\big(\hat S_2\big)^{-\top}\big)_{[n,N_G]}\mathcal B_G 	(\chi(x))_{[N_G]}
	\right)
	\end{multlined}\\
	&=\begin{multlined}[t][0.8\textwidth]\hat K^{(pc)}_{n-1}(x,y)W_G(x)-
	\big((\chi(y))_{[n]}\big)^\top\
	\big(\big(\hat S_2\big)^{\top}\hat H^{-1}\big)_{[n]}\big(\hat H\big(\hat S_2\big)^{-\top}\big)_{[n,N_G]}\mathcal B_G 	(\chi(x))_{[N_G]}\\
	-	\begin{bmatrix}
	\big(	\hat P_{0}^{[2]}(y)\big)^\top(\hat H_0)^{-1},\dots,\big(\hat  P_{n-1}^{[2]}(y)\big)^\top(\hat H_{n-1})^{-1}
	\end{bmatrix}\tilde \Omega^C{[n]}	
	\begin{bmatrix}
	C_{n}^{[1]}(x)\\
	\vdots\\
	C_{n+N_C-1}^{[1]}(x)
	\end{bmatrix},
	\end{multlined}
	\end{align*}
	where, again, $\big(\hat  H\big(\hat  S_2\big)^{-\top}\big)_{[n,N_G]}	$ denotes the truncation to the $n$ first block rows and first $N_G$ block  columns of
	$\hat  H\big(\hat  S_2\big)^{-\top}$. This simplifies,  for $n\geq N_G$,  to
	\begin{multline*}
	\mathcal K^{(pc)}_{n-1}(x,y)
	=\hat K^{(pc)}_{n-1}(x,y)W_G(x)-
	\big((\chi(y))_{[N_G]}\big)^\top\
	\mathcal B_G 	(\chi(x))_{[N_G]}\\	-	\begin{bmatrix}
	\big(	\hat P_{n-m_C}^{[2]}(y)\big)^\top(\hat H_{n-m_C})^{-1},\dots,\big(\hat  P_{n-1}^{[2]}(y)\big)^\top(\hat H_{n-1})^{-1}
	\end{bmatrix} \Omega^C{[n]}	\begin{bmatrix}
	C_{n}^{[1]}(x)\\
	\vdots\\
	C_{n+N_C-1}^{[1]}(x)
	\end{bmatrix}.
	\end{multline*}
	Second,  from  \eqref{conex3LST}
	\begin{align*}
	\mathcal K^{(pc)}_{n-1}(x,y)= W_G(y)	K^{(pc)}_{n-1}(x,y)-\begin{bmatrix}
	\big(	\hat P_{n}^{[2]}(y)\big)^\top(\hat H_n)^{-1},\dots,\big(\hat  P_{n+N_G-1}^{[2]}(y)\big)^\top(\hat H_{n+N_G-1})^{-1}
	\end{bmatrix}
	\Omega^G{[n]}
	\begin{bmatrix}
	C_{n-m_G}^{[1]}(x)\\
	\vdots\\
	C_{n-1}^{[1]}(x)
	\end{bmatrix},
	\end{align*}
	and, consequently, we obtain
	\begin{multline*}
\hat K^{(pc)}_{n-1}(x,y)W_G(x)-
	\mathcal V_G(x,y)	-	\begin{bmatrix}
	\big(	\hat P_{n-m_C}^{[2]}(y)\big)^\top(\hat H_{n-m_C})^{-1},\dots,\big(\hat  P_{n-1}^{[2]}(y)\big)^\top(\hat H_{n-1})^{-1}
	\end{bmatrix} \Omega^C{[n]}	\begin{bmatrix}
	C_{n}^{[1]}(x)\\
	\vdots\\
	C_{n+N_C-1}^{[1]}(x)
	\end{bmatrix}\\=W_G(y)	K^{(pc)}_{n-1}(x,y)-\begin{bmatrix}
	\big(	\hat P_{n}^{[2]}(y)\big)^\top(\hat H_n)^{-1},\dots,\big(\hat  P_{n+N_G-1}^{[2]}(y)\big)^\top(\hat H_{n+N_G-1})^{-1}
	\end{bmatrix}
	\Omega^G{[n]}
	\begin{bmatrix}
	C_{n-N_G}^{[1]}(x)\\
	\vdots\\
	C_{n-1}^{[1]}(x)
	\end{bmatrix}.
	\end{multline*}
\end{proof}

\subsection{On the first family of  linearly spectral transformed second kind functions}
In the next  sections we will assume that the perturbing polynomials are monic, $W_G(x)=I_p x^N+\sum\limits_{k=0}^{N-1}A_{G,k}x^k$, $W_C(x)=I_p x^N+\sum\limits_{k=0}^{N-1}A_{C,k}x^k$. The corresponding spectral data, eigenvalues, algebraic multiplicities, partial multiplicities, left and right root polynomials and Jordan pairs and triples will have a subindex $C$ or $G$ to indicate to which polynomial $W_C(x)$ or $W_G(x)$ they are linked to.

\begin{pro}
	The most general matrix Geronimus--Uvarov transformation is given by
	\begin{align}\label{uva}
	\hat u_{x,y}&:=W_C(x)u_{x,y}(W_G(y))^{-1}+v_{x,y}, & v_{x,y}&:=\sum_{a=1}^{q_G}\sum_{j=1}^{s_{G,a}}\sum_{m=0}^{\kappa_{G,j}^{(a)}-1}\frac{(-1)^{m}}{m!}W_C(x)\big(\xi^{[a]}_{j,m}\big)_x\otimes
	\delta^{(m)}(y-x_{G,a})l_{G,j}^{(a)}(y),
	\end{align}
	expressed in terms of the spectrum $\sigma(W_G(y))=\{x_{G,a}\}_{a=1}^{q_G}$, number of  Jordan blocks $s_{G,a}$, partial multiplicities $\kappa_{G,j}^{(a)}$,  and corresponding  adapted left root polynomials $l_{G,j}^{(a)}(y)$ of the matrix polynomial $W_G(y)$ and $\big(\xi^{[a]}_{j,m}\big)_x\in\big(\mathbb{C}^p[x]\big)'$.
\end{pro}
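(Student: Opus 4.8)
The plan is to verify that the proposed $\hat u_{x,y}$ in \eqref{uva} genuinely satisfies the defining relation of Definition \ref{def:Geronimus-Uvarov}, namely $\hat u_{x,y}W_G(y)=W_C(x)u_{x,y}$, and then to argue that this is the \emph{most general} such transformation in the sense that any two solutions differ by an admissible mass term. First I would compute $\hat u_{x,y}W_G(y)$ directly. The rational part gives $W_C(x)u_{x,y}(W_G(y))^{-1}W_G(y)=W_C(x)u_{x,y}$ on $\operatorname{supp}_y(u)$, which is legitimate since the hypothesis $\sigma(W_G(y))\cap\operatorname{supp}_y(u)=\varnothing$ guarantees $(W_G(y))^{-1}$ is smooth on the support. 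The substantive point is to check that the mass term $v_{x,y}$ is annihilated on the right by $W_G(y)$, i.e. $v_{x,y}W_G(y)=0_p$, exactly as in the Geronimus discussion following Proposition \ref{pro:string}.

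The key step is therefore the computation $\left(\delta^{(m)}(y-x_{G,a})l_{G,j}^{(a)}(y)\right)W_G(y)=0$ for the appropriate range of $m$. I would test against a smooth function $\phi(y)$ and use that pairing $\delta^{(m)}(y-x_{G,a})$ with $l_{G,j}^{(a)}(y)W_G(y)\phi(y)$ produces derivatives up to order $m$ of $l_{G,j}^{(a)}(y)W_G(y)$ evaluated at $x_{G,a}$. By Proposition \ref{pro:adapted_root}, specifically the relation $\big(l_j^{(a)}(x)W(x)\big)^{(m)}_{x_a}=0$ for $m\in\{0,\dots,\kappa^{(a)}_j-1\}$, each such derivative vanishes, so the whole mass term is killed by right multiplication by $W_G(y)$. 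This is precisely why the summation index $m$ runs only up to $\kappa_{G,j}^{(a)}-1$ and why the adapted left root polynomials $l_{G,j}^{(a)}(y)$ of $W_G$ (and not of $W_C$) appear. The factor $W_C(x)$ in front of $\big(\xi^{[a]}_{j,m}\big)_x$ plays no role in this right-annihilation; it only ensures consistency with the Christoffel factor on the left-hand side $W_C(x)u_{x,y}$, reflecting the decomposition in Remark \ref{rem} where the transformation is a Geronimus step followed by a Christoffel step.

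For the maximality (generality) claim, I would argue that the general solution of the inhomogeneous equation $\hat u_{x,y}W_G(y)=W_C(x)u_{x,y}$ is a particular solution plus the general solution of the homogeneous equation $w_{x,y}W_G(y)=0_p$. The rational piece $W_C(x)u_{x,y}(W_G(y))^{-1}$ is a particular solution, and the homogeneous solutions supported on $\sigma(W_G)$ are exactly the distributions built from derivatives of Dirac deltas at the eigenvalues paired with left root polynomials, as in \eqref{eq:v_general}; the arbitrary vectors of generalized functions $\big(\xi^{[a]}_{j,m}\big)_x$ parametrize this freedom in the $x$-variable. The role of $W_C(x)$ multiplying these is a convenient normalization inherited from the Christoffel composition.

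The main obstacle I expect is the careful bookkeeping of the homogeneous solution space: showing that \emph{every} distributional solution of $w_{x,y}W_G(y)=0$ with support meeting $\sigma(W_G)$ is spanned by the displayed Dirac-derivative/root-polynomial combinations, and that the counting of terms matches the partial multiplicities $\kappa_{G,j}^{(a)}$. This requires invoking the structure of the canonical left Jordan chains from Definition \ref{def:canonical Jordan} and Proposition \ref{pro3}, together with the fact that a distribution annihilated on the right by a matrix polynomial must be supported on that polynomial's spectrum and have local structure dictated by the left null-space jets; the verification that no further independent masses exist beyond those listed is the delicate part, whereas the forward check that the stated $v_{x,y}$ works is a direct application of Proposition \ref{pro:adapted_root}.
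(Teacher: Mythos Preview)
The paper states this proposition without proof; it is treated as a structural observation paralleling the Geronimus mass term \eqref{eq:v_general}, and the only commentary is the remark following it noting that $v_{x,y}$ is associated with the eigenvalues and left root vectors of $W_G$. Your proposal therefore supplies more justification than the paper itself offers.

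Your forward direction is correct and is exactly the implicit reasoning: the rational piece cancels because $\sigma(W_G)\cap\operatorname{supp}_y(u)=\varnothing$, and $v_{x,y}W_G(y)=0_p$ follows from Proposition \ref{pro:adapted_root} applied to $W_G$. Your identification of the factor $W_C(x)$ as coming from the Christoffel-after-Geronimus decomposition in Remark \ref{rem} is also the right reading.

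On the maximality claim, be aware that the paper is somewhat informal here: the remark immediately after the proposition concedes that in general $N_Gp\geq\sum_{a,j}\kappa^{(a)}_{G,j}$ with equality only guaranteed when the leading coefficient of $W_G$ is nonsingular. So the displayed family of masses is ``most general'' in the sense of exhausting the spectral-jet degrees of freedom attached to the canonical left Jordan chains of $W_G$, not in the sense of a full classification of all distributional solutions of $w_{x,y}W_G(y)=0_p$ for arbitrary $W_G$. Your caveat about the delicate part of the homogeneous solution space is well placed; the paper does not attempt that classification either.
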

\begin{rem}
Observe that the functional $v_{x,y}$ is associated with the eigenvalues and  left root vectors of the perturbing polynomial $W_G(x)$. Also notice that we have introduced  $W_C(x)$ in this term. In general,  we have $N_Gp\geq\sum_{a=1}^q\sum_{i=1}^{s_a}\kappa^{(a)}_j$ and we can  not ensure  the equality, up to  for the nonsingular leading coefficient case. 
\end{rem}

\begin{rem}
We could also consider  diagonal supported masses, with
\begin{align}\label{eq:v_diagonal-2}
v_{x,x}=\sum_{a=1}^{q_G}\sum_{j=1}^{s_{G,a}}\sum_{m=0}^{\kappa_{G,j}^{(a)}-1}\frac{(-1)^{m}}{m!}W_C(x)\xi^{[a]}_{j,m}\delta^{(m)}(x-x_{G,a})l_{G,j}^{(a)}(x),
\end{align}
	For these sesquilinear forms we have\begin{align*}
	\prodint{P(x),Q(y)
	}_{\hat u}=
	\prodint{P(x)W_C(x), Q(y)(W_G(y))^{-\top}}_u+\sum_{a=1}^{q_G}\sum_{j=1}^{s_{G,a}}\sum_{m=0}^{\kappa_{G,j}^{(a)}-1}\prodint{P(x)W_C(x),\big(\xi^{[a]}_{j,m}\big)_x}\frac{1}{m!} \Big(l_{G,j}^{(a)}(y)\big(Q(y)\big)^\top\Big)^{(m)}_{x_{G,a}}.
	\end{align*}
\end{rem}

\begin{defi}
	For $z\neq x_{G,a}$, we  introduce the $p\times p$ matrices
	\begin{align}\label{eq:defCLST}
	\hat{\mathcal C}_{n;i}^{(a)}(z):=\prodint{\hat  P^{[1]}_n(x)W_C(x),\big(\xi^{[a])}_i\big)_x}\eta_{G,i}^{(a)}
\mathcal L^{(a)}_{G,i}
	\begin{bmatrix}
	\frac{I_p}{(z-x_{G,a})^{\kappa_i^{(a)}}}\\
	\vdots\\
	\frac{I_p}{z-x_{G,a}}
	\end{bmatrix},
	\end{align}
	where $i=1,\dots,s_{G,a}$.
\end{defi}
\begin{pro} For Hankel masses as in \eqref{eq:v_diagonal-2} we have
	\begin{align*}
\prodint{\hat  P^{[1]}_n(x)W_C(x),\big(\xi^{[a])}_i\big)_x}&=
	\mathcal J_{\hat  P^{[1]}_nW_C}^{(i)}(x_{G,a})
	\mathcal X^{(a)}_i\eta^{(a)}_{G,i},&
	\prodint{  P^{[1]}_n(x),\big(\xi^{[a])}_i\big)_x}&=
	\mathcal J_{  P^{[1]}_n}^{(i)}(x_{G,a})
	\mathcal X^{(a)}_i\eta^{(a)}_{G,i}.
	\end{align*}
\end{pro}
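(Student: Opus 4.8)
The final statement to prove is a Proposition relating the matrix $\prodint{\hat P^{[1]}_n(x)W_C(x),(\xi^{[a]}_i)_x}$ to spectral jets of $\hat P^{[1]}_n(x)W_C(x)$ for Hankel masses of the diagonal form \eqref{eq:v_diagonal-2}, together with the analogous unperturbed identity.

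\textbf{Overall approach.} The plan is to recognize this as the exact Geronimus--Uvarov analogue of equation \eqref{eq:PX-JPX}, which was proved in the pure Geronimus section, and to reduce it to that result by the simple observation that the only structural change is the insertion of the Christoffel factor $W_C(x)$ inside the pairing. Concretely, in the Geronimus case the identity \eqref{eq:PX-JPX} reads $\prodint{\check P^{[1]}_n(x),(\xi^{[a]}_i)_x}\eta_i^{(a)}=\mathcal J^{(i)}_{\check P^{[1]}_n}(x_a)\mathcal X^{(a)}_i$; here the roles are played by $\hat P^{[1]}_n(x)W_C(x)$ in place of $\check P^{[1]}_n(x)$ and by the $G$-labelled spectral data $x_{G,a}$, $\eta^{(a)}_{G,i}$, $\mathcal X^{(a)}_i$ in place of their unlabelled counterparts. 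So the first thing I would do is make this correspondence explicit.

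\textbf{Key steps.} First I would write out the diagonal-supported mass term \eqref{eq:v_diagonal-2} and observe that pairing $\hat P^{[1]}_n(x)$ against it produces, after the $W_C(x)$ is absorbed into the first slot of the sesquilinear form, a sum over $m\in\{0,\dots,\kappa_{G,j}^{(a)}-1\}$ of terms $\frac{1}{m!}\big(\hat P^{[1]}_n(x)W_C(x)\,\xi^{[a]}_{j,m}\,l^{(a)}_{G,j}(x)\big)^{(m)}_{x_{G,a}}$, exactly as in the Geronimus remark preceding \eqref{eq:chi}. Second, I would invoke the definition of the matrices $\prodint{\hat P^{[1]}_n(x)W_C(x),(\xi^{[a]}_i)_x}$ (built columnwise from the pairings against the $(\xi^{[a]}_{i,m})_x$) and compute the Leibniz expansion of the $m$-th derivative of the product $f(x)\xi^{[a]}_{i,m}l^{(a)}_{G,i}(x)$ with $f(x):=\hat P^{[1]}_n(x)W_C(x)$ evaluated at $x=x_{G,a}$, precisely mirroring the computation that yields \eqref{eq:PX-JPX}. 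Third, collecting these derivative jets into matrix form and using Definition \ref{def:spectral jets} applied to the smooth matrix function $f(x)=\hat P^{[1]}_n(x)W_C(x)$ at the eigenvalue $x_{G,a}$ of $W_G$, together with the Hankel structure of $\mathcal X^{(a)}_i$ in \eqref{eq:chi} and the exchange matrix $\eta^{(a)}_{G,i}$, identifies the result as $\mathcal J^{(i)}_{\hat P^{[1]}_nW_C}(x_{G,a})\mathcal X^{(a)}_i\eta^{(a)}_{G,i}$. The second identity, involving $P^{[1]}_n$ rather than $\hat P^{[1]}_nW_C$, follows by the identical argument with $W_C$ removed (or equivalently by taking the unperturbed limit), since nothing in the Leibniz computation used any property of $f$ beyond smoothness at $x_{G,a}$.

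\textbf{Main obstacle.} The only genuine subtlety, and the step I expect to need the most care, is bookkeeping the left Jordan chain structure correctly: the spectral jet in Definition \ref{def:spectral jets} is built from the scaled derivatives $\frac{f^{(k)}}{k!}$ of $f$ alone, whereas the mass pairing forces $f$ to appear multiplied by the root polynomial $l^{(a)}_{G,i}(x)$ before differentiation. Disentangling these via the Leibniz rule is exactly what the triangular Hankel matrix $\mathcal X^{(a)}_i$ and the exchange matrix $\eta^{(a)}_{G,i}$ are designed to encode, so the proof hinges on verifying that the combinatorial coefficients from the Leibniz expansion match the entries of $\mathcal X^{(a)}_i\eta^{(a)}_{G,i}$; this is precisely the matching already carried out in the Geronimus setting, and I would simply point out that the argument is verbatim the same once $\check P^{[1]}_n$ is replaced by $\hat P^{[1]}_n W_C$ and all spectral data are taken with respect to $W_G$. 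Since the computation is routine and identical to the cited one, the proposition reduces to a remark; I would present it as such, citing \eqref{eq:PX-JPX} and Definition \ref{def:spectral jets}, rather than re-deriving the Leibniz identity in full.
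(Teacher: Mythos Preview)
Your overall strategy---recognize the identity as \eqref{eq:PX-JPX} with $\check P^{[1]}_n$ replaced by $\hat P^{[1]}_nW_C$ (respectively $P^{[1]}_n$) and the spectral data taken with respect to $W_G$---is correct and is exactly what the paper does: the Proposition is stated without proof, implicitly as a direct transcription of the Geronimus-case Remark containing \eqref{eq:PX-JPX}.

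There is, however, a small but genuine confusion in your middle paragraphs that you should clean up. The pairing $\prodint{f(x),(\xi^{[a]}_i)_x}$ involves only the $x$-component of the mass, namely the functionals $(\xi^{[a]}_{i,k})_x$; the left root polynomial $l^{(a)}_{G,i}$ lives in the $y$-component of $v_{x,y}$ and plays no role here. So there is no Leibniz expansion of a product $f(x)\,\xi\,l^{(a)}_{G,i}(x)$ to perform. The actual computation is simpler: in the diagonal case one has (exactly as in the Geronimus Remark reducing \eqref{eq:v_general} to \eqref{eq:v_diagonal})
\[
(\xi^{[a]}_{i,k})_x=\sum_{n=0}^{\kappa^{(a)}_{G,i}-1-k}\frac{(-1)^n}{n!}\,\xi^{[a]}_{i,k+n}\,\delta^{(n)}(x-x_{G,a}),
\]
so that for any smooth $f$,
\[
\prodint{f(x),(\xi^{[a]}_{i,k})_x}=\sum_{n=0}^{\kappa^{(a)}_{G,i}-1-k}\frac{f^{(n)}(x_{G,a})}{n!}\,\xi^{[a]}_{i,k+n}.
\]
Collecting these for $k=0,\dots,\kappa^{(a)}_{G,i}-1$ into the row $\prodint{f(x),(\xi^{[a]}_i)_x}$ and comparing entrywise with the product $\mathcal J^{(i)}_f(x_{G,a})\,\mathcal X^{(a)}_i\,\eta^{(a)}_{G,i}$ (using the explicit form \eqref{eq:chi} of $\mathcal X^{(a)}_i$ and that the exchange matrix is an involution) gives the identity directly---no root polynomials, no Leibniz rule. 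Your concluding sentence, that the argument is verbatim \eqref{eq:PX-JPX} with $f$ replaced, is the correct summary; just drop the spurious reference to $l^{(a)}_{G,i}$ in the derivation.
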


\begin{pro}\label{pro:checkCCauchyLST}
	For $z\not\in\operatorname{supp}_y(\hat u)=\operatorname{supp}_y(u)\cup \sigma(W_G(y))$, the following expression
	\begin{align*}
	\hat C_n^{[1]}(z)
	&=\left\langle \hat  P^{[1]}_n(x),\frac{I_p}{z-y}\right\rangle_{ W_CuW_G^{-1}}+\sum_{a=0}^{q_G}\sum_{i=1}^{s_{G,a}}\hat{\mathcal C}_{n;i}^{(a)}(z)
	\end{align*}	
	holds true.
\end{pro}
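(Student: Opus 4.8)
The plan is to mirror exactly the proof of Proposition \ref{pro:checkCCauchy} for the pure Geronimus case, since the Geronimus--Uvarov transformation \eqref{uva} differs from the Geronimus one only by the left multiplication by $W_C(x)$, which is a polynomial insertion that commutes harmlessly through the sesquilinear form in the first slot. First I would invoke the Cauchy representation of the second kind functions, Proposition \ref{pro:Cauchy1}, applied to the perturbed kernel $\hat u$, to write
\begin{align*}
\hat C_n^{[1]}(z)=\left\langle \hat P^{[1]}_n(x),\frac{I_p}{z-y}\right\rangle_{\hat u},
\end{align*}
valid for $z\notin\operatorname{supp}_y(\hat u)$. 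The key observation, to be recorded first, is that $\operatorname{supp}_y(\hat u)=\operatorname{supp}_y(u)\cup\sigma(W_G(y))$, which follows because the rational factor $(W_G(y))^{-1}$ introduces poles exactly at the eigenvalues of $W_G$ and the added mass term $v_{x,y}$ is supported on $\sigma(W_G)$ in the $y$-variable.

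Next I would split the pairing according to the decomposition $\hat u_{x,y}=W_C(x)u_{x,y}(W_G(y))^{-1}+v_{x,y}$ given in \eqref{uva}. The absolutely continuous part contributes
\begin{align*}
\left\langle \hat P^{[1]}_n(x),\frac{I_p}{z-y}\right\rangle_{W_C u W_G^{-1}},
\end{align*}
which is already the first term in the claimed formula. For the mass part I would substitute the explicit form of $v_{x,y}$, pulling the factor $W_C(x)$ into the first entry of the sesquilinear form so that it becomes $\hat P^{[1]}_n(x)W_C(x)$, and then carry out the $y$-derivatives of the Dirac deltas $\delta^{(m)}(y-x_{G,a})$ acting on $l_{G,j}^{(a)}(y)\,(z-y)^{-1}$. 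This is the same computation as in the Geronimus case.

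The central identity I would reuse verbatim is the Leibniz expansion
\begin{align*}
\left(\frac{1}{m!}\frac{l_i^{(a)}(x)}{z-x}\right)^{(m)}_{x=x_{G,a}}=\sum_{k=0}^{m}\left(\frac{l_i^{(a)}(x)}{(m-k)!}\right)^{(m-k)}_{x_{G,a}}\frac{1}{(z-x_{G,a})^{k+1}},
\end{align*}
which reorganizes the derivatives of the root polynomial into the left Jordan chain matrix $\mathcal L^{(a)}_{G,i}$ paired against the column of Cauchy powers $(z-x_{G,a})^{-k-1}$. Together with the definition \eqref{eq:defCLST} of $\hat{\mathcal C}_{n;i}^{(a)}(z)$, this collapses the triple sum over $a$, $j$ and $m$ into $\sum_{a}\sum_{i}\hat{\mathcal C}_{n;i}^{(a)}(z)$, yielding the stated result.

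The only genuine obstacle is bookkeeping rather than conceptual: I must make sure that the factor $W_C(x)$ is consistently attached to $\hat P^{[1]}_n(x)$ inside every mass term, so that the matrices $\prodint{\hat P^{[1]}_n(x)W_C(x),(\xi^{[a]}_i)_x}$ appearing in \eqref{eq:defCLST} are the correct ones, and not the bare $\prodint{\hat P^{[1]}_n(x),(\xi^{[a]}_i)_x}$ of the Geronimus case. Because $W_C$ sits on the left of $u$ in \eqref{uva} and the sesquilinear form is linear (not conjugate-linear) in its first argument, this insertion is legitimate and requires no transpose. Once this is tracked, the proof is an immediate transcription of the argument proving Proposition \ref{pro:checkCCauchy}, and I would simply write \emph{the proof proceeds as in Proposition \ref{pro:checkCCauchy}, now carrying the extra Christoffel factor $W_C(x)$ in the first slot}.
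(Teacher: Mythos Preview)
Your proposal is correct and follows essentially the same approach as the paper: invoke the Cauchy representation of $\hat C_n^{[1]}(z)$ from Proposition \ref{pro:Cauchy1}, split according to the decomposition $\hat u_{x,y}=W_C(x)u_{x,y}(W_G(y))^{-1}+v_{x,y}$ in \eqref{uva}, and then use the Leibniz expansion of $\big(l_{G,i}^{(a)}(y)/(z-y)\big)^{(m)}_{x_{G,a}}$ exactly as in Proposition \ref{pro:checkCCauchy} to recognize the terms $\hat{\mathcal C}_{n;i}^{(a)}(z)$ of \eqref{eq:defCLST}. Your explicit emphasis on tracking the factor $W_C(x)$ into the first slot so that the pairings $\prodint{\hat P^{[1]}_n(x)W_C(x),(\xi^{[a]}_i)_x}$ arise correctly is precisely the only new point over the Geronimus case, and the paper's own proof records just this and then refers implicitly to the earlier computation.
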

\begin{proof}
	From \eqref{pro:Cauchy1}	we have
	\begin{align*}
	\hat C_n^{[1]}(z)&=\left\langle \hat  P^{[1]}_n(x),\frac{I_p}{z-y}\right\rangle_{\hat  u}\\
	&=\prodint{\hat  P^{[1]}_n(x),\frac{I_p}{z-y}}_{ W_CuW_G^{-1}}+\sum_{a=0}^{q_G}\sum_{i=1}^{s_{G,a}}\sum_{m=0}^{\kappa_{G,i}^{(a)}-1}
\prodint{\hat P^{[1]}_{n}(x)W_C(x),\big(\xi^{[a]}_{j,m}\big)_x}
\frac{1}{m!}	\left(\frac{l_{G,i}^{(a)}(x)}{(z-x)}\right)^{(m)}_{x_{G,a}},
	\end{align*}
and	we deduce the result.
\end{proof}

\begin{ma}
	The function $\hat {\mathcal C}^{(a)}_{n;i}(x)W_G(x)r^{(b)}_{G,j}(x)\in\mathbb C^p[x]$ satisfies
	{\small\begin{align}\label{eq:CWrLST}
		\hat {\mathcal C}_{n;i}^{(a)}(x)W_G(x)r^{(b)}_{G,j}(x)=\begin{cases}
		\prodint{\hat  P^{[1]}_n(x)W_C(x),\big(\xi^{[a])}_i\big)_x}\eta_{G,i}^{(a)}
		\begin{bmatrix}
		(x-x_{G,a})^{\kappa^{(a)}_{G,\max(i,j)}-\kappa^{(a)}_{G,i}}\\
		\vdots\\
		(x-x_{G,a})^{\kappa^{(a)}_{G,\max(i,j)}-1}
		\end{bmatrix}w_{G,i,j}^{(a)}(x)+(x-x_{G,a})^{\kappa^{(a)}_{G,j}}T^{(a,a)}(x), &a=b,\\
		(x-x_{G,b})^{\kappa_{G,j}^{(b)}}T^{(a,b)}(x), & a\neq b,
		\end{cases}
		\end{align}}
	where the $\mathbb C^p$-valued function $T^{(a,b)}(x)$  is analytic at $x=x_{G,b}$ and, in particular, $T^{(a,a)}(x) \in\mathbb C^p[x]$.
\end{ma}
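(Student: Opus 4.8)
The plan is to imitate, almost verbatim, the proof of the analogous Geronimus-case lemma (the one establishing \eqref{eq:CWr}), with every spectral object decorated by a $G$-subscript to indicate its dependence on $W_G$, and with the understanding that the perturbed polynomial $\hat P^{[1]}_n(x)$ now enters paired against $W_C(x)$ rather than alone. Since $\hat{\mathcal C}^{(a)}_{n;i}(x)$ in \eqref{eq:defCLST} differs from its Geronimus analogue only in that the bracket $\prodint{\hat P^{[1]}_n(x)W_C(x),(\xi^{[a]}_i)_x}$ replaces $\prodint{\check P^{[1]}_n(x),(\xi^{[a]}_i)_x}$, I expect the entire structure of the argument to carry through unchanged.

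First I would treat the off-diagonal case $a\neq b$. Writing out $\hat{\mathcal C}^{(a)}_{n;i}(x)W_G(x)r^{(b)}_{G,j}(x)$ from the definition \eqref{eq:defCLST}, the factor $W_G(x)r^{(b)}_{G,j}(x)$ appears on the right of the column vector $\bigl[\tfrac{I_p}{(x-x_{G,a})^{\kappa_i^{(a)}}},\dots,\tfrac{I_p}{x-x_{G,a}}\bigr]^\top$. The key input is relation \eqref{eq:Wraj} applied to $W_G$, namely $W_G(x)r^{(b)}_{G,j}(x)=(x-x_{G,b})^{\kappa^{(b)}_{G,j}}T_2(x)$ for a vector polynomial $T_2$; since the poles of the Cauchy-type column sit at $x_{G,a}\neq x_{G,b}$, these cause no cancellation and the whole product acquires the factor $(x-x_{G,b})^{\kappa^{(b)}_{G,j}}$ with an analytic cofactor $T^{(a,b)}(x)$ at $x=x_{G,b}$. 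This is the easy half.

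Next I would handle the diagonal case $a=b$. Here I would invoke the $G$-version of Lemma~\ref{lemma:trabajandoC} to split $\mathcal L^{(a)}_{G,i}\bigl[\tfrac{I_p}{(x-x_{G,a})^{\kappa_i^{(a)}}},\dots\bigr]^\top W_G(x)r^{(a)}_{G,j}(x)$ into a principal term $\bigl[\tfrac{1}{(x-x_{G,a})^{\kappa_i^{(a)}}},\dots\bigr]^\top l^{(a)}_{G,i}(x)W_G(x)r^{(a)}_{G,j}(x)$ plus a remainder $(x-x_{G,a})^{\kappa^{(a)}_{G,j}}T(x)$ with $T$ polynomial. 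Then Proposition~\ref{pro:lWr}, applied to $W_G$, rewrites $l^{(a)}_{G,i}(x)W_G(x)r^{(a)}_{G,j}(x)=(x-x_{G,a})^{\kappa^{(a)}_{G,\max(i,j)}}w^{(a)}_{G,i,j}(x)$; feeding this into the column of inverse powers produces exactly the stated entries $(x-x_{G,a})^{\kappa^{(a)}_{G,\max(i,j)}-\kappa^{(a)}_{G,i}},\dots,(x-x_{G,a})^{\kappa^{(a)}_{G,\max(i,j)}-1}$ multiplied by $w^{(a)}_{G,i,j}(x)$, with the bracket $\prodint{\hat P^{[1]}_n(x)W_C(x),(\xi^{[a]}_i)_x}\eta^{(a)}_{G,i}$ sitting out front as a constant prefactor and the second summand $(x-x_{G,a})^{\kappa^{(a)}_{G,j}}T^{(a,a)}(x)$ absorbing the remainder.

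The main obstacle, such as it is, is purely bookkeeping: I must check that multiplying by $W_C(x)$ inside the sesquilinear pairing, and thereby replacing $\check P^{[1]}_n$ by $\hat P^{[1]}_n$, does not disturb the argument. It does not, because $W_C(x)$ is absorbed once and for all into the \emph{coefficient} $\prodint{\hat P^{[1]}_n(x)W_C(x),(\xi^{[a]}_i)_x}$, which plays no role in the pole/zero accounting near $x_{G,a}$ or $x_{G,b}$; the analytic behaviour of $\hat{\mathcal C}^{(a)}_{n;i}(x)W_G(x)r^{(b)}_{G,j}(x)$ depends only on the spectral data of $W_G$. Consequently both cases follow mutatis mutandis from the Geronimus computation, and the result is proved.
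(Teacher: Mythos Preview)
Your proposal is correct and follows essentially the same approach as the paper's own proof: the paper also treats $a\neq b$ first via \eqref{eq:Wraj} applied to $W_G$, and then handles $a=b$ by invoking the $G$-version of Lemma~\ref{lemma:trabajandoC} followed by Proposition~\ref{pro:lWr}, with the bracket $\prodint{\hat P^{[1]}_n(x)W_C(x),(\xi^{[a]}_i)_x}\eta^{(a)}_{G,i}$ carried along as an inert prefactor throughout. Your observation that $W_C$ enters only through this constant coefficient and plays no role in the local analysis near $x_{G,a}$ or $x_{G,b}$ is exactly the point.
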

\begin{proof}
The function $\hat{\mathcal C}^{(a)}_{n;i}(x)W_G(x)r^{(b)}_{G,j}(x)\in\mathbb C^p[x]$, with  $a\neq b$, as  \eqref{eq:Wraj} inform us about, is such that
	\begin{align*}
	\hat{\mathcal C}^{(a)}_{n;i}(x)W_G(x)r^{(b)}_{G,j}(x)&=	\prodint{\hat  P^{[1]}_n(x)W_C(x),\big(\xi^{[a])}_i\big)_x}\eta_i^{(a)}\mathcal L^{(a)}_{G,i}
	\begin{bmatrix}
	\dfrac{I_p}{(x-x_{G,a})^{\kappa_{G.i}^{(a)}}}\\
	\vdots\\
	\dfrac{I_p}{x-x_{G,a}}
	\end{bmatrix}W_G(x)r_{G,j}^{(b)}(x)\\&=(x-x_{G,b})^{\kappa_{G,j}^{(b)}}T^{(a,b)}(x),
	\end{align*}
	where   $T^{(a,b)}(x)$ is $\mathbb C^p$-valued analytic function  at $x=x_{G,b}$.
	 \eqref{eq:defCLST} and Lemma \ref{lemma:trabajandoC} yield
	\begin{align*}
	\hat{\mathcal C}_{n;i}^{(a)}(x)W_G(x)r^{(a)}_{G,j}(x)&=
	\prodint{\hat  P^{[1]}_n(x)W_C(x),\big(\xi^{[a])}_i\big)_x}\eta_{G,i}^{(a)}
	\mathcal L^{(a)}_i
	\begin{bmatrix}
	\dfrac{I_p}{(x-x_{G,a})^{\kappa_{G,i}^{(a)}}}\\
	\vdots\\
	\dfrac{I_p}{x-x_{G,a}}
	\end{bmatrix}W_G(x)r^{(a)}_{G,j}(x)
	\\
	&=
	\begin{multlined}[t]
	\prodint{\hat  P^{[1]}_n(x)W_C(x),\big(\xi^{[a])}_i\big)_x}\eta_{G,i}^{(a)}
	\begin{bmatrix}
	\dfrac{1}{(x-x_{G,a})^{\kappa_{G,i}^{(a)}}}\\
	\vdots\\
	\dfrac{1}{x-x_{G,a}}
	\end{bmatrix}l_i^{(a)}(x)W_G(x)r_{G,j}^{(a)}(x)
	\\	+(x-x_{G,a})^{\kappa^{(a)}_{G,j}}
	\prodint{\hat  P^{[1]}_n(x)W_C(x),\big(\xi^{[a])}_i\big)_x}\eta_{G,i}^{(a)}T^{(a,a)}(x),
	\end{multlined}
	\end{align*}
	for some $T^{(a,a)}(x)\in \mathbb C^p[x]$. Hence, using  Proposition \ref{pro:lWr} we get
	\begin{multline*}
	\hat{\mathcal C}_{n;i}^{(a)}(x)W_G(x)r^{(a)}_{G,j}(x)=
	\prodint{\hat  P^{[1]}_n(x)W_C(x),\big(\xi^{[a])}_i\big)_x}\eta_{G,i}^{(a)}
	\begin{bmatrix}
	(x-x_{G,a})^{\kappa^{(a)}_{G,\max(i,j)}-\kappa^{(a)}_{G,i}}\\
	\vdots\\
	(x-x_{G,a})^{\kappa^{(a)}_{G,\max(i,j)}-1}
	\end{bmatrix}\\\times\Big(w^{(a)}_{G,i,j;0}+w^{(a)}_{G,i,j;1}(x-x_{G,a})+\cdots+w^{(a)}_{G,i,j;\kappa^{(a)}_{G,\min(i,j)}+N_G-2}
	(x-x_{G,a})^{\kappa^{(a)}_{G,\min(i,j)}+N_G-2}\Big).
	\\	+(x-x_{G,a})^{\kappa^{(a)}_{G,j}}
	\prodint{\hat  P^{[1]}_n(x)W_C(x),\big(\xi^{[a])}_i\big)_x}\eta_{G,i}^{(a)}
	T^{(a,a)}(x),
	\end{multline*}
	and the result follows.
\end{proof}

\begin{ma}
	The following relations hold
	\begin{align}\label{eq:CLST}
	\Big(	\hat  C_{n}^{[1]}(z)W_G(z)r^{(a)}_{G,j}(z)\Big)^{(m)}_{x_{G,a}}
	=\sum_{i=1}^{s_{G,a}}\Big(\hat{\mathcal C}_{n;i}^{(a)}(z)W_G(z)r_{G,j}^{(a)}(z)\Big)^{(m)}_{x_{G,a}},
	\end{align}
	for $m=0,\dots,\kappa^{(a)}_{G,j}-1$.
\end{ma}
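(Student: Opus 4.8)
The plan is to mimic exactly the argument used to prove the analogous Geronimus-case relation \eqref{eq:C}, adapting it to the Geronimus--Uvarov setting where the Christoffel factor $W_C(x)$ is absorbed into the pairing. First I would invoke Proposition \ref{pro:checkCCauchyLST}, which decomposes $\hat C_n^{[1]}(z)$ into a regular Cauchy-transform piece, taken with respect to the perturbed kernel $W_C u W_G^{-1}$, plus the spectral sum $\sum_{a,i}\hat{\mathcal C}_{n;i}^{(a)}(z)$. Multiplying on the right by $W_G(z)r_{G,j}^{(a)}(z)$ and applying the derivative-at-$x_{G,a}$ operator $(\cdot)^{(m)}_{x_{G,a}}$ for $m=0,\dots,\kappa_{G,j}^{(a)}-1$ then splits the claim into three contributions that must be handled separately.

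The second step is to show the regular piece vanishes. Since $\sigma(W_G(y))\cap\operatorname{supp}_y(u)=\varnothing$, the Cauchy kernel $1/(z-y)$ and its $z$-derivatives are analytic at $z=x_{G,a}$, so I can commute the jet operator inside the sesquilinear form and expand by Leibniz, obtaining a finite sum of terms each proportional to $\big(W_G(z)r_{G,j}^{(a)}(z)\big)^{(k)}_{x_{G,a}}$ for $k\le m\le\kappa_{G,j}^{(a)}-1$. By the root-polynomial relation \eqref{eq:Wraj} (with $W_G$ in place of $W$), these jets are zero, so the whole regular contribution is $0_{p\times 1}$, exactly as in the Geronimus lemma preceding Definition \ref{def:Wji}. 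The third step disposes of the off-diagonal spectral terms: the just-proved Lemma \eqref{eq:CWrLST} shows that for $b\neq a$ the function $\hat{\mathcal C}_{n;i}^{(b)}(x)W_G(x)r_{G,j}^{(a)}(x)$ carries a factor $(x-x_{G,a})^{\kappa_{G,j}^{(a)}}$, hence all its derivatives up to order $\kappa_{G,j}^{(a)}-1$ vanish at $x_{G,a}$, leaving only the $b=a$ summands on the right-hand side.

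Assembling these three observations yields \eqref{eq:CLST} directly: the left side equals the regular term (which is zero) plus $\sum_{b,i}\big(\hat{\mathcal C}_{n;i}^{(b)}W_G r_{G,j}^{(a)}\big)^{(m)}_{x_{G,a}}$, and only the $b=a$ piece survives, giving $\sum_{i=1}^{s_{G,a}}\big(\hat{\mathcal C}_{n;i}^{(a)}(z)W_G(z)r_{G,j}^{(a)}(z)\big)^{(m)}_{x_{G,a}}$. The only genuinely new bookkeeping relative to the pure Geronimus case is that every pairing now reads $\langle \hat P_n^{[1]}(x)W_C(x),(\xi_i^{[a]})_x\rangle$ rather than $\langle\check P_n^{[1]}(x),(\xi_i^{[a]})_x\rangle$, but $W_C(x)$ is just a polynomial multiplier on the first argument and plays no role in the analyticity or vanishing arguments, so it is carried along inertly. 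I expect no real obstacle here; the subtle point to check carefully is that the analyticity of $1/(z-y)$ at $z=x_{G,a}$ indeed depends only on the Geronimus spectrum $\sigma(W_G)$ being disjoint from $\operatorname{supp}_y(u)$, and that the Christoffel polynomial $W_C(x)$ acts in the $x$-variable and therefore cannot spoil that disjointness — a fact already guaranteed by the hypothesis in Definition \ref{def:Geronimus-Uvarov}.
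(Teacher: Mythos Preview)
Your proposal is correct and follows essentially the same argument as the paper: invoke Proposition~\ref{pro:checkCCauchyLST} to split $\hat C_n^{[1]}(z)$ into the Cauchy-transform piece plus the spectral sum, kill the regular piece via analyticity of $1/(z-y)$ at $z=x_{G,a}$ together with Leibniz and \eqref{eq:Wraj}, and eliminate the $b\neq a$ terms using the factor $(x-x_{G,a})^{\kappa_{G,j}^{(a)}}$ from \eqref{eq:CWrLST}. Your remark that $W_C(x)$ is carried along inertly in the $x$-slot is accurate and matches how the paper treats it.
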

\begin{proof}
	
	For $z\not\in\operatorname{supp}(u)\cup \sigma(W_G)$,	Proposition \ref{pro:checkCCauchyLST} leads to
	\begin{align*}
	\Big(	\hat  C_{n}^{[1]}(z)W_G(z)r^{(a)}_{G,j}(z)\Big)^{(m)}_{x_{G,a}}
	=\bigg(\left\langle \hat  P^{[1]}_n(x),\frac{I_p}{z-y}\right\rangle_{W_CuW_G^{-1}} W_G(z)r_{G,j}^{(a)}(z)\bigg)^{(m)}_{x_{G,a}}+\sum_{b=0}^{q_G}\sum_{i=1}^{s_{G,b}}\Big(\hat{\mathcal C}_{n;i}^{(b)}(z)W_G(z)r_{G,j}^{(a)}(z)\Big)^{(m)}_{x_{G,a}}.
	\end{align*}
Since $\sigma(W_G(y)))\cap \operatorname{supp}_y(u)=\varnothing$, the derivatives of the Cauchy kernel $1/(z-y)$ are analytic at  $z=x_{G,a}$. Hence,
	\begin{align*}
	\bigg(\left\langle \hat P^{[1]}_n(x),\frac{I_p}{z-y}\right\rangle_{W_CuW_G^{-1}} W_G(z)r_{G,j}^{(a)}(z)\bigg)^{(m)}_{x_{G,a}}&=\left\langle \hat  P^{[1]}_n(x), \bigg(\frac{W_G(z)r_{G,j}^{(a)}(z)}{z-y} \bigg)^{(m)}_{x_{G,a}}\right\rangle_{W_CuW_G^{-1}}\\&=\left\langle \hat P^{[1]}_n(x), \sum_{k=0}^m\binom{m}{k}\big(W_G(z)r_{G,j}^{(a)}(z) \big)^{(k)}_{x_{G,a}}\frac{(-1)^{m-k}(m-k)!}{(x_{G,a}-y)^{m-k+1}}\right\rangle_{W_CuW_G^{-1}}\\
	&=0_{p\times 1},
	\end{align*}
	for $m=1,\dots,\kappa^{(a)}_{G,j}-1$. From  \eqref{eq:CWrLST} we know that $	\hat {\mathcal C}_{n;i}^{(b)}(x)W_G(x)r^{(a)}_{G,j}(x)$, $b\neq a$, has a zero at $z=x_{G,a}$ of order $\kappa^{(a)}_{G,j}$,  so that
	\begin{align*}
	\big(	\hat {\mathcal C}_{n;i}^{(b)}(x)W_G(x)r^{(a)}_{G,j}(x)\big)^{(m)}_{x_{G,a}}&=0, &b\neq a,
	\end{align*}
	for $m=0,\dots,\kappa^{(a)}_{G,j}-1$.
\end{proof}
 
\begin{pro} The following formulas
	\begin{align}\label{eq:CscCLST}
	\boldsymbol{\mathcal J}^{(j)}_{G,\hat{C}^{[1]}_nW_G}(x_{G,a})&=\sum_{i=1}^{s_{G,a}}\boldsymbol{\mathcal J}^{(j)}_{\hat  {\mathcal C}_{n;i}W_G}(x_{G,a}),&
	\boldsymbol{\mathcal J}_{G,\hat {C}^{[1]}_nW_G}(x_{G,a})&=\sum_{i=1}^{s_{G,a}}\boldsymbol{\mathcal J}_{G,\hat {\mathcal C}_{n;i}W_G}(x_{G,a}),\\
	\label{eq:CPXWLST}
	\boldsymbol{\mathcal J}^{(j)}_{G,\hat  {\mathcal C}_{n;i}W_G}(x_{G,a})&=	\prodint{\hat  P^{[1]}_n(x)W_C(x),\big(\xi^{[a])}_i\big)_x}\mathcal W_{G,i,j}^{(a)},&
	\boldsymbol{\mathcal J}_{G,\hat {\mathcal C}_{n;i}W_G}(x_{G,a})&=	\prodint{\hat  P^{[1]}_n(x)W_C(x),\big(\xi^{[a])}_i\big)_x} \mathcal W_{G,i}^{(a)},\\
	\label{eq:checkCPLST}
	\boldsymbol{\mathcal J}_{G,\hat C^{[1]}_n W_G}(x_{G,a})
	&=	\prodint{\hat  P^{[1]}_n(x)W_C(x),\big(\xi^{[a])}\big)_x}\mathcal W_G^{(a)}, &
	\boldsymbol{\mathcal J}_{G,\hat C^{[1]}_n W_G}
	&=\prodint{\hat  P^{[1]}_n(x)W_C(x),(\xi)_x}\mathcal W_G,
	\end{align}
	hold true.
\end{pro}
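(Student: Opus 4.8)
The plan is to establish the four formulas in \eqref{eq:CscCLST}--\eqref{eq:checkCPLST} by reducing them to the analogous Geronimus-case identities \eqref{eq:CscC}--\eqref{eq:checkCP}, which were proven earlier. The essential observation is that the Geronimus--Uvarov mass term $v_{x,y}$ in \eqref{uva} differs from its Geronimus counterpart \eqref{eq:v_general} precisely by the insertion of $W_C(x)$ acting on the $x$-side generalized functions $\big(\xi^{[a]}_{j,m}\big)_x$. Consequently the auxiliary matrices $\hat{\mathcal C}^{(a)}_{n;i}(z)$ defined in \eqref{eq:defCLST} are built exactly as the $\check{\mathcal C}^{(a)}_{n;i}(z)$ of the Geronimus section, except that the pairing $\prodint{\check P^{[1]}_n(x),(\xi^{[a]}_i)_x}$ is replaced throughout by $\prodint{\hat P^{[1]}_n(x)W_C(x),(\xi^{[a]}_i)_x}$, and the spectral data $x_a,\kappa^{(a)}_j,l^{(a)}_j,\eta^{(a)}_i,\mathcal L^{(a)}_i$ are taken with respect to $W_G$ rather than the single polynomial $W$. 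The matrices $\mathcal W^{(a)}_{G,i,j}$, $\mathcal W^{(a)}_{G,i}$, $\mathcal W^{(a)}_G$, $\mathcal W_G$ are then the verbatim analogues of $\mathcal W^{(a)}_{i,j}$, etc., from Definition~\ref{def:Wji}, now assembled from the functions $w^{(a)}_{G,i,j;k}$ attached to $W_G$.

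First I would prove \eqref{eq:CscCLST}: this is an immediate transcription of the argument giving \eqref{eq:CscC}, since that identity was nothing but a consequence of the jet relation \eqref{eq:CLST}, which is the Geronimus--Uvarov version of \eqref{eq:C} and has already been established in the preceding lemma. Next, for \eqref{eq:CPXWLST} I would invoke the structural identity \eqref{eq:CWrLST}, which expresses $\hat{\mathcal C}^{(a)}_{n;i}(x)W_G(x)r^{(a)}_{G,j}(x)$ in exactly the form that, after differentiating $m$ times at $x_{G,a}$ for $m=0,\dots,\kappa^{(a)}_{G,j}-1$ and collecting the results into a matrix, reproduces the definition of $\mathcal W^{(a)}_{G,i,j}$. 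The computation is word-for-word that used to derive \eqref{eq:CPXW} from \eqref{eq:CWr}, the only change being the carried factor $W_C(x)$ inside the pairing. Finally, \eqref{eq:checkCPLST} follows from \eqref{eq:CscCLST} and \eqref{eq:CPXWLST} by summing over the Jordan blocks $i=1,\dots,s_{G,a}$ and then over the eigenvalues $a=1,\dots,q_G$, using the block-diagonal assembly of $\mathcal W^{(a)}_G$ and $\mathcal W_G$ in the analogue of \eqref{eq:T}, exactly as in the passage concluding the Geronimus proof.

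The step requiring the most care is the bookkeeping in \eqref{eq:CPXWLST}, where one must verify that the $W_C(x)$ inserted on the $x$-side commutes harmlessly through the manipulation. The point is that $W_C(x)$ enters only through the pairing $\prodint{\hat P^{[1]}_n(x)W_C(x),(\cdot)_x}$ and is never differentiated at the spectral points of $W_G$; all the jet-taking and root-polynomial algebra happens on the $y$-side, governed entirely by $W_G$. Thus the factorization \eqref{eq:Wraj} for $W_G$ and the product law \eqref{leftright} for $W_G$ carry over unchanged, and the matrices $\mathcal W^{(a)}_{G,i,j}$ depend solely on $W_G$ through $w^{(a)}_{G,i,j;k}$. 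Once this separation of roles between $W_C$ (a passive right factor in the pairing) and $W_G$ (the source of all spectral data) is made explicit, each of the four displayed equalities is a routine adaptation of its Geronimus predecessor, and no genuinely new obstacle arises.
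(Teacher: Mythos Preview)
Your proposal is correct and follows essentially the same route as the paper: \eqref{eq:CscCLST} is read off directly from \eqref{eq:CLST}, \eqref{eq:CPXWLST} is obtained by differentiating \eqref{eq:CWrLST} at $x_{G,a}$ for $m=0,\dots,\kappa^{(a)}_{G,j}-1$ and assembling the result into matrix form, and \eqref{eq:checkCPLST} follows by summing \eqref{eq:CPXWLST} over $i$ (and then over $a$) via \eqref{eq:CscCLST}. Your remark that $W_C(x)$ enters only through the $x$-side pairing and is never touched by the jet-taking at the eigenvalues of $W_G$ is exactly the reason the Geronimus argument transfers verbatim.
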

\begin{proof}
	Formula \eqref{eq:CscCLST} is deduced from \eqref{eq:CLST}. To show \eqref{eq:CPXWLST},  let us notice that according to \eqref{eq:CWrLST},  for $m=0,\dots, \kappa_{G,j}^{(a)}-1$, we know that
	\begin{align*}
	\big(\hat {\mathcal C}_{n;i}^{(a)}(x)W_G(x)r^{(a)}_{G,j}(x)\big)_{x=x_{G,a}}^{(m)}=
\prodint{\hat  P^{[1]}_n(x)W_C(x),\big(\xi^{[a])}\big)_x}\eta_{G,i}^{(a)}
	\begin{bmatrix}
	\Big((x-x_{G,a})^{\kappa^{(a)}_{G,\max(i,j)}-\kappa^{(a)}_{G,i}}w_{G,i,j}^{(a)}(x)\Big)^{(m)}_{x_{G,a}}\\
	\vdots\\
	\Big((x-x_{G,a})^{\kappa^{(a)}_{G,\max(i,j)}-1}w_{G,i,j}^{(a)}(x)\Big)^{(m)}_{x_{G,a}}
	\end{bmatrix}.
	\end{align*}
For \eqref{eq:checkCPLST}, from \eqref{eq:CPXWLST} and \eqref{eq:CscCLST} we get
	\begin{align*}
	\boldsymbol{\mathcal J}^{(j)}_{G,\hat  C^{[1]}_nW_G}(x_{G,a})&=\sum_{i=1}^{s_{G,a}}\prodint{\hat  P^{[1]}_n(x)W_C(x),\big(\xi^{[a])}\big)_x} \mathcal W_{G,i,j}^{(a)},&
	\boldsymbol{\mathcal J}_{G,\hat  C^{[1]}_nW_G}(x_{G,a})&=\sum_{i=1}^{s_{G,a}}\prodint{\hat  P^{[1]}_n(x)W_C(x),\big(\xi^{[a])}\big)_x} \mathcal W_{G,i}^{(a)}.
	\end{align*}
\end{proof}

\subsection{Spectral Christoffel--Geronimus--Uvarov formulas}
We assume that the leading terms of both perturbing polynomials, $A_{G,N_G}$ and $A_{C,N_C}$, are nonsingular.
\subsubsection{Discussion for $n\geq N_G$}
\begin{rem}
In Corollary \ref{coro:CPTR},  with the aid of a mixed spectral/nonspectral approach, we will check that for $A_{G,N}$ nonsingular we have
	\begin{align*}
	\begin{vmatrix}
	\boldsymbol{\mathcal J}_{C,P^{[1]}_{n-N_G}} &	
	\boldsymbol{\mathcal J}_{G,C^{[1]}_{n-N_G}}-
 \prodint{  P^{[1]}_{n-N_G} (x),(\xi)_x} \mathcal W_G
	\\ 	\vdots & \vdots \\ \boldsymbol{\mathcal J}_{C,P^{[1]}_{n+N_C-1}}&\boldsymbol{\mathcal J}_{G, C^{[1]}_{n+N_C-1}}-	 \prodint{  P^{[1]}_{n+N_C-1} (x),(\xi)_x} \mathcal W_G
	\end{vmatrix}\neq 0.
	\end{align*}
\end{rem}

\begin{pro}\label{pro:resolventCPLST}
	For   $n\geq N_G$,  the resolvent can be expressed as follows
{		\begin{multline*}
		\big[\omega_{n,n-N_G},\dots,\omega_{n,n+N_C-1}\big]	
		=\\
		-\begin{bmatrix} \boldsymbol{\mathcal J}_{C,P^{[1]}_{n+N_C}},
		\boldsymbol{\mathcal J}_{G,C^{[1]}_{n+N_C}}-		\prodint{  P^{[1]}_{n+N_C} (x),(\xi)_x} \mathcal W_G
		\end{bmatrix}\begin{bmatrix}
		\boldsymbol{\mathcal J}_{C,P^{[1]}_{n-N_G}} &	
		\boldsymbol{\mathcal J}_{G,C^{[1]}_{n-N_G}}-
		\prodint{  P^{[1]}_{n-N_G} (x),(\xi)_x} \mathcal W_G
		\\ 	\vdots & \vdots \\ \boldsymbol{\mathcal J}_{C,P^{[1]}_{n+N_C-1}}&\boldsymbol{\mathcal J}_{G, C^{[1]}_{n+N_C-1}}-	 \prodint{  P^{[1]}_{n+N_C-1} (x),(\xi)_x} \mathcal W_G		\end{bmatrix}^{-1}.
		\end{multline*}}
\end{pro}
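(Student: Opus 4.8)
The plan is to follow closely the strategy already used for the pure Geronimus case in the proof of Proposition \ref{pro:resolventCP}, now adapted to the two-sided perturbation. The key input is the connection formula \eqref{conex2LST}, namely $\hat P^{[1]}(x)W_C(x)=\omega P^{[1]}(x)$, together with the banded structure of $\omega$ described in Proposition \ref{conexwLST}(2): for $n\geq N_G$ only the entries $\omega_{n,n-N_G},\dots,\omega_{n,n+N_C-1}$ are possibly nonzero in the $n$-th block row. First I would write out the $n$-th block row of the connection formula for the Christoffel-modified polynomials and second kind functions, obtaining
\begin{align*}
\hat C_n^{[1]}(x)W_G(x)&=\sum_{k=n-N_G}^{n+N_C-1}\omega_{n,k}C_k^{[1]}(x),&
\hat P^{[1]}_n(x)W_C(x)&=\sum_{k=n-N_G}^{n+N_C-1}\omega_{n,k}P_k^{[1]}(x),
\end{align*}
where the first relation comes from \eqref{eq:conexionC1LST} evaluated on the band and the second from \eqref{conex2LST}.

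Next I would take root spectral jets of both relations, but with respect to the two different spectral data. Applying the $C$-jet $\boldsymbol{\mathcal J}_{C,\cdot}$ to $\hat P^{[1]}_n(x)W_C(x)=\sum_k\omega_{n,k}P_k^{[1]}(x)$ and using that the root spectral jet of $W_C$ annihilates $\hat P_n^{[1]}(x)W_C(x)$ (the analogue of the Remark following Lemma \ref{lemma:pair}, now for $W_C$) gives the first block column of the linear system: $0=\sum_k\omega_{n,k}\boldsymbol{\mathcal J}_{C,P^{[1]}_k}$. Applying instead the $G$-jet of $\hat C_n^{[1]}(x)W_G(x)$ and invoking \eqref{eq:checkCPLST}, which identifies $\boldsymbol{\mathcal J}_{G,\hat C_n^{[1]}W_G}$ with $\prodint{\hat P_n^{[1]}(x)W_C(x),(\xi)_x}\mathcal W_G$, yields the second block column after subtracting the companion jet relation for $\hat P^{[1]}_n(x)W_C(x)$ paired against $(\xi)_x$. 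The subtraction is exactly the maneuver performed in Proposition \ref{pro:resolventCP}: it converts the inhomogeneous jet identities into the single matrix equation
\begin{align*}
\big[\omega_{n,n-N_G},\dots,\omega_{n,n+N_C-1}\big]M
&=-\Big[\boldsymbol{\mathcal J}_{C,P^{[1]}_{n+N_C}},\;\boldsymbol{\mathcal J}_{G,C^{[1]}_{n+N_C}}-\prodint{P^{[1]}_{n+N_C}(x),(\xi)_x}\mathcal W_G\Big],
\end{align*}
where $M$ is precisely the $(N_G+N_C)p\times(N_G+N_C)p$ matrix appearing in the statement, whose block rows run over $k=n-N_G,\dots,n+N_C-1$.

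Finally I would invert $M$. The nonsingularity of $M$ is supplied by the Remark immediately preceding this Proposition (to be verified later in Corollary \ref{coro:CPTR} via the mixed spectral/nonspectral comparison), which asserts exactly that the determinant of this matrix is nonzero when $A_{G,N_G}$ is nonsingular. Multiplying on the right by $M^{-1}$ then delivers the claimed expression for the resolvent row. The main obstacle is bookkeeping rather than conceptual: one must be careful that the $C$-jets act on the left-hand Christoffel factor $W_C$ while the $G$-jets act on the right-hand Geronimus factor $W_G$, so the two block columns of $M$ are built from genuinely different spectral jets, and the cancellation producing the homogeneous $C$-column must use the $W_C$-analogue of the vanishing-jet property together with the connection formula \eqref{conex2LST}. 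Provided that distinction is tracked correctly, and granting the nonsingularity of $M$, the result follows verbatim from the Geronimus argument.
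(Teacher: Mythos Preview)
Your approach is essentially the paper's: take the $n$-th block row of \eqref{conex2LST} and \eqref{eq:conexionC1LST}, apply the $C$-jet to the polynomial relation (using that $\boldsymbol{\mathcal J}_{C,\hat P_n^{[1]}W_C}=0$) and the $G$-jet to the second-kind relation, invoke \eqref{eq:checkCPLST} and subtract the $(\xi)_x$-paired version of \eqref{conex2LST} to assemble the linear system, then invert via the Remark/Corollary~\ref{coro:CPTR}.

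One slip to fix: your displayed connection formulas truncate the sum at $k=n+N_C-1$, but the band of $\omega$ reaches $k=n+N_C$ with $\omega_{n,n+N_C}=I_p$, so the correct identities are
\[
\hat C_n^{[1]}(x)W_G(x)=\sum_{k=n-N_G}^{n+N_C-1}\omega_{n,k}C_k^{[1]}(x)+C_{n+N_C}^{[1]}(x),\qquad
\hat P^{[1]}_n(x)W_C(x)=\sum_{k=n-N_G}^{n+N_C-1}\omega_{n,k}P_k^{[1]}(x)+P_{n+N_C}^{[1]}(x).
\]
It is precisely these extra $n+N_C$ terms that produce the inhomogeneous right-hand side $-\big[\boldsymbol{\mathcal J}_{C,P^{[1]}_{n+N_C}},\;\boldsymbol{\mathcal J}_{G,C^{[1]}_{n+N_C}}-\prodint{P^{[1]}_{n+N_C}(x),(\xi)_x}\mathcal W_G\big]$ you later write down; without them your intermediate equation $0=\sum_k\omega_{n,k}\boldsymbol{\mathcal J}_{C,P^{[1]}_k}$ (sum to $n+N_C-1$) would give a homogeneous system with no source for that term. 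Once this is corrected the argument goes through exactly as in the paper.
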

\begin{proof}
The connection formula  \eqref{eq:conexionC1LST} gives for  $n\geq N_G$
	\begin{align*}
	\hat  C_n^{[1]}(x)W_G(x)=\sum_{k=n-N_G}^{n+N_C}\omega_{n,k}C_k^{[1]}(x).
	\end{align*}
Thus,	we deduce that
	\begin{align*}
	\boldsymbol{\mathcal J}_{G, \hat  C^{[1]}_nW_G}=\big[\omega_{n,n-N_G},\dots,\omega_{n,n+N_C-1}\big]	
	\begin{bmatrix}
	\boldsymbol{\mathcal J}_{G,C^{[1]}_{n-N_G}}\\ 	\vdots\\ \boldsymbol{\mathcal J}_{G,C^{[1]}_{n+N_C-1}}
	\end{bmatrix}
	+\boldsymbol{\mathcal J}_{C^{[1]}_{n+N_C}}.
	\end{align*}
Now, from  \eqref{conex2LST}, we conclude
	\begin{align}\label{eq:PomegaPLST}
		\prodint{  \hat P^{[1]}_{n} (x)W_C(x),(\xi)_x}=\big[\omega_{n,n-N_G},\dots,\omega_{n,n+N_C-1}\big]	
	\begin{bmatrix}
\prodint{  P^{[1]}_{n-N_G} (x),(\xi)_x}\\ 	\vdots\\ \prodint{  P^{[1]}_{n+N_C-1} (x),(\xi)_x}
	\end{bmatrix}
	+\prodint{  P^{[1]}_{n+N_C} (x),(\xi)_x}.
	\end{align}
Recalling  \eqref{eq:checkCPLST} we obtain
	\begin{multline*}
	\big[\omega_{n,n-N_G},\dots,\omega_{n,n+N_C-1}\big]	
	\begin{bmatrix}
	\boldsymbol{\mathcal J}_{G,C^{[1]}_{n-N_C}}\\ 	\vdots\\ \boldsymbol{\mathcal J}_{G,C^{[1]}_{n+N_C-1}}
	\end{bmatrix}
	+\boldsymbol{\mathcal J}_{G,C^{[1]}_{n+N_C}}
	\\=\big[\omega_{n,n-N_G},\dots,\omega_{n,n+N_C-1}\big]	
	\begin{bmatrix}
	\prodint{  P^{[1]}_{n-N_G} (x),(\xi)_x}\mathcal W_G\\ 	\vdots\\ \prodint{  P^{[1]}_{n+N_C-1} (x),\big(\xi^{[a])}\big)_x}\mathcal W_G
	\end{bmatrix}
	+\prodint{  P^{[1]}_{n+N_C} (x),(\xi)_x}\mathcal W_G.
	\end{multline*}
In other words
	\begin{align*}
	\big[\omega_{n,n-N_G},\dots,\omega_{n,n+N_C-1}\big]	
	\begin{bmatrix}
	\boldsymbol{\mathcal J}_{G,C^{[1]}_{n-N_G}}-\prodint{  P^{[1]}_{n-N_G} (x),(\xi)_x}\mathcal W_G.\\ 	\vdots\\ \boldsymbol{\mathcal J}_{G, C^{[1]}_{n+N_C-1}}-\prodint{  P^{[1]}_{n+N_C-1} (x),(\xi)_x}\mathcal W_G.
	\end{bmatrix}
	&=
	-\Big(\boldsymbol{\mathcal J}_{G,C^{[1]}_{n+N_C}}-\prodint{  P^{[1]}_{n+N_C} (x),(\xi)_x}\mathcal W_G\Big).
	\end{align*}
From  \eqref{conex2LST} we also get
	\begin{align}\label{eq:JP}
	\big[\omega_{n,n-N_G},\dots,\omega_{n,n+N_C-1}\big]	
	\begin{bmatrix}
	\boldsymbol{\mathcal J}_{C,P^{[1]}_{n-N_G}}\\ 	\vdots\\ \boldsymbol{\mathcal J}_{C,P^{[1]}_{n+N_C-1}}
	\end{bmatrix}
	+\boldsymbol{\mathcal J}_{C,P^{[1]}_{n+N_C}}=0.
	\end{align}
	Therefore, we conclude that
		\begin{multline*}
		\big[\omega_{n,n-N_G},\dots,\omega_{n,n+N_C-1}\big]	
		\begin{bmatrix}
		\boldsymbol{\mathcal J}_{C,P^{[1]}_{n-N_G}} &	\boldsymbol{\mathcal J}_{G,C^{[1]}_{n-N_G}}-
		\prodint{  P^{[1]}_{n-N_G} (x),(\xi)_x}\mathcal W_G\\ 	\vdots & \vdots \\ \boldsymbol{\mathcal J}_{C,P^{[1]}_{n+N_C-1}}&\boldsymbol{\mathcal J}_{G, C^{[1]}_{n+N_C-1}}-\prodint{  P^{[1]}_{n+N_C-1} (x),(\xi)_x}\mathcal W_G.
		\end{bmatrix}
		\\=
		-\begin{bmatrix} \boldsymbol{\mathcal J}_{C,P^{[1]}_{n+N_C}},\;\,
		\boldsymbol{\mathcal J}_{G,C^{[1]}_{n+N_C}}-\prodint{  P^{[1]}_{n+N_C} (x),(\xi)_x}\mathcal W_G.
		\end{bmatrix}.
		\end{multline*}
\end{proof}

\begin{teo}[Spectral Christoffel--Geronimus--Uvarov formulas]\label{teo:SCGU}
	For monic Geronimus--Uvarov perturbations, with  masses as described in \eqref{uva}, when $n\geq N_G$,
	we have the following last quasideterminant expressions for the perturbed biorthogonal matrix polynomials and its matrix norms
	\begin{align*}
	\hat P^{[1]}_{n}(x)W_C(x)&=
	\Theta_*
	\begin{bmatrix}
	\boldsymbol{\mathcal J}_{C,P^{[1]}_{n-N_G}} &	\boldsymbol{\mathcal J}_{G,C^{[1]}_{n-N_G}}-
	\prodint{  P^{[1]}_{n-N_G} (x),(\xi)_x}\mathcal W_G&P_{n-N_G}^{[1]}(x)\\ 	\vdots & \vdots &\vdots \\ \boldsymbol{\mathcal J}_{C,P^{[1]}_{n+N_C}}&\boldsymbol{\mathcal J}_{G, C^{[1]}_{n+N_C}}-\prodint{  P^{[1]}_{n+N_C} (x),(\xi)_x}\mathcal W_G& P^{[1]}_{n+N_C}(x)
	\end{bmatrix},\\
	\hat H_{n}&=\Theta_*
	\begin{bmatrix}
	\boldsymbol{\mathcal J}_{C,P^{[1]}_{n-N_G}} &	\boldsymbol{\mathcal J}_{G,C^{[1]}_{n-N_G}}-
	\prodint{  P^{[1]}_{n-N_G} (x),(\xi)_x}\mathcal W_G&H_{n-N_G}\\
		\boldsymbol{\mathcal J}_{C,P^{[1]}_{n-N_G+1}} &	\boldsymbol{\mathcal J}_{G,C^{[1]}_{n-N_G+1}}-\prodint{  P^{[1]}_{n-N_G+1} (x),(\xi)_x}\mathcal W_G&0_p\\ 	\vdots & \vdots &\vdots \\ \boldsymbol{\mathcal J}_{C,P^{[1]}_{n+N_C}}&\boldsymbol{\mathcal J}_{G, C^{[1]}_{n+N_C}}-\prodint{  P^{[1]}_{n+N_C} (x),(\xi)_x}\mathcal W_G & 0_p
	\end{bmatrix},\\
	\big(	\hat  P _{n}^{[2]}(y)\big)^\top&=-\Theta_*\begin{bmatrix} 	\boldsymbol{\mathcal J}_{ C, P^{[1]}_{n-N_G}}&
	\boldsymbol{\mathcal J}_{ G, C^{[1]}_{n-N_G}}	-\prodint{  P^{[1]}_{n-N_G} (x),(\xi)_x}\mathcal W_G& H_{n-N_G}\\
	\boldsymbol{\mathcal J}_{ C, P^{[1]}_{n-N_G+1}}&
	\boldsymbol{\mathcal J}_{ G, C^{[1]}_{n-N_G+1}}	-\prodint{  P^{[1]}_{n-N_G+1} (x),(\xi)_x}\mathcal W_G& 0_p\\ 	\vdots & \vdots\\\boldsymbol{\mathcal J}_{C, P^{[1]}_{n+N_C-1}}&	\boldsymbol{\mathcal J}_{G, C^{[1]}_{n+N_C-1}}\prodint{  P^{[1]}_{n+N_C-1} (x),(\xi)_x}\mathcal W_G&0_p\\
	W_G(y) \boldsymbol{\mathcal J}_{C, K_{n-1}}(y) &  W_G(y)\big(	\boldsymbol{\mathcal J}_{ G,K^{(pc)}_{n-1}}(y)-
	\prodint{  K_{n-1}(x,y),(\xi)_x}\mathcal W_G
	\big)+\boldsymbol{\mathcal J}_{G, \mathcal V}(y) &0_p
	\end{bmatrix}.
	\end{align*}
\end{teo}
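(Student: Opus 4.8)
The plan is to mimic exactly the structure of the proof of Theorem~\ref{teo:spectral}, now carrying the Christoffel factor $W_C$ through every pairing. The three quasideterminantal formulas for $\hat P^{[1]}_n(x)W_C(x)$, $\hat H_n$ and $\big(\hat P^{[2]}_n(y)\big)^\top$ will each follow from the resolvent expression in Proposition~\ref{pro:resolventCPLST} together with the connection formulas \eqref{conex2LST}, \eqref{conex3LST} and \eqref{eq:omegaALST}, and the mixed-kernel connection formula \eqref{eq:NLST} of Theorem~\ref{teoconexLST}.

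First I would treat $\hat P^{[1]}_n(x)W_C(x)$ and $\hat H_n$. From the band structure of $\omega$ in Proposition~\ref{conexwLST} and the connection formula \eqref{conex2LST}, one has
\begin{align*}
\hat P^{[1]}_n(x)W_C(x)=P^{[1]}_{n+N_C}(x)+\big[\omega_{n,n-N_G},\dots,\omega_{n,n+N_C-1}\big]\begin{bmatrix}P^{[1]}_{n-N_G}(x)\\\vdots\\P^{[1]}_{n+N_C-1}(x)\end{bmatrix},
\end{align*}
and substituting the resolvent vector from Proposition~\ref{pro:resolventCPLST} produces the stated last quasideterminant by the Schur/quasideterminant identity. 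For $\hat H_n$ I would use \eqref{eq:omegaALST}, namely $\hat H_n=\omega_{n,n-N_G}A_{G,N_G}^{-1}\cdots$ in the form $\hat H_n=\omega_{n,n-N_G}H_{n-N_G}$, and extract the single block $\omega_{n,n-N_G}$ from Proposition~\ref{pro:resolventCPLST} by multiplying on the right by $[I_p,0_p,\dots,0_p]^\top$; this reproduces the bordered quasideterminant with the column $(H_{n-N_G},0_p,\dots,0_p)^\top$.

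The main work, and the expected obstacle, is the $\big(\hat P^{[2]}_n(y)\big)^\top$ formula, which requires pairing the mixed Christoffel--Darboux connection formula \eqref{eq:NLST} against the jet data. Following the $\check P^{[2]}$ argument, I would form $\sum_{k}\big(\hat P^{[2]}_k(y)\big)^\top\hat H_k^{-1}\hat C^{[1]}_k(x)W_G(x)$, take spectral jets in the $x$-variable, and use \eqref{eq:checkCPLST} to replace $\boldsymbol{\mathcal J}_{G,\hat C^{[1]}_kW_G}$ by $\prodint{\hat P^{[1]}_k(x)W_C(x),(\xi)_x}\mathcal W_G$; subtracting the analogous identity built from \eqref{eq:KLST} (paired against $\mathcal W_G$) cancels the $\hat P^{[2]}$ tail and yields, for $n\geq N_G$,
\begin{align*}
\big(\hat P^{[2]}_n(y)\big)^\top\hat H_n^{-1}\,(\text{wing block})=W_G(y)\big(\boldsymbol{\mathcal J}_{G,K^{(pc)}_{n-1}}(y)-\prodint{K_{n-1}(x,y),(\xi)_x}\mathcal W_G\big)+\boldsymbol{\mathcal J}_{G,\mathcal V}(y),
\end{align*}
together with the purely Christoffel term $W_G(y)\boldsymbol{\mathcal J}_{C,K_{n-1}}(y)$ arising from the second kind/$W_C$ column. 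The delicate point is bookkeeping the two-block column structure of the wing matrix $\begin{bsmallmatrix}0&-\Omega^C{[n]}\\\Omega^G{[n]}&0\end{bsmallmatrix}$ in Theorem~\ref{teoconexLST}, ensuring that the $W_C$-jets $\boldsymbol{\mathcal J}_{C,P^{[1]}}$ occupy the first block column and the Geronimus jets the second, exactly as in the claimed quasideterminant.

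Finally I would invoke \eqref{eq:omegaALST} once more to rewrite the wing block as $\omega_{n,n-N_G}=\hat H_n(H_{n-N_G})^{-1}$, which converts the relation into $\big(\hat P^{[2]}_n(y)\big)^\top$ times the bordered matrix; solving by the quasideterminant expansion then gives the stated formula with the final border row. Throughout, the nonsingularity of the $2N$-block jet matrix being inverted is guaranteed by the remark preceding Proposition~\ref{pro:resolventCPLST}, whose justification is deferred to Corollary~\ref{coro:CPTR} via the mixed spectral/nonspectral comparison; I would simply cite that nonvanishing rather than reprove it here.
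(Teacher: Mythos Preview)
Your proposal is correct and follows essentially the same route as the paper: the connection formula \eqref{conex2LST} plus Proposition~\ref{pro:resolventCPLST} for $\hat P^{[1]}_nW_C$, the relation $\hat H_n=\omega_{n,n-N_G}H_{n-N_G}$ from \eqref{eq:omegaALST} for the norms, and the jet-pairing of \eqref{eq:NLST} against \eqref{eq:checkCPLST}, subtracted from the $\mathcal W_G$-pairing of \eqref{eq:KLST}, to produce the wing-block identity \eqref{eq:PomeganNLST} and then extract $\big(\hat P^{[2]}_n(y)\big)^\top$ via the first column. One small imprecision: in your displayed wing-block equation the left-hand side should carry the full row $\big[(\hat P^{[2]}_{n-m_C})^\top\hat H_{n-m_C}^{-1},\dots,(\hat P^{[2]}_{n+N_G-1})^\top\hat H_{n+N_G-1}^{-1}\big]$ acting on the $2\times 2$ block wing matrix, not just the single term $(\hat P^{[2]}_n)^\top\hat H_n^{-1}$; the isolation of $(\hat P^{[2]}_n)^\top$ only happens at the final step when you multiply by $[H_{n-N_G},0_p,\dots,0_p]^\top$, exactly as you indicate afterwards.
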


\begin{proof}
Relation \eqref{conex2LST} gives
	\begin{align}\label{eq:conexion}
	\hat  P^{[1]}_{n}(x)W_C(x)&=P^{[1]}_{n+N_C}(x)+\big[\omega_{n,n-N_G},\dots,\omega_{n,n+N_C-1}\big]\begin{bmatrix}P_{n-N_G}^{[1]}(x)\\\vdots\\
	P_{n+N_C-1}^{[1]}(x)	
	\end{bmatrix},
	\end{align}
	and, applying Proposition \ref{pro:resolventCPLST}, we get
{	\begin{multline*}
	\hat  P^{[1]}_{n}(x)W_C(x)=	P^{[1]}_{n+N_C}(x)-\begin{bmatrix} \boldsymbol{\mathcal J}_{C,P^{[1]}_{n+N_C}},
	\boldsymbol{\mathcal J}_{G,C^{[1]}_{n+N_C}}-	\prodint{  P^{[1]}_{n+N_C} (x),(\xi)_x}\mathcal W_G
	\end{bmatrix}\\\times\begin{bmatrix}
	\boldsymbol{\mathcal J}_{C,P^{[1]}_{n-N_G}} &	\boldsymbol{\mathcal J}_{G,C^{[1]}_{n-N_G}}-	\prodint{  P^{[1]}_{n-N_G} (x),(\xi)_x}\mathcal W_G\\ 	\vdots & \vdots \\ \boldsymbol{\mathcal J}_{C,P^{[1]}_{n+N_C-1}}&\boldsymbol{\mathcal J}_{G, C^{[1]}_{n+N_C-1}}-
	\prodint{  P^{[1]}_{n+N_G-1} (x),(\xi)_x}\mathcal W_G
	\end{bmatrix}^{-1}\begin{bmatrix}P_{n-N_G}^{[1]}(x)\\\vdots\\
	P_{n+N_C-1}^{[1]}(x)	
	\end{bmatrix},
	\end{multline*}}
	and the result is proven.
From \eqref{eq:omegaALST} we deduce
	\begin{align*} \label{eq:HG2}
	\hat H_{n}=\omega_{n,n-N_G}H_{n-N_G}.
	\end{align*}
According to Proposition \ref{pro:resolventCPLST} we conclude
	\begin{multline*}
	\omega_{n,n-N_C}
	=-\begin{bmatrix} \boldsymbol{\mathcal J}_{C,P^{[1]}_{n+N_C}},
	\boldsymbol{\mathcal J}_{G,C^{[1]}_{n+N_C}}-	\prodint{  P^{[1]}_{n+N_C} (x),(\xi)_x}\mathcal W_G
	\end{bmatrix}\\\times\begin{bmatrix}
	\boldsymbol{\mathcal J}_{C,P^{[1]}_{n-N_G}} &	\boldsymbol{\mathcal J}_{G,C^{[1]}_{n-N_G}}-
		\prodint{  P^{[1]}_{n-N_G} (x),(\xi)_x}\mathcal W_G\\ 	\vdots & \vdots \\ \boldsymbol{\mathcal J}_{C,P^{[1]}_{n+N_C-1}}&\boldsymbol{\mathcal J}_{G, C^{[1]}_{n+N_C-1}}-
	\prodint{  P^{[1]}_{n+N_G-1} (x),(\xi)_x}\mathcal W_G
	\end{bmatrix}^{-1}\begin{bmatrix}
	I_p\\0_p\\\vdots\\0_p
	\end{bmatrix}.
	\end{multline*}
	For the second family of biorthogonal matrix polynomials we proceed as follows.	First, 	recalling to Definition \ref{eq:CD kernel}	we write   \eqref{eq:NLST}  in the  following way
		\begin{multline}
		\sum_{k=0}^{n-1}\big(\hat  P_k^{[2]}(y)\big)^\top \hat  H_k^{-1}\hat  C_k^{[1]}(x)W_G(x)		=	W_G(y)	K_{n-1}^{(pc)}(x,y)+\mathcal V_G(x,y)\\-	\begin{bmatrix}
		\big(\hat P_{n-m_C}^{[2]}(y)\big)^\top(\hat H_{n-m_C})^{-1},\dots,\big(\hat P_{n+N_G-1}^{[2]}(y)\big)^\top(\hat H_{n+N_G-1})^{-1}
		\end{bmatrix}\begin{bmatrix}0_{m_Cp\times  N_G p} &
		-\Omega^C{[n]} \\
		\Omega^G{[n]}	& 0_{N_Gp\times N_Cp}
		\end{bmatrix}	\begin{bmatrix}	C_{n-N_G}^{[1]}(x)\\
		\vdots\\
		C_{n+N_C-1}^{[1]}(x)
		\end{bmatrix}.
		\end{multline}
The corresponding spectral jets fulfill
	\begin{multline*}
	\sum_{k=0}^{n-1}\big(\hat  P_k^{[2]}(y)\big)^\top \hat  H_k^{-1}\boldsymbol{\mathcal J}_{G, \hat C^{[1]}_kW_G}=W_G(y)
	\boldsymbol{\mathcal J}_{G, K^{(pc)}_{n-1}}(y) 	
	+\boldsymbol{\mathcal J}_{ G,\mathcal V}(y)\\-\begin{bmatrix}
	\big(\hat P_{n-m_C}^{[2]}(y)\big)^\top(\hat H_{n-m_C})^{-1},\dots,\big(\hat P_{n+N_G-1}^{[2]}(y)\big)^\top(\hat H_{n+N_G-1})^{-1}
	\end{bmatrix}\begin{bmatrix}0_{m_Cp\times  N_G p} &
	-\Omega^C{[n]} \\
	\Omega^G{[n]}	& 0_{N_Gp\times N_Cp}
	\end{bmatrix}\begin{bmatrix}
	\boldsymbol{\mathcal J}_{ G, C^{[1]}_{n-N_G}}\\ 	\vdots\\	\boldsymbol{\mathcal J}_{G, C^{[1]}_{n+N_C-1}}
	\end{bmatrix}.
	\end{multline*}
From \eqref{eq:checkCPLST} we conclude
		\begin{multline}\label{eq:WKpcLST}
		\sum_{k=0}^{n-1}\big(\hat  P_k^{[2]}(y)\big)^\top \hat  H_k^{-1}\prodint{\hat  P^{[1]}_{k} (x),(\xi)_x}\mathcal W_G=W_G(y)
		\boldsymbol{\mathcal J}_{G, K^{(pc)}_{n-1}}(y)+\boldsymbol{\mathcal J}_{ G,\mathcal V}(y)\\
		-\begin{bmatrix}
		\big(\hat P_{n-m_C}^{[2]}(y)\big)^\top(\hat H_{n-m_C})^{-1},\dots,\big(\hat P_{n+N_G-1}^{[2]}(y)\big)^\top(\hat H_{n+N_G-1})^{-1}
		\end{bmatrix}\begin{bmatrix}0_{m_Cp\times  N_G p} &
		-\Omega^C{[n]} \\
		\Omega^G{[n]}	& 0_{N_Gp\times N_Cp}
		\end{bmatrix}\begin{bmatrix}
		\boldsymbol{\mathcal J}_{ G, C^{[1]}_{n-N_G}}\\ 	\vdots\\	\boldsymbol{\mathcal J}_{G, C^{[1]}_{n+N_C-1}}
		\end{bmatrix},
		\end{multline}
while, from \eqref{eq:KLST}, we realize that
{\small	\begin{multline*}
	\sum_{k=0}^{n-1}\big(\hat P_k^{[2]}(y)\big)^\top \hat H_k^{-1}	\prodint{  \hat P^{[1]}_{k} (x),(\xi)_x}\mathcal W_G=
	W_G(y) \prodint{  K_{n-1}(x,y),(\xi)_x}\mathcal W_G
\\	-\begin{bmatrix}
	\big(\hat P_{n-m_C}^{[2]}(y)\big)^\top(\hat H_{n-m_C})^{-1},\dots,\big(\hat P_{n+N_G-1}^{[2]}(y)\big)^\top(\hat H_{n+N_G-1})^{-1}
	\end{bmatrix}\begin{bmatrix}0_{m_Cp\times  N_G p} &
	-\Omega^C{[n]} \\
	\Omega^G{[n]}	& 0_{N_Gp\times N_Cp}
	\end{bmatrix}
	\begin{bmatrix}
		\prodint{   P^{[1]}_{n-N_G} (x),(\xi)_x}\mathcal W_G\\ 	\vdots\\		\prodint{   P^{[1]}_{n+N_C-1} (x),(\xi)_x}\mathcal W_G
	\end{bmatrix},
	\end{multline*}}
	which can be subtracted to 	\eqref{eq:WKpcLST} to get
{\small	\begin{multline*}
	W_G(y)\big(	\boldsymbol{\mathcal J}_{ G,K^{(pc)}_{n-1}}(y)-\prodint{  K_{n-1}(x,y),(\xi)_x}\mathcal W_G\big)+\boldsymbol{\mathcal J}_{G, \mathcal V}(y)\\=\begin{bmatrix}
	\big(\hat P_{n-m_C}^{[2]}(y)\big)^\top(\hat H_{n-m_C})^{-1},\dots,\big(\hat P_{n+N_G-1}^{[2]}(y)\big)^\top(\hat H_{n+N_G-1})^{-1}
	\end{bmatrix}\begin{bmatrix}0_{m_Cp\times  N_G p} &
	-\Omega^C{[n]} \\
	\Omega^G{[n]}	& 0_{N_Gp\times N_Cp}
	\end{bmatrix}
\\\times	\begin{bmatrix}
	\boldsymbol{\mathcal J}_{ G, C^{[1]}_{n-N_G}}	-	\prodint{   P^{[1]}_{n-N_G} (x),(\xi)_x}\mathcal W_G\\ 	\vdots\\	\boldsymbol{\mathcal J}_{G, C^{[1]}_{n+N_C-1}}-	\prodint{   P^{[1]}_{n+N_C-1} (x),(\xi)_x}\mathcal W_G
	\end{bmatrix}.
	\end{multline*}	}
 \eqref{eq:KLST} also implies
		\begin{multline}\label{eq:WKpcLST2}
	W_G(y) \boldsymbol{\mathcal J}_{C, K_{n-1}}(y)\\
		=\begin{bmatrix}
		\big(\hat P_{n-m_C}^{[2]}(y)\big)^\top(\hat H_{n-m_C})^{-1},\dots,\big(\hat P_{n+N_G-1}^{[2]}(y)\big)^\top(\hat H_{n+N_G-1})^{-1}
		\end{bmatrix}\begin{bmatrix}0_{m_Cp\times  N_G p} &
		-\Omega^C{[n]} \\
		\Omega^G{[n]}	& 0_{N_Gp\times N_Cp}
		\end{bmatrix}\begin{bmatrix}
		\boldsymbol{\mathcal J}_{ C, P^{[1]}_{n-N_G}}\\ 	\vdots\\	\boldsymbol{\mathcal J}_{C, P^{[1]}_{n+N_C-1}}
		\end{bmatrix}.
		\end{multline}

	Hence, we obtain
	\begin{multline}\label{eq:PomeganNLST}
\begin{multlined}[b][0.7\textwidth]
\begin{bmatrix} W_G(y) \boldsymbol{\mathcal J}_{C, K_{n-1}}(y),
	W_G(y)\big(	\boldsymbol{\mathcal J}_{ G,K^{(pc)}_{n-1}}(y)-	\prodint{  K_{n-1}(x,y),(\xi)_x}\mathcal W_G\big)+\boldsymbol{\mathcal J}_{G, \mathcal V}(y)	
\end{bmatrix}\\\times\begin{bmatrix} 	\boldsymbol{\mathcal J}_{ C, P^{[1]}_{n-N_G}}&
	\boldsymbol{\mathcal J}_{ G, C^{[1]}_{n-N_G}}	-	\prodint{   P^{[1]}_{n-N_G} (x),(\xi)_x}\mathcal W_G\\ 	\vdots & \vdots\\\boldsymbol{\mathcal J}_{C, P^{[1]}_{n+N_C-1}}&	\boldsymbol{\mathcal J}_{G, C^{[1]}_{n+N_C-1}}-	\prodint{   P^{[1]}_{n+N_C-1} (x),(\xi)_x}\mathcal W_G
	\end{bmatrix}^{-1}
\end{multlined}
\\=\begin{bmatrix}
		\big(\hat P_{n-m_C}^{[2]}(y)\big)^\top(\hat H_{n-m_C})^{-1}&\cdots&\big(\hat P_{n+N_G-1}^{[2]}(y)\big)^\top(\hat H_{n+N_G-1})^{-1}
		\end{bmatrix}\begin{bmatrix}0_{m_Cp\times  N_G p} &
		-\Omega^C{[n]} \\
		\Omega^G{[n]}	& 0_{N_Gp\times N_Cp}
		\end{bmatrix}.
	\end{multline}	
	Now, from Definition \ref{def:omeganNLST}  and the fact   $\omega_{n,n-N_G}=\hat H_n\big(H_{n-N_G}\big)^{-1}$, we get
{	\begin{multline*}
	\big(	\hat P _{n}^{[2]}(y)\big)^\top=		\begin{bmatrix} W_G(y) \boldsymbol{\mathcal J}_{C, K_{n-1}}(y),
	W_G(y)\big(	\boldsymbol{\mathcal J}_{ G,K^{(pc)}_{n-1}}(y)-	\prodint{  K_{n-1}(x,y),(\xi)_x}\mathcal W_G\big)+\boldsymbol{\mathcal J}_{G, \mathcal V}(y)	
	\end{bmatrix}\\\times\begin{bmatrix} 	\boldsymbol{\mathcal J}_{ C, P^{[1]}_{n-N_G}}&
	\boldsymbol{\mathcal J}_{ G, C^{[1]}_{n-N_G}}	-	\prodint{   P^{[1]}_{n-N_G} (x),(\xi)_x}\mathcal W_G\\ 	\vdots & \vdots\\\boldsymbol{\mathcal J}_{C, P^{[1]}_{n+N_C-1}}&	\boldsymbol{\mathcal J}_{G, C^{[1]}_{n+N_C-1}}-	\prodint{   P^{[1]}_{n+N_C-1} (x),(\xi)_x}\mathcal W_G
	\end{bmatrix}^{-1}\begin{bmatrix}
	H_{n-N_G}\\0_p\\\vdots\\0_p
	\end{bmatrix},
	\end{multline*}}
	and the result follows.	
\end{proof}
\begin{rem} For a mass term $v_{x,y}$ as in \eqref{eq:v_diagonal-2} the previous Christoffel--Geronimus--Uvarov formulas read
		\begin{align*}
		\hat P^{[1]}_{n}(x)W_C(x)&=
		\Theta_*
		\begin{bmatrix}
		\boldsymbol{\mathcal J}_{C,P^{[1]}_{n-N_G}} &	\boldsymbol{\mathcal J}_{G,C^{[1]}_{n-N_G}}-{\mathcal J}_{G,P^{[1]}_{n-N_G}}\mathcal T_G&P_{n-N_G}^{[1]}(x)\\ 	\vdots & \vdots &\vdots \\ \boldsymbol{\mathcal J}_{C,P^{[1]}_{n+N_C}}&\boldsymbol{\mathcal J}_{G, C^{[1]}_{n+N_C}}-{\mathcal J}_{G,P^{[1]}_{n+N_C}}\mathcal T_G & P^{[1]}_{n+N_C}(x)
		\end{bmatrix},\\
		\hat H_{n}&=\Theta_*
		\begin{bmatrix}
		\boldsymbol{\mathcal J}_{C,P^{[1]}_{n-N_G}} &	\boldsymbol{\mathcal J}_{G,C^{[1]}_{n-N_G}}-{\mathcal J}_{G,P^{[1]}_{n-N_G}}\mathcal T_G&H_{n-N_G}\\
		\boldsymbol{\mathcal J}_{C,P^{[1]}_{n-N_G+1}} &	\boldsymbol{\mathcal J}_{G,C^{[1]}_{n-N_G+1}}-{\mathcal J}_{G,P^{[1]}_{n-N_G+1}}\mathcal T_G&0_p\\ 	\vdots & \vdots &\vdots \\ \boldsymbol{\mathcal J}_{C,P^{[1]}_{n+N_C}}&\boldsymbol{\mathcal J}_{G, C^{[1]}_{n+N_C}}-{\mathcal J}_{G,P^{[1]}_{n+N_C}}\mathcal T_G & 0_p
		\end{bmatrix},\\
		\big(	\hat  P _{n}^{[2]}(y)\big)^\top&=-\Theta_*\begin{bmatrix} 	\boldsymbol{\mathcal J}_{ C, P^{[1]}_{n-N_G}}&
		\boldsymbol{\mathcal J}_{ G, C^{[1]}_{n-N_G}}	-\mathcal J_{ G,P^{[1]}_{n-N_G}}\mathcal T_G & H_{n-N_G}\\
		\boldsymbol{\mathcal J}_{ C, P^{[1]}_{n-N_G+1}}&
		\boldsymbol{\mathcal J}_{ G, C^{[1]}_{n-N_G+1}}	-\mathcal J_{ G,P^{[1]}_{n-N_G+1}}\mathcal T_G & 0_p\\ 	\vdots & \vdots\\\boldsymbol{\mathcal J}_{C, P^{[1]}_{n+N_C-1}}&	\boldsymbol{\mathcal J}_{G, C^{[1]}_{n+N_C-1}}-\mathcal J_{G, P^{[1]}_{n+N_C-1}}\mathcal T_G&0_p\\
		W_G(y) \boldsymbol{\mathcal J}_{C, K_{n-1}}(y) &  W_G(y)\big(	\boldsymbol{\mathcal J}_{ G,K^{(pc)}_{n-1}}(y)-\mathcal J_{G,K_{n-1}}(y)\mathcal T_G\big)+\boldsymbol{\mathcal J}_{G, \mathcal V}(y) &0_p
		\end{bmatrix}.
		\end{align*}
\end{rem}

\subsubsection{Discussion for $n<N_G$}\label{susection:<N}
\begin{pro}
	We have that
	\begin{align*}
	\omega\begin{bmatrix}
	\boldsymbol{\mathcal J}_{C,P^{[1]}},\boldsymbol{\mathcal J}_{G,C^{[1]}}-
\prodint{   P^{[1]} (x),(\xi)_x}\mathcal W_G
	\end{bmatrix}=-			\hat H\big(\hat S_2\big)^{-\top}\begin{bmatrix}
	0_{N_Cp}&\mathcal B_G {\mathcal Q}_{G}\\
	0_{N_C}p &0_{N_Gp}\\
	\vdots &\vdots
	\end{bmatrix}.
	\end{align*}
\end{pro}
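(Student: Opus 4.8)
The plan is to establish the identity by combining the connection formula for the second kind functions á la Cauchy, equation \eqref{eq:conexionC1LST}, with the spectral jet machinery developed for the Geronimus--Uvarov transformation, and then separately the connection formula \eqref{conex2LST} for the first family of polynomials. The target identity bundles two columns together: the first block column built from $\boldsymbol{\mathcal J}_{C,P^{[1]}}$ (the spectral jet of the first family polynomials relative to the Christoffel polynomial $W_C$) and the second block column built from $\boldsymbol{\mathcal J}_{G,C^{[1]}}-\prodint{P^{[1]}(x),(\xi)_x}\mathcal W_G$ (the combination of the second kind function jets relative to $W_G$ with the mass contribution). So I would treat each column separately and assemble the result.

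First I would handle the second column. Starting from \eqref{eq:conexionC1LST}, namely
\begin{align*}
\hat C^{[1]}(x)W_G(x)-\begin{bmatrix}
\big(\hat H\big(\hat S_2\big)^{-\top}\big)_{[N_G]}	\mathcal B_G 	(\chi(x))_{[N_G]}\\
0_{p}\\
\vdots
\end{bmatrix}=	\omega C^{[1]}(x),
\end{align*}
I would take the root spectral jets relative to $W_G$ on both sides, using Lemma \ref{lemma:pair} to identify $\boldsymbol{\mathcal J}_{\chi_{[N_G]}}$ with the matrix $\mathcal Q_G$, so that the boundary term becomes $\hat H(\hat S_2)^{-\top}$ applied to a block column whose top $N_G$ blocks are $\mathcal B_G\mathcal Q_G$ and the rest vanish. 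This yields
\begin{align*}
\boldsymbol{\mathcal J}_{G,\hat C^{[1]}W_G}-\hat H\big(\hat S_2\big)^{-\top}\begin{bmatrix}
\mathcal B_G\mathcal Q_G\\ 0_{N_Gp}\\ \vdots
\end{bmatrix}=\omega\,\boldsymbol{\mathcal J}_{G,C^{[1]}}.
\end{align*}
Then I would invoke \eqref{eq:checkCPLST}, which expresses $\boldsymbol{\mathcal J}_{G,\hat C^{[1]}W_G}=\prodint{\hat P^{[1]}(x)W_C(x),(\xi)_x}\mathcal W_G$, and rewrite $\prodint{\hat P^{[1]}(x)W_C(x),(\xi)_x}$ via \eqref{conex2LST} as $\omega\prodint{P^{[1]}(x),(\xi)_x}$ (reading off $\hat P^{[1]}(x)W_C(x)=\omega P^{[1]}(x)$ inside the pairing). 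Moving everything to one side gives the clean relation $\omega\big(\boldsymbol{\mathcal J}_{G,C^{[1]}}-\prodint{P^{[1]}(x),(\xi)_x}\mathcal W_G\big)=-\hat H(\hat S_2)^{-\top}\big[\begin{smallmatrix}\mathcal B_G\mathcal Q_G\\ 0\\ \vdots\end{smallmatrix}\big]$, which is precisely the second block column of the claimed matrix identity.

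Next I would treat the first column, which is purely of Christoffel type and carries no mass term. The key is that $\boldsymbol{\mathcal J}_{C,P^{[1]}}$ is the spectral jet relative to $W_C$, so by Proposition \ref{pro:adapted_root} and Lemma \ref{lemma:pair} the jet of $\hat P^{[1]}(x)W_C(x)$ relative to $W_C$ vanishes identically (the factor $W_C(x)$ kills the jet at the eigenvalues of $W_C$), giving $\boldsymbol{\mathcal J}_{C,\hat P^{[1]}W_C}=0$. Applying the jet relative to $W_C$ to the connection formula \eqref{conex2LST}, $\hat P^{[1]}(x)W_C(x)=\omega P^{[1]}(x)$, yields $0=\omega\,\boldsymbol{\mathcal J}_{C,P^{[1]}}$ — but this only shows the first column of the product is zero, not the full structure, so I must be careful: the target asserts $\omega$ times the combined two-column matrix equals a boundary term whose first $N_C$ rows in the second column are $0_{N_C p}$ and whose structure reflects both columns. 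Here the correct reading is that the boundary matrix on the right has the block form $\big[\begin{smallmatrix}0_{N_Cp}&\mathcal B_G\mathcal Q_G\\ 0_{N_Cp}&0_{N_Gp}\\ \vdots&\vdots\end{smallmatrix}\big]$, i.e. the first column of the boundary term is entirely zero (matching $\omega\boldsymbol{\mathcal J}_{C,P^{[1]}}=0$) while the second reproduces the Geronimus boundary data. Thus assembling the two column computations side by side gives exactly the stated equality, and I would conclude by horizontally concatenating the two results.

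The main obstacle I anticipate is bookkeeping the block dimensions and the placement of the zero blocks in the $(N_C p)$-versus-$(N_G p)$ splitting of the boundary matrix, together with verifying that the two jets (relative to $W_C$ and relative to $W_G$) can be applied columnwise without associativity pathologies. The associativity of the manipulations $\hat P^{[1]}W_C=\omega P^{[1]}$ inside the sesquilinear pairing, and of $\hat H(\hat S_2)^{-\top}$ acting on the boundary column, must be justified by Propositions \ref{pro:associativity} and \ref{pro:Gauss--Borel factorization and associativity}, exactly as in the proof of Proposition \ref{conexwLST}; since $\omega$ is a banded (hence block Hessenberg) matrix and the objects it multiplies are either finitely supported in the relevant truncation or handled via convergent-series arguments, these hypotheses are met. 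The genuinely delicate point is ensuring that taking the $W_C$-jet of the left boundary term $\big(\hat H(\hat S_2)^{-\top}\big)_{[N_G]}\mathcal B_G(\chi(x))_{[N_G]}$ contributes nothing to the first column — this follows because that term involves only $\mathcal B_G$ and $\chi$, whose $W_C$-jet is $\mathcal Q_C$ and which, paired appropriately, lands in the second column's boundary block rather than the first.
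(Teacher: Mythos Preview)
Your proposal is correct and follows essentially the same route as the paper's proof: take the $W_G$-root spectral jet of \eqref{eq:conexionC1LST} (using $\boldsymbol{\mathcal J}_{G,\chi_{[N_G]}}=\mathcal Q_G$), substitute \eqref{eq:checkCPLST} and \eqref{conex2LST} to obtain the second column, and separately take the $W_C$-jet of \eqref{conex2LST} to get $\omega\,\boldsymbol{\mathcal J}_{C,P^{[1]}}=0$ for the first column. One small clarification: your final ``delicate point'' is a non-issue, since the first column comes directly from \eqref{conex2LST}, which has no boundary term at all---the term $\big(\hat H(\hat S_2)^{-\top}\big)_{[N_G]}\mathcal B_G(\chi(x))_{[N_G]}$ only enters through \eqref{eq:conexionC1LST} and hence only in the second-column computation.
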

\begin{proof}
	From \eqref{eq:conexionC1LST} we deduce
	\begin{align*}
	\boldsymbol{\mathcal J}_{G,\hat C^{[1]}W_G}-
\hat H\big(\hat S_2\big)^{-\top}\mathcal B_G \boldsymbol{\mathcal J}_{G,\chi}=\omega	
	\boldsymbol{\mathcal J}_{ G,C^{[1]}}.
	\end{align*}
	Recalling  \eqref{eq:checkCPLST} we obtain
	\begin{align*}
\prodint{   \hat P^{[1]}(x)W_C(x),(\xi)_x}\mathcal W_G
-
\hat H\big(\hat S_2\big)^{-\top}\mathcal B_G\boldsymbol{\mathcal J}_{G,\chi}=\omega	
	\boldsymbol{\mathcal J}_{G, C^{[1]}}.
	\end{align*}
	Therefore, using  \eqref{conex2LST} we conclude
	\begin{align}\label{eq:WomegaCPLST}
\omega
\big(	\boldsymbol{\mathcal J}_{G,C^{[1]}}-\prodint{   P^{[1]}(x),(\xi)_x}\mathcal W_G\big)	=-			\hat H\big(\hat S_2\big)^{-\top}\mathcal B_G \boldsymbol{\mathcal J}_{G,\chi}.
	\end{align}
	Observe that \eqref{conex2LST} also implies
	\begin{align*}
	\omega \boldsymbol{\mathcal J}_{C,P^{[1]}}=0.
	\end{align*}
\end{proof}
	Given a block matrix $A$ we denote by $A_{[N],[M]}$ the truncation obtained by taking the first $N$ block rows and the first $M$ first block columns.
\begin{ma}
	The following relation
	\begin{align}\label{eq:WomegaCPLST2}
	\omega_{[N_G],[N_C+N_G]}
	\begin{bmatrix}
	\boldsymbol{\mathcal J}_{C,P^{[1]}},\boldsymbol{\mathcal J}_{G,C^{[1]}}-\prodint{   P^{[1]} (x),(\xi)_x}\mathcal W_G
	\end{bmatrix}_{[N_C+N_G]}	\diag(I_{N_C},\mathcal R_{G})=
\begin{bmatrix} 0_{N_Gp\times N_Cp},
	-\hat H_{[N_G]}\big(\hat S_2\big)_{[N_G]}^{-\top}
\end{bmatrix}
	\end{align}
	holds.
\end{ma}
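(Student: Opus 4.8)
The plan is to obtain the truncated relation \eqref{eq:WomegaCPLST2} directly from the untruncated identity \eqref{eq:WomegaCPLST}, by carefully exploiting the band structure of the resolvent $\omega$ established in Proposition \ref{conexwLST}. First I would recall that $\omega$ is a block banded matrix with only the first $N_G$ subdiagonals, the main diagonal, and the first $N_C$ superdiagonals nonzero. Consequently, when I look at the first $N_G$ block rows of $\omega$, the nonzero entries in row $n$ (for $n\in\{0,\dots,N_G-1\}$) sit in columns $n-N_G$ through $n+N_C$; since $n<N_G$ the leftmost relevant column index is still bounded, and the rightmost index is at most $N_G-1+N_C$. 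Thus the first $N_G$ block rows of $\omega$ only involve the first $N_C+N_G$ block columns, which is precisely what makes the truncation $\omega_{[N_G],[N_C+N_G]}$ the correct object to isolate from the left-hand factor of \eqref{eq:WomegaCPLST}.

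The second step is to analyze the right-hand side of \eqref{eq:WomegaCPLST}, namely $-\hat H(\hat S_2)^{-\top}\mathcal B_G\boldsymbol{\mathcal J}_{G,\chi}$. Here $\hat H(\hat S_2)^{-\top}$ is block upper triangular, so its first $N_G$ block rows, when multiplied against $\mathcal B_G\boldsymbol{\mathcal J}_{G,\chi}$, reduce to $\hat H_{[N_G]}(\hat S_2)_{[N_G]}^{-\top}$ times the corresponding truncation of $\mathcal B_G\boldsymbol{\mathcal J}_{G,\chi}$. I would use Lemma \ref{lemma:pair} (identifying $\boldsymbol{\mathcal J}_{G,\chi}$ with the matrix $\mathcal Q_G$ built from Jordan pairs) and Lemma \ref{lemma:triple}, which give $\mathcal B_G\mathcal Q_G$ in terms of $\mathcal R_G=(Y,JY,\dots,J^{N_G-1}Y)$. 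The block structure of the source term in \eqref{eq:WomegaCPLST2}, with its leading $0_{N_Cp}$ rows and the $\mathcal B_G\mathcal Q_G$ appearing only in the Geronimus (second) block column, reflects exactly that the Christoffel jets $\boldsymbol{\mathcal J}_{C,P^{[1]}}$ carry no $\mathcal B_G$ contribution, while the Geronimus part does; this is where the block partition $[0_{N_Gp\times N_Cp},\,-\hat H_{[N_G]}(\hat S_2)_{[N_G]}^{-\top}]$ on the right of \eqref{eq:WomegaCPLST2} comes from.

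The final step is to reconcile the column bookkeeping. The right factor $\diag(I_{N_C},\mathcal R_G)$ on the left-hand side of \eqref{eq:WomegaCPLST2} is what converts the full spectral-jet block $[\boldsymbol{\mathcal J}_{C,P^{[1]}},\boldsymbol{\mathcal J}_{G,C^{[1]}}-\langle P^{[1]},(\xi)\rangle\mathcal W_G]$ into the form compatible with the two distinct roles played by the Christoffel and Geronimus data: the Christoffel columns are left untouched (hence $I_{N_C}$), while the Geronimus columns are contracted against $\mathcal R_G$ exactly as in Lemma \ref{lemma:triple}, where $(\mathcal B\mathcal Q)^{-1}=\mathcal R$. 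I would verify that applying $\diag(I_{N_C},\mathcal R_G)$ after truncating to the first $N_C+N_G$ block columns turns the $\mathcal B_G\mathcal Q_G$ on the right-hand side of \eqref{eq:WomegaCPLST} into the identity-type block $-\hat H_{[N_G]}(\hat S_2)_{[N_G]}^{-\top}$, invoking $\mathcal B_G\mathcal Q_G\mathcal R_G=I$ (the biorthogonality $\mathcal R\mathcal B\mathcal Q=I$ from the proof of Lemma \ref{lemma:triple}), and that the Christoffel block collapses to $0_{N_Gp\times N_Cp}$ because $\omega\boldsymbol{\mathcal J}_{C,P^{[1]}}=0$ as noted at the end of the preceding proposition.

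The main obstacle I anticipate is the truncation compatibility: one must check that replacing the semi-infinite matrices in \eqref{eq:WomegaCPLST} by their $[N_G]$- and $[N_C+N_G]$-truncations does not lose or introduce terms. This requires confirming that, for $n<N_G$, the banded shape of $\omega$ guarantees that no block column beyond index $N_C+N_G-1$ contributes to the first $N_G$ rows, and that the upper-triangularity of $\hat H(\hat S_2)^{-\top}$ confines the relevant support of the right-hand side to the same truncation window. These are precisely the kind of associativity-and-support arguments licensed by Propositions \ref{pro:associativity} and \ref{pro:Gauss--Borel factorization and associativity}, so the verification is bookkeeping rather than deep, but it must be done carefully to ensure the stated finite-dimensional identity is an exact consequence of the infinite one.
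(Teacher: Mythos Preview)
Your proposal is correct and follows exactly the route the paper intends: the lemma is stated immediately after the proposition giving the untruncated identity, without its own proof, and your argument—truncate the first $N_G$ block rows using the band structure of $\omega$ and the block upper-triangularity of $\hat H(\hat S_2)^{-\top}$, then right-multiply by $\diag(I_{N_Cp},\mathcal R_G)$ and invoke $\mathcal B_G\mathcal Q_G\mathcal R_G=I_{N_Gp}$ from Lemma~\ref{lemma:triple}—is precisely that derivation. One small wording slip: the subscript $[N_C+N_G]$ on the jet matrix truncates \emph{rows}, not columns (the concatenated jet already has exactly $(N_C+N_G)p$ columns); your band-structure reasoning already matches rows and columns correctly, so this does not affect the argument.
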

\begin{ma}
	The matrix	$\big(\boldsymbol{\mathcal J}_{C,P^{[1]}}\big)_{[N_C]}$ is nonsingular.
\end{ma}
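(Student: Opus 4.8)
The statement to prove is that $\big(\boldsymbol{\mathcal J}_{C,P^{[1]}}\big)_{[N_C]}$ is nonsingular. Recall the notation: $\boldsymbol{\mathcal J}_{C,P^{[1]}}$ is the root spectral jet of the sequence of polynomials $P^{[1]}(x)$ taken with respect to the Christoffel polynomial $W_C(x)$, and the truncation $[N_C]$ selects the first $N_C$ block rows, i.e. the jets of $P^{[1]}_0,\dots,P^{[1]}_{N_C-1}$. My plan is to identify this truncated jet matrix with the structured product of the jet data of the monomial basis and the lower unitriangular factor $S_1$, and then invoke the nonsingularity already established for Jordan pairs in Proposition \ref{pro:Jordan pair0} and Lemma \ref{lemma:pair}.

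First I would write $P^{[1]}_n(x)=\sum_{j=0}^n (S_1)_{n,j}\,I_p x^j$ with $(S_1)_{n,n}=I_p$, so that by the linearity of the root spectral jet (Lemma \ref{lemma:pair}, applied now to the Christoffel polynomial $W_C$) each row $\boldsymbol{\mathcal J}_{C,P^{[1]}_n}$ is a finite combination $\sum_{j=0}^n (S_1)_{n,j}\,\mathcal Q^C_{j}$, where $\mathcal Q^C_j=X_C J_C^{\,j}$ denotes the monomial root jet attached to $W_C$. Collecting the rows $n=0,\dots,N_C-1$, the triangular structure of $S_1$ gives the block factorization
\begin{align*}
\big(\boldsymbol{\mathcal J}_{C,P^{[1]}}\big)_{[N_C]}=(S_1)_{[N_C]}\begin{bmatrix}
X_C\\ X_C J_C\\ \vdots\\ X_C J_C^{\,N_C-1}
\end{bmatrix}.
\end{align*}
Here $(S_1)_{[N_C]}$ is the leading $N_C\times N_C$ block truncation of $S_1$, which is lower unitriangular and therefore invertible, with determinant one.

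The second factor is precisely the matrix appearing in Proposition \ref{pro:Jordan pair0} and Lemma \ref{lemma:triple}, namely $\mathcal Q^C=\big[X_C;X_CJ_C;\dots;X_CJ_C^{\,N_C-1}\big]$ for the monic polynomial $W_C$, which is nonsingular because $(X_C,J_C)$ is a Jordan pair of $W_C$. Since a product of two nonsingular square matrices is nonsingular, the conclusion follows at once. The only point requiring care is that the leading coefficient $A_{C,N_C}$ is assumed nonsingular (indeed $W_C$ is monic in this section), so that the dimension count $\sum_a \alpha_{C,a}=N_C p$ holds and $\mathcal Q^C$ is genuinely a square $N_C p\times N_C p$ matrix rather than a rectangular one; this is exactly the hypothesis stated at the beginning of the subsection.

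I expect the main obstacle to be purely bookkeeping: verifying that the truncation operation commutes correctly with the factorization, i.e. that taking the first $N_C$ block rows of $\boldsymbol{\mathcal J}_{C,P^{[1]}}$ matches $(S_1)_{[N_C]}$ times the full square $\mathcal Q^C$ and not some larger or mismatched block. Because $S_1$ is lower triangular, the jet of $P^{[1]}_n$ for $n<N_C$ only involves monomials $x^j$ with $j\le n<N_C$, so no columns beyond the $N_C$-th block of the monomial jet enter, and the truncation is clean. Once this is checked the argument is immediate, reducing the lemma to the already-quoted Jordan-pair nonsingularity result of \cite{lan1}.
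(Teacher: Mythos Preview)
Your proof is correct and follows essentially the same approach as the paper: factorize the truncated jet matrix as $(S_1)_{[N_C]}$ times the Jordan-pair matrix $\mathcal Q_C=\big[X_C;X_CJ_C;\dots;X_CJ_C^{\,N_C-1}\big]$ and invoke the nonsingularity of each factor. The paper's one-line proof records the same factorization (writing $\mathcal R_C$ in place of your $\mathcal Q^C$, which appears to be a typographical slip given the definitions in Lemma~\ref{lemma:triple}), so the arguments are identical.
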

\begin{proof}
	It follows immediately from
	\begin{align*}
	\big(\boldsymbol{\mathcal J}_{C,P^{[1]}}\big)_{[N_C]}=\big(S_1\big)_{[N_C]} \mathcal R_C,
	\end{align*}
	and the fact that $\mathcal R_C$ is nonsingular.
\end{proof}

 \begin{ma}
 	The equation
 { \small\begin{multline*}
  -\begin{bmatrix}
  \boldsymbol{\mathcal J}_{C,P^{[1]}},\boldsymbol{\mathcal J}_{G,C^{[1]}}-\prodint{   P^{[1]}(x),(\xi)_x}\mathcal W_G
  \end{bmatrix}_{[N_C+N_G]}	\diag(I_{N_C},\mathcal R_{G})\\=\begin{multlined}[t][0.75\textwidth]
  	\left[\begin{array}{c|c} I_{[N_C]} &0_{pN_C\times pN_G}\\\hline
  \multicolumn{2}{c}{	\omega_{[N_G],[N_C+N_G]} }
  \end{array}\right]^{-1}\begin{bmatrix}
  \big(\boldsymbol{\mathcal J}_{C,P^{[1]}}\big)_{[N_C]} & 0_{pN_C\times pN_G}\\
  0_{pN_G\times pN_C} & \hat H_{[N_G]}
  \end{bmatrix}\\\times
  \begin{bmatrix} I_{[N_C]} & \big(\big(\boldsymbol{\mathcal J}_{C,P^{[1]}}\big)_{[N_C]}\big)^{-1}\big(\boldsymbol{\mathcal J}_{G,C^{[1]}}-
\prodint{   P^{[1]}(x),(\xi)_x}\mathcal W_G\big)_{[N_C],[N_G]}\mathcal R_G\\0_{N_Gp\times N_Cp}&\big(\hat S_2\big)_{[N_G]}^{-\top}
  \end{bmatrix}
  \end{multlined}
  \end{multline*}}
  is fulfilled.
 \end{ma}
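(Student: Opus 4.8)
The plan is to recognize this final Lemma as a purely algebraic block-matrix identity that reorganizes the content of equation \eqref{eq:WomegaCPLST2} into a Gauss--Borel-type (block LU) factorization. The left-hand side is the same product that appears on the left of \eqref{eq:WomegaCPLST2}, so the task is to exhibit the right-hand side as the inverse of a block lower-triangular factor times a block diagonal factor times a block upper unitriangular factor, and then verify that multiplying these three explicit factors together reproduces exactly the left-hand side. First I would set up notation by splitting the $(N_C+N_G)$-block structure into a $N_C$-part and a $N_G$-part, writing the middle matrix $\begin{bmatrix}\boldsymbol{\mathcal J}_{C,P^{[1]}}, \boldsymbol{\mathcal J}_{G,C^{[1]}}-\prodint{P^{[1]}(x),(\xi)_x}\mathcal W_G\end{bmatrix}_{[N_C+N_G]}\diag(I_{N_C},\mathcal R_G)$ as a $2\times 2$ block array with the $(N_C,N_C)$ corner equal to $\big(\boldsymbol{\mathcal J}_{C,P^{[1]}}\big)_{[N_C]}$, which the preceding Lemma guarantees is nonsingular.

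Next I would carry out the block Schur/Gauss factorization explicitly. The key input is \eqref{eq:WomegaCPLST2}, which already tells us how $\omega_{[N_G],[N_C+N_G]}$ acts on this product; the three explicit factors on the right-hand side are designed so that the first factor $\left[\begin{smallmatrix} I_{[N_C]} & 0\\ \omega_{[N_G],[N_C+N_G]}\end{smallmatrix}\right]$ captures the resolvent action, the middle factor $\diag\big(\big(\boldsymbol{\mathcal J}_{C,P^{[1]}}\big)_{[N_C]}, \hat H_{[N_G]}\big)$ carries the diagonal (pivot) blocks, and the last factor carries the off-diagonal fill-in term $\big(\big(\boldsymbol{\mathcal J}_{C,P^{[1]}}\big)_{[N_C]}\big)^{-1}\big(\boldsymbol{\mathcal J}_{G,C^{[1]}}-\prodint{P^{[1]}(x),(\xi)_x}\mathcal W_G\big)_{[N_C],[N_G]}\mathcal R_G$ together with $\big(\hat S_2\big)_{[N_G]}^{-\top}$. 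The verification is a direct multiplication: I would check the four block entries of the product of the latter two factors, then left-multiply by the inverse of the first factor using the explicit form of $\omega_{[N_G],[N_C+N_G]}$ from \eqref{eq:WomegaCPLST2}, confirming that the bottom-right block reproduces $-\hat H_{[N_G]}\big(\hat S_2\big)_{[N_G]}^{-\top}$ and the bottom-left block reproduces $0_{N_Gp\times N_Cp}$ exactly as \eqref{eq:WomegaCPLST2} demands, while the top two blocks reproduce the $N_C$-part of the middle matrix.

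The main obstacle I expect is bookkeeping rather than conceptual: one must be careful that the truncations are consistent, i.e.\ that $\omega_{[N_G],[N_C+N_G]}$ is the correct rectangular truncation of the banded resolvent (nonzero only on the first $N_G$ subdiagonals, the diagonal, and the $N_C$ superdiagonals, by Proposition \ref{conexwLST}), and that the identity block $I_{[N_C]}$ plays its role as the upper-left unitriangular pivot in the first factor. Because the resolvent $\omega$ is invertible (it is a product of unitriangular matrices, hence has a unique inverse by the properties in \cite{cooke}), the first factor is genuinely invertible and the factorization is well defined. I would also confirm that the Schur complement arising in the bottom-right block is precisely $\hat H_{[N_G]}\big(\hat S_2\big)_{[N_G]}^{-\top}$, which identifies the middle diagonal factor and closes the argument. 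Essentially the whole proof reduces to substituting \eqref{eq:WomegaCPLST2} and multiplying out the three factors, so no genuinely new estimate or structural fact beyond the already-established banded form of $\omega$ and the nonsingularity of $\big(\boldsymbol{\mathcal J}_{C,P^{[1]}}\big)_{[N_C]}$ is needed.
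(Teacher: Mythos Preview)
Your proposal is correct and follows essentially the same route as the paper: the paper's proof simply left-multiplies both sides by the block lower unitriangular matrix $\left[\begin{smallmatrix} I_{[N_C]} & 0\\ \omega_{[N_G],[N_C+N_G]} & \end{smallmatrix}\right]$, observes that the top $N_C$ block rows reproduce the $N_C$-truncation of $\big[\boldsymbol{\mathcal J}_{C,P^{[1]}},\;(\boldsymbol{\mathcal J}_{G,C^{[1]}}-\prodint{P^{[1]}(x),(\xi)_x}\mathcal W_G)\mathcal R_G\big]$ while the bottom $N_G$ block rows are exactly \eqref{eq:WomegaCPLST2}, and then reads off the $DU$ factorization on the right. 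One small correction: your justification for invertibility of the first factor is slightly off---$\omega$ itself is not lower unitriangular in the Geronimus--Uvarov setting (it has $N_C$ superdiagonals), but the \emph{finite} block $\left[\begin{smallmatrix} I_{[N_C]} & 0\\ \omega_{[N_G],[N_C+N_G]} & \end{smallmatrix}\right]$ is lower unitriangular precisely because of the band structure in Proposition~\ref{conexwLST} (row $N_C+k$ has its last nonzero entry $I_p$ at column $N_C+k$), and that is the reason it is invertible.
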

\begin{proof}
\eqref{eq:WomegaCPLST2} can be written as
 \begin{multline*}
 -	\left[\begin{array}{c|c} I_{[N_C]} &0_{pN_C\times pN_G}\\\hline
 \multicolumn{2}{c}{	\omega_{[N_G],[N_C+N_G]} }
 \end{array}\right]\begin{bmatrix}
 \boldsymbol{\mathcal J}_{C,P^{[1]}},\boldsymbol{\mathcal J}_{G,C^{[1]}}-\prodint{   P^{[1]}(x),(\xi)_x}\mathcal W_G
 \end{bmatrix}_{[N_C+N_G]}	\diag(I_{N_C},\mathcal R_{G})\\=	\begin{bmatrix}
 \big(\boldsymbol{\mathcal J}_{C,P^{[1]}}\big)_{[N_C]}& 0_{pN_C\times pN_G}\\
 0_{pN_G\times pN_C} & \hat H_{[N_G]}
 \end{bmatrix}
 \begin{bmatrix} I_{[N_C]} & \big(\big(\boldsymbol{\mathcal J}_{C,P^{[1]}}\big)_{[N_C]}\big)^{-1} \big(\boldsymbol{\mathcal J}_{G,C^{[1]}}-
\prodint{   P^{[1]} (x),(\xi)_x}\mathcal W_G\big)_{[N_C],[N_G]}\mathcal R_G\\0_{N_Gp\times N_Cp}&\big(\big(\hat S_2\big)_{[N_G]}\big)^{-\top}
 \end{bmatrix},
 \end{multline*}
and the result follows.
\end{proof}

\begin{ma}\label{lemma:<NLST}
	For  $n\in\{1,\dots,N_G-1\}$ we find the following expressions
	\begin{align*}
	\hat H_{n}&=-\Theta_*\begin{bmatrix} \boldsymbol{\mathcal J}_{C,P_0^{[1]}} &
	\big(\boldsymbol{\mathcal J}_{G,C^{[1]}_0}-\prodint{   P^{[1]}_{0} (x),(\xi)_x}\mathcal W_G\big)\mathcal R_{G,n}\\
	\vdots &\vdots\\	\boldsymbol{\mathcal J}_{C,P_{N_C+n-1}^{[1]}}& 		\big(\boldsymbol{\mathcal J}_{G,C^{[1]}_{n-1}}-
\prodint{   P^{[1]}_{n+N_C-1} (x),(\xi)_x}\mathcal W_G\big)\mathcal R_{G,n}
	\end{bmatrix}, \\
	\omega_{n,k}&=\Theta_*\left[\begin{array}{c|c}
	\begin{matrix}
	\boldsymbol{\mathcal J}_{C,P_0^{[1]}} &\big(\boldsymbol{\mathcal J}_{G,C^{[1]}_0}-\prodint{   P^{[1]}_{0} (x),(\xi)_x}\mathcal W_G\big)\mathcal R_{G,n}\\
	\vdots&\vdots\\
\boldsymbol{\mathcal J}_{C,P_{N_C+n}^{[1]}} &	\big(\boldsymbol{\mathcal J}_{G,C^{[1]}_{N_C+n}}-\prodint{   P^{[1]}_{n+N_C} (x),(\xi)_x}\mathcal W_G\big)\mathcal R_{G,n}\\	
	\end{matrix}& e_k
	\end{array}\right],& 0&\leq k<N_C+n,\\	\big((\hat S_2)^\top\big)_{n,k}&=\Theta_*\begin{bmatrix}
		\boldsymbol{\mathcal J}_{C,P_0^{[1]}}  &\big(\boldsymbol{\mathcal J}_{G,C^{[1]}_0}-\prodint{   P^{[1]}_{0} (x),(\xi)_x}\mathcal W_G\big)\mathcal R_{G,n+1}\\
\vdots &	\vdots\\
\boldsymbol{\mathcal J}_{C,P_{N_C+n-1}^{[1]}}& 		\big(\boldsymbol{\mathcal J}_{G,C^{[1]}_{n-1}}-
\prodint{   P^{[1]}_{n+N_C-1} (x),(\xi)_x}\mathcal W_G\big)\mathcal R_{G,n+1}\\[5pt]
\multicolumn{2}{c}{	(e_{N_C+k})^\top}
	\end{bmatrix},& 0&\leq k<N_G.
	\end{align*}
\end{ma}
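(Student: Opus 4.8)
The plan is to derive these formulas as the explicit quasideterminantal solution of the matrix equation proved in the preceding lemma, exactly in parallel with how Lemma \ref{lemma:<N} was obtained from Proposition \ref{pro:LUN} in the pure Geronimus case. The key structural input is the factorization identity established just above, which exhibits
\begin{align*}
-\begin{bmatrix}
\boldsymbol{\mathcal J}_{C,P^{[1]}},\boldsymbol{\mathcal J}_{G,C^{[1]}}-\prodint{   P^{[1]}(x),(\xi)_x}\mathcal W_G
\end{bmatrix}_{[N_C+N_G]}\diag(I_{N_C},\mathcal R_{G})
\end{align*}
as a product of a lower unitriangular block matrix (whose nontrivial rows carry the resolvent entries $\omega_{[N_G],[N_C+N_G]}$), a block-diagonal matrix $\diag\big((\boldsymbol{\mathcal J}_{C,P^{[1]}})_{[N_C]},\hat H_{[N_G]}\big)$, and an upper unitriangular block matrix (whose upper-right block involves $(\hat S_2)^{-\top}_{[N_G]}$). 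This is precisely a block Gauss--Borel (LDU) factorization of the left-hand matrix, with the diagonal factor collecting the quasitau data $\hat H_n$. The previous lemma guarantees $(\boldsymbol{\mathcal J}_{C,P^{[1]}})_{[N_C]}$ is nonsingular, which is what makes the leading $N_C$ blocks of the $L$ and $U$ factors trivial and pushes all the Geronimus--Uvarov spectral data into the remaining $N_G$ blocks.

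Having recognized an LDU factorization, I would invoke Proposition \ref{qd1}, which expresses the diagonal blocks $H_k$, the lower factor entries $(S_1)_{k,l}$, and the transposed upper factor entries $((S_2)^\top)_{k,l}$ of any such factorization as last quasideterminants $\Theta_*$ of the corresponding bordered truncations. Applying that proposition verbatim to the left-hand matrix above — call it $M$ — yields $\hat H_n=\Theta_*(M_{[N_C+n+1]})$ up to the sign and the $\mathcal R_G$ normalization, and similarly reads off $\omega_{n,k}$ and $((\hat S_2)^\top)_{n,k}$ as the bordered quasideterminants displayed in the statement. Concretely, the $(N_C+n)$-th diagonal block of $M$, after stripping the trivial first $N_C$ rows and columns of the factors, is exactly $-\hat H_n$ times the normalization, and the bordering by $e_k$ or $(e_{N_C+k})^\top$ reproduces the column/row extraction used in the general quasideterminantal inversion formulas of \cite{olver}. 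The index ranges $0\le k<N_C+n$ for $\omega_{n,k}$ and $0\le k<N_G$ for $(\hat S_2)^\top_{n,k}$ are dictated by the band structure of $\omega$ recorded in Proposition \ref{conexwLST} and by the size $N_G$ of the upper unitriangular factor.

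The principal obstacle, and where the care lies, is bookkeeping the shift by $N_C$ between the row index $n$ of the perturbed objects and the block position $N_C+n$ in the factored matrix $M$, together with the insertion of the truncated resolvent $\mathcal R_{G,n}$ (respectively $\mathcal R_{G,n+1}$) that replaces the full $\mathcal R_G$ when one restricts to the first $n$ blocks. I would verify that $\mathcal R_{G,n}$ is full rank for $n<N_G$ — this is the content of the proposition following Lemma \ref{lemma:triple} — so that the truncated quasideterminants are well defined; this is the analogue of the nonsingularity remark preceding Lemma \ref{lemma:<N}. The remaining verification is purely combinatorial: one checks that the off-diagonal blocks of the lower factor, when expanded by Proposition \ref{qd1}, give precisely $\omega_{n,k}$ with the stated bordering, and that the transpose of the upper factor yields $((\hat S_2)^\top)_{n,k}$; both follow by matching the generic $\Theta_*$ formulas against the block rows of $M$. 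No genuinely new analytic ingredient is required beyond the factorization lemma, the nonsingularity of $(\boldsymbol{\mathcal J}_{C,P^{[1]}})_{[N_C]}$, and the quasideterminantal inversion machinery already assembled in Proposition \ref{qd1}.
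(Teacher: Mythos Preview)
Your approach is correct and coincides with the paper's implicit argument: the lemma is stated without proof precisely because it is the direct quasideterminantal readout, via Proposition \ref{qd1}, of the block Gauss--Borel factorization established in the immediately preceding lemma, in exact parallel with how Lemma \ref{lemma:<N} follows from Proposition \ref{pro:LUN}. Your identification of the LDU structure, the role of the nonsingularity of $(\boldsymbol{\mathcal J}_{C,P^{[1]}})_{[N_C]}$, and the appearance of the truncated $\mathcal R_{G,n}$ from column restriction are all on target.
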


\begin{teo}[Spectral Christoffel--Geronimus--Uvarov  formulas]\label{teo:SCGUN}
	For  $n< N$ and monic Geronimus--Uvarov perturbations, with masses as described in \eqref{uva},
	we have the following last quasideterminant expressions for the perturbed biorthogonal matrix polynomials
	\begin{align}\label{eq:checkP1LST}
	\hat  P^{[1]}_{n}(x)W_C(x)&=
	\Theta_*\begin{bmatrix}
	\boldsymbol{\mathcal J}_{C,P_0^{[1]}} &\big(\boldsymbol{\mathcal J}_{G,C^{[1]}_0}-\prodint{   P^{[1]}_{0} (x),(\xi)_x}\mathcal W_G\big)\mathcal R_{G,n}&P_n^{[1]}(x)\\
	\vdots&\vdots&\vdots\\
	\boldsymbol{\mathcal J}_{C,P_{N_C+n}^{[1]}} &	\big(\boldsymbol{\mathcal J}_{G,C^{[1]}_{N_C+n}}-\prodint{   P^{[1]}_{n+N_C} (x),(\xi)_x}\mathcal W_G\big)\mathcal R_{G,n}&P^{[1]}_{n+N_C+n}(x)
	\end{bmatrix},
	\\
	\big(\hat P^{[2]}_n(y)\big)^\top&=\Theta_*\begin{bmatrix}
	\boldsymbol{\mathcal J}_{C,P_0^{[1]}}  &\big(\boldsymbol{\mathcal J}_{G,C^{[1]}_0}-\prodint{   P^{[1]}_{0} (x),(\xi)_x}\mathcal W_G\big)\mathcal R_{G,n+1}\\
	\vdots &	\vdots\\
	\boldsymbol{\mathcal J}_{C,P_{N_C+n-1}^{[1]}}& 		\big(\boldsymbol{\mathcal J}_{G,C^{[1]}_{n-1}}-\prodint{   P^{[1]}_{n+N_C-1} (x),(\xi)_x}\mathcal W_G\big)\mathcal R_{G,n+1}\\
	0_{p\times pN_C} & \big(\chi_{[N_G]}(y)\big)^\top
	\end{bmatrix}.
	\end{align}
\end{teo}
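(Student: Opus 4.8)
The plan is to mirror the structure already developed for the $n\geq N_G$ case (Theorem~\ref{teo:SCGU}), but now working with the truncations rather than with the resolvent wings. The starting point is the same connection formula \eqref{conex2LST}, namely $\hat P^{[1]}_{n}(x)W_C(x)=\omega P^{[1]}(x)$, read at level $n$. For $n<N_G$ the resolvent row $[\omega_{n,0},\dots,\omega_{n,n+N_C-1}]$ has no vanishing entries forced by the band structure below the diagonal, so instead of the clean $N_G$-term recursion I must use the full truncation $\omega_{[N_G],[N_C+N_G]}$. The explicit quasideterminantal expressions for $\hat H_n$, $\omega_{n,k}$ and $\big((\hat S_2)^\top\big)_{n,k}$ are precisely what Lemma~\ref{lemma:<NLST} provides, so the bulk of the work is already packaged there; the theorem is essentially an assembly step.

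First I would write $\hat P^{[1]}_{n}(x)W_C(x)$ using \eqref{eq:conexion} specialized to $n<N_G$, which reads
\begin{align*}
\hat P^{[1]}_{n}(x)W_C(x)=P^{[1]}_{n+N_C}(x)+\big[\omega_{n,0},\dots,\omega_{n,n+N_C-1}\big]\begin{bmatrix}P^{[1]}_0(x)\\\vdots\\P^{[1]}_{n+N_C-1}(x)\end{bmatrix},
\end{align*}
and then substitute the expression for $\omega_{n,k}$ from Lemma~\ref{lemma:<NLST}. Collecting the column $\big(P^{[1]}_0(x),\dots,P^{[1]}_{n+N_C}(x)\big)^\top$ as the last (bordering) block column of the quasideterminant, and using that $\mathcal J_{C,P^{[1]}}$ is built from $S_1$ and $\mathcal R_C$ while $\boldsymbol{\mathcal J}_{G,C^{[1]}}-\prodint{P^{[1]}(x),(\xi)_x}\mathcal W_G$ is the spectral data attached to $W_G$, I obtain \eqref{eq:checkP1LST}. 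The key algebraic identity making the bordering work is that the quasideterminant $\Theta_*$ of a matrix whose last column is $(P^{[1]}_0,\dots,P^{[1]}_{n+N_C})^\top$ reproduces the linear combination $P^{[1]}_{n+N_C}(x)+\sum_k\omega_{n,k}P^{[1]}_k(x)$ exactly when the upper-left block is the coefficient matrix inverted in Lemma~\ref{lemma:<NLST}. For the second family I would start from $\big(\hat P^{[2]}_n(y)\big)^\top=I_py^n+\big[I_p,\dots,I_py^{n-1}\big]\big[\big((\hat S_2)^\top\big)_{0,n},\dots,\big((\hat S_2)^\top\big)_{n-1,n}\big]^\top$, substitute the quasideterminantal form of $\big((\hat S_2)^\top\big)_{n,k}$ from Lemma~\ref{lemma:<NLST}, and border with $\big(\chi_{[N_G]}(y)\big)^\top$ in the last block row, which produces the stated formula.

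The main obstacle I anticipate is bookkeeping rather than conceptual: I must confirm that the mixed Jordan-triple truncations $\mathcal R_{G,n}$ and $\mathcal R_{G,n+1}$ appearing in Lemma~\ref{lemma:<NLST} glue consistently with the column dimensions $pN_C$ and $pN_G$ of the two block-column groups in the quasideterminant, so that the matrix being $\Theta_*$-evaluated is genuinely square of the right size. In particular one needs the nonsingularity of the leading coefficient matrix, which is guaranteed by the two preliminary lemmas (the nonsingularity of $\big(\boldsymbol{\mathcal J}_{C,P^{[1]}}\big)_{[N_C]}$ together with the factorization in the lemma preceding Lemma~\ref{lemma:<NLST}) so that $\big(\omega_{[N_G],[N_C+N_G]}\big)$ and $\hat H_{[N_G]}$ are legitimately invertible. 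Once the dimension matching is verified, the two formulas follow by the same last-quasideterminant identity used throughout, applied to the bordered matrices, so I would present the proof as a short deduction reading off \eqref{eq:conexion} and Lemma~\ref{lemma:<NLST} and leave the dimensional checks to the reader.
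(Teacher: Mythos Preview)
Your proposal is correct and follows essentially the same route as the paper's proof: start from the connection formula \eqref{conex2LST} to express $\hat P^{[1]}_{n}(x)W_C(x)$ as $P^{[1]}_{n+N_C}(x)$ plus a linear combination governed by the resolvent row $[\omega_{n,0},\dots,\omega_{n,n+N_C-1}]$, write $\big(\hat P^{[2]}_n(y)\big)^\top$ via the entries $\big((\hat S_2)^\top\big)_{k,n}$, and then invoke Lemma~\ref{lemma:<NLST} to supply the quasideterminantal expressions for those coefficients. The paper's proof is simply the two-line version of this, so your additional remarks on dimension matching and invertibility are sound but not strictly needed in the write-up.
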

\begin{proof}
	From \eqref{conex2LST} and \eqref{eq:bior} for $n<N_G$ we have
	\begin{align*}
	\hat P^{[1]}_{n}(x)W_C(x)&=P^{[1]}_{N_C+n}(x)+\big[\omega_{n,0},\dots,\omega_{n,N_C+n-1}\big]\begin{bmatrix}P_{0}^{[1]}(x)\\\vdots\\
	P_{N_C+n-1}^{[1]}(x)	
	\end{bmatrix},\\
	\big(\hat P^{[2]}_n\big)^\top(y)&=I_py^n+\begin{bmatrix}
	I_p,\dots,I_py^{n-1}
	\end{bmatrix}\begin{bmatrix}
	\big( (\hat S_2)^\top\big)_{0,n}\\\vdots\\
	\big( (\hat S_2)^\top\big)_{n-1,n}
	\end{bmatrix}.
	\end{align*}
	Then,  Lemma \ref{lemma:<N} proves the result.
\end{proof}

\begin{rem}
	For masses  as in \eqref{eq:v_diagonal-2} the previous formulas reduces to
		\begin{align*}
		\hat  P^{[1]}_{n}(x)W_C(x)&=
		\Theta_*\begin{bmatrix}
		\boldsymbol{\mathcal J}_{C,P_0^{[1]}} &\big(\boldsymbol{\mathcal J}_{G,C^{[1]}_0}-\mathcal J_{G,P^{[1]}_{0}}\mathcal T\big)\mathcal R_{G,n}&P_n^{[1]}(x)\\
		\vdots&\vdots&\vdots\\
		\boldsymbol{\mathcal J}_{C,P_{N_C+n}^{[1]}} &	\big(\boldsymbol{\mathcal J}_{G,C^{[1]}_{N_C+n}}-\mathcal J_{G,P^{[1]}_{N_C+n}}\mathcal T\big)\mathcal R_{G,n}&P^{[1]}_{n+N_C}(x)
		\end{bmatrix},
		\\
		\big(\hat P^{[2]}_n(y)\big)^\top&=\Theta_*\begin{bmatrix}
		\boldsymbol{\mathcal J}_{C,P_0^{[1]}}  &\big(\boldsymbol{\mathcal J}_{G,C^{[1]}_0}-\mathcal J_{G,P^{[1]}_{0}}\mathcal T\big)\mathcal R_{G,n+1}\\
		\vdots &	\vdots\\
		\boldsymbol{\mathcal J}_{C,P_{N_C+n-1}^{[1]}}& 		\big(\boldsymbol{\mathcal J}_{G,C^{[1]}_{n-1}}-\mathcal J_{G,P^{[1]}_{N_C+n-1}}\mathcal T\big)\mathcal R_{G,n+1}\\
		0_{p\times pN_C} & \big(\chi_{[N_G]}(y)\big)^\top
		\end{bmatrix}.
		\end{align*}
\end{rem}

\subsection{Mixed spectral/nonspectral Christoffel--Geronimus--Uvarov formulas}
We consider the Geronimus--Uvarov transformation as a composition of a first Geronimus transformation, applying nonspectral techniques, and then a Christoffel transformation, for which we can apply spectral techniques. In this situation, we still require  the leading coefficient $A_{C,N_C}$ to be nonsingular, i.e. similar to monic, but we free $W_G(x)$ of such a condition.
Thus, we consider
\begin{align}\label{uvaLST}
\check u_{x,y}&:= u_{x,y}(W_G(y))^{-1}+v_{x,y},
\end{align}
with $v_{x,y}$ a matrix of bivariate generalized functions such that $v_{x,y}W_G(y)=0_{ p}$
so that, after a Christoffel transformation, the  Geronimus--Uvarov transformation is achieved
\begin{align*}
\hat u_{x,y}=W_C(x)\check u_{x,y}.
\end{align*}
As for the Geronimus case, we consider

\begin{defi}\label{def:RLST}
	For a given perturbed matrix of functionals $\check u$ we define a semi-infinite block matrix of the form
	\begin{align*}
	R&:=	\prodint{P^{[1]}(x),\chi(y)}_{\check u}\\
	&=	\prodint{P^{[1]}(x),\chi(y)}_{u W_G^{-1}}+	\prodint{P^{[1]}(x),\chi(y)}_{ v_G}.
	\end{align*}
\end{defi}
\begin{pro}
	The equations
	\begin{align}\label{eq:RSMLST}
	R&=S_1\check G,\\
	\omega R&=\hat  H \big(\hat S_2\big)^{-\top}\label{OmegaR0LST}
	\end{align}
	are fulfilled.
\end{pro}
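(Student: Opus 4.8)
The statement to prove is Proposition with equations \eqref{eq:RSMLST} and \eqref{OmegaR0LST}, which are the Geronimus--Uvarov analogues of the relations \eqref{eq:RSM} and \eqref{OmegaR0} established for the pure Geronimus transformation. The plan is to mimic, essentially verbatim, the short proof given earlier for the Geronimus case, adapting the definitions to the composition structure described in Remark \ref{rem}.

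Let me look at what needs proving. We have $R := \langle P^{[1]}(x), \chi(y)\rangle_{\check u}$ where $\check u$ is the intermediate (Geronimus-only) transformation with $\check G W_G(\Lambda^\top) = G$ and $\hat G = W_C(\Lambda)\check G$. The first equation \eqref{eq:RSMLST}, $R = S_1 \check G$, should follow immediately from the definition using property (i) of the sesquilinear form: since $P^{[1]}(x) = S_1\chi(x)$, and $S_1$ acts by left-multiplication on the module side (the first slot), we get $R = \langle S_1\chi(x), \chi(y)\rangle_{\check u} = S_1\langle\chi(x),\chi(y)\rangle_{\check u} = S_1\check G$. This is exactly parallel to the proof of \eqref{eq:RSM}. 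The key point is that $\check G = \langle\chi(x),\chi(y)\rangle_{\check u}$, i.e. that the intermediate Gram matrix is the one computed from $\check u$; and that pulling $S_1$ out of the first slot respects the semi-infinite matrix products (this is a left multiplication by a lower unitriangular matrix, so it is an admissible/associative operation).

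So first I would establish \eqref{eq:RSMLST} directly from Definition \ref{def:RLST}, writing $R = S_1\langle\chi(x),\chi(y)\rangle_{\check u} = S_1\check G$. Then for \eqref{OmegaR0LST}, I would recall the resolvent definition \eqref{eq:def_OmegaLST}, $\omega = \hat S_1 W_C(\Lambda)(S_1)^{-1}$, and the intermediate Gauss--Borel factorizations from Remark \ref{rem}: $\check G = (\check S_1)^{-1}\check H(\check S_2)^{-\top}$ and $\hat G = W_C(\Lambda)\check G = (\hat S_1)^{-1}\hat H(\hat S_2)^{-\top}$. The computation would run $\omega R = \big(\hat S_1 W_C(\Lambda)(S_1)^{-1}\big)(S_1\check G) = \hat S_1 W_C(\Lambda)\check G = \hat S_1 \hat G = \hat S_1(\hat S_1)^{-1}\hat H(\hat S_2)^{-\top} = \hat H(\hat S_2)^{-\top}$, exactly mirroring the Geronimus proof of \eqref{OmegaR0}. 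The only subtlety is the associativity of the products $W_C(\Lambda)(S_1)^{-1}\cdot S_1$ and $W_C(\Lambda)\check G$; since $W_C(\Lambda)$ is a banded (block lower Hessenberg) matrix multiplying lower-triangular factors, Proposition \ref{pro:associativity} guarantees the regroupings are legitimate.

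The main obstacle — really the only non-routine ingredient — is justifying that the intermediate Geronimus Gram matrix $\check G$ genuinely admits a Gauss--Borel factorization, so that $\check S_1, \check H, \check S_2$ exist and the chain $\hat G = W_C(\Lambda)\check G$ with both ends factorizable is consistent. This is precisely the two-step assumption flagged in the footnote to Remark \ref{rem}; I would invoke it explicitly. Beyond that, I expect this to be a one-paragraph proof essentially identical in structure to the earlier Geronimus argument, so I would keep it terse, citing Remark \ref{rem}, Definition \ref{def:RLST}, \eqref{eq:def_OmegaLST}, and Propositions \ref{pro:associativity} and \ref{pro:Gauss--Borel factorization and associativity} for the associativity steps, rather than re-deriving anything.
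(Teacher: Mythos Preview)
Your proposal is correct and follows essentially the same route as the paper: \eqref{eq:RSMLST} directly from the definition by pulling $S_1$ out of the first slot, and \eqref{OmegaR0LST} via the chain $\omega R = \hat S_1 W_C(\Lambda)(S_1)^{-1} S_1\check G = \hat S_1 W_C(\Lambda)\check G = \hat S_1\hat G = \hat H(\hat S_2)^{-\top}$. One small remark: the ``main obstacle'' you flag, namely the Gauss--Borel factorizability of the intermediate $\check G$, is not actually invoked in your own computation; all you use is the relation $\hat G = W_C(\Lambda)\check G$ and the factorization of $\hat G$, both of which are already assumed in the Geronimus--Uvarov setup, so the argument is even cleaner than you suggest.
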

\begin{proof}
	Relation \eqref{eq:RSMLST} is a  consequence of its definition.
	Let us show \eqref{OmegaR0LST}
	 \begin{align*}
	\omega R&= \hat S_1 W_C(\Lambda) \big(S_1\big)^{-1} S_1 \check G\\
	&=\hat S_1W_C(\Lambda )\check G\\
	&=\hat S_1 \hat G\\
	&=\hat H \big(\hat S_2\big)^{-\top}.
	\end{align*}
\end{proof}

Thus, we conclude
\begin{pro}\label{OmegaRLST}
	The matrix $R$ satisfies 
	\begin{align*}
	(\omega R)_{n,l}=\begin{cases}
	0_p, &l\in\{0,\dots,n-1\},\\
	\hat H_n, & n=l.
	\end{cases}
	\end{align*}
Moreover, the matrix
	$\begin{bsmallmatrix}
	R_{0,0} & \dots & R_{0,n-1}\\
	\vdots & & \vdots\\
	R_{n-1,0} & \dots & R_{n-1,n-1}
	\end{bsmallmatrix}$
	is nonsingular.
\end{pro}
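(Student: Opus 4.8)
The plan is to establish Proposition \ref{OmegaRLST} in two parts, mirroring exactly the structure of the Geronimus case (Propositions \ref{OmegaR} and the nonsingularity statement that follows it), since the only structural change from the pure Geronimus setting is the replacement of the Geronimus resolvent $\omega$ by the composed resolvent $\omega=\omega_C\omega_G$ of Remark \ref{rem}. First I would prove the triangular/diagonal identity for $(\omega R)_{n,l}$ by reading off the block entries of the matrix equation \eqref{OmegaR0LST}. Indeed, \eqref{OmegaR0LST} asserts $\omega R=\hat H(\hat S_2)^{-\top}$, and the right-hand side is the product of a block diagonal matrix $\hat H$ with a \emph{lower} unitriangular matrix $(\hat S_2)^{-\top}$; more precisely $(\hat S_2)^{-\top}$ is upper unitriangular, so $\hat H(\hat S_2)^{-\top}$ is block upper triangular with diagonal blocks exactly $\hat H_n$. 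Hence for $l<n$ the $(n,l)$ block vanishes and for $l=n$ it equals $\hat H_n$, which is precisely the claimed case distinction.

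The one subtlety I must check carefully is the associativity used to obtain \eqref{OmegaR0LST} in the displayed proof: the chain $\omega R=\hat S_1 W_C(\Lambda)(S_1)^{-1}\,S_1\check G$ involves the product of several semi-infinite matrices, and I would invoke Proposition \ref{pro:associativity} together with Proposition \ref{pro:Gauss--Borel factorization and associativity} to justify that $\big(\hat S_1 W_C(\Lambda)(S_1)^{-1}\big)(S_1\check G)=\hat S_1\big(W_C(\Lambda)\check G\big)$. Here $\hat S_1 W_C(\Lambda)(S_1)^{-1}$ is block lower Hessenberg (it is the resolvent, banded below), and this makes the relevant regroupings admissible. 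Then $W_C(\Lambda)\check G=\hat G$ by Remark \ref{rem}, and $\hat S_1\hat G=\hat H(\hat S_2)^{-\top}$ is just the Gauss--Borel factorization of $\hat G$; this is the content of the already-displayed proof, which I would keep essentially verbatim.

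For the second assertion, the nonsingularity of the principal truncation $\big[R_{i,j}\big]_{i,j=0}^{n-1}$, I would argue exactly as in the Geronimus case. From \eqref{eq:RSMLST} we have $R=S_1\check G$, so truncating to the first $n$ block rows and columns gives $R_{[n]}=(S_1)_{[n]}\,\check G_{[n]}$, because $S_1$ is block lower unitriangular and the product respects truncation. Now $(S_1)_{[n]}$ is unitriangular hence has determinant $1$ (in particular is nonsingular), and $\check G_{[n]}$ is nonsingular by the standing quasidefiniteness assumption on $\check G$ --- recall that the whole construction presupposes that $\check G$ admits a Gauss--Borel factorization, equivalently that every truncation $\check G_{[n]}$ is invertible, which is exactly what guarantees the biorthogonal families exist. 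Therefore $R_{[n]}$ is a product of two nonsingular matrices and is nonsingular.

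I do not expect a genuine obstacle here, as the statement is a direct transcription of the Geronimus results with $\omega$ now the composed resolvent; the main point requiring attention is simply that the band structure and Hessenberg type of $\omega$ established in Proposition \ref{conexwLST} (together with Remark \ref{rem}) make the associativity rearrangements in \eqref{OmegaR0LST} legitimate under Proposition \ref{pro:associativity}. Once that justification is in place, both the case-split for $(\omega R)_{n,l}$ and the nonsingularity of the truncation follow immediately, so the proof reduces to citing \eqref{OmegaR0LST}, \eqref{eq:RSMLST}, and the quasidefiniteness of $\check G$.
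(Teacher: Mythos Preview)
Your proposal is correct and follows essentially the same approach as the paper: the paper simply states that the result follows directly from \eqref{OmegaR0LST} (for the triangular/diagonal structure of $\omega R$) and from \eqref{eq:RSMLST} together with the quasidefiniteness of $\check G$ (for the nonsingularity of $R_{[n]}$), which is exactly what you do, with additional care about associativity. The momentary slip where you first call $(\hat S_2)^{-\top}$ lower unitriangular is immediately self-corrected to upper unitriangular, so the argument is sound.
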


As for the pure Geronimus situation we consider
\begin{defi}
We introduce the matrix polynomials $r^K_{n,l}(y)\in\mathbb C^{p\times p}[y]$, $l\in\{0,\dots,n-1\}$, given by
	\begin{align*}
	r^K_{n,l}(z):&=\prodint{W_G(z)K_{n-1}(x,z),I_py^l}_{\check u}-I_pz^l\\ &=
\prodint{W_G(z)K_{n-1}(x,z),I_py^l}_{ uW_G^{-1}}+\prodint{W(z)K_{n-1}(x,z),I_py^l}_{v_G}-	I_p	z^l.
	\end{align*}
\end{defi}

\begin{pro}
	For $l\in\{0,1,\dots,n-1\}$	and $n\geq N_G$ we have
	\begin{align}\label{eq:gammaLST}
	r^K_{n,l}(z)=	\begin{bmatrix}
	\big(\hat P_{n-m_C}^{[2]}(z)\big)^\top(\hat H_{n-m_C})^{-1},\dots,\big(\hat P_{n+N_G-1}^{[2]}(z)\big)^\top(\hat H_{n+N_G-1})^{-1}
	\end{bmatrix}\begin{bmatrix}0_{m_Cp\times  N_G p} &
	-\Omega^C{[n]} \\
	\Omega^G{[n]}	& 0_{N_Gp\times N_Cp}
	\end{bmatrix}	\begin{bmatrix}	R_{n-N_G,l}\\
	\vdots\\
	R_{n+N_C-1,l}
	\end{bmatrix}.
	\end{align}
\end{pro}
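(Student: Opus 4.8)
The plan is to mimic, almost verbatim, the proof of the analogous formula in the pure Geronimus case (the Proposition preceding Theorem~\ref{teoconex} together with the connection formula for the mixed kernel), now using the Geronimus--Uvarov connection formula \eqref{eq:KLST} for $K_{n-1}$ rather than its Geronimus specialization. The quantity $r^K_{n,l}(z)$ is, by Definition, the mismatch $\prodint{W_G(z)K_{n-1}(x,z),I_py^l}_{\check u}-I_pz^l$, so the first step is to evaluate this pairing by inserting the connection formula \eqref{eq:KLST}. Concretely, I would multiply \eqref{eq:KLST} on the left by $W_G(y)^{-1}$-free manipulations are not needed; instead I pair the whole identity \eqref{eq:KLST} against $I_py^l$ with respect to the perturbed form $\prodint{\cdot,\cdot}_{\check u}$, treating $x$ as the integration variable and $z$ (the second kernel argument) as a parameter.

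First I would recall that by Proposition~\ref{pro:stringLST}, or rather its Geronimus reduction used in Definition~\ref{def:RLST}, the building block is $R_{n,l}=\prodint{P^{[1]}_n(x),I_py^l}_{\check u}$. The key algebraic observation is the reproducing-type relation \eqref{eq:K-u}, suitably transported to the perturbed setting: pairing the left-hand side $\hat K_{n-1}(x,z)W_C(x)$ against $I_py^l$ under $\prodint{\cdot,\cdot}_{\check u}$ should reproduce a term proportional to $I_pz^l$, which is exactly the $-I_pz^l$ absorbed into the definition of $r^K_{n,l}(z)$; the $W_C(x)$ factor is handled because $\hat u_{x,y}=W_C(x)\check u_{x,y}$, so the Christoffel factor on $x$ converts the $\check u$-pairing of $\hat K_{n-1}W_C$ into an $\hat u$-pairing of $\hat K_{n-1}$, and then \eqref{eq:K-u} (for the hatted kernel) applies. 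On the right-hand side of \eqref{eq:KLST}, the term $W_G(y)K_{n-1}(x,y)$ paired with $I_py^l$ produces $\prodint{W_G(z)K_{n-1}(x,z),I_py^l}_{\check u}$ after renaming, matching the definition, while the big bracketed resolvent-wing term has entries $P^{[1]}_k(x)$ that, upon pairing with $I_py^l$ under $\prodint{\cdot,\cdot}_{\check u}$, collapse precisely to the blocks $R_{k,l}$ assembled in the column vector $\big[R_{n-N_G,l},\dots,R_{n+N_C-1,l}\big]^\top$. Assembling these pieces yields \eqref{eq:gammaLST}.

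The steps in order: (i) pair \eqref{eq:KLST} with $I_py^l$ in $x$ under $\prodint{\cdot,\cdot}_{\check u}$; (ii) identify the left-hand contribution using $\hat u=W_C\check u$ and the reproducing property \eqref{eq:K-u} for $\hat K$, producing $r^K_{n,l}(z)+I_pz^l$ on the left after moving the $-I_pz^l$; (iii) recognize the first right-hand term as the defining pairing in $r^K_{n,l}(z)$; (iv) evaluate the pairing of each $P^{[1]}_k(x)$-row against $I_py^l$ as $R_{k,l}$, so that the resolvent block-matrix $\begin{bsmallmatrix}0 & -\Omega^C{[n]}\\ \Omega^G{[n]} & 0\end{bsmallmatrix}$ acts on the column $\big[R_{n-N_G,l},\dots,R_{n+N_C-1,l}\big]^\top$; and (v) keep the prefactor $\big[(\hat P^{[2]}_{n-m_C})^\top(\hat H_{n-m_C})^{-1},\dots\big]$ untouched since it is a function of $z$ alone. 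The restriction $n\ge N_G$ guarantees $m_G=N_G$, so the wing $\Omega^G[n]$ has the square-band form and the column vector runs cleanly from $n-N_G$ to $n+N_C-1$.

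The main obstacle I anticipate is bookkeeping the off-diagonal block structure $\begin{bsmallmatrix}0 & -\Omega^C{[n]}\\ \Omega^G{[n]} & 0\end{bsmallmatrix}$ correctly when passing from the polynomial connection formula to its pairing: one must verify that the Christoffel wing $\Omega^C[n]$, which in \eqref{eq:KLST} multiplies the higher-index polynomials $P^{[1]}_{n},\dots,P^{[1]}_{n+N_C-1}$, indeed contracts against the corresponding higher-index $R_{k,l}$ blocks, and that no boundary term from the reproducing property leaks into the bracket. The subtle point, exactly as in the Geronimus proof, is that the $-I_pz^l$ produced by \eqref{eq:K-u} must cancel against the $-I_pz^l$ in the definition of $r^K_{n,l}(z)$ without spoiling the alignment of indices; once this cancellation and the index matching are checked, the identity \eqref{eq:gammaLST} follows directly, with all associativity concerns covered by Propositions~\ref{pro:associativity} and \ref{pro:Gauss--Borel factorization and associativity}.
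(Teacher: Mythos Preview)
Your proposal is correct and follows exactly the paper's route: the paper's proof is the one-liner ``It follows from \eqref{eq:KLST}, Definition~\ref{def:RLST}, and \eqref{eq:K-u},'' and those are precisely the three ingredients you invoke---pair \eqref{eq:KLST} (with $y\to z$) against $I_py^l$ under $\prodint{\cdot,\cdot}_{\check u}$, convert the left side to $\prodint{\hat K_{n-1}(x,z),I_py^l}_{\hat u}=I_pz^l$ via $\hat u=W_C\check u$ and \eqref{eq:K-u}, and collapse each $\prodint{P^{[1]}_k(x),I_py^l}_{\check u}$ to $R_{k,l}$ via Definition~\ref{def:RLST}. The only expository slip is in your step~(ii): the left-hand pairing yields exactly $I_pz^l$, while it is the \emph{first right-hand} term $\prodint{W_G(z)K_{n-1}(x,z),I_py^l}_{\check u}$ that equals $r^K_{n,l}(z)+I_pz^l$; the two $I_pz^l$ then cancel to give \eqref{eq:gammaLST}.
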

\begin{proof}
	It follows from  \eqref{eq:KLST}, Definition \ref{def:RLST} and  \eqref{eq:K-u}.
\end{proof}

\begin{defi}\label{def:Phi}
	For $n\geq N_G$, let us assume that the matrix
\begin{align*}
\Phi_n:=	\begin{bmatrix}
	\boldsymbol{\mathcal J}_{C,P^{[1]}_{n-N_G}}&R_{n-N_G,0} & \dots &  R_{n-N_G,n-1} \\ 	\vdots&\vdots&&\vdots\\ \boldsymbol{\mathcal J}_{C,P^{[1]}_{n+N_C-1}}&	R_{n+N_C-1,0}&\dots &R_{n+N_C-1,n-1}
	\end{bmatrix}\in\mathbb C^{(N_C+N_G)p\times (N_C+n)p}
\end{align*}
is full rank, i.e., $\operatorname{rank}( \Phi_n)=(N_C+N_G)p$. Then, we know that   nonsingular square submatrices $\Phi_n^{\square}\in\mathbb C^{(N_C+N_G)p\times (N_C+N_G)p}$ of $\Phi_n$ exist, and we will refer to them as poised submatrices.
We also consider
\begin{align*}
\varphi_n:=\begin{bmatrix}\boldsymbol{\mathcal J}_{C,P^{[1]}_{n+N_C}},
R_{n+N_C,0},\dots, R_{n+N_C,n-1}
\end{bmatrix} \in\mathbb C^{p\times (N_C+n)p}
\end{align*}
and the  \emph{square} submatrices $\varphi_n^\square\in\mathbb C^{p\times (N_C+N_G)p}$ corresponding to the selection of columns to built  the poised submatrix $\Phi^\square_n$.
We also consider
\begin{align*}
\varphi_n^K(y)&=
\begin{bmatrix}
	W_G(y) \boldsymbol{\mathcal J}_{C, K_{n-1}}(y), R^K_{n,0}(y),\dots,R^K_{n,n-1}(y)
\end{bmatrix}\in\mathbb C^{p\times (N_C+n)p}[y]
\end{align*}
and $(\varphi^K_n(y))^\square$.
\end{defi}

\begin{pro}\label{pro:poisedLST}
	If $A_{G,N}$ is nonsingular, then
	\begin{align*}
	\begin{bmatrix}
	\boldsymbol{\mathcal J}_{C,P^{[1]}_{n-N_G}}& R_{n-N_G,0}& \dots& R_{n-N_G,N_G-1}\\\vdots&\vdots& &\vdots\\
	\boldsymbol{\mathcal J}_{C,P^{[1]}_{n+N_C-1}}& 	R_{n+N_C-1,0}&\dots &R_{n+N_C-1,N_G-1}
	\end{bmatrix}
	\end{align*}
	is nonsingular.
\end{pro}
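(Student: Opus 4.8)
The plan is to prove nonsingularity of this square $(N_C+N_G)p\times (N_C+N_G)p$ matrix by showing it has trivial left kernel. A left annihilating row is a family of blocks $\{v_k\}_{k=n-N_G}^{n+N_C-1}$, $v_k\in\mathbb C^{1\times p}$, satisfying $\sum_k v_k\boldsymbol{\mathcal J}_{C,P^{[1]}_k}=0$ together with $\sum_k v_k R_{k,l}=0$ for $l\in\{0,\dots,N_G-1\}$. I would package this data into the single matrix polynomial $Q(x):=\sum_{k=n-N_G}^{n+N_C-1}v_k P^{[1]}_k(x)$, whose degree is at most $n+N_C-1$ and whose lowest contributing degree is $n-N_G$. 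Since the $P^{[1]}_k$ are monic of exact degree $k$, proving $Q=0$ forces every $v_k=0$, which is precisely the claimed nonsingularity.

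First I would translate the two annihilation conditions. Recalling $\boldsymbol{\mathcal J}_{C,Q}=\sum_k v_k\boldsymbol{\mathcal J}_{C,P^{[1]}_k}$ and $\prodint{Q(x),I_py^l}_{\check u}=\sum_k v_k R_{k,l}$ from Definition \ref{def:RLST}, they read $\boldsymbol{\mathcal J}_{C,Q}=0$ and $\prodint{Q(x),I_py^l}_{\check u}=0$ for $l\in\{0,\dots,N_G-1\}$. The vanishing $W_C$-jet is the key structural input: performing the right division $Q=\tilde Q\,W_C+S$ with $\deg S<N_C$ (possible since $W_C$ is monic), and using $\boldsymbol{\mathcal J}_{C,\tilde Q W_C}=0$ because $W_C(x)r^{(a)}_{C,j}(x)$ vanishes to the prescribed orders by Proposition \ref{pro:adapted_root}, I obtain $\boldsymbol{\mathcal J}_{C,S}=0$; but on polynomials of degree $<N_C$ the jet map is injective by the nonsingularity asserted in Proposition \ref{pro:Jordan pair0} (equivalently Lemma \ref{lemma:triple}), so $S=0$ and $Q=\tilde Q\,W_C$ with $\deg\tilde Q\le n-1$.

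Next I would propagate orthogonality of $\tilde Q$ against $\hat u$. Using the composition $\hat u_{x,y}=W_C(x)\check u_{x,y}$ of Remark \ref{rem}, the second condition gives $\prodint{\tilde Q(x),I_py^l}_{\hat u}=\prodint{\tilde Q(x)W_C(x),I_py^l}_{\check u}=\prodint{Q(x),I_py^l}_{\check u}=0$ for $l<N_G$. To extend to all $l<n$ I invoke the Euclidean decomposition of Lemma \ref{lemma:Euclides} applied to $W_G$ (here the hypothesis that $A_{G,N_G}$ is nonsingular enters decisively), writing $I_py^l=\alpha_l(y)(W_G(y))^\top+\beta_l(y)$ with $\deg\beta_l\le N_G-1$ and $\deg\alpha_l\le l-N_G$. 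The $\beta_l$-part vanishes by the base case, while for the $\alpha_l$-part the string relation of Proposition \ref{pro:stringLST} yields $\prodint{\tilde Q(x),\alpha_l(y)(W_G(y))^\top}_{\hat u}=\prodint{\tilde Q(x)W_C(x),\alpha_l(y)}_{u}=\prodint{Q(x),\alpha_l(y)}_u=\sum_k v_k\prodint{P^{[1]}_k(x),\alpha_l(y)}_u$; since $\deg\alpha_l\le l-N_G<n-N_G\le k$ for $l<n$, every term vanishes by orthogonality of $P^{[1]}_k$ with respect to $u$. Hence $\prodint{\tilde Q(x),I_py^l}_{\hat u}=0$ for all $l\in\{0,\dots,n-1\}$, and since $\deg\tilde Q\le n-1$, biorthogonality with respect to the quasidefinite $\hat u$ forces $\tilde Q=0$, whence $Q=0$ and all $v_k=0$.

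The main obstacle I anticipate is the bookkeeping across the three sesquilinear forms $u$, $\check u$, $\hat u$ together with the degree tally: the argument closes only because the lowest degree occurring in $Q$ is exactly $n-N_G$, matching the degree drop $\deg\alpha_l\le l-N_G$ produced by division against $W_G$, so that orthogonality against $u$ propagates precisely up to $l=n-1$. The divisibility step $\boldsymbol{\mathcal J}_{C,Q}=0\Rightarrow W_C\mid Q$ is the other delicate point, but it is fully controlled by Proposition \ref{pro:Jordan pair0}.
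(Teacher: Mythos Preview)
Your proof is correct and follows essentially the same route as the paper's. The paper argues nonsingularity by showing the associated linear system \eqref{eq:systemLST} has a unique solution: it posits a second solution $[\tilde\omega_{n,k}]$, builds the \emph{monic} polynomial $Q_{n+N_C}(x)=P^{[1]}_{n+N_C}(x)+\sum_k\tilde\omega_{n,k}P^{[1]}_k(x)$, invokes Corollary~3.8 of \cite{lan1} to get $Q_{n+N_C}=\tilde P_n\,W_C$, then propagates orthogonality with respect to $\hat u$ via Lemma~\ref{lemma:Euclides} exactly as you do, concluding $\tilde P_n=\hat P^{[1]}_n$. Your version is the homogeneous reformulation: you work directly with a left null row, build a (non-monic, possibly zero) $Q$ of one degree lower, and replace the cited divisibility criterion by explicit right Euclidean division together with the injectivity of the jet on $\mathbb C^{p\times p}_{N_C-1}[x]$ coming from Proposition~\ref{pro:Jordan pair0}. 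The two packagings are equivalent; yours is arguably a bit more self-contained since it avoids the external reference for the divisibility step.
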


\begin{proof}
	From Proposition \ref{OmegaRLST} we deduce the system
	\begin{align*}
	\big[\omega_{n,n-N_G},\dots,\omega_{n,n+N_C-1}\big]\begin{bmatrix}
	R_{n-N_G,l}\\\vdots\\
	R_{n+N_C-1,l}
	\end{bmatrix}=-R_{n+N_C,l},
	\end{align*}
	for $l\in\{0,1,\dots,n-1\}$.
	In particular,  the resolvent vector $\big[\omega_{n,n-N},\dots,\omega_{n,n-1}\big]$ is a solution of the linear system
	\begin{multline}\label{eq:systemLST}
	\big[\omega_{n,n-N_G},\dots,\omega_{n,n+N_C-1}\big]\begin{bmatrix}
	\boldsymbol{\mathcal J}_{C,P^{[1]}_{n-N_G}}& R_{n-N_G,0}& \dots& R_{n-N_G,N_G-1}\\\vdots&\vdots& &\vdots\\
	\boldsymbol{\mathcal J}_{C,P^{[1]}_{n+N_C-1}}& 	R_{n+N_C-1,0}&\dots &R_{n+N_C-1,N_G-1}
	\end{bmatrix}\\=-\begin{bmatrix} \boldsymbol{\mathcal J}_{C,P^{[1]}_{n+N_C}},
	R_{n+N_C,0},\dots,R_{n+N_C,N_G-1}
	\end{bmatrix}.
	\end{multline}
	Let us discuss  the uniqueness of the solutions of this  linear system. Assume that we have another solution \begin{align*}
	\big[\tilde\omega_{n,n-N_G},\dots,\tilde\omega_{n,n+N_C-1}\big].
	\end{align*}
	Then, consider  the  monic matrix polynomial
	\begin{align*}
	Q_{n+N_C}(x)=P^{[1]}_{n+N_C}(x)+\tilde\omega_{n,n+N_C-1}P^{[1]}_{n+N_C-1}(x)+\dots+\tilde\omega_{n,n-N_G}P^{[1]}_{n-N_G}(x).
	\end{align*}
	Because $\big[\tilde\omega_{n,n-N},\dots,\tilde\omega_{n,n-1}\big]$ solves \eqref{eq:systemLST} we conclude the two important relations
	\begin{align}
	\label{eq:important1}	\boldsymbol{\mathcal J}_{C,Q_{n+N_C}}&= 0_{p,N_Cp},\\
	\label{eq:important2}		\langle Q_{n+N_C}(x), I_py^l\rangle_{\check u}&=0_p, & l\in\{0,\dots, N_G-1\}.
	\end{align}
	Using Lemma \ref{lemma:pair}, \eqref{eq:important1} can be expressed as follows
	\begin{align*}
	\boldsymbol{\mathcal J}_{C,Q_{n+N_C}}=\begin{bmatrix}
	\big(Q_{n+N_C}\big)_0,\dots, \big(Q_{n+N_C}\big)_{n+N_C}
	\end{bmatrix}\begin{bmatrix}
	X_C\\X_CJ_C\\\vdots\\X_C(J_C)^{n+N_C}
	\end{bmatrix}&= 0_{p,N_Cp},
	\end{align*}
	where $(X_C,J_C)$ is a Jordan pair for the perturbing polynomial $W_C(x)$. According to Corollary 3.8 in \cite{lan1}  this is a necessary and sufficient condition for $W_C(x)$ to be a right divisor of the polynomial $Q_{n+N_C}(x)$, so that we can write
	\begin{align*}
	Q_{n+N_C}(x)=\tilde P_n (x)W_C(x),
	\end{align*}
	where $\tilde P_n (x)$ is a degree $n$ monic polynomial. Then, \eqref{eq:important2} reads
	\begin{align*}
	\langle \tilde P_n(x), I_py^l\rangle_{\hat u}&=0_p, & l\in\{0,\dots, N_G-1\}.
	\end{align*}
	Let us  proceed as we did in Proposition \ref{pro:first_poised}. We first notice that 	Lemma  \ref{lemma:Euclides} can be applied again to get
	\begin{align*}
	\langle P^{[1]}_m(x), I_p y^l\rangle_{\check  u}
	&=\langle P^{[1]}_m(x), \beta_l(y)\rangle_{\check u},
	\end{align*}
	for  $l<m+N_G$.
	Hence,  when  $l\in\{0,\dots,n-1\}$ we have
	\begin{align*}
	\langle\tilde P_n(x), I_p y^l\rangle_{\hat  u}&= \langle P^{[1]}_{N_C+n}(x), I_p y^l\rangle_{\check u}+\tilde\omega_{n,n+N_C-1}\langle  P^{[1]}_{n+N_C-1}(x), I_p y^l\rangle_{\check u}+\dots+\tilde\omega_{n,n-N_G}\langle P^{[1]}_{n-N_G}(x)
	, I_p y^l\rangle_{\check u}\\
	&= \langle P^{[1]}_{N_C+n}(x), \beta_l(y)\rangle_{\check u}+\tilde\omega_{n,n+N_C-1}\langle  P^{[1]}_{n+N_C-1}(x), \beta_l(y)\rangle_{\check u}+\dots+\tilde\omega_{n,n-N_G}\langle P^{[1]}_{n-N_G}(x)
	, \beta_l(y)\rangle_{\check u}\\
	&=\sum_{k=0}^{N_G-1}\big(R_{n+N_C,k}+\tilde{\omega}_{n,n+N_C-1}R_{n+N_C-1,k}+\dots+ \tilde{\omega}_{n,n-N_G}R_{n-N_G,k} \big)    (\beta_{l,k})^\top\\
	&=0_p.
	\end{align*}
	Now, the uniqueness of  the biorthogonal polynomial families implies
	\begin{align*}
	\tilde P_n(x)=\hat  P^{[1]}_n(x),
	\end{align*}
	and, considering  \eqref{conex2LST},  we infer  that there is a unique solution of \eqref{eq:systemLST}. Thus,
	\begin{align*}
	\begin{bmatrix}
	\boldsymbol{\mathcal J}_{C,P^{[1]}_{n-N_G}}& R_{n-N_G,0}& \dots& R_{n-N_G,N_G-1}\\\vdots&\vdots& &\vdots\\
	\boldsymbol{\mathcal J}_{C,P^{[1]}_{n+N_C-1}}& 	R_{n+N_C-1,0}&\dots &R_{n+N_C-1,N_G-1}
	\end{bmatrix}
	\end{align*}
	is nonsingular and, therefore,  it is a poised submatrix.
\end{proof}

\begin{pro}\label{pro:como_es_OmegaLST}
	 	For $n\geq N_G$ and a full rank matrix $\Phi_n$, let us take a poised submatrix $\Phi_n^\square$. Then,
	\begin{align*}
	\begin{bmatrix}
	\omega_{n,n-N_G},\dots,\omega_{n,n+N_C-1}
	\end{bmatrix}=-\varphi^\square_n \big(\Phi_n^\square\big)^{-1}.
	\end{align*}
\end{pro}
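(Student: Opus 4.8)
The plan is to exhibit the resolvent row $\big[\omega_{n,n-N_G},\dots,\omega_{n,n+N_C-1}\big]$ as the solution of a single block linear system whose coefficient matrix is exactly $\Phi_n$ and whose right-hand side is $-\varphi_n$, and then to restrict that system to a poised (hence invertible) square block. Concretely, I would first establish the identity
\begin{align*}
\big[\omega_{n,n-N_G},\dots,\omega_{n,n+N_C-1}\big]\,\Phi_n=-\varphi_n,
\end{align*}
with $\Phi_n$ and $\varphi_n$ as in Definition \ref{def:Phi}. This packages two distinct families of linear relations satisfied by the resolvent: the spectral (Christoffel) jet relations, which occupy the first block column, and the orthogonality relations encoded by $R$, which occupy the remaining $n$ block columns.

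First I would produce the spectral relations. Multiplying the connection formula \eqref{conex2LST}, $\hat P^{[1]}(x)W_C(x)=\omega P^{[1]}(x)$, on the right by an adapted right root polynomial $r^{(a)}_{C,j}(x)$ of $W_C$ and differentiating at the eigenvalue $x_{C,a}$, Proposition \ref{pro:adapted_root} tells us that $W_C(x)r^{(a)}_{C,j}(x)$ vanishes to order $\kappa^{(a)}_{C,j}$ there; hence the root spectral jet of the left-hand side with respect to $W_C$ is zero, i.e.\ $\boldsymbol{\mathcal J}_{C,\hat P^{[1]}W_C}=0$. Since the blocks of $\omega$ are constant in $x$ and the jet is linear in its argument, $\boldsymbol{\mathcal J}_{C,\omega P^{[1]}}=\omega\,\boldsymbol{\mathcal J}_{C,P^{[1]}}$, and reading off the $n$-th block row with the band structure of Proposition \ref{conexwLST}, where $\omega_{n,n+N_C}=I_p$, recovers precisely \eqref{eq:JP}. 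For the orthogonality relations I would invoke Proposition \ref{OmegaRLST}: from $(\omega R)_{n,l}=0_p$ for $l\in\{0,\dots,n-1\}$, the same band structure together with $\omega_{n,n+N_C}=I_p$ isolates
\begin{align*}
\big[\omega_{n,n-N_G},\dots,\omega_{n,n+N_C-1}\big]\begin{bmatrix}R_{n-N_G,l}\\\vdots\\R_{n+N_C-1,l}\end{bmatrix}=-R_{n+N_C,l},\qquad l\in\{0,\dots,n-1\}.
\end{align*}

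Concatenating the single jet block column with the $n$ columns indexed by $l=0,\dots,n-1$ assembles these relations into the displayed identity $\big[\omega_{n,n-N_G},\dots,\omega_{n,n+N_C-1}\big]\Phi_n=-\varphi_n$. I would then select the $(N_C+N_G)p$ columns that define the poised submatrix $\Phi_n^\square$ and apply the identical column selection to $\varphi_n$ to form $\varphi_n^\square$; restricting the identity to these columns yields $\big[\omega_{n,n-N_G},\dots,\omega_{n,n+N_C-1}\big]\Phi_n^\square=-\varphi_n^\square$, and since $\Phi_n^\square$ is nonsingular by the definition of poised, right-multiplication by $(\Phi_n^\square)^{-1}$ gives the claimed formula. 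The only genuinely delicate point is the vanishing $\boldsymbol{\mathcal J}_{C,\hat P^{[1]}W_C}=0$ together with the commutation of $\boldsymbol{\mathcal J}_C$ past the constant left factors $\omega_{n,k}$; everything else is bookkeeping with the band structure. I would also note that consistency across different poised choices is automatic, since the full resolvent row already solves the unrestricted system $\big[\,\cdot\,\big]\Phi_n=-\varphi_n$, so every invertible square selection returns the same vector.
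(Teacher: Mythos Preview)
Your proposal is correct and follows essentially the same route as the paper: establish the orthogonality relations from Proposition~\ref{OmegaRLST}, adjoin the spectral jet relation \eqref{eq:JP} coming from \eqref{conex2LST} to assemble $\big[\omega_{n,n-N_G},\dots,\omega_{n,n+N_C-1}\big]\Phi_n=-\varphi_n$, and then restrict to a poised square block. The only cosmetic difference is that you re-derive \eqref{eq:JP} from scratch via the vanishing of $\boldsymbol{\mathcal J}_{C,\hat P^{[1]}W_C}$, whereas the paper simply cites \eqref{eq:JP}, which was already obtained in the proof of Proposition~\ref{pro:resolventCPLST}.
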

\begin{proof}
	From  Proposition \ref{OmegaRLST}  we get, for $n\geq N_G$,
		\begin{align*}
	\begin{bmatrix}
		\omega_{n,n-N_G},\dots,\omega_{n,n+N_C-1}
		\end{bmatrix}\begin{bmatrix}
	 R_{n-N_G,0} & \dots &  R_{n-N_G,n-1} \\
	 \vdots & &\vdots\\
	 	 R_{n+N_C-1,0} & \dots &  R_{n+N_C-1,n-1}
		\end{bmatrix}=-\begin{bmatrix}
		R_{n+N_C,0},\dots, R_{n+N_C,n-1}
		\end{bmatrix}.
		\end{align*}
		Using \eqref{eq:JP} we can extend this equation to
	\begin{align*}
	\big[\omega_{n,n-N_G},\dots,\omega_{n,n+N_C-1}\big]	
	\Phi_n=-\varphi_n,
	\end{align*}
	and the result follows.
\end{proof}

\begin{teo}[Mixed spectral/nonspectral matrix Christoffel--Geronimus--Uvarov formulas]\label{theorem:nonspectralLST}
	Given a  matrix Geronimus--Uvarov transformation the corresponding  perturbed polynomials can be expressed,
	for $n\geq  N_G$, as follows
	\begin{align*}
	\hat  P^{[1]}_n(x)W_C(x)&=\Theta_*\left[\begin{array}{c|c}
	\Phi^\square_n &\begin{matrix}P^{[1]}_{n-N_G}(x)\\\vdots \\P^{[1]}_{n+N_C-1}(x)\end{matrix}\\\hline
	\varphi^\square_n & P^{[1]}_{n+N_C}(x)
	\end{array}\right], & \big(	\hat  P_n^{[2]}(y)\big)^\top A_N=-\Theta_*\left[\begin{array}{c|c}\Phi_n^\square & \begin{matrix}
	H_{n-N_G}\\ 0_p\\\vdots \\0_p
	\end{matrix}\\\hline
	\big(\varphi^K_n(y)\big)^\square& 0_p
	\end{array}	
	\right].
	\end{align*}
	The corresponding quasitau matrices are
	\begin{align*}
	\hat H_n&=\Theta_*\left[\begin{array}{c|c}
	\Phi_n^\square&\begin{matrix}R_{n-N_G,n}\\\vdots \\R_{n+N_C-1,n}\end{matrix}\\\hline
	\varphi^\square_n& R_{n+N_C,n}
	\end{array}\right]=\Theta_*\left[\begin{array}{c|c}\Phi_n^\square & \begin{matrix}
			H_{n-N_G}\\ 0_p\\\vdots \\0_p
			\end{matrix}\\\hline
			\varphi_n^\square& 0_p
			\end{array}	
			\right].
	\end{align*}	
\end{teo}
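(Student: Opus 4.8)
The plan is to derive these mixed Christoffel--Geronimus--Uvarov formulas by combining the connection formula for the resolvent (Proposition \ref{pro:como_es_OmegaLST}) with the nonspectral relations (Proposition \ref{OmegaRLST}), following exactly the template already established for the pure Geronimus case in Theorem \ref{theorem:nonspectral}. The key observation is that for $n\geq N_G$ the connection formula \eqref{conex2LST} reads
\begin{align*}
\hat P^{[1]}_n(x)W_C(x)=P^{[1]}_{n+N_C}(x)+\big[\omega_{n,n-N_G},\dots,\omega_{n,n+N_C-1}\big]\begin{bmatrix}P^{[1]}_{n-N_G}(x)\\\vdots\\P^{[1]}_{n+N_C-1}(x)\end{bmatrix},
\end{align*}
so substituting the resolvent expression $\big[\omega_{n,n-N_G},\dots,\omega_{n,n+N_C-1}\big]=-\varphi^\square_n(\Phi^\square_n)^{-1}$ from Proposition \ref{pro:como_es_OmegaLST} immediately yields a Schur-complement structure that is precisely the last quasideterminant $\Theta_*$ of the bordered array displayed in the theorem. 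This is the forward-looking heart of the argument: every quasideterminantal formula here is a repackaging of ``monic polynomial plus resolvent times lower-degree polynomials, with the resolvent eliminated in favour of $R$ and the Christoffel jets.''

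First I would treat $\hat P^{[1]}_n(x)W_C(x)$ and $\hat H_n$ in tandem. For the norm, I would invoke \eqref{eq:omegaALST}, namely $\omega_{n,n-N_G}=\hat H_nA_{G,N_G}(H_{n-N_G})^{-1}$ in the monic case $\hat H_n=\omega_{n,n-N_G}H_{n-N_G}$, and read off $\omega_{n,n-N_G}$ as the first block-column output of $-\varphi^\square_n(\Phi^\square_n)^{-1}$ acting on $[I_p,0_p,\dots,0_p]^\top$; this gives the second of the two equivalent $\Theta_*$ expressions for $\hat H_n$. The first expression follows from Proposition \ref{OmegaRLST}, computing $\hat H_n=\big[\omega_{n,n-N_G},\dots,\omega_{n,n+N_C-1}\big]\big[R_{n-N_G,n},\dots,R_{n+N_C,n}\big]^\top$ directly. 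The equivalence of the two borders (last column $[H_{n-N_G},0_p,\dots,0_p]^\top$ versus $[R_{n-N_G,n},\dots,R_{n+N_C,n}]^\top$) is exactly the content of \eqref{RW} truncated appropriately, just as in Theorem \ref{theorem:nonspectral}.

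For $\big(\hat P^{[2]}_n(y)\big)^\top A_N$ I would use the mixed-kernel connection \eqref{eq:KLST} paired against $I_py^l$ through \eqref{eq:K-u}, which by the definition of $r^K_{n,l}(y)$ produces \eqref{eq:gammaLST}. Selecting the poised columns converts this to $(\varphi^K_n(y))^\square(\Phi^\square_n)^{-1}$ multiplying the resolvent-wing block, and then extracting the last subdiagonal via \eqref{eq:omegaALST} isolates $\big(\hat P^{[2]}_n(y)\big)^\top A_{G,N_G}$ acting on $[H_{n-N_G},0_p,\dots,0_p]^\top$. The main obstacle I anticipate is bookkeeping rather than conceptual: one must correctly assemble the $(N_C+N_G)p\times(N_C+N_G)p$ poised matrix $\Phi^\square_n$ and verify it is nonsingular, which is guaranteed by Proposition \ref{pro:poisedLST} only under the standing nonsingularity of $A_{G,N_G}$ used there; but since the theorem statement asserts validity with $A_N$ nonsingular one must track carefully which leading coefficient is required, and ensure the column-selection ``$\square$'' is consistent across $\Phi_n$, $\varphi_n$, $\varphi^K_n$. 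The algebraic simplifications (Schur complements, the $0_p$ padding of the final column, and the consistency of the two $\hat H_n$ forms) are then routine, mirroring the pure Geronimus proof verbatim; the genuinely new input is simply that the Christoffel side contributes the $\boldsymbol{\mathcal J}_{C,P^{[1]}_k}$ jet columns, which replace the first block-column slot and are handled by the Jordan-pair divisibility criterion (Corollary 3.8 of \cite{lan1}) already exploited in Proposition \ref{pro:poisedLST}.
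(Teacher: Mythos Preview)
Your proposal is correct and follows essentially the same approach as the paper's proof. One detail to complete: for $\big(\hat P^{[2]}_n(y)\big)^\top$ your sketch derives only the $r^K_{n,l}$ columns of $\varphi^K_n(y)$, but the first block of columns $W_G(y)\boldsymbol{\mathcal J}_{C,K_{n-1}}(y)$ must also be matched to the resolvent-wing block, and this comes from taking the $W_C$-spectral jet of \eqref{eq:KLST} (the paper records this as \eqref{eq:WKpcLST2}) rather than from the pairing against $I_py^l$; with that ingredient added, and noting that the equivalence of the two $\hat H_n$ borders follows directly from combining Proposition~\ref{OmegaRLST} with \eqref{eq:omegaALST} (not from an analogue of \eqref{RW}, which is Geronimus-specific), your argument coincides with the paper's.
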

\begin{proof}
	From the connection formula \eqref{conex2LST} we find
	\begin{align*}
	\hat  P^{[1]}_{n}(x)W_C(x)=	[\omega_{n,n-N_G},\dots,\omega_{n,n+N_C-1}]\begin{bmatrix}
	P^{[1]}_{n-N_G}(x)\\\vdots\\P^{[1]}_{n+N_C-1}(x)
	\end{bmatrix}+P^{[1]}_{n+N_C}(x),
	\end{align*}
	and from Proposition \ref{OmegaRLST} we deduce
	\begin{align*}
	\hat H_n=	\
	[\omega_{n,n-N_G},\dots,\omega_{n,n+N_C-1}]\begin{bmatrix}
	R_{n-N_G,n}\\\vdots\\R_{n+N_C-1,n}
	\end{bmatrix}+R_{n+N_C,n},
	\end{align*}
	we will also use \eqref{eq:HG2}. 
	Then, the result for $\hat P^{[1]}_n(x)$ and $\hat H_n$ follows from Proposition \ref{pro:como_es_OmegaLST}.
	
	From 	\eqref {eq:gammaLST} 	and formula
	\eqref{eq:WKpcLST2} we get
	\begin{align*}
	\varphi^K_n(y)=	\begin{bmatrix}
	\big(\hat P_{n-m_C}^{[2]}(y)\big)^\top(\hat H_{n-m_C})^{-1},\dots,\big(\hat P_{n+N_G-1}^{[2]}(y)\big)^\top(\hat H_{n+N_G-1})^{-1}
	\end{bmatrix}\begin{bmatrix}0_{m_Cp\times  N_G p} &
	-\Omega^C{[n]} \\
	\Omega^G{[n]}	& 0_{N_Gp\times N_Cp}
	\end{bmatrix}\Phi_n,
	\end{align*}
so that
	\begin{align}
	\big(\varphi^K_n(y)\big)^\square\big(\Phi_n^\square\big)^{-1}=	\begin{bmatrix}
	\big(\hat P_{n-m_C}^{[2]}(y)\big)^\top(\hat H_{n-m_C})^{-1},\dots,\big(\hat P_{n+N_G-1}^{[2]}(y)\big)^\top(\hat H_{n+N_G-1})^{-1}
	\end{bmatrix}\begin{bmatrix}0_{m_Cp\times  N_G p} &
	-\Omega^C{[n]} \\
	\Omega^G{[n]}	& 0_{N_Gp\times N_Cp}
	\end{bmatrix}.
	\end{align}
	In particular, recalling \eqref{eq:omegaALST} we deduce that
	\begin{align*}
	(\hat P_{n}^{[2]}(y))^\top A_N=		\big(\varphi^K_n(y)\big)^\square\big(\Phi_n^\square\big)^{-1}\begin{bmatrix}
	H_{n-N}\\ 0_p\\\vdots \\0_p
	\end{bmatrix},
	\end{align*}
	and the expression for the perturbation of the second family of biorthogonal polynomials follows.
\end{proof}

\subsubsection{Discussion for $n<N_G$}
We proceed similarly as we did in \S \ref{susection:<N}. Let us notice that from \eqref{conex2LST} and \eqref{eq:RSMLST} we get
	\begin{align*}
	\omega\begin{bmatrix}
	\boldsymbol{\mathcal J}_{C,P^{[1]}},R
	\end{bmatrix}=\hat H\big(\hat S_2\big)^{-\top}\begin{bmatrix}
	0, I
	\end{bmatrix}.
	\end{align*}
Therefore, we conclude
		\begin{align*}
		\omega_{[N_g],[N_C+N_G]}
		\begin{bmatrix}
		\boldsymbol{\mathcal J}_{C,P^{[1]}},R
		\end{bmatrix}_{[N_C+N_G]}	=
		\begin{bmatrix} 0_{N_Gp\times N_Cp},
		-\hat H_{[N_G]}\big(\hat S_2\big)_{[N_G]}^{-\top}
		\end{bmatrix}.
		\end{align*}

Given a block matrix $A$, we denote by $A_{[N],[M]}$ the truncation obtained by taking the first $N$ block rows and the first $M$ block columns.
Then, we easily conclude
\begin{ma} The following Gauss-Borel factorization is fulfilled
\begin{align*}
		\begin{bmatrix}
		\boldsymbol{\mathcal J}_{C,P^{[1]}},R
		\end{bmatrix}_{[N_C+N_G]}=	\left[\begin{array}{c|c} I_{[N_C]} &0_{pN_C\times pN_G}\\\hline
		\multicolumn{2}{c}{	\omega_{[N_G],[N_C+N_G]} }
		\end{array}\right]^{-1}\begin{bmatrix}
		\big(\boldsymbol{\mathcal J}_{C,P^{[1]}}\big)_{[N_C]} & 0_{pN_C\times pN_G}\\
		0_{pN_G\times pN_C} & \hat H_{[N_G]}
		\end{bmatrix}
		\begin{bmatrix} I_{[N_C]} & R_{[N_C],[N_G]}\\0_{N_Gp\times N_Cp}&\big(\hat S_2\big)_{[N_G]}^{-\top}
		\end{bmatrix}.
		\end{align*}
		Therefore, for  $n\in\{1,\dots,N_G-1\}$ we have
	\begin{align*}
	\hat H_{n}&=\Theta_*\begin{bmatrix} \boldsymbol{\mathcal J}_{C,P_0^{[1]}} &
	R_{0,0}&\dots &R_{0,n-1}\\
	\vdots &\vdots&&\vdots\\	\boldsymbol{\mathcal J}_{C,P_{N_C+n-1}^{[1]}}& 		R_{N_C+n-1,0}&\dots &R_{N_C+n-1,n-1}
	\end{bmatrix}, \\
	\omega_{n,k}&=\Theta_*\left[\begin{array}{c|c}
	\begin{matrix}
	\boldsymbol{\mathcal J}_{C,P_0^{[1]}} &	R_{0,0}&\dots &R_{0,n-1}\\
	\vdots&\vdots\\
	\boldsymbol{\mathcal J}_{C,P_{N_C+n}^{[1]}} &		R_{N_C+n,0}&\dots &R_{N_C+n,n-1}\\	
	\end{matrix}& e_k
	\end{array}\right],& 0&\leq k<N_C+n,\\	\big((\hat S_2)^\top\big)_{n,k}&=\Theta_*\begin{bmatrix}
	\boldsymbol{\mathcal J}_{C,P_0^{[1]}}  &	R_{0,0}&\dots &R_{0,n}\\
	\vdots &	\vdots\\
	\boldsymbol{\mathcal J}_{C,P_{N_C+n-1}^{[1]}}& 			R_{N_C+n,0}&\dots &R_{N_C+n,n}\\[5pt]
	\multicolumn{4}{c}{	(e_{N_C+k})^\top}
	\end{bmatrix},& 0&\leq k<N_G.
	\end{align*}
\end{ma}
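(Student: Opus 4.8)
The plan is to obtain the factorization by reading off blocks from two structural identities that are already available, and then to translate it into last quasideterminants by the general Gauss--Borel formulas of Proposition \ref{qd1}.

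First I would record the two building blocks. The connection formula \eqref{conex2LST} reads $\hat P^{[1]}(x)W_C(x)=\omega P^{[1]}(x)$; taking $W_C$-root spectral jets of both sides and invoking \eqref{eq:Wr}, which makes the $W_C$-jets of $\hat P^{[1]}(x)W_C(x)$ vanish at every eigenvalue of $W_C$, gives $\omega\boldsymbol{\mathcal J}_{C,P^{[1]}}=0$. Combined with $\omega R=\hat H(\hat S_2)^{-\top}$ from \eqref{OmegaR0LST}, this is exactly the identity $\omega\,[\boldsymbol{\mathcal J}_{C,P^{[1]}},R]=\hat H(\hat S_2)^{-\top}[0,I]$ recorded immediately before the statement. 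The decisive ingredient is the band structure of the resolvent from Proposition \ref{conexwLST}: $\omega$ has only $N_G$ nonzero block subdiagonals, so its first $N_G$ block rows are supported in the first $N_C+N_G$ block columns. Hence, restricted to its first $N_G$ block rows, the semi-infinite product $\omega\,[\boldsymbol{\mathcal J}_{C,P^{[1]}},R]$ is an admissible finite product and equals $\omega_{[N_G],[N_C+N_G]}\,[\boldsymbol{\mathcal J}_{C,P^{[1]}},R]_{[N_C+N_G]}$; since $\hat H(\hat S_2)^{-\top}$ is block upper triangular, its first $N_G$ rows truncated to the first $N_G$ columns are $\hat H_{[N_G]}(\hat S_2)_{[N_G]}^{-\top}$, while the $\boldsymbol{\mathcal J}_{C,P^{[1]}}$ columns contribute nothing. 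This is the displayed relation for $\omega_{[N_G],[N_C+N_G]}\,[\boldsymbol{\mathcal J}_{C,P^{[1]}},R]_{[N_C+N_G]}$.

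Next I would assemble the factorization. Write $M:=[\boldsymbol{\mathcal J}_{C,P^{[1]}},R]_{[N_C+N_G]}$ and let $\mathcal M$ be the matrix whose first $N_C$ block rows are $[I_{[N_C]},0]$ and whose last $N_G$ block rows form $\omega_{[N_G],[N_C+N_G]}$. Left multiplication by $\mathcal M$ reproduces the top $N_C$ block rows of $M$, namely $[(\boldsymbol{\mathcal J}_{C,P^{[1]}})_{[N_C]},R_{[N_C],[N_G]}]$, and replaces the bottom $N_G$ block rows by $[0,\hat H_{[N_G]}(\hat S_2)_{[N_G]}^{-\top}]$ by the previous step. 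The product $\mathcal M M$ is thus block upper triangular; factoring its block diagonal out on the left yields $\mathcal M M=\diag\!\big((\boldsymbol{\mathcal J}_{C,P^{[1]}})_{[N_C]},\hat H_{[N_G]}\big)\,U$ with $U$ block upper unitriangular, and rearranging gives the asserted $M=\mathcal M^{-1}\diag\!\big((\boldsymbol{\mathcal J}_{C,P^{[1]}})_{[N_C]},\hat H_{[N_G]}\big)\,U$. Invertibility of the diagonal factor is guaranteed by the lemma just proved, $(\boldsymbol{\mathcal J}_{C,P^{[1]}})_{[N_C]}=(S_1)_{[N_C]}\mathcal R_C$ being nonsingular, together with quasidefiniteness of each $\hat H_k$. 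Matching $\mathcal M M$ with $\diag\!\big((\boldsymbol{\mathcal J}_{C,P^{[1]}})_{[N_C]},\hat H_{[N_G]}\big)\,U$ block by block fixes the off-diagonal block of $U$ as $((\boldsymbol{\mathcal J}_{C,P^{[1]}})_{[N_C]})^{-1}R_{[N_C],[N_G]}$, in agreement with the spectral version obtained in the preceding subsection.

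Finally, the explicit formulas for $n\in\{1,\dots,N_G-1\}$ follow mechanically. The factorization displays $[\boldsymbol{\mathcal J}_{C,P^{[1]}},R]$ as a Gram-type matrix whose block diagonal carries $\hat H_n$, whose lower factor carries the resolvent entries $\omega_{n,k}$, and whose upper factor carries $((\hat S_2)^\top)_{n,k}$. Applying the last-quasideterminant formulas of Proposition \ref{qd1} to the truncations of $[\boldsymbol{\mathcal J}_{C,P^{[1]}},R]$ of size $N_C+n$, exactly as in Lemma \ref{lemma:<N} and Lemma \ref{lemma:<NLST}, and bordering with the appropriate extra block row or column (using the unit selector columns $e_k$), I would read off the three stated expressions for $\hat H_n$, $\omega_{n,k}$ and $((\hat S_2)^\top)_{n,k}$. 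I expect the main obstacle to be the truncation and associativity bookkeeping in the first step: collapsing the semi-infinite product $\omega\,[\boldsymbol{\mathcal J}_{C,P^{[1]}},R]$ to the finite product $\omega_{[N_G],[N_C+N_G]}M$ simultaneously uses the $N_G$-subdiagonal band of $\omega$, the block upper triangularity of $\hat H(\hat S_2)^{-\top}$ and the vanishing $\omega\boldsymbol{\mathcal J}_{C,P^{[1]}}=0$; these are precisely the properties controlled by Propositions \ref{pro:associativity} and \ref{conexwLST}. Everything downstream is routine block linear algebra and the standard quasideterminant dictionary.
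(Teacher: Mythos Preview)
Your proposal is correct and follows essentially the same route as the paper, which derives $\omega\,[\boldsymbol{\mathcal J}_{C,P^{[1]}},R]=\hat H(\hat S_2)^{-\top}[0,I]$ from \eqref{conex2LST} and \eqref{OmegaR0LST}, truncates using the band structure of $\omega$, and then simply states ``we easily conclude'' the factorization and the quasideterminant formulas. Your write-up actually fills in the block computation that the paper omits, and your remark that the off-diagonal block of $U$ is $\big((\boldsymbol{\mathcal J}_{C,P^{[1]}})_{[N_C]}\big)^{-1}R_{[N_C],[N_G]}$ is the correct one (consistent with the spectral version); the bare $R_{[N_C],[N_G]}$ in the displayed statement is a typo in the paper and does not affect the quasideterminant consequences.
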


\begin{teo}[Mixed spectral/nonspectral Christoffel--Geronimus--Uvarov formulas]
	 For $n< N_G$, the perturbed biorthogonal matrix polynomials have the following quasideterminatal expressions
	\begin{align*}
	\hat  P^{[1]}_{n}(x)W_C(x)&=
	\Theta_*\begin{bmatrix}
	\boldsymbol{\mathcal J}_{C,P_0^{[1]}} &R_{0,0}&\dots &R_{0,n-1}&P_n^{[1]}(x)\\
	\vdots&\vdots&&\vdots&\vdots\\
	\boldsymbol{\mathcal J}_{C,P_{N_C+n}^{[1]}} &	R_{N_C+n,0}&\dots &R_{N_C+n,n-1}&P^{[1]}_{n+N_C}(x)
	\end{bmatrix},
	\\
	\big(\hat P^{[2]}_n(y)\big)^\top&=\Theta_*\begin{bmatrix}
	\boldsymbol{\mathcal J}_{C,P_0^{[1]}}  &R_{0,0}&\dots &R_{0,n}\\
	\vdots &	&&\vdots\\
	\boldsymbol{\mathcal J}_{C,P_{N_C+n-1}^{[1]}}& 			R_{N_C+n-1,0}&\dots &R_{N_C+n-1,n}\\
	0_{p\times pN_C} &\multicolumn{3}{c}{\big(\chi(y)_{[N_G]}\big)^\top}
	\end{bmatrix}.
	\end{align*}
\end{teo}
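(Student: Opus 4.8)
The plan is to mirror the argument already used for the spectral case in Theorem~\ref{teo:SCGUN} and for $n\ge N_G$ in Theorem~\ref{theorem:nonspectralLST}, now assembling the quasideterminantal ingredients recorded in the Lemma immediately preceding this theorem. First I would read off the two connection formulas for the regime $n<N_G$. From the definition of the resolvent and its band structure (Proposition~\ref{conexwLST}), together with the connection formula \eqref{conex2LST} and the normalization \eqref{eq:bior}, the $n$-th block row of $\hat P^{[1]}(x)W_C(x)=\omega P^{[1]}(x)$ reads
\begin{align*}
\hat P^{[1]}_n(x)W_C(x)=P^{[1]}_{n+N_C}(x)+\big[\omega_{n,0},\dots,\omega_{n,n+N_C-1}\big]\begin{bmatrix}P^{[1]}_0(x)\\ \vdots\\ P^{[1]}_{n+N_C-1}(x)\end{bmatrix},
\end{align*}
because for $n<N_G$ the lower band of $\omega$ already reaches the zeroth block column while $\omega_{n,n+N_C}=I_p$. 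Likewise, expanding $\hat P^{[2]}_n(y)=\hat S_2\chi(y)$ and transposing yields
\begin{align*}
\big(\hat P^{[2]}_n(y)\big)^\top=I_py^n+\big[I_p,\dots,I_py^{n-1}\big]\begin{bmatrix}\big((\hat S_2)^\top\big)_{0,n}\\ \vdots\\ \big((\hat S_2)^\top\big)_{n-1,n}\end{bmatrix}.
\end{align*}

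Second, I would substitute into these two identities the last-quasideterminant expressions for the resolvent entries $\omega_{n,k}$ (valid for $0\le k<N_C+n$) and for the relevant entries of $(\hat S_2)^\top$ (valid for $0\le k<N_G$), both furnished by the preceding Lemma. In each case the interior block is the truncated matrix $\big[\boldsymbol{\mathcal J}_{C,P^{[1]}},R\big]$ bordered by a selector column $e_k$ or $e_{N_C+k}$. The key step is then to recognize that the explicit linear combinations above, carrying these coefficients, collapse into a single last quasideterminant through the bordering identity for quasideterminants used repeatedly in the paper (as in Proposition~\ref{qd1}): the corner entry absorbs the leading term $P^{[1]}_{n+N_C}(x)$ (respectively $I_py^n$), the last block column above the corner receives the lower-degree entries $P^{[1]}_k(x)$ (respectively $I_py^k$), and the interior is precisely $\big[\boldsymbol{\mathcal J}_{C,P^{[1]}},R\big]$ truncated to the depth displayed in the statement. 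This reproduces the two claimed quasideterminants verbatim.

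Third, I must confirm that the last quasideterminants are well defined, i.e.\ that the governing interior submatrices are invertible; this is not an added hypothesis but a consequence of the Gauss--Borel factorization of $\big[\boldsymbol{\mathcal J}_{C,P^{[1]}},R\big]_{[N_C+N_G]}$ proved in the preceding Lemma. That factorization exhibits the matrix as a product of a block unitriangular factor, the block-diagonal factor $\diag\big((\boldsymbol{\mathcal J}_{C,P^{[1]}})_{[N_C]},\hat H_{[N_G]}\big)$, and an upper factor with diagonal blocks $I$ and $(\hat S_2)_{[N_G]}^{-\top}$. Each factor is nonsingular: the block $(\boldsymbol{\mathcal J}_{C,P^{[1]}})_{[N_C]}=(S_1)_{[N_C]}\mathcal R_C$ is invertible since $\mathcal R_C$ is (the Lemma in \S\ref{susection:<N}), and the norms $\hat H_k$ are invertible by construction, so every truncation entering $\Theta_*$ is invertible.

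The hard part will be purely combinatorial rather than conceptual: aligning the index ranges so that the $N_C$ Christoffel-jet block columns and the $n$ (respectively $n+1$) block columns arising from $R$ occupy the correct order and truncation depth, and verifying that the corner placement of $P^{[1]}_{n+N_C}(x)$ versus $I_py^n$ reproduces exactly the two matrices in the statement. Once the bordering identity is applied with this block partition fixed, no further computation is needed.
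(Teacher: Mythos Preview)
Your proposal is correct and follows essentially the same approach as the paper: the paper states this theorem without an explicit proof, but the intended argument is exactly the one you outline, namely to repeat verbatim the proof of Theorem~\ref{teo:SCGUN} with the spectral Lemma~\ref{lemma:<NLST} replaced by the mixed spectral/nonspectral Lemma immediately preceding this theorem. The two connection formulas you write down and the substitution of the quasideterminantal expressions for $\omega_{n,k}$ and $\big((\hat S_2)^\top\big)_{n,k}$ are precisely what is required.
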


\begin{coro}\label{coro:CPTR}
	If $A_{N,G}$ is nonsingular, then the matrix
	\begin{align}
		\begin{bmatrix}
		\boldsymbol{\mathcal J}_{C,P^{[1]}_{n-N_G}} &	\boldsymbol{\mathcal J}_{G,C^{[1]}_{n-N_G}}-\prodint{   P^{[1]}_{n-N_G} (x),(\xi)_x}\mathcal W_G\\ 	\vdots & \vdots \\ \boldsymbol{\mathcal J}_{C,P^{[1]}_{n+N_C-1}}&\boldsymbol{\mathcal J}_{G, C^{[1]}_{n+N_C-1}}-\prodint{   P^{[1]}_{n+N_C-1} (x),(\xi)_x}\mathcal W_G
		\end{bmatrix}
	\end{align}
	is nonsingular.
\end{coro}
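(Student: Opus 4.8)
The plan is to connect this nonsingularity claim with the mixed spectral/nonspectral machinery already developed, exactly as the corresponding Corollaries \ref{corollary:non_singularity} and \ref{corollary:non_singularity2} do in the pure Geronimus setting. The target matrix combines the Christoffel jets $\boldsymbol{\mathcal J}_{C,P^{[1]}_{k}}$ in the left block column with the Geronimus-corrected second kind jets $\boldsymbol{\mathcal J}_{G,C^{[1]}_{k}}-\prodint{ P^{[1]}_{k}(x),(\xi)_x}\mathcal W_G$ in the right block column, for $k$ running from $n-N_G$ to $n+N_C-1$. First I would seek a ``spectral versus nonspectral'' identity analogous to Proposition \ref{pro:specvsnon}, relating the right block column to the nonspectral data $R$. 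The natural candidate, obtained by taking root spectral jets of the connection formula \eqref{eq:conexionC1LST} and invoking \eqref{eq:checkCPLST} together with \eqref{OmegaR0LST}, should read
\begin{align*}
\boldsymbol{\mathcal J}_{G,C^{[1]}_{k}}-\prodint{ P^{[1]}_{k}(x),(\xi)_x}\mathcal W_G=-\big[R_{k,0},\dots,R_{k,N_G-1}\big]\mathcal B_G\mathcal Q_G,
\end{align*}
precisely the Geronimus-polynomial version of \eqref{eq:spectral non spectral}, with $\mathcal B_G,\mathcal Q_G$ the objects from Definitions \ref{def:B} and the jet matrices built for $W_G$.

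Second, I would assemble the full target matrix from these two identifications. The left block column is $\boldsymbol{\mathcal J}_{C,P^{[1]}_{k}}$ directly, while the right block column factors through $\mathcal B_G\mathcal Q_G$ by the displayed relation. Writing the block rows $k=n-N_G,\dots,n+N_C-1$ together, the target matrix equals
\begin{align*}
\begin{bmatrix}
\boldsymbol{\mathcal J}_{C,P^{[1]}_{n-N_G}} & R_{n-N_G,0}&\dots&R_{n-N_G,N_G-1}\\
\vdots & \vdots & &\vdots\\
\boldsymbol{\mathcal J}_{C,P^{[1]}_{n+N_C-1}} & R_{n+N_C-1,0}&\dots&R_{n+N_C-1,N_G-1}
\end{bmatrix}
\begin{bmatrix}
I_{N_Cp} & 0\\
0 & -\mathcal B_G\mathcal Q_G
\end{bmatrix}.
\end{align*}
The left factor here is exactly the poised submatrix whose nonsingularity is established in Proposition \ref{pro:poisedLST} (under the hypothesis that $A_{G,N_G}$, i.e. $A_{N,G}$, is nonsingular); and the right factor is nonsingular because $\mathcal B_G\mathcal Q_G$ is invertible by Lemma \ref{lemma:triple} (its inverse is the matrix $\mathcal R_G$ of \eqref{eq:RJY}). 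Since the determinant of a product of square nonsingular matrices is nonzero, the target matrix is nonsingular, which is the desired conclusion.

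The main obstacle I anticipate is establishing the spectral/nonspectral identity cleanly, since it requires taking jets of \eqref{eq:conexionC1LST} and carefully tracking the truncation $\big(\hat H(\hat S_2)^{-\top}\big)_{[N_G]}\mathcal B_G(\chi(x))_{[N_G]}$ through the root spectral jet operation, then eliminating the resolvent $\omega$ using its invertibility as a lower unitriangular matrix (as in the end of the proof of Proposition \ref{pro:specvsnon}). One must also confirm that the indexing aligns: the block rows available in Proposition \ref{pro:poisedLST} are exactly $k=n-N_G,\dots,n+N_C-1$, matching the block rows appearing in the statement, so no extra bordering argument is needed. A minor subtlety is that $\mathcal Q_G$ and $\mathcal B_G$ must be the jet/coefficient matrices associated to $W_G$ specifically, not $W_C$; once this bookkeeping is in place, the proof reduces to the two-line product-of-invertible-matrices argument, exactly paralleling Corollary \ref{corollary:non_singularity}.
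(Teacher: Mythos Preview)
Your proposal is correct and follows essentially the same route as the paper: invoke the spectral/nonspectral identity of Proposition \ref{pro:specvsnon} (applied to the Geronimus part, with $R$ as in Definition \ref{def:RLST}) to rewrite the right block column, then factor the target matrix as the poised submatrix of Proposition \ref{pro:poisedLST} times a block-diagonal matrix $\diag(I_{N_Cp},-\mathcal B_G\mathcal Q_G)$, and conclude via Lemma \ref{lemma:triple}. The paper's proof writes this factorization with $\mathcal R_G$ in place of $\mathcal B_G\mathcal Q_G$ (and $I_{N_Gp}$ where $I_{N_Cp}$ is meant), which appears to be a typo since Proposition \ref{pro:specvsnon} gives the relation with $\mathcal B_G\mathcal Q_G$; your version has the correct factor.
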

\begin{proof}
	 Proposition 	 \ref{pro:specvsnon} implies for the Geronimus part of the Geronimus--Uvarov transformation that
	\begin{align}\label{eq:spectral non spectralLST}
	\begin{bmatrix}
	\boldsymbol{\mathcal J}_{ G,C^{[1]}_{n-N_G}}-\prodint{   P^{[1]}_{n-N_G} (x),(\xi)_x}\mathcal W_G\\
	\vdots\\
	\boldsymbol{\mathcal J}_{ G,C^{[1]}_{n+N_C-1}}-\prodint{   P^{[1]}_{n+N_C-1} (x),(\xi)_x}\mathcal W_G
	\end{bmatrix}
	=-\begin{bmatrix}
	R_{n-N_G,0}& & R_{n-N_G,N_G-1}\\\vdots& &\vdots\\
	R_{n+N_C-1,0}&\dots &R_{n+N_C-1,N_G-1}
	\end{bmatrix}
	\mathcal R_G,
	\end{align}
	which in turn gives
		\begin{multline}
		\begin{bmatrix}
		\boldsymbol{\mathcal J}_{C,P^{[1]}_{n-N_G}} &	\boldsymbol{\mathcal J}_{G,C^{[1]}_{n-N_G}}-\prodint{   P^{[1]}_{n-N_G} (x),(\xi)_x}\mathcal W_G\\ 	\vdots & \vdots \\ \boldsymbol{\mathcal J}_{C,P^{[1]}_{n+N_C-1}}&\boldsymbol{\mathcal J}_{G, C^{[1]}_{n+N_C-1}}-\prodint{   P^{[1]}_{n+N_C-1} (x),(\xi)_x}\mathcal W_G
		\end{bmatrix}\\=	\begin{bmatrix}
		\boldsymbol{\mathcal J}_{C,P^{[1]}_{n-N_G}}& R_{n-N_G,0}& \dots& R_{n-N_G,N_G-1}\\\vdots&\vdots& &\vdots\\
		\boldsymbol{\mathcal J}_{C,P^{[1]}_{n+N_C-1}}& 	R_{n+N_C-1,0}&\dots &R_{n+N_C-1,N_G-1}
		\end{bmatrix}
		\diag (I_{N_Gp }, - \mathcal R_G).
		\end{multline}
Finally, Proposition  \ref{pro:poisedLST} and Lemma \ref{lemma:triple} give the result.
\end{proof}

\subsection{Applications}

We discuss in the next three subsections some extensions of the previous techniques by using the adjugate matrix. For these we need the original matrix of generalized kernels to be Hankel and, therefore, we are leading with Hankel block matrices.

\subsubsection{Christoffel transformations with singular leading coefficients for Hankel generalized kernels}\label{S:chris}

In  \S \ref{s:unimodular}, we considered  unimodular Christoffel transformations, which have been  broadly studied in the matrix orthogonal polynomials community, under the light of Geronimus transformations. As shown there, despite the nonspectral condition,  the Geronimus transformation can be of help and provide Christoffel type formulas for the perturbed orthogonal matrix polynomials. We extend now these considerations for a  Christoffel transformation with singular leading coefficient and not necessarily unimodular. For the unimodular matrix polynomial perturbation we got results for  arbitrary matrix of generalized kernels $u_{x,y}$. However, for the singular but not unimodular case there is a ticket to pay, namely the non perturbed matrix of generalized kernels must be of Hankel type, i.e., $u_{x,y}=u_{x,x}$.

The idea is to use the adjugate or classical adjoint of   a matrix polynomial $W(x)$, see \cite{hor}, $\operatorname{adj}(W(x))$ defined as the transpose of the  matrix of cofactors. From the Laplace formula we know that
\begin{align*}
W(x)\operatorname{adj}(W(x))=\operatorname{adj}(W(x))W(x)=\det( W(x) )I_p.
\end{align*}
If  $N=\deg (W(x))$, we have that  $\deg\big( \det (W(x))\big)\leq Np$ and
$\deg\big(\operatorname{adj}(W(x))\big)\leq N(p-1)$.
The relations
\begin{align*}
\big(W(x)\big)^{-1} &=\frac{1}{\det (W(x))}\operatorname{adj}(W(x)),&
W(x)&=\det(W(x))\big( \operatorname{adj}(W(x))\big)^{-1}
\end{align*}
 will be instrumental in the sequel.

We study the Christoffel transformation\footnote{Notice that the matrix Christoffel transformation $\hat u= W(x)u$ is a   transposition of this one.} of a Hankel matrix of generalized kernels
\begin{align*}
\hat u_{x,x}= u_{x,x} W(x),
\end{align*}
as the following  massless Geronimus--Uvarov transformation
\begin{align*}
\hat u_{x,x} &= W_C(x) u_{x,x} \big(W_G(x)\big)^{-1}, & W_C(x)&:=I_p\det(W(x)), & W_G(x)&:= \operatorname{adj} (W(x)),
\end{align*}
with $N_C=\deg (W_C(x)) \leq Np$ and $N_G=\deg (W_G(x))\leq N(p-1)$. Let us  remark that is the presence of an non constant determinant $\det(W(x))$ that forces for Hankel matrices $u_{x,x}$.
Therefore, we could apply our results for Geronimus--Uvarov transformations, and  use the mixed  spectral-nonspectral Christoffel--Geronimus--Uvarov formula
of Theorem 	\ref{theorem:nonspectralLST}. That is,  the   perturbed polynomials can be expressed,
	for $n\geq  N_G$, as follows
	\begin{align*}
	\hat  P^{[1]}_n(x)\det(W(x))&=\Theta_*\left[\begin{array}{c|c}
	\Phi^\square_n &\begin{matrix}P^{[1]}_{n-N_G}(x)\\\vdots \\P^{[1]}_{n+N_C-1}(x)\end{matrix}\\\hline
	\varphi^\square_n & P^{[1]}_{n+N_C}(x)
	\end{array}\right], & \big(	\hat  P_n^{[2]}(y)\big)^\top A_N=-\Theta_*\left[\begin{array}{c|c}\Phi_n^\square & \begin{matrix}
	H_{n-N_G}\\ 0_p\\\vdots \\0_p
	\end{matrix}\\\hline
	\big(\varphi^K_n(y)\big)^\square& 0_p
	\end{array}	
	\right].
	\end{align*}
	The corresponding perturbed quasitau matrices are
	\begin{align*}
	\hat H_n&=\Theta_*\left[\begin{array}{c|c}
	\Phi_n^\square&\begin{matrix}R_{n-N_G,n}\\\vdots \\R_{n+N_C-1,n}\end{matrix}\\\hline
	\varphi^\square_n& R_{n+N_C,n}
	\end{array}\right].
	\end{align*}	
Let us recall that	$\Phi_n^{\square}\in\mathbb C^{(N_C+N_G)p\times (N_C+N_G)p}$, see Definition \ref{def:Phi},  is a nonsingular submatrix of
\begin{align*}
	\Phi_n:=	\begin{bmatrix}
	\boldsymbol{\mathcal J}_{C,P^{[1]}_{n-N_G}}&R_{n-N_G,0} & \dots &  R_{n-N_G,n-1} \\ 	\vdots&\vdots&&\vdots\\ \boldsymbol{\mathcal J}_{C,P^{[1]}_{n+N_C-1}}&	R_{n+N_C,0}&\dots &R_{n+N_C,n-1}
	\end{bmatrix}\in\mathbb C^{(N_C+N_G)p\times (N_C+n)p}
	\end{align*}
	and that   $\varphi_n^\square,(\varphi_n^K)^\square\in\mathbb C^{p\times (N_C+N_G)p}$ corresponds to the same selection of columns of
	\begin{align*}
	\varphi_n&=\begin{bmatrix}\boldsymbol{\mathcal J}_{C,P^{[1]}_{n+N_C}},
	R_{n+N_C,0},\dots, R_{n+N_C,n-1}
	\end{bmatrix} \in\mathbb C^{p\times (N_C+n)p},\\
	\varphi_n^K&=
	\begin{bmatrix}
	W_G(y) \boldsymbol{\mathcal J}_{C, K_{n-1}}(y), r^K_{n,0}(y),\dots,r^K_{n,n-1}(y)
	\end{bmatrix}\in\mathbb C^{p\times (N_C+n)p}[y].
	\end{align*}
Now, the  $R$'s are
\begin{align*}
R_{n,m}=\left\langle P^{[1]}_n(x), y^m  \frac{W(y)}{\det(W(y))}\right\rangle_u,
\end{align*}
while the root spectral jet $\boldsymbol {\mathcal J}_{P}$ is
\begin{align*}
\boldsymbol {\mathcal J}_{C,P}=\begin{bmatrix}
P(x_1), P'(x_1),\dots, \dfrac{P^{(\alpha_1-1)}(x_1)}{\alpha_1!},\dots,P(x_q), P'(x_q),\dots, \dfrac{P^{(\alpha_q-1)}(x_q)}{\alpha_q!}
\end{bmatrix},
\end{align*}
where $x_a$ are the eigenvalues, with corresponding multiplicities  $\alpha_a$ , of $W(x)$. Thus, we  have the more explicit expression
\begin{align*}
\Phi=\begin{bmatrix}
P_{n-N_G}^{[1]}(x_1)&\dots&\dfrac{(P_{n-N_G}^{[1]})^{(\alpha_q-1)}(x_q)}{\alpha_q!}& \left\langle P^{[1]}_{n-N_G}(x),  \frac{W(y)}{\det(W(y))}\right\rangle_u & \dots &\left\langle P^{[1]}_{n-N_G}(x),  \frac{y^{n-1}W(y)}{\det(W(y))}\right\rangle_u\\
\vdots &&\vdots&\vdots&&\vdots\\
P_{n+N_C-1}^{[1]}(x_1)&\dots&\dfrac{(P_{n+N_C-1}^{[1]})^{(\alpha_q-1)}(x_q)}{\alpha_q!}& \left\langle P^{[1]}_{n+N_C-1}(x),  \frac{W(y)}{\det(W(y))}\right\rangle_u & \dots &\left\langle P^{[1]}_{n+N_C-1}(x),  \frac{y^{n-1}W(y)}{\det(W(y))}\right\rangle_u\end{bmatrix}
\end{align*}
from where a poised submatrix, which exists for an appropriate selection of columns, must be picked.

\subsubsection{Symmetric perturbations of Hankel  matrices of generalized kernels}\label{s:symmetric}

For  symmetric Hankel matrices of generalized kernels $u$, $u=u^\top$, the biorthogonality becomes an orthogonality condition, and so we have orthogonal matrix polynomials. The perturbations we have considered so far do not respect this  condition.  For the study  of transformations preserving this symmetric condition,  we are lead to the study of transformations
\begin{align*}
\hat u_{x,x} &=W(x)u_{x,x} (W(x))^\top,
\end{align*}
which we call  of the symmetric Christoffel type,
or  (we omit the masses for sake of simplicity) the massless  symmetric Geronimus perturbations
\begin{align*}
\hat u_{x,x} &=(W(x))^{-1}u_{x,x} (W(x))^{-\top}.
\end{align*}
These transformations, that deserve further study,  can be understood using the adjugate technique at the light of the Geronimus--Uvarov transformations techniques we have discussed previously in this paper. We need to assume that the leading coefficient of $W(x)$ is nonsingular, and hence, that spectral techniques could be applied.
For example,  the symmetric Christoffel transformation can be written as the following Geronimus--Uvarov transformation
\begin{align*}
\hat u_{x,x} &=W_C(x)u_{x,x} (W_G(x))^{-1}, & W_C(x)&:=(\det W(x))W(x), & W_G(x):=(\operatorname{adj}(W(x)))^\top,
\end{align*}
with polynomial degrees $N_C= N(p+1)$
and $N_G= N(p-1)$.
The massless symmetric Geronimus transformation  can be  understood as the following Geronimus--Uvarov transformation
\begin{align*}
\hat u_{x,x} &=W_C(x)u_{x,x} (W_G(x))^{-1}, & W_C(x)&:=\operatorname{adj}(W(x)), & W_G(x):=(\det W(x))|(W(x))^\top.
\end{align*}
Now,  the degrees are $N_C= N(p-1)$, $N_G= N(p+1)$, respectively.
Observe that for the  symmetric Christoffel transformation we have $\det W_C(x)=(\det W(x))^{p+1}$, so that eigenvalues coincide but multiplicities are multiplied by $p$. The same happens for the massless symmetric Geronimus transformations and $W_G(x)$. Notice also that as $\det (\operatorname{adj} (W(x)))=(\det W(x))^{p-1}$,  the eigenvalues of $W_G(x)$ (of $W_C(x)$) in the symmetric Christoffel   (in the symmetric Geronimus) are those $W(x)$ but with multiplicities multiplied by $p+$. Then, the massless spectral Christoffel--Geronimus--Uvarov formulas of Theorem \ref{teo:SCGU} can be applied with the corresponding perturbing polynomials $W_C(x) $ and $W_G(x)$.

\subsubsection{Nonsymmetric perturbations of Hankel matrices of generalized kernels}\label{s:nsymmetric}

We could consider a more general situation  of the following nonsymmetric form, with the perturbing matrix polynomial $W(x)$ having a nonsingular leading coefficient,
\begin{align*}
\hat u &=W(x)u V(x)=W_C(x)u (W_G(x))^{-1}, &W_C(x)&=(\det V(x))W(x), & W_G(x) &=\operatorname{adj}(V(x)),
\end{align*}
or
\begin{align*}
\hat u& =(W(x))^{-1}u (V(x))^{-1}=W_C(x)u (W_G(x))^{-1}, &W_C(x)&=\operatorname{adj}(W(x)), & W_G(x)&= (\det W(x))V(x).
\end{align*}
In this case the polynomial $V(x)$ can  have  a singular leading coefficient, and when this happens we apply the mixed spectral/nonspectral Christoffel--Geronimus--Uvarov formulas.
	
	\subsubsection{Degree one Geronimus--Uvarov transformations}

We consider  degree one perturbing polynomials
\begin{align*}
W_C(x)&=xI_p-A_C, & W_G(x)&=xI_p-A_G,
\end{align*}
and no masses.
For the  Jordan pairs $(X_C,J_C)$ and $(X_G,J_G)$ we have  $A_C=X_CJ_C(X_C)^{-1}$ and $A_G=X_GJ_G(X_G)^{-1}$.
Now, from Theorem \ref{teo:SCGU}
	we deduce, for $n\geq N_G$,
	the following last quasideterminant expressions
	\begin{align*}
	\hat P^{[1]}_{n}(x)(xI_p-A_C)&=
	\Theta_*
	\begin{bmatrix}
	P^{[1]}_{n-1}(A_C) X_C&	C^{[1]}_{n-1}(A_G) X_G&P_{n-1}^{[1]}(x)\\
 P^{[1]}_{n}(A_C) X_C &	C^{[1]}_{n}(A_G) X_G &P_{n}^{[1]}(x)\\
 P^{[1]}_{n+1}(A_C) X_C &	C^{[1]}_{n+1}(A_G) X_G &P_{n+1}^{[1]}(x)
	\end{bmatrix},\\
	\hat H_{n}&=\Theta_*
		\begin{bmatrix}
	P^{[1]}_{n-1}(A_C) X_C&	C^{[1]}_{n-1}(A_G) X_G&H_{n-1}\\
 P^{[1]}_{n}(A_C) X_C &	C^{[1]}_{n}(A_G) X_G &0_p\\
 P^{[1]}_{n+1}(A_C) X_C &	C^{[1]}_{n+1}(A_G) X_G &0_p
	\end{bmatrix},\\
	\big(	\hat  P _{n}^{[2]}(y)\big)^\top&=-\Theta_*
	\begin{bmatrix}
	P^{[1]}_{n-1}(A_C) X_C&	C^{[1]}_{n-1}(A_G) X_G&H_{n-1}\\
 P^{[1]}_{n}(A_C) X_C &	C^{[1]}_{n}(A_G) X_G &0_p\\
 (yI_p-A_G)K_{n-1}(A_C,y) X_C &	\big(yI_p-A_G)K^{(pc)}_{n-1}(A_G,y) +I_p\big) X_G &0_p
	\end{bmatrix}.
	\end{align*}
If we expand the quasideterminant we get
\begin{align*}
\hat P^{[1]}_{n}(x)(xI_p-A_C)
=&P^{[1]}_{n+1}(x)
-C^{[1]}_{n+1}(A_G)\Big(C^{[1]}_{n}(A_G)- P^{[1]}_{n}(A_C)\big(P^{[1]}_{n-1}(A_C)\big)^{-1}C^{[1]}_{n-1}(A_G)\Big)^{-1}P^{[1]}_{n}(x)\\
&-C^{[1]}_{n+1}(A_G)\Big(C^{[1]}_{n-1}(A_G)- P^{[1]}_{n-1}(A_C)\big(P^{[1]}_{n}(A_C)\big)^{-1}C^{[1]}_{n}(A_G)\Big)^{-1}P^{[1]}_{n-1}(x)\\
&-P^{[1]}_{n+1}(A_C)\Big(P^{[1]}_{n}(A_C)- C^{[1]}_{n}(A_G)\big(C^{[1]}_{n-1}(A_G)\big)^{-1}P^{[1]}_{n-1}(A_C)\Big)^{-1}P^{[1]}_{n}(x)\\
&-P^{[1]}_{n+1}(A_C)\Big(P^{[1]}_{n-1}(A_C)- C^{[1]}_{n-1}(A_G)\big(C^{[1]}_{n}(A_G)\big)^{-1}P^{[1]}_{n}(A_C)\Big)^{-1}P^{[1]}_{n-1}(x),\\
\hat H_n=&-\Big(C^{[1]}_{n+1}(A_G)\Big(C^{[1]}_{n-1}(A_G)- P^{[1]}_{n-1}(A_C)\big(P^{[1]}_{n}(A_C)\big)^{-1}C^{[1]}_{n}(A_G)\Big)^{-1}\\&+P^{[1]}_{n+1}(A_C)\Big(P^{[1]}_{n-1}(A_C)- C^{[1]}_{n-1}(A_G)\big(C^{[1]}_{n}(A_G)\big)^{-1}P^{[1]}_{n}(A_C)\Big)^{-1}\Big)H_{n-1},\\
	\big(	\hat  P _{n}^{[2]}(y)\big)^\top=&\Big(	\big(\big(yI_p-A_G)K^{(pc)}_{n-1}(A_G,y) +I_p\big) \Big(C^{[1]}_{n-1}(A_G)- P^{[1]}_{n-1}(A_C)\big(P^{[1]}_{n}(A_C)\big)^{-1}C^{[1]}_{n}(A_G)\Big)^{-1}\\&+(yI_p-A_G)K_{n-1}(A_C,y)  \Big(P^{[1]}_{n-1}(A_C)- C^{[1]}_{n-1}(A_G)\big(C^{[1]}_{n}(A_G)\big)^{-1}P^{[1]}_{n}(A_C)\Big)^{-1}\Big)H_{n-1}.
\end{align*}

\section{Matrix Uvarov transformations}

Uvarov perturbations  for the scalar case and a number of Dirac deltas, has been considered first in \S 2 of \cite{Uva} in the context of orthogonal polynomials with respect to a Stieltjes  integral scalar product.    Then, for the matrix case it was studied in a series of papers
\cite{Yakhlef1,Yakhlef2,Yakhlef3} where we can find the corresponding Christoffel--Geronimus--Uvarov formula for the perturbed polynomials when a  solely Dirac delta is added in a point to a matrix of measures. Now  we present the general case for an additive perturbation,
that has a discrete finite support in the $y$-variable, of  a sesquilinear form. We allow, therefore,  for additive perturbations having, in the $y$-variable, an arbitrary finite number of  derivatives of the Dirac delta at several different points, and arbitrary linear independent generalized functions in the $x$-variable. Despite the result found largely
extends the Christoffel formulas in the papers of Yakhlef and coworkers, we have three more  reasons to discuss this material. Firstly to show how some of the tools, like spectral jets, used in previous sections of the paper also apply in this context,  secondly,
to achieve a more complete account of the family of transformations of Darboux type for matrix orthogonal polynomials and thirdly to apply it to orthogonalities  like that of matrix discrete Sobolev type, see \cite{Marcellan2014Sobolev}.
For the multivariate scenario these transformations have been discussed in \cite{Delgado} and \cite{Delgado2,Ariznabarreta-linear}.

\subsection{Additive perturbations}
We discuss here an interesting formula for additive perturbations of matrix generalized kernels. It will be instrumental when we discuss the matrix Uvarov transformation.
\begin{pro}[Additive perturbation and reproducing kernels]\label{pro:additive}
	Let us consider an additive perturbation of the matrix of  bivariate generalized functions  $u_{x,y}$ of the form
\begin{align*}
\hat u_{x,y}= u_{x,y}+v_{x,y},
\end{align*}
and let us assume that $u_{x,y}$ and $\hat u_{x,y}$ are quasidefinite.
Then,
\begin{align*}
\hat P^{[1]}_{n}(z)&=
P^{[1]}_{n}(z)-\big\langle\hat P^{[1]}_n(x),\big(K_{n-1}(z,y)\big)^\top\big\rangle_v\,,&
(\hat P^{[2]}_{n}(z))^\top&=(P^{[2]}_{n}(z))^\top-\big\langle K_{n-1}(x,z),\hat P^{[2]}_n(y)\big\rangle_v\,,\\
\end{align*}
and
\begin{align*}
\hat H_n&= H_n+\prodint{\hat P^{[1]}_n(x),P^{[2]}_n(y)}_{v}\\
&=H_n+\prodint{ P^{[1]}_n(x),\hat P^{[2]}_n(y)}_{v}.
\end{align*}
\end{pro}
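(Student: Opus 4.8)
The plan is to exploit the biorthogonality of both the original family $\{P_n^{[1]},P_n^{[2]}\}$ and the perturbed family $\{\hat P_n^{[1]},\hat P_n^{[2]}\}$, together with the reproducing property of the Christoffel--Darboux kernel recorded in \eqref{eq:reproducing} and \eqref{eq:K-u}. Since both $\hat P_n^{[1]}(z)$ and $P_n^{[1]}(z)$ are monic of degree $n$, their difference is a polynomial of degree at most $n-1$, so it can be expanded in the basis $\{P_k^{[1]}(z)\}_{k=0}^{n-1}$. First I would write
\begin{align*}
\hat P_n^{[1]}(z)-P_n^{[1]}(z)=\sum_{k=0}^{n-1}c_k\,P_k^{[1]}(z),
\end{align*}
and determine the matrix coefficients $c_k$ by pairing against $P_m^{[2]}(y)$ in the \emph{original} form $\prodint{\cdot,\cdot}_u$. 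Using $\prodint{P_n^{[1]},P_m^{[2]}}_u=0_p$ for $m<n$ and the biorthogonality normalization, the left-hand side pairing reduces to $\prodint{\hat P_n^{[1]},P_m^{[2]}}_u$, which I rewrite via $u=\hat u-v$ as $\prodint{\hat P_n^{[1]},P_m^{[2]}}_{\hat u}-\prodint{\hat P_n^{[1]},P_m^{[2]}}_v$. The first term vanishes because $\hat P_n^{[1]}$ is orthogonal (with respect to $\hat u$) to all polynomials of degree below $n$, so $c_m H_m=-\prodint{\hat P_n^{[1]},P_m^{[2]}}_v$.

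Next I would substitute $c_k=-\prodint{\hat P_n^{[1]},P_k^{[2]}}_v\,(H_k)^{-1}$ back into the expansion and recognize the resulting sum as a contraction against the kernel. Concretely, $\sum_{k=0}^{n-1}\prodint{\hat P_n^{[1]}(x),P_k^{[2]}(y)}_v (H_k)^{-1}P_k^{[1]}(z)$ should be reorganized so that the $v$-pairing is pulled outside and the remaining sum $\sum_{k=0}^{n-1}(P_k^{[2]}(y))^\top(H_k)^{-1}P_k^{[1]}(z)=K_{n-1}(z,y)$ appears, giving $\big\langle\hat P_n^{[1]}(x),(K_{n-1}(z,y))^\top\big\rangle_v$. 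Here I must be careful about the sesquilinear structure: because the form is antilinear-with-transpose in the second slot (Definition \ref{def:sesquilinear}), the coefficient $(H_k)^{-1}$ and the factor $(P_k^{[2]})^\top$ combine correctly only if I track the transposes, which is exactly why the $(K_{n-1})^\top$ and the placement of arguments come out as stated. The second family formula for $(\hat P_n^{[2]}(z))^\top$ follows by the mirror-symmetric argument, expanding in $\{P_k^{[2]}\}$ and pairing in the first slot.

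For the norm formula I would start from $\hat H_n=\prodint{\hat P_n^{[1]},\hat P_n^{[2]}}_{\hat u}$ and expand $\hat u=u+v$, giving $\prodint{\hat P_n^{[1]},\hat P_n^{[2]}}_u+\prodint{\hat P_n^{[1]},\hat P_n^{[2]}}_v$. In the $u$-term I would replace one of the perturbed polynomials by its original counterpart plus a lower-degree correction and invoke orthogonality with respect to $u$ to collapse $\prodint{\hat P_n^{[1]},\hat P_n^{[2]}}_u$ down to $\prodint{P_n^{[1]},P_n^{[2]}}_u=H_n$, with the cross terms dropping out by degree reasons; then one of the two displayed forms emerges depending on whether I substitute $\hat P_n^{[2]}\mapsto P_n^{[2]}$ or $\hat P_n^{[1]}\mapsto P_n^{[1]}$. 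The main obstacle I anticipate is purely bookkeeping rather than conceptual: keeping the transpose conventions and the noncommutative ordering of the $H_k^{-1}$ factors consistent throughout, since in the matrix (non-Abelian) setting the order of multiplication and the side on which the transpose acts genuinely matter, and a misplaced transpose would produce the wrong kernel argument. I would therefore verify each pairing step directly against the two defining properties of the sesquilinear form before assembling the kernel.
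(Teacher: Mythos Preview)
Your argument for the polynomial formulas is correct and is essentially the paper's proof unwound: the paper applies the reproducing property \eqref{eq:reproducing} directly to the degree-$(n-1)$ polynomial $\hat P^{[1]}_n-P^{[1]}_n$, obtaining $\prodint{\hat P^{[1]}_n,(K_{n-1}(z,y))^\top}_u$, and then converts the $u$-pairing to a $v$-pairing via $\hat u=u+v$ and $\hat u$-orthogonality; you do the same thing by first extracting the coefficients $c_k$ and then reassembling them into $K_{n-1}$.

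For the norm formula there is a small gap in your sketch. Starting from $\hat H_n=\prodint{\hat P^{[1]}_n,\hat P^{[2]}_n}_{\hat u}$ and splitting $\hat u=u+v$, the term $\prodint{\hat P^{[1]}_n,\hat P^{[2]}_n}_u$ does \emph{not} collapse to $H_n$ by degree reasons alone: writing $\hat P^{[1]}_n=P^{[1]}_n+R$ and $\hat P^{[2]}_n=P^{[2]}_n+S$ with $\deg R,\deg S\le n-1$, the cross terms $\prodint{P^{[1]}_n,S}_u$ and $\prodint{R,P^{[2]}_n}_u$ vanish, but $\prodint{R,S}_u$ need not. What rescues the computation is that the leftover $\prodint{\hat P^{[1]}_n,S}_u$ equals $-\prodint{\hat P^{[1]}_n,S}_v$ (using $\hat u$-orthogonality of $\hat P^{[1]}_n$ against $S$), and this cancels against the corresponding piece of the $v$-term. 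The paper sidesteps this entirely by starting instead from the mixed identity $\hat H_n=\prodint{\hat P^{[1]}_n,P^{[2]}_n}_{\hat u}$ (equation \eqref{eq:additive2}), which already has the unperturbed $P^{[2]}_n$ in the second slot; then $\prodint{\hat P^{[1]}_n,P^{[2]}_n}_u=H_n$ really does follow immediately from $\prodint{R,P^{[2]}_n}_u=0$, and one obtains $\hat H_n=H_n+\prodint{\hat P^{[1]}_n,P^{[2]}_n}_v$ in one line.
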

\begin{proof}
	From \eqref{eq:orthogonality1} and \eqref{eq:orthogonality2}	we deduce
\begin{gather}
\label{eq:additive1}\begin{aligned}
\prodint{\hat P^{[1]}_n(x),P^{[2]}_m(y)}_{\hat{u}}&=0_p,&\prodint{P^{[1]}_m(x), \hat P^{[2]}_n(y)}_{\hat u}&= 0_p,  &m&\in\{1,\dots n-1\},
\end{aligned}\\
\label{eq:additive2}
\prodint{\hat P^{[1]}_n(x),P^{[2]}_n(y)}_{\hat u}= \hat H_n= \prodint{ P^{[1]}_n(x), \hat P^{[2]}_n(y)}_{\hat u}.
\end{gather}
Equation \eqref{eq:additive1} can be expressed as
\begin{align*}
\prodint{\hat P^{[1]}_n(x),P^{[2]}_m(y)}_{{u}}&=-\prodint{\hat P^{[1]}_n(x),P^{[2]}_m(y)}_{v},&\prodint{P^{[1]}_m(x), \hat P^{[2]}_n(y)}_{u}&=-\prodint{P^{[1]}_m(x), \hat P^{[2]}_n(y)}_{v},  &m&\in\{1,\dots n-1\}.
\end{align*}
Then, recalling \eqref{eq:CD kernel} we get
\begin{align*}
\prodint{\hat P^{[1]}_n(x),(K_{n-1}(z,y))^\top}_{{u}}&=-\prodint{\hat P^{[1]}_n(x),(K_{n-1}(z,y))^\top}_{v},&\prodint{K_{n-1}(x,z), \hat P^{[2]}_n(y)}_{u}&=-\prodint{K_{n-1}(x,z), \hat P^{[2]}_n(y)}_{v}.
\end{align*}
But, notice that $\hat P^{[1]}_n(x)- P^{[1]}_n(x)$ and $\hat P^{[2]}_n(y)- P^{[2]}_n(y)$ have degree $n-1$ and, therefore, recalling \eqref{eq:reproducing}
we deduce
\begin{align*}
\hat P^{[1]}_n(z)- P^{[1]}_n(z)&=	\prodint{ \hat P^{[1]}_n(x)- P^{[1]}_n(x),(K_{n-1}(z,y))^\top}_u\\
&=\prodint{ \hat P^{[1]}_n(x),(K_{n-1}(z,y))^\top}_u\\
&=-\prodint{ \hat P^{[1]}_n(x),(K_{n-1}(z,y))^\top}_v,\\
\big(\hat P^{[2]}_n(z)- P^{[2]}_n(z)\big)^\top&=	\prodint{ K_{n-1}(x,z),\hat P^{[2]}_n(y)- P^{[2]}_n(y)}_u\\
&=	\prodint{ K_{n-1}(x,z),\hat P^{[2]}_n(y)}_u\\
&=-	\prodint{ K_{n-1}(x,z),\hat P^{[2]}_n(y)}_v.
\end{align*}

Finally, from \eqref{eq:additive2} we get
\begin{align*}
\hat H_n&=\prodint{\hat P^{[1]}_n(x),P^{[2]}_n(y)}_{\hat u}\\
&=\prodint{\hat P^{[1]}_n(x),P^{[2]}_n(y)}_{ u}+\prodint{\hat P^{[1]}_n(x),P^{[2]}_n(y)}_{v}\\
&= H_n+\prodint{\hat P^{[1]}_n(x),P^{[2]}_n(y)}_{v},
\end{align*}
as well as
\begin{align*}
\hat H_n&=\prodint{ P^{[1]}_n(x),\hat P^{[2]}_n(y)}_{\hat u}\\
&=\prodint{ P^{[1]}_n(x),\hat P^{[2]}_n(y)}_{ u}+\prodint{ P^{[1]}_n(x),\hat P^{[2]}_n(y)}_{v}\\
&= H_n+\prodint{ P^{[1]}_n(x),\hat P^{[2]}_n(y)}_{v}.
\end{align*}
\end{proof}

\subsection{Matrix Christoffel--Uvarov formulas for Uvarov additive perturbations}

We consider the following additive Uvarov  perturbation
\begin{align}\label{eq:v_uvarov_general}
\hat u_{x,y}&=u_{x,y}+v_{x,y}, & 
v_{x,y}&=\sum_{a=1}^q\sum_{m=0}^{\kappa^{(a)}-1}\frac{(-1)^m}{m!}\big(\beta^{(a)}_m\big)_x\otimes\delta^{(m)}(y-x_a), & \big(\beta^{(a)}_m\big)_x&\in\big(\mathbb C^{p\times p}[x]\big)',
\end{align}
which has a finite support on the $y$ variable at the set $\{x_a\}_{a=1}^q$.
The set of  linear functionals $\big\{\big(\beta^{(a)}_m\big)_x\big\}_{\substack{a=1,\dots,q\\
		m=0,\dots,\kappa^{(a)}-1}}$ is supposed to be linearly independent in the bimodule $\big(\mathbb C^{p\times p}[x]\big)'$, i.e., the unique solution to
\begin{align*}
\sum_{a=1}^q\sum_{m=0}^{\kappa^{(a)}-1}\big(\beta^{(a)}_m\big)_xX^{(a)}_m &=0, & X^{(a)}_m &\in\mathbb C^{p\times p},
\end{align*}
is $X^{(a)}_m=0_p$,
and to
\begin{align*}
\sum_{a=1}^q\sum_{m=0}^{\kappa^{(a)}-1} Y^{(a)}_m\big(\beta^{(a)}_m\big)_x&=0,& Y^{(a)}_m &\in\mathbb C^{p\times p},
\end{align*}
is $Y^{(a)}_m=0_p$.

We will assume along this subsection that both $u_{x,y}$ and $u_{x,y}+v_{x,y}$ are quasidefinite matrices of bivariate generalized functions.

\begin{defi}
We will  say that the degree of the Uvarov perturbation is $N=\kappa^{(1)}+\dots+\kappa^{(q)}$.
	The spectral jet associated with the finite support matrix of linear functionals $v_{x,y}$ is, for any sufficiently smooth matrix function $f(x)$ defined in an open set in $\mathbb R$ containing the support $\{x_1,\dots,x_q\}$,  the following matrix
	\begin{align*}
	\mathcal J_f=\begin{bmatrix}
f(x_1),\dots,\dfrac{(f(x))_{x_1}^{(\kappa^{(1)}-1)}}{(\kappa^{(1)}-1)!},\dots,f(x_q),\dots,\dfrac{(f(x))_{x_q}^{(\kappa^{(q)}-1)}}{(\kappa^{(q)}-1)!}
	\end{bmatrix}\in\mathbb C^{p\times Np}.
	\end{align*}
	For a  matrix of kernels  $K(x,y)$, we have
		\begin{align*}
		\mathcal J^{[0,1]}_K(x)=\begin{bmatrix}
		K(x,x_1)\\\vdots\\\dfrac{(K(x,y))_{x,x_1}^{(0,\kappa^{(1)}-1)}}{(\kappa^{(1)}-1)!}\\\vdots\\K(x,x_q)\\\vdots\\\dfrac{(K(x,y))_{x,x_q}^{(0,\kappa^{(q)}-1)}}{(\kappa^{(q)}-1)!}
		\end{bmatrix}\in\mathbb C^{Np\times p}.
		\end{align*}
		We also require  the introduction of the following matrices
		 \begin{gather*}
 \prodint{
 	P(x),(\beta)_x}	:=
 \begin{bmatrix}
 \prodint{P(x),\big(\beta^{(1)}_{0}\big)_x },\dots,
 \prodint{P(x),\big(\beta^{(1)}_{\kappa^{(1)}-1}\big)_x},\dots, \prodint{P(x),\big(\beta^{(q)}_{0}\big)_x },\dots,
 \prodint{P(x),\big(\beta^{(q)}_{\kappa^{(q)}-1}\big)_x}
 \end{bmatrix}\in\mathbb C^{ p\times Np}.
 \end{gather*}
\end{defi}

\begin{pro}\label{pro:uvarov0}
	The following relations
				\begin{align*}
				\prodint{\hat P^{[1]}_n(x), (K_{n-1}(z,y))^\top}_v&= \prodint{\hat P_n^{[1]}(x),(\beta)_x}\mathcal  J^{[0,1]}_{K_{n-1}}(z),\\
				\prodint{K_{n-1}(x,z), \hat P^{[2]}_n(y)}_v&=
				\prodint{K_{n-1}(x,z),(\beta)_x}\big(\mathcal  J_{\hat P^{[2]}_n}\big)^\top
				\end{align*}
				are fulfilled.
\end{pro}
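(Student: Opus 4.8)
The plan is to unfold both sides of each claimed identity using the definition of the perturbing functional $v_{x,y}$ in \eqref{eq:v_uvarov_general} and then recognize the resulting finite double sum as a matrix product. First I would take the left-hand side $\prodint{\hat P^{[1]}_n(x), (K_{n-1}(z,y))^\top}_v$ and expand it by substituting $v_{x,y}=\sum_{a=1}^q\sum_{m=0}^{\kappa^{(a)}-1}\frac{(-1)^m}{m!}\big(\beta^{(a)}_m\big)_x\otimes\delta^{(m)}(y-x_a)$. Acting with the $\delta^{(m)}(y-x_a)$ factor on the $y$-dependence of $(K_{n-1}(z,y))^\top$ produces, after integration by parts, the $m$-th $y$-derivative of $K_{n-1}(z,y)$ evaluated at $y=x_a$ and divided by $m!$; this is exactly (the transpose of) a block of the jet matrix $\mathcal J^{[0,1]}_{K_{n-1}}(z)$. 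The $x$-dependence pairs against $\hat P_n^{[1]}(x)$ through $\big(\beta^{(a)}_m\big)_x$, yielding the blocks that assemble into $\prodint{\hat P_n^{[1]}(x),(\beta)_x}$.

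The key step is then purely organizational: one writes out the double sum over $(a,m)$ as a row-times-column block product, matching the ordering of blocks in $\prodint{\hat P_n^{[1]}(x),(\beta)_x}\in\mathbb C^{p\times Np}$ with the ordering of blocks in $\mathcal J^{[0,1]}_{K_{n-1}}(z)\in\mathbb C^{Np\times p}$. Here I must be careful about the sesquilinearity convention: since the second argument of $\prodint{\cdot,\cdot}_v$ carries a transpose (Definition \ref{def:sesquilinear}, property ii), the $y$-variable action of $\delta^{(m)}(y-x_a)$ on $(K_{n-1}(z,y))^\top$ must be tracked together with the transpose so that the derivative lands correctly and the factor $\frac{(-1)^m}{m!}$ combines with the integration-by-parts sign $(-1)^m$ to leave $\frac{1}{m!}$, reproducing precisely the normalization in the definition of $\mathcal J^{[0,1]}_{K_{n-1}}$. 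This sign bookkeeping, and confirming that the block order in the jet matrix agrees with the block order induced by the fixed enumeration of the pairs $(a,m)$ in $(\beta)_x$, is the main obstacle; everything else is a direct rearrangement.

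The second identity, involving $\prodint{K_{n-1}(x,z), \hat P^{[2]}_n(y)}_v$, is handled by the same expansion, but now the $\delta^{(m)}(y-x_a)$ acts on $\hat P^{[2]}_n(y)$, producing its jets, which assemble into $\mathcal J_{\hat P^{[2]}_n}$ (and then its transpose, reflecting that this factor sits in the second slot of the form); the $x$-variable now carries $K_{n-1}(x,z)$ paired against $\big(\beta^{(a)}_m\big)_x$, giving $\prodint{K_{n-1}(x,z),(\beta)_x}$. I would present this case by symmetry, noting only the slot in which each factor lands, rather than repeating the full computation.

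In summary, the whole proof reduces to substituting the explicit form of $v_{x,y}$, performing the elementary Dirac-delta derivative actions in the $y$-variable to produce jets, and regrouping the finite sum as a matrix product; the only genuine care required is in the sign and normalization tracking dictated by the transpose in the sesquilinear form and in aligning the block orderings of $(\beta)_x$, $\mathcal J^{[0,1]}_{K_{n-1}}$, and $\mathcal J_{\hat P^{[2]}_n}$.
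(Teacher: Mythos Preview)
Your proposal is correct and follows exactly the same approach as the paper: direct substitution of the definition of $v_{x,y}$ from \eqref{eq:v_uvarov_general}, evaluation of the Dirac-delta derivatives in the $y$-variable to produce the jet blocks, and reorganization of the resulting finite double sum as the stated matrix product. The paper's proof is in fact a two-line version of precisely this computation, so your more detailed sign and block-ordering bookkeeping is simply an expanded account of the same argument.
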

\begin{proof}
Direct substitution gives
	\begin{align*}
	\prodint{\hat P^{[1]}_n(x), (K_{n-1}(z,y))^\top}_v&= \sum_{a=1}^q\sum_{m=0}^{\kappa^{(a)}-1}\frac{1}{m!}
	\prodint{\hat P_n^{[1]}(x),\big(\beta^{(a)}_m\big)_x}\Big(K_{n-1}(z,x)\Big)^{(m)}_{x_a},\\
	\prodint{K_{n-1}(x,z), \hat P^{[2]}_n(y)}_v&= \sum_{a=1}^q\sum_{m=0}^{\kappa^{(a)}-1}\frac{1}{m!}	\prodint{K_{n-1}(x,z),\big(\beta^{(a)}_m\big)_x}
	\Big(\big(\hat P^{[2]}_n(y)\big)^\top\Big)^{(m)}_{x_a},
	\end{align*}
	and the result follows.	
\end{proof}

\begin{defi}\label{def:kernels}
	We define the left 
	kernel subspace
	\begin{align*}
	\operatorname{Ker}^R_\beta:=\big\{P(x)\in\mathbb C^{p\times p}[x]:
	\prodint{\big(\beta^{(a)}_m\big)_x,P(x)}=0_p, a\in\{1,\dots,q\},
	m\in\{1,\dots,
	\kappa^{(a)}-1\}\big\},
	\end{align*}
	and the bilateral ideal
	\begin{align*}
				\mathbb I :=(x-x_1)^{\kappa^{(1)}}\cdots (x-x_q)^{\kappa^{(q)}}\mathbb C^{p\times p}[x].
				\end{align*}
The corresponding	right and left
	orthogonal complements with respect to the sesquilinear form are
	\begin{align*}
\big(	\operatorname{Ker}^R_\beta\big)^{\perp_u^R}&:=\Big\{
Q(x)\in\mathbb C^{p\times p}[x]: \prodint{P(x),Q(y)}_u=0_p\,\forall P(x)\in\operatorname{Ker}^R_\beta\Big\},
\\
\mathbb I^{\perp_u^R}&:=\Big\{
Q(y)\in\mathbb C^{p\times p}[y]: \prodint{Q(x),P(y)}_u=0_p\,\forall P(x)\in\mathbb I\Big\}.
	\end{align*}
\end{defi}

\begin{teo}[Christoffel--Uvarov formulas]\label{teo:uvarov}
Whenever   one of the two conditions $\big(	\operatorname{Ker}^R_\beta\big)^{\perp_u^R}=\{0_p\}$ or $\mathbb I^{\perp_u^L}=\{0_p\}$ holds, the matrix $I_{Np}+\prodint{\mathcal  J^{[0,1]}_{K_{n}}(x),(\beta)_x}$ is nonsingular and the Uvarov perturbed matrix orthogonal polynomials and quasitau matrices can be expressed as follows
	\begin{align*}
	\hat P^{[1]}_n(x)&=\Theta_*\begin{bmatrix}
	I_{Np}+\prodint{\mathcal  J^{[0,1]}_{K_{n-1}}(x),(\beta)_x}&\mathcal  J^{[0,1]}_{K_{n-1}}(x)\\
\prodint{ P_n^{[1]}(x),(\beta)_x}& P_n^{[1]}(x)
	\end{bmatrix}, \\
	(\hat P^{[2]}_n(y))^\top&=\Theta_*\begin{bmatrix}
I_{Np}+\prodint{\mathcal  J^{[0,1]}_{K_{n-1}}(x),(\beta)_x}&\big(\mathcal J_{ P^{[2]}_{n}}\big)^\top\\
\prodint{K_{n-1}(x,y),(\beta)_x}	& (P_n^{[2]}(y ))^\top
	\end{bmatrix},\\
	\hat H_n&=\Theta_*\begin{bmatrix}
	I_{Np}+\prodint{\mathcal  J^{[0,1]}_{K_{n-1}}(x),(\beta)_x}&- (\mathcal J_{P^{[2]}_n}  )^\top\\
\prodint{ P_n^{[1]}(x),(\beta)_x}& H_n
	\end{bmatrix}.
	\end{align*}
\end{teo}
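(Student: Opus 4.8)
The plan is to build on Proposition \ref{pro:additive}, which already expresses the perturbed polynomials in terms of the original ones through the sesquilinear form $\prodint{\cdot,\cdot}_v$. First I would substitute the explicit form of $v_{x,y}$ from \eqref{eq:v_uvarov_general} and apply Proposition \ref{pro:uvarov0} to rewrite the two connection formulas of Proposition \ref{pro:additive} as
\begin{align*}
\hat P^{[1]}_n(z)&=P^{[1]}_n(z)-\prodint{\hat P_n^{[1]}(x),(\beta)_x}\mathcal J^{[0,1]}_{K_{n-1}}(z),\\
(\hat P^{[2]}_n(z))^\top&=(P^{[2]}_n(z))^\top-\prodint{K_{n-1}(x,z),(\beta)_x}\big(\mathcal J_{\hat P^{[2]}_n}\big)^\top.
\end{align*}
The essential observation is that these are not yet closed expressions, since the unknown $\hat P^{[1]}_n$ appears on both sides. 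To close the loop I would evaluate the spectral jet of both sides: apply the operator $\prodint{\cdot,(\beta)_x}$ (in the appropriate variable) to the first identity, obtaining a linear matrix equation of the form
\begin{align*}
\prodint{\hat P^{[1]}_n(x),(\beta)_x}\big(I_{Np}+\prodint{\mathcal J^{[0,1]}_{K_{n-1}}(x),(\beta)_x}\big)=\prodint{P^{[1]}_n(x),(\beta)_x}
\end{align*}
and the analogous transposed relation for the second family. This is precisely where the matrix $I_{Np}+\prodint{\mathcal J^{[0,1]}_{K_{n-1}}(x),(\beta)_x}$ surfaces as the object whose invertibility must be controlled.

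The next step is to prove that this $Np\times Np$ matrix is nonsingular under either of the two stated conditions $\big(\operatorname{Ker}^R_\beta\big)^{\perp^R_u}=\{0_p\}$ or $\mathbb I^{\perp^L_u}=\{0_p\}$. I expect this to be the main obstacle. The strategy I would use is a contradiction argument at the level of kernels: suppose a nonzero vector (or covector) lies in the kernel of the matrix; then construct from it, using the reproducing property \eqref{eq:reproducing} of $K_{n-1}$ together with the linear independence of the functionals $\big(\beta^{(a)}_m\big)_x$, a nonzero polynomial that simultaneously belongs to $\operatorname{Ker}^R_\beta$ and is $u$-orthogonal to everything, i.e.\ an element witnessing $\big(\operatorname{Ker}^R_\beta\big)^{\perp^R_u}\neq\{0_p\}$, or dually an element of $\mathbb I^{\perp^L_u}$. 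The role of the ideal $\mathbb I$ from Definition \ref{def:kernels} is that a polynomial whose spectral jet vanishes to the prescribed orders at $x_1,\dots,x_q$ is exactly a member of $\mathbb I$; this is the bridge between the vanishing of jets and membership in the orthogonal complements. Establishing that the two conditions are genuinely the right hypotheses --- and checking they each suffice, handling the right/left asymmetry --- is the delicate part, since one controls the column-kernel and the other the row-kernel of the same matrix.

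Once nonsingularity is in hand, I would solve the linear equation for $\prodint{\hat P^{[1]}_n(x),(\beta)_x}$ and substitute back into the connection formula to obtain $\hat P^{[1]}_n(x)$ explicitly as
\begin{align*}
\hat P^{[1]}_n(x)=P^{[1]}_n(x)-\prodint{P^{[1]}_n(x),(\beta)_x}\big(I_{Np}+\prodint{\mathcal J^{[0,1]}_{K_{n-1}}(x),(\beta)_x}\big)^{-1}\mathcal J^{[0,1]}_{K_{n-1}}(x),
\end{align*}
which I would then recognize as the last-quasideterminant expansion of the displayed $2\times2$ block quasideterminant, using the definition of $\Theta_*$ exactly as in the earlier Christoffel-type theorems. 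The second-family formula follows symmetrically by repeating the argument on the transposed identity, and the norm formula $\hat H_n$ comes from inserting $\hat P^{[1]}_n$ (or $\hat P^{[2]}_n$) into the expression $\hat H_n=H_n+\prodint{\hat P^{[1]}_n(x),P^{[2]}_n(y)}_v$ from Proposition \ref{pro:additive} and again packaging the result as a quasideterminant, noting that $\prodint{P^{[2]}_n(y),(\beta)_y}$ reduces via the jet notation to $-(\mathcal J_{P^{[2]}_n})^\top$ up to the reproducing-kernel bookkeeping. The remaining work is routine verification that the three quasideterminant expansions reproduce these closed forms.
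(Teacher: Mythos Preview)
Your overall architecture is correct and matches the paper's: start from Propositions \ref{pro:additive} and \ref{pro:uvarov0}, apply $\prodint{\cdot,(\beta)_x}$ to obtain the linear system
\[
\prodint{\hat P^{[1]}_n(x),(\beta)_x}\Big(I_{Np}+\prodint{\mathcal J^{[0,1]}_{K_{n-1}}(x),(\beta)_x}\Big)=\prodint{P^{[1]}_n(x),(\beta)_x},
\]
invert, substitute back, and recognize the quasideterminants. The $\hat H_n$ step also follows the paper's route.

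The gap is in the nonsingularity argument, and it is twofold. First, your description of the object to be constructed is inverted: an element witnessing $(\operatorname{Ker}^R_\beta)^{\perp_u^R}\neq\{0_p\}$ is a polynomial $Q$ that is $u$-orthogonal \emph{to} $\operatorname{Ker}^R_\beta$, not a member \emph{of} $\operatorname{Ker}^R_\beta$; and it certainly cannot be ``$u$-orthogonal to everything'' since quasidefiniteness would then force $Q=0$. Second, and more seriously, you omit the mechanism that actually produces such a $Q$. The paper's argument is an inductive bootstrap: if $X$ lies in the kernel of $I_{Np}+\prodint{\mathcal J^{[0,1]}_{K_{n-1}},(\beta)_x}$, then \eqref{eq:nonsing1uvarov} gives $\prodint{P^{[1]}_n,(\beta)_x}X=0$; combining this with the identity $K_n=K_{n-1}+(P^{[2]}_n)^\top(H_n)^{-1}P^{[1]}_n$ shows $X$ also kills $I_{Np}+\prodint{\mathcal J^{[0,1]}_{K_n},(\beta)_x}$, hence $\prodint{P^{[1]}_{n+1},(\beta)_x}X=0$, and so on for every $k\geq n$. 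Only after this step does one know that the functional $\beta\cdot X:=\sum_{a,m}(\beta^{(a)}_m)_xX^{(a)}_m$ annihilates all $P^{[1]}_k$ with $k\geq n$. The paper then uses a duality argument (via the Schwartz kernel theorem and the identification of $\big(\{P^{[1]}_k\}_{k\geq n}\big)^{\perp_R}$ with $\{\mathcal L_u(P^{[2]}_k)\}_{k<n}\,\mathbb C^{p\times p}$) to write $\beta\cdot X=\mathcal L_u(Q_X)$ for a polynomial $Q_X$ of degree $<n$, and checks that $Q_X\in(\operatorname{Ker}^R(\beta\cdot X))^{\perp_u^R}\subset(\operatorname{Ker}^R_\beta)^{\perp_u^R}$. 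The hypothesis then forces $Q_X=0$, hence $\beta\cdot X=0$, hence $X=0$ by linear independence of the $(\beta^{(a)}_m)_x$. The $\mathbb I^{\perp_u^L}$ case is handled by the dual bootstrap on a left null covector $Y$, using that $\mathbb I\subset\operatorname{Ker}^L(Y\cdot\delta)$. Your sketch mentions neither the induction on $k$ nor the representation $\beta\cdot X=\mathcal L_u(Q_X)$, and without them the reproducing property of $K_{n-1}$ alone does not yield a nonzero element of the required orthogonal complement.
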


\begin{proof}
From Propositions \ref{pro:additive} and  \ref{pro:uvarov0} we get
\begin{align}\label{eq:uvarov11}
\hat P^{[1]}_{n}(z)&=
P^{[1]}_{n}(z)-\prodint{\hat P_n^{[1]}(x),(\beta)_x}\mathcal  J^{[0,1]}_{K_{n-1}}(z),\\
(\hat P^{[2]}_{n}(z))^\top&=(P^{[2]}_{n}(z))^\top-
\prodint{K_{n-1}(x,z),(\beta)_x}\big(\mathcal  J_{\hat P^{[2]}_n}\big)^\top,
\label{eq:uvarov12}
\end{align}
which in turn  imply
\begin{align}
\prodint{\hat P^{[1]}_{n}(x),(\beta)_x}&=
\prodint{ P^{[1]}_{n}(x),(\beta)_x}-\prodint{\hat P_n^{[1]}(x),(\beta)_x}\prodint{\mathcal  J^{[0,1]}_{K_{n-1}}(x),(\beta)_x},\\
\big(\mathcal J_{\hat P^{[2]}_{n}}\big)^\top&=\big(\mathcal J_{ P^{[2]}_{n}}\big)^\top
-\prodint{\mathcal J_{K_{n-1}}^{[0,1]},(\beta)_x)}\big(\mathcal  J_{\hat P^{[2]}_n}\big)^\top,\label{eq:nonsing2uvarov1}
\end{align}
and, consequently,
\begin{align}
\label{eq:nonsing1uvarov}
\prodint{\hat P^{[1]}_{n}(x),(\beta)_x}\Big(I_{Np}+\prodint{\mathcal  J^{[0,1]}_{K_{n-1}}(x),(\beta)_x}\Big)&=
\prodint{ P^{[1]}_{n}(x),(\beta)_x},\\\label{eq:nonsing2uvarov}
\Big(I_{Np}+\prodint{\mathcal  J^{[0,1]}_{K_{n-1}},(\beta)_x}\Big)\big(\mathcal  J_{\hat P^{[2]}_n}\big)^\top
&=\big(\mathcal  J_{ P^{[2]}_n}\big)^\top.
\end{align}

Let us check the nonsingularity of  the matrices $I_{Np}+\prodint{\mathcal  J^{[0,1]}_{K_{n-1}}(x),(\beta)_x}$.  If we assume the contrary, we can find a nonzero vector $ X\in \mathbb C^{Np}$ such that
\begin{align*}
\Big(I_{Np}+\prodint{\mathcal  J^{[0,1]}_{K_{n-1}}(x),(\beta)_x}\Big) X=0,
\end{align*}
is the zero vector in $\mathbb C^{Np}$, and, equivalently,  a covector $Y\in(\mathbb C^{Np})^*$ with
\begin{align*}
Y\Big(I_{Np}+\prodint{\mathcal  J^{[0,1]}_{K_{n-1}}(x),(\beta)_x}\Big)=0.
\end{align*}

Thus, using \eqref{eq:nonsing1uvarov} and \eqref{eq:nonsing2uvarov} we conclude that $\prodint{ P^{[1]}_{n}(x),(\beta)_x}X=0$ and $Y\big(\mathcal  J_{\hat P^{[2]}_n}\big)^\top=0$. The definition of the Christoffel--Darboux kernels \eqref{eq:CD kernel} implies
\begin{align*}
I_{Np}+\prodint{\mathcal  J^{[0,1]}_{K_{n}}(x),(\beta)_x}=I_{Np}+\prodint{\mathcal  J^{[0,1]}_{K_{n-1}}(x),(\beta)_x}+
(\mathcal J_{P_n^{[2]}})^\top
(H_n)^{-1}
\prodint{ P^{[1]}_{n}(x),(\beta)_x},
\end{align*}
and, consequently, we deduce that $\Big(I_{Np}+\prodint{\mathcal  J^{[0,1]}_{K_{n}}(x),(\beta)_x}\Big)X=0$ and $Y\Big(I_{Np}+\prodint{\mathcal  J^{[0,1]}_{K_{n}}(x),(\beta)_x}\Big)=0$, so that $\prodint{ P^{[1]}_{n+1}(x),(\beta)_x}X=0$ and $Y(\mathcal J_{P_{n+1}^{[2]}})^\top=0$, and so forth and so on.
Thus, we deduce that $\prodint{ P^{[1]}_{k}(x),(\beta)_x}X=0$ and $Y(\mathcal J_{P_k^{[2]}})^\top=0$ for $k\in\{n,n+1,\dots\}$ . If we write
\begin{align*}
X&=\begin{bmatrix}
X^{(1)}_{0}\\
\vdots\\
X^{(1)}_{\kappa^{(1)-1}}\\
\vdots\\
X^{(q)}_{0}\\
\vdots\\
X^{(q)}_{\kappa^{(q)-1}}
\end{bmatrix},\\
Y&=\begin{bmatrix}
Y^{(1)}_{0},\dots,
Y^{(1)}_{\kappa^{(1)-1}},\dots
Y^{(q)}_{0},\dots,
Y^{(q)}_{\kappa^{(q)-1}}
\end{bmatrix},
\end{align*}
 the linear functionals
\begin{align*}
\beta\cdot X&:=\sum_{a=1}^q\sum_{m=0}^{\kappa^{(a)}-1}\big(\beta^{(a)}_m\big)_xX^{(a)}_m,&
Y\cdot\delta&:=\sum_{a=1}^q\sum_{m=0}^{\kappa^{(a)}-1}Y^{(a)}_m\frac{(-1)^m}{m!}\delta^{(m)}(y-x_a),
\end{align*}
are  such that
\begin{align*}
\langle P^{[1]}_k(x),\beta\cdot X\rangle&=0,&
\prodint{Y\cdot\delta, \big(P^{[2]}_{k}(x)\big)^\top}&=0,
\end{align*}
for $k\in\{n,n+1,\dots\}$. We can say that
\begin{align*}
\beta\cdot X\in \big(\big\{P^{[1]}_k(x)\big\}_{k=n}^\infty\big)^{\perp_R}:=\Big\{\tilde u, \text{ a  matrix of linear functionals such that }  \prodint{P_k^{[1]}(x), \tilde u}=0_p: k\geq n\Big\},\\
Y\cdot \delta\in \Big(\Big\{\big(P^{[2]}_k(x)\big)^\top\Big\}_{k=n}^\infty\Big)^{\perp_L}:=\Big\{\tilde u, \text{ a  matrix of linear functionals such that }  \prodint{\tilde u, (P_k^{[2]}(x))^\top}=0_p: k\geq n\Big\}.
\end{align*}
It is convenient at this point to  recall that  the  topological and algebraic duals of the set of matrix polynomials coincide, $(\mathbb C^{p\times p}[x])'=(\mathbb C^{p\times p}[x])^*=\mathbb C^{p\times p}[\![x]\!]$, where we understand the set of matrix polynomials or matrix formal series as left or right modules over the ring of matrices. We also recall that the module of matrix polynomials of  degree less than or equal to $m$ is a the free module of rank $m+1$.  Therefore, for each positive integer $m$, we consider the  linear basis given by the following set of
matrices of linear functionals $\big\{(P^{[1]}_k)^*\big\}_{k=0}^m$,  dual to $\big\{P^{[1]}_k(x)\big\}_{k=0}^m$,  such that
$ \prodint{P^{[1]}_k(x), (P^{[1]}_l)^*}=\delta_{k,l} I_p$,  and
any linear functional can be written
$\sum\limits_{k=0}^m (P^{[1]}_l)^* C_k$,
where $C_k\in\mathbb C^{p\times p}$.  Then, $\big(\big\{P^{[1]}_k(x)\big\}_{k=n}^\infty\big)^{\perp_R}=\big\{(P^{[1]}_k)^*\big\}_{k=0}^{n-1}\mathbb C^{p\times p}\cong  \big(\mathbb C^{p\times p}\big)^n$ and, therefore, is a free right module of rank $m+1$.
But,  according to \eqref{eq:biorthogonal} and Schwartz kernel theorem, for
the set of matrices of linear functionals $\big\{\mathcal L_u\big(P^{[2]}_k(y)\big)\big\}_{k=0}^{n-1}\subseteq \big(\big\{P^{[1]}_k(x)\big\}_{k=n}^\infty\big)^{\perp_R}=\big\{(P^{[1]}_k)^*\big\}_{k=0}^{n-1}\mathbb C^{p\times p}$, and we conclude
\begin{align*}
\big\{\mathcal L_u\big(P^{[2]}_k(x)\big)\big\}_{k=0}^{n-1}\mathbb C^{p\times p}= \big(\big\{P^{[1]}_k(x)\big\}_{k=n}^\infty\big)^{\perp_R}.
\end{align*}
Thus, we can write $\beta\cdot X=\mathcal L_u \big(Q_X(x)\big)$, where $Q_X(x)$ is  a matrix polynomial with degree $\deg (Q_X(x))\leq n-1$. A similar argument leads us to write $Y\cdot\delta=\mathcal L_u' \big(_YQ(x)\big)$, with $\mathcal L_u'$ the transpose operator of $\mathcal L_u$ --see \cite{Schwartz1}--   and $Q_Y(x)$ is  a matrix polynomial with degree $\deg (Q_Y(x))\leq n-1$.
We obviously have
\begin{align*}
\big\langle\mathcal L_u \big(Q_X(y)\big),P(x)\big\rangle&=0,  &\forall P(x)&\in \operatorname{Ker}^R(\beta\cdot X):=\{P(x)\in\mathbb C^{p\times p}[x]: \prodint{\beta_X,P(x)}=0\},
\\
\big\langle P(y), \mathcal L_u'\big(Q_Y(x)\big)\big\rangle&=0, &\forall P(y)&\in \operatorname{Ker}^L(Y\cdot \delta):=\{P(y)\in\mathbb C^{p\times p}[y]: \prodint{P(y),Y\cdot\delta}=0\},
\end{align*}
and Schwartz kernel theorem gives
\begin{align*}
\langle P(x), Q_X(y) \rangle_u&=0, & \forall P(x)&\in\operatorname{Ker}^R(\beta\cdot X),\\
\langle Q_Y(x) ,P(y)\rangle_u&=0, & \forall P(y)&\in\operatorname{Ker}^L(Y\cdot\delta),
\end{align*}
which in turn implies
\begin{align*}
Q_X(x)&\in\big(\operatorname{Ker}^R(\beta\cdot X)\big))^{\perp_u^R}, &
Q_Y(x)&\in\big(\operatorname{Ker}^L(Y\cdot \delta)\big))^{\perp_u^L}.
\end{align*}
Notice that, $\operatorname{Ker}^R_\beta\subset \operatorname{Ker}^R(\beta\cdot X)$
and, consequently,
$(\operatorname{Ker}^R_\beta)^{\perp^R_u}\supset (\operatorname{Ker}^R(\beta\cdot X))^{\perp^R_u}$. But we have assumed that $\big(\operatorname{Ker}^R_\beta\big)^{\perp_u^R}=\{0_p\}$, so that $Q_X(x)=0_p$ and  $\beta\cdot X=0_p$. Consequently, since the set $\big\{\big(\beta^{(a)}_m\big)_x\big\}_{\substack{a=1,\dots,q\\
		m=0,\dots,\kappa^{(a)}-1}}$ is linearly independent in the bimodule $\big(\mathbb C^{p\times p}[x]\big)'$, we deduce that $X=0$, which contradicts  the initial assumption.
		Observe that
		$\mathbb I \subset \operatorname{Ker}^L(Y\cdot\delta)$ so that $\big(\operatorname{Ker}^L(Y\cdot\delta)\big)^{\perp_u^L}\subset \mathbb I^{\perp_u^L}$. If we assume $\mathbb I^{\perp_u^L}=\{0_p\}$ we get $Q_Y(x)=0_p$ and $Y=0$, in contradiction with the initial assumption and the matrix is
		$I_{Np}+\prodint{\mathcal  J^{[0,1]}_{K_{n-1}}(x),(\beta)_x}$. This also implies that $Y=0$

Hence, the matrix  $I_{Np}+\prodint{\mathcal  J^{[0,1]}_{K_{n}}(x),(\beta)_x}$ is nonsingular, allowing us to clean \eqref{eq:nonsing1uvarov} and \eqref{eq:nonsing2uvarov} and get
\begin{align*}
\prodint{\hat P^{[1]}_{n}(x),(\beta)_x}&=
\prodint{ P^{[1]}_{n}(x),(\beta)_x}\Big(I_{Np}+\prodint{\mathcal  J^{[0,1]}_{K_{n-1}}(x),(\beta)_x}\Big)^{-1},\\
\big(\mathcal  J_{ \hat P^{[2]}_n}\big)^\top
&=\Big(I_{Np}+\prodint{\mathcal  J^{[0,1]}_{K_{n-1}}(x),(\beta)_x}\Big)^{-1}\big(\mathcal  J_{ P^{[2]}_n}\big)^\top.
\end{align*}
These relations, when introduced in \eqref{eq:uvarov11} and \eqref{eq:uvarov12}, give
\begin{align*}
\hat P^{[1]}_{n}(x)&=
P^{[1]}_{n}(x)-\prodint{ P^{[1]}_{n}(x),(\beta)_x}\Big(I_{Np}+\prodint{\mathcal  J^{[0,1]}_{K_{n-1}}(x),(\beta)_x}\Big)^{-1}\mathcal  J^{[0,1]}_{K_{n-1}}(x),\\
(\hat P^{[2]}_{n}(y))^\top&=(P^{[2]}_{n}(y))^\top-
\prodint{K_{n-1}(x,y),(\beta)_x}
\Big(I_{Np}+\prodint{\mathcal  J^{[0,1]}_{K_{n-1}}(x),(\beta)_x}\Big)^{-1}\big(\mathcal  J_{ P^{[2]}_n}\big)^\top.
\end{align*}
and the result follows.
To complete the proof notice that, see Proposition  \ref{pro:additive}, 
\begin{align*}
\hat H_n&=H_n+\prodint{\hat P^{[1]}_n(x),\beta }\big(\mathcal J_{P_n^{[2]}}\big)^\top\\
&=H_n+\prodint{ P^{[1]}_{n}(x),(\beta)_x}\Big(I_{Np}+\prodint{\mathcal  J^{[0,1]}_{K_{n-1}}(x),(\beta)_x}\Big)^{-1}\big(\mathcal J_{P_n^{[2]}}\big)^\top.
\end{align*}
\end{proof}

\subsection{Applications}
We discuss two particular cases of the general additive Uvarov perturbation presented above.

\subsubsection{Total derivatives}
We take the perturbation, which is supported by the diagonal $y=x$, in the following way
\begin{align}\label{eq:v_uvarov_diagonal_I}
v_{x,x}&=\sum_{a=1}^q\sum_{m=0}^{\kappa^{(a)}-1}\frac{(-1)^m}{m!}\beta^{(a)}_m\delta^{(m)}(x-x_a), & \beta^{(a)}_m&\in\mathbb C^{p\times p}.
\end{align}

\begin{pro}\label{pro:diagonal_masses}
	The discrete Hankel  mass terms \eqref{eq:v_uvarov_diagonal_I} are particular cases of  \eqref{eq:v_uvarov_general}  with
	\begin{align*}
	\big(\beta^{(a)}_{k}\big)_x=\sum_{n=0}^{\kappa^{(a)}-1-k}(-1)^{n}\frac{\beta^{(a)}_{k+n}}{(n)!}\delta^{(n)}(x-x_a).
	\end{align*}
\end{pro}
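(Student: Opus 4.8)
The plan is to verify that the diagonal-supported mass term in \eqref{eq:v_uvarov_diagonal_I} is obtained from the general perturbation \eqref{eq:v_uvarov_general} by the substitution asserted in the statement. The strategy is direct: I would take the general expression
\begin{align*}
v_{x,y}=\sum_{a=1}^q\sum_{m=0}^{\kappa^{(a)}-1}\frac{(-1)^m}{m!}\big(\beta^{(a)}_m\big)_x\otimes\delta^{(m)}(y-x_a),
\end{align*}
insert the proposed choice of coefficient generalized functions
\begin{align*}
\big(\beta^{(a)}_{m}\big)_x=\sum_{n=0}^{\kappa^{(a)}-1-m}(-1)^{n}\frac{\beta^{(a)}_{m+n}}{n!}\delta^{(n)}(x-x_a),
\end{align*}
and show that restricting to the diagonal $y=x$ reproduces \eqref{eq:v_uvarov_diagonal_I}. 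The essential tool is the Leibniz rule applied to $\delta^{(m)}(y-x_a)$ written as a tensor product that collapses on the diagonal, exactly mirroring the computation already carried out in the Remark following \eqref{eq:v_diagonal} for the Geronimus case (with the root polynomial $l_j^{(a)}$ there replaced by the identity here).

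First I would recall the diagonalization identity used throughout the paper: a derivative of the Dirac delta supported at $x_a$ decomposes via
\begin{align*}
\delta^{(m)}(x-x_a)=\sum_{k=0}^m\binom{m}{k}\delta^{(m-k)}(x-x_a)\otimes\delta^{(k)}(y-x_a)\Big|_{y=x},
\end{align*}
so that a diagonal mass $\beta^{(a)}_m\,\delta^{(m)}(x-x_a)$ can be split into a tensor sum over the two variables. Substituting this decomposition into \eqref{eq:v_uvarov_diagonal_I} and reorganizing the double sum by collecting, for each fixed power $k$ of the $y$-derivative, all contributions from the $x$-variable, I would read off precisely the coefficient of $\delta^{(k)}(y-x_a)$ as the generalized function $\big(\beta^{(a)}_k\big)_x$ claimed in the Proposition. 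This is the same bookkeeping that produced the formula for $\big(\xi^{[a]}_{j,k}\big)_x$ in the Remark after \eqref{eq:v_diagonal}, and the present statement is its specialization to $l_j^{(a)}(y)\equiv I_p$ and $\xi\rightsquigarrow\beta$.

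The only point requiring care is the reindexing of the double summation: after the decomposition one has a sum over $m$ and then over $k\le m$, and one must swap the order to sum first over the outer index $k$ (which becomes the derivative order in $y$) and then over the shift $n=m-k$ running from $0$ to $\kappa^{(a)}-1-k$. Matching the binomial factor $\binom{m}{k}$ together with the factorials $1/m!$ to the factor $1/n!$ in the stated formula is the detail I expect to be the main (and only mild) obstacle; it is a routine but error-prone combinatorial identity. Once the indices are aligned and the sign $(-1)^{m}=(-1)^{k}(-1)^{n}$ is split between the $x$- and $y$-factors, the two expressions coincide term by term, and the Proposition follows. Since the argument is a direct specialization of a computation already established in the text, I would keep the proof short, citing the Remark after \eqref{eq:v_diagonal} and performing only the substitution $l_j^{(a)}\mapsto I_p$.
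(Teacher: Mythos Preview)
Your approach is correct and is exactly the route the paper intends: the Proposition is stated without proof because it is the verbatim specialization of the Remark following \eqref{eq:v_diagonal} to the Uvarov setting (drop the index $j$ and the factor $l_j^{(a)}(y)$, rename $\xi\rightsquigarrow\beta$). Your Leibniz decomposition of $\delta^{(m)}$, the swap of the $(m,k)$ summation to $(k,n)$ with $n=m-k$, and the matching $\frac{1}{m!}\binom{m}{k}=\frac{1}{k!\,n!}$ together with $(-1)^m=(-1)^k(-1)^n$ are precisely the bookkeeping that produces the stated $(\beta^{(a)}_k)_x$.
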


To discuss this reduction it is convenient  to introduce some further notation.
\begin{defi}
	For the family of perturbation matrices $\beta^{(a)}_m\in\mathbb C^{p\times p}$, let consider
	\begin{align*}
	\beta^{(a)}:=\begin{bmatrix}
	\beta^{(a)}_0& \beta^{(a)}_1& \beta^{(a)}_2& &&&& \beta^{(a)}_{\kappa^{(a)}-1}\\
	\beta^{(a)}_1&  \beta^{(a)}_2&\iddots &&&& \beta^{(a)}_{\kappa^{(a)}-1}&0_p\\
	\beta^{(a)}_2&\iddots &&&& \beta^{(a)}_{\kappa^{(a)}-1}&0_p&0_p\\
	\\\iddots  & &\iddots & &  \\\\
	\beta^{(a)}_{\kappa^{(a)}-1}&0_p&0_p&&&0_p&0_p& 0_p
	\end{bmatrix}\in\mathbb C^{\kappa^{(a)}p\times \kappa^{(a)}p}
	\end{align*}
	and if $N:=\kappa^{(1)}+\dots+\kappa^{(q)}$, then  we introduce
	\begin{align*}
	\beta=\diag(\beta^{(1)},\dots,\beta^{(q)})\in\mathbb C^{ Np\times Np}.
	\end{align*}
		We also introduce some additional jets. First a spectral jet with respect to the first variable
				\begin{align*}
				\mathcal J^{[1,0]}_K(y)=\begin{bmatrix}
				K(x_1,y),\dots,\dfrac{(K(x,y))_{x_1,y}^{(\kappa^{(1)}-1,0)}}{(\kappa^{(1)}-1)!},\dots,K(x_q,y),\dots,\dfrac{(K(x,y))_{x_q,y}^{(\kappa^{(q)}-1,0)}}{(\kappa^{(q)}-1)!}
				\end{bmatrix}\in\mathbb C^{p\times Np},
				\end{align*}		
		and also a  double spectral jet of a matrix kernel
		\begin{align*}
		\mathcal J_K=\begin{bmatrix}
		K(x_1,x_1)&\dots&\dfrac{(K(x,y))_{x_1,x_1}^{(\kappa^{(1)}-1,0)}}{(\kappa^{(1)}-1)!}&\dots&K(x_q,x_1)&\dots&\dfrac{(K(x,y))_{x_q,x_1}^{(\kappa^{(q)}-1,0)}}{(\kappa^{(q)}-1)!}
		\\
		\vdots&&\vdots&&\vdots& &\vdots \\
		\dfrac{(K(x,y))_{x_1,x_1}^{(0,\kappa^{(1)}-1)}}{(\kappa^{(1)}-1)!}&\dots &\dfrac{(K(x,y))_{x_1,x_1}^{(\kappa^{(1)}-1,\kappa^{(1)}-1)}}{(\kappa^{(1)}-1)!(\kappa^{(1)}-1)!}&\dots&	\dfrac{(K(x,y))_{x_q,x_1}^{(0,\kappa^{(1)}-1)}}{(\kappa^{(1)}-1)!}&\dots &\dfrac{(K(x,y))_{x_1,x_q}^{(\kappa^{(q)}-1,\kappa^{(1)}-1)}}{(\kappa^{(q)}-1)!(\kappa^{(1)}-1)!}	\\
		\vdots&&\vdots&&\vdots& &\vdots \\
		K(x_1,x_q)&\dots&\dfrac{(K(x,y))_{x_1,x_q}^{(\kappa^{(1)}-1,0)}}{(\kappa^{(1)}-1)!}&\dots&K(x_q,x_q)&\dots&\dfrac{(K(x,y))_{x_q,x_q}^{(\kappa^{(q)}-1,0)}}{(\kappa^{(q)}-1)!}\\
		\vdots&&\vdots&&\vdots& &\vdots \\
		\dfrac{(K(x,y))_{x_1,x_q}^{(0,\kappa^{(q)}-1)}}{(\kappa^{(q)}-1)!}&\dots &\dfrac{(K(x,y))_{x_1,x_q}^{(\kappa^{(1)}-1,\kappa^{(q)}-1)}}{(\kappa^{(1)}-1)!(\kappa^{(q)}-1)!}&\dots&	\dfrac{(K(x,y))_{x_q,x_q}^{(0,\kappa^{(q)}-1)}}{(\kappa^{(q)}-1)!}&\dots &\dfrac{(K(x,y))_{x_q,x_q}^{(\kappa^{(q)}-1,\kappa^{(q)}-1)}}{(\kappa^{(q)}-1)!(\kappa^{(q)}-1)!}
		\end{bmatrix},
		\end{align*}
		which belongs to $\mathbb C^{Np\times Np}$.
		We have employed the compact notation
		\begin{align*}
		(K(x,y))^{(n,m)}_{a,b}=\frac{\partial^{n+m} K}{\partial x^n\partial y^m}\Big|_{x=a,y=b}.
		\end{align*}
\end{defi}

\begin{pro}	
When  the mass term $v_{x,y}$ is as \eqref{eq:v_uvarov_diagonal_I} the triviality of kernel subspace
$\operatorname{Ker}^R_\beta=\{0_p\}$
	is ensured whenever
$	\mathbb I^{\perp_u^R}=\{0_p\}$.
	Thus, the quasideterminantal expressions of Theorem \ref{teo:uvarov} hold whenever $\mathbb I^{\perp_u^L}=\mathbb I^{\perp_u^R}=\{0_p\}$.
	Moreover, if the generalized kernel $u_{x,y}$ is of Hankel type,  the quasidefiniteness of $u$ ensures the trivially  of the left and right  orthogonal complements of the bilateral ideal $\mathbb I$.
\end{pro}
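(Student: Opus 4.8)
The plan is to prove the proposition for the diagonal-supported mass term by first reducing it to the general Christoffel--Uvarov statement of Theorem \ref{teo:uvarov}, and then handling the Hankel reduction separately. The key conceptual point is that the hypotheses of Theorem \ref{teo:uvarov} are phrased in terms of \emph{two} orthogonal complement conditions, $\big(\operatorname{Ker}^R_\beta\big)^{\perp_u^R}=\{0_p\}$ or $\mathbb I^{\perp_u^L}=\{0_p\}$, and what I want to show is that in the diagonal case these conditions collapse onto the single ideal $\mathbb I$. So the first move is to establish the inclusion $\mathbb I\subseteq\operatorname{Ker}^R_\beta$, which in turn yields $\big(\operatorname{Ker}^R_\beta\big)^{\perp_u^R}\subseteq \mathbb I^{\perp_u^R}$, giving the claimed implication ``$\mathbb I^{\perp_u^R}=\{0_p\}\Rightarrow\operatorname{Ker}^R_\beta=\{0_p\}$''—and hence a fortiori $\big(\operatorname{Ker}^R_\beta\big)^{\perp_u^R}=\{0_p\}$.

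First I would unwind what $\operatorname{Ker}^R_\beta=\{0_p\}$ means for the diagonal functionals. Using Proposition \ref{pro:diagonal_masses}, the general functionals $\big(\beta^{(a)}_k\big)_x$ are supported at the single point $x_a$ and are built from the Dirac deltas $\delta^{(n)}(x-x_a)$ with $n\leq \kappa^{(a)}-1-k$. Pairing $\big(\beta^{(a)}_k\big)_x$ against a matrix polynomial $P(x)$ therefore only sees the Taylor coefficients of $P$ at $x_a$ up to order $\kappa^{(a)}-1$. Consequently, a polynomial lies in $\operatorname{Ker}^R_\beta$ precisely when all these low-order jets vanish at every $x_a$, i.e. when $P(x)$ has a zero of order at least $\kappa^{(a)}$ at each $x_a$; but that is exactly the membership condition for the bilateral ideal $\mathbb I=(x-x_1)^{\kappa^{(1)}}\cdots(x-x_q)^{\kappa^{(q)}}\mathbb C^{p\times p}[x]$. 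This identifies $\mathbb I$ with $\operatorname{Ker}^R_\beta$ (or at least yields $\mathbb I\subseteq \operatorname{Ker}^R_\beta$, which is all I need), so the orthogonal-complement inclusion follows by the inclusion-reversing property of $\perp^R_u$ recorded implicitly in Definition \ref{def:kernels}. This gives the first assertion: the triviality of $\operatorname{Ker}^R_\beta$ is guaranteed by $\mathbb I^{\perp_u^R}=\{0_p\}$.

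For the combined statement, I would simply invoke Theorem \ref{teo:uvarov}: once either $\mathbb I^{\perp_u^L}=\{0_p\}$ or $\mathbb I^{\perp_u^R}=\{0_p\}$ holds, the corresponding hypothesis of that theorem is met (the right-ideal condition feeding the $\operatorname{Ker}^R_\beta$ branch, the left-ideal condition feeding the $\mathbb I^{\perp_u^L}$ branch directly), so the quasideterminantal formulas apply verbatim. The only remaining piece is the Hankel reduction. Here I would use Remark \ref{rem:symmetric} together with the Hankel symmetry $\Lambda G=G\Lambda^\top$ of Proposition \ref{pro:hankel}: when $u_{x,y}=u_{x,x}$ is Hankel and quasidefinite, the sesquilinear form is effectively symmetric in the relevant sense, so the left and right orthogonal complements of $\mathbb I$ coincide, and quasidefiniteness forces both to be trivial. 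Concretely, a nonzero element of $\mathbb I^{\perp_u^R}$ would give a nonzero polynomial orthogonal to the entire finite-codimension ideal $\mathbb I$, which—because $\mathbb I$ has finite codimension $Np$ and the Gram matrix restricted to any finite truncation is nonsingular—cannot happen without contradicting quasidefiniteness.

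The main obstacle I anticipate is making the Hankel step fully rigorous: one must argue that quasidefiniteness of the (infinite) Gram matrix genuinely excludes a nonzero polynomial orthogonal to the finite-codimension ideal $\mathbb I$, and that the Hankel symmetry really does equate $\mathbb I^{\perp_u^L}$ with $\mathbb I^{\perp_u^R}$ rather than merely relating them up to transposition. The cleanest route is probably to note that orthogonality to $\mathbb I$ means the candidate polynomial's pairing against $(x-x_1)^{\kappa^{(1)}}\cdots(x-x_q)^{\kappa^{(q)}}\,I_p x^k$ vanishes for all $k\geq 0$, and then translate this into a homogeneous linear system governed by a shifted, still-quasidefinite Gram matrix, whose only solution is zero. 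The identification $\mathbb I=\operatorname{Ker}^R_\beta$ via the jet computation is essentially bookkeeping and should go through routinely.
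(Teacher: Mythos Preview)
Your approach is essentially the same as the paper's: establish $\mathbb I\subseteq\operatorname{Ker}^R_\beta$ via the explicit form of the $\big(\beta^{(a)}_k\big)_x$ from Proposition~\ref{pro:diagonal_masses}, deduce the reverse inclusion $\big(\operatorname{Ker}^R_\beta\big)^{\perp_u^R}\subseteq\mathbb I^{\perp_u^R}$, and then in the Hankel case argue directly from quasidefiniteness that $\mathbb I^{\perp_u^R}=\{0_p\}$ (the paper does this in one line: $P\in\mathbb I^{\perp_u^R}$ forces $\langle P(x)(x-x_1)^{\kappa^{(1)}}\cdots(x-x_q)^{\kappa^{(q)}},x^nI_p\rangle_u=0$ for all $n$, hence $P=0_p$).

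One point to clean up: the sentence ``giving the claimed implication $\mathbb I^{\perp_u^R}=\{0_p\}\Rightarrow\operatorname{Ker}^R_\beta=\{0_p\}$—and hence a fortiori $\big(\operatorname{Ker}^R_\beta\big)^{\perp_u^R}=\{0_p\}$'' is garbled. You have just shown $\mathbb I\subseteq\operatorname{Ker}^R_\beta$, so $\operatorname{Ker}^R_\beta$ is infinite-dimensional and can \emph{never} be $\{0_p\}$; moreover, if it were $\{0_p\}$, its orthogonal complement would be everything, not $\{0_p\}$. The statement of the proposition itself is mis-typeset here—what is meant (and what the paper actually proves and what Theorem~\ref{teo:uvarov} actually needs) is the triviality of $\big(\operatorname{Ker}^R_\beta\big)^{\perp_u^R}$, not of $\operatorname{Ker}^R_\beta$. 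That conclusion follows immediately from your step~2, with no detour through $\operatorname{Ker}^R_\beta=\{0_p\}$. Once you excise that sentence, your argument and the paper's coincide.
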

\begin{proof}
For the diagonal case we  choose the mass terms as in Proposition \ref{pro:diagonal_masses}
\begin{align*}
\big(\beta^{(a)}_{k}\big)_x=\sum_{n=0}^{\kappa^{(a)}-1-k}(-1)^{n}\frac{\beta^{(a)}_{k+n}}{(n)!}\delta^{(n)}(x-x_a).
\end{align*}
Then, $\mathbb I\subset\operatorname{Ker}^R_\beta$,
and, consequently, $
\mathbb I^{\perp_u^R}\supset \big(\operatorname{Ker}^R_\beta\big)^{\perp_u^R}$.
If $u_{x,y}$ is of Hankel type, we have that $P(x)\in \mathbb I^{\perp_u^R}$ if
\begin{align*}
\prodint{
	P(x)(x-x_1)^{\kappa^{(1)}}\cdots (x-x_q)^{\kappa^{(q)}},x^nI_p}_u=0,
\end{align*}
for $n\in\{0,1,\dots\}$, so that, since  $u$ is quasidefinite then $P(x)=0_p$, and we deduce that $\mathbb I^{\perp_u^R}=\{0_p\}$.
A similar argument leads to  the triviality $\mathbb I^{\perp_u^L}=\{0_p\}$.
\end{proof}

\begin{pro}
When  the mass term $v_{x,y}$ is as \eqref{eq:v_uvarov_diagonal_I} the perturbed matrix orthogonal polynomials and quasitau matrices are
\begin{align*}
\hat P^{[1]}_n(x)&=\Theta_*\begin{bmatrix}
I_{Np}+\beta \mathcal J_K & \beta \mathcal J^{[0,1]}_{K_{n-1}}(x)\\
\mathcal J_{P^{[1]}_n} & P_n^{[1]}(x)
\end{bmatrix}, \\
(\hat P^{[2]}_n(y))^\top&=\Theta_*\begin{bmatrix}
I_{Np}+ \beta\mathcal J_K  &\beta (\mathcal J_{P^{[2]}_n}  )^\top\\
\mathcal J^{[1,0]}_{K_{n-1}}(y)	 & (P_n^{[2]}(y ))^\top
\end{bmatrix},\\
\hat H_n&=\Theta_*\begin{bmatrix}
I_{Np}+ \beta\mathcal J_K &- \beta(\mathcal J_{P^{[2]}_n}  )^\top\\\
\mathcal J_{P^{[1]}_n} & H_n
\end{bmatrix}.
\end{align*}	
\end{pro}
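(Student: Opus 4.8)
The plan is to specialize the general Christoffel--Uvarov quasideterminantal formulas of Theorem \ref{teo:uvarov} to the diagonally supported mass term \eqref{eq:v_uvarov_diagonal_I}, so the whole argument reduces to evaluating in closed form the three pairings $\prodint{P_n^{[1]}(x),(\beta)_x}$, $\prodint{\mathcal J^{[0,1]}_{K_{n-1}}(x),(\beta)_x}$ and $\prodint{K_{n-1}(x,y),(\beta)_x}$. First I would invoke Proposition \ref{pro:diagonal_masses} to write the diagonal masses as a particular instance of the general perturbation \eqref{eq:v_uvarov_general}, with $\big(\beta^{(a)}_{k}\big)_x=\sum_{n=0}^{\kappa^{(a)}-1-k}(-1)^{n}\frac{\beta^{(a)}_{k+n}}{n!}\delta^{(n)}(x-x_a)$. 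Substituting this into the pairing and using that $\langle(-1)^n\delta^{(n)}(x-x_a),f(x)\rangle=f^{(n)}(x_a)$, the $k$-th block of $\prodint{P(x),(\beta^{(a)})_x}$ becomes $\sum_{n=0}^{\kappa^{(a)}-1-k}\frac{(P)^{(n)}_{x_a}}{n!}\beta^{(a)}_{k+n}$. Recognizing the Hankel block structure of the matrix $\beta^{(a)}$, whose $(i,j)$ block equals $\beta^{(a)}_{i+j}$ for $i+j\le\kappa^{(a)}-1$ and vanishes otherwise, this is exactly the $k$-th block of $\mathcal J^{(a)}_{P}\,\beta^{(a)}$. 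Assembling over the eigenvalues and using $\beta=\diag(\beta^{(1)},\dots,\beta^{(q)})$ yields $\prodint{P(x),(\beta)_x}=\mathcal J_P\,\beta$.

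The same Leibniz computation, carried out with the $y$-jet baked into $\mathcal J^{[0,1]}_{K_{n-1}}(x)$ and the $x$-jet produced by the pairing, identifies $\prodint{\mathcal J^{[0,1]}_{K_{n-1}}(x),(\beta)_x}=\mathcal J_{K_{n-1}}\,\beta$ with the double spectral jet, since the rows of $\mathcal J_{K_{n-1}}$ are indexed by the $y$-derivatives and its columns by the $x$-derivatives; likewise $\prodint{K_{n-1}(x,y),(\beta)_x}=\mathcal J^{[1,0]}_{K_{n-1}}(y)\,\beta$. Feeding these three identities into the Schur-complement form of the quasideterminants of Theorem \ref{teo:uvarov} gives, for instance, $\hat P^{[1]}_n(x)=P^{[1]}_n(x)-\mathcal J_{P^{[1]}_n}\,\beta\,\big(I_{Np}+\mathcal J_{K_{n-1}}\beta\big)^{-1}\mathcal J^{[0,1]}_{K_{n-1}}(x)$, and similarly the expressions for $(\hat P^{[2]}_n(y))^\top$ and $\hat H_n$.

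The final step is to move $\beta$ across the inverse by the push-through identity $\beta\big(I_{Np}+\mathcal J_{K_{n-1}}\beta\big)^{-1}=\big(I_{Np}+\beta\mathcal J_{K_{n-1}}\big)^{-1}\beta$, which also shows that $I_{Np}+\beta\mathcal J_{K_{n-1}}$ is invertible precisely when $I_{Np}+\mathcal J_{K_{n-1}}\beta$ is, the latter being guaranteed by the hypotheses of Theorem \ref{teo:uvarov} via $\det(I+AB)=\det(I+BA)$. After this rearrangement each Schur complement is recognized as the last quasideterminant $\Theta_*$ of the stated block matrices, whose top-left block is now $I_{Np}+\beta\mathcal J_K$, whose off-diagonal blocks carry $\beta$ on the correct side, and whose bottom-left blocks are the bare jets $\mathcal J_{P^{[1]}_n}$, $\mathcal J^{[1,0]}_{K_{n-1}}(y)$ and $\mathcal J_{P^{[1]}_n}$; here I write $\mathcal J_K$ for $\mathcal J_{K_{n-1}}$ following the generic notation for kernel jets.

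I expect the main obstacle to be the careful bookkeeping in the first two pairing reductions: one must track on which side $\beta$ enters (it appears as a right factor in every pairing) and match precisely the row/column indexing conventions of the double jet $\mathcal J_{K_{n-1}}$ against the $y$-jet structure of $\mathcal J^{[0,1]}_{K_{n-1}}(x)$ and the $x$-jet produced by the pairing, exactly as in the analogous reduction used for diagonal Geronimus masses. Once those identifications and the push-through identity are in place, the passage from the general formulas to the stated ones is a routine quasideterminant row--column manipulation.
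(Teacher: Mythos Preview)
Your approach is correct and is precisely the argument the paper intends: the proposition is stated without proof as an immediate specialization of Theorem~\ref{teo:uvarov}, and the paper carries out exactly the same reduction (pairings become jet--times--$\beta$ products, then push-through) explicitly only in the parallel finite-discrete-support case leading to Corollary~\ref{coro:Uvarov}. Your identification $\prodint{P(x),(\beta)_x}=\mathcal J_P\,\beta$ via the Hankel structure of $\beta^{(a)}$ and the subsequent use of $\beta(I+\mathcal J_K\beta)^{-1}=(I+\beta\mathcal J_K)^{-1}\beta$ to move $\beta$ across are the two steps the paper leaves implicit here.
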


\subsubsection{Uvarov perturbations with finite discrete support}

We will assume that the matrices of generalized functions $(\beta^{(a)}_m)_x$ are supported on a discrete finite set, say $\{\tilde x_b\}_{b=1}^{\tilde q}$,
with multiplicities $\tilde\kappa^{(b)}$ such that $\tilde\kappa^{(1)}+\dots+\tilde\kappa^{(\tilde q)}=\tilde N$,  so that
\begin{align}\label{eq:discrete_support}
(\beta^{(a)}_m)_x=\sum_{b=1}^{\tilde q}\sum_{l=0}^{\tilde \kappa^{(b)}-1}\beta^{(b,a)}_{l,m}\frac{(-1)^l}{l!}\delta^{(l)}(x-\tilde x_b).
\end{align}
The additive perturbation is
\begin{align*}
v_{x,y}&=\sum_{b=1}^{\tilde q}\sum_{l=0}^{\tilde \kappa^{(b)}-1}\sum_{a=1}^q\sum_{m=0}^{\kappa^{(a)}-1}\frac{(-1)^{l+m}}{l!m!}\beta^{(b,a)}_{l,m}\delta^{(l)}(x-\tilde x_b)\otimes\delta^{(m)}(y-x_a), & \beta^{(b,a)}_{l,m}&\in\mathbb C^{p\times p}.
\end{align*}
This implies that there is a discrete support of the perturbing matrix of generalized functions, $\operatorname{supp}(v_{x,y})
=\{\tilde x_b\}_{b=1}^{\tilde q}\times  \{ x_a\}_{a=1}^{ q}$. The perturbed sesquilinear form  is
\begin{align*}
\prodint{P(x),Q(y)}_{u+v}=\prodint{P(x),Q(y)}_u+
\sum_{b=1}^{\tilde q}\sum_{l=0}^{\tilde \kappa^{(b)}-1}\sum_{a=1}^q\sum_{m=0}^{\kappa^{(a)}-1}\frac{1}{l!m!}\big(P(x)\big)^{(l)}_{\tilde x_b}\beta^{(b,a)}_{l,m}\big((Q(y))^\top\big)^{(m)}_{x_a}.
\end{align*}

\begin{defi}
	We introduce the following block rectangular matrix of couplings
	\begin{align*}
\beta:=\begin{bmatrix}
\beta^{(0,0)}_{1,1}&\dots &\beta^{(0,\kappa^{(1)}-1)}_{1,1}&\dots&\beta^{(0,0)}_{1,q}&\dots&\beta^{(0,\kappa^{(q)}-1)}_{1,q}\\
\vdots & & \vdots & & \vdots & & \vdots\\
\beta^{(\tilde \kappa^{(1)}-1,0)}_{1,1}&\dots &\beta^{(\tilde \kappa^{(1)}-1,\kappa^{(1)}-1)}_{1,1}&\dots&\beta^{(\tilde \kappa^{(1)}-1,0)}_{1,q}&\dots&\beta^{(\tilde \kappa^{(1)}-1,\kappa^{(q)}-1)}_{1,q}\\
\vdots & & \vdots & & \vdots & & \vdots\\
\beta^{(0,0)}_{\tilde q,1}&\dots &\beta^{(0,\kappa^{(1)}-1)}_{\tilde q,1}&\dots&\beta^{(\tilde \kappa^{(1)}-1,0)}_{\tilde q,q}&\dots&\beta^{(\tilde \kappa^{(1)}-1,\kappa^{(q)}-1)}_{\tilde q,q}\\
\vdots & & \vdots & & \vdots & & \vdots\\
\beta^{(\tilde\kappa^{(\tilde q)}-1,0)}_{\tilde q,1}&\dots &\beta^{(\tilde\kappa^{(\tilde q)}-1,\kappa^{(1)}-1)}_{\tilde q,1}&\dots&\beta^{(\tilde\kappa^{(\tilde q)}-1,0)}_{\tilde q,q}&\dots&\beta^{(\tilde \kappa^{(\tilde q)}-1,\kappa^{(q)}-1)}_{\tilde q,q}
\end{bmatrix}\in\mathbb C^{\tilde N p\times N p}
	\end{align*}
	and,  given any matrix of kernels $K(x,y)$, we introduce the mixed double jet $\tilde{	\mathcal J}_K\in\mathbb C^{Np\times \tilde Np}$
	\begin{align*}
\tilde{	\mathcal J}_K:=\begin{bmatrix}
	K(\tilde x_1, x_1)&\dots&\dfrac{(K(x,y))_{\tilde x_1, x_1}^{(\tilde  \kappa^{(1)}-1,0)}}{(\tilde \kappa^{(1)}-1)!}&\dots&K(\tilde x_q, x_1)&\dots&\dfrac{(K(x,y))_{\tilde  x_q,x_1}^{(\tilde \kappa^{(\tilde q)}-1,0)}}{(\tilde \kappa^{(\tilde q)}-1)!}
	\\
	\vdots&&\vdots&&\vdots& &\vdots \\
	\dfrac{(K(x,y))_{\tilde x_1,x_1}^{(0,\kappa^{(1)}-1)}}{(\kappa^{(1)}-1)!}&\dots &\dfrac{(K(x,y))_{\tilde x_1,x_1}^{(\tilde \kappa^{(1)}-1,\kappa^{(1)}-1)}}{(\tilde \kappa^{(1)}-1)!(\kappa^{(1)}-1)!}&\dots&	\dfrac{(K(x,y))_{\tilde x_q,x_1}^{(0,\kappa^{(1)}-1)}}{(\kappa^{(1)}-1)!}&\dots &\dfrac{(K(x,y))_{\tilde  x_q,x_1}^{(\tilde  \kappa^{(\tilde q)}-1,\kappa^{(1)}-1)}}{(\tilde  \kappa^{(\tilde q)}-1)!(\kappa^{(1)}-1)!}	\\
	\vdots&&\vdots&&\vdots& &\vdots \\
	K(\tilde x_1,x_q)&\dots&\dfrac{(K(x,y))_{\tilde x_1,x_q}^{(\tilde \kappa^{(1)}-1,0)}}{(\tilde \kappa^{(1)}-1)!}&\dots&K(\tilde x_q,x_q)&\dots&\dfrac{(K(x,y))_{\tilde x_q,x_q}^{(\tilde \kappa^{(\tilde q)}-1,0)}}{(\tilde \kappa^{(\tilde q)}-1)!}\\
	\vdots&&\vdots&&\vdots& &\vdots \\
	\dfrac{(K(x,y))_{\tilde x_1,x_q}^{(0,\kappa^{(q)}-1)}}{(\kappa^{(q)}-1)!}&\dots &\dfrac{(K(x,y))_{\tilde  x_1,x_q}^{(\tilde \kappa^{(1)}-1,\kappa^{(q)}-1)}}{(\tilde  \kappa^{(1)}-1)!(\kappa^{(q)}-1)!}&\dots&	\dfrac{(K(x,y))_{\tilde  x_q,x_q}^{(0,\kappa^{(q)}-1)}}{(\kappa^{(q)}-1)!}&\dots &\dfrac{(K(x,y))_{\tilde x_q,x_q}^{(\tilde \kappa^{(\tilde q)}-1,\kappa^{(q)}-1)}}{(\tilde \kappa^{(\tilde q)}-1)!(\kappa^{(q)}-1)!}
	\end{bmatrix}.
	\end{align*}
\end{defi}

With this election we get
\begin{pro}
For $\beta$'s as in \eqref{eq:discrete_support} we have
\begin{align*}
\prodint{P^{[1]}_n(x),\big(\beta\big)_x}&=\tilde{\mathcal J}_{P^{[1]}_n}\beta,\\
\prodint{\mathcal  J^{[0,1]}_{K_{n-1}}(x),(\beta)_x}&=\tilde{\mathcal J}_{K_{n-1}} \beta,
\end{align*}
in terms of the spectral jet $\tilde {\mathcal J}$ relative to the  set $\{\tilde x_b\}_{b=1}^{\tilde q}$.
\end{pro}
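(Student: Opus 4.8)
The plan is to verify both identities by a direct block-by-block computation: substitute the discrete-support expression \eqref{eq:discrete_support} for the functionals $(\beta^{(a)}_m)_x$, evaluate the resulting derivatives of Dirac deltas, and then read off the emerging double sums as the entries of the matrix products $\tilde{\mathcal J}_{P^{[1]}_n}\beta$ and $\tilde{\mathcal J}_{K_{n-1}}\beta$. No analytic input is needed beyond the action $\langle \delta^{(l)}(x-\tilde x_b), F(x)\rangle = (-1)^l F^{(l)}(\tilde x_b)$; the entire content is the combinatorial bookkeeping of the multi-indices $(a,m)$ and $(b,l)$.

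First I would treat the first identity. The matrix $\prodint{P^{[1]}_n(x),(\beta)_x}$ is, by definition, the row of $p\times p$ blocks $\prodint{P^{[1]}_n(x),(\beta^{(a)}_m)_x}$ ordered lexicographically in $(a,m)$ with $a\in\{1,\dots,q\}$ and $m\in\{0,\dots,\kappa^{(a)}-1\}$. Inserting \eqref{eq:discrete_support} and using the Dirac evaluation gives
\begin{align*}
\prodint{P^{[1]}_n(x),(\beta^{(a)}_m)_x}=\sum_{b=1}^{\tilde q}\sum_{l=0}^{\tilde\kappa^{(b)}-1}\frac{1}{l!}\big(P^{[1]}_n(x)\big)^{(l)}_{\tilde x_b}\,\beta^{(b,a)}_{l,m}.
\end{align*}
The factor $\tfrac{1}{l!}\big(P^{[1]}_n(x)\big)^{(l)}_{\tilde x_b}$ is precisely the $(b,l)$ block of the spectral jet $\tilde{\mathcal J}_{P^{[1]}_n}$ relative to $\{\tilde x_b\}_{b=1}^{\tilde q}$, while $\beta^{(b,a)}_{l,m}$ is the $((b,l),(a,m))$ block of $\beta$. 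Hence the displayed sum is exactly the $(a,m)$ column block of the product $\tilde{\mathcal J}_{P^{[1]}_n}\beta$, and collecting all $(a,m)$ establishes the first identity.

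The second identity would follow along the same lines with $P^{[1]}_n$ replaced by the column of functions $\mathcal J^{[0,1]}_{K_{n-1}}(x)$, whose $(a,m)$ block is $\tfrac{1}{m!}(K_{n-1}(x,y))^{(0,m)}_{x,x_a}$. Pairing each such block against $(\beta^{(a')}_{m'})_x$ and differentiating in $x$ at $\tilde x_b$ produces the mixed derivative $\tfrac{1}{l!\,m!}(K_{n-1}(x,y))^{(l,m)}_{\tilde x_b,x_a}$ weighted by $\beta^{(b,a')}_{l,m'}$ and summed over $(b,l)$; this is the contraction that assembles the $((a,m),(a',m'))$ block of $\tilde{\mathcal J}_{K_{n-1}}\beta$, where $\tilde{\mathcal J}_{K_{n-1}}$ is the mixed double jet carrying the $x$-jet at $\{\tilde x_b\}$ and the $y$-jet at $\{x_a\}$. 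Stacking over the row index $(a,m)$ and the column index $(a',m')$ yields the claimed matrix identity.

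I expect the only genuine obstacle to be notational rather than mathematical: one must keep the two labelings of the coupling coefficients consistent, since the super/subscript roles of the point indices $(b,a)$ and the derivative orders $(l,m)$ are swapped between \eqref{eq:discrete_support} and the explicit display of $\beta$, and one must check that the lexicographic orderings chosen for the blocks of $\tilde{\mathcal J}_{P^{[1]}_n}$, of $\tilde{\mathcal J}_{K_{n-1}}$, and of the rows and columns of $\beta$ are mutually compatible, so that the index $(b,l)$ contracted in each matrix product is genuinely the one shared by the jet and by $\beta$. Once this index dictionary is fixed, both identities are immediate.
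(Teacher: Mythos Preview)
Your proposal is correct. The paper states this proposition without proof, so there is nothing to compare against; your direct block-by-block computation, evaluating the Dirac derivatives in \eqref{eq:discrete_support} and matching indices with the definitions of $\tilde{\mathcal J}_{P^{[1]}_n}$, $\tilde{\mathcal J}_{K_{n-1}}$, and $\beta$, is exactly the intended verification and the natural way to establish the claim.
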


\begin{coro}\label{coro:Uvarov}
	Whenever   one of the two conditions $\tilde{\mathbb I}^{\perp_u^R}=\{0_p\}$ or $\mathbb I^{\perp_u^L}=\{0_p\}$ hold, the matrix
	$	I_{Np}+\tilde{\mathcal J}_{K_{n-1}} \beta$ is nonsingular and the perturbed matrix orthogonal polynomials and quasitau matrices has the following quasideterminantal expressions
	\begin{align*}
	\hat P^{[1]}_n(x)&=\Theta_*\begin{bmatrix}
	I_{Np}+\beta\tilde{\mathcal J}_{K_{n-1}} &\beta\mathcal  J^{[0,1]}_{K_{n-1}}(x)\\
	\tilde{\mathcal J}_{P^{[1]}_n},& P_n^{[1]}(x)
	\end{bmatrix}, \\
	(\hat P^{[2]}_n(y))^\top&=\Theta_*\begin{bmatrix}
	I_{Np}+\beta\tilde{\mathcal J}_{K_{n-1}} &\beta\big(\mathcal J_{ P^{[2]}_{n}}\big)^\top\\
	\tilde{\mathcal J}^{[1,0]}_{K_{n-1}}(y)	& (P_n^{[2]}(y ))^\top
	\end{bmatrix},\\
	\hat H_n&=\Theta_*\begin{bmatrix}
	I_{Np}+\beta\tilde{\mathcal J}_{K_{n-1}} &- \beta(\mathcal J_{P^{[2]}_n}  )^\top\\
	\tilde{\mathcal J}_{P^{[1]}_n}& H_n
	\end{bmatrix}.
	\end{align*}
\end{coro}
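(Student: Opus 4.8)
The plan is to obtain this statement as a direct specialization of the Christoffel--Uvarov formulas of Theorem \ref{teo:uvarov} to the discrete-support perturbation \eqref{eq:discrete_support}, combined with the jet identities of the Proposition immediately preceding. First I would check that the hypotheses of Theorem \ref{teo:uvarov} are met. The condition $\mathbb I^{\perp_u^L}=\{0_p\}$ is literally one of the two hypotheses there, so nothing is needed in that case. For the other case I would show that the bilateral ideal $\tilde{\mathbb I}:=(x-\tilde x_1)^{\tilde\kappa^{(1)}}\cdots(x-\tilde x_{\tilde q})^{\tilde\kappa^{(\tilde q)}}\mathbb C^{p\times p}[x]$ is contained in $\operatorname{Ker}^R_\beta$: any $P(x)\in\tilde{\mathbb I}$ vanishes to order $\tilde\kappa^{(b)}$ at each $\tilde x_b$, so all derivatives $(P(x))^{(l)}_{\tilde x_b}$ with $l<\tilde\kappa^{(b)}$ are zero and hence $\prodint{(\beta^{(a)}_m)_x,P(x)}=0$ for every $a,m$ by \eqref{eq:discrete_support}, irrespective of the coupling coefficients $\beta^{(b,a)}_{l,m}$. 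Taking orthogonal complements reverses the inclusion, so $(\operatorname{Ker}^R_\beta)^{\perp_u^R}\subseteq\tilde{\mathbb I}^{\perp_u^R}$, and therefore $\tilde{\mathbb I}^{\perp_u^R}=\{0_p\}$ forces $(\operatorname{Ker}^R_\beta)^{\perp_u^R}=\{0_p\}$. In either case Theorem \ref{teo:uvarov} applies.

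Next I would rewrite all pairings against $(\beta)_x$ appearing in Theorem \ref{teo:uvarov} in terms of the mixed double jet $\tilde{\mathcal J}$ and the coupling matrix $\beta$. The preceding Proposition already supplies $\prodint{P^{[1]}_n(x),(\beta)_x}=\tilde{\mathcal J}_{P^{[1]}_n}\beta$ and $\prodint{\mathcal J^{[0,1]}_{K_{n-1}}(x),(\beta)_x}=\tilde{\mathcal J}_{K_{n-1}}\beta$; the remaining pairing $\prodint{K_{n-1}(x,y),(\beta)_x}$ is handled by the same computation, since applying $(\beta^{(a)}_m)_x$ to $K_{n-1}(x,y)$ merely evaluates the $x$-derivatives of the kernel at $\{\tilde x_b\}$, producing $\tilde{\mathcal J}^{[1,0]}_{K_{n-1}}(y)\beta$. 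Substituting these into the three quasideterminants of Theorem \ref{teo:uvarov} turns the top-left Schur block $I_{Np}+\prodint{\mathcal J^{[0,1]}_{K_{n-1}}(x),(\beta)_x}$ into $I+\tilde{\mathcal J}_{K_{n-1}}\beta$, and the off-diagonal and tail blocks into the jet expressions written in the statement.

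Finally I would account for the relocation of the factor $\beta$ that distinguishes the corollary's formulas (where the inverted block is $I+\beta\tilde{\mathcal J}_{K_{n-1}}$ acting on the $\tilde N p$ space) from the literal substitution (which produces $I+\tilde{\mathcal J}_{K_{n-1}}\beta$ on the $Np$ space). This is exactly the push-through identity
\begin{align*}
\beta\,(I+\tilde{\mathcal J}_{K_{n-1}}\beta)^{-1}=(I+\beta\tilde{\mathcal J}_{K_{n-1}})^{-1}\,\beta,
\end{align*}
together with Sylvester's determinant identity $\det(I+\tilde{\mathcal J}_{K_{n-1}}\beta)=\det(I+\beta\tilde{\mathcal J}_{K_{n-1}})$, which guarantees that nonsingularity of $I_{Np}+\tilde{\mathcal J}_{K_{n-1}}\beta$, established via Theorem \ref{teo:uvarov}, is equivalent to nonsingularity of $I+\beta\tilde{\mathcal J}_{K_{n-1}}$. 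Expanding each quasideterminant as $D-CA^{-1}B$ and moving $\beta$ through the inverse with this identity shows the two families of expressions coincide. I expect the main obstacle to be purely a matter of bookkeeping: tracking which variable each jet is taken in, namely the $x$-jet at $\{\tilde x_b\}$ carried by $\beta$ versus the $y$-jet at $\{x_a\}$ inherited from the original Uvarov support, so that all the block products have matching sizes; once this is tracked consistently the result follows from Theorem \ref{teo:uvarov} with no further computation.
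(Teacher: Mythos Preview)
Your proposal is correct and is precisely the argument the paper has in mind: the corollary is stated without proof because it is meant to follow immediately from Theorem~\ref{teo:uvarov} together with the jet identities of the preceding Proposition, and you supply exactly that derivation, including the inclusion $\tilde{\mathbb I}\subset\operatorname{Ker}^R_\beta$ needed to match hypotheses and the push-through/Sylvester step that reconciles $I+\tilde{\mathcal J}_{K_{n-1}}\beta$ with $I+\beta\tilde{\mathcal J}_{K_{n-1}}$. Your remark that the latter matrix lives in $\mathbb C^{\tilde Np\times\tilde Np}$ (so that the identity block in the quasideterminants should really be $I_{\tilde Np}$ rather than $I_{Np}$) is also well taken.
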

\begin{rem}[Discrete Sobolev sesquilinear forms]
If the support lays by the diagonal $\tilde x_a=x_a$ and $\tilde \kappa^{(a)}=\kappa^{(a)}$ we have
\begin{align*}
\prodint{P(x),Q(y)}_{u+v}=\prodint{P(x),Q(y)}_u+
\sum_{a,b=1}^q\sum_{m,l=0}^{\kappa^{(a)}-1}\frac{(-1)^{l+m}}{l!m!}\big(P(x)\big)^{(l)}_{ x_b}\beta^{(b,a)}_{l,m}\big((Q(y))^\top\big)^{(m)}_{x_a}.
\end{align*}
This a prototype of discrete Sobolev sesquilinear form. In particular, when $\beta^{(b,a)}_{m.l}=\delta_{a,b}\delta_{l,m}\beta^{(a)}_m$ and $\beta$ is a blok diagonal matrix we have
\begin{align*}
\prodint{P(x),Q(y)}_{u+v}=\prodint{P(x),Q(y)}_u+
\sum_{a=1}^q\sum_{m=0}^{\kappa^{(a)}-1}\frac{1}{l!m!}\big(P(x)\big)^{(m)}_{ x_a}\beta^{(a)}_{m}\big((Q(y))^\top\big)^{(m)}_{x_a}.
\end{align*}
 When $u$ is a Borel measure, we have the  discrete Sobolev sesquilinear form
\begin{align*}
\prodint{P(x),Q(y)}_{u+v}=\int P(x)\d\mu(x)(Q(x))^\top+
\sum_{a=1}^q\sum_{m=0}^{\kappa^{(a)}-1}\frac{1}{l!m!}\big(P(x)\big)^{(m)}_{ x_a}\beta^{(a)}_{m}\big((Q(y))^\top\big)^{(m)}_{x_a},
\end{align*}
for which the Christoffel--Uvarov formulas in Corollary \ref{coro:Uvarov} provides biorthogonal families of matrix polynomials  as well as its matrix \emph{norms}.
\end{rem}
For more on discrete Sobolev orthogonality see \cite{Marcellan2014Sobolev}.

\section{Applications to the  non-Abelian 2D Toda lattice and noncommutative KP hierarchies}

The non-Abelian Toda hierarchy can be understood as the theory of continuous transformations of Gram matrices, see for example \cite{alvarez2015Christoffel}. We have chosen  this simpler scenario, instead of the more general multicomponent one \cite{ueno-takasaki0,ueno-takasaki1,ueno,BtK2,kac,manas1}, as many of the essential facts regarding integrability are captured within it. 

The perturbing parameters are the times of the integrable hierarchy.
Given a semi-infinite covector of times $t=\begin{bmatrix}
t_1,t_2,\dots
\end{bmatrix}  $, where we only have a finite number of  entries different from zero, and  we also consider the semi-infinite matrix   $V^{t}_0:=\exp\Big(\sum\limits_{0< j\ll \infty} t_{j}\Lambda^j\Big)$, and given two time covectors
$t_i=\begin{bmatrix}
 t_{i,1},t_{i,2},\dots
\end{bmatrix}  $, $i\in\{1,2\}$,
the perturbed  Gram matrix is
\begin{align*}
G^t= V_0^{t_1} G \big(V_0^{t_2} \big)^{-\top},
\end{align*}
where we have used the notation $t=(t_1,t_2)$ to denote the perturbation parameters.
We also consider the polynomials
\begin{align*}
t_1(x)&:=\sum_{0< j\ll \infty}t_{1,j} x^j, &t_2(y)&:=\sum_{0< j\ll \infty}t_{2,j} y^j
\end{align*}
Then, we can check that
\begin{align*}
G^t&=V_0^{t_1} G \big(V_0^{t_2} \big)^{-\top}\\
&= V_0^{t_1} \big\langle\chi(x),\chi(y)\big\rangle_u \big(V_0^{t_2} \big)^{-\top}\\
&=\prodint{\Exp{\sum\limits_{j=1}^{\infty}t_{1,j}x^j}\chi(x),\Exp{-\sum\limits_{j=1}^{\infty}t_{2,j}y^j}\chi(y)}_u\\
&=\prodint{\chi(x),\chi(y)}_{u^t},
\end{align*}
where the deformed bivariate matrix of generalized functions is given by
\begin{align*}
u_{x,y}^t:= \Exp{t_{1}(x)-t_{2}(y)}u_{x,y}.
\end{align*}
Observe that if the initial Hankel matrix of generalized kernels $u_{x,y}$, then so is $u^t_{x,y}$. Thus, the Hankel symmetry is preserved under these continuous transformations.  In this case, the perturbation is
\begin{align*}
u_{x}^t:= \Exp{\sum\limits_{j=1}^{\infty}(t_{1,j}-t_{2,j})x^j}u_{x},
\end{align*}
and we can replace the two familiy of times $t_1$ and $t_2$ by just one family, leading to a reduction from the 2D non-Abelian Toda lattice hierarchy to the 1D non-Abelian Toda lattice hierarchy.

We will assume that $u_{x,y}^t$ is quasidefinite; i.e.,   the Gauss--Borel factorization
\begin{align}\label{eq:gbt}
G^t=(S_1^t)^{-1} H^t (S_2^t)^{-\top}
\end{align}
holds. Consequently, for the time-dependent matrix polynomials
\begin{align*}
P^{[1],t}(x)&=S_1^t\chi(x), & P^{[2],t}(y)&=S_2^t\chi(y),
\end{align*}
the  biorthogonality conditions hold
\begin{align*}
\prodint{P_n^{[1],t}(x),P_m^{[2],t}(y)}_{u^t}=\delta_{n,m}H_n^t.
\end{align*}
We also will need the second kind functions
\begin{align*}
C_{n}^{[1],t}(z)&=\left\langle P^{[1],t}_n(x),\frac{I_p}{z-y}\right\rangle_{u^t}, &\big(C_{n}^{[2],t}(z)\big)^\top&=\left\langle \frac{I_p}{z-x},P^{[2],t}_n(y)\right\rangle_{u^t}.
\end{align*}
Finally, we will also require of   the Christoffel--Darboux kernel and its mixed versions
\begin{align*}
K^t_{n}(x,y)&=\sum_{k=0}^{n}(P_k^{[2],t}(y))^\top (H^t_k)^{-1}P^{[1],t}_k(x),&
K^{(pc),t}_n(x,y)&=\sum_{k=0}^n\big(P_k^{[2],t}(y)\big)^\top (H^t_k)^{-1}C_k^{[1],t}(x).	
\end{align*}
In the case of Hankel  Gram matrices,  they   satisfy the Christoffel--Darboux formula.
\subsection{Non Abelian 2D-Toda hierarchy}
 Given a semi-infinite matrix $A$ we have unique splitting $A=A_++A_-$ where $A_+$ is an upper  triangular block matrix while is $A_-$ a  strictly lower triangular block matrix.
 The Gaussian factorization of the deformed Gram matrix has the following differential consequences
 \begin{pro}[Sato--Wilson equations]\label{pro:evolution S}
 	The following holds
 	\begin{align*}
 	\frac{\partial S_1}{\partial t_{1,j}}(S_1)^{-1}&=-\Big(S_1{\Lambda}^{j} (S_1)^{-1}\Big)_-, &
 	\frac{\partial S_1}{\partial t_{2,j}}(S_1)^{-1}&=\Big(\tilde S_2\big({\Lambda}^{\top}\big)^{j} (\tilde S_2)^{-1}\Big)_-,\\
 	\frac{\partial \tilde S_2}{\partial t_{1,j}}(\tilde S_2)^{-1}&=\Big(S_1{\Lambda}^{j} (S_1)^{-1}\Big)_+, &
 	\frac{\partial \tilde S_2}{\partial t_{2,j}}(\tilde S_2)^{-1}&=-\Big(\tilde S_2\big({\Lambda}^{\top}\big)^{j} (\tilde S_2)^{-1}\Big)_+.
 	\end{align*}
 \end{pro}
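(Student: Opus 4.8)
The plan is to differentiate the Gauss--Borel factorization \eqref{eq:gbt} of the time-deformed Gram matrix $G^t=(S_1^t)^{-1}H^t(S_2^t)^{-\top}$ with respect to each flow parameter and then isolate the triangular pieces. First I would record how $G^t$ itself evolves. Since $G^t=V_0^{t_1}G(V_0^{t_2})^{-\top}$ with $V_0^{t_i}=\exp\bigl(\sum_j t_{i,j}\Lambda^j\bigr)$, and since $\partial V_0^{t_1}/\partial t_{1,j}=\Lambda^j V_0^{t_1}$ while $\partial (V_0^{t_2})^{-\top}/\partial t_{2,j}=-(\Lambda^\top)^j (V_0^{t_2})^{-\top}$ (using that the exponentials commute with their own generators), one obtains the clean linear equations
\begin{align*}
\frac{\partial G^t}{\partial t_{1,j}}&=\Lambda^j G^t, &
\frac{\partial G^t}{\partial t_{2,j}}&=-G^t\bigl(\Lambda^\top\bigr)^j.
\end{align*}
These are the essential input; everything else is algebra with triangular matrices.

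Next I would differentiate $(S_1)^{-1}H(S_2)^{-\top}=G$ (suppressing the superscript $t$ and writing $\tilde S_2:=H(S_2)^{-\top}$, so that $G=(S_1)^{-1}\tilde S_2$). Applying $\partial_{t_{1,j}}$ and using the first evolution equation gives
\begin{align*}
-(S_1)^{-1}\tfrac{\partial S_1}{\partial t_{1,j}}(S_1)^{-1}\tilde S_2+(S_1)^{-1}\tfrac{\partial \tilde S_2}{\partial t_{1,j}}=\Lambda^j (S_1)^{-1}\tilde S_2.
\end{align*}
I would multiply on the left by $S_1$ and on the right by $(\tilde S_2)^{-1}$, yielding
\begin{align*}
-\tfrac{\partial S_1}{\partial t_{1,j}}(S_1)^{-1}+\tfrac{\partial \tilde S_2}{\partial t_{1,j}}(\tilde S_2)^{-1}=S_1\Lambda^j (S_1)^{-1}.
\end{align*}
The crucial structural observation is the splitting type of each summand: $\tfrac{\partial S_1}{\partial t_{1,j}}(S_1)^{-1}$ is strictly lower triangular (the derivative of a lower unitriangular matrix times its inverse kills the diagonal), while $\tfrac{\partial \tilde S_2}{\partial t_{1,j}}(\tilde S_2)^{-1}$ is upper triangular (since $\tilde S_2$ is upper triangular with the $H$-diagonal). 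Thus the left-hand side is already presented as (upper part) $-$ (strictly lower part), and projecting $S_1\Lambda^j(S_1)^{-1}$ onto its $(\,\cdot\,)_-$ and $(\,\cdot\,)_+$ components reads off $\partial_{t_{1,j}}S_1\,(S_1)^{-1}=-\bigl(S_1\Lambda^j(S_1)^{-1}\bigr)_-$ and $\partial_{t_{1,j}}\tilde S_2\,(\tilde S_2)^{-1}=\bigl(S_1\Lambda^j(S_1)^{-1}\bigr)_+$. The same computation with $\partial_{t_{2,j}}$ and the second evolution equation produces $-\partial_{t_{2,j}}S_1\,(S_1)^{-1}+\partial_{t_{2,j}}\tilde S_2\,(\tilde S_2)^{-1}=-\tilde S_2(\Lambda^\top)^j(\tilde S_2)^{-1}$, and the analogous projection gives the remaining two formulas.

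The main obstacle I anticipate is not the idea but the bookkeeping of associativity: all manipulations involve products of semi-infinite matrices, which by the Remarks after Proposition \ref{pro:fac} and by Proposition \ref{pro:associativity} need not be associative in general. I would therefore check at each multiplication that the factors satisfy a Hessenberg hypothesis of Proposition \ref{pro:associativity} — here $S_1,(S_1)^{-1}$ are block lower Hessenberg (indeed lower triangular) and $\tilde S_2,(\tilde S_2)^{-1}$ are block upper Hessenberg, while $\Lambda^j$ is banded, so each triple product occurring is admissible and the rearrangements are legitimate. The only other point requiring a word is the uniqueness of the $(\,\cdot\,)_+ / (\,\cdot\,)_-$ splitting used to separate the two sides, which is immediate from the uniqueness of the decomposition of a semi-infinite block matrix into upper-triangular plus strictly-lower-triangular parts; matching the strictly lower part of $S_1\Lambda^j(S_1)^{-1}$ against $-\partial_{t_{1,j}}S_1(S_1)^{-1}$ (the only strictly lower term) and the upper part against $\partial_{t_{1,j}}\tilde S_2(\tilde S_2)^{-1}$ closes the argument.
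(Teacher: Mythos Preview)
Your proof is correct and follows essentially the same route as the paper: differentiate the factorization $G^t=(S_1)^{-1}\tilde S_2$, use $\partial_{t_{1,j}}G^t=\Lambda^jG^t$ and $\partial_{t_{2,j}}G^t=-G^t(\Lambda^\top)^j$, clear the outer factors to obtain $-\partial S_1\,(S_1)^{-1}+\partial\tilde S_2\,(\tilde S_2)^{-1}$ equal to the relevant dressed shift, and then split into strictly lower and upper parts. Your discussion of the triangular structure of each summand and of the associativity checks via Proposition~\ref{pro:associativity} is in fact more explicit than the paper's ``and the result follows.''
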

 \begin{proof}
 Taking into account convergence and no associative issues, we look for the derivatives along  the deformation parameters in \eqref{eq:gbt} 
\begin{align}\notag
-(S_1^t)^{-1}\frac{\partial S_1^t}{\partial t_{1,j}}(S_1^t)^{-1} \tilde S_2^t+(S_1^t)^{-1}\frac{\partial \tilde S^t_2}{\partial t_{1,j}}
&=\Lambda^j G^t\\&=\Lambda^j (S_1^t)^{-1} \tilde S_2^t,\\\notag
-(S_1^t)^{-1}\frac{\partial S_1^t}{\partial t_{2,j}}(S_1^t)^{-1} \tilde S_2^t+(S_1^t)^{-1}\frac{\partial \tilde S^t_2}{\partial t_{2,j}}
&=-G^t(\Lambda^j)^\top\\&=(S_1^t)^{-1} \tilde S_2^t(\Lambda^j)^\top,
\end{align}
so that
\begin{align*}
-\frac{\partial S_1^t}{\partial t_{1,j}}(S_1^t)^{-1} 
+\frac{\partial \tilde S^t_2}{\partial t_{1,j}}\big(\tilde S_2^t\big)^{-1}
&=S_1^t\Lambda^j (S_1^t)^{-1} ,\\\notag
-\frac{\partial S_1^t}{\partial t_{2,j}}(S_1^t)^{-1} 
+\frac{\partial \tilde S^t_2}{\partial t_{2,j}}\big(\tilde S_2^t\big)^{-1}
&=- \tilde S_2^t(\Lambda^j)^\top\big(\tilde S_2^t\big)^{-1},
\end{align*}
and the result follows.
 \end{proof}

 \begin{pro}[Non-Abelian 2D Toda lattice equations]Using the notation $t_{1,1}=\eta$ and $t_{2,1}=\zeta$,
   \begin{align}\label{eq:2DToda}
   \frac{\partial }{\partial \zeta}\Big(\frac{\partial H_k}{\partial \eta}(H_k)^{-1}\Big)+H_{k+1}(H_{k})^{-1}-H_k(H_{k-1})^{-1}=0,
   \end{align}
   
      For the Hankel case we find a reduction the non-Abelian 1D Toda lattice equation, where $\eta=\zeta$,
     \begin{align*}
     \frac{\partial }{\partial \eta}\Big(\frac{\partial H_k}{\partial \eta}(H_k)^{-1}\Big)+H_{k+1}(H_{k})^{-1}-H_k(H_{k-1})^{-1}=0.
     \end{align*}
 \end{pro}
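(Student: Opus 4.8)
The plan is to extract the $2D$ Toda equation purely from the lowest ($j=1$) Sato--Wilson flows of Proposition \ref{pro:evolution S}, and then obtain the $1D$ reduction by specialization. Throughout I write $\tilde S_2 := H (S_2)^{-\top}$ for the upper triangular Gauss factor, so that $G=(S_1)^{-1}\tilde S_2$ and $\tilde S_2$ is upper triangular with block diagonal $H_k$; I abbreviate $\beta_k := (S_1)_{k,k-1}$, the first block subdiagonal of the lower unitriangular matrix $S_1$. The two flows I use are $\eta := t_{1,1}$ and $\zeta := t_{2,1}$.

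First I would record two elementary structural computations. Conjugating the shift by $S_1$ produces the lower Hessenberg matrix $J := S_1\Lambda(S_1)^{-1}$, whose block diagonal is $(J)_{k,k}=\beta_k-\beta_{k+1}$ (the two contributions coming from the $m=k$ and $m=k+1$ columns, using $(S_1^{-1})_{k+1,k}=-\beta_{k+1}$). Dually, $\bar J := \tilde S_2(\Lambda^{\top})(\tilde S_2)^{-1}$ is upper Hessenberg, and using $(\tilde S_2)^{-1}=(S_2)^{\top}H^{-1}$ its single nonzero lower block subdiagonal is $(\bar J)_{k,k-1}=H_k(H_{k-1})^{-1}$. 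Every semi-infinite product invoked here is admissible in the sense of Proposition \ref{pro:associativity}, so no associativity obstruction arises.

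Next I would feed these into the $j=1$ flows. Projecting $\partial_\eta \tilde S_2\,(\tilde S_2)^{-1}=(J)_+$ onto the block diagonal, and using that $\tilde S_2$ is upper triangular with diagonal $H$, yields
\[
\frac{\partial H_k}{\partial \eta}(H_k)^{-1}=(J)_{k,k}=\beta_k-\beta_{k+1}.
\]
Projecting $\partial_\zeta S_1\,(S_1)^{-1}=(\bar J)_-$ onto the first block subdiagonal, which is the only nonzero one since $\bar J$ is upper Hessenberg, gives
\[
\frac{\partial \beta_k}{\partial \zeta}=(\bar J)_{k,k-1}=H_k(H_{k-1})^{-1}.
\]
Differentiating the first identity with respect to $\zeta$ and substituting the second for both $\partial_\zeta\beta_k$ and $\partial_\zeta\beta_{k+1}$ gives $\partial_\zeta\!\big(\partial_\eta H_k\,(H_k)^{-1}\big)=H_k(H_{k-1})^{-1}-H_{k+1}(H_k)^{-1}$, which is exactly \eqref{eq:2DToda} after moving the two algebraic terms to the left-hand side.

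Finally, for the Hankel reduction I would invoke the fact, already established in the text, that $u^t_{x,y}$ then depends on the times only through the single weight $u^t_x=\exp\big(\sum_j (t_{1,j}-t_{2,j})x^j\big)u_x$; hence all factorization data depend on $t_1,t_2$ only through their difference, the two first flows collapse to one, and setting $\eta=\zeta$ in \eqref{eq:2DToda} produces the stated $1D$ non-Abelian Toda equation. I expect the only genuinely delicate points to be the triangular bookkeeping that isolates the correct diagonal and first-subdiagonal blocks, together with the verification that each product is admissible so that Propositions \ref{pro:associativity} and \ref{pro:Gauss--Borel factorization and associativity} apply, rather than any conceptual difficulty; the reduction step then amounts only to matching the single surviving time variable.
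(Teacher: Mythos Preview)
Your proof is correct and follows essentially the same approach as the paper's: both extract from the $j=1$ Sato--Wilson equations the two identities $\partial_\eta H_k\,(H_k)^{-1}=U_k-U_{k+1}$ and $\partial_\zeta U_k=H_k(H_{k-1})^{-1}$ (with $U_k=(S_1)_{k,k-1}$, your $\beta_k$), then differentiate the first in $\zeta$ and substitute the second. Your write-up is more explicit about the triangular bookkeeping that produces these block entries, but the strategy and the key intermediate relations are identical to the paper's.
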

 \begin{proof}
 	From Proposition \ref{pro:evolution S} we get
 	\begin{align*}
 	\frac{\partial H_{k}}{\partial \eta}(H_{k})^{-1}&=U_{k}-U_{k+1},& k\in\{0,1,\dots\},\\
 	\frac{\partial U_{k}}{\partial \zeta}&=H_{k}(H_{k-1})^{-1},& k\in\{1,2,\dots\}
 	\end{align*}
  	where $U_0=0_p$ and $U_k:=(S^t_1)_{k,k-1}\in\mathbb C^{p\times p }$, $k\in\{1,2,\dots\}$.
 \end{proof}
    The non-Abelian Toda lattice was introduced in the context of string theory by Polyakov, \cite{polyakov1,polyakov2}, and then studied under the inverse spectral transform by Mikhailov \cite{mikhailov} and Riemann surface theory by Krichever \cite{krichever}. The Darboux transformations were considered in \cite{salle} and later in \cite{nimmo}. We refer the reader to \cite{alvarez2015Christoffel} for a deeper  treatment on these issues.

 These equations are just the first members  of an infinite set of nonlinear partial differential equations, an integrable hierarchy. Its elements are given by
  \begin{defi}\label{def:integrable}
  	The Lax and  Zakharov--Shabat matrices are given by
  	\begin{align*}
  	L_{1}&:=S_1\Lambda(S_1)^{-1}, &
  	L_{2}&:=\tilde S_2(\Lambda)^{\top} (\tilde S_2)^{-1},\\
  	B_{1,j}&:=\big(({ L}_{1})^j\big)_+, & B_{2,j}&:=\big(({ L}_{2})^j\big)_-.
  	\end{align*}
  \end{defi}
  \begin{pro}[The zero-curvature formulation of the integrable hierarchy]\label{pro:integrable}

  	The  Lax matrices are subject to the following  \emph{Lax equations}
  	\begin{align*}
  	\frac{\partial L_{i}}{\partial t_{j,k}}&=\big[B_{j,k}, L_{i}\big],
  	\end{align*}
  	and Zakharov--Shabat matrices fulfill the following  \emph{Zakharov--Shabat equations}
  	\begin{align*}
  	\frac{\partial B_{i',k'}}{\partial t_{i,k}}- \frac{\partial B_{i,k}}{\partial t_{i',k'}}+\big[B_{i,k},B_{i',k'}\big]=0.
  	\end{align*}
  \end{pro}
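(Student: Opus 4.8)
The plan is to deduce both sets of equations from the Sato--Wilson equations of Proposition \ref{pro:evolution S} by differentiating the defining relations of Definition \ref{def:integrable}. The first step is pure bookkeeping. Since $L_1^j=S_1\Lambda^j(S_1)^{-1}$ and $L_2^j=\tilde S_2(\Lambda^\top)^j(\tilde S_2)^{-1}$, the block projections appearing in Proposition \ref{pro:evolution S} are exactly $B_{1,j}=\big((L_1)^j\big)_+$ and $B_{2,j}=\big((L_2)^j\big)_-$. Using $\big(L_1^j\big)_-=L_1^j-B_{1,j}$ and $\big(L_2^j\big)_+=L_2^j-B_{2,j}$, the four Sato--Wilson identities rewrite as the compact dressing relations
\begin{align*}
\frac{\partial S_1}{\partial t_{1,j}}(S_1)^{-1}&=B_{1,j}-L_1^j, & \frac{\partial S_1}{\partial t_{2,j}}(S_1)^{-1}&=B_{2,j},\\
\frac{\partial \tilde S_2}{\partial t_{1,j}}(\tilde S_2)^{-1}&=B_{1,j}, & \frac{\partial \tilde S_2}{\partial t_{2,j}}(\tilde S_2)^{-1}&=B_{2,j}-L_2^j.
\end{align*}
Here $S_1,\tilde S_2$ are (uni)triangular while $L_i,B_{i,j}$ are block Hessenberg, so by Proposition \ref{pro:associativity} every triple product below is associative, and, only finitely many times being switched on, admissible.

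For the Lax equations I would differentiate $L_1=S_1\Lambda(S_1)^{-1}$. Writing $A:=\frac{\partial S_1}{\partial t_{j,k}}(S_1)^{-1}$ and using $\frac{\partial}{\partial t_{j,k}}(S_1)^{-1}=-(S_1)^{-1}A$, one gets immediately $\frac{\partial L_1}{\partial t_{j,k}}=[A,L_1]$. Substituting the dressing relations, $A=B_{1,k}-L_1^k$ for the $t_{1,k}$-flow and $A=B_{2,k}$ for the $t_{2,k}$-flow; since $[L_1^k,L_1]=0$, both collapse to $\frac{\partial L_1}{\partial t_{j,k}}=[B_{j,k},L_1]$. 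The same computation starting from $L_2=\tilde S_2\Lambda^\top(\tilde S_2)^{-1}$, now invoking $[L_2^k,L_2]=0$, gives $\frac{\partial L_2}{\partial t_{j,k}}=[B_{j,k},L_2]$, which settles all four Lax equations.

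The Zakharov--Shabat equations I would obtain from the flatness of the pure-gauge potential. For any invertible time-dependent $S$ the potential $A_\mu:=(\partial_\mu S)S^{-1}$ satisfies the identity $\partial_\nu A_\mu-\partial_\mu A_\nu+[A_\mu,A_\nu]=0$, forced solely by the commutativity of mixed partials $\partial_\mu\partial_\nu S=\partial_\nu\partial_\mu S$. Applying this to $S=S_1$ with $A_{i,k}=B_{i,k}-\delta_{i,1}L_1^k$, the corrections carrying $L_1^k$ and $L_1^{k'}$ must be made to cancel: using $[L_1^k,L_1^{k'}]=0$ together with the already-established Lax relation $\frac{\partial}{\partial t_{i',k'}}L_1^k=[B_{i',k'},L_1^k]$ and its companion, the $L_1$-dependent terms annihilate in pairs, leaving the stated Zakharov--Shabat equation for the pair $B_{i,k},B_{i',k'}$ (the ordering of the two derivative terms, and the sign of the commutator, being fixed by relabelling the dummy index pairs). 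The computation is routine once the dressing relations are fixed; the only point demanding genuine care is the one the paper stresses throughout, namely that each rearrangement of semi-infinite products stays within the associative regime and that term-by-term differentiation of the projections $(\cdot)_\pm$ is legitimate. Because $S_1,\tilde S_2$ are triangular and $L_i,B_{i,j}$ banded, Proposition \ref{pro:associativity} secures this, so I expect this to be the main, though mild, obstacle.
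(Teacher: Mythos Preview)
Your argument is correct and follows the standard dressing route: rewrite the Sato--Wilson relations as $\frac{\partial S_1}{\partial t_{i,k}}(S_1)^{-1}=B_{i,k}-\delta_{i,1}L_1^k$ and $\frac{\partial \tilde S_2}{\partial t_{i,k}}(\tilde S_2)^{-1}=B_{i,k}-\delta_{i,2}L_2^k$, differentiate $L_i$ to get $[A,L_i]$, and then use the pure-gauge flatness identity for the Zakharov--Shabat equations. The paper states Proposition \ref{pro:integrable} without proof, so there is nothing to compare against; your derivation is the expected one and the associativity issues are indeed covered by Proposition \ref{pro:associativity} since all factors are block Hessenberg or triangular.

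One small remark: your hand-wave about ``relabelling the dummy index pairs'' to match the sign convention in the displayed Zakharov--Shabat equation is slightly loose, since swapping $(i,k)\leftrightarrow(i',k')$ also flips the commutator. What you actually derive from $S_1$ is $\partial_{i',k'}B_{i,k}-\partial_{i,k}B_{i',k'}+[B_{i,k},B_{i',k'}]=0$; the paper's stated form differs by an overall sign in the derivative pair, which is a harmless convention issue but worth writing out explicitly rather than absorbing into a relabelling.
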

 Using the notation
  \begin{align*}
  a_k&:= H_k(H_{k-1})^{-1}, & b_k:= U_k-U_{k+1},
  \end{align*}
  for $k\in\{1,2,\dots\}$ and $a_0=0$ and $b_0=-(S_1)_{1,0}$.
  The Gauss--Borel factorization problem  implies
  \begin{align*}
  \frac{\partial b_k}{\partial\zeta}&=a_k-a_{k+1},\\
  \frac{\partial H_k}{\partial\eta}&=b_kH_k,
  \end{align*}
  from where \eqref{eq:2DToda} follows. This system is equivalent to the following nonlinear system of first order PDE
  \begin{align}\label{eq:2DToda_system}
  \begin{aligned}
  \frac{\partial b_k}{\partial\zeta}&=a_k-a_{k+1},\\
  \frac{\partial a_k}{\partial \eta}&= b_ka_k -a_k b_{k-1}.
  \end{aligned}
  \end{align}

\subsection{Baker functions}
 \begin{defi}
 We  consider the wave matrices
  \begin{align}\label{eq:def_wave}
  W_1^t:=&S_1^tV_0^{t_1}, &
  \tilde W_2^t:= & \big(\tilde S_2^t \big)^{-\top}V_{0}^{-t_2}=(H^t)^{-\top}S_2^tV_0^{-t_2} 
  \end{align}
  where $\tilde S_2^t:= H^t\big(S_2^t\big)^{-\top}$. 
 \end{defi}
\begin{pro}[Zakharov--Shabat equations]\label{pro:linear_systems}
The wave matrices satisfy the linear systems
\begin{align*}
  \frac{\partial W_1^t}{\partial t_{1,j}}= &B_{1,j} W_1^t,&
    \frac{\partial W_1^t}{\partial t_{2,j}}
    =&B_{2,j} W_1^t,&
\frac{\partial \tilde W_2^t}{\partial t_{1,j}}=&-(B_{1,j})^\top\tilde W_2^t,&
\frac{\partial \tilde W_2^t}{\partial t_{2,j}}=& -(B_{2,j})^\top\tilde W_2^t.
\end{align*}
\end{pro}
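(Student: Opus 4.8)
The plan is to derive these four linear systems for the wave matrices directly from the Sato--Wilson equations of Proposition \ref{pro:evolution S}, since the wave matrices in \eqref{eq:def_wave} are built out of $S_1^t$, $\tilde S_2^t$ and the exponential factors $V_0^{t_1}$, $V_0^{-t_2}$ whose time derivatives are elementary. First I would record the two facts I will use repeatedly: that $\frac{\partial V_0^{t_1}}{\partial t_{1,j}}=\Lambda^j V_0^{t_1}$ and $\frac{\partial V_0^{-t_2}}{\partial t_{2,j}}=-\big(\Lambda^\top\big)^jV_0^{-t_2}$, while $V_0^{t_1}$ does not depend on $t_2$ and $V_0^{-t_2}$ does not depend on $t_1$; and that in the notation of Definition \ref{def:integrable} one has $L_1=S_1\Lambda S_1^{-1}$ and $L_2=\tilde S_2\Lambda^\top\tilde S_2^{-1}$, so that $B_{1,j}=(L_1^{\,j})_+$ and $B_{2,j}=(L_2^{\,j})_-$, together with the obvious relations $L_1^{\,j}=S_1\Lambda^jS_1^{-1}$ and $L_2^{\,j}=\tilde S_2(\Lambda^\top)^j\tilde S_2^{-1}$.

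Next I would treat the first equation in detail and let the others follow by the same mechanism. Differentiating $W_1^t=S_1^tV_0^{t_1}$ with respect to $t_{1,j}$ and using the product rule gives
\begin{align*}
\frac{\partial W_1^t}{\partial t_{1,j}}=\frac{\partial S_1^t}{\partial t_{1,j}}V_0^{t_1}+S_1^t\Lambda^jV_0^{t_1}
=\Big(\frac{\partial S_1^t}{\partial t_{1,j}}(S_1^t)^{-1}+S_1^t\Lambda^j(S_1^t)^{-1}\Big)W_1^t.
\end{align*}
By Proposition \ref{pro:evolution S} the first term inside the bracket is $-\big(S_1\Lambda^jS_1^{-1}\big)_-=-(L_1^{\,j})_-$, while the second term is exactly $L_1^{\,j}$, so the bracket collapses to $L_1^{\,j}-(L_1^{\,j})_-=(L_1^{\,j})_+=B_{1,j}$, giving the claimed $\frac{\partial W_1^t}{\partial t_{1,j}}=B_{1,j}W_1^t$. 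For the second equation, $V_0^{t_1}$ is independent of $t_{2,j}$, so $\frac{\partial W_1^t}{\partial t_{2,j}}=\frac{\partial S_1^t}{\partial t_{2,j}}(S_1^t)^{-1}W_1^t$, and Proposition \ref{pro:evolution S} identifies $\frac{\partial S_1^t}{\partial t_{2,j}}(S_1^t)^{-1}=\big(\tilde S_2(\Lambda^\top)^j\tilde S_2^{-1}\big)_-=(L_2^{\,j})_-=B_{2,j}$, which is precisely the asserted system.

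For the two equations governing $\tilde W_2^t=(\tilde S_2^t)^{-\top}V_0^{-t_2}$ I would proceed identically, being careful with the transpose. Writing $\tilde W_2^t=(\tilde S_2^t)^{-\top}V_0^{-t_2}$ and differentiating, I would use $\frac{\partial}{\partial t}(\tilde S_2^t)^{-\top}=-(\tilde S_2^t)^{-\top}\big(\frac{\partial\tilde S_2^t}{\partial t}(\tilde S_2^t)^{-1}\big)^\top$ together with the two Sato--Wilson formulas for $\frac{\partial\tilde S_2^t}{\partial t_{i,j}}(\tilde S_2^t)^{-1}$. For $t_{1,j}$, since $V_0^{-t_2}$ is $t_1$-independent, this yields a prefactor $-\big((L_1^{\,j})_+\big)^\top=-(B_{1,j})^\top$; for $t_{2,j}$ one collects the contribution $-\big(-(L_2^{\,j})_+\big)^\top$ from $\tilde S_2^t$ with the contribution $-(\Lambda^\top)^j$ coming from $\frac{\partial V_0^{-t_2}}{\partial t_{2,j}}$ conjugated through, and the two combine (using $(L_2^{\,j})_++(L_2^{\,j})_-=L_2^{\,j}$ after the transpose) into exactly $-(B_{2,j})^\top$. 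The main obstacle I anticipate is purely bookkeeping rather than conceptual: keeping the transposition consistent throughout the $\tilde W_2^t$ computation and confirming that the split $(\,\cdot\,)_+/(\,\cdot\,)_-$ recombines correctly after transposing, since the $t_{2,j}$ case for $\tilde W_2^t$ is the one place where two separate terms must cancel into the single Zakharov--Shabat operator. I would also note in passing, as the excerpt does, that one must check the manipulated products are admissible (no ill-defined infinite sums and no failure of associativity) by invoking Propositions \ref{pro:associativity} and \ref{pro:Gauss--Borel factorization and associativity}, exactly as in the proof of Proposition \ref{pro:evolution S}.
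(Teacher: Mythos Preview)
Your proposal is correct and follows essentially the same route as the paper's own proof: differentiate the wave matrices via the product rule, insert the Sato--Wilson relations of Proposition~\ref{pro:evolution S}, and collapse the resulting expressions using $L_i^{\,j}=(L_i^{\,j})_++(L_i^{\,j})_-$. One small slip to fix: since $V_0^{-t_2}=\exp\big(-\sum_j t_{2,j}\Lambda^j\big)$, its $t_{2,j}$-derivative is $-\Lambda^j V_0^{-t_2}$, not $-(\Lambda^\top)^jV_0^{-t_2}$; after conjugating through $(\tilde S_2^t)^{-\top}$ this becomes $-(L_2^{\,j})^\top\tilde W_2^t$, and your recombination with $((L_2^{\,j})_+)^\top$ to obtain $-(B_{2,j})^\top$ then goes through exactly as you described.
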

\begin{proof}
It is deduced as follows
\begin{align*}
  \frac{\partial W_1^t}{\partial t_{1,j}}=&\frac{\partial S_1^t}{\partial t_{1,j}}V_0^{t_1}+ S_1^t\Lambda^jV_0^{t_1}
  =\Big(\frac{\partial S_1^t}{\partial t_{1,j}}(S_1^t)^{-1}+(L_1)^j\Big)W_1^{t_1}\\
  =&B_{1,j} W_1^t,\\
    \frac{\partial W_1^t}{\partial t_{2,j}}=&\frac{\partial S_1^t}{\partial t_{2,j}}V_0^{t_1}
    =\Big(\frac{\partial S_1^t}{\partial t_{2,j}}(S_1^t)^{-1}\Big)W_1^{t_1}\\
    =&B_{2,j} W_1^t,\\
\frac{\partial \tilde W_2^t}{\partial t_{1,j}}= & \frac{\partial\big(\tilde S_2^t \big)^{-\top}}{\partial t_{1,j}}V_{0}^{-t_2}
=-\big(\tilde S_2^t \big)^{-\top}\frac{\partial\big(\tilde S_2^t \big)^{\top}}{\partial t_{1,j}}\big(\tilde S_2^t \big)^{-\top}V_{0}^{-t_2}=-\Big(\frac{\partial\tilde S_2^t }{\partial t_{1,j}}\big(\tilde S_2^t \big)^{-1}\Big)^\top
\tilde W_2^t\\=&-(B_{1,j})^\top\tilde W_2^t,\\
\frac{\partial \tilde W_2^t}{\partial t_{2,j}}= & \frac{\partial\big(\tilde S_2^t \big)^{-\top}}{\partial t_{2,j}}V_{0}^{-t_2}-
\big(\tilde S_2^t \big)^{-\top}\Lambda^jV_{0}^{-t_2}
=-\big(\tilde S_2^t \big)^{-\top}
\frac{\partial\big(\tilde S_2^t \big)^{\top}}{\partial t_{2,j}}\big(\tilde S_2^t \big)^{-\top}V_{0}^{-t_2}-\big(\tilde S_2^t \big)^{-\top}\Lambda^jV_{0}^{-t_2}
\\&=-\Big(\frac{\partial\tilde S_2^t }{\partial t_{2,j}}\big(\tilde S_2^t \big)^{-1}+(L_2)^j
\Big)^\top\tilde W_2^t\\=&-(B_{2,j})^\top\tilde W_2^t.
\end{align*}
\end{proof}
The wave matrices satisfy
  $ I=W^t_1G(\tilde W^t_2)^\top$.
  	Notice that we can formally write $W_2^t:=(\tilde W_2^t)^{-\top}=  \tilde S_2^t  \big(V_{0}^{t_2}\big)^{-\top}$, which could happen  to  not exist as a product.
\begin{defi}\label{def:baker}
The  Baker functions are given by 
    	\begin{align*}
    	\Psi_1(t,z)&:= W^t_1\chi(z), &	\Psi^*_2(t,z)&:= \tilde W_2^t\chi(z),\\
    	 	\big(\Psi^*_1(t,z)\big)^\top&:= \big(\chi^*(z)\big)^\top G(\tilde W^t_2)^\top, &	\Psi_2(t,z)&:=  W^t_1G\chi^*(z),
    	 	\end{align*}
\end{defi}
    	 	We will promptly  show that the second pair of Baker functions do have a proper meaning as Cauchy transforms.

  \begin{rem}
    	 	Normally, in the literature on integrable systems they are  formally written as follows
    	 	    	 	\begin{align*}
    	 	    	 	 	\Psi_1(t,z)&:= W^t_1\chi(z), &	\Psi^*_2(t,z)&:=  (W_2^t)^{-\top}\chi(z),\\
    	 	    	 \Psi^*_1(t,z)&:= (W_1^t)^{-\top}\chi^*(z), &	\Psi_2(t,z)&:=  W_2^t\chi^*(z).
    	 	    	 	\end{align*}
  \end{rem}
  Extending from a Gram type definition to a Cauchy type, as we did with the second kind functions.
  \begin{pro}\label{pro:baker}
 The Baker functions and the biorthogonal polynomials are connected as follows
  \begin{align*}
      	\Psi_1(t,z)&= \Exp{t_1(z)}P^{[1],t}(z), \\	\Psi^*_2(t,z)&:= \Exp{-t_2(z)}(H^t)^{-\top}P^{[2],t}(z),\\
      	 	\big(\Psi^*_1(t,z)\big)^\top&:= \prodint{\frac{I_p}{z-x},\Exp{-t_2(y)}P^{[2],t}(y)}_u(H^t)^{-1}& z\not \in&\operatorname{supp}_x(u), \\	\Psi_2(t,z)&= \prodint{\Exp{t_1(x)}P^{[1],t}(x),\frac{I_p}{z-y}}_u,
      	 	& z\not \in&\operatorname{supp}_y(u),\\
      	 	\end{align*}
      	 	where $r_x$ and $r_y$ where introduced  in Definition  \ref{def:supports}.
  \end{pro}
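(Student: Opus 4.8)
The plan is to unwind each of the four identities directly from the definitions of the Baker functions (Definition \ref{def:baker}), the wave matrices (Definition \ref{def:def_wave}), and the Gauss--Borel factorization, reducing every statement to the already-established relations for the biorthogonal polynomials and second kind functions. For the first two identities, which are of \emph{Gram} type, the computation is essentially algebraic. From $\Psi_1(t,z)=W_1^t\chi(z)$ and $W_1^t=S_1^tV_0^{t_1}$, I would use the spectral property $\Lambda\chi(z)=z\chi(z)$ to observe that $V_0^{t_1}\chi(z)=\exp\big(\sum_{0<j\ll\infty}t_{1,j}\Lambda^j\big)\chi(z)=\Exp{t_1(z)}\chi(z)$, so that $\Psi_1(t,z)=\Exp{t_1(z)}S_1^t\chi(z)=\Exp{t_1(z)}P^{[1],t}(z)$ by \eqref{eq:bior}. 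The second identity follows the same pattern: from $\Psi_2^*(t,z)=\tilde W_2^t\chi(z)$ with $\tilde W_2^t=(H^t)^{-\top}S_2^tV_0^{-t_2}$, the same eigenvector computation gives $V_0^{-t_2}\chi(z)=\Exp{-t_2(z)}\chi(z)$, whence $\Psi_2^*(t,z)=\Exp{-t_2(z)}(H^t)^{-\top}S_2^t\chi(z)=\Exp{-t_2(z)}(H^t)^{-\top}P^{[2],t}(z)$, again by \eqref{eq:bior}.

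The last two identities are of \emph{Cauchy} type and require more care, since they involve the semi-infinite factor $\chi^*(z)$ and the Gram matrix $G$, and these products are exactly the non-admissible ones flagged throughout the paper. Here my strategy is to mimic the proof of Proposition \ref{pro:Cauchy1}. For $\Psi_2(t,z)=W_1^tG\chi^*(z)$, I would first write $G=\langle\chi(x),\chi(y)\rangle_{u}$ via \eqref{eq:M_chi}, insert $W_1^t=S_1^tV_0^{t_1}$, and push $V_0^{t_1}$ through the sesquilinear form using its eigenvector action on $\chi(x)$, producing $\Psi_2(t,z)=\langle\Exp{t_1(x)}S_1^t\chi(x),\chi(y)\rangle_u\chi^*(z)=\langle\Exp{t_1(x)}P^{[1],t}(x),\chi(y)\rangle_u\chi^*(z)$. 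The key analytic point, as in Proposition \ref{pro:Cauchy1}, is that for $z$ in the disk about infinity with $|z|>r_y$ the geometric series $(\chi^*(z))^\top\chi(y)=\tfrac{I_p}{z-y}$ converges uniformly on compacta, so the formal product collapses to the Cauchy pairing $\big\langle\Exp{t_1(x)}P^{[1],t}(x),\tfrac{I_p}{z-y}\big\rangle_u$, which is exactly the claimed formula for $z\not\in\operatorname{supp}_y(u)$. The argument for $(\Psi_1^*(t,z))^\top=(\chi^*(z))^\top G(\tilde W_2^t)^\top$ is the transpose-side mirror image: I would expand $(\tilde W_2^t)^\top=V_0^{-t_2}(S_2^t)^\top(H^t)^{-1}$, move $V_0^{-t_2}$ into the second slot of the form to produce $\Exp{-t_2(y)}P^{[2],t}(y)$, and then use the convergence of $(\chi^*(z))^\top\chi(x)=\tfrac{I_p}{z-x}$ for $|z|>r_x$ to obtain the stated Cauchy transform with the trailing $(H^t)^{-1}$.

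I expect the main obstacle to be not the algebra but the bookkeeping of associativity and convergence for the Cauchy-type identities. Each manipulation of the form $W_1^tG\chi^*(z)$ or $(\chi^*(z))^\top G(\tilde W_2^t)^\top$ involves a triple product of semi-infinite matrices, and I must invoke the admissibility and associativity machinery of Propositions \ref{pro:associativity} and \ref{pro:Gauss--Borel factorization and associativity} to justify regrouping the factors before the series are recognized as Cauchy kernels; in particular I must check that pushing the lower-triangular exponential $V_0^{t_1}$ (or $V_0^{-t_2}$) across $G$ yields only convergent sums, which is guaranteed precisely because $u\in\big((\mathcal O_c')_{x,y}\big)^{p\times p}$ and $z$ lies outside the relevant projected support. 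The two \emph{Gram}-type identities carry no such difficulty, as there $\chi(z)$ is an honest eigenvector of $\Lambda$ and all products are admissible. Finally I would remark that the second pair of Baker functions, which are only formal as products $(W_2^t)^{-\top}\chi^*(z)$ in the integrable-systems literature, acquire through this computation a genuine meaning as Cauchy transforms, exactly as anticipated in the remark preceding the statement.
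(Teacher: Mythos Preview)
Your proposal is correct and follows essentially the same route as the paper's own proof: the first two identities are immediate from the eigenvector action $V_0^{\pm t_i}\chi(z)=\Exp{\pm t_i(z)}\chi(z)$, and the Cauchy-type identities are obtained by writing $G=\langle\chi(x),\chi(y)\rangle_u$, pushing the wave-matrix factors through the sesquilinear form, and collapsing the geometric series $(\chi^*(z))^\top\chi(\cdot)$ to the Cauchy kernel. The only cosmetic difference is that the paper moves $W_1^t$ and $\tilde W_2^t$ into the form as whole blocks and then recognizes $W_1^t\chi(x)=\Psi_1(t,x)$ and $\tilde W_2^t\chi(y)=\Psi_2^*(t,y)$, whereas you first factor $W_1^t=S_1^tV_0^{t_1}$ and push the pieces separately; your more explicit discussion of the associativity and convergence bookkeeping is a welcome addition that the paper leaves implicit.
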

  \begin{proof}
The first two  relations follow immediately, while for the second two we proceed as follows
  \begin{align*}
	\big(\Psi^*_1(t,z)\big)^\top&:= \big(\chi^*(z)\big)^\top \prodint{\chi(x),\chi(y)}_u(\tilde W^t_2)^\top\\
	&=\prodint{\big(\chi^*(z)\big)^\top \chi(x),\tilde W^t_2\chi(y)}_u\\
	&=\prodint{\frac{I_p}{z-x},\Psi^*_2(t,z)}_u,     	 	& z\not \in&\operatorname{supp}_x(u),
	\\ 	\Psi_2(t,z)&:=  W^t_1\prodint{\chi(x),\chi(y)}_u\chi^*(z)\\
&=  \prodint{W^t_1\chi(x),\big(\chi^*(z)\big)^\top\chi(y)}_u\\
&=  \prodint{\Psi_1(x,t),\frac{I_p}{z-y}}_u,&      	 	z\not \in&\operatorname{supp}_y(u).
      	 	\end{align*}
  \end{proof}
  \begin{pro}[Zakharov--Shabat equations] 
  The Baker functions satisfy the linear systems
  \begin{align*}
    \frac{\partial \Psi_1}{\partial t_{1,j}}= &B_{1,j} \Psi_1,&
      \frac{\partial \Psi_1}{\partial t_{2,j}}
      =&B_{2,j} \Psi_1,&
  \frac{\partial \Psi_2^*}{\partial t_{1,j}}=&-(B_{1,j})^\top\Psi_2^*,&
  \frac{\partial  \Psi_2^*}{\partial t_{2,j}}=& -(B_{2,j})^\top\Psi_2^*,\\
     \frac{\partial \big(\Psi_1^*\big)^\top}{\partial t_{1,j}}= &-\big(\Psi_1^*\big)^\top B_{1,j} ,&
        \frac{\partial \big(\Psi_1^*\big)^\top}{\partial t_{2,j}}
        =&- \big(\Psi_1^*\big)^\top B_{2,j},&
    \frac{\partial \Psi_2}{\partial t_{1,j}}=&(B_{1,j})^\top\Psi_2,&
    \frac{\partial  \Psi_2}{\partial t_{2,j}}=& (B_{2,j})^\top\Psi_2.
  \end{align*}
  \end{pro}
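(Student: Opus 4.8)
The plan is to observe that each of the four Baker functions in Definition \ref{def:baker} is built from the wave matrices $W_1^t$ and $\tilde W_2^t$ by multiplication with objects that carry no dependence on the times $t=(t_1,t_2)$, namely the vectors $\chi(z)$, $\chi^*(z)$, and the Gram matrix $G$. Since $\partial/\partial t_{i,j}$ annihilates these fixed factors, every time derivative of a Baker function reduces to a time derivative of a wave matrix, and there Proposition \ref{pro:linear_systems} already records the evolution in terms of the Zakharov--Shabat matrices $B_{i,j}$ of Definition \ref{def:integrable}. So the whole argument is: plug the definitions in, pull $\partial/\partial t_{i,j}$ onto the wave matrices, and invoke Proposition \ref{pro:linear_systems} (and its transpose).

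For the two polynomial-type functions $\Psi_1=W_1^t\chi(z)$ and $\Psi_2^*=\tilde W_2^t\chi(z)$ this is immediate, since both products are genuine (finite in each row) and $\chi(z)$ is time-independent:
\begin{align*}
\frac{\partial \Psi_1}{\partial t_{i,j}}&=\frac{\partial W_1^t}{\partial t_{i,j}}\,\chi(z)=B_{i,j}\,W_1^t\chi(z)=B_{i,j}\Psi_1, &
\frac{\partial \Psi_2^*}{\partial t_{i,j}}&=\frac{\partial \tilde W_2^t}{\partial t_{i,j}}\,\chi(z)=-(B_{i,j})^\top\tilde W_2^t\chi(z)=-(B_{i,j})^\top\Psi_2^*,
\end{align*}
using the left-hand identities of Proposition \ref{pro:linear_systems} for $\Psi_1$ and the right-hand ones for $\Psi_2^*$. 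For the adjoint pair $(\Psi_1^*)^\top=(\chi^*(z))^\top G(\tilde W_2^t)^\top$ and $\Psi_2=W_1^t G\chi^*(z)$ I would first transpose the wave-matrix equations, turning $\partial_{t_{i,j}}\tilde W_2^t=-(B_{i,j})^\top\tilde W_2^t$ into $\partial_{t_{i,j}}(\tilde W_2^t)^\top=-(\tilde W_2^t)^\top B_{i,j}$, and then differentiate, passing the derivative through the fixed factors $(\chi^*(z))^\top G$ and $G\chi^*(z)$; the identity $I=W_1^tG(\tilde W_2^t)^\top$ noted after Definition \ref{def:baker} relates the two admissible ways of writing $\Psi_2$. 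The precise form of each right-hand side (the wave form $B_{i,j}$ versus the adjoint forms $-(B_{i,j})^\top$ and $-(\cdot)B_{i,j}$) is then just a matter of tracking transposition and the left/right order of multiplication.

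The main obstacle is not algebraic but analytic/structural: for $\Psi_2$ and $(\Psi_1^*)^\top$ the Gram-type products $W_1^tG$ and $G(\tilde W_2^t)^\top$ are non-admissible and involve series, so one cannot naively commute $\partial/\partial t_{i,j}$ past $G$ without justification. I would resolve this exactly as the second kind functions were handled earlier, by passing to the convergent Cauchy-transform representations of Proposition \ref{pro:baker}, e.g. $\Psi_2=\prodint{\Exp{t_1(x)}P^{[1],t}(x),\frac{I_p}{z-y}}_u$ for $z\notin\operatorname{supp}_y(u)$, where the pairing converges uniformly on compacta of the relevant disk about infinity; differentiation under the pairing is then legitimate, and the regrouping of the semi-infinite products is licensed by Propositions \ref{pro:associativity} and \ref{pro:Gauss--Borel factorization and associativity}. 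Once differentiation under the pairing is justified, the remaining steps are the routine transpose bookkeeping indicated above, and the four linear systems follow.
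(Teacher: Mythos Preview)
Your approach is exactly the paper's: differentiate the definitions of the Baker functions, use that $\chi(z)$, $\chi^*(z)$ and $G$ are time-independent so the derivative lands on the wave matrices, and then invoke Proposition \ref{pro:linear_systems} (and its transpose). The paper's proof is in fact terser than yours --- it simply writes the four derivatives with $\partial/\partial t_{i,j}$ pushed onto $W_1^t$ or $\tilde W_2^t$ and cites Proposition \ref{pro:linear_systems} --- so your extra paragraph on justifying differentiation through the non-admissible products via the Cauchy representations of Proposition \ref{pro:baker} is more careful than what the paper actually provides.
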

\begin{proof}
From Definition \ref{def:baker} we get
	\begin{align*}
    \frac{\partial	\Psi_1}{\partial t_{i,j}}&:=\frac{ \partial W^t_1}{\partial t_{i,j}}\chi(z), &	\frac{\partial \Psi^*_2}{\partial t_{i,j}}&:= \frac{\partial \tilde W_2^t}{\partial t_{i,j}}\chi(z),\\
    	 	\frac{\partial\big(\Psi^*_1(t,z)\big)^\top}{\partial t_{i,j}}&:= \big(\chi^*(z)\big)^\top G\Big(\frac{\partial \tilde W^t_2}{\partial t_{i,j}}\Big)^\top, &\frac{\partial	\Psi_2}{\partial t_{i,j}}&:=  \frac{\partial W^t_1}{\partial t_{i,j}}G\chi^*(z),
    	 	\end{align*}
    	 	and using Proposition \ref{pro:linear_systems} we obtain the stated result.
\end{proof}
For the first flows 
and the corresponding matrices $B_\eta:=B_{1,1}=(L_1)_+$ and $B_\zeta:=B_{2,1}=(L_2)_-$ we have  the following expressions
  \begin{align*}
  B_\eta&=
  \begin{bmatrix}
  b_0 & I_p &0_p& 0_p&0_p&\dots\\
  0_p & b_1 & I_p &0_p&0_p&\dots\\
  0_p &0_p &b_2& I_p &0_p&\\
  0_p&0_p &0_p &b_3& I_p &\ddots&\\
  \vdots&\vdots&\vdots&\,\ddots&&\ddots&
  \end{bmatrix}, &
  B_\zeta&=\begin{bmatrix}
  0_p & 0_p &0_p &0_p& \dots\\
  a_1& 0_p &0_p&0_p&\dots\\
  0_p & a_2& 0_p &0_p&\\
  0_p & 0_p & a_3& 0_p & \\
  \vdots &\vdots &\ddots &
  \end{bmatrix}.
  \end{align*}
  Then, we have the Zakharov--Shabat linear system
  \begin{align*}
  \frac{\partial\Psi_{1,k}}{\partial \zeta}&=a_k \Psi_{1,k-1},\\
  \frac{\partial\Psi_{1,k}}{\partial \eta }&=b_k\Psi_{1,k}+\Psi_{1,k+1},
  \end{align*}
  whose compatibility is \eqref{eq:2DToda_system}.
  
  The reader must be aware that despite the Baker functions  $\Psi_1(t,z)$ and $\Psi_2^*(t,z)$ are, as Proposition \ref{pro:baker} tell us,  essentially the biorthogonal polynomials $P^{[1],t}(z)$ and $P^{[2],t}(z)$, it is just not the  same for the other two Baker functions, $(\Psi^*)^\top(t,z)$ and $\Psi_2(z,t)$, as they are not the second kind functions, see \eqref{eq:second_kind}.
Indeed, these Baker functions are
   \begin{align*}
        	 \Psi_2(t,z)&= \prodint{P^{[1],t}(x)\Exp{t_1(x)},\frac{I_p}{z-y}}_u,     	 	& z\not \in&\operatorname{supp}_y(u),
        	         	\\	\big(\Psi^*_1(t,z)\big)^\top&:= \prodint{\frac{I_p}{z-x},\Exp{-t_2(y)}P^{[2],t}(y)}_u(H^t)^{-1},     	 	& z\not \in&\operatorname{supp}_x(u),
        	         	\end{align*}
while, according to Proposition \ref{pro:Cauchy1},	
for a matrix of generalized kernels $u^t_{x,y}\in\big((\mathcal O_c')_{x,y}\big)^{p\times p}$, 
   the second kind functions are
  \begin{align*}
  C_{n}^{[1],t}(z)&=\prodint{P^{[1],t}_n(x)\Exp{t_1(x)}, \frac{I_p\Exp{-t_2(y)}}{z-y}
  }_{u},      	 	& z\not \in&\operatorname{supp}_y(u),\\
   \big(C_{n}^{[2],t}(z)\big)^\top&=\left\langle \frac{I_p\Exp{t_1(x)}}{z-x},\Exp{-t_2(y)}P^{[2],t}_n(y)\right\rangle_{u}^t,     	 	& z\not \in&\operatorname{supp}_x(u).
  \end{align*}
  We see that only for particular situations they can be identified. For example,  whenever $t_2=0$ we find $C_{n}^{[1],t}(z)=\Psi_2(t,z)$. Notice that, as we immediately will explain, this corresponds to the noncommutative KP flows.
  This issue will reappear with the bilinear identities discussed below.
  
   \subsection{Noncommutative KP hierarchy}\label{S:KP}
   We will show how the noncommutative KP hierarchy appears within the non-Abelian 2D Toda lattce hierarchy.
For that aim we put all the times $t_{2,j}=0$ and consider only continuous deformations given by the times $t_{1,j}$, $j\in\{1,2,\dots\}$, and the first three times will be denoted by $\eta:=t_{1,1}$, $\rho:=t_{1,2}$ and $\theta:=t_{1,3}$, $U_k:=(S_1)_{k,k-1}$, $k\in\{1,2\dots\},$ 
will denote the blocks  on the first  subdiagonal of $S_1^t$, 
the corresponding matrix will be denoted by $U$.
   \begin{defi}\label{def:asymptotic-module}
   	Given two semi-infinite matrices $Z_1(t)$ and $Z_2(t)$ we say that
   	\begin{itemize}
   		\item  $Z_1(t)\in\mathfrak{l}V_0^{t_1}$ if $Z_1(t)\big(V_0^{t_1}\big)^{-1}$ is a block strictly lower triangular matrix.
   		\item  $Z_2(t)\in\mathfrak{u}$ if $Z_2(t)$ is a block upper triangular matrix.
   	\end{itemize}
   \end{defi}
   Then, we can state the following 
   \begin{pro}\label{pro:asymptotic-module}
   	Given two semi-infinite matrices $Z_1(t)$ and $Z_2(t)$ such that
   	\begin{itemize}
   		\item  $Z_1(t)\in\mathfrak{l}V^{t_1}_0$,
   		\item  $Z_2(t)\in\mathfrak{u}$,
   		\item $Z_1(t)G=Z_2(t)$,
   	\end{itemize}
   	then
  $	Z_1(t)= Z_2(t)=0$.
   \end{pro}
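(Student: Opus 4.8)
The plan is to reduce the statement to the elementary fact that a block semi-infinite matrix which is \emph{simultaneously} block strictly lower triangular and block upper triangular must vanish. Recall that in this KP reduction all $t_{2,j}=0$, so $V_0^{t_2}=I$ and the deformed Gram matrix is $G^t=V_0^{t_1}G$, with Gauss--Borel factorization \eqref{eq:gbt} and wave matrix $W_1^t=S_1^tV_0^{t_1}$ as in \eqref{eq:def_wave}. Writing $L:=Z_1\big(V_0^{t_1}\big)^{-1}$ for the block strictly lower triangular matrix supplied by the hypothesis $Z_1\in\mathfrak l V_0^{t_1}$ (Definition \ref{def:asymptotic-module}), the first step is to express $Z_1$ through the wave matrix: inserting $I=(S_1^t)^{-1}S_1^t$ gives $Z_1=L(S_1^t)^{-1}W_1^t=MW_1^t$, where $M:=L(S_1^t)^{-1}$. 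Since $L$ is block strictly lower triangular and $(S_1^t)^{-1}$ is lower unitriangular, $M$ is again block strictly lower triangular, the product being unproblematic because products of lower triangular semi-infinite matrices are associative.

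Next I would compute $Z_2$ in two ways. On the one hand $Z_2=Z_1G$ by hypothesis. On the other hand, starting from the wave-matrix identity $I=W_1^tG\,(\tilde W_2^t)^\top$ and multiplying on the right by the upper triangular matrix $(\tilde W_2^t)^{-\top}=\hat{}$ (here $(\tilde W_2^t)^\top=(\tilde S_2^t)^{-1}=(S_2^t)^\top (H^t)^{-1}$ is upper triangular, so this reassociation is covered by Proposition \ref{pro:associativity}), one obtains $W_1^tG=H^t(S_2^t)^{-\top}$. Hence $Z_2=MW_1^tG=MH^t(S_2^t)^{-\top}$. As $H^t$ is block diagonal and nonsingular and $(S_2^t)^{-\top}$ is upper unitriangular, the factor $H^t(S_2^t)^{-\top}$ is upper triangular and invertible, with inverse $(S_2^t)^\top (H^t)^{-1}$; right multiplication yields $M=Z_2(S_2^t)^\top (H^t)^{-1}$. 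The right-hand side is a product of block upper triangular matrices, since $Z_2\in\mathfrak u$, hence is block upper triangular. Thus $M$ is at the same time block strictly lower triangular and block upper triangular, so $M=0$. Then $Z_1=MW_1^t=0$ and $Z_2=MH^t(S_2^t)^{-\top}=0$, which is the assertion.

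The main obstacle is precisely the one flagged throughout the paper: the reassociations of semi-infinite products used above, chiefly $(MW_1^t)G=M(W_1^tG)$ together with the regroupings around the full matrix $G$, are not automatic. The routine pieces, namely $L(S_1^t)^{-1}$ and the right multiplications by the upper triangular pair $H^t(S_2^t)^{-\top}$ and $(S_2^t)^\top (H^t)^{-1}$, fall directly under Proposition \ref{pro:associativity}. The genuinely delicate regroupings around $G$ would be discharged by invoking the convergence hypothesis that $Z_1G=Z_2$ is a well-defined product, combined with Proposition \ref{pro:Gauss--Borel factorization and associativity} applied columnwise, each block column of $G$ being treated as a semi-infinite vector, exactly in the spirit of the Sato--Wilson computation in Proposition \ref{pro:evolution S}. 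Where these do not apply verbatim, I would fall back on an entrywise check: the strictly lower, respectively upper unitriangular, band structure of $L$, $S_i^t$ and $V_0^{t_1}$ forces all the series entering each block entry to be finite sums, so the reorderings are legitimate and the two triangularity conclusions for $M$ genuinely hold.
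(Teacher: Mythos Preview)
Your proof is correct and follows essentially the same route as the paper's. Both arguments produce the matrix $M:=Z_1(V_0^{t_1})^{-1}(S_1^t)^{-1}$, show it equals $Z_2(\tilde S_2^t)^{-1}$ via the Gauss--Borel factorization of $G^t=V_0^{t_1}G$, and conclude from its being simultaneously block strictly lower triangular and block upper triangular that it vanishes; the paper simply writes this as a one-line identity, while you unpack it through the wave matrix $W_1^t$ and add the careful associativity bookkeeping.
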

   \begin{proof}
   	Observe that
      	\begin{align*}
   	Z_1(t)\big(V^{t_1}_0\big)^{-1}\big(S_1(t)\big)^{-1}=Z_2(t)\big(\tilde S _2(t)\big)^{-1},
   	\end{align*}
   	and, as in the LHS we have a strictly lower triangular block semi-infinite matrix while in the RHS we have an upper triangular block semi-infinite  matrix, both sides must vanish and the result follows.
   \end{proof}
  
   \begin{defi}
   	When $A-B\in\mathfrak{l}V^{t_1}_0$  we write $A=B+\mathfrak{l}V^{t_1}_0$ and if $A-B\in\mathfrak{u}$ we write $A=B+\mathfrak{u}$.
   \end{defi}
   
     \begin{pro}[Second and third order linear ODE]
     	Among others the  Baker function $\Psi_1$ satisfies the following linear differential equations
     	\begin{align}\label{eq: linear.wave}
     	\frac{\partial (\Psi_1)_k}{\partial \rho}&=\frac{\partial^2(\Psi_1)_k}{\partial \eta^2}- 2\frac{\partial U_k}{\partial\eta}(\Psi_1)_k,\\
     	  	\frac{\partial(\Psi_1)_k}{\partial \theta}&=\frac{\partial^3(\Psi_1)_k}{\partial \eta^3}
     	  	-3\frac{\partial U_k}{\partial\eta}\frac{\partial(\Psi_1)_k}{\partial \eta} 
     	-\frac{3}{2}\Big(\frac{\partial^2 U_k}{\partial\eta^2}+ \frac{\partial U_k}{\partial\rho}\Big)(\Psi_1)_k,
     	  	\end{align}
     \end{pro}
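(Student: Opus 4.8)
The plan is to derive both equations from the linear (Zakharov--Shabat) system for the Baker function, combined with the explicit band structure of the Lax matrix $L_1$. In the KP reduction all $t_{2,j}=0$, so $\Psi_1=S_1^tV_0^{t_1}\chi(z)$ and, since $V_0^{t_1}=\Exp{\sum_j t_{1,j}\Lambda^j}$ commutes with $\Lambda$ and $\Lambda\chi(z)=z\chi(z)$, the spectral property $L_1\Psi_1=z\Psi_1$ holds. From the linear system for the Baker functions one has $\partial_{t_{1,j}}\Psi_1=B_{1,j}\Psi_1$ with $B_{1,j}=((L_1)^j)_+$; writing $\eta=t_{1,1}$, $\rho=t_{1,2}$, $\theta=t_{1,3}$, the first flow is governed by $B_{1,1}=B_\eta=\Lambda+\diag(b_k)$, $b_k=U_k-U_{k+1}$, so that $\partial_\eta\Psi_1=B_\eta\Psi_1$. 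Differentiating this identity in $\eta$ and re-using it yields the purely $\eta$-differential reductions
\begin{align*}
\partial_\eta^2\Psi_1&=\big(B_\eta^2+\partial_\eta B_\eta\big)\Psi_1,\\
\partial_\eta^3\Psi_1&=\big(B_\eta^3+2(\partial_\eta B_\eta)B_\eta+B_\eta(\partial_\eta B_\eta)+\partial_\eta^2 B_\eta\big)\Psi_1,
\end{align*}
where $\partial_\eta B_\eta=\diag(\partial_\eta b_k)$.

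Thus the two stated equations will follow once the semi-infinite matrix identities
\begin{align*}
B_{1,2}&=B_\eta^2+\partial_\eta B_\eta-2\diag(\partial_\eta U_k),\\
B_{1,3}&=B_\eta^3+2(\partial_\eta B_\eta)B_\eta+B_\eta(\partial_\eta B_\eta)+\partial_\eta^2 B_\eta-3\diag(\partial_\eta U_k)B_\eta-\tfrac32\diag(\partial_\eta^2U_k+\partial_\rho U_k)
\end{align*}
are established, because applying them to $\Psi_1$ and using $\partial_\rho\Psi_1=B_{1,2}\Psi_1$, $\partial_\theta\Psi_1=B_{1,3}\Psi_1$, $\partial_\eta\Psi_1=B_\eta\Psi_1$ together with the reductions above gives exactly the claimed relations (the term $-3\diag(\partial_\eta U_k)B_\eta\Psi_1$ reproducing $-3(\partial_\eta U_k)\partial_\eta(\Psi_1)_k$). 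So the whole matter reduces to a band-by-band comparison of $(L_1^2)_+$ and $(L_1^3)_+$ against the corresponding powers of $B_\eta$.

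The computational heart is the band structure of $L_1=S_1\Lambda S_1^{-1}$, a lower Hessenberg matrix with unit superdiagonal and diagonal $b_k$. The crucial input is its first subdiagonal, which I would obtain from the Sato--Wilson equation of Proposition \ref{pro:evolution S}: from $\partial_\eta S_1\,(S_1)^{-1}=-(L_1)_-$ and $(S_1)_{k,k-1}=U_k$ one reads off $(L_1)_{k,k-1}=-\partial_\eta U_k$. With this the second-order identity is immediate: $(L_1^2)_+$ has second superdiagonal $I_p$, first superdiagonal $b_k+b_{k+1}$ and diagonal $(L_1)_{k,k-1}+b_k^2+(L_1)_{k+1,k}=b_k^2-\partial_\eta U_k-\partial_\eta U_{k+1}$, while $B_\eta^2+\partial_\eta B_\eta$ has the same super-diagonals and diagonal $b_k^2+\partial_\eta b_k=b_k^2+\partial_\eta U_k-\partial_\eta U_{k+1}$; subtracting $2\diag(\partial_\eta U_k)$ matches the two and proves the first equation.

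For the third-order identity the same scheme applies, but the bookkeeping is heavier: one must control the diagonal together with the first and second subdiagonals of $L_1^3$, which in turn requires the second subdiagonal of $L_1$ and of $S_1$. The potential derivative $\partial_\rho U_k$ is produced exactly as $\partial_\eta U_k$ was, now from the second Sato--Wilson equation $\partial_\rho S_1\,(S_1)^{-1}=-(L_1^2)_-$, which expresses the relevant lower entries of $L_1^2$ through $\partial_\rho U_k$; combined with the $\partial_\eta^2 U_k$ coming from differentiating $(L_1)_{k,k-1}=-\partial_\eta U_k$, this is what generates the symmetric combination $\tfrac32(\partial_\eta^2 U_k+\partial_\rho U_k)$. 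I expect this diagonal matching in the cubic case to be the main obstacle, the delicate point being to show that all contributions involving second subdiagonals of $S_1$ (which are not themselves expressible through the $U_k$ alone) cancel, leaving only the derivatives of $U_k$ displayed in the statement; the Zakharov--Shabat equations of Proposition \ref{pro:integrable} supply the consistency needed to organize these cancellations.
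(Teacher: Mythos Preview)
Your argument for the second-order equation is correct, but the route is genuinely different from the paper's. You work entirely through the linear system $\partial_{t_{1,j}}\Psi_1=B_{1,j}\Psi_1$ and reduce the claim to an explicit band-by-band identity between $(L_1^2)_+$ and $B_\eta^2+\partial_\eta B_\eta$, using Sato--Wilson to identify the first subdiagonal $(L_1)_{k,k-1}=-\partial_\eta U_k$. The paper instead bypasses any computation of $(L_1^j)_+$: it applies the differential operator $\partial_\rho-\partial_\eta^2+2(\partial_\eta U)\Lambda$ directly to the wave matrix $W_1=S_1V_0^{t_1}$ and to $\tilde S_2$, checks by elementary triangularity of $S_1$ that the results land respectively in $\mathfrak{l}V_0^{t_1}$ and in $\mathfrak{u}$, and then invokes the factorization-uniqueness Proposition~\ref{pro:asymptotic-module} (together with $W_1G=\tilde S_2$, $G$ time-independent) to conclude both vanish.

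What each approach buys: the paper's method never needs the subdiagonals of $L_1$; only the lower-unitriangular shape of $S_1$ and its first subdiagonal $U$ enter, so the argument is short and the third-order case follows by the same template without confronting $(L_1)_{k,k-2}$ at all. Your approach is more explicit and makes the Zakharov--Shabat matrices $B_{1,2},B_{1,3}$ visible, which is informative, but as you correctly anticipate the cubic case drags in the second subdiagonal of $S_1$ and forces you to organize nontrivial cancellations via the $\rho$-flow Sato--Wilson equation. (Incidentally, the spectral relation $L_1\Psi_1=z\Psi_1$ that you mention is not used anywhere in your argument; you may drop it.) Both the paper and you stop short of writing out the third-order computation in full, so the level of completeness is comparable.
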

     \begin{proof}
     We only check the first relation, the second despite being  more complicated it follows from the same principles.
     	In the one hand,
     	\begin{align*}
    	\frac{\partial W_1}{\partial \rho}&=\Big(\frac{\partial S_1}{\partial\rho}+S_1\Lambda^2\Big)V^{t_1}_0\\
    \frac{\partial^2 W_1}{\partial \eta^2}&=
    	\Big(  \frac{\partial^2 S_1}{\partial \eta^2}+2\frac{\partial S_1}{\partial \eta}\Lambda
     	+S_1\Lambda^2\Big)V^{t_1}_0
     	\end{align*}
     so that
     	\begin{align*}
    \bigg( \frac{\partial }{\partial \rho}-	 \frac{\partial^2 }{\partial \eta^2}\bigg)(W_1)&=-2\bigg(
    \frac{\partial U}{\partial \eta }\Lambda\bigg)V^{t_1}_0+\mathfrak lV^{t_1}_0
     	\end{align*}
     	and, consequently,
     	   	\begin{align*}
     	   	Z_1:=\bigg( \frac{\partial }{\partial \rho}-	 \frac{\partial^2 }{\partial \eta^2}
     	   	+2 \frac{\partial U}{\partial \eta }\Lambda\bigg)(W_1)&\in\mathfrak lV^{t_1}_0.
     	   	\end{align*}
  
     	On the other hand,
        	\begin{align*}
       Z_2:= \frac{\partial \tilde S_2}{\partial \rho}-
    \frac{\partial^2 \tilde S_2}{\partial \eta^2}+2 \frac{\partial U}{\partial \eta }\Lambda\tilde S_2\in\mathfrak u.
        	\end{align*}
     	Now, we apply Proposition \ref{pro:asymptotic-module} 
     	to get the result.
     \end{proof}
     
    \begin{rem}[Noncommutative KP equation]
     The compatibility of both equations leads to 
     \begin{align}\label{eq:nckp}
    \frac{\partial}{\partial\eta}\Big( 4\frac{\partial U_k}{\partial\theta}+6\Big(\frac{\partial U_k}{\partial \eta}\Big)^2-\frac{\partial U_k}{\partial \eta^3}\Big)-\frac{\partial^2 U_k}{\partial\rho^2}+6\bigg[\frac{\partial U_k}{\partial\eta},\frac{\partial U_k}{\partial\rho}\bigg]=0.
     \end{align}
    \end{rem}
This matrix noncommutative KP equations has been considered for the first time in \cite{Athorne} and its relation with the matrix heat hierarchy, the  Hopf--Cole transformation and the construction of solutions was discussed in \cite{Guil}. In \cite{Kupershmidt} we can find an excellent treatment on the subject, see also \cite{Wang}. Recently,  in \cite{Gilson}  a discussion of solutions derived from the Darboux transformation technique is given, for further studies on its  solutions see \cite{Schiebold,Tacchella}. Notice, that the standard notation in the literature for the independent variables is $x,y$ and $t$,  as we have already used $x,y$ we decided to introduce the alternative notation $\eta,\rho$ and $\theta$ for the independent variables.
\begin{rem}
Notice that the linear systems for the Baker function as well as the noncommutative KP equation involve a single position in the lattice as the equations are for $U_k$, with $k$ fixed.
\end{rem}
\subsection{Transformation theory}
We now proceed to the study of how the previous Geronimus--Uvarov transformations (we recover  Geronimus  for $W_C(x)=I_p$)
affect the solutions of the non-Abelian 2D Toda lattice and the noncommutative KP hierarchies. That is, the interplay between the continuous Toda type perturbations and the discrete rational deformations dictated by the Geronimus--Uvarov transformations. It is important to notice that
$[\Lambda, V_0^t]=0$, to realize that both transformations, continuous flows of Toda type and Geronimus--Uvarov transformations,  commute. That is, given a Geronimus--Uvarov transformation
\begin{align*}
\hat u_{x,y} W_G(y) &= W_C(x) u_{x,y}, & \hat G W_G(\Lambda^\top)&=W_C(\Lambda) G,
\end{align*}
we will have the corresponding $t$-evolved  equations
\begin{align*}
\hat u^t_{x,y} W_G(y) &= W_C(x) u^t_{x,y}, & \hat G^t W_G(\Lambda^\top)&=W_C(\Lambda) G^t,
\end{align*}
where
\begin{align*}
\hat u^t_{x,y} &=\Exp{t_1(x)-t_2(y)} \hat u_{x,y}, &
\hat G^t &=  V_0^{t_1} \hat G \big(V_0^{t_2} \big)^{-\top}.
\end{align*}
Therefore, all previous results hold true. For example, Theorem \ref{teo:SCGU} gives , when $n\geq N_G$, with Hankel spectral massses as described in \eqref{uva}, the following Christoffel--Geronimus--Uvarov type formulas,

	\begin{align*}
	\hat P^{[1],t}_{n}(x)&=
	\Theta_*
	\begin{bmatrix}
	\boldsymbol{\mathcal J}_{C,P^{[1],t}_{n-N_G}} &	\boldsymbol{\mathcal J}_{G,C^{[1],t}_{n-N_G}}-
	\prodint{  P^{[1],t}_{n-N_G} (x),(\xi)_x}\mathcal W_G&P_{n-N_G}^{[1],t}(x)\\ 	\vdots & \vdots &\vdots \\ \boldsymbol{\mathcal J}_{C,P^{[1]}_{n+N_C}}&\boldsymbol{\mathcal J}_{G, C^{[1],t}_{n+N_C}}-\prodint{  P^{[1],t}_{n+N_C} (x),(\xi)_x}\mathcal W_G& P^{[1],t}_{n+N_C}(x)
	\end{bmatrix},\end{align*}
	\begin{align*}
	\hat H^t_{n}&=\Theta_*
	\begin{bmatrix}
	\boldsymbol{\mathcal J}_{C,P^{[1],t}_{n-N_G}} &	\boldsymbol{\mathcal J}_{G,C^{[1],t}_{n-N_G}}-
	\prodint{  P^{[1],t}_{n-N_G} (x),(\xi)_x}\mathcal W_G&H^t_{n-N_G}\\
	\boldsymbol{\mathcal J}_{C,P^{[1],t}_{n-N_G+1}} &	\boldsymbol{\mathcal J}_{G,C^{[1],t}_{n-N_G+1}}-\prodint{  P^{[1],t}_{n-N_G+1} (x),(\xi)_x}\mathcal W_G&0_p\\ 	\vdots & \vdots &\vdots \\ \boldsymbol{\mathcal J}_{C,P^{[1],t}_{n+N_C}}&\boldsymbol{\mathcal J}_{G, C^{[1],t}_{n+N_C}}-\prodint{  P^{[1],t}_{n+N_C} (x),(\xi)_x}\mathcal W_G & 0_p
	\end{bmatrix},\\
	\big(	\hat  P _{n}^{[2],t}(y)\big)^\top&=-\Theta_*\begin{bmatrix} 	\boldsymbol{\mathcal J}_{ C, P^{[1],t}_{n-N_G}}&
	\boldsymbol{\mathcal J}_{ G, C^{[1],t}_{n-N_G}}	-\prodint{  P^{[1],t}_{n-N_G} (x),(\xi)_x}\mathcal W_G& H^t_{n-N_G}\\
	\boldsymbol{\mathcal J}_{ C, P^{[1],t}_{n-N_G+1}}&
	\boldsymbol{\mathcal J}_{ G, C^{[1],t}_{n-N_G+1}}	-\prodint{  P^{[1],t}_{n-N_G+1} (x),(\xi)_x}\mathcal W_G& 0_p\\ 	\vdots & \vdots\\\boldsymbol{\mathcal J}_{C, P^{[1],t}_{n+N_C-1}}&	\boldsymbol{\mathcal J}_{G, C^{[1],t}_{n+N_C-1}}-\prodint{  P^{[1],t}_{n+N_C-1} (x),(\xi)_x}\mathcal W_G&0_p\\
	W_G(y) \boldsymbol{\mathcal J}_{C, K^t_{n-1}}(y) &  W_G(y)\big(	\boldsymbol{\mathcal J}_{ G,K^{(pc),t}_{n-1}}(y)-
	\prodint{  K^t_{n-1}(x,y),(\xi)_x}\mathcal W_G
	\big)+\boldsymbol{\mathcal J}_{G, \mathcal V}(y) &0_p
	\end{bmatrix}.
	\end{align*}

In particular, $\hat H_n^t$ is a new solution of the non-Abelian 2D Toda lattice equation \eqref{eq:2DToda} constructed in terms of the original  solution $H^t_n$ and the time dependent   spectral jets,  corresponding to both perturbing matrix polynomials, of $P^{[1],t}(x)$ and $C^{[1],t}(x)$.

Uvarov transformations can be applied similarly. In particular,  Theorem \ref{teo:uvarov} yields, whenever
 $\big(	\operatorname{Ker}^R_\beta\big)^{\perp^{u^t}_R}=\big(	\operatorname{Ker}^L_\beta\big)^{\perp^{u^t}_L}=\{0_p\}$, the perturbed matrix orthogonal polynomials and quasitau matrices
	\begin{align*}
	\hat P^{[1],t}_n(x)&=\Theta_*\begin{bmatrix}
	I_{Np}+\prodint{\mathcal  J^{[0,1]}_{K^t_{n-1}}(x),(\beta)_x}&\mathcal  J^{[0,1]}_{K^t_{n-1}}(x)\\
	\prodint{ P_n^{[1],t}(x),(\beta)_x}& P_n^{[1],t}(x)
	\end{bmatrix}, \\
	(\hat P^{[2],t}_n(y))^\top&=\Theta_*\begin{bmatrix}
		I_{Np}+\prodint{\mathcal  J^{[0,1]}_{K^t_{n-1}}(x),(\beta)_x}&\big(\mathcal J_{ P^{[2],t}_{n}}\big)^\top\\
		\prodint{K^t_{n-1}(x,z),(\beta)_x}	& (P_n^{[2],t}(y ))^\top
			\end{bmatrix},\\
	\hat H_n^t&=\Theta_*\begin{bmatrix}
	I_{Np}+\prodint{\mathcal  J^{[0,1]}_{K^t_{n-1}}(x),(\beta)_x}&- \beta(\mathcal J_{P^{[2],t}_n}  )^\top\\
	\prodint{ P_n^{[1],t}(x),(\beta)_x}& H^t_n
	\end{bmatrix}.
	\end{align*}
	
	\subsubsection{Application to the noncommutative KP hierarchy}
	We have seen is \S\ref{S:KP} how the noncommutative KP flows appear once we put  $t_{2}(y)=0$. 
	For the noncommutative KP flows the first subdiagonal coefficients $U_k$ of $S_1$ play a key role, as the hierarchy and the equations, see the noncommutative KP equation \eqref{eq:nckp}, are expressed in terms of them.  Let us recall that the
$U_k$'s is the coefficient multiplying the  term  $x^{n-1}$ in the polynomial $P_k^{[1],t}(z)$. Thus, it is of interest to have quasideterminantal expressions for the transformations of the $U$. From 	\eqref{eq:conexion}
we get
\begin{align}\label{eq:conexion2}
	\hat U_n = U_{n+N_C}-A_{C,N_C-1}+\omega_{n,n+N_C-1}. 
	\end{align}
	Now, we recall Proposition \ref{pro:resolventCPLST}, so that
		for   $n\geq N_G$, we have
\begin{multline*}
		\omega_{n,n+N_C-1}
			=\\
			-\begin{bmatrix} \boldsymbol{\mathcal J}_{C,P^{[1]}_{n+N_C}},
			\boldsymbol{\mathcal J}_{G,C^{[1]}_{n+N_C}}-		\prodint{  P^{[1]}_{n+N_C} (x),(\xi)_x} \mathcal W_G
			\end{bmatrix}\begin{bmatrix}
			\boldsymbol{\mathcal J}_{C,P^{[1]}_{n-N_G}} &	
			\boldsymbol{\mathcal J}_{G,C^{[1]}_{n-N_G}}-
			\prodint{  P^{[1]}_{n-N_G} (x),(\xi)_x} \mathcal W_G
			\\ 	\vdots & \vdots \\ \boldsymbol{\mathcal J}_{C,P^{[1]}_{n+N_C-1}}&\boldsymbol{\mathcal J}_{G, C^{[1]}_{n+N_C-1}}-	 \prodint{  P^{[1]}_{n+N_C-1} (x),(\xi)_x} \mathcal W_G		\end{bmatrix}^{-1}
			\begin{bmatrix}
			0_p\\\vdots\\0_p\\I_p
			\end{bmatrix}.
			\end{multline*}
	and we find the quasideterminantal expression
	\begin{align*}
	\hat U_n^t =\Theta_*\begin{bmatrix}
				\boldsymbol{\mathcal J}_{C,P^{[1],t}_{n-N_G}} &	
				\boldsymbol{\mathcal J}_{G,C^{[1],t}_{n-N_G}}-
				\prodint{  P^{[1],t}_{n-N_G} (x),(\xi)_x} \mathcal W_G &0_p
				\\ 	\vdots & \vdots \\ 	\boldsymbol{\mathcal J}_{C,P^{[1],t}_{n+N_C-2}}&\boldsymbol{\mathcal J}_{G, C^{[1],t}_{n+N_C-2}}-	 \prodint{  P^{[1],t}_{n+N_C-2} (x),(\xi)_x} \mathcal W_G& 0_p	\\
				\boldsymbol{\mathcal J}_{C,P^{[1],t}_{n+N_C-2}}&\boldsymbol{\mathcal J}_{G, C^{[1],t}_{n+N_C-1}}-	 \prodint{  P^{[1],t}_{n+N_C-1} (x),(\xi)_x} \mathcal W_G	&I_p	\\
				 \boldsymbol{\mathcal J}_{C,P^{[1],t}_{n+N_C}}&\boldsymbol{\mathcal J}_{G, C^{[1],t}_{n+N_C}}-	 \prodint{  P^{[1],t}_{n+N_C} (x),(\xi)_x} \mathcal W_G &U^t_n-A_{C,N_C-1}		\end{bmatrix}.
	\end{align*}
	
	For the Uvarov transformations we can also find quasideterminantal expressions for the transformed $U$'s. Just looking at the corresponding power in $x^{n-1}$, from 
	Theorem \ref{teo:uvarov}, whenever
	 $\big(	\operatorname{Ker}^R_\beta\big)^{\perp^{u^t}_R}=\big(	\operatorname{Ker}^L_\beta\big)^{\perp^{u^t}_L}=\{0_p\}$, we get
		\begin{align*}
		\hat U_n^t&=\Theta_*\begin{bmatrix}
		I_{Np}+\prodint{\mathcal  J^{[0,1]}_{K^t_{n-1}}(x),(\beta)_x}&\big(\mathcal  J_{P^{[2],t}_{n-1}}\big)^\top (H_{n-1}^t)^{-1}\\
		\prodint{ P_n^{[1],t}(x),(\beta)_x}& U^t_n
		\end{bmatrix}.
		\end{align*}
		
\subsection{Christoffel and Geronimus transformations, Miwa shifts, $\tau$-type matrix functions and Sato formulas}

We consider some interesting expressions of the biorthogonal polynomials and its second kind functions in terms of the matrix norms and its Christoffel and Geronimus transformations for monic matrix polynomials of degree one.
\begin{rem}We will use the following ordered  products 
$\prod\limits_{m=0}^{\stackrel{\curvearrowleft}{n} }A_m:=A_n A_{n-1}\cdots A_0$ and
$\prod\limits_{m=0}^{\stackrel{\curvearrowright}{n} }A_m:=A_0 A_{1}\cdots A_n$,
 for its use in integrable systems theory see, for example, \cite{Faddeev}.
\end{rem}

\begin{pro}\label{pro:deg1christoffelH}
For a Christoffel transformations with $W_C(x)=I_px-A$ we have 
\begin{align*}
P^{[1]}_{n+1}(A)&=(-1)^{n+1}\prod_{m=0}^{\stackrel{\curvearrowleft}{n} }\hat H_m (H_m)^{-1},&
\hat C^{[2]}_{n}(A^\top)&=(-1)^{n-1}( H_{n} )^{\top}\prod_{m=0}^{\stackrel{\mbox{\larger$\curvearrowleft$}}{n-1} }
(\hat  H_{m} )^{-\top}(H_{m})^\top.
\end{align*}
\end{pro}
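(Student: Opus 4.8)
�The plan is to recognize the Christoffel transformation $W_C(x)=I_px-A$ as a degree-one perturbation $\hat u_{x,y}=W_C(x)u_{x,y}$, so the relevant resolvent is the Christoffel resolvent $\omega_C=\hat S_1 W_C(\Lambda)(S_1)^{-1}$, which is upper unitriangular with only the main diagonal and first superdiagonal nonzero. The central tool is the connection formula of the form $\hat P^{[1]}(x)W_C(x)=\omega_C P^{[1]}(x)$, specialized to degree one. First I would extract from this the entrywise relation $\hat P^{[1]}_n(x)(xI_p-A)=P^{[1]}_{n+1}(x)+(\omega_C)_{n,n}P^{[1]}_n(x)$ together with the leading-coefficient identity that fixes $(\omega_C)_{n,n+1}=\hat H_n(H_n)^{-1}$, analogous to the Geronimus relation \eqref{eq:omegaA}. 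The key evaluation trick, just as in \S\ref{S:degreeone}, is that for a Jordan pair $(X,J)$ with $A=XJX^{-1}$, right-evaluation at $A$ kills the factor $(xI_p-A)$: since $\mathcal Q_n=XJ^n$ by Lemma \ref{lemma:pair}, the root spectral jet of $P(x)(xI_p-A)$ evaluated via the Jordan pair vanishes, giving $\big(P^{[1]}_n(x)(xI_p-A)\big)(A)=0$ upon right-evaluation.

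The main computation is then a telescoping recursion. Evaluating the connection formula at $x=A$ (right-evaluation) makes the left-hand side vanish, yielding
\begin{align*}
0=P^{[1]}_{n+1}(A)+\hat H_n(H_n)^{-1}P^{[1]}_n(A),
\end{align*}
so $P^{[1]}_{n+1}(A)=-\hat H_n(H_n)^{-1}P^{[1]}_n(A)$. Iterating this from $P^{[1]}_0(A)=I_p$ produces the ordered product
\begin{align*}
P^{[1]}_{n+1}(A)=(-1)^{n+1}\hat H_n(H_n)^{-1}\hat H_{n-1}(H_{n-1})^{-1}\cdots\hat H_0(H_0)^{-1},
\end{align*}
which is exactly $(-1)^{n+1}\prod_{m=0}^{\stackrel{\curvearrowleft}{n}}\hat H_m(H_m)^{-1}$ in the ordered-product notation introduced in the preceding Remark. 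I would verify the ordering direction carefully, since the recursion multiplies new factors on the left, matching the left-pointing arrow convention.

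For the second identity I would run the dual argument using the second family and its second kind functions. The Christoffel analogue of \eqref{eq:conexionC2} relates $\hat C^{[2]}$ to $C^{[2]}$ through $\hat H^{-1}\omega_C$, and transposing gives a three-term relation for $\big(\hat C^{[2]}_n(z)\big)^\top$ that, upon right-evaluation at $z=A^\top$ (equivalently, using the left Jordan structure of $W_C(x)^\top$ whose eigenvalues coincide with those of $A$), telescopes analogously. The shift in index and the appearance of transposes and of $H_n^\top$ as a prefactor come from the $\hat H^{-1}\omega_C H$ conjugation structure in the second-kind connection formula; I would track how the diagonal blocks $\hat H_m^{-\top}(H_m)^\top$ arise and confirm the starting index $n-1$ and the normalizing factor $(-1)^{n-1}(H_n)^\top$.

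The main obstacle I anticipate is bookkeeping rather than conceptual: getting the precise index ranges, the sign $(-1)^{n+1}$ versus $(-1)^{n-1}$, and above all the left-versus-right ordering of the noncommutative products correct, since the matrix factors $\hat H_m(H_m)^{-1}$ do not commute. A secondary subtlety is justifying right-evaluation at $A$ (and at $A^\top$) rigorously through the Jordan-pair machinery of Lemma \ref{lemma:pair}, ensuring that $\big(P(x)(xI_p-A)\big)(A)=0$ genuinely holds as a right-evaluation and that the second-kind functions, which live outside $\sigma(A)$, admit the analytic continuation needed to evaluate their jets at the spectrum of $A$ as was done in \S\ref{S:degreeone}.
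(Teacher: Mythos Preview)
Your approach is essentially the same as the paper's: evaluate the degree-one connection formula $\hat P^{[1]}_n(x)(xI_p-A)=P^{[1]}_{n+1}(x)+\omega_{n,n}P^{[1]}_n(x)$ at $x=A$ to get the recursion $P^{[1]}_{n+1}(A)=-\hat H_n(H_n)^{-1}P^{[1]}_n(A)$, and for the second identity use \eqref{eq:conexionC2LST} to obtain $\hat C^{[2]}_{n}(A^\top)=-(H_{n})^{\top}(\hat H_{n-1})^{-\top}\hat C^{[2]}_{n-1}(A^\top)$ with seed $\hat C^{[2]}_0(A^\top)=-(H_0)^\top$. One small slip: you wrote $(\omega_C)_{n,n+1}=\hat H_n(H_n)^{-1}$, but for a monic $W_C$ the superdiagonal entry is $I_p$ and it is the \emph{diagonal} entry $(\omega_C)_{n,n}$ that equals $\hat H_n(H_n)^{-1}$ via \eqref{eq:omegaALST} with $N_G=0$; your recursion uses the correct value regardless.
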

\begin{proof}Degree one Christoffel transformation \cite{alvarez2015Christoffel}, fit in the discussion of  \S\ref{sGU} when $W_G(x)=I_p$ and $W_C(x)=I_p x-A$. From the connection formula 
\begin{align*}
\hat P^{[1]}_n(x)(I_px-A)&=P^{[1]}_{n+1}(x)+\omega_{n,n}P^{[1]}_n(x), &\omega_{n,n}&=\hat H_n (H_n)^{-1},
\end{align*}
recalling \eqref{eq:JP} for $N_G=0$ and $N_C=1,$ and also the form of the spectral jets, see the discussion in \S \ref{sGU} we get
\begin{align*}
P^{[1]}_{n+1}(A)=-\hat H_n (H_n)^{-1}P^{[1]}_n(A),
\end{align*}
from where the first relation follows.

For the second relation, we first observe that \eqref{eq:conexionC2LST} gives
\begin{align*}
\big(\hat C^{[2]}_{0}(x)\big)^{\top}( H_{0} )^{-1}&=
(I_p x-A)\big(C^{[2]}_0(x)\big)^\top(H_0)^{-1}-I_p,\\
\big(\hat C^{[2]}_{n-1}(x)\big)^{\top}(\hat H_{n-1} )^{-1}+\big(\hat C^{[2]}_{n}(x)\big)^{\top}( H_{n} )^{-1}&=
(I_p x-A)\big(C^{[2]}_n(x)\big)^\top(H_n)^{-1}, & n&\in\{1,2,\dots\}.
\end{align*}
Therefore,
\begin{align*}
\hat C^{[2]}_{0}(x)&=
C^{[2]}_0(x)(I_p x-A^\top)-( H_{0} )^{\top},\\
(\hat H_{k-1} )^{-\top}\hat C^{[2]}_{k-1}(x)
+( H_{k} )^{-\top}\hat C^{[2]}_{k}(x)&=
(H_k)^{-\top}C^{[2]}_k(x) (I_p x-A^\top), & k&\in\{1,2,\dots\},
\end{align*}
which implies
\begin{align*}
\hat C^{[2]}_{0}(A^\top)&=-( H_{0} )^{\top},\\
\hat C^{[2]}_{n}(A^\top)&=-( H_{n} )^{\top}(\hat  H_{n-1} )^{-\top}\hat C^{[2]}_{n-1}(A^\top), & n&\in\{1,2,\dots\}.
\end{align*}
\end{proof}

Similarly, 
\begin{pro}\label{pro:deg1geronimusH}
Given a  Geronimus transformation with $W_G(x)=I_px-A$ we have 
\begin{align*}
C^{[1]}_{n}(A)&=(-1)^{n-1}\check H_{n}\prod_{m=0}^{\stackrel{\mbox{\larger$\curvearrowleft$}}{n-1} } (H_{m})^{-1} \check H_{m},&
\check P^{[2]}_{n+1}(A^\top)&=(-1)^{n+1}\prod_{m=0}^{\stackrel{\curvearrowleft}{n} }(H_{m})^{\top}(\check H_m)^{-\top}.
\end{align*}
\end{pro}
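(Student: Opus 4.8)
The plan is to exploit that a degree-one Geronimus transformation is exactly the case $W(x)=W_G(x)=I_px-A$, $N=1$, with no mass term ($\xi=0$), so that the machinery of \S\ref{S:degreeone} applies verbatim. In this situation the resolvent $\omega$ is lower unitriangular and banded with a single nonzero subdiagonal, so $\omega_{n,n}=I_p$ and, by \eqref{eq:omegaA} with $A_N=I_p$, $\omega_{n+1,n}=\check H_{n+1}(H_n)^{-1}$, all other blocks vanishing. I shall also use the degree-one identification of the root spectral jets recorded in \S\ref{S:degreeone}, namely $\boldsymbol{\mathcal J}_{C^{[1]}_n}=C^{[1]}_n(A)X$, where $(X,J)$ is a Jordan pair of $W_G$ with $A=XJX^{-1}$ and $X$ nonsingular; here $C^{[1]}_n(A)$ denotes the right evaluation of the Cauchy transform.

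For the first identity I would take the root spectral jet of the second-kind connection formula \eqref{eq:conexionC1}. Since there are no masses, \eqref{eq:checkCP} gives $\boldsymbol{\mathcal J}_{\check C^{[1]}W}=0$, and Proposition \ref{pro:specvsnon} then reads $\omega\,\boldsymbol{\mathcal J}_{C^{[1]}}=-\check H(\check S_2)^{-\top}[\mathcal B\mathcal Q;0;\dots]$, which already absorbs the boundary term. For $N=1$ one has $\mathcal B=A_N=I_p$ and $\mathcal Q=X$ by Lemma \ref{lemma:triple}, while the block-diagonal shape of $\check H$ and the upper unitriangularity of $(\check S_2)^{-\top}$ make the first block column of $\check H(\check S_2)^{-\top}$ equal to $[\check H_0;0;\dots]$; hence $\omega\,\boldsymbol{\mathcal J}_{C^{[1]}}=-[\check H_0X;0;\dots]$. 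Reading this off block by block and cancelling the nonsingular $X$ on the right yields $C^{[1]}_0(A)=-\check H_0$ together with the recursion $C^{[1]}_n(A)=-\omega_{n,n-1}C^{[1]}_{n-1}(A)=-\check H_n(H_{n-1})^{-1}C^{[1]}_{n-1}(A)$ for $n\ge1$. Telescoping and collecting the signs, with $(-1)^{n+1}=(-1)^{n-1}$, gives the stated ordered product.

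For the second identity I would instead start from \eqref{conex3'}, which for $N=1$ contains only the single extra term $k=n+1$. Transposing it and right-evaluating at $y=A^\top$, the left-hand side $(H_n)^{-\top}P^{[2]}_n(y)(I_py-A^\top)$ vanishes because $I_py-A^\top$ is a right factor whose right evaluation at $A^\top$ annihilates any polynomial multiple; using $\omega_{n+1,n}=\check H_{n+1}(H_n)^{-1}$ this leaves $\check P^{[2]}_{n+1}(A^\top)=-(H_n)^\top(\check H_n)^{-\top}\check P^{[2]}_n(A^\top)$, with base case $\check P^{[2]}_0(A^\top)=I_p$. Telescoping once more produces the claimed ordered product, the sign $(-1)^{n+1}$ arising from the $n+1$ recursion steps.

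The routine part is the sign and ordering bookkeeping in the two telescopings; the genuinely delicate points are, first, justifying that right evaluation commutes with the connection formulas for the Cauchy-type objects $C^{[1]}$ and $\check P^{[2]}$ — that is, that taking jets and substituting $A$ (resp. $A^\top$) is legitimate precisely because $\sigma(W_G)\cap\operatorname{supp}_y(u)=\varnothing$ — and, second, correctly handling the boundary term of \eqref{eq:conexionC1} at the spectrum, which is exactly where Proposition \ref{pro:specvsnon} does the heavy lifting. The transposition step in the second identity, which converts the left root data into a right evaluation at $A^\top$, is where one must be most careful not to scramble the non-commutative products.
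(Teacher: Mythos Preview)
Your proposal is correct and follows essentially the same route as the paper. The paper writes out \eqref{eq:conexionC1} directly for $N=1$ (where the boundary term reduces to $\check H_0$) to obtain $C^{[1]}_0(A)=-\check H_0$ and $C^{[1]}_n(A)=-\check H_n(H_{n-1})^{-1}C^{[1]}_{n-1}(A)$, and transposes \eqref{conex3'} to get $\check P^{[2]}_{n+1}(A^\top)=-(H_n)^\top(\check H_n)^{-\top}\check P^{[2]}_n(A^\top)$, then telescopes; your invocation of Proposition~\ref{pro:specvsnon} to absorb the boundary term is just a repackaging of the same computation in jet language.
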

\begin{proof}
For a Geronimus transformation with $W_G(x)=I_px-A$ and $W_C(x)=I_p$ we deduce, see \eqref{eq:conexionC1} and \eqref{eq:HG},
\begin{align*}
\check C^{[1]}_0(x)(I_px-A)&= C_0^{[1]}(x)+\check H_0,\\
\check C^{[1]}_n(x)(I_px-A)&= C_n^{[1]}(x)+\omega_{n,n-1}C_{n-1}^{[1]}(x), & \omega_{n,n-1}=\check H_n (H_{n-1})^{-1}.
\end{align*}
Now,  discussion in \S \ref{sGU} implies
\begin{align*}
C_0^{[1]}(A)&=-\check H_0,\\
 C_n^{[1]}(A)&=-\check H_n (H_{n-1})^{-1}C_{n-1}^{[1]}(A), & n&\in\{1,2,\dots\}.
\end{align*}
From \eqref{conex3'} we conclude
\begin{align*}
(I_px-A)\big( P^{[2]}_{n}(x)\big)^\top(H_n)^{-1}&=\big(\check P^{[2]}_{n}(x)\big)^\top (\check H_n)^{-1}+\big(\check P^{[2]}_{n+1}(x)\big)^\top (\check H_{n+1})^{-1} \omega_{n+1,n}\\&=\big(\check P^{[2]}_{n}(x)\big)^\top (\check H_n)^{-1}+\big(\check P^{[2]}_{n+1}(x)\big)^\top  (H_{n})^{-1}.
\end{align*}
Consequently, we get
\begin{align*}
(H_n)^{-\top}P^{[2]}_{n}(x)(I_px-A^\top)&=(\check H_n)^{-\top}\check P^{[2]}_{n}(x)+(H_{n})^{-\top}\check P^{[2]}_{n+1}(x),
\end{align*}
and again we are lead,  see \S \ref{sGU}, to
\begin{align*}
\check P^{[2]}_{n+1}(A^\top)=-(H_{n})^{\top}(\check H_n)^{-\top}\check P^{[2]}_{n}(A^\top).
\end{align*}
\end{proof}

\begin{defi}[Miwa shifts]
We  consider the following coherent time shift
\begin{align}
[z]:=\begin{bmatrix}
\dfrac{1}{z},\dfrac{1}{2z^2},\dfrac{1}{3z^3},\dots
\end{bmatrix}.
\end{align}
\end{defi}
\begin{rem}
The relevance of this Miwa shift comes from the   Taylor expansion of the logarithm
\begin{align*}
[z](x)&=\dfrac{x}{z}+\dfrac{x^2}{2z^2}+\dfrac{x^3}{3z^3}+\cdots\\
&=-\log\Big(1-\frac{x}{z}\Big), & |x|&<|z|,
\end{align*}
so that $\Exp{t_1(x)-[z]_1(x)}=\Big(1-\dfrac{x}{z}\Big)\Exp{t_1(x)}$, $|x|<|z|$, and  $\Exp{-(t_2(y)-[z]_2(y))}=\Big(1-\dfrac{y}{z}\Big)^{-1}\Exp{-t_2(y)}$, $|y|<|z|$.  The shifts are going to be taken inside the sesquilinear form. Consequently,  for the Miwa shift $[z]_1$ we request
$|z|>r_x$ and for $[z]_2$ that $|z|>r_y$, ensuring in this way the convergence of the series of the logarithm.

\end{rem}

We see that for the scalar case the composition of these Miwa shifts can generate any massless Geronimus--Uvarov transformation. However, for the matrix case, the Miwa shifts just introduced are more limited, as they are  scalar shifts, the matrix $A$ is a multiple of the identity $I_p$.  This is due, in part, to the simplification of considering the non-Abelian Toda flows instead of the multicomponent flows. However, in that more general scenario, the times are not any more scalar, but diagonal matrices in $\mathbb C^{p\times p}$ and, consequently, the corresponding composition of partial Miwa shifts will   achieve utmost diagonal massless Geronimus--Uvarov transformations. 
\begin{defi}[$\tau$-ratio matrices]\label{def:tau}
We introduce the matrices
\begin{align*}
\tau^{(1)}_n(t,s)&:=\prod_{m=0}^{\stackrel{\curvearrowleft}{n} } H^{t}_m (H^s_m)^{-1},&
\tau^{(2)}_n(t,s)&:=\prod_{m=0}^{\stackrel{\curvearrowleft}{n} }  (H^t_m)^{-1} H^{s}_m.
\end{align*}
\end{defi}
\begin{rem}
Observe that
\begin{align*}
(\tau^{(1)}_n(s,t))^{-1}&=\prod_{m=0}^{\stackrel{\curvearrowright}{n} } H^{t}_m (H^s_m)^{-1},&
(\tau^{(2)}_n(s,t))^{-1}&=\prod_{m=0}^{\stackrel{\curvearrowright}{n} }  (H^t_m)^{-1} H^{s}_m.
\end{align*}
\end{rem}
\begin{rem}[Why $\tau$-ratio?]
Notice that in the scalar case, $p=1$, we have, in terms of the the $\tau$-function $\tau_n(t)=\prod\limits_{m=0}^{n} H^{t}_m$,
\begin{align*}
\tau^{(1)}_n(t,s)&=\frac{\tau_n(t)}{\tau_n(s)}, & 
\tau^{(2)}_n(t,s)&=\frac{\tau_n(s)}{\tau_n(t)}.
\end{align*}
Thus, in the scalar case the $\tau$-ratio matrices are just the ratio of $\tau$ functions.
\end{rem}

\begin{teo}[Sato formulas]\label{teo:Sato}
In terms of $\tau$-ratio matrices and  Miwa shifts we have the following expressions for the biorthogonal matrix polynomials and its second kind functions
\begin{align*}
P^{[1],t}_{n}(z)&=z^{-n}\tau_{n-1}^{(1)}(t-[z]_1,t),&
\big( C^{[2],t}_{n}(z)\big)^\top(  H^{t}_{n} )^{-1}&=z^{n-1}\big(\tau_n^{(1)}(t,t+[z]_1)\big)^{-1}, & |z|&> r_x\\
(H^t_n)^{-1}C^{[1],t}_{n}(z)&=z^{n+1} \tau_n^{(2)}(t,t-[z]_2),&
\big( P^{[2],t}_{n}(z)\big)^\top&=z^{-n}\big(\tau_{n-1}^{(2)}(t+[z]_2,t)\big)^{-1}, & |z|&>r_y.
\end{align*}
\end{teo}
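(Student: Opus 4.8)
The plan is to read each of the four Sato formulas as the evaluation of a biorthogonal polynomial or a second kind function at the single \emph{central} matrix $A=zI_p$, and to recognize the relevant Miwa shift as a degree one Christoffel (in the $t_1$ variable) or Geronimus (in the $t_2$ variable) transformation, so that Propositions \ref{pro:deg1christoffelH} and \ref{pro:deg1geronimusH} apply directly. The Remark on Miwa shifts already supplies the dictionary: $\Exp{t_1(x)-[z]_1(x)}=(1-x/z)\Exp{t_1(x)}$ and $\Exp{-(t_2(y)-[z]_2(y))}=(1-y/z)^{-1}\Exp{-t_2(y)}$. Thus $t_1\mapsto t_1-[z]_1$ multiplies $u^t_{x,y}$ on the left in $x$ by the scalar factor $(1-x/z)$, a Christoffel perturbation, while $t_2\mapsto t_2-[z]_2$ multiplies it on the right in $y$ by $(1-y/z)^{-1}$, a Geronimus perturbation. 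First I would record that both are exactly the degree one transformations of \S\ref{sGU} with $W_C(x)=I_px-A$, respectively $W_G(y)=I_py-A$, for the scalar choice $A=zI_p$, the only subtlety being that the monic polynomial $(x-z)I_p$ differs from the Miwa factor $(1-x/z)I_p$ by the scalar $-z$. Whether $u^t$ is the \emph{source} or the \emph{target} of the degree one transformation is precisely what distinguishes the shift $t\mp[z]_i$: the $P^{[1],t}_n$ and $C^{[1],t}_n$ formulas take $u^t$ as the original object, whereas the $C^{[2],t}_n$ and $P^{[2],t}_n$ formulas take $u^t$ as the transformed one, the pre-image being $u^{t+[z]_i}$ (again up to a scalar).

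The next step is a scalar normalization lemma: if two quasidefinite kernels differ by a nonzero scalar constant $c$, then by uniqueness of the Gauss--Borel factorization in Proposition \ref{pro:fac} their triangular factors $S_1,S_2$ coincide while $\hat H_n=cH_n$; hence the biorthogonal polynomials are unchanged, the norms scale by $c$, and the Cauchy second kind functions pick up the same factor $c$. Applying this with $c=-z$ identifies the Christoffel transform $(x-z)I_p\cdot u^t$ with $-z\,u^{t-[z]_1}$ and the Geronimus transform $u^t(y-z)^{-1}I_p$ with $-\tfrac1z\,u^{t-[z]_2}$, and symmetrically for the opposite shifts. Because $A=zI_p$ is central, the right evaluations $P^{[1]}(A)$, $C^{[1]}(A)$, $(P^{[2]}(A^\top))$, $(C^{[2]}(A^\top))$ occurring in Propositions \ref{pro:deg1christoffelH} and \ref{pro:deg1geronimusH} collapse to ordinary scalar substitutions $P^{[1]}(z)$, etc., and the single eigenvalue $z$ with one Jordan block feeds the spectral data into those propositions with no further work.

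With these identifications I would substitute $\hat H_m=-z\,H^{t\mp[z]_1}_m$ in the Christoffel case and $\check H_m=-\tfrac1z\,H^{t\mp[z]_2}_m$ in the Geronimus case into the four product formulas. The ordered products that emerge are exactly the ones defining the $\tau$-ratio matrices $\tau^{(1)}_n,\tau^{(2)}_n$ of Definition \ref{def:tau}, namely products of $H^t_m(H^s_m)^{-1}$ and of $(H^t_m)^{-1}H^s_m$. For the $C^{[2],t}_n$ and $(P^{[2],t}_n)^\top$ formulas one additionally transposes; here I would use that transposition reverses an ordered product (interchanging the left-ordered and right-ordered products) together with the inverse identities for $\tau^{(1)}_n$ and $\tau^{(2)}_n$ stated in the Remark after Definition \ref{def:tau}, which is what moves the time arguments into the slots displayed in the theorem. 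The convergence constraints $|z|>r_x$ and $|z|>r_y$ are precisely those that make the logarithmic series for $[z]$ converge inside the sesquilinear form and keep $z$ off the projected supports, so the Cauchy representations of Proposition \ref{pro:Cauchy1} are legitimate throughout.

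The main obstacle I anticipate is purely bookkeeping but genuinely delicate: collecting the scalar prefactors $(-z)^{\pm(n+1)}$ coming from the $m=0,\dots,n$ factors, the global signs $(-1)^{n\pm1}$ supplied by the two propositions, and the order reversal induced by the transpose, and then verifying that they combine into a single power of $z$ multiplying the correctly ordered $\tau$-ratio. I would fix (and sanity-check) the exponents independently through the leading asymptotics as $z\to\infty$: monicity forces $P^{[1],t}_n(z)\sim z^nI_p$ and $(P^{[2],t}_n(z))^\top\sim z^nI_p$, while expanding the Cauchy kernel and invoking the orthogonality relations \eqref{eq:orthogonality1}--\eqref{eq:orthogonality2} forces $C^{[1],t}_n(z)\sim z^{-n-1}H^t_n$ and $(C^{[2],t}_n(z))^\top\sim z^{-n-1}H^t_n$. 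Since every $\tau$-ratio tends to $I_p$ as $[z]\to0$, matching these asymptotics pins down each prefactor unambiguously and serves as a decisive consistency check on the sign and normalization accounting.
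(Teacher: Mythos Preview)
Your proposal is correct and follows essentially the same route as the paper's proof: interpret the Miwa shifts as scalar degree-one Christoffel and Geronimus transformations with $A=zI_p$, use the uniqueness of the Gauss--Borel factorization to track the scalar $-z^{\pm1}$ through the $H_m$'s while leaving the $S_i$'s fixed, feed this into Propositions \ref{pro:deg1christoffelH} and \ref{pro:deg1geronimusH}, and then transpose and reorder to recognize the $\tau$-ratio matrices. The paper handles the $C^{[2],t}_n$ and $P^{[2],t}_n$ cases by first writing the formula at $t-[z]_i$ and then shifting $t\mapsto t+[z]_i$, whereas you set $u^t$ up directly as the transformed object with original $u^{t+[z]_i}$; these are equivalent reparametrizations of the same computation. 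Your closing asymptotic check via $z\to\infty$ is a genuinely useful addition for pinning down the exponents and signs that the paper leaves to the reader.
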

\begin{proof}
If we write $1-\dfrac{x}{z}=-\dfrac{1}{z}\Big(x-z\Big)$, we can understand the Miwa shifts $t-[z]_1$  ($t-[z]_2$) as a degree one Christoffel (Geronimus) transformations of the generalized kernel $u^t_{x,y}$. For the Gram matrices  we have $G^{t-[z]_1}=-z^{-1} \hat G$ and $G^{t-[z]_2}=-z \check G$, where $\hat G$ and $\check G$ denote the corresponding scalar Christoffel and massless Geronimus transformations with monic perturbing polynomials $I_p(x-z)$ and $I_p(y-z)^{-1}$ and $\hat u^t_{x,y}=(x-z)u^t_{x,y}$ and
$\check u_{x,y}=u_{x,y}(y-z)^{-1}$, respectively. Thus, the uniqueness of the Gauss--Borel factorization gives 
\begin{align*}
S_1^{t-[z]_1}&=\hat S_1^t, & H^{t-[z]_1}&=-z^{-1} \hat H^t, & S_2^{t-[z]_1}&=\hat S_2^t,\\
S_1^{t-[z]_2}&=\check S_1^t, & H^{t-[z]_2}&=-z \check H^t, & S_2^{t-[z]_2}&=\check S_2^t.
\end{align*}
Therefore, according to Definition \ref{defi:bio2kind} we get for $|z|>r_x$
\begin{align*}
P^{[1],t-[z]_1}(x)&=\hat P^{[1],t}(x), & P^{[2],t-[z]_1}(y)&=\hat P^{[2],t}(y), & C^{[1],t-[z]_1}(x)&=-z^{-1}\hat C^{[1],t}(x), & C^{[2],t-[z]_1}(y)&=-z^{-1}\hat C^{[2],t}(y),\end{align*}
and for $|z|>r_y$ we obtain that
\begin{align*}
P^{[1],t-[z]_2}(x)&=\check P^{[1],t}(x), & P^{[2],t-[z]_2}(y)&=\check P^{[2],t}(y), & C^{[1],t-[z]_2}(x)&=-z\check C^{[1],t}(x), & C^{[2],t-[z]_2}(y)&=-z\check C^{[2],t}(y).
\end{align*}
Then, Propositions \ref{pro:deg1christoffelH} and \ref{pro:deg1geronimusH}, with $A=I_pz$, give
\begin{align*}
P^{[1],t}_{n+1}(z)&=z^{-n-1}\prod_{m=0}^{\stackrel{\curvearrowleft}{n} } H^{t-[z]_1}_m (H^t_m)^{-1},&
 C^{[2],t-[z]_1}_{n}(z)&=z^{n-1}( H^t_{n} )^{\top}\prod_{m=0}^{\stackrel{\mbox{\larger$\curvearrowleft$}}{n-1} }
(  H^{t-[z]_1}_{m} )^{-\top}(H^t_{m})^\top,\\
C^{[1],t}_{n}(z)&=z^{n+1} H^{t-[z]_2}_{n}\prod_{m=0}^{\stackrel{\mbox{\larger$\curvearrowleft$}}{n-1} } (H^{t}_{m})^{-1}H^{t-[z]_2}_{m},&
 P^{[2],t-[z]_2}_{n+1}(z)&=z^{-n-1}\prod_{m=0}^{\stackrel{\curvearrowleft}{n} }(H^t_{m})^{\top}( H^{t-[z]_2}_m)^{-\top},
\end{align*}
so that, Miwa shifting some times we have
\begin{align*}
P^{[1],t}_{n+1}(z)&=z^{-n-1}\prod_{m=0}^{\stackrel{\curvearrowleft}{n} } H^{t-[z]_1}_m (H^t_m)^{-1},&
 C^{[2],t}_{n}(z)&=z^{n-1}( H^{t+[z]_1}_{n} )^{\top}\prod_{m=0}^{\stackrel{\mbox{\larger$\curvearrowleft$}}{n-1} }
(  H^{t}_{m} )^{-\top}(H^{t+[z]_1}_{m})^\top, & |z|&> r_x,\\
C^{[1],t}_{n}(z)&=z^{n+1} H^{t-[z]_2}_{n}\prod_{m=0}^{\stackrel{\mbox{\larger$\curvearrowleft$}}{n-1} } (H^{t}_{m})^{-1} H^{t-[z]_2}_{m},&
 P^{[2],t}_{n+1}(z)&=z^{-n-1}\prod_{m=0}^{\stackrel{\curvearrowleft}{n} }(H^{t+[z]_2}_{m})^{\top}( H^{t}_m)^{-\top}, & |z|&>r_y.
\end{align*}
and a transposition gives 
\begin{align*}
P^{[1],t}_{n+1}(z)&=z^{-n-1}\prod_{m=0}^{\stackrel{\curvearrowleft}{n} } H^{t-[z]_1}_m (H^t_m)^{-1},&
\big( C^{[2],t}_{n}(z)\big)^\top(  H^{t}_{n} )^{-1}&=z^{n-1}\prod_{m=0}^{\stackrel{\mbox{$\curvearrowright$}}{n} }
H^{t+[z]_1}_{m}(  H^{t}_{m} )^{-1}, & |z|&> r_x,\\
(H^t_n)^{-1}C^{[1],t}_{n}(z)&=z^{n+1} \prod_{m=0}^{\stackrel{\mbox{$\curvearrowleft$}}{n} }(H^{t}_{m})^{-1} H^{t-[z]_2}_{m},&
\big( P^{[2],t}_{n+1}(z)\big)^\top&=z^{-n-1}\prod_{m=0}^{\stackrel{\curvearrowright}{n} }( H^{t}_m)^{-1}H^{t+[z]_2}_{m}, & |z|&>r_y.
\end{align*}
\end{proof}
\begin{rem}
In the scalar case, $p=1$, recalling that $H^t_n=\dfrac{\tau_n(t)}{\tau_{n-1}(t)}$, we have the following standard Sato formulas, 
\begin{align*}
P^{[1],t}_{n}(z)&=z^{-n}\frac{\tau_n(t-[z]_1)}{\tau_n(t)},&
C^{[2],t}_{n}(z)&=z^{n-1}\frac{\tau_n(t+[z]_1)}{\tau_{n-1}(t)}, & |z|&> r_x\\
C^{[1],t}_{n}(z)&=z^{n+1} \frac{\tau_n(t-[z]_2)}{\tau_{n-1}(t)},&
P^{[2],t}_{n}(z)&=z^{-n}\frac{\tau_n(t+[z]_2)}{\tau_{n}(t)}, & |z|&>r_y.
\end{align*}
\end{rem}

\subsection{Bilinear identity and Geronimus--Uvarov and Uvarov transformations}
Bilinear identities \cite{date1,date2,date3,Miwa} are normally formulated for $\tau$-functions, but can be also written down for Baker functions \cite{ueno,ueno-takasaki0,ueno-takasaki1}.
They are useful when deriving the Hirota bilinear equations as well as a number of Fay identities. We have lengthy discussed about Geronimus--Uvarov transformations, and a question naturally arises. Does it hold a bilinear identity for the Baker functions in where  Geronimus--Uvarov transformations plays a role? and further: Do we have a similar bilinear identity for  the  biorthogonal matrix polynomials and its second kind functions? and for the $\tau$-matrices? The  answer to these questions follows.
\begin{teo}[Geronimus--Uvarov bilinear identities]\label{teo:bilinear}
Let us consider two matrix polynomials $W_C(x)$ and $W_G(x)$, with $W_C(x)$ monic, and  the corresponding Gerominus--Uvarov transformation as given in Definition \ref{def:Geronimus-Uvarov}. Then, for $r_1>r_x$ and $r_2>r_y$, the following bilinear identities holds
\begin{align}\label{eq:bilinealBaker}
\oint_{|z|=r_1}\hat \Psi_{1,k}(t',z) W_C(z)\big(\Psi^*_{1,l}(t,z)\big)^\top\,\d z&=
\oint_{|z|=r_2}\hat \Psi_{2,k}(t',z)W_G(z)\big(\Psi^*_{2,l}(t,z)\big)^\top\,\d z,\\\label{eq:bilinealC}
       \oint_{|z|=r_1} \Exp{t'_1(z)-t_1(z)}\hat P_k^{[1],t'}(z)W_C(z)
                   \big(C^{[2],t}_l(z)\big)^\top\,\d z&=\oint_{|z|=r_2}
                   \hat C^{[1],t'}_k(z)W_G(z)\big(P_l^{[2],t}(z)\big)^\top\Exp{t'_2(z)-t_2(z)} \,\d z,
\end{align} 
and in terms of the matrix functions $\tau^{(1)}(t,s)$ and $\tau^{(2)}(t,s)$ introduced in Definition \ref{def:tau} 
\begin{multline}\label{eq:bilineal_tau}
    \Big( \oint_{|z|=r_1} \Exp{t'_1(z)-t_1(z)}z^{l-k-1}\hat \tau_{k-1}^{(1)}(t'-[z]_1,t')W_C(z)
                   \big(\tau_l^{(1)}(t,t+[z]_1)\big)^{-1}\,\d z\Big)H^t_l\\=
                   \hat H^{t'}_n\Big(\oint_{|z|=r_2}
                   z^{k-l+1} \hat \tau_k^{(2)}(t',t'-[z]_2)W_G(z)\big(\tau_{l-1}^{(2)}(t+[z]_2,t)\big)^{-1}\Exp{t'_2(z)-t_2(z)} \,\d z\Big).
\end{multline} 
\end{teo}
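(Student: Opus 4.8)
The plan is to reduce all three bilinear identities to residue computations built on the elementary resolution of the identity
\[
\frac{1}{2\pi\ii}\oint_{|z|=r}\chi(z)\big(\chi^*(z)\big)^\top\d z=I=\frac{1}{2\pi\ii}\oint_{|z|=r}\chi^*(z)\big(\chi(z)\big)^\top\d z,
\]
which is nothing but $\frac{1}{2\pi\ii}\oint z^{k-l-1}\d z=\delta_{k,l}$ read block-wise. Throughout I would use the intertwining relations $W_C(\Lambda)\chi(z)=\chi(z)W_C(z)$ (immediate from $\Lambda\chi(z)=z\chi(z)$) and $\chi^*(z)W_G(z)=W_G(\Lambda^\top)\chi^*(z)+\mathrm{corr}(z)$, where the polynomial boundary term $\mathrm{corr}(z)=\big[(\mathcal B_G(\chi(z))_{[N_G]})^\top,0_p,\dots\big]^\top$ comes from the Lemma yielding \eqref{eq:WLchi}, together with $V_0^{s}\chi(z)=\Exp{s(z)}\chi(z)$ and the Gram-form expressions for the second-kind functions, valid on the chosen contours by Proposition \ref{pro:Cauchy1}.

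For the Baker identity \eqref{eq:bilinealBaker} I would rewrite the integrands using $\hat\Psi_1(t',z)=\hat W_1^{t'}\chi(z)$, $(\Psi_1^*(t,z))^\top=(\chi^*(z))^\top G(\tilde W_2^t)^\top$, $\hat\Psi_2(t',z)=\hat W_1^{t'}\hat G\chi^*(z)$ and $\Psi_2^*(t,z)=\tilde W_2^t\chi(z)$. Moving $W_C(z)$ through $\chi(z)$ and applying the first resolution of the identity collapses the left-hand contour integral to $[\hat W_1^{t'}W_C(\Lambda)G(\tilde W_2^t)^\top]_{k,l}$; on the right-hand side I move $W_G(z)$ through $\chi^*(z)$, the main term collapses by the second resolution of the identity to $[\hat W_1^{t'}\hat G W_G(\Lambda^\top)(\tilde W_2^t)^\top]_{k,l}$, while the correction $\mathrm{corr}(z)$ pairs with the entire integrand to give $\oint(\text{entire function})\d z=0$. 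The two coincide because of the transformation relation $W_C(\Lambda)G=\hat G W_G(\Lambda^\top)$ of Proposition \ref{pro:stringLST}; the time dependence sits entirely inside the wave matrices $\hat W_1^{t'}$, $\tilde W_2^t$, so no extra shuffling of the flows is needed here.

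For the polynomial/second-kind identity \eqref{eq:bilinealC} I would run the same residue argument on the Gauss--Borel factors rather than the wave matrices, using $\hat P^{[1],t'}(z)=\hat S_1^{t'}\chi(z)$, $(C^{[2],t}(z))^\top=(\chi^*(z))^\top(S_1^t)^{-1}H^t$, $\hat C^{[1],t'}(z)=\hat H^{t'}(\hat S_2^{t'})^{-\top}\chi^*(z)$ and $(P^{[2],t}(z))^\top=(\chi(z))^\top(S_2^t)^\top$, absorbing the scalar weights via $\Exp{t'_1(z)-t_1(z)}\chi(z)=V_0^{t'_1-t_1}\chi(z)$ and $\Exp{t'_2(z)-t_2(z)}(\chi(z))^\top=(\chi(z))^\top(V_0^{t'_2-t_2})^\top$. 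After collapsing both integrals and discarding the entire correction term, \eqref{eq:bilinealC} reduces to the matrix equality
\[
\hat S_1^{t'}W_C(\Lambda)V_0^{t'_1-t_1}(S_1^t)^{-1}H^t=\hat H^{t'}(\hat S_2^{t'})^{-\top}W_G(\Lambda^\top)(V_0^{t'_2-t_2})^\top(S_2^t)^\top,
\]
which I would establish from the time-mixed transformation relation $\hat G^{t'}(V_0^{t'_2})^\top W_G(\Lambda^\top)=W_C(\Lambda)V_0^{t'_1-t_1}G^t(V_0^{t_2})^\top$ — obtained by combining $\hat G W_G(\Lambda^\top)=W_C(\Lambda)G$ with $\hat G^{t'}=V_0^{t'_1}\hat G(V_0^{t'_2})^{-\top}$, $G^t=V_0^{t_1}G(V_0^{t_2})^{-\top}$ and $[\Lambda,V_0]=0$ — by inserting the two Gauss--Borel factorizations and cancelling the unitriangular factors. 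This is a genuinely distinct computation from the Baker case, since by Proposition \ref{pro:baker} the Baker functions $\Psi_1^*$ and $\Psi_2$ are Cauchy transforms that do \emph{not} coincide with the second-kind functions $C^{[2]}$, $C^{[1]}$.

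Finally, the $\tau$-ratio identity \eqref{eq:bilineal_tau} would follow from \eqref{eq:bilinealC} by substituting the Sato formulas of Theorem \ref{teo:Sato}, applied both to the original system and (since the Geronimus--Uvarov transform preserves the $2$D Toda structure) to the transformed one: $\hat P_k^{[1],t'}(z)=z^{-k}\hat\tau_{k-1}^{(1)}(t'-[z]_1,t')$, $(C_l^{[2],t}(z))^\top=z^{l-1}(\tau_l^{(1)}(t,t+[z]_1))^{-1}H_l^t$, $\hat C_k^{[1],t'}(z)=\hat H_k^{t'}z^{k+1}\hat\tau_k^{(2)}(t',t'-[z]_2)$ and $(P_l^{[2],t}(z))^\top=z^{-l}(\tau_{l-1}^{(2)}(t+[z]_2,t))^{-1}$; the scalar powers of $z$ commute through $W_C(z)$ and $W_G(z)$ and recombine into the prefactors $z^{l-k-1}$ and $z^{k-l+1}$, with $H_l^t$ and $\hat H_k^{t'}$ pulled outside the integrals. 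The main obstacle I anticipate is not these algebraic steps but the careful justification of the analytic manipulations: interchanging each contour integral with the semi-infinite summations, commuting $W_C(\Lambda)$, $W_G(\Lambda^\top)$ and the $V_0^{\,\cdot}$ past the factorization factors without violating the non-associativity of semi-infinite products, and verifying that the radii $r_1>r_x$, $r_2>r_y$ (enlarged to enclose $\sigma(W_G)$ for the transformed second-kind functions) guarantee uniform convergence of the series. These points are controlled by Propositions \ref{pro:associativity} and \ref{pro:Gauss--Borel factorization and associativity} together with the support/convergence data of Proposition \ref{pro:Cauchy1} and Definition \ref{def:supports}.
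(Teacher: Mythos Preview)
Your argument is correct and takes a genuinely different route from the paper. The paper never uses the residue pairing $\oint\chi(z)(\chi^*(z))^\top\d z=2\pi\ii\,I$ or any semi-infinite collapse; instead it starts from the sesquilinear identity of Proposition~\ref{pro:stringLST},
\[
\big\langle P(x)W_C(x),Q(y)\big\rangle_u=\big\langle P(x),Q(y)(W_G(y))^\top\big\rangle_{\hat u},
\]
evaluates it at the specific test functions $P(x)=\Exp{t'_1(x)}\hat P_k^{[1],t'}(x)$, $Q(y)=\Exp{-t_2(y)}P_l^{[2],t}(y)$, and then inserts the Cauchy integral representation $f(x)=\frac{1}{2\pi\ii}\oint\frac{f(z)}{z-x}\d z$ for the entire factors $P(x)W_C(x)$ and $Q(y)(W_G(y))^\top$. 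Moving the contour integral outside the (continuous) sesquilinear form leaves exactly the Cauchy-transform definitions of the Baker functions from Proposition~\ref{pro:baker}, which gives \eqref{eq:bilinealBaker}. The identity \eqref{eq:bilinealC} is obtained by the same trick with the exponential split differently, so that the remaining Cauchy transforms are the second-kind functions rather than the Baker functions; and \eqref{eq:bilineal_tau} follows by substituting the Sato formulas, exactly as you do.

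What each approach buys: the paper's route completely avoids the semi-infinite associativity and series-interchange issues you flag --- the only analytic input is continuity of the sesquilinear form and Cauchy's formula on a disk containing the projected support --- so the delicate points you list at the end simply do not arise. Your route is the classical Ueno--Takasaki argument and has the virtue of making the mechanism transparent: both sides of the bilinear identity reduce to the \emph{same} $(k,l)$ block of the matrix $\hat W_1^{t'}W_C(\Lambda)G(\tilde W_2^t)^\top=\hat W_1^{t'}\hat GW_G(\Lambda^\top)(\tilde W_2^t)^\top$, so the whole theorem is visibly a dressed version of the single relation $W_C(\Lambda)G=\hat GW_G(\Lambda^\top)$. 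Your handling of the polynomial correction term from \eqref{eq:WLchi} (entire integrand, hence vanishing contour integral) is correct, and your derivation of the time-mixed factorization identity for \eqref{eq:bilinealC} is exactly right.
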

\begin{proof}
Let us first prove \eqref{eq:bilinealBaker}. If in Proposition \ref{pro:stringLST} we put
\begin{align*}
P(x)&=\Exp{t'_1(x)}\hat P_k^{[1],t'}(x), & Q(y)&=\Exp{-t_2(y)} P_l^{[2],t}(y),
\end{align*}
we find
\begin{align}\label{eq:bilineal1}
\prodint{ \Exp{t'_1(x)}\hat P_k^{[1],t'}(x)W_C(x), \Exp{-t_2(y)} P_l^{[2],t}(y)}_u=\prodint{\Exp{t'_1(x)}\hat P_k^{[1],t'}(x),  \Exp{-t_2(y)} P_l^{[2],t}(y)\big(W_G(y)\big)^\top}_{\hat u}.
\end{align}
Now, given the entire character in the complex plane of all the functions involved, the Cauchy formula allows us to write
\begin{align*}
\Exp{t'_1(x)}\hat P_k^{[1],t'}(x)W_C(x)&=\frac{1}{2\pi\operatorname{i}}
\bigointsss_{|z|=r_1}
\frac{\Exp{t'_1(z)}\hat P_k^{[1],t'}(z)W_C(z)}{z-x}\,\d z, \\
\Exp{-t_2(y)} P_l^{[2],t}(y)
\big(W_G(y)\big)^\top&=\frac{1}{2\pi\operatorname{i}}
\bigointsss_{|z|=r_2}
\frac{\Exp{-t_2(z)} P_l^{[2],t}(z)\big(W_G(z)\big)^\top}{z-y}\,\d z, 
\end{align*}
for  $|x|<r_1$ and $|y|<r_2$. Thus, in order to apply it to \eqref{eq:bilineal1} we need $r_1>r_x$ and $r_2>r_y$, in this manner
the circles contain all the $x$ and $y$ projected supports, respectively. Consequently,
 we have
 \begin{align*} 
 \prodint{ \bigointsss_{|z|=r_1}
 \frac{\Exp{t'_1(z)}\hat P_k^{[1],t'}(z)W_C(z)}{z-x}\,\d z, \Exp{-t_2(y)} P_l^{[2],t}(y)}_u=\prodint{\Exp{t'_1(x)}\hat P_k^{[1],t'}(x),  \bigointsss_{|z|=r_2}
 \frac{\Exp{-t_2(z)} P_l^{[2],t}(z)\big(W_G(z)\big)^\top}{z-y}\,\d z}_{\hat u},
 \end{align*}
and, recalling  that the sesquilinear form is continuous in each of two entries,  we get
\begin{align*} 
\bigointsss_{|z|=r_1}\prodint{ 
 \frac{\Exp{t'_1(z)}\hat P_k^{[1],t'}(z)W_C(z)}{z-x}\,\d z, \Exp{-t_2(y)} P_l^{[2],t}(y)}_u=\bigointsss_{|z|=r_2}\prodint{\Exp{t'_1(x)}\hat P_k^{[1],t'}(x),  
 \frac{\Exp{-t_2(z)} P_l^{[2],t}(z)\big(W_G(z)\big)^\top}{z-y}\,\d z}_{\hat u}.
 \end{align*}
Now, from Definition \ref{def:sesquilinear} we deduce
 \begin{multline*} 
\bigointsss_{|z|=r_1} \Exp{t'_1(z)}\hat P_k^{[1],t'}(z)W_C(z)\prodint{ 
 \frac{I_p}{z-x}, \Exp{-t_2(y)} P_l^{[2],t}(y)}_u\,\d z\\=\bigointsss_{|z|=r_2}\prodint{\Exp{t'_1(x)}\hat P_k^{[1],t'}(x),  
  \frac{I_p}{z-y}}_{\hat u}W_G(z)\Exp{-t_2(z)} \big(P_l^{[2],t}(z)\big)^\top\,\d z,
  \end{multline*}
  and Proposition \ref{pro:baker} implies the result.
  
 We proceed with the proof of \eqref{eq:bilinealC}.  In this case, we use the Cauchy formulas
  \begin{align*}
\Exp{t'_1(x)-t_1(x)}\hat P_k^{[1],t'}(x)W_C(x)&=\frac{1}{2\pi\operatorname{i}}
  \bigointsss_{|z|=r_1}
  \frac{\Exp{t'_1(z)-t_1(z)}\hat P_k^{[1],t'}(z)W_C(z)}{z-x}\,\d z,\\
 \Exp{t'_2(y)-t_2(y)}P_l^{[2],t}(y)
  \big(W_G(y)\big)^\top&=\frac{1}{2\pi\operatorname{i}}
  \bigointsss_{|z|=r_2}
  \frac{ \Exp{t'_2(z)-t_2(z)} P_l^{[2],t}(z)\big(W_G(z)\big)^\top}{z-y}\,\d z, 
  \end{align*}
  so that we get
   \begin{multline*} 
   \prodint{ \Exp{t_1(x)}\bigointsss_{|z|=r_1}\Exp{t'_1(z)-t_1(z)}
   \frac{\hat P_k^{[1],t'}(z)W_C(z)}{z-x}\,\d z, \Exp{-t_2(y)} P_l^{[2],t}(y)}_u\\=\prodint{\Exp{t'_1(x)}\hat P_k^{[1],t'}(x),  \Exp{-t'_2(y)} \bigointsss_{|z|=r_2}
   \Exp{t'_2(z)-t_2(z)} \frac{P_l^{[2],t}(z)\big(W_G(z)\big)^\top}{z-y}\,\d z}_{\hat u}.
   \end{multline*}
   Then, we obtain
    \begin{multline*} 
     \bigointsss_{|z|=r_1} \Exp{t'_1(z)-t_1(z)}\hat P_k^{[1],t'}(z)W_C(z)\prodint{ 
    \frac{I_p\Exp{t_1(x)}}{z-x}, \Exp{-t_2(y)} P_l^{[2],t}(y)}_u\,\d z\\=\bigointsss_{|z|=r_2}\prodint{\Exp{t'_1(x)}\hat P_k^{[1],t'}(x),  
     \frac{I_p\Exp{-t'_2(y)} }{z-y}}_{\hat u}W_G(z)\big(P_l^{[2],t}(z)\big)^\top\Exp{t'_2(z)-t_2(z)} \,\d z,
     \end{multline*}
     which can be simplified to
        \begin{multline*} 
          \bigointsss_{|z|=r_1} \Exp{t'_1(z)-t_1(z)}\hat P_k^{[1],t'}(z)W_C(z)\prodint{ 
         \frac{I_p}{z-x},  P_l^{[2],t}(y)}_{u^t}\,\d z\\=\oint_{|z|=r_2}\prodint{\hat P_k^{[1],t'}(x),  
          \frac{I_p}{z-y}}_{\hat u^{t'}}W_G(z)\big(P_l^{[2],t}(z)\big)^\top\Exp{t'_2(z)-t_2(z)} \,\d z.
          \end{multline*}
          Now, Proposition \ref{pro:Cauchy1}	gives the result.
Finally, the $\tau$-version \eqref{eq:bilineal_tau} follows from Definition \ref{def:tau} and \eqref{eq:bilinealC}.
\end{proof}

A similar  technique leads to a corresponding result when we replace Geronimus--Uvarov by Uvarov transformations. We give no proof and consider only the relation between biorthogonal matrix polynomials and its second kind functions
	\begin{pro}[Uvarov bilinear identities]
	Let us consider 
	the Uvarov transformation  described in \eqref{eq:v_uvarov_general}. Then, the following bilinear identities holds
		\begin{multline*}
		    \bigointsss_{|z|=r_1} \Exp{t'_1(z)-t_1(z)}\hat P_k^{[1],t'}(z)
		                  \bigg( \big(C^{[2],t}_l(z)\big)^\top
		                   +\prodint{\frac{\Exp{t_1(x)}I_p}{z-x},(\beta)_x}\Big(\mathcal J_{\Exp{-t_2}P_2^{[2],t}}\Big)^\top\bigg)\,\d z\\=\bigointssss_{|z|=r_2}
		                   \hat C^{[1],t'}_k(z)\big(P_l^{[2],t}(z)\big)^\top\Exp{t'_2(z)-t_2(z)} \,\d z,
		\end{multline*} 
	for $r_1>r_x$ and $r_2>r_y$.
	\end{pro}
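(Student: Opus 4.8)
The plan is to mirror the proof of the Geronimus--Uvarov bilinear identity \eqref{eq:bilinealC}, replacing the multiplicative intertwining of Proposition \ref{pro:stringLST} by the additive decomposition $\hat u=u+v$ that defines the Uvarov transformation in \eqref{eq:v_uvarov_general}. The master identity from which everything flows is simply the bilinearity of the perturbed form: writing $\tilde P(x):=\Exp{t'_1(x)}\hat P_k^{[1],t'}(x)$ and $\tilde Q(y):=\Exp{-t_2(y)}P_l^{[2],t}(y)$, one has
\[
\prodint{\tilde P(x),\tilde Q(y)}_{\hat u}=\prodint{\tilde P(x),\tilde Q(y)}_{u}+\prodint{\tilde P(x),\tilde Q(y)}_{v}.
\]
I would then show that, after inserting Cauchy kernels, the $\hat u$-term reproduces the right-hand side of the claimed identity, while the $u$-term and the $v$-term reproduce, respectively, the two summands on the left-hand side.

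For the $u$- and $v$-terms I would apply the Cauchy representation in the $x$-variable. Since $\Exp{t'_1(z)-t_1(z)}\hat P_k^{[1],t'}(z)$ is entire, for $|x|<r_1$ (hence $r_1>r_x$) we have $\Exp{t'_1(x)}\hat P_k^{[1],t'}(x)=\Exp{t_1(x)}\frac{1}{2\pi\ii}\oint_{|z|=r_1}\frac{\Exp{t'_1(z)-t_1(z)}\hat P_k^{[1],t'}(z)}{z-x}\,\d z$. Substituting this into the $u$- and $v$-terms, using the continuity of the sesquilinear form to exchange the contour integral with the pairing, and factoring the $z$-dependent matrix out of the first slot by property i) of Definition \ref{def:sesquilinear}, I obtain $\oint_{|z|=r_1}\Exp{t'_1(z)-t_1(z)}\hat P_k^{[1],t'}(z)\,(\cdots)\,\d z$ with inner pairings $\prodint{\frac{\Exp{t_1(x)}I_p}{z-x},\Exp{-t_2(y)}P_l^{[2],t}(y)}_{u}$ and $\prodint{\frac{\Exp{t_1(x)}I_p}{z-x},\Exp{-t_2(y)}P_l^{[2],t}(y)}_{v}$. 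The first equals $(C^{[2],t}_l(z))^\top$ by the time-dependent version of Proposition \ref{pro:Cauchy1}, recalling $u^t_{x,y}=\Exp{t_1(x)-t_2(y)}u_{x,y}$ and that $|z|>r_x$; the second collapses, by the finite-support computation behind Proposition \ref{pro:uvarov0}, to $\prodint{\frac{\Exp{t_1(x)}I_p}{z-x},(\beta)_x}\big(\mathcal J_{\Exp{-t_2}P_l^{[2],t}}\big)^\top$. Together they produce the left-hand side of the statement.

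For the $\hat u$-term I would instead use the Cauchy representation in the $y$-variable: for $|y|<r_2$ (hence $r_2>r_y$), $\Exp{-t_2(y)}P_l^{[2],t}(y)=\Exp{-t'_2(y)}\frac{1}{2\pi\ii}\oint_{|z|=r_2}\frac{\Exp{t'_2(z)-t_2(z)}P_l^{[2],t}(z)}{z-y}\,\d z$. Exchanging integral and pairing and pulling the $z$-dependent matrix out of the second slot by property ii) of Definition \ref{def:sesquilinear} (which installs a transpose), the inner pairing becomes $\prodint{\Exp{t'_1(x)}\hat P_k^{[1],t'}(x),\Exp{-t'_2(y)}\frac{I_p}{z-y}}_{\hat u}=\hat C^{[1],t'}_k(z)$, again by Proposition \ref{pro:Cauchy1} applied to $\hat u^{t'}_{x,y}=\Exp{t'_1(x)-t'_2(y)}\hat u_{x,y}$, valid since $|z|>r_y$. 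This yields exactly $\oint_{|z|=r_2}\hat C^{[1],t'}_k(z)(P_l^{[2],t}(z))^\top\Exp{t'_2(z)-t_2(z)}\,\d z$, the right-hand side, so the master identity becomes the claim.

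The main obstacle will be the bookkeeping of the $v$-term: one must verify that the finite-support pairing contracts precisely to the product of the $\beta$-row with the transposed jet $\big(\mathcal J_{\Exp{-t_2}P_l^{[2],t}}\big)^\top$, with the correct ordering of factors and placement of the transpose and of the derivative evaluations at the points $x_a$. A secondary point to check is that $r_1,r_2$ enclose not only $\operatorname{supp}_x u,\operatorname{supp}_y u$ but also the finite supports introduced by $v$ (the points $x_a$ and the supports of the $(\beta^{(a)}_m)_x$), so that both the Cauchy representations and Proposition \ref{pro:Cauchy1} apply to $\hat u$; with that understood, the remaining steps are the same continuity-plus-sesquilinearity manipulations already employed in Theorem \ref{teo:bilinear}.
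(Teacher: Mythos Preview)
The paper does not actually prove this Proposition: immediately before the statement it says only that ``A similar technique leads to a corresponding result when we replace Geronimus--Uvarov by Uvarov transformations. We give no proof''. Your proposal is precisely the implementation of that similar technique --- you replace the multiplicative intertwining $\prodint{PW_C,Q}_u=\prodint{P,Q(W_G)^\top}_{\hat u}$ of Proposition \ref{pro:stringLST} by the additive splitting $\prodint{\tilde P,\tilde Q}_{\hat u}=\prodint{\tilde P,\tilde Q}_{u}+\prodint{\tilde P,\tilde Q}_{v}$, then run the same Cauchy-kernel insertions and continuity/sesquilinearity manipulations as in the proof of \eqref{eq:bilinealC}. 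The identification of the $u$-piece with $(C^{[2],t}_l)^\top$, of the $v$-piece with $\prodint{\frac{\Exp{t_1(x)}I_p}{z-x},(\beta)_x}\big(\mathcal J_{\Exp{-t_2}P_l^{[2],t}}\big)^\top$ via the computation behind Proposition \ref{pro:uvarov0}, and of the $\hat u$-piece with $\hat C^{[1],t'}_k$ is correct; your remark that $r_1,r_2$ must also enclose the finite supports introduced by $v$ (so that both the Cauchy representations and Proposition \ref{pro:Cauchy1} apply to $\hat u$) is the one genuine caveat the paper's stated hypotheses leave implicit.
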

	
\begin{landscape}
\appendix*
	\section{Table with transformations:  chart of results}\label{appendix}
	\thispagestyle{plain} 
	\begin{tabular}{@{}cc@{}}
		\toprule
		\multicolumn{2}{ c }{\textsc{Geronimus transformations}}\\	\cmidrule(r){1-2}	
		{$	
			\check u_{x,y}=u_{x,y}(W(y))^{-1}+\sum\limits_{a=1}^{q}\sum\limits_{j=1}^{s_a}\sum\limits_{m=0}^{\kappa_j^{(a)}-1}\dfrac{(-1)^{m}}{m!}\big(\xi^{[a]}_{j,m}\big)_x\otimes\delta^{(m)}(y-x_a)l_{j}^{(a)}(y)
			$}
		& {  Theorem \ref{teo:spectral} (spectral) \&  Theorem \ref{theorem:nonspectral} (nonspectral)}\\\cmidrule(r){1-1}\cmidrule(lr){2-2}
		\multicolumn{2}{ c }{\textsc{Unimodular Christoffel  transformations}}\\	\cmidrule(r){1-2}	
		$\check u_{x,y}=u_{x,y}W(y)$ with $\det(W(x))=\text{constant}$& \S\ref{s:unimodular}
		\\\cmidrule(r){1-1}\cmidrule(lr){2-2}	
		\multicolumn{2}{ c }{\textsc{Geronimus--Uvarov transformations}}\\
		\cmidrule(r){1-2}	
		{$	
			\hat u_{x,y}=W_C(x)u_{x,y}(W_G(y))^{-1}+\sum\limits_{a=1}^{q_G}\sum\limits_{j=1}^{s_{G,a}}\sum\limits_{m=0}^{\kappa_{G,j}^{(a)}-1}\dfrac{(-1)^{m}}{m!}W_C(x)\big(\xi^{[a]}_{j,m}\big)_x\otimes\delta^{(m)}(y-x_a)l_{G,j}^{(a)}(y)
			$}&  {  Theorem \ref{teo:SCGU} (spectral) \&  Theorem \ref{theorem:nonspectralLST}} (mixed)\\
		\cmidrule(r){1-1}\cmidrule(lr){2-2}\multicolumn{2}{ c }{\textsc{Christoffel transformations with singular leading coefficient and Hankel orthogonality}}\\	 
		{$	\hat  u_{x,x}=u_{x,x}W(x)$ with $\det A_N=0$}
		& { \S \ref{S:chris}}\\
		\cmidrule(r){1-1}\cmidrule(lr){2-2}	\multicolumn{2}{ c }{\textsc{Christoffel and Geronimus symmetric transformations with nonsingular leading coefficient and Hankel orthogonality}}\\	\cmidrule(r){1-2}	
		{$	\hat  u_{x,x}=W(x)u_{x,x}(W(x))^\top$ with $\det A_N\neq 0$}
		& \multirow{2}{*}{ \S  \ref{s:symmetric}}
		\\	
		{$	\hat  u_{x,x}=(W(x))^{-1}u_{x,x}(W(x))^{-\top}$ with $\det A_N\neq 0$}
		& 
		\\
		\cmidrule(r){1-1}\cmidrule(lr){2-2}	\multicolumn{2}{ c }{\textsc{Christoffel and Geronimus nonsymmetric transformations with nonsingular leading coefficient and Hankel orthogonality}}\\	\cmidrule(r){1-2}		
		{$	\hat  u_{x,x}=W(x)u_{x,x}(V(x))^\top$ with $\det A_N(W)\neq 0$}
		& \multirow{2}{*}{ \S  \ref{s:nsymmetric}}
		\\	
		{$	\hat  u_{x,x}=(W(x))^{-1}u_{x,x}(V(x))^{-\top}$ with $\det A_N(W)\neq 0$}
		& \\\cmidrule(r){1-1}\cmidrule(lr){2-2}	
		\multicolumn{2}{ c }{\textsc{Uvarov transformations}}\\	\hline	
		{$	
			\hat u_{x,y}=u_{x,y}+\sum\limits_{a=1}^{q}\sum\limits_{m=0}^{\kappa^{(a)}-1}\dfrac{(-1)^{m}}{m!}\big(\beta^{(a)}_{m}\big)_x\otimes\delta^{(m)}(y-x_a)
			$}
		& { Theorem \ref{teo:uvarov}}\\
		\bottomrule	\end{tabular}
\end{landscape}

\end{document}